\renewcommand{\@biblabel}[1]{[#1]\hfill}
\newtheorem{theorem}{Theorem}
\newtheorem*{theorem*}{Theorem}
\newtheorem{lemma}[theorem]{Lemma}
\newtheorem{proposition}[theorem]{Proposition}
\newtheorem{corollary}[theorem]{Corollary}
\newtheorem{definition}[theorem]{Definition}
\newtheorem*{notation}{Notation}
\newtheorem*{example}{Example}
\newtheorem{remark}[theorem]{Remark}
\newtheorem{problem}[theorem]{Problem}
\newtheorem*{assumption}{Assumptions}
\newtheorem*{conjecture}{Conjecture}
\numberwithin{theorem}{section}
\numberwithin{equation}{section}
\numberwithin{figure}{section}
\def\tikzfig#1#2#3{%
\begin{figure}[htb]%
  \centering
\begin{tikzpicture}#3
\end{tikzpicture}
  \caption{#2}
  \label{fig:#1}%
\end{figure}%
}
\def\smalldot#1{\draw[fill=black] (#1) %
node [inner sep=1.3pt,shape=circle,fill=black] {}}
\def\vvert{\|}
\def\aa{\xi}   % For coefficients of control quadratic polynomial
\def\uedge{u_{edge}}
\def\R{\mathbb{R}}
\def\C{\mathbb{C}}
\def\Z{\mathbb{Z}}
\def \h{\mathfrak{h}}
\def\hstar{\mathfrak{h}^{\star}}
\def\H{\mathcal{H}}
\def\A{\mathcal{A}}
\def\bstar{+}
\def\Hstar{\H^{\bstar}}
\def\Ssing{\mathcal{S}_{sing}}
\def\truncstar{\mathfrak{h}^{\star\star}} 
\def\SL{\mathrm{SL}_2}
\def\OR{\mathrm{O}_2}
\def\sl{\mathfrak{sl}_2}
\def \sotwo{\mathfrak{so}(2,1)}
\def \SU{\mathrm{SU}(1,1)}
\def\su{\mathfrak{su}(1,1)}
\def\SO{\mathrm{SO}_2(\R)}
\def\op#1{\mathrm{#1}}
\newcommand{\abss}[1]{\ldbrack{#1}\rdbrack}
\def\RX{\R X}
\def\Ad{\mathrm{Ad}}
\def\Jsu{J_{\mathfrak{su}}}
\def\ad{\mathrm{ad}}
\def\packing{\mathcal{P}}
\def\LL{\mathbf{L}}
\def\Dih{\mathbf{Dih}_6} % dihedral group
\def\ep{\mathbf{e}} %vertex of U_T.
\def\I{\mathbf{I}}
\def\Kbal{\mathfrak{K}_{bal}}
\def\tr{\mathrm{trace}}
\def\O{\mathcal{O}}
\def\OX{\mathcal{O}_{X}}
\def \delx{\frac{\partial}{\partial x}}
\def \dely{\frac{\partial}{\partial y}}
\def\W{{\mathcal W}}
\def\G{{\mathcal G}}
\def\CC{{\mathbf C}}
\def\DD{{\mathbf D}}
\def\rs{r_{scale}}
\def\leftopen{(}
\def\rightclosed{]}
\def\itf{\iota_{\tau{}F}}
\def\bd{\partial}
\newcommand{\mattwo}[4]{\left(
\begin{array}{cc}
#1 & #2 \\
#3 & #4 \\  
\end{array}
\right)
}
\newcommand{\dettwo}[4]{\left|
\begin{array}{cc}
#1 & #2 \\
#3 & #4 \\  
\end{array}
\right|
}
\newcommand{\bracks}[2]{
\left\langle #1 , #2 \right\rangle
}
\newcommand{\Kccs}{\mathfrak{K}_{ccs}}
\newcommand{\RR}{\Re}   %{\mathfrak{R}}
\newcommand{\mb}[1]{\mathbf{#1}}
\newcommand\ee[1]{e_{#1}^*}
\newcommand{\partials}[2]{\frac{\partial #1}{\partial #2}}
\newcommand\mcite[1]{}  %% {\marginnote[{{\\ \tt[#1]}}]{{\tt [#1]}}} %Mathematica labels
\patchcmd{\BR@backref}{\newblock}{\newblock(on page~}{}{}
\patchcmd{\BR@backref}{\par}{)\par}{}{}
\newcommand*\linenomathpatch[1]{%
  \cspreto{#1}{\linenomath}%
  \cspreto{#1*}{\linenomath}%
  \csappto{end#1}{\endlinenomath}%
  \csappto{end#1*}{\endlinenomath}%
}
\date{}
\title{Packings of Smoothed Polygons}
\author{Thomas Hales and Koundinya Vajjha}
\begin{document}
\maketitle

% SPECIAL OPTIONS
% Include Keywords
% To include keywords as part of the abstract include the option '[keywords]' (e.g., \begin{abstract}[Keywords:]
% The list comes from the '\keywords' specified in the 'Title Page'

% Include the word 'ABSTRACT'
% Use '\begin{abstract*}' and '\end{abstract*} instead of '\begin{abstract}' and '\end{abstract}
% The word `ABSTRACT' appears on the top of the page
%\begin{abstract}  Insert
%\end{abstract}
%============================================================================%
%============================================================================%
%============================================================================%
%============================================================================%

%\begin{Abstract}
%\begin{changemargin}{1cm}{1cm}
%Your text
%\end{changemargin}
%\end{Abstract}

\chapter*{\centering Abstract}
{
\parskip=0.5\baselineskip

This book uses optimal control theory to prove that the most
unpackable centrally symmetric convex disk in the plane is a smoothed
polygon.  A smoothed polygon is a polygon whose corners have been
rounded in a special way by arcs of hyperbolas.  To be highly
unpackable means that even densest packing of that disk has low
density.

Motivated by Minkowski's geometry of numbers, which investigates
lattice packings of convex bodies, researchers (notably Blaschke and
Courant) began to search for the most unpackable centrally symmetric
convex disk (in brief, \emph{the most unpackable disk}) starting in
the early 1920s.  In 1934, Reinhardt conjectured that the most
unpackable disk is a smoothed octagon.  Working independently of
Reinhardt, but also motivated by Minkowski's geometry of numbers,
Mahler attempted without success in 1946 to prove that the most
unpackable disk must be a smoothed polygon.  This book proves what
Mahler set out to prove: \emph{Mahler's First} conjecture on smoothed
polygons.  His second conjecture is identical to the Reinhardt
conjecture, which remains open.

This book explores the many remarkable structures of this packing
problem, formulated as a problem in optimal control theory on a Lie
group, with connections to hyperbolic geometry and Hamiltonian
mechanics.  Bang-bang Pontryagin extremals to the optimal control
problem are smoothed polygons.  Extreme difficulties arise in the
proof because of chattering behavior in the optimal control problem,
corresponding to possible smoothed ``polygons with infinitely many
sides'' that need to be ruled out.  To analyze and eliminate the
possibility of chattering solutions, the book introduces a discrete
dynamical system (the Poincar\'e first recurrence map) and gives a
full description of its fixed points, stable and unstable manifolds,
and basin of attraction on a blowup centered at a singular set. Some proofs 
in this book are computer-assisted using a computer algebra system.

}
%% This book shows how the still-unsolved Reinhardt conjecture in
%% discrete geometry can be formulated as a problem in optimal control
%% theory.  A proof of Mahler's First conjecture is presented, which is a
%% weak form of the Reinhardt conjecture, asserting that the most
%% unpackable centrally symmetric convex disk is a smoothed polygon.

%==========================================================================================%
% TABLE OF CONTENTS, FIGURES, AND TABLES
%==========================================================================================%

% Table of contents
\tableofcontents

\part{Preliminaries}
\chapter{Introduction}%             

\newcommand{\multi}{\mb{s}}

\index{density!packing}
\index{convex body}
\index[n]{zd@$\delta$, density!$\delta(K,\packing)$, packing}
\index[n]{K@$K$, convex disk!body}
\index{disk!convex}

This book shows how the still-unsolved Reinhardt conjecture in
discrete geometry can be formulated as a problem in optimal control
theory.  A proof of Mahler's First conjecture is presented, which is a
weak form of the Reinhardt conjecture, asserting that the most
unpackable centrally symmetric convex disk is a smoothed polygon.

Discrete geometers are interested in the class of problems which
minimize or maximize the \emph{packing density} $\delta(K,\packing)$
of a \emph{convex body} $K \subset \R^n$; that is, a convex compact
set with nonempty interior.%
\footnote{This and other terms are defined
at the beginning of Chapter 2.}
A convex body in $\R^2$ is called a \emph{convex
disk}. The packing density is roughly the fraction
of space taken up by non-overlapping congruent copies of a convex body
$K$ when they are arranged according to the packing
$\packing$\index[n]{P@$\packing$ packing} in Euclidean space
$\R^n$. Since $\delta(K,\packing)$ is function of two variables,
different flavors of this question may be posed: we may restrict the
classes of convex bodies $K$ under consideration, or we may restrict
the type of packings $\packing$.

\index[n]{zd@$\delta$, density!$\delta(K)$, greatest}
\index{sphere packing problem}

For example, the \emph{sphere packing problem} fixes the convex body
$K$ to be $B^n$\index[n]{B@$B^n$, unit ball} (the unit ball in $\R^n$)
and asks us to determine $\delta(K) := \sup_{\mathcal{P}}
\delta(B^n,\packing)$, where the supremum ranges over all possible
packings $\mathcal{P}$. Determining $\delta(K)$ for an arbitrary
convex body $K$ is an extremely hard optimization problem in general,
even in low dimensions. In three dimensions, the sphere packing
problem is the Kepler conjecture, which was asserted over 400 years
ago, but not solved until 1998~\cite{hales2005proof}.  In 2022, Maryna
Viazovska\index{Viazovska, Maryna} received a Fields medal for the
solution of the sphere packing problem in eight
dimensions~\cite{cohn2022work}.
\begin{figure}[ht]
\centering
\includegraphics[scale=0.9]{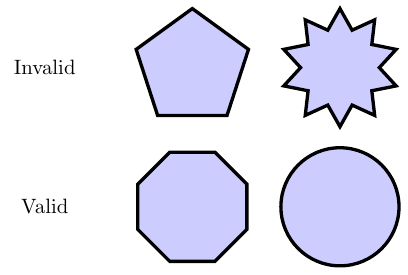}
\caption{Valid and Invalid Convex Centrally Symmetric Disks.}
\label{fig:class-ccs}
\end{figure}

% DONE[] Note: it is not true in general that affine transformations do
% not change packing density. 
% (Kuperberg Fejes Toth skinny ellipsoid example.)
% We can only assert this when the optimal is known to be a lattice.

\index[n]{KK@$\mathfrak{K}$, set of convex disks} 
\index[n]{KK@$\mathfrak{K}$, set of convex disks!$\Kccs$, centrally symmetric convex}

\section{Karl Reinhardt's Problem}
\index{Reinhardt, Karl} Let $\mathfrak{K}$ be the set of convex disks
in the plane, and let $\Kccs\subset\mathfrak{K}$ be the set of
centrally symmetric convex disks in the plane $\R^2$. Examples of
compact sets belonging to (and not belonging to) $\Kccs$ are shown in
Figure~\ref{fig:class-ccs}. The Reinhardt problem is to determine the
infimum
\[
\inf_{K \in \Kccs} \delta(K)
= \inf_{K \in \Kccs} \sup_{\mathcal{P}} \delta(K,\packing)
\]
and also that centrally symmetric convex disk which whose greatest
packing density achieves this minimum.  The Reinhardt problem is
structured as a minimax problem: finding the infimum of a supremum (or
to find that disk whose \textit{greatest} packing density is the
\textit{least}).  In our situation, a centrally symmetric convex disk
achieving this minimum exists. We will see below that an affine
transformation does not change the greatest packing density of a
centrally symmetric convex disk.  The minimizer is conjectured to be
unique up to affine transformation.

\begin{figure}[ht]
\centering
\includegraphics{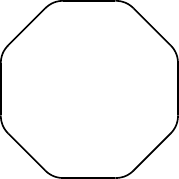}
\caption{The Smoothed Octagon.}
\label{fig:smoothed}
\end{figure}

\index{unpackable}

Although a plausible first guess for the minimizer is the circular
disk in the plane, it turns out that there is a candidate which is
slightly worse. (We say that one convex disk is \emph{worse} than
another if its greatest packing density is smaller.  To be worse is to
be less \emph{packable} and more \emph{unpackable}.) In 1934,
Reinhardt conjectured that the minimum is achieved by the so-called
\textit{smoothed octagon} pictured in
Figure~\ref{fig:smoothed}~\cite{reinhardt1934dichteste}.
Independently, Kurt Mahler arrived at the same conjecture in
1947~\cite{mahler1947minimum}.

\begin{conjecture} [Reinhardt \cite{reinhardt1934dichteste}, Mahler \cite{mahler1947minimum}]
\index{conjecture!Reinhardt}\index{Reinhardt!conjecture}
The smoothed octagon achieves the least greatest packing density among all
other centrally symmetric convex disks in the plane. Its density is
given by
\begin{equation}\label{eqn:density-formula}
\inf_{K \in \Kccs} \delta(K) 
= \frac{8 - \sqrt{32} - \ln 2}{\sqrt{8} - 1} \approx 0.902414.
\end{equation}
\mcite{MCA:7481306}
\end{conjecture}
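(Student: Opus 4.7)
The plan is to recast this minimax optimization as a problem in optimal control theory on a Lie group acting on the space of centrally symmetric convex disks modulo area-preserving affine transformations. The starting observation is that the greatest packing density $\delta(K)$ of a centrally symmetric convex disk equals the ratio $\op{area}(K)/\op{area}(\Lambda_K)$, where $\Lambda_K$ is a critical lattice for $K$. As the boundary of $K$ is traversed, an associated curve in the group can be described by a control system, so that boundary curves of disks in $\Kccs$ correspond to admissible trajectories. Minimizing $\delta(K)$ over $\Kccs$ then becomes minimizing a cost functional over admissible controls.

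First I would apply Pontryagin's maximum principle to characterize minimizing trajectories. The control constraint set is compact and convex, and a convexity analysis of the Hamiltonian should show that extremals are either singular along the interior or bang-bang on the boundary of the constraint set. Second, I would identify the geometric meaning of bang-bang arcs: between switches, the boundary $\partial K$ is an arc of a hyperbola (the smoothing arcs), and the switching points correspond to the rounded corners of a polygon. Thus bang-bang extremals with finitely many switches are precisely smoothed polygons, and singular arcs — if they occur on a minimizer — must be ruled out as suboptimal. This would yield Mahler's First conjecture: the most unpackable disk is a smoothed polygon.

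The hard part, and the main obstacle, is ruling out \emph{chattering} extremals, in which the control switches infinitely often on a finite interval. Geometrically these would correspond to smoothed ``polygons with infinitely many sides'' that evade any direct enumeration argument. My plan is to introduce the Poincar\'e first recurrence map of the switching dynamics, blow up the singular locus where chattering trajectories accumulate, and then classify the fixed points together with their stable and unstable manifolds and basins of attraction on the blowup. The goal is a structural statement that any trajectory entering the basin of the singular set would violate necessary optimality conditions; the algebraic identities needed for this should be amenable to computer-assisted verification in a computer algebra system.

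Once chattering is excluded, the minimizer must be a smoothed $2k$-gon for some finite $k \ge 3$ (central symmetry forces an even number of sides, and $k=2$ degenerates). The final step is to compute the packing density $\delta$ of the smoothed $2k$-gon as a closed-form function of $k$, and show by monotonicity or direct comparison that this function attains its infimum at $k=4$, giving the smoothed octagon and the stated value $(8-\sqrt{32}-\ln 2)/(\sqrt{8}-1)$. I expect the first two and the last step to be comparatively tractable modulo technical bookkeeping; the chattering analysis is where the novel difficulty lies.
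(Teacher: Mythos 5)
The statement you are trying to prove is a \emph{conjecture}, and the paper does not prove it. The paper explicitly says ``As of 2024, the full Reinhardt conjecture is still beyond our immediate reach, having remained open since 1934,'' and ``Although this book does not succeed in resolving the Reinhardt conjecture\ldots.'' What the paper \emph{does} prove is Mahler's First conjecture (Theorem~\ref{thm:mahler}): the minimizer is \emph{some} finite-sided smoothed polygon. It does not identify that polygon as the smoothed octagon.

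Your first three paragraphs correctly sketch the paper's actual route to Mahler's First: reformulate the problem as an optimal control problem on (a bundle over) $\SL(\R)$ via the critical-hexagon formula $\delta(K)=\op{area}(K)/\op{area}(H_K)$, apply Pontryagin's principle, identify bang-bang arcs with hyperbolic smoothing arcs and straight edges, rule out singular arcs, and then confront chattering by analyzing the Fuller system, blowing up at the singular locus, and classifying fixed points and their stable and unstable manifolds under the Poincar\'e first-recurrence map. That is indeed the argument of Parts~\ref{part:circular} and~\ref{part:mahler}.

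The gap is in your final paragraph, and it is not a bookkeeping gap. You write that, once chattering is excluded, ``the minimizer must be a smoothed $2k$-gon for some finite $k\ge3$'' and that comparing densities over this one-parameter family finishes the job. This assumes a \emph{complete enumeration} of the finitely-switching periodic Pontryagin extremals, which the paper does not have. What Theorem~\ref{thm:6k+2} and Section~\ref{sec:6k+2-gons} construct is \emph{one} family of periodic extremals (the $6k+2$-gons, obtained by imposing a strong transversality condition that forces all switching times to be equal); among that family, comparing costs as in Figure~\ref{fig:cost} does show the octagon is best. But the transversality condition $g(t_f)=R$, $X(t_f)=R^{-1}X_0R$ does \emph{not} force equal switching times or a cyclic switching sequence, and nothing in the paper rules out other bang-bang periodic extremals with more complicated, asymmetric switching patterns. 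The paper is explicit: ``No other extremals have been found, but we have no proof that no other extremals exist. In light of our results, a proof that no other extremals exist would complete a proof of the Reinhardt conjecture.'' So the step you mark as ``comparatively tractable'' is precisely where the conjecture remains open; eliminating chattering (the part you call the ``main obstacle'') is the hard step the paper \emph{does} carry out, yielding Mahler's First, while classifying all remaining bang-bang extremals is the unresolved step separating Mahler's First from the Reinhardt conjecture itself.
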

\index[n]{0=@$\approx$, approximate equality}

\begin{figure}[ht]
  \centering
  \includegraphics{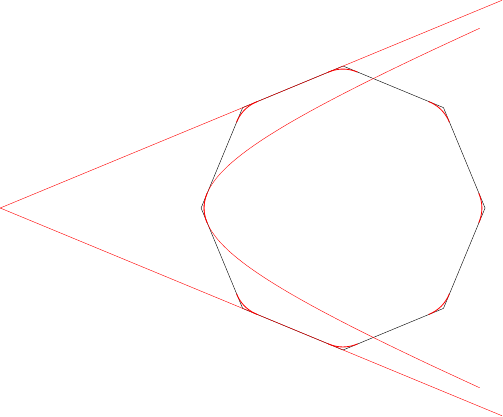}
  \caption{Construction of the Smoothed Octagon starting from a regular octagon. 
  The hyperbolic arcs clipping each vertex are shown in red. }
  \label{fig:smoothed-construction}
\end{figure}
  
The smoothed octagon is constructed by clipping the vertices of a
regular octagon by hyperbolic arcs which are tangent to the two edges 
at the vertex, and whose asymptotes pass through two further edges of the octagon, as shown in Figure~\ref{fig:smoothed-construction}.
The density formula appears in Fejes T\'oth~\cite[p.106]{toth2023lagerungen}.
A calculation of the density formula appears in an example
before Theorem~\ref{thm:6k+2}.

\section{History of the Reinhardt Problem}

The earliest mention of the Reinhardt problem is as Problem 17 in \S27
of a 1923 book by Wilhelm Blaschke,\index{Blaschke, Wilhelm} where it
is called \emph{Courant's
  conjecture}\index{conjecture!Courant}~\cite{blaschke1945differentialgeometrie}, 
  stating that the worst of all centrally symmetric convex
disks is the circular disk (whose greatest packing density in the
plane is ${\pi}/{\sqrt{12}}$).

\index[n]{zp@$\pi=3.14\ldots$}

Less than a decade later, Richard Courant's conjecture was shown to be
false by Reinhardt in his 1934 article, by his construction of the
smoothed octagon.  In his article, Reinhardt also proved fundamental
results on the existence, structure, and regularity of a minimizer.
The title and motivation for Reinhardt's article came from
Minkowski's\index{Minkowski,~Hermann} work from 1904 on the lattice
packings of convex bodies~\cite{minkowski1904dichteste}.  Reinhardt
wrote,
\begin{quote}
\emph{``Bei unseren Bereichen kommt diejenige
Figure in Betracht, welche aus einem regelm\"a\ss igen Achteck ensteht,
wenn man jede Ecke durch diejenigen Hyperbel abschneidet, die die
beiden austo\ss enden Seiten ber\"uhrt, und die beiden wieder an diese
grenzenden Seiten zu Asymptoten hat''}~\cite[p.~230]{reinhardt1934dichteste}. 

\smallskip
Among our regions, that
figure comes into consideration which arises from a regular octagon, if
one cuts off each corner with that hyperbola which
is tangent to the two outgoing sides, and
again has the two sides bordering on these as asymptotes. (Compare
Figure~\ref{fig:smoothed-construction}.)
% extruding changed to outgoing 4/20/2024.
\end{quote}
% Bei unseren Bereichen kommt diejenige Figure in Betracht, welche aus
% einem regelmaessigen Achteck ensteht, wenn man jede Ecke durch
% diejenigen Hyperbel abschneidet, die die beiden austossenden Seiten
% beruehrt, und die beiden wieder an diese grenzenden Seiten zu
% Asymptoten hat.

%% DONE[TCH 4/20/2024. Context added] 
% In what way did Mahler use calculus of variations?  Did he
%% write down the Euler-Lagrange equations and solve them?
More than a decade later, Kurt Mahler\index{Mahler, Kurt} was also led
to the smoothed octagon in a series of articles in 1946--47. Mahler's
first article used the calculus of variations to refute Courant's
conjecture by proving the existence of a convex disk whose packing
density was worse than the circular disk~\cite{mahler1947minimum}. In
this paper, Mahler formulates the packing problem, considers a
parameterized family of convex domains adapted to this problem, and
writes down necessary conditions these domains should satisfy.  Making
the assumption that the boundary is sufficiently smooth, and by
disregarding the convexity constraint, he shows that the only solution
to the Euler-Lagrange equation is a circle, up to affine
transformation.  He then takes a second variation of the circle to
show that it is not second-order optimal.  In this way, he learns that
the convexity condition cannot be disregarded.  Our treatment of the
circle is similar to his~\cite[\S5.1,\S5.2]{hales2011}.  Like Mahler,
we use parameterizations in $\SL(\R)$.

\index{Euler-Lagrange equation}
\index{calculus of variations}

In the same article, Mahler makes the final remark:
\begin{quote}
    \textit{It seems highly probable from the convexity condition, that the
    boundary of an extreme convex domain consists of line segments and
    arcs of hyperbolae. So far, however, I have not succeeded in
    proving this assertion.}
\end{quote}
\index{smoothed octagon}

We refer to this final remark as \emph{Mahler's First conjecture:} the
most unpackable centrally symmetric convex disk is a smoothed polygon.

In a follow-up article, Mahler gives an explicit construction of the
smoothed octagon~\cite{mahler1947area}. The term \emph{smoothed octagon}
appears explicitly in a later article by Mahler and Ledermann in
1949~\cite{ledermann1949lattice}. Although we may tend to cite
Mahler more frequently than Reinhardt, and although they worked from
different perspectives, we wish to make it clear that priority for many 
early results belongs to Reinhardt.  

Further progress was achieved by V. Ennola in 1961 and Paul Tammela in
1969 where they showed that $\inf_{K \in \Kccs} \delta(K)\ge
0.8926...$ ~\cite{ennola1961lattice}~\cite{tammela1969estimate}.
Fedor Nazarov proved that the smoothed octagon is a local minimum in
the space of convex disks equipped with the Hausdorff
metric~\cite{nazarov1988reinhardt}.  Discussions of the Reinhardt
conjecture appear in the books by J\'{a}nos Pach and Pankaj Agarwal
and by L. Fejes T\'oth
~\cite{pach2011combinatorial}~\cite{toth2013lagerungen}.  Hales's
earlier work treats the Reinhardt problem as a problem in the
calculus of variations~\cite{hales2011}.

As of 2024, the full Reinhardt conjecture is still beyond our
immediate reach, having remained open since 1934.  However, we firmly
believe that optimal control theory is the proper framework for the
study of this conjecture.  This book uses optimal control theory to
give a proof of Mahler's First conjecture.

\section{Book Summary}

This book is an extension of a 2017 preprint of Hales in which the
Reinhardt problem is reduced to an optimal control problem on the
tangent bundle of the Lie group $\SL(\R)$~\cite{hales2017reinhardt}.
The book also grows out of the 2022 PhD thesis of Vajjha, which
considerably extends the theoretical
framework~\cite{vajjha-phdthesis}.  We include all the results from
that preprint and thesis, and we carry the program much further still.

As we show, the Reinhardt optimal control problem has a remarkable
amount of structure with deep connections with hyperbolic geometry,
Hamiltonian mechanics and the theory of chattering control.  It is our
belief that the Reinhardt conjecture has now been transformed from an
impossible problem to a difficult, but approachable one.

\index{Mahler's First conjecture}
\index{multi-curve}

In Part I of the book, we recall Reinhardt's and Mahler's results,
which will be essential for the construction of our control problem.
In the formulation of the control problem, properties proved by
Reinhardt himself in 1934 play an essential role.  As an example,
Reinhardt proved that the boundary of the minimizer is described by
six points moving to generate six curves, which close up seamlessly
into a single simple closed curve.  The origin and any three
of these consecutive points form a parallelogram whose area 
remains fixed as the three points move around the
boundary. This is shown in Figure \ref{fig:multi-curve-oct}.  The six
curves (with centrally symmetric pairs colored similarly) 
form a \emph{multi-curve}, in a sense made precise in Definition~\ref{def:multi-pt-multi-curve}.

\begin{figure}[!htbp]
\centering

\subfloat{\includegraphics[width=0.33\textwidth]{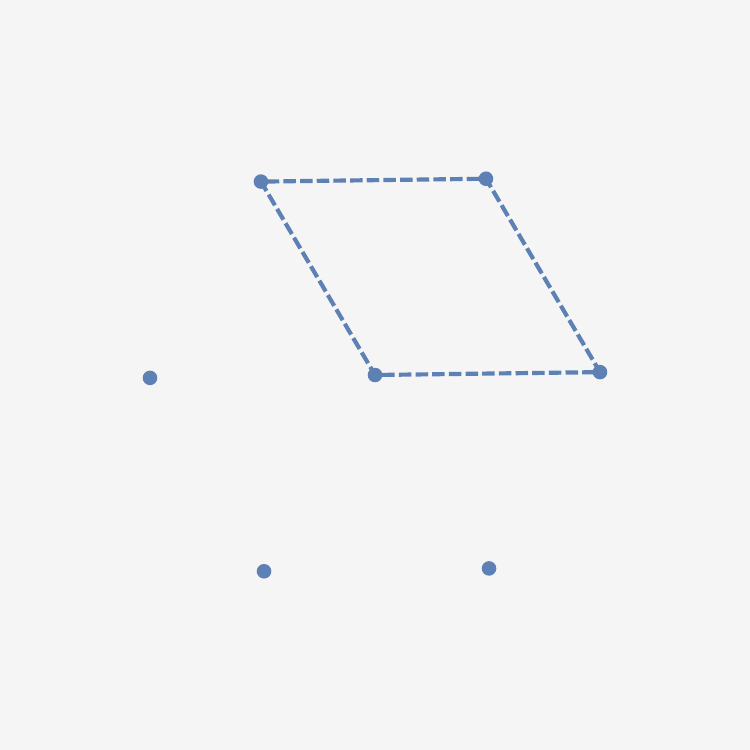}}
\subfloat{\includegraphics[width=0.33\textwidth]{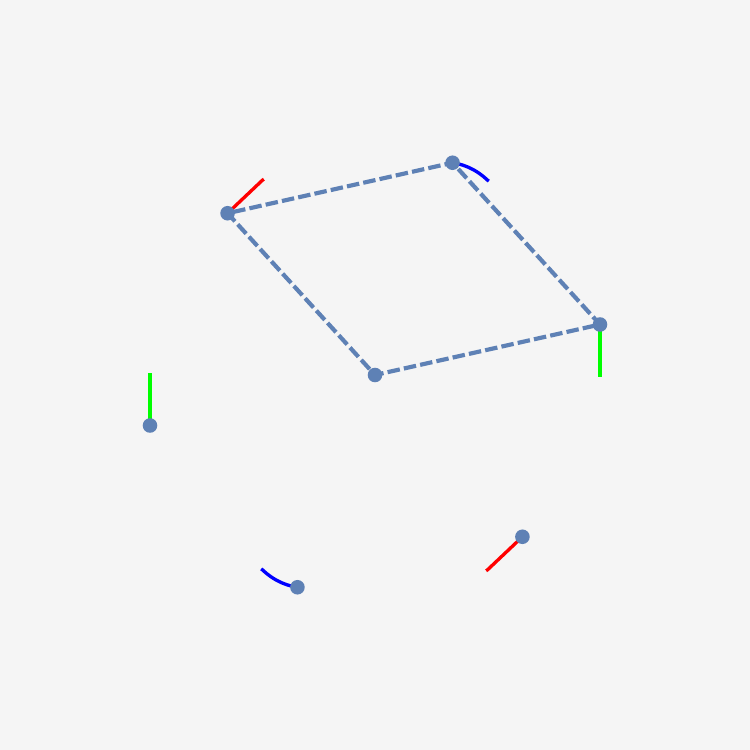}}
\subfloat{\includegraphics[width=0.33\textwidth]{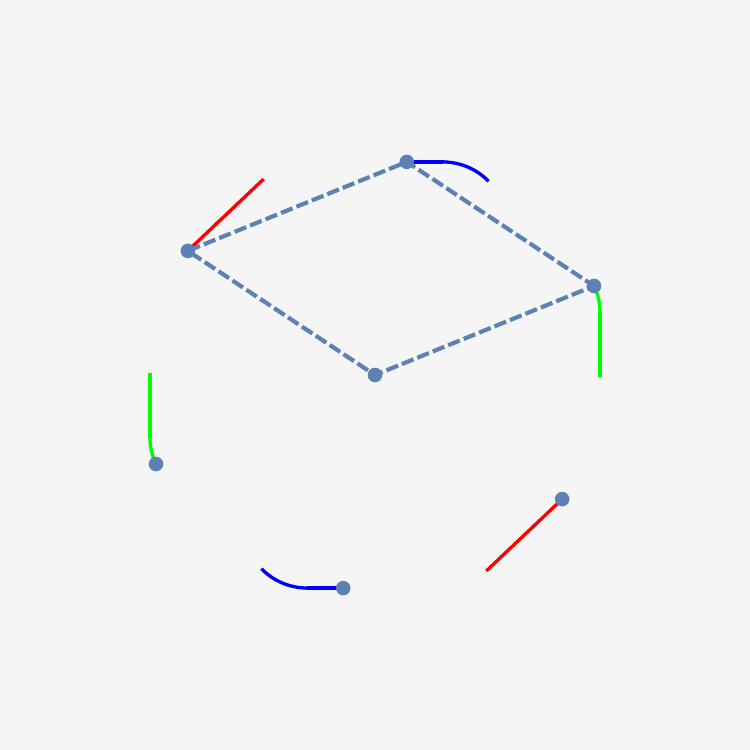}}

\subfloat{\includegraphics[width=0.33\textwidth]{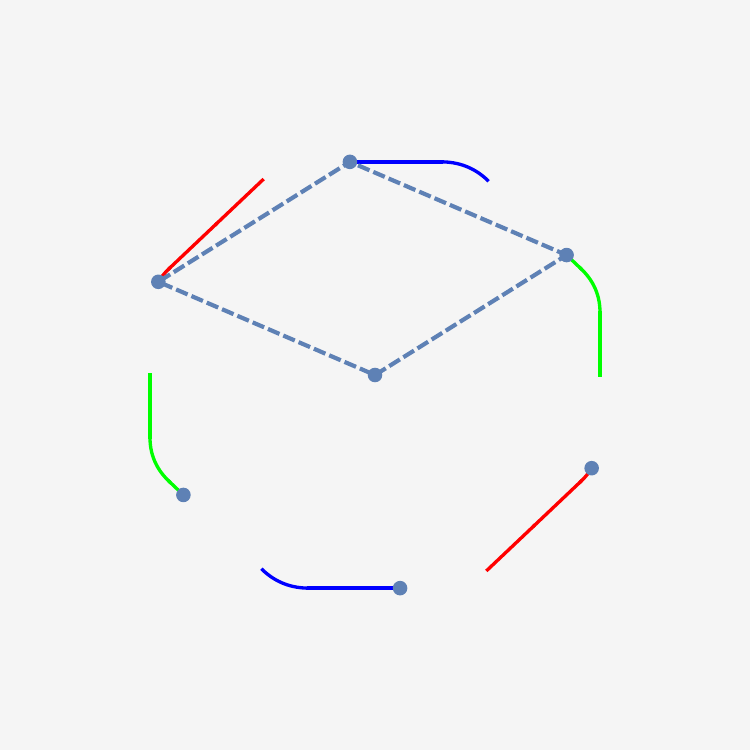}}
\subfloat{\includegraphics[width=0.33\textwidth]{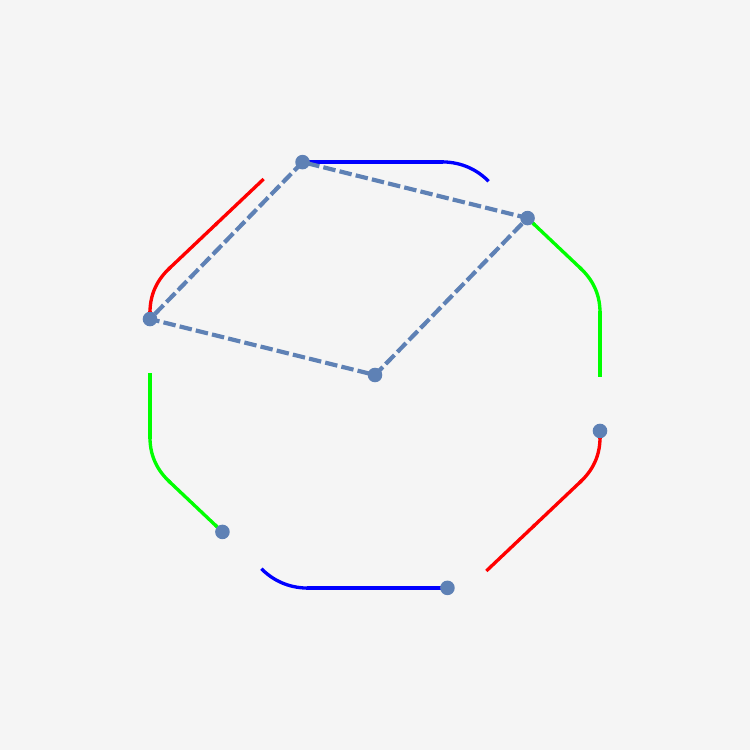}}
\subfloat{\includegraphics[width=0.33\textwidth]{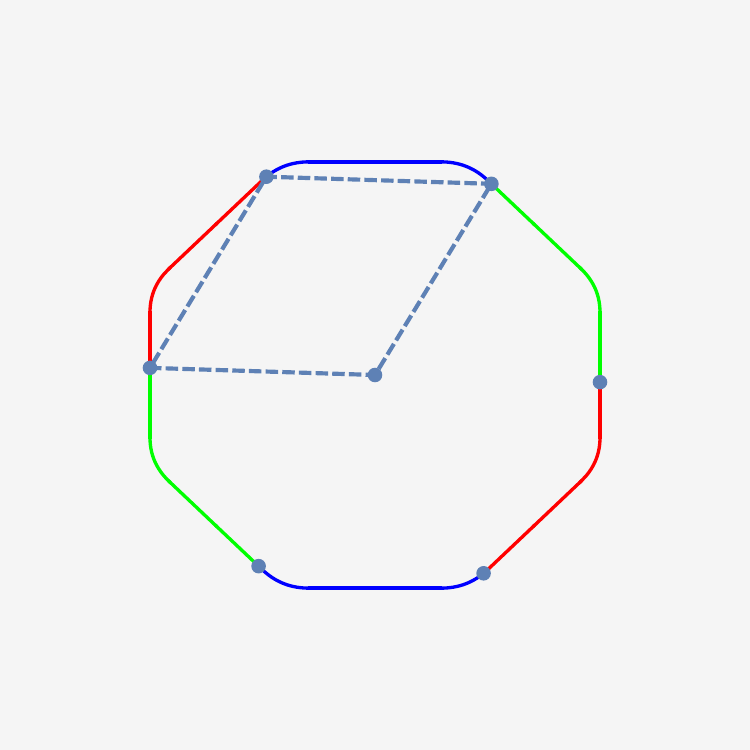}}
\caption{Multi-curves generating the smoothed octagon.  The parallelograms
all have the same area.}\label{fig:multi-curve-oct}

\end{figure}

The points move along the boundary curves in a way that yields the
convexity of the enclosed disk. Convexity is imposed \textit{locally}
via a local curvature nonnegativity condition and \textit{globally} via
conditions on the tangents to the six curves. The curvatures
of these curves play a role in determining the packing density of the
resultant disk in the plane. The control problem reformulation takes
all these conditions into account. 

A noteworthy feature of our control problem is that the set of
controls is the standard two-simplex in $\R^3$. Each point in the
control set can be viewed as a normalized ordered triple of
curvatures, which are used to specify the planar curvatures of the six
curves describing the boundary of a minimizer $K$.  The bounding edges
of the two-simplex constrain the planar curvatures to be nonnegative,
enforcing the convexity of $K$.  We prove an analogue of the
Frenet-Serret formulas, showing that the six curves are determined
from the curvature control function, by solving a second order
ordinary differential equation with initial values. These differential
equations appear in what we call the \emph{state equations} of the
Reinhardt control problem~(equations \eqref{eqn:trisystem-g} and
\eqref{eqn:trisystem-X} in
problem~\ref{pbm:reinhardt-optimal-control}).

\index{state equations}

\index{Frenet-Serret formula}
\index[n]{SL@$\mathrm{SL}_n$, special linear group}

The six curves describing the boundary of a minimizer $K$ can be
generated in a uniform way from a single curve, taking values in the
Lie group $\SL(\R)$ of $2\times2$ matrices with real entries and
determinant one.  The control problem will occur naturally on a
manifold that is closely related to this Lie group (its tangent
bundle).  \index{coadjoint orbit} \index{hyperbolic!geometry}
\index{upper-half plane}

Symmetry is visible throughout this book and plays an important
role. The control problem alluded to above is a left-invariant control
problem on a Lie group, and it is well known that such problems admit
a reduction in dynamics to coadjoint orbits in the corresponding Lie
algebra~\cite{jurdjevic_1996}.  The Poincar\'{e} upper-half plane (a
well-known model of hyperbolic geometry) and its invariant metric also
appear in a natural way -- being symplectomorphic to this coadjoint
orbit. Lemma~\ref{lem:Riemannian-speed} shows that many important
trajectories of the control problem have constant speed with respect
to the Riemannian metric on the upper-half plane.

Another prominent symmetry is the discrete dihedral symmetry of the
equilateral triangle (expanded on in Section~\ref{sec:dihedral})
arising from the standard two-simplex, which is our control set.
Using the isomorphism between $\SL(\R)$ and the special unitary group
$\SU$, we transfer the dynamics to the hyperboloid model of the
hyperbolic plane.  In the hyperboloid model, the symmetries take a
particularly nice form.

Part II of this book explores the \emph{Reinhardt optimal control
problem} \ref{pbm:reinhardt-optimal-control}, and highlights its
remarkable structure and these connections with hyperbolic geometry and
Hamiltonian mechanics.
\index{Reinhardt!optimal control problem}

The state space of the control problem is unbounded.  In
Chapter~\ref{sec:compactification}, the state space is compactified by
cutting down the region of interest to a compact set containing the
global minimizer $K$.  This compactification is achieved by giving a
geometric interpretation to trajectories that stray outside the
compact set.  In a sense that we make precise, such trajectories
correspond to convex disks that are approximately parallelograms.  Any
such approximate parallelogram yields a packing in the plane with high
density.  In particular, it cannot have the worst greatest packing
density, and thus it can be ruled out.  Our motivation for
compactifying the state space has been to make the Reinhardt control
problem more amenable to numerical computer experiments and possibly
also more amenable to computer-assisted proof.

\index{PMP@Pontryagin Maximum Principle}
\index{PMP, Pontryagin Maximum Principle}

First order necessary conditions for optimality of an optimal control
problem are given by the Pontryagin Maximum Principle (PMP) which
states that the optimal trajectory is given by a projection of the
lifted controlled trajectory (living in the cotangent bundle of the
underlying manifold)~\cite{zelikin2004control}.  This means that our
control problem is a \textit{higher-order} variational problem since
it involves the cotangent bundle of the tangent bundle of a Lie
group~\cite{colombo2014higher}.  The lifted controlled trajectory is
the Hamiltonian flow of the \textit{maximized
  Hamiltonian}\index{Hamiltonian!maximized}, which is the pointwise
maximum of a control-dependent Hamiltonian function
\index{Hamiltonian!control-dependent} on the control set.

\index{bang-bang}
% \index{control, bang-bang}\index{bang-bang}

A control function is said to be \emph{bang-bang} if its range is
contained in the set of extreme points of the control set, with
discontinuous switching.  In our setting, the extreme points of the
control set are the vertices of the two-simplex. Critical points of
control problems are frequently given by bang-bang controls.  The
smoothed octagon is an example of an explicit solution to the
Reinhardt optimal control problem with bang-bang control.  In earlier
research, we viewed the Reinhardt conjecture as a problem in the
calculus of variations~\cite{hales2011}.  However, one of our primary
reasons to reformulate the conjecture as a problem in optimal control
theory is the bang-bang behavior of the smoothed octagon.  This
behavior can be seen geometrically in the way its boundary switches
suddenly between linear segments and segments that are as curved as
possible at the rounded corners.  This insight explains its shape.
Lemma~\ref{lem:hyp-arc} shows that the rounded corners of the smoothed
octagon are indeed as highly curved as possible, subject to the
constraints of the problem.

In Part III, we construct explicit solutions to the Reinhardt optimal
control problem \ref{pbm:reinhardt-optimal-control}.  A bang-bang
control function with finitely many switches always produces a
smoothed polygon, and a smoothed polygon with the corner-rounding of
sort considered by Reinhardt and Mahler has a bang-bang control
function with finitely many switches.  In particular, in Section
\ref{sec:6k+2-gons}, the smoothed octagon is shown to be a critical
point of the optimization problem (a Pontryagin extremal)\index{Pontryagin extremal} given by a
bang-bang control.  More generally, for each $k=1,2,\ldots$,
Theorem~\ref{thm:6k+2} constructs a smoothed $6k+2$-gon that is a
Pontryagin extremal of the Reinhardt optimal control problem. The
associated control is bang-bang.  As $k$ tends to infinity, the
smoothed $6k+2$-gon converges to the circle, which is also a
Pontryagin extremal, but its control function is not bang-bang.  Among
this explicit list of extremals, the smoothed octagon has the worst
greatest packing density.

If we examine the initial and terminal conditions of the optimal
control problem, we find that the boundary conditions are periodic
modulo a rotation by angle $\pi/3$.  We can use the rotational
symmetry of the boundary conditions to extend every extremal
trajectory to a periodic orbit with a discrete rotational symmetry.
In this way, the global minimizer of the Reinhardt control problem can
be viewed as a periodic orbit of the dynamical system.  Because of
this, the research focus should be on the periodic Pontryagin
extremals.  For example, if we could classify the periodic extremals,
then the global minimizer could be picked out from among them.

%% DONE[TCH:4/26/2024. OK as is.] OK not to hazard a guess? Leave as a
%% question or possibility
%Some discussion of this question appears in the final chapter.
No other extremals have been found, but we have no proof that no other
extremals exist.  In light of our results, a proof that no other
extremals exist would complete a proof of the Reinhardt conjecture.
However, we do not hazard a guess about whether other extremals might exist.

\index{singular extremal}
\index{abnormal extremal}
\index{edge!control problem}

\emph{Singular arcs} in optimal control problems arise when the
maximization condition in the Pontryagin maximum principle fails to
determine a unique control over an interval of time. In such a case, a
face of the control two-simplex can be found such that the entire face
satisfies the maximization condition (by Lemma
\ref{lem:control-face-lemma}).  When that face is an edge of the
two-simplex, an anomalous situation occurs. Abnormal Pontryagin
extremals exist, but those abnormal extremals are obviously spurious
solutions.  In this case, we modify the control problem slightly to
form what we call the \emph{edge control problem}.  In the modified
control problem, the spurious extremals disappear, and every
Pontryagin extremal of the edge control problem has a bang-bang
control with finitely many switches.  This is Theorem~\ref{thm:edge}.

\index{face}

As we have just mentioned, associated to each
singular arc is a maximizing face of the control two-simplex.  That
face can be the entire control simplex.  
In this case, 
Theorem~\ref{thm:circle-iff-singular} gives a characterization of
singular extremals. The result states that up to an affine
transformation, the arc of a circle (generating the circular disk $K$
in the plane) is the unique such singular extremal of the Reinhardt optimal
control problem.  Although the arc of a circle is an extremal in the
sense of Pontryagin, it does not satisfy the necessary second order
conditions to be a global minimizer.
Theorem~\ref{thm:no-singular-arcs} proves that the global minimizer
contains no singular arcs.

% TCH 4/20/2024. Reworded because it sounded strange to define the
% singular locus as containing the singular arcs, when none exist.
The region of state space where singular behavior might appear is
called the \emph{singular locus}.  As mentioned, the global minimizer
does not remain in the singular locus during any positive time
interval.  Nevertheless, it is possible for trajectories to approach
the singular locus, without remaining in the locus during a positive
time interval.  Extremals that completely avoid the singular locus
have a simple form, as follows.

\begin{theorem*}[\ref{thm:finite-bang-bang}]
    % [Hales~\cite{hales2017reinhardt}]
Every Pontryagin extremal of the Reinhardt control problem which does
not meet the singular locus is given by a bang-bang control
function with finitely many switches.
\end{theorem*}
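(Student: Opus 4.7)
The plan is to exploit the affine-linearity of the control-dependent Hamiltonian in $u$ together with the Poisson-bracket structure of the Hamiltonian flow. By the formulation of Problem~\ref{pbm:reinhardt-optimal-control}, the control-dependent Hamiltonian has the form $H(p,q,u)=\sum_{i=1}^{3} u_i H_i(p,q)$ with $u$ in the standard 2-simplex $\Delta^2$, and the PMP maximization condition selects the face of $\Delta^2$ on which $u \mapsto \sum_i u_i H_i(p,q)$ is constant and maximal. Write $\phi_{ij}:=H_i-H_j$ for the three switching functions.

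\emph{Bang-bang step.} By the definition of $\Ssing$ via Lemma~\ref{lem:control-face-lemma}, a maximizing face of positive dimension can persist along the flow only where $(p,q) \in \Ssing$. Hence off $\Ssing$ the maximum is attained at a unique vertex $e_{i^*}$, and the Pontryagin control is bang-bang almost everywhere, with switches only at isolated instants where two $H_i$ coincide.

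\emph{Finiteness step.} Suppose for contradiction an extremal avoids $\Ssing$ but has infinitely many switches. Since the extremal is defined on a compact interval the switch times accumulate at some $t^\ast$. By the pigeonhole principle some pair, which we may take to be $(1,2)$, is exchanged at infinitely many switches $t_k \to t^\ast$, so $\phi_{12}$ vanishes on this accumulating sequence; continuity of $(p,q)$ and of the $H_i$ gives $\phi_{12}(p(t^\ast),q(t^\ast))=0$. On any interval on which the control is constant at $e_k$, the flow is generated by $H_k$, so
\begin{equation*}
\dot\phi_{12} \;=\; \{H_1-H_2,\,H_k\},
\end{equation*}
and the identities $\{H_1-H_2,H_1\} = \{H_1-H_2,H_2\} = \{H_1,H_2\}$ show that whenever the control alternates purely between $e_1$ and $e_2$ the function $\dot\phi_{12}$ extends continuously across switches and equals $\{H_1,H_2\}(p(t),q(t))$ on the union of such sub-intervals. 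On each such constant-control segment, $\phi_{12}$ starts and ends at zero and has fixed sign dictated by maximality, so Rolle's theorem produces an interior zero of $\{H_1,H_2\}$. Passing to the limit along a subsequence $s_k \to t^\ast$ and using continuity, $\{H_1,H_2\}(p(t^\ast),q(t^\ast)) = 0$. The simultaneous vanishing $\phi_{12} = \{H_1,H_2\} = 0$ is exactly the condition characterizing the edge-$\{e_1,e_2\}$ component of $\Ssing$, contradicting the hypothesis.

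The principal obstacle is the case in which the accumulating switches interlace all three vertices, so that the clean alternation $e_1 \leftrightarrow e_2$ is interrupted by $e_3$-segments on which $\dot\phi_{12} = \{H_1,H_3\} - \{H_2,H_3\}$ rather than $\{H_1,H_2\}$. I would dispatch this with a refined pigeonhole selecting an ordered triple of consecutive switches $(e_i,e_k,e_j)$ occurring at infinitely many accumulating instants, running the Rolle argument on the corresponding subsequence with the appropriate bracket, and using real-analyticity of the $H_i$ on each constant-control segment to rule out pathological limits. Organizing this case analysis so that every accumulation pattern forces at least one pair $\phi_{ij} = \{H_i,H_j\} = 0$ at $t^\ast$ is the main technical work.
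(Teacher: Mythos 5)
Your proposal has two linked misconceptions that make it break down at the critical step, and one foundational inaccuracy.

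\textbf{The singular locus is misidentified.} You take $\Ssing$ to be the locus where the Hamiltonian's maximizing face on $U_T$ has positive dimension, so that off $\Ssing$ the control is pinned to a unique vertex. But in this problem $\Ssing$ is the specific, high-codimension set $\{(g,\Lambda_1,X,\Lambda_R) = (g_0,\tfrac{3}{2}\lambda_{cost}J, J, 0)\}$. Off $\Ssing$, Lemma~\ref{lem:LambdaR} only gives that the maximizing face is a \emph{vertex or an edge}; edge-singular arcs, where the optimal control ranges over an edge for a positive time interval, are not a priori excluded. Your ``bang-bang step'' is therefore unjustified, and it is precisely this edge-singular possibility that makes the theorem hard. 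The paper handles it via Theorem~\ref{thm:avoid-singular} (which, with Gronwall estimates on $\Lambda_R$, confines the maximizers to a single edge near each time $t_0$ off $\Ssing$) followed by the separate edge control problem (Theorem~\ref{thm:edge}), and the actual version of Theorem~\ref{thm:finite-bang-bang} carries an extra hypothesis of edge-extremality that the informal statement omits — see Proposition~\ref{prop:edge-extremal} and Remark~\ref{rem:edge-abnormal} for why that hypothesis is there.

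\textbf{The Rolle/Poisson-bracket argument cannot close.} Even granting bang-bang structure, your accumulation-point argument yields at best two conditions at $t^\ast$: $\phi_{12}=0$ and $\{\H_1,\H_2\}=0$. You then assert this is ``exactly the condition characterizing the edge-$\{e_1,e_2\}$ component of $\Ssing$,'' but $\Ssing$ has no such edge-indexed decomposition and is cut out by far more equations (in particular $\Lambda_R=0$). So the claimed contradiction with the hypothesis ``does not meet $\Ssing$'' does not follow. The paper's finiteness step instead runs through Lemma~\ref{lem:lambda2-Landau}, a Landau-$O$ estimate showing that zeros of the edge switching function $\lambda_2$ are isolated with a controllable multiplicity $n$ and nonzero leading coefficient $C$, together with the monotonicity of $x$ along edge trajectories. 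Your interlacing case (which you correctly flag as the main difficulty) is exactly the regime where a straightforward Rolle argument on a single pair falls apart; the paper avoids this by localizing to a single edge before counting switches.

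\textbf{Minor but worth fixing.} The control-dependent Hamiltonian is not $\sum_i u_i H_i(p,q)$: it is a ratio of two affine functions of $u$ (this is why Lemma~\ref{lem:control-face-lemma} is a nontrivial fact rather than an immediate observation). Your Poisson-bracket identities do survive if you reinterpret $H_i$ as $\H(\cdot,\ep_i)$ evaluated at the vertices, but the opening line asserting affine dependence is false as stated.
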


In terms of the centrally symmetric convex domain $K$, this theorem
implies that any such extremal is a smoothed polygon whose corners are
rounded by hyperbolic arcs, according to Reinhardt's corner-smoothing
procedure.  Thus, if the global minimizer avoids the singular locus,
then the global minimizer is a smoothed polygon.  We eventually show
that the global minimizer does indeed avoid the singular locus, and
this yields a proof of Mahler's First conjecture,
which is the main result of the book.

\begin{theorem*}[Mahler's First~\ref{thm:mahler}]
The global minimizer of the Reinhardt optimal control problem is a
bang-bang solution with finitely many switches.  In particular, the
minimizer $K_{\min}$ of the Reinhardt problem is a finite-sided
smoothed polygon with rounded hyperbolic arcs at each corner of the
sort described by Reinhardt and Mahler.
\end{theorem*}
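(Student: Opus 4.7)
The plan is to reduce the statement to Theorem~\ref{thm:finite-bang-bang}, which already establishes the desired bang-bang structure for any Pontryagin extremal disjoint from the singular locus. Since the global minimizer is itself a Pontryagin extremal (by the Pontryagin Maximum Principle applied after compactification of the state space, Chapter~\ref{sec:compactification}), it suffices to show that the trajectory of $K_{\min}$ never meets the singular locus. By Theorem~\ref{thm:no-singular-arcs}, the minimizer contains no singular arc, so it cannot lie in the singular locus on any time interval of positive length. Thus the only remaining possibilities to eliminate are (i)~isolated tangential contacts of the trajectory with the singular locus, and (ii)~chattering, i.e.\ an infinite sequence of bang-bang switches whose switching times accumulate, with the accumulation point lying on the singular locus.

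For possibility~(i), I would use the characterization in Theorem~\ref{thm:circle-iff-singular} that the arc of a circle is the unique singular extremal up to affine transformation; combined with the second-order failure mentioned there, tangential contact can be analyzed by a local comparison, replacing a neighborhood of the contact with a nearby bang-bang segment and checking that the cost strictly decreases. The real difficulty is possibility~(ii), chattering, which is precisely the phenomenon the introduction flags as producing ``smoothed polygons with infinitely many sides.'' My approach here follows the program sketched in the introduction: introduce the Poincar\'{e} first recurrence map associated to the bang-bang switching dynamics, and perform a blowup of the state space centered on the singular set so that the induced map extends smoothly to the exceptional divisor.

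On this blowup I would classify all fixed points of the Poincar\'{e} return map, compute their linearizations using the Lie-algebraic structure of the control problem on $\SL(\R)$ together with the dihedral symmetry from Section~\ref{sec:dihedral}, and determine the stable and unstable manifolds and the basin of attraction of each fixed point. Any chattering extremal must, by definition, have its switch points converging to an accumulation point on the singular locus, and hence the associated orbit of the return map must converge to a fixed point on the exceptional divisor; so it must lie in a stable manifold of such a fixed point. I would then show, by integrating the cost along these invariant manifolds and comparing with the bang-bang extremals constructed in Theorem~\ref{thm:6k+2} (in particular the smoothed octagon), that any chattering candidate has cost strictly larger than the smoothed octagon's, contradicting global minimality. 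Combined with the analysis of periodic bang-bang extremals, this forces $K_{\min}$ to avoid the singular locus and hence, by Theorem~\ref{thm:finite-bang-bang}, to be a finite-sided smoothed polygon.

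The main obstacle is unquestionably the chattering analysis: setting up the correct blowup so the return map is well-defined and regular, proving quantitative convergence estimates on stable manifolds, and bounding the cost along chattering orbits sharply enough to rule them out. The dihedral and hyperbolic symmetries of the problem, together with the Riemannian constant-speed property of Lemma~\ref{lem:Riemannian-speed}, should simplify the local normal forms, but the remaining numerical comparisons with the smoothed octagon's density $(8-\sqrt{32}-\ln 2)/(\sqrt{8}-1)$ will almost certainly require the computer-algebra verifications mentioned in the abstract.
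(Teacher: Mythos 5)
Your reduction to Theorem~\ref{thm:finite-bang-bang} and your identification of chattering as the central obstacle are both correct, and your plan to blow up the singular locus, analyze the Poincar\'e first recurrence map, and classify the fixed points with their stable and unstable manifolds matches the paper's machinery. But the way you propose to obtain the final contradiction is genuinely different from the paper's, and it is the weaker of the two. You propose to ``integrat[e] the cost along these invariant manifolds and compar[e] with the bang-bang extremals constructed in Theorem~\ref{thm:6k+2}'' to show that ``any chattering candidate has cost strictly larger than the smoothed octagon's.'' That is a quantitative density comparison which the paper never attempts and which would, if carried out, constitute substantial progress on the full Reinhardt conjecture itself (which the book explicitly leaves open). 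It is not clear that such a bound is provable with the tools at hand, and the Fuller/blowup analysis in the paper does not produce cost estimates of this type. The paper's actual contradiction is structural and does not involve the octagon at all: once Theorem~\ref{thm:no-chatter} places the cluster point at $q_{in}$ and the approach on $W^s(q_{in})$, one applies Theorem~\ref{thm:no-return} (together with time reversal) to conclude that a trajectory entering the singular locus along $W^s(q_{in})$ did not emanate from the exceptional divisor at any earlier time, so it cannot be periodic. Since the transversality conditions force any Reinhardt minimizer to extend to a periodic orbit (mod rotation by $R$), chattering is excluded outright. No comparison of areas with the smoothed octagon is needed; periodicity alone kills the chattering branch.

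A secondary point: your possibility~(i), ``isolated tangential contacts,'' is handled in the paper not by a local cost comparison argument using Theorem~\ref{thm:circle-iff-singular}, but by Lemma~\ref{lem:constant-not-singular}, which shows that an extremal with constant control at a vertex of $U_T$ cannot meet the singular locus at all. Combined with the other results, this means a trajectory cannot touch $\Ssing$ after only finitely many switches, so there is nothing separate to rule out at the level of isolated contacts: either the trajectory avoids $\Ssing$ entirely (bang-bang with finitely many switches) or it chatters. Your ``replace a neighborhood with a bang-bang segment and check the cost decreases'' plan is both more complicated and not needed.
\par
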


One of the most intriguing aspects of the Reinhardt problem is the
behavior of trajectories near the singular locus.  The only way for a
Pontryagin extremal to approach the singular locus is through
\textit{chattering}, which is the term used to denote the phenomenon
when the control function performs discontinuous and increasingly
rapid transitions between extreme points of the control set in order
to approach the singularity~\cite{zelikin2012theory}. They were first studied 
in a problem of A.~T.~Fuller in 1963 \cite{fuller1963study} and were considered
pathological for a time, but were eventually proven to be
\textit{ubiquitous} in a very precise sense by Ivan A. Kupka in the
1990s~\cite{kupka2017ubiquity}. One of the main results of this book
is the recovery of the Fuller optimal control system in a neighborhood
of the singular locus.  To explain this precisely, we first discuss
\emph{circular control} sets.

\index{disk!circular}

An important strategy for us is to change the shape of the control set
from the two-simplex to a circular disk.  Part IV of this book is
devoted to the study of the control problem for a circular control
set. By changing the control set, the control problem changes, and we
are no longer studying the Reinhardt problem in discrete geometry.
However, there are good reasons to investigate the optimal control
problem with a circular control set.

There are a few different ways to imagine the relationship between the
original Reinhardt problem and the modified control problem with
circular control.  First, if we take the circular control set to be
the circumscribing circle of the triangular control, then the modified
optimal control problem is a \emph{relaxation} of the original
Reinhardt optimal control problem.  This means that a lower bound on
the cost in the modified problem should be a lower bound on the cost
of the original problem.%
\footnote{We do not claim this relaxation result as a theorem
  because of technicalities related to the fact that we have not
  extended the compactification result of
  Chapter~\ref{sec:compactification} to the relaxed problem.  The
  issue is that the relaxed problem is defined on a \emph{smaller
    domain} than the original problem, and a global minimizer of
  a relaxed problem on the smaller domain is not \emph{a priori} a
  lower bound to the unrelaxed problem on the full domain.  }
% DONE[TCH 4/19/2024] footnote resolves a reported issue. 

Second, the triangular control is a discretization of the circular
control.  We know that the optimal control function for trajectories that avoid
the singular locus takes values in the set of extreme points of the
control set.  For circular control, each point on the circle is an
extreme point, and the control function is continuous.  For triangular
control, the control function is discontinuous, taking values at the
vertices of the two-simplex.  Hence, the Reinhardt problem can be viewed
as a three-point discretization of the continuous control.

Third, we can view the triangle as a continuous deformation of the
circle.  We can study the properties of the dynamical system for the
circular control set.  We can ask to what extent these properties are
preserved as the circular control is deformed back into a triangular
control set.

Finally, the triangular control is a symmetry breaking of the circular
control problem.  All data used to specify the Reinhardt control
problem have a rotational symmetry except for the control set.  The
modification of the control set to make it circular allows us to
construct a conserved quantity.  This we do by appealing to a
control-theoretic version of the classical Noether
theorem\index{Noether's theorem}, proved by Hector Sussmann
\cite{sussman}.

\begin{theorem*}[\ref{thm:angular-momentum}]
  In the control problem with the circular disk control set, each
  Pontryagin extremal satisfies a conservation law (which we call the
angular momentum).
\end{theorem*}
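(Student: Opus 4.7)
The plan is to apply Sussmann's control-theoretic Noether theorem \cite{sussman} to a continuous symmetry that appears once the control set is replaced by a rotationally symmetric disk. The Reinhardt control problem is a left-invariant control system on (the tangent bundle of) $\SL(\R)$, with the control entering the state equations through a triple of planar curvatures. The triangular control set has only the dihedral symmetry of an equilateral triangle, so only discrete symmetries descend to the control problem. When the control set is instead a circular disk, this discrete symmetry is promoted to a continuous rotation group that acts jointly on the control disk and on the state manifold by right multiplication by a one-parameter subgroup of $\SO$ (equivalently, in the $\SU$ picture, a one-parameter subgroup of the compact torus), with the rotation on the state compensated by a matching rotation of the control vector inside the disk.

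First, I would verify that this combined action is a symmetry of the circular control problem in the sense required by Sussmann's theorem: it must preserve the state equations, the cost functional, the control constraint (the disk is invariant by construction), and the boundary conditions. Left-invariance of the state vector fields together with the rotational invariance of the Riemannian data on the Poincar\'e upper-half plane (Lemma~\ref{lem:Riemannian-speed}) yields invariance of the dynamics, and the cost, being determined by rotationally invariant information on the coadjoint orbit, is preserved as well. Invariance of the maximized Hamiltonian is then automatic from invariance of the control set under the compensating rotation.

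With these invariances in place, Sussmann's theorem produces a conserved quantity along every Pontryagin extremal of the circular control problem: the pairing of the costate $\lambda$ with the infinitesimal generator $\xi$ of the symmetry. Concretely, $\langle \lambda, \xi \rangle$ is constant along each extremal. This is the quantity we call the \emph{angular momentum}: pushed down to the coadjoint orbit (the hyperbolic plane), it is conjugate to a rotational angle coordinate, in direct analogy with classical mechanics.

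The main obstacle will be careful verification of the hypotheses of Sussmann's Noether theorem rather than its invocation. One must check that the rotation action lifts consistently from $\SL(\R)$ to its tangent bundle and further to its cotangent bundle, where Pontryagin extremals actually live (recall the higher-order variational nature of the problem), and that the infinitesimal generator produces a well-defined pairing with the costate that is differentiable along trajectories. A secondary technical point is to treat abnormal and singular extremals uniformly so that the conservation law holds without exception.
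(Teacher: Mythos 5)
Your approach is essentially the same as the paper's: invoke Sussmann's control-theoretic Noether theorem for the continuous rotational symmetry introduced by enlarging the control set to a disk, and read off the conserved quantity from the momentum map, which turns out to be $\bracks{J}{\Lambda_1+\Lambda_R}$. The only real variation is your choice of group action. The paper acts on $\SL(\R)$ by conjugation $g\mapsto\exp(J\theta)\,g\,\exp(-J\theta)$ (with the adjoint action on $X,\Lambda_1,\Lambda_2$ and on $Z_u$), computes the momentum map, and then must argue that an extra term $\bracks{\Lambda_1}{\Ad_{g^{-1}}J}$ is in fact constant using $\Lambda_1(t)=g(t)^{-1}\Lambda_1(0)\,g(t)$; its own remark afterward points out that acting instead by right multiplication, as you propose, would eliminate this spurious additive constant altogether, so your version is marginally cleaner. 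One citation to tighten: the rotational invariance of the cost and state dynamics used here is the content of Theorem~\ref{thm:symmetries}, not Lemma~\ref{lem:Riemannian-speed} (which is about constant hyperbolic speed of constant-control trajectories).
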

The symmetry is broken by the triangular control set, where the
angular momentum is approximately but not exactly preserved.  The
circular control problem with its extra symmetry is a toy model for
the original Reinhardt problem.  We study the modified problem in the
hope that it will lead to useful insights into the Reinhardt
conjecture.

With this background about circular control sets, we return to examine
the trajectories near the singular locus in greater detail.  The
conservation of angular momentum gives us valuable information about
the optimal control. 
% We use this to analyze the behaviour of the
% problem near the so-called \emph{singular locus}.

Using this conservation law, we perform a truncation of the Pontryagin
control system by estimating the magnitude of terms in the system of
equations and then discarding all higher-order terms.  We make the
remarkable discovery that the truncation of our optimal control
problem is precisely the Fuller optimal control problem for a chain of odd
length.  We find inward and outward logarithmic spiral solutions to
the Fuller system, centered at the singular locus.  (In a similar
fashion, we construct triangular inward and outward spirals, when the
control set is the two-simplex.)

We make a complete analysis of the global dynamics of this Fuller
system.  The dynamical system maps onto a simpler dynamical system in
the plane.  In the planar system, there are only two critical points.
One is asymptotically stable and the other is unstable.  Every point
in the plane, except for the unstable equilibrium point, is in the
basin of attraction of the stable critical point.  Going from the
planar system back to the full Fuller system, we find that the only
trajectory that converges to the singular locus is an inward
logarithmic spiral centered at the singular locus.  The only
trajectory that escapes from the singular locus is an outward spiral,
which is unique up to rotational symmetry.  The inward spiral is
unstable, and the outward spiral is stable, so that a trajectory that
is not exactly an inward spiral must necessarily swerve away from the
singular locus, then reapproach an outward spiral.

%% Still working in the context of a circular control set,
%% Chapter~\ref{sec:global-analysis} classifies the relative equilibrium
%% points of the optimal control system.  We show that the state and
%% costate variables all move at the same constant angular frequency.  
We plot some solutions numerically and observe that the solutions
appear to behave chaotically. We conjecture that for certain parameter
values, the trajectories are indeed chaotic.  For this and other
research problems, we refer the reader to Appendix~\ref{sec:problems}.

%% TCH 4/20/2024 added summary of part:mahler
In Part~\ref{part:mahler}, we return to the Reinhardt problem with
triangular control set.  Several further ideas are introduced to give
a proof of Mahler's conjecture.  Blowing up at the singular locus (in
the sense of algebraic geometry) creates an exceptional divisor, which
becomes the focus of attention.  We make a detailed study of the
Fuller system with a triangular control set on the exceptional
divisor.  By restricting the dynamical system to switching times, the
Fuller system becomes a discrete dynamical system whose dynamics are
given by a Poincar\'e first recurrence map.

We find that the discrete dynamical system has several features that
are remarkably similar to features that were found in the toy system
with circular control.  There are exactly two fixed points. One is
stable and the other is unstable.  The two fixed points are related by
a time-reversing symmetry.  The stable fixed point has a global basin
of attraction.  The fixed points can be interpreted as self-similar
spirals in a larger dynamical system that does not factor out
by symmetries.

To prove that the stable fixed point has a global basin of attraction
we introduce an explicit geometric partition of the exceptional
divisor into finitely many compact pieces.  On each piece, the
discrete dynamical system is continuous.  The dynamical system
acquires a block upper triangular form with respect to the geometric
partition.  The strictly upper triangular blocks represent transient
behavior of the dynamical system, and the diagonal terms are localized
around the stable and unstable fixed points.  In this way, the claim
of global stability can be reduced to a statement about local
stability. From a slightly different perspective, the upper triangular
structure can be interpreted as a discrete Lyapunov function with
respect to the geometric partition.

The stable and unstable fixed points for the discrete Fuller dynamical
system are hyperbolic fixed points for the discrete Reinhardt
dynamical system.  On the blowup, the discrete Reinhardt dynamical
system extends by analytic continuation to a neighborhood of the
hyperbolic fixed points.  We study the local stable and unstable
manifolds near the fixed points.  The global stability result (for the
discrete Fuller system) is used to show that a chattering solution to
the Reinhardt dynamical system must approach and depart the blown up
singular locus through the stable and unstable manifolds of the
hyperbolic fixed points.  Explicit calculations show that trajectories
on these stable and unstable manifolds cannot be periodic. However,
the solution to the Reinhardt problem is necessarily periodic. We
conclude that the solution to the Reinhardt problem is not a
chattering solution and does not meet the singular locus.  From this
conclusion, it follows that the solution to the Reinhardt problem is a
smoothed polygon, affirming Mahler's First.

\bigskip

In many ways, the Reinhardt problem is a textbook control problem,
because of the way it employs significant parts of the general theory
in a single problem.  Among other structures, we encounter Lax equations,
control problems on Lie groups, the symplectic structure on coadjoint
orbits, Poisson brackets, Lie-Poisson dynamics, Euler-Arnold
equations, Lyapunov functions, a conserved quantity via the
Noether-Sussmann theorem, singular arcs, chattering, the Fuller system,
bang-bang solutions, and even an ODE without a Lipschitz condition.

Although this book does not succeed in resolving the Reinhardt
conjecture, it is our firm belief that optimal control theory is the
proper framework for understanding this problem.  In particular, the
Reinhardt conjecture is formulated as an entirely explicit control
problem.  This book brings us one step closer to a complete solution.

\section*{Acknowledgements}
We would like to thank Velimir Jurdjevic and Greg Kuperberg for helpful 
discussions. % XX: anyone else we want to thank?

Vajjha would like to dedicate this book to his grandmother Saraswati Mokkapati. 

\newpage
\chapter{Historical Results}

\section{A Statement of the Reinhardt Conjecture}
In this section, we state the Reinhardt conjecture and introduce 
terminology used throughout this book. 

\index[n]{0@$(-,-)$, ordered pair}
\index[n]{KK@$\mathfrak{K}$, set of convex disks!$\Kccs$, centrally symmetric convex}
\index[n]{K@$K$, convex disk}
\index{body, convex}
\index{disk!convex}
\index{symmetric, centrally}
\index[n]{O@$\mathbf{0}$, origin}

We will call a compact, convex set in $\R^n$ with nonempty interior
a \emph{convex body}, and a convex body in $\R^2$
will be called a \emph{convex disk}.  By
a \emph{centrally symmetric} convex disk
in the Euclidean plane, we mean a convex disk $K$ in $\R^2$
such that if $\mb{v} \in K$ then $-\mb{v} \in K$. Here, and throughout this
chapter, we assume the center of symmetry is the origin $\mathbf{0}:=(0,0)$.
We denote by $\Kccs$ the set of all centrally symmetric convex disks in
the plane $\R^2$, which have the origin as the center of symmetry.

A family of convex disks in $\R^2$ is called
a \emph{packing}\index{packing} if any two distinct convex disks in
the family \emph{do not overlap}; that is, they have disjoint
interiors. We can now define the \emph{packing density}\index{density!packing} and \emph{greatest packing density}\index{density!greatest
packing} of a packing in $\R^2$. Intuitively the packing density
corresponds to the proportion of the plane taken up by the packing.

\index[n]{area@$\mathrm{area}$!Lebesgue measure on $\R^2$} 
\index[n]{r@$r$, real number} 

\begin{definition}[Greatest packing density]
Let $B_r$ be a ball of radius $r$ in $\R^2$ centered at the origin, and
let $\op{area}$
be the Lebesgue measure on $\R^2$. The \emph{upper}
and \emph{lower density} of a packing $\packing$ are defined to
be 
\[ 
\limsup_{r \rightarrow \infty} \frac{1}{\op{area}(B_r)}\sum_{K' \in \packing} \op{area}(K' \cap
B_r) 
\quad \mathrm{and}\quad 
\liminf_{r \rightarrow \infty} \frac{1}{\op{area}(B_r)}\sum_{K' \in \packing} \op{area}(K' \cap
B_r) 
\] 
respectively. If they both exist and coincide, the common number is
called the \emph{density of the packing}\index{density!packing}
$\packing$ and is denoted $\delta(K,\packing)$. Given a convex body
$K$ we define the \emph{greatest packing density} as the packing
density formed with congruent copies of $K$:
\[
\delta(K) := \sup \left\{ \delta(K,\packing)\mid 
\delta(K,\packing)\text { exists, and } 
\mathcal{\packing} \text{ is a packing with congruent copies of }K \right\}.
\]
\end{definition}

\index[n]{zd@$\delta$, density!$\delta(K,\packing)$, packing}
\index[n]{L@$\LL$, lattice}

It can be proved that for a convex disk $K$, one can always find a
packing $\mathcal{P}$ such that $\delta(K,\mathcal{P})$ exists and is
equal to $\delta(K)$~\cite[Exercise~3.2]{pach2011combinatorial}.

A \emph{lattice}\index{lattice} is a discrete additive subgroup of
$\R^2$ of full rank.  An important class of packings are \emph{lattice
  packings}\index{packing!lattice}, which consist of lattice
translates of of a convex disk $K$: If $\LL$ is a lattice in $\R^2$
and $K$ is a fixed convex disk, then we consider the \emph{packings of
  translates of} $K$ under $\LL$, provided the translates of $K$ do
not overlap (called the \emph{lattice packing} of $K$).  We write
$K+\LL$ for the packing and write $K+l$, for the lattice translate of
the convex disk $K$, for $l\in\LL$. We can now similarly define the
lattice packing density and greatest lattice packing density.
% TCH 4/20/2024: Technically speaking, K+L is not the Minkowski sum.

\begin{definition}[Greatest lattice packing density]
We define the \emph{upper} and \emph{lower densities} of a lattice
packing $K+\LL$ of congruent copies of a convex disk $K$ to be respectively
\[ 
\limsup_{r \rightarrow +\infty} 
\frac{\sum_{l \in \LL}\op{area}(B_r \cap (K + l))}{\op{area}(B_r)} 
\qquad \mathrm{and} \qquad 
\liminf_{r \rightarrow +\infty} \frac{\sum_{l \in \LL}\op{area}(B_r \cap (K + l))}{\op{area}(B_r)}.
\]

It can be proved that given a convex disk $K$ and a lattice $\LL$,
the upper and lower lattice packing densities of the packing $K+\LL$
(provided that the $\LL$-translates of $K$ have disjoint interiors)
coincide and are both equal to 
\[
\delta(K,\LL)= \frac{\op{area}(K)}{\det(\LL)}
\index[n]{zd@$\delta$, density!$\delta(K,\LL)$, of lattice packing}
\] 
where $\det(\LL)$ is the determinant of the lattice $\LL$ (see
Definition \ref{def:lattice-determinant})~\cite[Corollary~30.1]{gruber2007convex}.

The common value $\delta(K,\LL)$ of the upper and lower densities is
called the \emph{density of the lattice packing}.  The \emph{greatest
lattice packing density}\index{density!greatest lattice packing} is defined as
\[
\delta_L(K) := \sup \{ \delta(K,\LL) \ | \ \LL \text{ a lattice in } \R^2 
\text{ such that }
K + \LL\text{ is a packing }\}.
\index[n]{zd@$\delta$, density!$\delta_L(K)$, greatest lattice}
\]
\end{definition}

\index[n]{K@$K$, convex disk!$K_{sym}$, symmetrization}

If $K$ is a convex disk, let $K_{sym} := \{(\mb{v}-\mb{w})/2 \mid \mb{v},\mb{w}\in K\}$ be its
symmetrization\index{symmetrization}.  Then $K_{sym}$ is a centrally
symmetric convex disk. For a centrally symmetric convex disk $K$, we
have $K =K_{sym}$.  Minkowski made the simple observation that $K+\LL$
is a packing if and only if $K_{sym}+\LL$ is a packing.  In this way,
questions about lattice packings for $K$ can usually be reduced to
corresponding questions for $K_{sym}$.  This led early researchers to
focus on centrally symmetric convex disks.

% Minkowski simple observation: supplementary notes

A key point is the following theorem of L. Fejes T\'{o}th\index{Fejes
T\'oth, Laszlo}, which states that the greatest packing density and
greatest lattice packing densities are actually equal for the class
$\Kccs$~\cite{toth1950some},\cite{toth2013lagerungen},\cite{toth2023lagerungen}.
\begin{theorem}[Fejes T\'{o}th]\label{thm:fejes-toth-packing}
If $K \subset \R^2$ is a \emph{centrally symmetric} convex disk, then  
\begin{equation}\label{eqn:fejes-toth}
\delta(K) = \delta_L(K).
\end{equation}
\end{theorem}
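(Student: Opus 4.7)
The trivial direction $\delta_L(K) \le \delta(K)$ holds because lattice packings are a subclass of all packings. The substantive direction is $\delta(K) \le \delta_L(K)$, and the plan is to prove it by cell-decomposition.

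First I would introduce the auxiliary quantity
\[
\mu(K) := \inf\{\op{area}(H) \mid H \supseteq K,\ H \text{ centrally symmetric convex hexagon}\},
\]
where degenerate hexagons (parallelograms) are allowed. A compactness argument shows the infimum is attained. Since every centrally symmetric convex hexagon is a parallelohedron, a minimizing hexagon $H$ tiles $\R^2$ under a lattice $\LL_H$ generated by the three vectors joining midpoints of opposite sides; translates of $K$ under $\LL_H$ form a lattice packing, giving
\[
\delta_L(K) \;\ge\; \frac{\op{area}(K)}{\op{area}(H)} \;=\; \frac{\op{area}(K)}{\mu(K)}.
\]
So it suffices to prove $\delta(K,\packing) \le \op{area}(K)/\mu(K)$ for any packing $\packing$ of congruent copies of $K$.

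Next I would build circumscribing cells. Given a packing $\packing = \{K_i\}$ inside a large ball $B_r$, associate to each $K_i$ a convex polygonal cell $C_i$ containing $K_i$ so that the $C_i$ have pairwise disjoint interiors and exhaust $B_r$ up to a boundary strip of area $O(r)$. A convenient construction uses the centers $c_i$ of the $K_i$ and places the separating line between $C_i$ and $C_j$ along the common inner tangent of $K_i$ and $K_j$ that is equidistant from $c_i$ and $c_j$; because $K$ is centrally symmetric, this line separates $K_i$ from $K_j$, ensuring $K_i \subset C_i$. Each $C_i$ is then centrally symmetric about $c_i$ (in a local averaged sense) and has finitely many sides.

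The core combinatorial input is that by Euler's formula for the planar subdivision induced by the $C_i$, the average number of sides per cell is at most $6$. Hence one can invoke Dowker's theorem (in the form due to L. Fejes T\'oth): for a centrally symmetric convex disk $K$, the area of any centrally symmetric circumscribed $n$-gon is a convex function of $n^{-1}$, so that the average of $\op{area}(C_i)$ over the packing is bounded below by the area of the optimal circumscribed hexagon, namely $\mu(K)$. Summing over $i$ and dividing by $\op{area}(B_r)$,
\[
\delta(K,\packing) \;=\; \lim_{r\to\infty}\frac{\sum_i \op{area}(K_i \cap B_r)}{\op{area}(B_r)} \;\le\; \frac{\op{area}(K)}{\mu(K)} \;\le\; \delta_L(K),
\]
which completes the proof after taking the supremum over $\packing$.

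The main obstacle is the cell construction in the third paragraph: one must verify that inner-tangent separating lines give a well-defined polygonal tiling with $K_i \subset C_i$, and that the combinatorial hypothesis of Dowker's theorem (centrally symmetric circumscribed polygons) applies to each $C_i$ or, failing that, to a slight symmetrization of $C_i$ about $c_i$ that preserves both the containment of $K_i$ and the total area. Handling the boundary effects on $\partial B_r$ in the limit is routine but requires the standard $O(r)$ surface-term estimate.
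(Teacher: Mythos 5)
The paper does not prove this theorem at all: it is stated as a known result with citations to Fejes T\'oth's 1950 paper and his books \emph{Lagerungen in der Ebene, auf der Kugel und im Raum}, and is used as a black box. So there is no in-paper proof to compare against; what matters is whether your sketch is sound. Your overall strategy is indeed the classical Fejes T\'oth/Rogers route (cell decomposition, Euler formula bounding the average number of sides by six, Dowker-type convexity to push the average cell area below by the optimal hexagon area), and the easy direction and the tiling-by-hexagons step giving $\delta_L(K)\ge \op{area}(K)/\mu(K)$ are both fine.

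However, three points in the hard direction need repair. First, your cell construction is not the one that works. ``Inner tangent equidistant from $c_i$ and $c_j$'' neither obviously produces a tiling by convex polygons nor ensures pairwise disjoint interiors across all triples of neighboring bodies; the standard construction (used by Rogers, and mirrored by Reinhardt's Lemma~\ref{lem:reinhardt-tiling} in this book for the lattice case) is the homothetic-expansion-plus-separating-lines device, or generalized Dirichlet cells with respect to the Minkowski norm of $K$. Second, the cells produced by any such construction are in general \emph{not} centrally symmetric, so your parenthetical that each $C_i$ is ``centrally symmetric in a local averaged sense'' is not something you can get for free, nor is it needed. What is needed is (a) the fact that for each $n$ the minimum area $a(n)$ of an \emph{arbitrary} convex $n$-gon circumscribing $K$ is a convex decreasing function of $n$ (Dowker), so $\tfrac{1}{N}\sum_i \op{area}(C_i) \ge \tfrac{1}{N}\sum_i a(n_i) \ge a(\bar n) \ge a(6)$; and (b) the separate lemma that for centrally symmetric $K$ the minimum is attained on a centrally symmetric hexagon, so $a(6)=\mu(K)$ — this is exactly the fact recorded in the paper after Theorem~\ref{thm:frac-hexagon} via \cite[Theorem~2.5]{pach2011combinatorial}. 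Your sketch conflates (a) and (b), and also misstates Dowker: it is the \emph{minimum} circumscribed $n$-gon area that is convex in $n$ (not ``the area of any'' polygon, and not as a function of $n^{-1}$). Third, the Euler-characteristic count of six average sides requires some care near $\partial B_r$, which you acknowledge, but it also requires that the cell decomposition be a genuine polygonal subdivision, which loops back to the first point. With these repairs — replace the inner-tangent cells with generalized Dirichlet cells, use Dowker for arbitrary circumscribed $n$-gons, and insert the centrally-symmetric-minimizer lemma — the argument becomes the standard proof of Fejes T\'oth's theorem.
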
 
% Additionally, the greatest packing density of a centrally symmetric
% convex disk is given by a lattice packing.

% DONE[TCH 5/4/2024. Added citation to FejesToth 1950.] [TCH
% 4/20/2024. Double check citation to Fejes Toth.  Presumably
% Lagerungen?]
% See page 3. http://www.csun.edu/~ctoth/Handbook/chap2.pdf

% DONE(KV - 03/29/23): The paragraph below had this sentence: 
% In 1904, Minkowski established a lower bound~\cite{minkowski1907diophantische}.
% What exactly is the lower bound?

Many of the early research articles on the Reinhardt conjecture were
restricted in scope to lattice packings.  However, in view of Fejes
T\'oth's theorem, results about greatest lattice packing density
actually imply results about greatest packing density (for the set $\Kccs$).
In lattice form, packings of convex bodies were studied by
multiple authors, beginning with Minkowski. 
% In 1923, Blaschke
% called \textit{Courant's conjecture} the statement that the elliptical
% disk (as a centrally symmetric convex disk) is
% $K_{\min}$~\cite{blaschke1945differentialgeometrie}.
% But so far, we do not know if the minimizer for the above problem exists. 
% Reinhardt proved that minimizer of this
% quantity exists, and proved several properties about it, including the
% fact that the ellipse is not the minimizer, refuting Courant's
% conjecture~\cite{reinhardt1934dichteste}.

Now consider the infimum of densities:
\[ 
\delta_{\min} := \inf_{K \in \Kccs} \delta(K).
\index[n]{zd@$\delta$, density!$\delta_{\min}$, minimax}
\]
So $\delta_{\min}$ is defined as a \emph{minimax}: the least (or
worst) greatest packing density among all centrally symmetric convex
disks in $\R^2$.

%Much of the history described in the first chapter was confined to
%lattice packings.  
In 1904, Minkowski established a lower
bound on this infimum~\cite{minkowski1907diophantische}. In 1923,
Blaschke called \emph{Courant's conjecture}\index{conjecture!Courant}
the statement that the ellipse minimizes the greatest packing
density~\cite{blaschke1945differentialgeometrie}.  Later, Reinhardt
proved that a minimizer exists, and proved several properties about
it, including the fact that the ellipse is not the minimizer, refuting
Courant's conjecture~\cite{reinhardt1934dichteste}.

\index[n]{K@$K$, convex disk!$K_{\min}$, minimax optimizer}
\index{unpackable}

Reinhardt's problem now is to explicitly describe a $K_{\min}\in\Kccs$
for which $\delta(K_{\min}) = \delta_{\min}$, and also determine this
worst greatest packing density.  It is the problem of finding the
\emph{most unpackable shape} in $\Kccs$.  As mentioned in the previous
chapter, Reinhardt suggested a specific candidate, the \emph{smoothed
  octagon}\index{octagon!smoothed}, to be the most unpackable;
that is, to be the minimax optimizer.  The smoothed octagon is a
regular octagon whose vertices have been clipped by hyperbolic arcs
(shown in Figure \ref{fig:smoothed}).

If $\LL$ gives a lattice packing of $K$, and if $g$ is an affine
transformation, then $g\LL$ gives a lattice packing of $gK$ of the same
density.  Because of this affine invariance, the set of worst disks is
stable under the group of affine transformations.

\index[n]{g@$g$, group element!affine transformation}
\index[n]{0=@$\approx$, approximate equality}

\begin{conjecture} [Reinhardt \cite{reinhardt1934dichteste}, Mahler \cite{mahler1947minimum}]
The smoothed octagon achieves the least greatest packing density among
all centrally symmetric convex disks in the plane. Its density is
given by
\[ 
\frac{8 - \sqrt{32} - \ln 2}{\sqrt{8} - 1} \approx 0.902414.
\index[n]{0@$0.902414\ldots$, smoothed octagon density}
\mcite{MCA7481306}
\]
The smoothed octagon is uniquely the worst, up to affine transformation.
\end{conjecture}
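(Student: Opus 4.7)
The plan is to combine the book's proof of Mahler's First (Theorem~\ref{thm:mahler}) with a classification of all periodic Pontryagin extremals of the Reinhardt optimal control problem. By Mahler's First, any minimizer $K_{\min} \in \Kccs$ is a finite-sided smoothed polygon with hyperbolic arc corners, so its associated control function is bang-bang with finitely many switches. Because the boundary conditions of the control problem are periodic modulo rotation by $\pi/3$, the extremal trajectory associated with $K_{\min}$ extends to a periodic orbit of the Hamiltonian flow with discrete rotational symmetry, and so the search for $K_{\min}$ reduces to a search among periodic bang-bang Pontryagin extremals.

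Second, I would attempt to show that the only such extremals are the smoothed $(6k+2)$-gons constructed in Theorem~\ref{thm:6k+2}, for $k=1,2,\ldots$ (the circular arc is singular rather than bang-bang and is excluded by Theorem~\ref{thm:no-singular-arcs} from being a global minimizer). The $\pi/3$-rotational boundary conditions, combined with the $\Dih$ symmetry of the control two-simplex and the multi-curve structure of Figure~\ref{fig:multi-curve-oct}, heavily constrain the admissible switching sequences. The strategy would be to enumerate the compatible finite switching sequences consistent with this symmetry and periodicity, and to verify that each such sequence either reproduces a smoothed $(6k+2)$-gon or fails to close up into a valid periodic trajectory.

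Third, given the candidate list, I would compute the greatest packing density of each smoothed $(6k+2)$-gon using the method of the example preceding Theorem~\ref{thm:6k+2}, and establish that $\delta$ is a strictly increasing function of $k$ that tends from the smoothed octagon value $(8-\sqrt{32}-\ln 2)/(\sqrt{8}-1) \approx 0.902414$ (at $k=1$) upward to the circle's density $\pi/\sqrt{12} \approx 0.9069$ as $k \to \infty$. Uniqueness up to affine transformation would then follow from the strict monotonicity together with the affine invariance of $\delta$ noted in Chapter~1.

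The main obstacle, and the reason the full conjecture remains open, is the second step: proving exhaustiveness of the $(6k+2)$-gon family among periodic bang-bang Pontryagin extremals of the Reinhardt problem. As the introduction candidly notes, no other extremals have been found, but none have been ruled out either. Any proof of exhaustiveness would demand a delicate global analysis of the Hamiltonian flow on the cotangent bundle associated with $\SL(\R)$, together with a combinatorial classification of switching sequences compatible with the $\pi/3$-rotational periodicity; the transcendental dependence of switching times on the Hamiltonian's conserved quantities appears to be the principal technical barrier, and it is where I would expect to spend most of the effort.
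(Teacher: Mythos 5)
The statement you are addressing is labeled a \emph{conjecture} in the book, and the book itself does not prove it: the introduction states explicitly that the full Reinhardt conjecture remains open and that the book only proves Mahler's First (Theorem~\ref{thm:mahler}), the weaker assertion that the minimizer is \emph{some} finite-sided smoothed polygon. So there is no proof in the paper to compare against, and you have correctly read the situation --- your proposal is a roadmap, not a proof, and you say so.

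Your roadmap is essentially the one the authors themselves gesture toward. Step one (reduce to periodic bang-bang extremals via Mahler's First and the $\pi/3$-periodic boundary conditions) is exactly what the book establishes and advocates: ``the research focus should be on the periodic Pontryagin extremals.'' Step three (compare densities across the $(6k+2)$-gon family and pick out $k=1$) is numerically supported by the book (Figure~\ref{fig:cost} and the discussion in Section~\ref{sec:n-gon}), though even this monotonicity is only exhibited graphically, not proved; a rigorous argument would require an actual analytic or interval-arithmetic verification that $\op{area}(K)$ is strictly increasing in $k$. You correctly identify step two --- exhaustiveness of the $(6k+2)$-gon family among periodic bang-bang extremals with the prescribed boundary conditions --- as the genuine gap, and the book agrees: ``No other extremals have been found, but we have no proof that no other extremals exist.'' One point worth sharpening in your sketch: Theorem~\ref{thm:6k+2} constructs the $(6k+2)$-gons as extremals with a very particular structure (equal switching times, a single scaling parameter $y_0$, constant of motion $d=\det(\Lambda_1)$ determined by $k$). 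A general periodic bang-bang extremal is a priori allowed unequal switching durations, more complicated switching sequences, and a free value of $d$; ruling all of these out is where the difficulty concentrates, and the dihedral symmetry of the control simplex by itself does not obviously force the switching sequence to be the pure rotational one $(0,-1,-2,\ldots)$ of Equation~\eqref{eqn:3k+1}. So your proposal is consistent with the book's perspective and correctly locates the obstruction, but it should not be read as a near-proof: the classification step you defer is the whole problem.
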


This book investigates the Reinhardt conjecture by restating it as a
problem in control theory. To do this, we rely on numerous geometric
properties of the worst convex disk $K_{\min}$, which we collect in
the following sections.
These sections review the results contained in Reinhardt's article of
1934 and Mahler's articles written in the 1940s.

\section{Reinhardt's Approach}\label{sec:reinhardt-approach}

In this section, we briefly review Reinhardt's article of 1934.
The proofs that we give will be sketches, because the full details
are available in Reinhardt's article.
Let $K$ be a centrally symmetric convex disk.  Let $K+\LL$ be a lattice
packing of $K$.

\index[n]{HK@$H_K$, critical hexagon}

\begin{lemma}\label{lem:reinhardt-tiling}  
The packing $K+\LL$ is realized by placing $K$ inside an appropriate
parallelogram or centrally symmetric convex hexagon $H_K$, tiling the
plane with translates of $H_K$, then placing a copy of $K$ inside each
translate of $H_K$.
\end{lemma}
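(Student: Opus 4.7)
The plan is to reduce to a critical (``tight'') lattice packing by enlarging $K$, identify the finite set of lattice contact vectors, and then construct $H_K$ as an intersection of slabs and verify that it is a fundamental domain of $\LL$.

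First I would translate the packing condition into lattice-theoretic form: since $K$ is centrally symmetric, $K + \LL$ is a packing if and only if $\mathrm{int}(2K) \cap \LL = \{\mathbf{0}\}$. Then I would dilate, passing from $K$ to the largest $K^* = s_0 K$ (with $s_0 \geq 1$) such that $K^* + \LL$ remains a packing. This $K^*$ is \emph{critical}: a non-zero lattice vector lies on $\partial(2K^*)$ but none in its interior. Denote the finite, centrally symmetric set of contact vectors
\[
\mathcal{C} \;:=\; \bigl\{\, l \in \LL \setminus \{\mathbf{0}\} : l \in \partial(2K^*)\,\bigr\}.
\]
By planar lattice-convexity arguments, I would show that $\mathcal{C}$ consists of either exactly two pairs $\pm l_1, \pm l_2$ with $\{l_1, l_2\}$ a basis of $\LL$, or three pairs $\pm l_1, \pm l_2, \pm l_3$ with a linear relation $l_3 = l_1 \pm l_2$ and $\{l_1, l_2\}$ a basis. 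Four or more pairs are impossible: a fourth pair $\pm l_4$, together with the convexity of $2K^*$ and Minkowski's theorem applied to the norm induced by $2K^*$, would force some short lattice combination into $\mathrm{int}(2K^*)$.

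Next I would construct $H_K$. For each pair $\pm l_i \in \mathcal{C}$, the body $K^*$ touches its translate $K^* + l_i$ at $l_i/2$, and a supporting line of $K^*$ at $l_i/2$ separating $K^*$ from $K^* + l_i$, together with its centrally symmetric partner at $-l_i/2$, defines a closed slab $S_i \supset K^*$. Set
\[
H_K \;:=\; \bigcap_i S_i,
\]
which is a centrally symmetric convex polygon --- a parallelogram in the two-pair case and a centrally symmetric hexagon in the three-pair case --- containing $K \subset K^* \subset H_K$. To verify that the $\LL$-translates of $H_K$ tile the plane, observe that the slab $S_i$ is invariant under the reflection across the line through $l_i/2$ perpendicular to $l_i$, so $H_K$ and $H_K + l_i$ meet along a shared edge; in the hexagon case, vertex compatibility at each corner of $H_K$ is forced by the relation $l_3 = l_1 \pm l_2$, while in the parallelogram case it is immediate. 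A direct area computation gives $\mathrm{area}(H_K) = \det(\LL)$, confirming that the tiling has no gaps or overlaps.

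The main obstacle is the combinatorial-geometric step bounding the cardinality and structure of $\mathcal{C}$: ruling out four or more pairs of contact vectors requires the planar Minkowski reduction theory for the norm induced by $2K^*$, and establishing the linear relation $l_3 = l_1 \pm l_2$ in the three-pair case requires that $\{l_1, l_2\}$ can be chosen as a basis of $\LL$. Once this structural statement about $\mathcal{C}$ is in hand, the construction of $H_K$ and the verification that it tiles are essentially formal, and shrinking back from $K^*$ to $K \subset K^* \subset H_K$ delivers the lemma.
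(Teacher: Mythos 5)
Your approach is genuinely different from the paper's. The paper uses an iterative expand-and-crop argument: homothetically enlarge $K$ until two translates touch, insert a separating line, crop to the resulting slab, then continue expanding \emph{within} the cropped region to generate further contacts; after finitely many contact events the cropped region tiles, and an Euler-characteristic count of the resulting polygonal tiling caps the number of edges at six. You instead dilate to the critical homothety $K^* = s_0 K$ in a single step and try to read $H_K$ off directly from the contact vectors $\mathcal{C}$ of $K^*$ with $\LL$. That is a classical geometry-of-numbers route in the Minkowski--Mahler tradition, and when it applies it gives a cleaner description of $H_K$ and of why it tiles.

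The gap is that the critical homothety $K^*$ need not contact its neighbors in enough directions to bound $H_K$. Concretely, take $K$ the unit disk and $\LL = 2\Z \times 3\Z$. Then $K + \LL$ is already critical ($s_0 = 1$), the only contacts are at $\pm(2,0)$, so $\mathcal{C}$ has a \emph{single} pair, and your $H_K$ degenerates to the unbounded slab $\{|x| \le 1\}$. The structure claim ``exactly two or three pairs'' is a theorem about \emph{critical lattices} (those minimizing $\det(\LL)$ among $K$-admissible lattices, as in Theorem~\ref{thm:minkowski-inscribe}), but here $\LL$ is given and arbitrary and only $K$ is scaled; a single homothety cannot be forced to touch in two independent directions, let alone three. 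The paper avoids this exactly by cropping to the slab as soon as the first contact appears and then continuing to expand within the slab, which manufactures new contacts in fresh directions until the region becomes bounded and tiles; the bound of six edges then comes from the Euler characteristic of the resulting tiling rather than from any structure theorem for $\mathcal{C}$. To repair your proof you would need to iterate in the same spirit: after constructing the slab(s) from the first round of contacts, expand $K$ further inside that intersection of slabs, collect the new contacts, and repeat --- at which point the two proofs essentially coincide, with the Euler-characteristic step replacing your count of contact pairs.
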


\index[n]{0@$-^{cr}$, crop}
\index[n]{r@$r$, real number!homothety} 
\index[n]{L@$\LL$, lattice!$l$, lattice element}

\begin{proof} Homothetically expand $K$ (to $r K$)
and its translates by $\LL$ until two translates $rK$ and
$rK + l$ come in contact.  Draw a
separating line between these two translates (by the separating
hyperplane theorem).  Similarly separate other translates of $rK$ using
translates of the separating line.  Continue to homothetically expand
$rK$, but now cropping $rK$ to $(rK)^{cr}$ so as to lie between
its bounding separating lines, so that cropped translated regions
$(rK)^{cr} + l$ do not overlap.  Continue to expand until a new
point of contact is formed.  Repeat the process, adding new
separating lines, cropping, and then continuing with cropped homothetic
expansion.  Eventually, after repeating the process a finite number of
times, the plane is tiled by the translates of the cropped homothetic
expansion $H_K$ of $K$.

By construction $H_K$ is a convex polygon, because it is bounded by the
finitely many separating lines that were introduced.  It is centrally
symmetric by central symmetry of $K$ and the symmetric placement of
the separating lines.  By considerations of Euler characteristic of a
polygon tiling, the number of edges is at most six.  Thus $H_K$ is a
parallelogram or centrally symmetric hexagon. 
\end{proof}

Every centrally symmetric hexagon tiles the plane.  A parallelogram
$H_K$ never gives smaller area than that of the smallest centrally
symmetric hexagon containing $K$, because its corners can be clipped
to give a smaller hexagon containing $K$, except when $K$ itself is a
parallelogram.  In this exceptional case, $K$ itself tiles and has
greatest packing density $1$.  We exclude this exception from our
further discussions.

\begin{theorem}[Reinhardt-Fejes-T\'oth]\label{thm:frac-hexagon}
    If $K \subset \R^2$ is a \emph{centrally symmetric} convex disk
    that is not a  parallelogram, then its packing density is
\begin{equation}\label{eqn:reinhardt-fejes-toth}
\delta(K) = \frac{\op{area}(K)}{\op{area}(H_K)},
\end{equation}
    where $H_K$ is a centrally
    symmetric hexagon of least area circumscribing $K$.
\end{theorem}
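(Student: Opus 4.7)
The plan is to combine Fejes T\'oth's theorem (Theorem~\ref{thm:fejes-toth-packing}) with the tiling construction in Lemma~\ref{lem:reinhardt-tiling}, and then optimize the lattice. First I would reduce the packing-density statement to a statement about lattice packings: by Theorem~\ref{thm:fejes-toth-packing}, since $K$ is centrally symmetric, $\delta(K)=\delta_L(K)$, so it suffices to show
\[
\delta_L(K) \;=\; \frac{\op{area}(K)}{\op{area}(H_K)}.
\]

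Next I would establish, for each admissible lattice $\LL$, the identity $\det(\LL)=\op{area}(H_{K,\LL})$, where $H_{K,\LL}$ is the associated centrally symmetric hexagon (or parallelogram) produced by Lemma~\ref{lem:reinhardt-tiling}. The lemma guarantees that translates of $H_{K,\LL}$ by $\LL$ tile $\R^2$, so its area equals the covolume $\det(\LL)$. Combined with the formula $\delta(K,\LL)=\op{area}(K)/\det(\LL)$ recalled in the text, this gives
\[
\delta(K,\LL)=\frac{\op{area}(K)}{\op{area}(H_{K,\LL})}.
\]
Taking the supremum over lattices $\LL$ for which $K+\LL$ is a packing therefore reduces the problem to \emph{minimizing} $\op{area}(H_{K,\LL})$ over all admissible lattices.

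Now I would argue that this minimum is achieved by a centrally symmetric \emph{hexagon} (not merely a parallelogram) of least area circumscribing $K$. In one direction, every admissible $H_{K,\LL}$ is a centrally symmetric hexagon (or parallelogram) containing $K$, so $\op{area}(H_{K,\LL})\ge \op{area}(H_K)$ where $H_K$ is the minimum-area circumscribing centrally symmetric hexagon. In the other direction, any centrally symmetric hexagon $H\supseteq K$ tiles the plane by a lattice $\LL_H$ with $\det(\LL_H)=\op{area}(H)$, and this tiling induces a valid lattice packing of $K$, so the infimum $\op{area}(H_K)$ is attained by some admissible $\LL$. Combining both inequalities yields the asserted formula. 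The exceptional parallelogram case (excluded by hypothesis) is handled by the remark in the text: if $H_{K,\LL}$ is a parallelogram and $K$ is not itself a parallelogram, its corners can be clipped to produce a strictly smaller centrally symmetric hexagon still containing $K$, so the infimum is realized on a genuine hexagon.

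The main obstacle is ensuring that a minimizer $H_K$ actually exists and that it is admissible as some $H_{K,\LL}$, rather than only an infimum. I would handle this by a standard compactness argument: centrally symmetric hexagons circumscribing $K$ with uniformly bounded area form a set compact in the Hausdorff metric (up to Blaschke selection), and $\op{area}(\cdot)$ is continuous there, so a minimizer exists; that minimizer tiles the plane by the lattice generated by midpoints of opposite edges, which supplies the required $\LL$. The tiling/admissibility step is the delicate point, but it follows directly from the well-known fact, already used in Lemma~\ref{lem:reinhardt-tiling}, that every centrally symmetric hexagon tiles $\R^2$ by translations.
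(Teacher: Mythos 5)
Your proof is correct and follows essentially the same route as the paper's: reduce to lattice packings via Fejes T\'oth, invoke the tiling hexagon from Lemma~\ref{lem:reinhardt-tiling} to identify $\det(\LL)$ with $\op{area}(H_{K,\LL})$, and then minimize over lattices using the fact that every centrally symmetric hexagon tiles. The paper's version is terser (it picks the optimal lattice upfront and argues the hexagon is minimal by contradiction), but the logical content is the same, including the treatment of the excluded parallelogram case.
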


We remark that the hexagon of smallest area circumscribing
a centrally symmetric disk $K$ can be
realized as a centrally symmetric hexagon
\cite[Theorem~2.5]{pach2011combinatorial}.

% DONE[] There were two equations named eqn:fejes-toth.   The
%equality in \eqref{eqn:fejes-toth} was known for at least a few
%decades before Fejes T\'{o}th.

% DONE[]: confirm that the following is
%true and if yes move the above theorem to Reinhardt's section and
%maybe give a sketch? TCH 4/16/2023: I have moved it.

\begin{proof}
By Fejes-T\'oth (Equation~\ref{eqn:fejes-toth}), we have
$\delta_L(K)=\delta(K)$.  Let $\LL$ be a lattice that realizes this
equality.  By the previous lemma, the lattice packing $K+\LL$ is
obtained by tiling a centrally symmetric hexagon $H_K$.  The density of
this packing is given by \eqref{eqn:reinhardt-fejes-toth}. This
hexagon has least area among centrally symmetric ones, because
every centrally symmetric hexagon tiles, and one of smaller area
circumscribing $K$ would lead to a packing of greater density.
\end{proof}

From now on, $H_K$ will denote a centrally symmetric hexagon of
smallest area circumscribing $K$.  We call such a $H_K$ a
\emph{critical hexagon}\index{hexagon, critical}.  This terminology is
further explained in Definition~\ref{def:critical-hexagon} and
Theorem~\ref{thm:min-critical}.

The midpoints of the edges of $H_K$ lie on the boundary of $K$.  For
otherwise, an edge of $H_K$ can be rotated about some point on that
edge to create a hexagon of smaller area.
\begin{figure}
    \centering
    \includegraphics{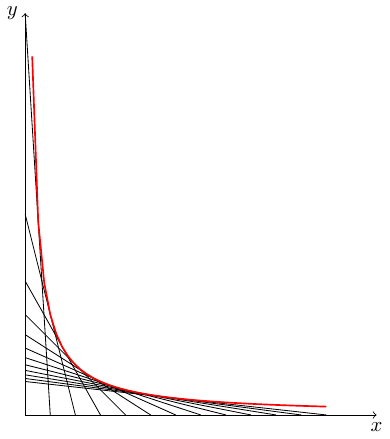}
     \caption{The hyperbola is the envelope of a pencil of 
     lines the product of whose $x$ and $y$-intercepts is constant.}
     \label{fig:hyperbolic-envelope}
\end{figure}
If we slide one edge of a convex polygon,
% along the line segments formed by extending its fixed adjacent sides, 
where the movement 
is constrained to keep the area of the polygon fixed, then the envelope 
of the moving edge is a hyperbola whose asymptotes are the lines through 
the two adjacent edges. This observation follows from the fact that 
the hyperbola is the envelope of a pencil of lines the product of 
whose $x$ and $y$-intercepts is constant. Additionally,  every 
point on the hyperbolic envelope is the midpoint of a unique line segment 
formed by a line in the pencil and the $x$ and $y$ axes. See 
Figure~\ref{fig:hyperbolic-envelope}.
\index{line, support}

\begin{definition}[Support line]\label{def:support}
\index{line, support}\index{support}
For a convex disk $K$, a support line is a line containing at least
one point of $K$ but does not separate any points of $K$.
\end{definition}

From this observation about the hyperbola, it follows that the six
edge midpoints of $H_K$ on the boundary of $K$ are not corners of $K$:
the unique support line at each midpoint is the tangent to the
hyperbolic envelope at that point.  Otherwise the area of $H_K$ is not
minimal.   

\begin{remark}\label{rem:arc-hyperbola}
  We also observe that locally around each midpoint, $K$
contains an arc of the hyperbola. It is here that we first see the
significance of the hyperbola for the Reinhardt conjecture.
\end{remark}

The ratio $\op{area}(K)/\op{area}(H_K)$ in 
Theorem~\ref{thm:frac-hexagon} is scale invariant and so there is no
loss of generality in fixing the denominator $\op{area}(H_K)$.  We
choose the normalization $\op{area}(H_K)=\sqrt{12}$, which is the area
of a regular hexagon with inradius $1$.  This choice
has the advantage of making the unit circular disk $K$ satisfy
the normalization condition.

\index[n]{s@$\multi_i$, multi-point} 
\index{multi-point}
\index[n]{det@$\det(\mb{v}_1,\mb{v}_2)$, $2\times2$ determinant, columns $\mb{v}_i$}

Let $\det(\mb{v}_1,\mb{v}_2)$ denote the
determinant of the $2\times 2$ matrix with with columns
$\mb{v}_1,\mb{v}_2\in\R^2$.
If $H_K$ is any critical hexagon of $K\in\Kccs$, then the edge
midpoints $\multi_0,\multi_1,\ldots,\multi_5$ give six
points, ordered counterclockwise, around the boundary of $K$.  
The six points, by virtue of being the edge midpoints of a centrally
symmetric convex hexagon, satisfy the following \emph{multi-point} conditions
\begin{equation}\label{eqn:multi}
\multi_0 + \multi_2 + \multi_4 = 0,\quad \multi_{j+3} = -\multi_j,\quad \det(\multi_j,\multi_{j+1}) = \text{constant},
\end{equation}
for all $j\in \Z/6\Z$, the constant being independent of $j$.
Moreover, $\det(\multi_j,\multi_{j+1})$ is independent of the critical hexagon
$H_K$, because it is a fixed fraction of the area of $H_K$.
By fixing the area of $H_K$ at $\sqrt{12}$, we have
\begin{equation}\label{eqn:sqrt12}
\det(\mb{s}_j,\mb{s}_{j+1}) = \sqrt{3}/2.
\end{equation}

% DONE. Do we want to change the notation $\mb{e}_j$ to something
% else, as the Mahler section also uses the same notation?

\index[n]{hK@$h_K$, inscribed hexagon}
\index[n]{T@$T$, triangle}
\index[n]{s@$\multi_i$, multi-point!$\mb{s}_j^*$,  sixth roots of unity} 
\index[n]{ij@$i,j$, integers} 

Replacing $K$ by its image under an affine transformation, we may
assume that $\multi_0,\multi_1,\ldots,\multi_5$ are the sixth roots of
unity $\mb{s}_j^*$ in the plane, with complex coordinates
$\mb{s}_j^*=\exp(2\pi i j/6)$, where
$i=\sqrt{-1}$\index[n]{i@$i=\sqrt{-1}$}.  The convex hull of the six
points $\mb{s}_j^*$ is a regular hexagon $h_K$ contained in $K$.  It
follows from the convexity of $K$ that the boundary of $K$ is
contained in the union of six equilateral triangles $T_j$, where
triangle $T_j$ has vertices $\mb{s}_j^*$, $\mb{s}_{j+1}^*$,
$\mb{s}_j^*+\mb{s}_{j+1}^*$, for $j=0,\ldots,5$. See Figure
\ref*{fig:starofdavid}.
\begin{figure}
    \centering
    \includegraphics{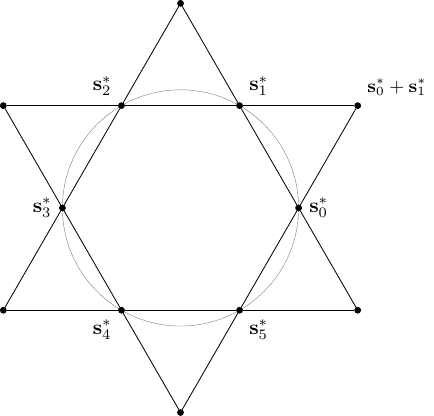}
    \caption{Hexagon formed by the sixth roots of unity along with 
    triangles $T_j$ formed by the vertices
     $\mb{s}_j^*$, $\mb{s}_{j+1}^*$,
    $\mb{s}_j^*+\mb{s}_{j+1}^*$. 
    %{DONE(KV 04/24/23) Here and in other figures, the font size should match the surrounding text.  Rescale inside tikz rather than rescaling in the include graphics. See tikzfig\{ideal\} for an example. Change notation in figure.}
}
    \label{fig:starofdavid}
\end{figure}

\begin{lemma}\label{lem:no-corner} 
A disk $K_{\min}$ with the worst greatest density lattice packing exists.
$K_{\min}$ has no corners.  That is, there is a unique support line to
$K_{\min}$ at each boundary point.  Moreover, the support line of each
boundary point of $K_{\min}$ contains an edge of some critical
hexagon.
\end{lemma}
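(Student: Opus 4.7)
The plan is in three stages.

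\textbf{Existence.} I would apply Blaschke's selection theorem to a minimizing sequence. Since $\delta(K)$ is affine-invariant, normalize each member by an affine transformation so that the edge midpoints of one of its critical hexagons are the sixth roots of unity as in \eqref{eqn:sqrt12}. Then each such $K$ is contained in the bounded Star-of-David region $\bigcup_j T_j$ of Figure \ref{fig:starofdavid}, giving a uniformly bounded family closed under Hausdorff limits (convexity and central symmetry pass to limits). By Theorem \ref{thm:frac-hexagon}, $\delta(K) = \op{area}(K)/\op{area}(H_K)$ depends continuously on $K$ in the Hausdorff topology, so some $K_{\min} \in \Kccs$ realizes the infimum $\delta_{\min}$.

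\textbf{Variational principle.} The technical heart is the claim: if some $p \in \partial K_{\min}$ has a support line $\ell$ that is not the edge-line of any critical hexagon, then $K_{\min}$ is not minimal. I would trim $K_{\min}$ by a centrally symmetric pair of short chords parallel to $\ell$ near $p$ and $-p$, producing $K' \subsetneq K_{\min}$ that is still convex and centrally symmetric. By compactness of the space of critical hexagons of $K_{\min}$ and continuity of the edge-midpoint map, the chords can be chosen disjoint from every critical-hexagon edge midpoint, so every critical hexagon of $K_{\min}$ still has all its edge midpoints on $\partial K'$. A first-order analysis, using that $H_{K_{\min}}$ was already area-minimal, shows $\op{area}(H_{K'})$ differs from $\op{area}(H_{K_{\min}})$ only to second order in the trim parameter, whereas the area loss $\op{area}(K_{\min}) - \op{area}(K')$ is first order. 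Hence $\delta(K') < \delta(K_{\min})$ for sufficiently small trims, the desired contradiction.

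\textbf{Consequences and main obstacle.} Both claims of the lemma follow from the variational principle. The moreover statement is the principle itself, applied to the support line at each boundary point. For no corners, suppose $p \in \partial K_{\min}$ admits a nondegenerate support cone. For each direction $\theta$ in the interior of the cone, the support line $\ell_\theta$ at $p$ meets $\partial K_{\min}$ only at $p$. By the variational principle, $\ell_\theta$ is the edge-line of some critical hexagon $H_\theta$, and the midpoint of that edge must lie on $\partial K_{\min} \cap \ell_\theta = \{p\}$, so $p$ is that midpoint. The hyperbolic envelope argument of the excerpt then identifies $\ell_\theta$ as the unique support line at $p$ coming from $H_\theta$, contradicting the existence of a whole cone of support directions at $p$. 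So $p$ is not a corner. The main obstacle is the variational principle itself: executing the trim so as to preserve convexity, central symmetry, and the full critical-hexagon family of $K_{\min}$ while quantifying the orders of the area changes is the technical core of Reinhardt's 1934 analysis.
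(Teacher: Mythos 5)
The existence argument via Blaschke selection and the appeal to the hyperbolic-envelope tangency at edge midpoints both match the paper's sketch. The genuine divergence, and a genuine gap, is in the ``variational principle.'' You formulate it as a statement about \emph{every} support line at \emph{every} boundary point being the edge-line of some critical hexagon; the paper proves only the weaker claim that every boundary point lies on an edge of some critical hexagon. Your stronger version is exactly what allows the slick contradiction for ``no corners,'' but its proof by trimming breaks down at a corner.

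Here is the problem. By the paper's (weaker) variational claim, a hypothetical corner $p$ already lies on some critical-hexagon edge $\ell'$, with $\ell'$ a support line on the boundary of the support cone at $p$. Your trim is a chord parallel to a line $\ell$ strictly in the interior of the cone, placed at small depth near $p$. Any such chord removes $p$ and therefore removes an endpoint of the contact segment $K_{\min}\cap\ell'$; it unavoidably cuts into a critical-hexagon contact. The precaution you propose---keeping the chords disjoint from the critical hexagons' \emph{edge midpoints}---is correct but insufficient: for $\op{area}(H_{K'})=\op{area}(H_{K_{\min}})$ one needs the shaved region to be disjoint from the critical hexagons' \emph{edges} themselves, which cannot be arranged near a corner. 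That is why the paper first proves the weak claim by shaving at a point lying on no critical-hexagon edge at all (there the minimal circumscribed hexagon is provably unchanged and the ratio drops), and only then deals with corners by a case analysis on whether the point is a midpoint, lies in the relative interior of a contact segment, or is an endpoint of one. The order-of-magnitude claim also does not hold as stated: at a corner, the area lost by a chord at depth $\epsilon$ is $O(\epsilon^2)$, not first order, so the comparison you invoke (``first order loss versus second order hexagon change'') is wrong even in your own framing and does not carry the argument.

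Finally, a smaller issue: you use ``the critical hexagons of $K_{\min}$ still circumscribe $K'$'' as though it forces $\op{area}(H_{K'})=\op{area}(H_{K_{\min}})$. It only gives $\op{area}(H_{K'})\le \op{area}(H_{K_{\min}})$; equality requires an argument that no \emph{smaller} centrally symmetric circumscribing hexagon of $K'$ appears. The paper's device of shaving away from all critical-hexagon edges is precisely what makes this equality clean, and is the step your proposal needs to recover.
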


\index[n]{u@$\mb{u}$, vector!in $\R^2$}

\begin{proof}  
Reinhardt uses the Blaschke selection theorem\index{Blaschke selection
  theorem} to prove the existence of a centrally symmetric $K$ with a
worst greatest density lattice packing.  We fix one such $K=K_{\min}$.
The set of critical hexagons of $K$ is closed: a convergent limit of
critical hexagons is again critical.  Moreover, every point on the
boundary of $K=K_{\min}$ lies on some edge of a critical hexagon.
Otherwise, if the point $\mb{u}$ is not on the edge of any critical
hexagon, then by closedness, the same holds for all nearby boundary
points,
% DONE[]: why  exactly? TCH 4/16/2023, sentence about closedness added
and a small area can be shaved in a centrally symmetric
manner from $K$ at $\pm \mb{u}$ to decrease the area of $K$ without
changing the minimal area of the critical hexagons.  This contradicts
the density minimax property of $K_{\min}$.  If a boundary point
$\mb{u}$ is a midpoint of a critical hexagon $H_K$, then as seen above
with the hyperbolic envelope, $\mb{u}$ is not a corner.

If a boundary point $\mb{u}$ lies on some edge of a critical hexagon
without being its midpoint, then an entire segment containing $\mb{u}$
of the edge lies along the boundary of $K$.  The segment determines
the unique support line for points in the relative interior of the
segment.  Each endpoint of the segment is the midpoint of an edge of a
critical hexagons and hence not a corner, for otherwise it can be
shaved as above.
\end{proof}

\begin{lemma}\label{lem:tri-interior}
  Assume that the boundary of $K_{\min}$ has critical hexagon with
  edge midpoints $\mb{s}_j^*$.
Consider a second critical hexagon of $K_{\min}$ with edge midpoints
$\multi_0,\ldots,\multi_5$, indexed so that $\multi_0\in
T_0\setminus\{\mb{s}_0^*,\mb{s}_1^*\}$.  Then for all $j$, we have that
$\multi_j$ lies in the interior of $T_j$.
\end{lemma}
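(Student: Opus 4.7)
The plan is to use the bilinearity of $\det(\cdot,\cdot)$ together with the area condition $\det(\multi_j,\multi_{j+1})=\sqrt{3}/2$ from \eqref{eqn:sqrt12} to eliminate every placement of $\multi_1$ other than $T_1$, and then iterate. First I argue that $\multi_0$ lies in the topological interior of $T_0$. The boundary $\partial T_0$ consists of the inner edge $\mathbf{s}_0^*\mathbf{s}_1^*$, which lies in $h_K\subset K_{\min}$ and hence in the interior of $K_{\min}$ except at its endpoints, together with two outer edges. Each outer edge extends to a line passing through a second point of the form $\mathbf{s}_j^*$ (for instance the edge from $\mathbf{s}_0^*$ toward the outer vertex lies on the line through $\mathbf{s}_0^*$ and $\mathbf{s}_5^*$), so it is a chord of $K_{\min}$ and therefore not a support line; by Lemma~\ref{lem:no-corner} it meets $\partial K_{\min}$ only at the shared vertex of $\partial T_0$. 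The hypothesis $\multi_0\ne\mathbf{s}_0^*,\mathbf{s}_1^*$ then places $\multi_0\in\mathrm{int}(T_0)$, and central symmetry gives $\multi_3=-\multi_0\in\mathrm{int}(T_3)$.

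The key tool is the following bilinear inequality. With $T_k$ having vertices $A=\mathbf{s}_k^*$, $B=\mathbf{s}_{k+1}^*$, $C=A+B$, direct computation yields $\det(A,B)=\det(A,C)=\sqrt{3}/2$ and $\det(B,C)=-\sqrt{3}/2$. Writing $\mathbf{u}=\lambda_A A+\lambda_B B+\lambda_C C$ in barycentric coordinates,
\[
\det(\mathbf{u},A)=(\lambda_A-1)\tfrac{\sqrt{3}}{2},\quad \det(\mathbf{u},B)=(1-\lambda_B)\tfrac{\sqrt{3}}{2},\quad \det(\mathbf{u},C)=(\lambda_A-\lambda_B)\tfrac{\sqrt{3}}{2}.
\]
When $\mathbf{u}$ lies in the open interior of $T_k$, all $\lambda_*$ are strictly positive, so $\max_{\mathbf{v}\in T_k}\det(\mathbf{u},\mathbf{v})<\sqrt{3}/2$; the analogous bound for $\det(\mathbf{v},\mathbf{u})$ follows by antisymmetry.

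To eliminate candidates for $\multi_1$: the counterclockwise ordering together with $\multi_3\in T_3$ restricts $\multi_1\in T_0\cup T_1\cup T_2\cup T_3$. If $\multi_1\in T_0$, the bilinear inequality with $\mathbf{u}=\multi_0\in\mathrm{int}(T_0)$ gives $\det(\multi_0,\multi_1)<\sqrt{3}/2$, contradicting \eqref{eqn:sqrt12}. If $\multi_1\in T_3$, evaluating $\det(\multi_0,\mathbf{v})$ at the three vertices of $T_3$ shows the maximum equals the $y$-coordinate $y_0$ of $\multi_0$, which is strictly less than $\sqrt{3}/2$ since $\multi_0\ne\mathbf{s}_1^*$, again giving a contradiction. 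If $\multi_1\in T_2$, one first checks directly that $\multi_1\ne\mathbf{s}_2^*,\mathbf{s}_3^*$: the area condition at $\mathbf{s}_2^*$ would force $\multi_0$ onto the inner edge of $T_0$ (inside $h_K$), and at $\mathbf{s}_3^*$ would force $y_0=\sqrt{3}/2$, hence $\multi_0=\mathbf{s}_1^*$; both are excluded. So $\multi_1\in\mathrm{int}(T_2)$. Now $\multi_2$ lies counterclockwise from $\multi_1$ and before $\multi_3$, hence $\multi_2\in T_2\cup T_3$; the bilinear inequality with $\mathbf{u}=\multi_1$ rules out $\multi_2\in T_2$, and with $\mathbf{u}=\multi_3\in\mathrm{int}(T_3)$ rules out $\multi_2\in T_3$. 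Contradiction. Thus $\multi_1\in T_1$.

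The analogous argument with indices shifted by one, using $\multi_1\in\mathrm{int}(T_1)$ and $\multi_3\in\mathrm{int}(T_3)$ in the roles of $\multi_0$ and $\multi_3$, yields $\multi_2\in T_2$. Central symmetry then gives $\multi_4=-\multi_1\in T_4$ and $\multi_5=-\multi_2\in T_5$. Each $\multi_j$ is interior to $T_j$ by the reasoning of the opening paragraph combined with the observation that $\multi_j=\mathbf{s}_j^*$ or $\mathbf{s}_{j+1}^*$ would force $\multi_0=\mathbf{s}_0^*$ or $\mathbf{s}_1^*$ (via the corresponding area calculation), contrary to hypothesis. The main obstacle is the case $\multi_1\in T_2$: it requires applying the bilinear inequality twice in succession and is genuinely non-local, using $\multi_0$ and its antipode $\multi_3$ simultaneously to pin down $\multi_1$ and $\multi_2$.
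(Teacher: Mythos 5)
Your proof takes a genuinely different route from the paper's. The paper first establishes the weaker containment $\multi_j\in T_j$ via a Minkowski-sum constraint $\multi_1\in T_j\cap(T_0+T_k)$ together with the half-plane constraint $\det(\multi_0,\multi_1)>0$, and only afterwards upgrades to interiority by an auxiliary multi-point construction and a collinearity contradiction. You reverse the order: you establish interiority of $\multi_0$ first, then rule out $T_0,T_2,T_3$ with the bilinear inequality $\max_{\mathbf{v}\in T_k}\det(\mathbf{u},\mathbf{v})<\sqrt{3}/2$ for $\mathbf{u}\in\mathrm{int}(T_k)$. That inequality (which you compute correctly from barycentric coordinates) is a clean, self-contained tool the paper does not use, and it makes the eliminations transparent.

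However, the opening paragraph, on which everything else rests, has a gap. The claim that $[\mathbf{s}_0^*,\mathbf{s}_1^*]$ ``lies in $h_K\subset K_{\min}$ and hence in the interior of $K_{\min}$ except at its endpoints'' is not a valid inference: that segment lies on $\partial h_K$, and $h_K\subset K_{\min}$ does not prevent an edge of $\partial h_K$ from also lying on $\partial K_{\min}$. Likewise, ``it is a chord of $K_{\min}$ and therefore not a support line'' fails in general, since a chord of a convex disk \emph{is} a support line whenever $\partial K_{\min}$ is straight between its endpoints. What is actually needed, and what the paper establishes just before the lemma via Remark~\ref{rem:arc-hyperbola}, is that the unique support line at each $\mathbf{s}_j^*$ is the edge of the circumscribing critical hexagon $H_K$ through $\mathbf{s}_j^*$ (the tangent to the hyperbolic envelope). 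If the line through $\mathbf{s}_0^*$ and $\mathbf{s}_1^*$ (or through $\mathbf{s}_5^*$ and $\mathbf{s}_0^*$) were a support line, both incident $H_K$-edges would lie on it and $H_K$ would degenerate, which is excluded since $K_{\min}$ is not a parallelogram. Once this is supplied, your argument goes through; there remain smaller imprecisions (e.g.\ when $\multi_1=\mathbf{s}_3^*$, the area condition forces $\multi_0$ onto the edge $[\mathbf{s}_1^*,\mathbf{s}_0^*+\mathbf{s}_1^*]\subset\partial T_0$, not specifically onto $\mathbf{s}_1^*$, but the contradiction with $\multi_0\in\mathrm{int}(T_0)$ still holds).
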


Reinhardt calls this property \emph{monotonicity}.  As the midpoint
$\multi_0$ advances counterclockwise beyond $\mb{s}_0^*$, the other
midpoints $\multi_j$ advance counterclockwise beyond $\mb{s}_j^*$ into
the interior of $T_j$. See Figure \ref*{fig:monotonicity}. Note that
if $\{\multi_j\mid j\}\ne \{\mb{s}_j^*\mid j\}$, we can always assume
without generality that the subscripts are indexed such that
\[
\multi_0\in T_0\setminus\{\mb{s}_j^*\mid j\} = 
T_0\setminus\{\mb{s}_0^*,\mb{s}_1^*\},
\]
to satisfy the assumption of the lemma.
% DONE(KV, 03/30/23): perhaps this picture isn't the best...
\begin{figure}
    \centering
    \includegraphics{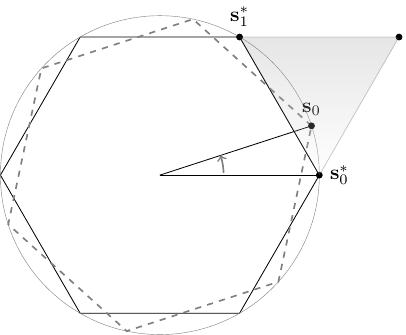}
    \caption{This figure shows \emph{monotonicity}. 
    The points $\multi_j$ of the rotated hexagons lie in triangles $T_j$.}
    \label{fig:monotonicity}
\end{figure}
\index{Minkowski sum}

\begin{proof}
Assume $\mb{s}_0\in T_0$.  For some $j$ and $k$, we have
$\multi_1\in\,T_j$ and $\multi_2\in T_k$.  As above, we have $\multi_1
= \multi_0+\multi_2$, which gives a system of constraints
\[
\multi_1\in T_j\cap (T_0+T_k),
\]
expressed using the \emph{Minkowski sum}
\[
T_0+T_k := 
\{\mb{u}_0 + \mb{u}_k\mid \mb{u}_0\in T_0,\quad \mb{u}_k\in T_k\}.
\]
% DONE[]: what does $+$ mean above? [FIX , TCH 3/27/2023: Minkowski sum]
Also, for fixed $\multi_0$, the inequality $\det(\multi_0,\multi_1)>0$
places a half-plane constraint on $\multi_1$.  These constraints imply
that $\multi_1\in\,T_1$ and $\multi_2\in T_2$.  Using
$\mb{s}_{j+3}=-\mb{s}_j$ and rotational symmetry, we have a weak form
of monotonicity: if for some $j$, we have
$\mb{s}_j\in{}T_j\setminus\{\mb{s}^*_j,\mb{s}^*_{j+1}\}$, then for all
$k$, we have $\mb{s}_k\in T_k$.

\index[n]{0@$\partial$, boundary}
\index[n]{0@$(-,-)$, edge between endpoints}
\index[n]{t@$t$, real number!$t_0,t_1$, scalars}
\index[n]{u@$\mb{u}$, arbitrary multi-point}

Now suppose for a contradiction that $\multi_0$ lies on the relative
interior of the edge $(\mb{s}_0^*,\mb{s}_1^*)$ of $T_0$.  By the
convexity of $K$, the entire edge $[\mb{s}_0^*,\mb{s}_1^*]$ lies on
the boundary of $K$.  Let $\mb{u}_1\in\partial K$ be any midpoint of a
critical hexagon in the interior of $T_1$ such that its support line
is different from the support lines at $\mb{s}_1^*$ and $\mb{s}_2^*$.
The point $\mb{u}_1$ exists because there are no corners, by
Lemma~\ref{lem:no-corner}.  Let $\mb{u}_j$ be the other midpoints. By
the weak form of monotonicity in the previous paragraph, the point
$\mb{u}_0$ lies in $T_0$, hence along the edge of $T_0$.  The
multi-point conditions \eqref{eqn:multi} now imply that for some
$t_0,t_1\in\R$, we have

\[
\mb{u}_0 = \mb{s}_0^* +t_0 \mb{s}_2^*,\quad 
\mb{u}_1 = \mb{s}_2^* + t_1 \mb{u}_0\quad 
\mb{u}_2 = \mb{s}_2^* + (t_1-1)\mb{u}_0.
\]
The condition that $\mb{u}_1$ is an interior point of $T_1$ gives $0<t_1<1$.
Thus, $\mb{u}_1,\mb{s}_2^*,\mb{u}_2$ are distinct collinear points on $\partial K$ so
that the entire segment from $\mb{u}_1$ to $\mb{u}_2$ lies on the boundary of
$K$.  This shows that $\mb{s}_2^*$ and $\mb{u}_1$ have the same support line,
which is contrary to the construction of $\mb{u}_1$.  Thus $\mb{u}_0$ does not
lie on the relative interior of the edge.
Repeating this argument for each $j$, we find that no $\mb{s}_j$ lies
on the relative interior of the edge $(\mb{s}_j^*,\mb{s}_{j+1}^*)$ of $T_j$.

Now assume that $\multi_0\in T_0\setminus[\multi_0^*,\multi_1^*]$.
We claim that $\multi_1\ne \mb{s}_1^*$.  
Otherwise, the multi-point condition $\multi_5=\multi_0 -\multi_1^*$ 
%DONE: why is there a hanging $+$ in this equation? TCH 4/16/2023: fixed.
forces $\multi_5$ to lie along the relative interior of the edge
$(\mb{s}_5^*,\mb{s}_0^*)$, which we have shown to be impossible.
Similarly, we claim that $\multi_1\ne \mb{s}_2^*$.  Otherwise, the
multi-point condition gives $\multi_0 = \mb{s}_2^* - \multi_2$, which
forces $\multi_0$ to lie on the forbidden edge
$[\mb{s}_0^*,\mb{s}_1^*]$.

Thus, $\multi_1$ satisfies the hypotheses of the previous paragraph
(shifting indices by one): $\multi_1 \in T_1\setminus
[\mb{s}_1^*,\mb{s}_2^*]$.  Iterating the argument of the previous
paragraph for consecutive
$j$, we find that for all $j$, we have
$\multi_j\in{T_j}\setminus[\mb{s}_j^*,\mb{s}_{j+1}^*]$.  Furthermore,
no $\multi_j$ is on the boundary of $T_j$.  Otherwise, the convexity
of $K$ forces $\multi_{j+1}$ or $\multi_{j-1}$ to lie on the forbidden
edge.  This completes the proof.
\end{proof}

\begin{lemma}\label{lem:multi-cont}
  Let $K=K_{\min}$ have worst greatest packing density.
Then every point of the boundary of $K$ is the midpoint of an edge of a unique
critical hexagon.   As $\multi_0$ advances around the boundary of $K$ in a 
counterclockwise direction, the five other midpoints $\multi_1,\ldots,\multi_5$ advance
strictly monotonically and continuously in a counterclockwise direction.
\end{lemma}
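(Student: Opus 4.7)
The lemma combines three assertions: (i) every $\mathbf{u} \in \partial K_{\min}$ is the midpoint of an edge of some critical hexagon; (ii) the critical hexagon is unique; and (iii) as $\multi_0$ advances counterclockwise around $\partial K_{\min}$, the other midpoints $\multi_1, \ldots, \multi_5$ advance continuously and strictly monotonically. My main tools are Lemma~\ref{lem:no-corner} (no corners on $\partial K_{\min}$, and every boundary point lies on an edge of a critical hexagon) and Lemma~\ref{lem:tri-interior} (strict placement of midpoints of a second critical hexagon in the interiors of the triangles $T_j$).

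\textbf{Uniqueness.} Suppose $\mathbf{u} \in \partial K_{\min}$ is the $\multi_0$-midpoint of two distinct critical hexagons. Affinely normalize so that one has midpoints at the sixth roots of unity $\mathbf{s}_j^*$ with $\mathbf{u} = \mathbf{s}_0^*$, and let the other have midpoints $(\mathbf{s}_0^*, \multi_1, \ldots, \multi_5)$ with $\multi_k \ne \mathbf{s}_k^*$ for some $k \ge 1$. By the weak monotonicity observation embedded in the proof of Lemma~\ref{lem:tri-interior}, one can cyclically reindex the second hexagon so that the reindexed starting midpoint lies in $T_0 \setminus \{\mathbf{s}_0^*, \mathbf{s}_1^*\}$. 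Lemma~\ref{lem:tri-interior} then forces every reindexed midpoint into the interior of the corresponding $T_j$; but in this reindexing the midpoint $\mathbf{s}_0^*$ reappears at some position and must land in the interior of some $T_j$, contradicting $\mathbf{s}_0^*$ being a vertex of $T_0$ (and of $T_5$), hence on the boundary of every triangle containing it.

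\textbf{Existence.} Fix $\mathbf{u} \in \partial K_{\min}$ and suppose it is not the midpoint of any critical hexagon. By Lemma~\ref{lem:no-corner} and its proof, $\mathbf{u}$ lies in the relative interior of a maximal flat segment $S \subset \partial K_{\min}$ whose endpoints $\mathbf{a}, \mathbf{b}$ are midpoints of critical hexagons $H_a, H_b$, both of which have one edge on the supporting line $\ell$ of $S$. I construct a one-parameter family of centrally symmetric hexagons interpolating between $H_a$ and $H_b$: slide the midpoint continuously along $S$ from $\mathbf{a}$ to $\mathbf{b}$, and let the other five midpoints adjust along $\partial K_{\min}$ so the multi-point relations \eqref{eqn:multi} and \eqref{eqn:sqrt12} are preserved. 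Each edge of the resulting hexagon is the support line to $\partial K_{\min}$ at its midpoint, so the family consists of circumscribing centrally symmetric hexagons with area normalized to $\sqrt{12}$, hence critical. The member whose varying midpoint equals $\mathbf{u}$ is then a critical hexagon with $\mathbf{u}$ as midpoint, contradicting our assumption.

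\textbf{Monotonicity, continuity, and the main obstacle.} For strict monotonicity, given $\multi_0$ and a neighbouring $\multi_0'$ counterclockwise past it, normalize so the critical hexagon at $\multi_0$ has midpoints $\mathbf{s}_j^*$; after cyclic reindexing when $\multi_0'$ has advanced out of $T_0$, Lemma~\ref{lem:tri-interior} applies to the hexagon at $\multi_0'$ and gives each $\multi_i'$ strictly counterclockwise of $\multi_i = \mathbf{s}_i^*$ inside $T_i$. Continuity follows from compactness: if $\multi_0^{(n)} \to \multi_0^*$, a subsequence of $(\multi_1^{(n)},\ldots,\multi_5^{(n)}) \in (\partial K_{\min})^5$ converges to a limit satisfying \eqref{eqn:multi} and \eqref{eqn:sqrt12}; this limit forms a critical hexagon with $\multi_0^*$ as first midpoint, which by uniqueness is the critical hexagon at $\multi_0^*$, so the full sequence converges. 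The main obstacle is the existence argument: one must verify that the six support lines at the sliding midpoints do genuinely close up into a bona fide convex centrally symmetric hexagon throughout the one-parameter family, rather than failing tangency or convexity at some intermediate parameter. The uniqueness and monotonicity/continuity claims follow cleanly from Lemma~\ref{lem:tri-interior} combined with compactness.
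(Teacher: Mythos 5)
Your uniqueness and monotonicity arguments are correct and follow essentially the same line of reasoning as the paper (both rest on Lemma~\ref{lem:tri-interior}). Your continuity argument --- compactness of the space of critical hexagons plus uniqueness forces full convergence of subsequences --- is a clean route and arguably more economical than the paper's, but it is conditional on existence, which is where the real problem lies.

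The existence argument has a genuine gap, and you have identified it yourself, but it is worth spelling out why the gap is not merely a deferred technicality. You propose to slide $\multi_0$ along the flat segment $S$ from $\mathbf{a}$ to $\mathbf{b}$ and "let the other five midpoints adjust along $\partial K_{\min}$ so the multi-point relations are preserved." The multi-point conditions \eqref{eqn:multi} together with membership in $\partial K$ form a heavily overdetermined system: given $\multi_0$, the condition $\det(\multi_0,\multi_2)=\sqrt{3}/2$ cuts $\partial K$ in at most two candidate points $\multi_2$, and then $\multi_4=-\multi_0-\multi_2$ is forced, but it is not at all clear that this forced $\multi_4$ lies on $\partial K$ and satisfies $\det(\multi_2,\multi_4)=\sqrt{3}/2$. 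Proving that a solution exists for each intermediate $\mathbf{v}\in S$ is not obviously easier than proving existence in the original sense --- the interpolation step is essentially equivalent to what you are trying to prove.

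The paper avoids this by turning the argument around into a contradiction that exploits strict monotonicity \emph{simultaneously in all six positions}. Suppose for contradiction that there is a maximal open arc of $\partial K$ containing no midpoint; its endpoints $\multi_0,\mathbf{u}_0$ are midpoints of critical hexagons $H_{\multi}$ and $H_{\mathbf{u}}$. Monotonicity (Lemma~\ref{lem:tri-interior}) forces the corresponding arcs between $\multi_j$ and $\mathbf{u}_j$ to be midpoint-free for \emph{every} $j$. Lemma~\ref{lem:no-corner} then makes each of these six arcs a straight segment lying in an edge of some critical hexagon. This means $H_{\multi}$ and $H_{\mathbf{u}}$ share all six edge lines, hence coincide as hexagons, hence $\multi_j=\mathbf{u}_j$ for all $j$ --- so the arc is degenerate, contradiction. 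The key structural point you miss is that the single flat segment $S$ containing one $\multi_0$-arc is not enough data; you must invoke monotonicity to propagate the midpoint-free condition to all six positions, after which the two bounding hexagons are seen to coincide. This makes the interpolation unnecessary: the flat segment with distinct midpoint endpoints and no interior midpoint simply cannot exist.
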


\begin{proof}
Strict monotonicity is established in the previous lemma.  In
particular, each point on the boundary is the edge midpoint of at most
one critical hexagon.  Continuity follows from monotonicity if we show
that there are no jumps.  We show that every open interval along the
boundary of $K$ contains the edge midpoint of a critical hexagon.

Suppose for a contradiction that an open interval exists without a
such a midpoint.  Then picking the interval to be as large as
possible, there exist critical hexagons and edge midpoints
$\multi_0,\mb{u}_0$ marking the endpoints.  Let
$\multi_0,\ldots,\multi_5$ and $\mb{u}_0,\ldots,\mb{u}_5$ be the
corresponding midpoints of the six edges of these two critical
hexagons.  
% DONE[] TCH 4/16/2023. This next sentence is ill-formed, perhaps you mean to say
%  [We must also have that no point between $\multi_j$ and]
%  $\mb{u}_j$...''?
For each $j$, we claim that
no point on $\partial K$ between $\multi_j$
and $\mb{u}_j$ is an edge midpoint of a critical hexagon.
For otherwise, by
monotonicity the same critical hexagon has an edge midpoint between
$\multi_0$ and $\mb{u}_0$.  By Lemma~\ref{lem:no-corner}, the boundary
segments of $\partial K$ between $\multi_j$ and $\mb{u}_j$ are
straight lines, included in edges of critical hexagons.  This forces
the critical hexagon for $\multi_j$ and $\mb{u}_j$ to be equal (both
hexagons having their edges along these straight lines), and since
these are the edge midpoints $\multi_j=\mb{u}_j$ for all $j$.  Thus,
no such open interval exists.
\end{proof}

% DONE[](Kody 3/31/23): This section ends quite abruptly, with
% Lemmas above not building up to any main theorem. Do we want to have
% a paragraph in conclusion about what else Reinhardt says?

Thus, in summary, excluding the degenerate case when $K$ is a parallelogram,
Reinhardt constructed the hexagon $H_K$ as the centrally symmetric hexagon of
least area containing $K$. He showed that the midpoints of the edges of $H_K$ 
lie on the boundary of $K$. He constructed $h_K$ as the centrally symmetric polygon joining
these midpoints and showed that to achieve the densest lattice
packing of $K$, the plane is tiled by copies of $H_K$. Reinhardt also proved the 
existence of a disk $K_{\min}$ which has the worst greatest lattice packing density, and 
proved properties about its boundary. 

% DONE[]: missing is the expression for the density of the packing --
%   was this known to Reinhardt in 1934? TCH: I don't think he displays the
%   formula for density.  Ledermann and Mahler say that he knew the
%   formula $\Delta(K) = \frac{1}{4} \mathrm{area}(H_K)$, but I wonder
%   if it is explicitly stated in the 1934 paper.  Reinhardt understood
%   that $H_K$ is the fundamental domain for a tiling, which is equivalent
%   to this formula.

\section{Mahler's Approach}\label{sec:mahler}

In the previous chapter, we have mentioned Minkowski's discovery of
the connection between centrally symmetric convex disks and lattices,
which resulted in the famous Minkowski theorem on lattice
points~\cite{minkowski1910geometrie}.  These results initiated
the \emph{geometry of numbers}.  Kurt Mahler rediscovered the Reinhardt
conjecture while attempting to extend Minkowski's results. These
results were published in a series of papers in the 1940s.  We review
his approach in this section.

\index[n]{O@$\mathbf{0}$, origin} 

\begin{definition}[Admissible lattice]\label{def:admissible}
\index{lattice!admissible}\index{admissible}
For a $K \in \Kccs$ centered at the origin, a lattice $\LL$ is called
$K$-admissible if no point of $\LL$ other than $\mathbf{0}=(0,0)$
lies in the interior of $K$.
\end{definition}

The lattice $\LL$ of a centrally symmetric convex disk $K$ is
$K$-admissible if and only if $K/2 + \LL$ is a lattice packing of
$K/2$.  Thus, results about admissible lattices translate readily into
results about lattice packings.

\index[n]{det@$\det(\LL)$, determinant of a lattice}
\index{covolume}
%\index[n]{m@$m$, integer!$m,n\in\Z$}
\index[n]{v@$\mb{v}$, vector!$\mb{v}_i\in\R^2$}
\index[n]{u@$\mb{u}$, vector!$\mb{u}_0,\mb{u}_1\in\R^2$}

%% DONE[TCH 4/15/2023]: e_i should only be used for the standard basis,
%% changing e_i to v_i

\begin{definition}[Determinant of a Lattice]\label{def:lattice-determinant}
  For any lattice $\LL$
  %= \{ m\mb{u}_0 + n\mb{u}_1\mid m,n \in \mathbb{Z} \}$
  with
  basis $\mb{u}_0,\mb{u}_1 \in \R^2$, the \emph{determinant} $\det(\LL)$ is equal
 to the absolute value $|\det(\mb{u}_0,\mb{u}_1)|$. This is also sometimes
 called the \emph{covolume} of the lattice $\LL$.
\end{definition}

\begin{definition}[Minimal determinant]
For $K\in\Kccs$, the minimizer 
\[ 
\Delta(K) := \inf_{K-\mathrm{admissible}} \det(\LL),
\index[n]{zD@$\Delta(K)$, minimal determinant}
\] 
where the infimum runs over all $K$-admissible lattices, is called
the \emph{minimal determinant}\index{determinant, minimal} of the
convex disk $K$.
\end{definition}

\begin{definition}[Critical lattice]
A lattice is called \emph{critical}\index{critical
lattice}\index{lattice!critical} for a convex disk $K$ if its
determinant is equal to the minimal determinant of $K$.
\end{definition}

\begin{definition}[Irreducible disk]\label{def:irreducible}
 A convex disk $K \in \Kccs$ is called \emph{irreducible}\index{irreducible disk} if every
 boundary point of $K$ lies on a critical lattice of $K$.
\end{definition}

We remark that this is not the original definition of irreducibility
of a convex disk. We choose our definition based on of
\cite[Lemma~3]{mahler1947minimum}, which shows that this definition is
equivalent to the original definition.  We reiterate that the most
significant results of this section were known to Reinhardt in 1934,
without using the language of critical lattices, minimal determinants,
and irreducibility.

Minkowski proved the following theorem which gives conditions under
which points on $K$ give rise to critical lattices. 

\begin{theorem}[Minkowski~\cite{minkowski1907diophantische}, Mahler~\cite{mahler1946lattice}]
\label{thm:minkowski-inscribe}
Let $\LL$ be a critical lattice of a convex disk $K \in \Kccs$. Then
$\LL$ contains three points $\multi_0,\multi_1,\multi_2$ on the boundary of $K$ such
that (i) $\multi_0,\multi_1$ is a basis of the lattice $\LL$, and (ii)
$\mb{0}\multi_0\multi_1\multi_2$ is a parallelogram of area $\det(\LL) = |\det(\multi_0,\multi_1)|
= \Delta(K)$, the minimal determinant of $K$. Conversely, if
$\multi_0,\multi_1,\multi_2$ are three points on the boundary of $K$ such that
$\mb{0}\multi_0\multi_1\multi_2$ is a parallelogram, then the area of this parallelogram
is not less than $\Delta(K)$ and is equal to $\Delta(K)$ if and only
if the lattice with basis $\multi_0,\multi_1$ is critical.
\end{theorem}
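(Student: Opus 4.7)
For the forward direction, the plan is to use successive extremality arguments to exhibit three lattice points on $\partial K$ in the parallelogram configuration. First, every critical lattice $\LL$ contains a primitive nonzero lattice point $\multi_0$ on $\partial K$: if $\LL$ had no nonzero point on $\partial K$, then by compactness of $K$ and discreteness of $\LL$ there is some $\epsilon>0$ with $(1+\epsilon)K\cap\LL=\{\mathbf{0}\}$, so $\LL$ is admissible for $(1+\epsilon)K$; but $\Delta((1+\epsilon)K)=(1+\epsilon)^2\Delta(K)>\det(\LL)$, a contradiction. Primitivity of $\multi_0$ follows because $\mathbf{0}$ lies in the interior of $K$ (by central symmetry and nonempty interior), so any nontrivial lattice divisor $\multi_0/k$ with integer $k\ge 2$ would lie on the open segment from $\mathbf{0}$ to $\multi_0$, hence in $\mathrm{int}(K)$, contradicting admissibility.

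Next, I would extend $\multi_0$ to a basis $(\multi_0,\multi_1^*)$ of $\LL$ and decompose $\LL$ into parallel lines $\ell_k=k\multi_1^*+\Z\multi_0$. A shear-plus-scale deformation of $\LL$ fixing $\multi_0$ shows that $\ell_1$ must contain a lattice point on $\partial K$; otherwise $\det(\LL)$ could be strictly decreased while preserving admissibility, contradicting criticality. Call this point $\multi_1$, so $(\multi_0,\multi_1)$ is a basis of $\LL$. A further analysis of the segment $\ell_1\cap K$ (whose length is at most $|\multi_0|$ by admissibility) forces, after possibly shifting $\multi_1$ by an integer multiple of $\multi_0$, that $\multi_2:=\multi_1-\multi_0$ also lies on $\partial K$. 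The parallelogram $\mathbf{0}\multi_0\multi_1\multi_2$ then has area $|\det(\multi_0,\multi_1)|=\det(\LL)=\Delta(K)$.

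For the converse, given $\multi_0,\multi_1,\multi_2=\multi_1-\multi_0\in\partial K$ forming a parallelogram with $\mathbf{0}$, let $\LL'$ be the lattice with basis $(\multi_0,\multi_1)$. By central symmetry, the six points $\pm\multi_0,\pm\multi_1,\pm\multi_2$ all lie on $\partial K$. To establish $K$-admissibility of $\LL'$, I would reduce any nonzero lattice point $p=m_0\multi_0+m_1\multi_1$ via divisibility to the primitive case. For primitive $p$ outside the six vertices, I would exhibit a convex combination $\lambda p+(1-\lambda)v=r$ with $\lambda\in(0,1]$ and $v,r$ drawn from the six vertices; assuming $p\in\mathrm{int}(K)$ then forces $r\in\mathrm{int}(K)$ (since $r$ is a convex combination of an interior and a boundary point with positive weight on the interior point), contradicting $r\in\partial K$. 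Hence $\LL'$ is admissible and $|\det(\multi_0,\multi_1)|=\det(\LL')\ge\Delta(K)$, with equality precisely when $\LL'$ is critical.

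The main obstacle is the deformation step in the forward direction: constructing an admissibility-preserving, determinant-decreasing deformation of $\LL$ whenever $\ell_1$ lacks a boundary lattice point. One must carefully track all non-basis lattice points during the shear-and-scale to ensure none enters $\mathrm{int}(K)$—this is delicate because the non-basis lattice points lie on lines $\ell_k$ for $|k|\ge 2$, and their proximity to $\partial K$ governs how far the lattice can be contracted before admissibility breaks.
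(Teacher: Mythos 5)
The paper does not prove Theorem~\ref{thm:minkowski-inscribe}; it cites it to Minkowski~\cite{minkowski1907diophantische} and Mahler~\cite{mahler1946lattice} as a classical result and moves on, so there is no in-paper argument to compare against. Judging your proposal on its own: your forward direction is structured correctly. The extremality argument producing a boundary lattice point $\multi_0$ (shrink $K$ to $(1+\epsilon)^{-1}K$, compare determinants) and the primitivity argument are both sound, and the shear-and-scale deformation to force a boundary point on the adjacent line $\ell_1$ is the standard classical route; you rightly flag the bookkeeping on the remaining lattice lines as the delicate part.

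The converse, however, has a concrete gap. You assert that every primitive $p\in\LL'$ not among the six hexagon vertices admits a relation $r=\lambda p+(1-\lambda)v$ with $\lambda\in(0,1]$ and \emph{both} $r$ and $v$ drawn from $\{\pm\multi_0,\pm\multi_1,\pm\multi_2\}$. This is false. In coordinates $\multi_0=(1,0)$, $\multi_1=(0,1)$, $\multi_2=\multi_1-\multi_0=(-1,1)$, take $p=5\multi_0+2\multi_1=(5,2)$, which is primitive. The fifteen lines through pairs of the six vertices are $x+y\in\{0,\pm1\}$, $x+2y=\pm1$, $y\in\{0,\pm1\}$, $y-x=\pm1$, $x\in\{0,\pm1\}$, $2x+y=\pm1$; none contains $(5,2)$, so no vertex is collinear with $p$ and another vertex, and the claimed convex combination does not exist.

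The fix is to let $v$ range over the whole inscribed hexagon $h=\mathrm{conv}\{\pm\multi_0,\pm\multi_1,\pm\multi_2\}\subseteq K$ rather than only its vertices, and to phrase the argument via the Minkowski gauge $g_K$. Since $g_K$ is a norm with $g_K(\multi_0)=g_K(\multi_1)=g_K(\multi_2)=1$, if some vertex $r$ satisfies $r=\lambda p+(1-\lambda)v$ with $\lambda\in(0,1)$ and $v\in h$, then $1=g_K(r)\le\lambda g_K(p)+(1-\lambda)$, hence $g_K(p)\ge1$ and $p\notin\mathrm{int}(K)$. One then verifies that the six ``shielding'' cones at the vertices (for example, at $\multi_0$ this cone is $\{x\ge1,\ x+y\ge1\}$) cover every lattice point of $\LL'$ other than $\mathbf{0}$ and the six vertices; this is a short integer case analysis on $(m,n)$ with $p=m\multi_0+n\multi_1$. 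For your failed example, $\multi_0=\tfrac15 p+\tfrac45 v$ with $v=(0,-1/2)\in h$ works. With that repair, your reduction to the primitive case and the rest of the converse go through.
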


\index{hexagon!inscribed}
\index[n]{hK@$h_K$, inscribed hexagon}
\index[n]{hK@$h_K$, inscribed hexagon!$h$, hexagon}

The parallelogram of the theorem above is shown in Figure~\ref{fig:multi-curve-oct}. 

Since centrally symmetric hexagons can be decomposed into three
parallelograms, the above result shows that a critical lattice of a
convex disk $K \in \Kccs$ gives rise to an \emph{inscribed centrally
symmetric hexagon} $h_K$ within our convex disk $K$ so that
$\Delta(K)=\op{area}(h_K)/3$ which is \emph{minimal} in the sense
that
\[ 
\op{area}(h_K) = \inf_{h} \op{area}(h),
\]
where the infimum is taken over all hexagons $h$ with vertices
$\multi_0,\multi_1,\multi_2$ (and their reflections about $\mb{0}$) on the 
boundary of $K$ and with $\multi_0 - \multi_1 + \multi_2 = 0$. 
In 1947, Mahler~\cite{mahler1947minimum} proved an analogous
result for \emph{circumscribed hexagons} of $K$:

\index[n]{L@$\ell_i$, support line} 

\begin{theorem}[Mahler~\cite{mahler1947minimum}]\label{thm:mahler-circumscribe}
Let $K \in \Kccs$ be a convex disk which is not a parallelogram. Let
$\LL$ be a critical lattice of $K$ with lattice point
$\multi_0,\ldots,\multi_5$ on the boundary of $K$ satisfying $\multi_0
- \multi_1 + \multi_2= 0$ and $\multi_{j+3}=-\multi_j$.  Then there
are unique symmetric support lines $\ell_j$ of $K$ at
these points, such that
\begin{enumerate}
\item 
no two of these lines coincide;
\item 
the area of the centrally symmetric hexagon $H_K$ bounded by the
support lines is given by $\op{area}(H_K) = 4\Delta(K)$;
\item 
each side of $H_K$ is bisected at the lattice point $\multi_j$ where
it touches the boundary of $K$;
\item 
the hexagon $H_K$ is \emph{minimal} in the sense that 
\[ 
\op{area}(H_K) = \inf_{H} \op{area}(H),
\]
where the infimum is taken over the set of all hexagons $H$ 
bounded by symmetric support lines % $\ell_j',\Tilde{\ell_j'}$ for $j=0,1,2$ 
of the convex disk $K$.
\end{enumerate}
\end{theorem}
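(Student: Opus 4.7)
The approach is to parametrize candidate hexagons by a single vector using the bisection condition, check that the area is pinned down automatically, and then use Reinhardt's least-area centrally symmetric circumscribed hexagon to produce and pin down the parameter.

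First I would exploit the bisection requirement~(3). Given any starting vertex $v_0$ of the prospective hexagon $H_K$, the identity $v_{j+1}=2\multi_j-v_j$ determines all six vertices: $v_1=2\multi_0-v_0$, $v_2=2\multi_2+v_0$, and the central symmetry $v_{j+3}=-v_j$ is automatic from $\multi_{j+3}=-\multi_j$ and $\multi_0-\multi_1+\multi_2=0$. A direct shoelace computation then yields
\[
\op{area}(H_K) \;=\; \bigl|\det(v_0,v_1)+\det(v_1,v_2)+\det(v_0,v_2)\bigr| \;=\; 4\,|\det(\multi_0,\multi_2)| \;=\; 4\Delta(K),
\]
independent of $v_0$. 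Property~(2) is therefore automatic whenever $v_0$ produces a valid convex hexagon, and the remaining task is to show that there is a unique $v_0$ for which the line $\ell_j$ through $\multi_j$ with direction $\multi_j-v_j$ is a support line of $K$ at $\multi_j$ for every $j$.

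Existence follows from Reinhardt's construction around Lemma~\ref{lem:reinhardt-tiling}. Rerunning that construction with the six given boundary points $\multi_j$ as the forced edge midpoints produces a centrally symmetric hexagon $\tilde H$ circumscribing $K$ whose edges are support lines of $K$ at the $\multi_j$; by Theorem~\ref{thm:minkowski-inscribe} and the area formula above, $\op{area}(\tilde H)=4\Delta(K)$ and $\tilde H$ is a least-area centrally symmetric circumscribed hexagon of $K$. This gives at least one admissible $v_0$ expressed in terms of the $\multi_j$.

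For uniqueness and condition~(1), I would combine the support-cone analysis at each $\multi_j$ with the area invariance. At every smooth $\multi_j$ the support direction is forced, pinning $v_0$ down via the constraint that $\multi_j-v_j$ is parallel to the unique tangent to $\partial K$ at $\multi_j$. When some $\multi_j$ is a corner, a cone of admissible directions is a priori available; but any two distinct admissible parameters $v_0^a\ne v_0^b$ would produce two distinct centrally symmetric hexagons of area $4\Delta(K)$ sharing the midpoints $\multi_j$, and re-circumscribing their intersection with a centrally symmetric hexagon would yield one of strictly smaller area still containing $K$, contradicting the minimality of $\tilde H$. Property~(4) then follows from (2) together with this minimality, and (1) holds because coincidence of two of the $\ell_j$ would collapse $H_K$ into a parallelogram and force $K$ itself to be a parallelogram, contrary to hypothesis.

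\textbf{Main obstacle.} The delicate step is the uniqueness argument when several of the $\multi_j$ are corners of $K$. One must show that the intersection of two candidate hexagons with common midpoints $\multi_j$ can always be resymmetrized into a strictly smaller centrally symmetric circumscribing hexagon of $K$. This parallels the shaving and separating-line techniques used in Reinhardt's proofs of Lemmas~\ref{lem:reinhardt-tiling} and~\ref{lem:no-corner} and is the main technical content of Mahler's original argument.
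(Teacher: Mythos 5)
The paper cites this theorem to Mahler (1947) without reproducing a proof, so there is no in-text argument to compare against. On its own merits, your parameterization of candidate hexagons by the single vertex $v_0$ and the shoelace computation showing $\op{area}(H(v_0))=4|\det(\multi_0,\multi_2)|=4\Delta(K)$ are correct, and they cleanly reduce property (2) to property (3).

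The uniqueness step, however, does not go through as written. Every centrally symmetric hexagon $H$ circumscribing $K$ already has $\op{area}(H)\ge 4\Delta(K)$: its edge midpoints generate a $K$-admissible lattice $\LL'$ for which $H$ is a fundamental domain of $2\LL'$, so $\op{area}(H)=4\det(\LL')\ge 4\Delta(K)$ (this is exactly the tiling argument used in the proof of Theorem~\ref{thm:min-critical}, and it is independent of the present theorem). Hence whatever centrally symmetric hexagon you circumscribe $H^a\cap H^b$ with, its area cannot be strictly smaller than $4\Delta(K)$, and the contradiction you are aiming for never materializes; the strict inequality you assert is precisely the statement that remains to be proved, so the argument is circular. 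The actual content of Mahler's uniqueness claim is that the three support-cone constraints on the two real degrees of freedom of $v_0$ (the direction $v_1-v_0$ must lie in the support cone at $\multi_0$, and likewise at $\multi_1,\multi_2$) admit exactly one common solution, and this must be argued directly — for example by showing that each $\ell_j$ is forced to be a specific separating line between $K$ and $K+2\multi_j$, using the admissibility of $\LL$. The existence step is also underspecified: Lemma~\ref{lem:reinhardt-tiling} produces a circumscribing hexagon from a packing, but it does not allow one to "force" the edge midpoints to be the $\multi_j$. You would need to show that in the tiling associated to the critical packing $K/2+\LL$, translation compatibility between a tile and its neighbor across the common edge forces that edge to be bisected by the relevant lattice contact point; that is a real step your sketch omits.
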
 

\begin{figure}
\centering
\includegraphics{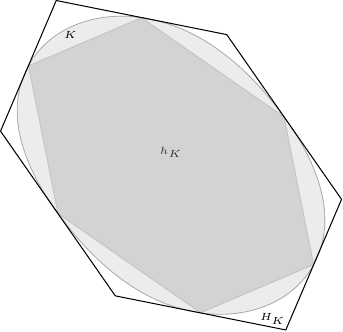}
\caption{Critical hexagons for an ellipse.}
\label{fig:hexagons}
\end{figure}

\noindent
By construction, $h_K,H_K \in \Kccs$. 
The hexagons $H_K$ and $h_K$ for an ellipse are shown in Figure
\ref{fig:hexagons}.  
The critical lattice of any $K \in \Kccs$ gives rise to hexagons
$h_K$ and $H_K$ whose areas are related as
\begin{equation}\label{eqn:outer-inner-area}
\Delta(K) = \frac{1}{3}\op{area}(h_K) = \frac{1}{4}\op{area}(H_K).
\end{equation}

% Balanced removed 
\begin{definition}[Critical Hexagon]\label{def:critical-hexagon}
\index[n]{HK@$H_K$, critical hexagon} For a convex disk $K \in \Kccs$,
a hexagon $H_K$ given by the construction of
Theorem~\ref{thm:mahler-circumscribe} is called a \emph{critical
  hexagon}\index{hexagon!critical}.
\end{definition}

In the section on Reinhardt's approach, we defined 
critical hexagons differently.  The following theorem shows that the
definitions are compatible.

\begin{theorem}\label{thm:min-critical} Let $K\in\Kccs$.  Let $H_K$ be a centrally
symmetric hexagon of least area circumscribing $K$.  Then $H_K$ is a
critical hexagon in the sense of
Definition~\ref{def:critical-hexagon}.
\end{theorem}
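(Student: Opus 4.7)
The plan is to exhibit a $K$-admissible lattice $\LL$ determined by $H_K$ whose determinant equals $\Delta(K)$, so that $H_K$ is precisely the hexagon that Theorem~\ref{thm:mahler-circumscribe} produces from this critical lattice.

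First, I would record the basic geometric properties of $H_K$. Let $\multi_0,\multi_1,\ldots,\multi_5$ be the edge midpoints of $H_K$ ordered counterclockwise. By the hyperbolic-envelope argument sketched in Section~\ref{sec:reinhardt-approach}, each $\multi_j$ lies on $\partial K$; otherwise a small rotation of that edge about a nearby contact point of $H_K$ with $K$ would yield a smaller circumscribing centrally symmetric hexagon. Central symmetry of $H_K$ forces $\multi_{j+3}=-\multi_j$, and writing each midpoint as an average of two adjacent vertices (which are antipodal in pairs) produces the multi-point relation $\multi_0-\multi_1+\multi_2=0$.

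Next, I would set $\LL$ to be the lattice generated by $\multi_0$ and $\multi_1$, and I would show that $\LL$ is $K$-admissible. The centrally symmetric hexagon $H_K$ tiles the plane under the sublattice $\LL':=2\LL$ spanned by $2\multi_0$ and $2\multi_1$, and because $K\subseteq H_K$, the translates $K+\ell$, $\ell\in\LL'$, have pairwise disjoint interiors. Equivalently, $K/2+\LL$ is a packing, so $\LL$ is $K$-admissible by the remark following Definition~\ref{def:admissible}. Hence $\det(\LL)\ge\Delta(K)$, and
\[
\op{area}(H_K)\;=\;\det(\LL')\;=\;4\det(\LL)\;\ge\;4\Delta(K).
\]
For the reverse inequality, I would apply Theorem~\ref{thm:mahler-circumscribe} to any critical lattice of $K$ to produce a centrally symmetric circumscribing hexagon of area exactly $4\Delta(K)$. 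The minimality hypothesis on $H_K$ gives $\op{area}(H_K)\le 4\Delta(K)$, and combining the two bounds forces $\det(\LL)=\Delta(K)$, i.e.\ $\LL$ is a critical lattice of $K$.

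Finally, I would identify $H_K$ with the Mahler hexagon of $\LL$. The triple $\multi_0,\multi_1,\multi_2$, with $\multi_0-\multi_1+\multi_2=0$ and $\multi_0,\multi_1$ a basis of the critical lattice $\LL$, is of the form required in Theorem~\ref{thm:mahler-circumscribe}; each edge of $H_K$ is a symmetric support line of $K$ at the corresponding $\multi_j$ (again by minimality of the area, together with the fact that the midpoints lie on $\partial K$); and the six edges bound a hexagon of area $4\Delta(K)$. Uniqueness of the symmetric support lines in Theorem~\ref{thm:mahler-circumscribe} then identifies $H_K$ with the hexagon prescribed there, so $H_K$ is a critical hexagon in the sense of Definition~\ref{def:critical-hexagon}. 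The main obstacle is the admissibility step: one must carefully use the tiling property of centrally symmetric hexagons and the containment $K\subseteq H_K$ to obtain admissibility of $\LL$ itself (rather than merely of $2\LL$ for $K$); once $\det(\LL)\ge\Delta(K)$ is secured, closing the proof against Mahler's upper bound is a clean area comparison.
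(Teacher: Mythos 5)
Your proof is correct and follows essentially the same route as the paper: both take $\LL$ to be the lattice generated by the edge midpoints of $H_K$, show $\LL$ is $K$-admissible (you via the $K/2+\LL$ packing equivalence noted after Definition~\ref{def:admissible}, the paper by directly checking that odd-coordinate lattice points are edge midpoints of neighboring tiles), then establish criticality by comparing $\op{area}(H_K)$ against $4\Delta(K)$ using Theorem~\ref{thm:mahler-circumscribe} and the minimality hypothesis, and finally identify $H_K$ with the Mahler hexagon by taking $\ell_j$ to be the edge lines. Your presentation of the two-sided area inequality is if anything slightly more carefully ordered than the paper's, but the underlying argument is the same.
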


\begin{proof} 
Let $\mb{s}_j$ be the edge midpoints of $H_K$.  These points lie on
the boundary of $K$.  Since $H_K$ is centrally symmetric, the points
$\mb{s}_j$ satisfy the multi-point conditions \eqref{eqn:multi}.

Tile the plane with translates of $H_K$.  Let $\LL$ be the lattice
generated by $\mb{s}_0$ and $\mb{s}_1$.  The centers of the tiles form
the sublattice $2\LL$ generated by $2\mb{s}_0$ and $2\mb{s}_1$.  A
lattice point with one or two odd coordinates is the midpoint of an
edge of a translate of $H_K$ centered at an adjacent even lattice
point. Thus, none of the nonzero lattice points of $\LL$ lie in the
interior of $K$.  That is, $\LL$ is $K$-admissible.  Note that $H_K$
is a fundamental domain for the lattice $2\LL$.  Thus,
$4\Delta(K)=\det(2\LL)=\mathrm{area}(H_K)$.

We claim that $\LL$ is critical.  Let $\tilde\LL$ be a critical
lattice with critical hexagon $\tilde H_K$.  Since $H_K$ and the
critical hexagon $\tilde H_K$ both have smallest area among centrally
symmetric circumscribing hexagons, their areas must be equal.  They
are both fundamental domains for the corresponding even sublattices
$2\LL$ and $2\tilde\LL$. Hence the determinants of the lattices are
equal.  This implies that $\LL$ is a critical lattice.

It is now easy to see that the properties of
Theorem~\ref{thm:mahler-circumscribe} all hold for $H_K$, if we take
$\ell_j$ to be the line through the $j$th edge of $H_K$.
\end{proof}

\begin{corollary} Let $K=K_{\min}$ have worst greatest packing density.
  Then $K$ is irreducible.
\end{corollary}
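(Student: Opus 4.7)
The plan is to combine the continuity-of-midpoints result for $K_{\min}$ with the equivalence of the two notions of critical hexagon, noting that edge midpoints of a critical hexagon are precisely the relevant lattice points on the boundary.

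First, I would start with an arbitrary boundary point $\mb{u}\in\partial K_{\min}$. By Lemma \ref{lem:multi-cont}, every boundary point of $K_{\min}$ is the midpoint of an edge of a (unique) critical hexagon $H$ in Reinhardt's sense, that is, $H$ is a centrally symmetric hexagon of least area circumscribing $K_{\min}$. So I can write $\mb{u}=\multi_j$ for some $j$, where $\multi_0,\ldots,\multi_5$ are the edge midpoints of $H$.

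Next, I would invoke Theorem \ref{thm:min-critical} to upgrade $H$ from a least-area circumscribing hexagon to a critical hexagon in the sense of Definition \ref{def:critical-hexagon}. That theorem, proved via the tiling construction, produces a critical lattice $\LL$ (generated by $\mb{s}_0,\mb{s}_1$ in the notation there) for which $H$ is exactly the hexagon arising from the construction of Theorem \ref{thm:mahler-circumscribe}. In particular, property (3) of Theorem \ref{thm:mahler-circumscribe} says that each side of $H$ is bisected at a lattice point $\multi_j\in\LL$ which lies on the boundary of $K_{\min}$. Therefore the boundary point $\mb{u}=\multi_j$ lies on the critical lattice $\LL$.

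Since $\mb{u}$ was arbitrary, every boundary point of $K_{\min}$ lies on some critical lattice, which is exactly the definition of irreducibility (Definition \ref{def:irreducible}), so $K_{\min}$ is irreducible. There is essentially no obstacle: the substance has already been done in Lemma \ref{lem:multi-cont} (monotonicity plus no jumps) and in Theorem \ref{thm:min-critical} (reconciling Reinhardt's and Mahler's notions of criticality). The only thing to check is the bookkeeping that the parallelogram case is genuinely excluded, which follows because a parallelogram has greatest packing density $1$ and so cannot be $K_{\min}$.
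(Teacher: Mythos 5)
Your proof is correct and follows essentially the same route as the paper's: use Lemma \ref{lem:multi-cont} to put any boundary point at an edge midpoint of a least-area circumscribing hexagon, then invoke Theorem \ref{thm:min-critical} (and its proof, which explicitly produces the critical lattice generated by $\mb{s}_0,\mb{s}_1$) to see that this midpoint is a point of a critical lattice, which is exactly Definition \ref{def:irreducible}. The paper gives the same two-step argument, just more tersely.
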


\begin{proof} By Reinhardt, every boundary point of $K$ is
  a midpoint of an edge of an area-minimizing centrally symmetric
  circumscribing hexagon $H_K$.  By Theorem \ref{thm:min-critical} and
  its proof, the boundary point is a lattice point of a critical
  lattice $\LL$.
\end{proof}

\section{Boundary Parameterization of Minimizer}\label{sec:kmin}\label{sec:boundary-parameterization}

We return to Reinhardt's setup from
Section~\ref{sec:reinhardt-approach}.  Let $K=K_{\min}$ be a centrally
symmetric convex disk that gives the worst greatest density.

We have seen that each point $\mb{s}_0$ on the boundary of $K$ is associated
with other points $\mb{s}_1,\ldots,\mb{s}_5$ that satisfy the multi-point
conditions \eqref{eqn:multi}, with area normalization \eqref{eqn:sqrt12}.

\index{multi-curve}
\index[n]{zs@$\sigma_i$, multi-curve}

\index[n]{t@$t\in\R$, time}

Since the boundary of $K$ does not contain any corners, we can
parameterize the boundary by a regular $C^1$ curve
$t \mapsto \sigma_0(t)$, traversing the boundary in
the \emph{counter-clockwise direction}. We will call this the \emph{positive
orientation}. Then by the above discussion,
the point $\sigma_0(t)$ gives rise to other points
$\sigma_1(t),\ldots,\sigma_5(t)$ which
are subject to the multi-point conditions at each time $t$:
\begin{equation}\label{eqn:multi-point-properties}
\sigma_j(t) + \sigma_{j+2}(t) + \sigma_{j+4}(t)= 0,\quad
%\\ 
% corrected 6/22/2023
\sigma_{j+3}(t) = -\sigma_j(t),\quad
%\\ 
\det(\sigma_j(t),\sigma_{j+1}(t)) = \frac{\sqrt{3}}{2}.
\end{equation}
\index{positive orientation}
\index{multi-point}
\index[n]{C@$C^k$, differentiability class}

\begin{definition}[Multi-point and multi-curve]\label{def:multi-pt-multi-curve}
  A function $\multi : \Z/6\Z \to \R^2$ such that
  is called a \emph{multi-point} if it satisfies the multi-point
  conditions~\eqref{eqn:multi} with normalization \eqref{eqn:sqrt12}.
  An indexed set of
  $C^k$ curves $\sigma:\Z/6\Z\times[0,t_f]\to\R^2$ is
  a $C^k$ \emph{multi-curve}\index{multi-curve} if for all $t\in[0,t_f]$,
  $j\mapsto\sigma_j(t)$ is a multi-point.  
\end{definition}
If the differentiability
  class $C^k$ is not specified, then $C^1$ is assumed.
The regularity of the curves will be established in the next section.

\begin{example}
\normalfont\leavevmode
\begin{itemize}
\item The collection of sixth roots of unity $\mb{s}_j^* = \exp\left(\frac{2\pi
    ij}{6}\right) \in \C$, viewed as points in $\R^2$, is an
    example of a multi-point.
\item 
If $\theta:[0,t_f]\to\R$ is a $C^1$ curve, then the rotation
\[
\sigma_j(t) = 
\begin{pmatrix}
\cos(\theta(t))&-\sin(\theta(t))\\
\sin(\theta(t))&\cos(\theta(t))
\end{pmatrix} \multi_j^*
\] 
of a multi-point
is an example of a multi-curve.
\item Section \ref{sec:hypotrochoids} in the Appendix gives an example
  of a hypotrochoid curve\index{hypotrochoid} in $\R^2$ which
  satisfies the multi-curve properties in Equation
  \eqref{eqn:multi-point-properties}.
\end{itemize}

\end{example}

\subsection{Regularity Properties of Multi-Curves}

%% DONE[TCH 5/4/2024. Previous edits have resolved the issue.]
%% I think this lemma is using more than the balanced
%%   hypothesis. It is using the minimizing properties from
%%   Lemma~\ref{lem:tri-interior}.  I have been replacing the balanced
%%   hypothesis in lemmas with what I think appropriate.  I haven't
%%   tried to weaken the hypothesis of Lemma~\ref{lem:tri-interior}, but
%%   it might be possible.  
%%   TCH 4/19/2024: I have forgotten why we need the lemma to be
%%   more general than for Kmin.

\begin{lemma}[Hales~\cite{hales2011}]\label{lem:sigma2:c1}
  Let $K=K_{\min}\in\Kccs$ be a centrally symmetric convex disk that has
  the worst greatest packing density.  Consider a $C^0$ multi-curve
  $\sigma$ parameterization of its boundary.  If $t\mapsto\sigma_0(t)$
  is a positively oriented $C^1$ regular curve parameterizing the
  boundary of $K$, then so is $\sigma_j(t)$ for $j=1,\dots,5$.
\end{lemma}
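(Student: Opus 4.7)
The plan is to exploit the determinant relation $\det(\sigma_j(t),\sigma_{j+1}(t))=\sqrt{3}/2$ via the implicit function theorem. Since $\sigma_0$ is a $C^1$ regular parameterization of $\partial K$, and since by Lemma~\ref{lem:multi-cont} each $\sigma_j$ advances strictly monotonically and continuously around $\partial K$, we may write $\sigma_j(t)=\sigma_0(s_j(t))$ for a strictly increasing continuous $s_j$. It then suffices to show that each $s_j$ is $C^1$ with $s_j'(t)\neq 0$. By central symmetry $\sigma_{j+3}=-\sigma_j$, only the cases $j=1$ and $j=2$ require work.

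For $j=1$, set $F(t,s):=\det(\sigma_0(t),\sigma_0(s))-\sqrt{3}/2$, which vanishes when $s=s_1(t)$. The implicit function theorem will yield $s_1\in C^1$ provided
\[
\frac{\partial F}{\partial s}(t,s_1(t))=\det(\sigma_0(t),\sigma_0'(s_1(t)))\neq 0,
\]
while regularity $s_1'(t)\neq 0$ requires in addition
\[
\frac{\partial F}{\partial t}(t,s_1(t))=\det(\sigma_0'(t),\sigma_1(t))\neq 0.
\]

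The main obstacle is verifying these two non-degeneracy conditions, and both will follow from the same geometric principle. Suppose the first fails, so that the tangent to $\partial K$ at $\sigma_1(t)$ is parallel to $\sigma_0(t)$. Then the support line at $\sigma_1(t)$ passes through $\sigma_1(t)$ with direction $\sigma_0(t)$, and therefore contains the point $\sigma_2(t)=\sigma_1(t)-\sigma_0(t)$ supplied by the multi-point relation. Since $\partial K$ is $C^1$, the two points $\sigma_1(t)$ and $\sigma_2(t)$ lie on a common flat face of $\partial K$, so the tangent line at $\sigma_2(t)$ coincides with the support line at $\sigma_1(t)$. The sides indexed $1$ and $2$ of the critical hexagon from Theorem~\ref{thm:mahler-circumscribe} are then adjacent sides lying on a common line, which is impossible for a convex hexagon; non-degeneracy here is guaranteed because the six midpoints lie in the open triangles $T_j$ by Lemma~\ref{lem:tri-interior} and are therefore distinct. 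The condition $\det(\sigma_0'(t),\sigma_1(t))\neq 0$ is handled identically: if it failed, then $\sigma_5(t)=\sigma_0(t)-\sigma_1(t)$ would lie on the support line at $\sigma_0(t)$, forcing the adjacent sides indexed $5$ and $0$ of the critical hexagon to be collinear, the same contradiction.

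With $\sigma_1$ now $C^1$ regular, the case $j=2$ proceeds identically, replacing $\sigma_0$ by $\sigma_1$ throughout: apply the implicit function theorem to $G(t,s):=\det(\sigma_1(t),\sigma_1(s))-\sqrt{3}/2=0$ defining $\sigma_2(t)=\sigma_1(\tilde s(t))$, and use the multi-point identity $\sigma_3=\sigma_2-\sigma_1$ to obtain the same hexagon-collinearity contradiction whenever either partial derivative of $G$ vanishes. The cases $j=4,5$ then follow from $\sigma_{j+3}=-\sigma_j$. In summary, the non-degeneracy of the critical hexagon---no two adjacent edges collinear---precisely matches the non-vanishing hypothesis required by the implicit function theorem at every step.
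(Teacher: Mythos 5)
Your proof is correct and rests on the same core principle as the paper's — differentiate the constancy of $\det(\sigma_j,\sigma_{j+1})$ and verify a geometric non-degeneracy — but it is implemented by a genuinely different route. You apply the implicit function theorem to $F(t,s)=\det(\sigma_0(t),\sigma_0(s))-\sqrt3/2$ and induct through consecutive curves $\sigma_0\to\sigma_1\to\sigma_2$, so that the regularity of each $\sigma_{j+1}$ is pulled out of the regularity of $\sigma_j$ via the two partial derivatives of $F$. The paper instead works directly with the pair $(\sigma_0,\sigma_2)$: it uses the continuity of the unit tangent $\mb{u}(t)$ at $\sigma_2$ (which comes from the no-corner property), reads off the would-be speed $s_2(t)$ from the differentiated area relation, constructs a candidate curve $\tilde\sigma_2$ by integration, and identifies $\tilde\sigma_2=\sigma_2$ by an a posteriori uniqueness check. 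Your IFT route is more automatic and avoids the explicit construction, at the modest cost of one extra induction step.

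The second difference is in the non-degeneracy argument. You trace a vanishing partial derivative, via the identities $\sigma_2=\sigma_1-\sigma_0$ and $\sigma_5=\sigma_0-\sigma_1$, to a coincidence of two adjacent support lines of the critical hexagon, and close the contradiction with item (1) of Theorem~\ref{thm:mahler-circumscribe}. The paper instead observes that the same failure forces the tangent to run along an edge of the inscribed hexagon $h_K(t)$ and appeals to Lemma~\ref{lem:tri-interior}. Your appeal to Theorem~\ref{thm:mahler-circumscribe} is arguably the cleaner and more direct closing move. One small imprecision in your writeup: the supplementary invocation of Lemma~\ref{lem:tri-interior} to assert "the six midpoints ... are therefore distinct" does not by itself rule out the degeneracy — two distinct midpoints can perfectly well lie on a common line — and the real force is carried by the statement in Theorem~\ref{thm:mahler-circumscribe}(1) that no two of the six support lines coincide, together with the no-corner property, which identifies those support lines with the tangent lines. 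Everything else in your argument is sound.
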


Positively-oriented regular $C^1$ parameterizations $\sigma_0$ exist
for the boundary of $K$. For example, the arclength parameterization
has this property.  By Lemma~\ref{lem:multi-cont}, the curves
$\sigma_j(t)$ are continuous.

\begin{proof}

\index[n]{s@$s$, real parameter!speed}
\index[n]{0@$-'$, derivative} 
\index[n]{u@$\mb{u}$, vector!unit tangent}

Given that $\sigma_0'(t)$ is
continuous, we show that $\sigma_2'(t)$ exists and is
continuous~\cite[Lemma~11]{hales2011}.  Since $K$ has no corners, the
unit tangent $\mb{u}(t)$ to $\sigma_2(t)$, with the orientation given
by $\sigma_0$, is a continuous function of $t$.  It is enough to check
that the speed $s_2$ of $\sigma_2$ is continuous in $t$.  We know that
$\det(\sigma_0(t),\sigma_2(t))$ does not depend on $t$.

We claim that $\det(\sigma_0(t),\mb{u}(t))\ne 0$.  Let $h_K(t)$ be the
hexagon given by the convex hull of $\{\sigma_j(t)\}$.  If
$\det(\sigma_0(t),\mb{u}(t))=0$, then the tangent line to $\sigma_2$
at $t$ contains the edge of $h_K(t)$ through $\sigma_2(t)$ and
$\sigma_1(t)$.  This is contrary to Lemma~\ref{lem:tri-interior}.
In fact, $\det(\sigma_0(t),\mb{u}(t))<0$.
Similarly, we claim that $\det(\sigma_0'(t),\sigma_2(t))\ne0$.
Otherwise the tangent line to $\sigma_0$ at $t$ lies along another
edge of $h_K(t)$, which is contrary to Lemma~\ref{lem:tri-interior}.
In fact, $\det(\sigma_0'(t),\sigma_2(t))<0$.

\index[n]{s@$s$, real parameter!speed}

Define a positive continuous function $s_2:\R\to(0,\infty)$ by the equation
\begin{equation}
\label{eqn:sB}
\det(\sigma_0'(t),\sigma_2(t)) + \det(\sigma_0(t),\mb{u}(t))s_2(t) = 0.
\end{equation}
The curve
\[
\tilde\sigma_2(t):= \int_{t_0}^t \mb{u}(t)s_2(t) dt + \sigma_2(t_0),
\]
has the same initial value at $t=t_0$ as $\sigma_2$, 
the same tangent direction for all $t$, and
satisfies the same area relation
\[
\det(\sigma_0(t),\tilde\sigma_2(t))=\det(\sigma_0(t),\sigma_2(t))
=\sqrt{3}/2
\]
by \eqref{eqn:sB}.
We conclude that $\sigma_2=\tilde\sigma_2$ and that
$\sigma_2'(t)=\mb{u}(t)s_2(t)$.  The regularity condition is
$s_2(t)\ne0$.  (Compare the proof of Lemma~\ref{lem:star-conditions}.)

Similar statements for other curves $\sigma_j$ follow by iteration over $j$. 
% DONE[] Fill in these proofs. No need. We're done by symmetry.
\end{proof}

\begin{lemma}\label{lem:tangents-lipschitz}
    Let $\sigma(t)$ denote a $C^1$ multi-curve parameterization of the
    boundary of $K=K_{\min}\in\Kccs$, giving worst greatest packing density.
    Assume that the curve $\sigma_0$ is parameterized according to
    arclength. Then, the tangents $\sigma_j'$ are Lipschitz
    continuous\index{Lipschitz!continuity} for all $j$.
\end{lemma}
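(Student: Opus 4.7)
The plan is to obtain a uniform upper bound on the curvature of $\partial K_{\min}$ from Remark~\ref{rem:arc-hyperbola}, which immediately yields Lipschitz continuity of $\sigma_0'$ via the arclength parameterization, and then to propagate this regularity to each $\sigma_j'$ using the explicit derivative formula in Lemma~\ref{lem:sigma2:c1}.

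First, for each $t_0\in[0,t_f]$ and each index $j$, the critical hexagon $H_K(t_0)$ gives (via the sliding-edge construction preceding Remark~\ref{rem:arc-hyperbola}) a hyperbolic envelope $\mathcal{H}_{j,t_0}$ tangent to $\partial K_{\min}$ at $\sigma_j(t_0)$, whose asymptotes are the two edges of $H_K(t_0)$ adjacent to the edge through $\sigma_j(t_0)$. By Remark~\ref{rem:arc-hyperbola}, an arc of this hyperbola is contained in $K_{\min}$ near the midpoint. Writing both $\partial K_{\min}$ and $\mathcal{H}_{j,t_0}$ as local graphs $y=f_K(x)$ and $y=f_{\mathcal{H}}(x)$ over the common tangent line, and using that $K_{\min}$ lies on the interior side of $\partial K_{\min}$, the containment $\mathcal{H}_{j,t_0}\subset K_{\min}$ becomes $f_{\mathcal{H}}\ge f_K$ near the midpoint, with both vanishing to first order there. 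Comparing second-order Taylor coefficients gives $\kappa_{\partial K_{\min}}(\sigma_j(t_0))\le\kappa_{\mathcal{H}_{j,t_0}}(\sigma_j(t_0))$. The hyperbolic curvature on the right is an explicit continuous function of the hexagon's edge intercepts (for instance $4ab/(a^2+b^2)^{3/2}$ in the orthogonal-asymptote case).

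Next, the family $\{H_K(t)\}_{t\in[0,t_f]}$ is continuous by Lemma~\ref{lem:multi-cont}, with values in the compact space of centrally symmetric hexagons of area $\sqrt{12}$ circumscribing the fixed compact set $K_{\min}$ (which has positive inradius). Hence the relevant edge intercepts are uniformly bounded above and bounded away from zero below, producing a single constant $C$ with $\kappa_{\partial K_{\min}}\le C$ everywhere. Since $\sigma_0$ is arclength-parameterized, $\sigma_0'=\mathbf{u}_0$ is the unit tangent, and $|\mathbf{u}_0'(t)|$ equals the curvature; consequently $\sigma_0'$ is $C$-Lipschitz.

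Finally, to extend to $\sigma_j'$ for $j\ge 1$, I invoke Lemma~\ref{lem:sigma2:c1}, which writes $\sigma_j'=s_j\mathbf{u}_j$ with $s_j=-\det(\sigma_0',\sigma_j)/\det(\sigma_0,\mathbf{u}_j)$. The unit tangent $\mathbf{u}_j$ along $\partial K_{\min}$ is Lipschitz with respect to the arclength of $\sigma_j$ by the same curvature bound, and since $s_j$ is continuous and positive on $[0,t_f]$, hence bounded above and below, the arclength of $\sigma_j$ is bi-Lipschitz equivalent to $t$, so $\mathbf{u}_j$ is Lipschitz in $t$. The denominator $\det(\sigma_0,\mathbf{u}_j)$ is bounded away from zero (from the proof of Lemma~\ref{lem:sigma2:c1}), so $s_j$ is a Lipschitz function of Lipschitz data, and $\sigma_j'=s_j\mathbf{u}_j$ is a product of bounded Lipschitz functions, hence Lipschitz. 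The main obstacle is the pointwise curvature comparison of the first step: turning the set-theoretic containment of Remark~\ref{rem:arc-hyperbola} into the stated inequality for curvatures requires careful verification that the hyperbolic envelope sits on the interior side of $\partial K_{\min}$, and this ultimately rests on the minimality of the critical hexagon's area and the convexity of $K_{\min}$.
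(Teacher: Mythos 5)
Your overall plan follows the paper's proof closely: the hyperbolic envelope of Remark~\ref{rem:arc-hyperbola} gives a curvature bound that is uniform over the compact boundary, yielding Lipschitz regularity of $\sigma_0'$, which is then propagated to the other $\sigma_j'$ via the determinant relation of Lemma~\ref{lem:sigma2:c1}. The propagation step (your last paragraph) is essentially identical to the paper's and is fine.

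However, your first step has a genuine gap. You write that ``comparing second-order Taylor coefficients'' of $f_K$ and $f_{\mathcal{H}}$ gives $\kappa_{\partial K_{\min}}(\sigma_j(t_0))\le\kappa_{\mathcal{H}_{j,t_0}}(\sigma_j(t_0))$, and later that ``$|\mathbf{u}_0'(t)|$ equals the curvature; consequently $\sigma_0'$ is $C$-Lipschitz.'' Both statements presuppose that $\partial K_{\min}$ has a second derivative at the point under consideration, which is precisely what is at issue: the lemma you are proving is a regularity lemma, and \emph{a priori} only $C^1$ is available. (The paper flags this explicitly: ``even when the second derivative of $\sigma_0$ and the curvature of $\sigma_0$ do not exist.'') You could salvage the first claim by replacing $\kappa_{\partial K_{\min}}$ with an upper curvature defined via $\limsup$ of second-order divided differences of $f_K$, which always exists for a convex graph. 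But even then, an a.e.\ or pointwise bound on the (upper) curvature does not by itself give Lipschitz continuity of the tangent: a monotone tangent angle with a.e.\ bounded derivative need not be absolutely continuous. What actually closes the gap is the geometric squeezing argument: the containment $f_K\le f_{\mathcal{H}}$ shows $\partial K_{\min}$ lies between its tangent line and an osculating circle of curvature $\kappa$ at \emph{every} boundary point, and a convex arc trapped in such a lens at every point has its tangent angle changing by at most $\kappa$ per unit arclength. This is the step the paper makes explicit and the one your proposal skips.
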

\begin{proof}
This is Hales~\cite[Lemmas~17,18]{hales2011}.  We recall the proof.
We start by establishing the Lipschitz continuity of $\sigma_0'$.  We
parameterize the curve $\sigma_0$ according to arclength $s$.  Then
$\sigma_0'$ is a unit tangent vector.  The vector $\sigma_0'$ is
continuous, because the convex region $K$ has no corners, and the
support lines are unique.

\index[n]{zk@$\kappa$, curvature!planar}
\index[n]{s@$s$, real parameter!arclength}
\index[n]{zc@$\gamma$, planar curve!$\gamma_s$, hyperbolic arc}

For each value $s$ of arclength, let $\gamma_s$ be the hyperbola
through $\sigma_0(s)$ tangent to the curve $\sigma_0$ at $s$, whose
asymptotes are the lines in direction $\sigma'_j(s)$ through
$\sigma_j(s)$, for $j=\pm 1$.  By Remark~\ref{rem:arc-hyperbola},
locally near $\sigma_0(s)$, the arc $\gamma_s$ lies inside $K$.  As
$s$ varies, by continuity over the compact boundary, the curvatures of
the hyperbolas $\gamma_s$ at $\sigma_0(s)$ are bounded above by some
$\kappa\in \R$.  (The curvature of the hyperbola depends analytically
on the parameters defining the asymptotes and tangent lines.  These
parameters vary continuously along the boundary of $K$. Thus, the
curvature of the hyperbola varies continuously along the boundary of
$K$, even when the second derivative of $\sigma_0$ and the curvature
of $\sigma_0$ do not exist.)  This means that an osculating circle of
fixed curvature $\kappa$ can be placed locally in $K$ at each point
$\sigma_0(s)$ so that $\sigma_0'(s)$ is tangent to the circle.  By
convexity, the curve $\sigma_0$ near $s$ is constrained to pass
between the tangent line at $\sigma_0(s)$ and the osculating circle of
curvature $\kappa$.  If we parameterize the curve by arclength, then
$\sigma_0'(s)$ has unit length.  Lipschitz continuity now follows from
this bound $\kappa$ on the curvature.
%% Details of this ``now follows'' can be found in thales's handwritten notes.

Now consider the Lipschitz continuity for $j\ne0$.  By evident
symmetries, it is enough to consider $j=2$.  Let $t$ be the arclength
parameter for the curve $\sigma_0$ and let $s$ be the arclength
parameter for the curve $\sigma_2$. Write $s=s(t)$ and $t=t(s)$
for the reparameterizations.  
By Lemma~\ref{lem:sigma2:c1}, the functions $s(t),t(s)$ are $C^1$.
Set $\tilde\sigma_2(s)=\sigma_2(t(s))$.
The derivative of $\det(\sigma_0(t),\sigma_2(t))=\sqrt3/2$ gives
\[
\det(\sigma_0'(t),\sigma_2(t))+
\det(\sigma_0(t),\frac{d\tilde\sigma_2(s(t))}{ds})\frac{ds}{dt}=0.
\]
The Lipschitz continuity of $ds/dt$ (and of $dt/ds$) follows from the Lipschitz
continuity of the other functions $\sigma_0'$, $\sigma_2$, $\sigma_0$,
and $d\tilde\sigma_2/ds$ in that equation.
Then we also have the Lipschitz continuity of
\[
\sigma_2'(t) = \frac{d\tilde\sigma_2}{d s} \frac{ds}{dt}.
\]
% DONE[4/21/2024. Proof rewritten] Fill in this proof and prove the bi-Lipschitzness.
\end{proof}

\begin{corollary}\label{lem:multi-curve-c2}
The functions $\sigma_j'(t)$ are differentiable almost-everywhere.
\end{corollary}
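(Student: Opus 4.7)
The plan is to invoke the classical theorem that every Lipschitz continuous function on an interval of $\R$ is differentiable almost everywhere (with respect to Lebesgue measure). This is a one-dimensional instance of Rademacher's theorem, but in fact it follows more elementarily from the observation that a Lipschitz function is absolutely continuous, and every absolutely continuous function on an interval is differentiable almost everywhere (a standard result in real analysis, typically proved via the Lebesgue differentiation theorem applied to the a.e.-existing derivative of the corresponding BV function).

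The previous lemma (Lemma~\ref{lem:tangents-lipschitz}) establishes exactly the hypothesis needed: for each $j$, the function $\sigma_j'(t)$ is Lipschitz continuous on the (compact) parameter interval. Each component of $\sigma_j'$ is then a real-valued Lipschitz function on a compact interval, so the cited theorem applies componentwise. Hence $\sigma_j'$ is differentiable almost everywhere, which yields the corollary.

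The only mild subtlety is that Lemma~\ref{lem:tangents-lipschitz} is stated under the assumption that $\sigma_0$ is parameterized by arclength, whereas the corollary is stated without such an assumption. However, differentiability almost everywhere is preserved under a $C^1$ reparameterization with nonvanishing derivative, and Lemma~\ref{lem:sigma2:c1} (together with arclength reparameterization being $C^1$ for a regular $C^1$ curve) guarantees that passing between any admissible $C^1$ parameterization and the arclength one is such a change of variable. So the conclusion transfers to any $C^1$ multi-curve parameterization. There is no real obstacle here; the work was already done in the Lipschitz estimate of the previous lemma, and this corollary is essentially a direct citation of a standard theorem from real analysis.
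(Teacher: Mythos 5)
Your proposal matches the paper's proof exactly: the paper states, verbatim, that the corollary "follows by Rademacher's theorem and Lemma~\ref{lem:tangents-lipschitz}." Your added remarks on the elementary one-dimensional case and on reparameterization are fine but not a different route; in fact, the paper places this corollary within the portion of the text where the arclength parameterization of $\sigma_0$ is already in force (it explicitly reaffirms this convention immediately after the corollary), so the reparameterization subtlety you flag does not arise.
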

\begin{proof}
This follows by Rademacher's theorem\index{Rademacher's theorem} and
Lemma \ref{lem:tangents-lipschitz}.
\end{proof}
Until further notice, we assume that the curve $t \mapsto \sigma_0(t)$
is parameterized according to arclength.  See
Proposition~\ref{prop:reparam-lipschitz}.

\subsection{Convexity of Multi-Curves}

This subsection investigates the convexity conditions on the curves $\sigma$.

\begin{lemma}[Star conditions]\label{lem:star-conditions}\index{star!condition}
    Let $K\in\Kccs$ be a convex disk with boundary parameterized by
    the $C^1$ regular multi-curve $\sigma$. At a given time $t$, let
    $j\mapsto\multi_j=\sigma_j(t)$ be a multi-point on the boundary of the
    convex disk $K$. Then for each $j$ and time $t$, the tangent
    $\sigma_j'(t)$ points into the open cone with apex $\multi_j$ and
    bounding rays through $\multi_{j+1}$ and $\multi_j +
    \multi_{j+1}$.
\end{lemma}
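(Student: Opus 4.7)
The plan is to establish the star conditions in two stages: convexity of $K$ places $\sigma_j'(t)$ in the closed cone spanned by the two bounding rays, and $C^1$ regularity of the multi-curve then excludes the bounding rays themselves. I fix an index $j$; the full claim follows by cyclic symmetry.

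Since $\det(\multi_j,\multi_{j+2}) = \sqrt{3}/2 \neq 0$, the pair $\{\multi_j,\multi_{j+2}\}$ is a basis of $\R^2$, so I write $\sigma_j'(t) = a_j\,\multi_j + b_j\,\multi_{j+2}$. The multi-point identity $\multi_{j+1} = \multi_j + \multi_{j+2}$ shows that the open cone at apex $\multi_j$ with bounding rays through $\multi_{j+1}$ and $\multi_j + \multi_{j+1}$ corresponds precisely to $0 < a_j < b_j$. By convexity of $K$ and the counterclockwise orientation, the boundary points $\multi_{j+1}$ and $\multi_{j-1} = -\multi_{j+2}$ lie in the closed left half-plane of the tangent at $\multi_j$; the inequalities $\det(\sigma_j',\multi_{j+1}-\multi_j) \geq 0$ and $\det(\sigma_j',\multi_{j-1}-\multi_j) \geq 0$ reduce to $a_j \geq 0$ and $b_j \geq a_j$, respectively, giving closed-cone membership. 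Applying the same argument at every index yields $0 \leq a_k \leq b_k$ for all $k$.

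It remains to rule out $a_j = 0$ and $a_j = b_j$. First suppose $a_j = 0$, so $\sigma_j'(t) = b_j\,\multi_{j+2}$ with $b_j > 0$ by regularity. The tangent line at $\multi_j$ passes through $\multi_j + \multi_{j+2} = \multi_{j+1}$; being a support line through two boundary points, convexity forces $[\multi_j,\multi_{j+1}] \subseteq \partial K$. By $C^1$ regularity, the tangent at $\multi_{j+1}$ has direction $+\multi_{j+2}$, so $\sigma_{j+1}'(t) = c\,\multi_{j+2}$ for some $c > 0$. The multi-curve relation $\sigma_{j+1}' = \sigma_j' + \sigma_{j+2}'$ then forces $\sigma_{j+2}'(t) = (c-b_j)\,\multi_{j+2}$, which in the basis $\{\multi_{j+2},\multi_{j+4}\}$ has $b_{j+2} = 0$; combined with $0 \leq a_{j+2} \leq b_{j+2} = 0$, this gives $\sigma_{j+2}'(t) = 0$, contradicting regularity. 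Next suppose $a_j = b_j$; then $\det(\sigma_j',\multi_{j-1}-\multi_j) = (b_j-a_j)\sqrt{3}/2 = 0$, so the tangent line at $\multi_j$ also passes through $\multi_{j-1}$, and convexity yields $[\multi_{j-1},\multi_j] \subseteq \partial K$. By $C^1$ regularity at $\multi_{j-1}$, and since the chord direction is $\multi_j - \multi_{j-1} = \multi_{j+1}$, we have $\sigma_{j-1}'(t) = c'\,\multi_{j+1}$ with $c' > 0$; in the basis $\{\multi_{j-1},\multi_{j+1}\}$ for index $j-1$ this says $a_{j-1} = 0$, which is the previous hypothesis at index $j-1$, and the same argument yields $\sigma_{j+1}'(t) = 0$, again contradicting regularity.

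The main obstacle is upgrading from closed-cone to open-cone membership: convexity alone gives only the closed cone, and the strict inequalities require a subtle interplay among the multi-point relations, $C^1$ regularity, and the central-symmetry identity $\multi_{j+3} = -\multi_j$ across neighboring indices simultaneously.
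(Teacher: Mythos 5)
Your proof is correct and follows essentially the same two-stage approach as the paper: establish closed-cone membership from convexity and orientation, then eliminate the two boundary cases $a_j=0$ and $a_j=b_j$ by propagating the degeneracy through the multi-curve relations until some $\sigma_k'(t)$ is forced to vanish, contradicting regularity. The only minor divergence is in the closed-cone step, where you apply the support-line inequalities at $\multi_j$ directly to the neighbors $\multi_{j\pm 1}$, whereas the paper invokes the hexagon inclusion $h_K\subset K$ together with a dual argument involving $\sigma_{j+2}'$; your version is somewhat more explicit, but it is the same idea.
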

\begin{proof}
This situation is depicted in Figure~\ref{fig:ellipse-global-convex}.
This is asserted in Hales~\cite{hales2011,hales2017reinhardt} and is
called the \emph{star condition}.  By convexity of the disk $K$, at
any time $t$ the hexagon $h_K(t)$ is a subset of $K$ (as $h_K$ is the
convex hull of the points $\{\multi_j\}$). Now, the vector
$\sigma_j'(t)$ cannot point into the hexagon, because continuity would
then create a non-convex piece of the curve $\sigma_j$. Dually, it
cannot point beyond the ray from $\multi_j$ through
$\multi_j+\multi_{j+1}$, as that would force $\sigma_{j+2}'(t)$ to
point into $h_K$.  Thus, the tangent vector points into the closed
cone.

If the vector $\sigma_j'(t)$ points along the edge $\multi_j\multi_{j+1}$ of the
triangle, then it would have to remain pointing in that same direction
until reaching $\multi_{j+1}$, as it cannot point inward (by the above
argument) or outward (as then it would not be convex).  This implies
that $\sigma_j'(t)$, $\sigma_{j+1}'(t)$, $\sigma_{j+3}'(t)$, and
$\sigma_{j+4}'(t)$ are all parallel.  The relation
$\sigma_0'(t)+\sigma_2'(t)+\sigma_4'(t)=0$ implies that
$\sigma_{j+2}'(t)$ and $\sigma_{j+5}'(t)$ are parallel as well.
However, the star domain of $\sigma_{j+2}'(t)$ contains no vectors in
that direction, forcing $\sigma_{j+2}'(t)=0$.  This contradicts the
regularity of the curve $\sigma_{j+2}$.

Finally, if $\sigma_j'(t)$ points along the edge $\multi_j(\multi_j+\multi_{j+1})$, then
$\sigma_{j-1}'(t)$ points along $\multi_{j-1}\multi_j$, and the argument can be
repeated with $j-1$ in place of $j$.
\end{proof}

\begin{figure}
\centering
\includegraphics{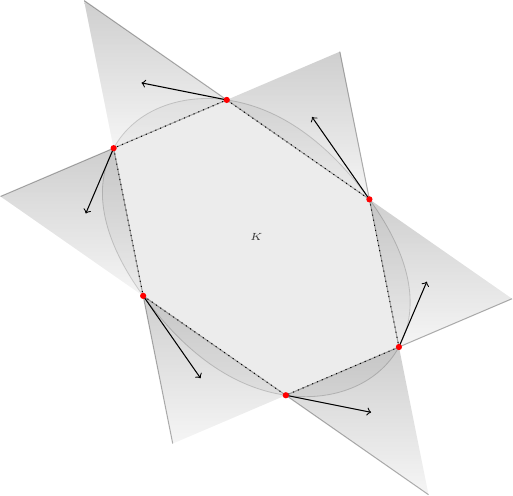}
\caption{Global convexity condition for the ellipse.}
\label{fig:ellipse-global-convex}
\end{figure}

Apart from the star conditions, there is another condition on the
curvature of the boundary curve which needs to be imposed:

\begin{lemma}[Curvature constraint]\label{lem:local-curvature-constrt}
Consider a convex disk $K \in\Kccs$ with boundary parameterized by a 
 $C^1$ regular multi-curve $\sigma$ with Lipschitz continuous derivative.
 Then we have the following condition almost everywhere:
\begin{equation}\label{eqn:local-convexity}
\det(\sigma_j'(t),\sigma_{j}''(t)) \ge 0 \quad j \in \Z/6\Z.
\end{equation}
\end{lemma}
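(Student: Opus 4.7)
The plan is to reduce the inequality $\det(\sigma_j',\sigma_j'')\ge 0$, which up to the positive factor $1/|\sigma_j'|^3$ is the signed curvature of $\sigma_j$, to the classical fact that a convex planar curve traversed counterclockwise has nonnegative signed curvature wherever a second derivative exists.

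First, by Corollary~\ref{lem:multi-curve-c2}, each $\sigma_j'$ is differentiable almost everywhere, so the expression $\det(\sigma_j'(t),\sigma_j''(t))$ is defined at almost every $t$. Regularity gives $|\sigma_j'|>0$, so the inequality is equivalent to the nonnegativity of the signed curvature $\kappa_j(t)=\det(\sigma_j',\sigma_j'')/|\sigma_j'|^3$ almost everywhere.

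Next, I would observe that every component $\sigma_j$ is itself a positively oriented $C^1$ regular parameterization of $\partial K$: by Lemma~\ref{lem:sigma2:c1} each $\sigma_j$ is $C^1$ regular, and by the strict monotonicity of Lemma~\ref{lem:multi-cont}, $\sigma_j(t)$ advances counterclockwise around $\partial K$ as $t$ increases. Thus all six curves are on equal footing, and it suffices to argue for a single $j$.

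Finally, fix a $t$ at which $\sigma_j''(t)$ exists. By Lemma~\ref{lem:no-corner} the tangent line $L$ at $\sigma_j(t)$ is the unique support line of $K$ through that boundary point, so $K$ lies entirely in one closed half-plane bounded by $L$; since $\sigma_j$ is positively oriented, that half-plane is the one on the side of the inward normal $R\sigma_j'(t)$, where $R$ denotes counterclockwise rotation by $\pi/2$. The Taylor expansion
\[
\sigma_j(t+h)-\sigma_j(t) = h\,\sigma_j'(t) + \tfrac{h^2}{2}\sigma_j''(t) + o(h^2),
\]
together with $\langle\sigma_j',R\sigma_j'\rangle=0$ and $\langle\sigma_j'',R\sigma_j'\rangle = \det(\sigma_j',\sigma_j'')$, gives
\[
\bigl\langle\sigma_j(t+h)-\sigma_j(t),\,R\sigma_j'(t)\bigr\rangle = \tfrac{h^2}{2}\det(\sigma_j'(t),\sigma_j''(t)) + o(h^2).
\]
Since $\sigma_j(t+h)\in K$ lies in the stated closed half-plane, the left-hand side is $\ge 0$ for all small $h$, which forces $\det(\sigma_j'(t),\sigma_j''(t))\ge 0$. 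The only real subtlety is orientation bookkeeping, namely verifying that for a counterclockwise traversal of $\partial K$ the inward-pointing normal is indeed $+R\sigma_j'(t)$, which follows from the definition of positive orientation and the convention for $R$.
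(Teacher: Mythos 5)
Your proof is correct and reaches the same conclusion, but it replaces the paper's one-line appeal to a textbook theorem with a self-contained argument. The paper simply cites Shifrin's Proposition~3.8 (a simple closed regular plane curve is convex iff it can be oriented to have everywhere nonnegative signed curvature) and notes that $\det(\sigma_j',\sigma_j'')$ is the signed curvature up to a positive parameterization factor. You instead prove the needed implication from scratch: the tangent line at a boundary point of a convex body is a support line, and a second-order Taylor expansion in the direction of the inward normal $R\sigma_j'$ (using $\langle\sigma_j'',R\sigma_j'\rangle=\det(\sigma_j',\sigma_j'')$) forces the coefficient of $h^2$ to be nonnegative. This is actually a cleaner fit for the lemma's a.e.\ Lipschitz setting, since Shifrin's result is usually stated for $C^2$ curves, and your argument applies pointwise at each Lebesgue point where $\sigma_j''$ exists. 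What the paper's approach buys is brevity; what yours buys is rigor without an external reference.

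One issue worth flagging: the lemma is stated for an arbitrary convex disk $K\in\Kccs$ with a $C^1$ regular multi-curve boundary, yet you invoke Lemma~\ref{lem:no-corner}, Lemma~\ref{lem:sigma2:c1}, and Lemma~\ref{lem:multi-cont}, all of which are stated specifically for the minimizer $K=K_{\min}$. These citations are not needed: the hypothesis of the present lemma already gives you that each $\sigma_j$ is $C^1$ regular, and for any convex body the tangent line to a $C^1$ boundary point is automatically a support line (you need only this, not uniqueness). You should replace those three citations by the hypothesis and elementary convexity, which also makes the argument apply at the intended level of generality.
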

\begin{proof}
The derivatives exist almost everywhere 
by the Lipschitz assumption.
A well-known theorem (see
Proposition 3.8 of Shifrin~\cite{shifrin2015differential}) states that
a simple closed regular plane curve is convex if and only if its
orientation can be chosen in such a way so that its signed planar curvature
is everywhere nonnegative. The left-hand side of
\eqref{eqn:local-convexity} is the planar curvature 
\[
\frac{\det(\sigma_j'(t),\sigma_{j}''(t))}{|\sigma'(t)|^3}.
\]  
up to a positive
factor depending on the parameterization.
The assertion follows.
\end{proof}

\subsection{A Characterization of Balanced Disks}

\index[n]{KK@$\mathfrak{K}$, set of convex disks!$\Kbal$, balanced}
\index{balanced disk}

Summarizing, we define a class $\Kbal\subset\Kccs$ (the class of
balanced disks) of centrally symmetric disks $K$ as those satisfying
the properties that we have established.

\begin{definition}\label{def:balanced}
A centrally symmetric convex disk $K\in\Kccs$ is \emph{balanced} if
the following conditions hold.
\begin{enumerate}
\item The boundary of $K$ is parameterized counterclockwise 
    by six regular $C^1$
    curves $\sigma_j:\R\to\R^2$.  
\item The derivatives $\sigma_j'$ are Lipschitz.
\item For each $t$,
    $j\mapsto\sigma_j(t)$ is a multi-point with normalization convention 
   \eqref{eqn:sqrt12}.  
\item For almost all $t$, we
    have the curvature constraint
    $\det(\sigma_j'(t),\sigma_{j}''(t)) \ge 0$.  
\item For each $t$,
    the vector $\sigma_j'(t)$ points into the open cone with apex $\sigma_j(t)$
    and rays passing through  $\sigma_{j+1}(t)$ and
    $\sigma_j(t) + \sigma_{j+1}(t)$.  
\item % TCH 4/21/2024: added the same simple closed curve.
The image of each curve $\sigma_j:\R/(6t_f\Z)\to\R^2$ is the same
simple closed curve in $\R^2$, where $6t_f$ is the common period of
the functions $\sigma_j$.
%  in $\R^2$
%  
%    $\{\sigma_j\}$ close up seamlessly so that the boundary of $K$ is
%    a simple closed curve.
%
%TCH 4/21/2024: Replaced this:
%% There exists a least $t_f>0$ such that each curve satisfies
%%   $\sigma_j(t_f)=\sigma_{j+1}(0)$, and each curve
%%   $\sigma_j:\R/(6t_f\Z)\to \R^2$ is a simple closed curve.
\end{enumerate}
Let $\Kbal$ denote the set of all balanced disks.  A \emph{balanced
  pair} $(K,\sigma)$ consists of a balanced disk $K$ and a boundary
parameterization by a multi-curve $\sigma$ satisfying the foregoing
enumerated properties.
\end{definition}

\index{balanced pair}

By convention, the area of the hexagon $H_K$ has area normalized to
$\sqrt{12}$.  By the results of this section, every minimizer
$K_{\min}$ following this convention belongs to $\Kbal$.

By affine invariance, there is no loss of generality in assuming that
the sixth roots of unity $\{\mb{s}_j^*\mid j\}$ lie on the boundary of
the convex disk $K$. We make this multi-point the \emph{initial}
position of the multi-curve $\sigma$.  
Following
Hales~\cite{hales2011}, we call this representation the \emph{circle
  representation of the convex disk}\index{circle representation} $K$.

\begin{problem}[Balanced Reinhardt Problem in Circle Representation]
\label{pbm:reinhardt-reduction-2}
Describe those $K_{\min} \in \Kbal$ in circle representation for which
\[
\op{area}(K_{\min}) = \inf_{\{K \in \Kbal\mid K\text{ in circle representation}\}} \op{area}(K),
\]
where $\Kbal$ is the class of balanced disks.
\end{problem}

\newpage
\part{Optimal Control}

\chapter{A Control Problem}

\index{cost functional}
\index{state space}
\index{control!parameter}
\index[n]{SL@$\mathrm{SL}_n$, special linear group}
\index{Lie group}

Now that we have a description of the set $\Kbal$ of balanced disks,
we can use this to restate the Reinhardt conjecture as an optimal
control problem. Optimal control problems solve for a policy that drives
an agent in an environment over a period of time such that a cost
function is optimized. We pin down our reformulation in three steps:

\begin{enumerate}
\item Recast the system as a dynamical system on a \emph{state space} (which, in our case, will be a manifold).
\item Introduce a well-defined \emph{cost functional}.
\item Determine a well-defined \emph{control parameter}.
\end{enumerate}This will be our focus in this section.

\section{State Dynamics in the Lie Group}\label{sec:lie-group-dynamics}
Let $\SL(\R)$ be the Lie group consisting of all $2\times 2$ matrices
with real entries and determinant $1$.  The multi-curve conditions
give rise to a curve in $\SL(\R)$ in the following sense.

\index[n]{k@$k$, integer}
\index[n]{g@$g$, group element!$g(t)$, curve in $\SL$}

\begin{theorem}[Mahler~\cite{mahler1947area}, Hales~\cite{hales2011}]\label{thm:defn-g}
  Let $\sigma$ be a $C^k$ multi-curve, and let $\mb{s}$ be any
  multi-point.  Then $\sigma$ determines a $C^k$ curve
  $g:[0,t_f]\to\SL(\R)$, by the conditions 
\begin{equation}\label{eqn:g-sigma}
\sigma_j(t) =  g(t)\mb{s}_j
\end{equation}
for all $j$ and all $t\in[0,t_f]$.
\end{theorem}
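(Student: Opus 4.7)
The plan is to define $g(t)$ explicitly by the linear action on the basis $(\mb{s}_0,\mb{s}_1)$, then use the multi-point relations to extend the identity $\sigma_j(t)=g(t)\mb{s}_j$ to all six indices, and finally read off regularity from the explicit formula.

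First, by the multi-point condition $\det(\mb{s}_0,\mb{s}_1)=\sqrt{3}/2\neq 0$, the pair $(\mb{s}_0,\mb{s}_1)$ is a basis of $\R^2$. Let $M$ denote the $2\times 2$ matrix with columns $\mb{s}_0,\mb{s}_1$, and for each $t\in[0,t_f]$ let $N(t)$ denote the $2\times 2$ matrix with columns $\sigma_0(t),\sigma_1(t)$. The identical normalization $\det(\sigma_0(t),\sigma_1(t))=\sqrt{3}/2$ from \eqref{eqn:multi-point-properties} gives $\det N(t)=\det M$. Define
\[
g(t) := N(t)M^{-1}.
\]
Then $\det g(t)=1$, so $g(t)\in\SL(\R)$, and by construction $g(t)\mb{s}_0=\sigma_0(t)$ and $g(t)\mb{s}_1=\sigma_1(t)$.

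Second, verify the identity $\sigma_j(t)=g(t)\mb{s}_j$ for $j=2,3,4,5$. The multi-point relations $\mb{s}_{j+3}=-\mb{s}_j$ and $\mb{s}_0+\mb{s}_2+\mb{s}_4=0$ (together with the corresponding relations for $\sigma$) express each $\mb{s}_j$ as a fixed integer linear combination of $\mb{s}_0$ and $\mb{s}_1$; for instance $\mb{s}_4=-\mb{s}_1$, $\mb{s}_2=\mb{s}_1-\mb{s}_0$, $\mb{s}_3=-\mb{s}_0$, and $\mb{s}_5=\mb{s}_0-\mb{s}_1$. The same combinations hold for the $\sigma_j(t)$ at every $t$. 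Because $g(t)$ is linear, applying it to each of these combinations on the $\mb{s}$-side reproduces the corresponding combination on the $\sigma$-side, yielding $g(t)\mb{s}_j=\sigma_j(t)$ for all $j\in\Z/6\Z$. Uniqueness of $g(t)$ is immediate from the fact that its values on the basis $(\mb{s}_0,\mb{s}_1)$ are forced.

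Third, for regularity: the entries of $N(t)$ are coordinates of $\sigma_0(t)$ and $\sigma_1(t)$, hence $C^k$ in $t$ by hypothesis, and $M^{-1}$ is a constant matrix. Therefore $g(t)=N(t)M^{-1}$ is $C^k$. There is no serious obstacle here: the result is essentially a linear-algebraic unwinding of the multi-point conditions, and the only subtlety is to notice that the determinant normalization is precisely what forces $g(t)$ into $\SL_2$ rather than merely $\GL_2$.
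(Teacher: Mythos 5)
Your proof is correct and follows essentially the same argument as the paper's: define $g(t)$ by its action on a two-element linearly independent subset of the multi-point, extend to all six indices by the multi-point linear relations, and use the determinant normalization $\det(\mb s_j,\mb s_{j+1})=\det(\sigma_j(t),\sigma_{j+1}(t))=\sqrt{3}/2$ to force $\det g(t)=1$. The only (immaterial) difference is that you work with the basis $(\mb s_0,\mb s_1)$ while the paper uses $(\mb s_0,\mb s_2)$; since $\mb s_2=\mb s_1-\mb s_0$, both choices of basis yield the same $g(t)$ and the same determinant computation.
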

\begin{proof}
Given a multi-curve $\sigma$ and $t\in[0,t_f]$,
the value $\sigma(t)$ is a multi-point. 
It is enough to construct a 
$2 \times 2$ real matrix $g(t)$ so that
\[
\sigma_j(t) =  g(t) \mb{s}_j, \quad j=0,2,
\]
because the multi-point conditions then imply by linearity that
$\sigma_j(t) = g(t)\mb{s}_j$ for all $j \in \Z/6\Z$.  A unique such
matrix $g(t)$ can always be found by linear independence:
\[
\det(\mb{s}_0,\mb{s}_1)=\det(\mb{s}_0,\mb{s}_2) \ne 0.
\]
The identity
\[
\det(\sigma_0(t),\sigma_2(t))=
\det(g(t)\mb{s}_0, g(t)\mb{s}_2) =\det(g(t))\det(\mb{s}_0,\mb{s}_2)
\]
gives $\det(g(t)) = 1$, and $g(t)\in\SL(\R)$. Thus, if we have a $C^k$
curve of multi-points $\sigma(t)$, we obtain a unique induced $C^k$
curve $g:[0,t_f]\to\SL(\R)$.
\end{proof}

\begin{remark}
\normalfont \leavevmode
\begin{itemize}
\item 
If $t_0,t_1,t_2$ are three time instants, and
$\sigma_k(t_i)=g(t_i,t_j)\sigma_k(t_j)$, then
$g(t_2,t_0)=g(t_2,t_1)g(t_1,t_0)$.
\item 
Later in the book, the multi-point is $\mb{s}_j^*$, and the
multi-curves $\sigma_j$ are given by $\sigma_j(t) = g(t)\mb{s}_j^*$,
where $g(t) \in \SL(\R)$.
\end{itemize}
\end{remark}

\index[n]{SL@$\mathrm{SL}_n$, special linear group!$\mathfrak{sl}_n$, Lie algebra}

Let $\sl(\R)$ be the Lie algebra\index{Lie algebra} of $2\times 2$
matrices with real entries and trace $0$.  Let
$\Ad_{g}(X)=gXg^{-1}$ be the \emph{adjoint representation} of
$\SL(\R)$ on its Lie algebra.

\index[n]{Ad@$\Ad$, adjoint representation of Lie group}

Similarly, the tangents $\{\sigma_j'(t)\}$
give rise to a corresponding curve in the Lie algebra $\sl(\R)$ as
follows.
\begin{definition}\label{def:X-defn}
For a $C^1$ curve $g:[0,t_f]\to \SL(\R)$ as above, define
$X:[0,t_f]\to\mathfrak{gl}_2(\R)$ by $g'(t)=g(t)X(t)$.
\end{definition}

\index[n]{X@$X$, Lie algebra element!$X=g^{-1}g'$, curve in Lie algebra}
\index[n]{X@$X$, Lie algebra element!$X\in\sl$}

% TCH 4/18/2023 : removed circle representation hypothesis.
\begin{theorem}\label{thm:defn-X}
Assume that we have a convex disk $K$ %in circle representation with
boundary parameterized by a regular $C^1$ multi-curve $\sigma$. Let
$g$ be the induced curve in $\SL(\R)$ for some multi-point $\mb{s}$
given by Equation \eqref{eqn:g-sigma}, and define $X$ by $g'=gX$.  Then
\begin{enumerate}
\item $X(t) \in \sl(\R)$.
\item $\sigma_j'(t) = \Ad_{g(t)}(X(t))\sigma_j(t)=g(t)X(t)g(t)^{-1}\sigma_j(t)$ for all $j \in \Z/6\Z$.
\item $X:[0,t_f]\to\sl(\R)$ is Lipschitz continuous. 
\end{enumerate}
\end{theorem}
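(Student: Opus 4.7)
The proof breaks naturally into the three claims, and only the third requires any real work.

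For claim (1), the plan is to differentiate the identity $\det g(t) = 1$. Using Jacobi's formula $\frac{d}{dt}\det g = \det(g)\,\tr(g^{-1} g')$, we obtain $\tr(g^{-1} g') = \tr(X(t)) = 0$, so $X(t) \in \sl(\R)$.

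For claim (2), I would simply differentiate the defining relation $\sigma_j(t) = g(t)\,\mb{s}_j$ from Theorem~\ref{thm:defn-g}. This gives $\sigma_j'(t) = g'(t)\,\mb{s}_j = g(t)X(t)\,\mb{s}_j$, and using $\mb{s}_j = g(t)^{-1}\sigma_j(t)$ yields $\sigma_j'(t) = g(t)X(t)g(t)^{-1}\sigma_j(t) = \Ad_{g(t)}(X(t))\,\sigma_j(t)$, as required.

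For claim (3), the main obstacle, the plan is to express both $g(t)$ and $g'(t)$ directly in terms of the multi-curve components $\sigma_0,\sigma_2,\sigma_0',\sigma_2'$, and then invoke Lemma~\ref{lem:tangents-lipschitz} along with the fact that products and inverses of bounded Lipschitz matrix-valued functions are Lipschitz. Concretely, let $M$ be the fixed invertible $2\times 2$ matrix whose columns are the multi-point values $\mb{s}_0$ and $\mb{s}_2$ (invertibility is immediate from $\det(\mb{s}_0,\mb{s}_2)\neq 0$, which was used in the proof of Theorem~\ref{thm:defn-g}). The identities $\sigma_j(t)=g(t)\mb{s}_j$ for $j=0,2$ assemble columnwise into
\begin{equation*}
[\sigma_0(t)\ \sigma_2(t)] = g(t)\,M,
\end{equation*}
so that $g(t) = [\sigma_0(t)\ \sigma_2(t)]\,M^{-1}$ and hence $g'(t) = [\sigma_0'(t)\ \sigma_2'(t)]\,M^{-1}$.

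By Lemma~\ref{lem:tangents-lipschitz}, each $\sigma_j'$ is Lipschitz on $[0,t_f]$, and $\sigma_j$ is automatically Lipschitz since $\sigma_j'$ is bounded. Post-multiplication by the constant matrix $M^{-1}$ preserves Lipschitz continuity, so both $g$ and $g'$ are Lipschitz on $[0,t_f]$. Since $g(t)\in\SL(\R)$, its inverse $g(t)^{-1}$ is given by the cofactor formula, which is a polynomial in the entries of $g(t)$; combined with the fact that the entries of $g$ are bounded on the compact interval $[0,t_f]$, this shows $t\mapsto g(t)^{-1}$ is Lipschitz as well. Finally, $X(t) = g(t)^{-1} g'(t)$ is a product of two bounded Lipschitz matrix-valued functions, hence Lipschitz. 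The hardest conceptual step is recognizing that one should bypass any direct analysis of the derivative of $g^{-1}$ by first reconstructing $g$ itself from the multi-curve data, which reduces everything to the already-established Lemma~\ref{lem:tangents-lipschitz}.
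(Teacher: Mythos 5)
Your proposal is correct and follows essentially the same route as the paper: claim (1) is the same general fact (the paper states $g^{-1}g'\in\sl(\R)$ directly, you derive it via Jacobi's formula), claim (2) is the identical computation, and claim (3) rests on the same key ingredient, namely Lemma~\ref{lem:tangents-lipschitz} together with the observation that $g$ is $C^1$ on a compact interval. The only cosmetic difference in (3) is that the paper works column-by-column, writing $X(t)\mb{s}_j = g(t)^{-1}\sigma_j'(t)$ and noting this is Lipschitz, whereas you first reconstruct $g$ and $g'$ as $[\sigma_0\ \sigma_2]M^{-1}$ and $[\sigma_0'\ \sigma_2']M^{-1}$, then use that $X = g^{-1}g'$ is a product of bounded Lipschitz matrix-valued functions; these are the same argument expressed in slightly different form.
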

\begin{proof}
First of all, the matrix $X(t)$ belongs to $\sl(\R)$ because if
$g:[0,t_f]\to\SL(\R)$ is any differentiable curve, then $X =
g^{-1}g'\in\sl(\R)$.  Let $\sigma_j(t)=g(t)\mb{s}_j$, for some
multi-point $\mb{s}_j$.  We then have
\[
\sigma_j'(t) = g'(t)\mb{s}_{j} = 
g(t)X(t)\mb{s}_{j}=g(t)X(t)g(t)^{-1}g(t)\mb{s}_{j}=
\Ad_{g(t)}(X(t))\sigma_j(t).
\]
To see that $X(t)$ is Lipschitz, it is enough to show that $X(t)
\mb{s}_{j} = g(t)^{-1} \sigma_j'(t)$ is Lipschitz for each $j$, which
follows because $\sigma_j'$ is Lipschitz by
Lemma \ref{lem:tangents-lipschitz}, and $g$ is a $C^1$ curve on a
compact interval.
\end{proof}

\index[n]{J@$J$, infinitesimal generator of rotations}
\index[n]{SO@$\mathrm{SO}$, special orthogonal group!$\mathrm{SO}_n$, compact orthogonal}

\begin{definition}
Let 
\[
J := \mattwo{0}{-1}{1}{0}\in\sl(\R).
\]
$J$ is the infinitesimal generator of the rotation group $\SO$.  This
infinitesimal generator gives rotations $\exp(J t) \in \SO$.  In
particular, 
\begin{equation}\label{eqn:R}
R := \exp(J\pi/3)\index[n]{R@$R$, rotation by $\pi/3$}
\end{equation}
is counterclockwise rotation by angle
$\pi/3$.  The matrices $J$ and $R$ are global notations throughout.
\end{definition}

\index[n]{zr@$\rho_j$, star function}

\begin{corollary}\label{cor:X-props}
For a balanced disk $K\in\Kbal$, with $C^1$ boundary parameterization
$\sigma_j(t)=g(t)\mb{s}_j^*$,
we have the following
properties for $X=X(t)$ defined in Definition \ref{def:X-defn} for all
times $t$.
\begin{itemize}
\index[n]{abcd@$a,b,c,d$, matrix entries!of $X$}
\item We have $\rho_i(X)>0$, for $i=0,1,2$, where
\begin{align}\label{eqn:rho}
\rho_0(X) &:= \frac{c + \sqrt{3}a}{\sqrt3},\\
\rho_1(X) &:= \frac{c-\sqrt{3} a}{\sqrt3},\\
\rho_2(X) &:= \frac{-(3b + c)}{2\sqrt{3}}
\end{align}
and
\[
X = \mattwo{a}{b}{c}{-a}.
\] 
In particular, $\sqrt{3}|a| < c$, $3b+c<0$, 
$0<c$, and 
$\tr(JX)=b-c<0$.\index{trace} 
\item
    $\det(X(t))>0$ for all $t$.
\end{itemize}
\end{corollary}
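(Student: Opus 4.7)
The plan is to translate the geometric star condition of Lemma~\ref{lem:star-conditions} into algebraic inequalities on the entries of $X(t)$, using the identity $\sigma_j'(t) = g(t)X(t)\mb{s}_j^*$ from Theorem~\ref{thm:defn-X}. Applying $g(t)^{-1}$ to the star cone at $\sigma_j(t)$ sends the apex $\sigma_j(t)$ to $\mb{s}_j^*$, the tangent $\sigma_j'(t)$ to $X(t)\mb{s}_j^*$, and the bounding rays through $\sigma_{j+1}(t)$ and $\sigma_j(t)+\sigma_{j+1}(t)$ to the rays with direction vectors $\mb{s}_{j+1}^*-\mb{s}_j^*$ and $\mb{s}_{j+1}^*$ emanating from $\mb{s}_j^*$. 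The star condition is therefore equivalent to the existence, for each $j$, of scalars $\lambda_j,\mu_j>0$ satisfying
\[
X(t)\mb{s}_j^* \;=\; \lambda_j(\mb{s}_{j+1}^*-\mb{s}_j^*) + \mu_j\,\mb{s}_{j+1}^*.
\]

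Next I would solve this $2\times 2$ linear system in each of the cases $j=0,1,2$ (the cases $j=3,4,5$ are redundant by central symmetry $\mb{s}_{j+3}^*=-\mb{s}_j^*$), writing $X=\mattwo{a}{b}{c}{-a}$ and using $\mb{s}_j^*=(\cos(2\pi j/6),\sin(2\pi j/6))$. A short computation shows that $j=0$ yields $\lambda_0=\rho_1(X)$ and $\mu_0=\rho_0(X)$; $j=1$ yields $\lambda_1=\rho_2(X)$ and $\mu_1=\rho_1(X)$; and $j=2$ yields $\lambda_2=\rho_0(X)$ and $\mu_2=\rho_2(X)$. The three star conditions thus jointly force $\rho_0(X)>0$, $\rho_1(X)>0$, and $\rho_2(X)>0$.

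The auxiliary sign statements follow by elementary arithmetic. Adding $\rho_0+\rho_1=2c/\sqrt3$ gives $c>0$; the conjunction $\rho_0>0$ and $\rho_1>0$ is the inequality $\sqrt{3}|a|<c$; the inequality $\rho_2>0$ reads $3b+c<0$; and then $\tr(JX)=b-c<-c/3-c<0$ since $b<-c/3$ and $c>0$.

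For the determinant claim, $\det(X)=-a^2-bc$ because $X\in\sl(\R)$ has trace zero. The bound $\sqrt{3}|a|<c$ gives $a^2<c^2/3$, while $b<-c/3$ together with $c>0$ gives $-bc>c^2/3$. Hence $\det(X)>c^2/3-a^2>0$. There is no real obstacle here: the only place that requires care is correctly extracting the generating directions of the star cone (which is described by its apex and two points on its bounding rays, not by generators at the origin) before transporting it by $g(t)^{-1}$; after that point the proof is a bookkeeping exercise in linear algebra.
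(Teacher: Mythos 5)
Your proposal is correct and follows essentially the same route as the paper: both pass from the star conditions to the cone condition on $X\mb{s}_j^*$ after cancelling $g(t)$, solve the resulting $2\times 2$ linear systems to identify the cone coefficients with $\rho_0,\rho_1,\rho_2$, and read off the sign consequences. The only cosmetic difference is in the determinant step, where the paper uses the algebraic identity $\det(X)=\rho_0\rho_1+\rho_1\rho_2+\rho_2\rho_0$ while you deduce $\det(X)=-a^2-bc>0$ directly from $\sqrt3|a|<c$ and $3b+c<0$; both are equally short and valid.
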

We call the inequalities $\rho_i(X)>0$ the \emph{star
  inequalities}\index{star!inequality} for $X\in\sl(\R)$.
\begin{proof}
At time $t$, the multi-point is given by  $g(t)\mb{s}_j^*$.
The star conditions in Lemma \ref{lem:star-conditions} imply that the
tangent vector $\sigma_j'(t) = g(t) X(t)\mb{s}_{j}^*$ lies in
the open cone with apex at the origin
bounded by the rays 
%at apex $g(t)\mb{e}_j^*$ 
through the points $g(t)\mb{s}_{j+1}^*$ and $g(t)\mb{s}_{j+2}^*$.
After cancelling a factor of $g(t)$ from both sides, and writing $X$
for $X(t)$, these open cone conditions become
\index[n]{zr@$\rho_j$, star function!$\tilde\rho_j$}

\mcite{MCA:rho-star-inequalities}
\begin{equation}\label{eqn:rho-star-inequalities}
     X \mb{s}_j^* = \rho_j(X) \mb{s}_{j+1}^* + {\tilde\rho}_{j+1}(X) \mb{s}_{j+2}^*,
\end{equation}
for some unknowns $\rho_j(X),{\tilde\rho}_j(X) > 0$, for $j\in\Z/6\Z$
and $X\in\sl(\R)$.  By central symmetry, $\rho_{j+3}=\rho_j$ and
$\tilde\rho_{j+3}=\tilde\rho_j$.  Solving this systems of linear
equations for $\rho_j,{\tilde\rho}_j$ in terms of the matrix entries $a,b,c$
of $X$, we obtain for all $j$ that $\rho_j$ is given by the statement
of the lemma.  Also, ${\tilde\rho}_{j}=\rho_j$.  It follows that
\[
\sqrt{3}|a| < c \quad
3b+c<0.
\] 
In particular, $c>0$. Using these values, we have by direct
calculation that
\begin{equation}\label{eqn:det-rho}
\det(X) = \rho_0 \rho_1 + \rho_1 \rho_2 + \rho_2 \rho_0 > 0.
\end{equation}
\end{proof}

\begin{remark}[Reconstructing the hexagon]\label{rem:hexagon}  
Given $X\in\sl(\R)$ satisfying the star inequalities in the conclusion of
the corollary, we can reconstruct a centrally symmetric hexagon whose
edge midpoints are the points $\mb{s}_j^*$ and such that the hexagon
satisfies the star conditions.  The edge direction at $\mb{s}_j^*$ is
$X\mb{s}_j^*$. Two elements in the Lie algebra give the same hexagon
if one element is a positive multiple of the other.

The vertices of the hexagon are constructed as the solutions to linear
equations; each vertex is the point of intersection between the line
through $\mb{s}_j^*$ in direction $X \mb{s}_j^*$ and the line through
$\mb{s}_{j+1}^*$ in the direction $X \mb{s}_{j+1}^*$. Explicitly, by solving the
equations, the vertex is
\begin{equation}\label{eqn:hex-vertex}
\mb{s}_j^* + \frac{{\rho}_{j+2}(X)}{\det(X)} X \mb{s}_j^* 
= \mb{s}_{j+1}^* - \frac{\rho_{j}(X)}{\det(X)} X \mb{s}_{j+1}^*.
\end{equation}
Each $\mb{s}_j^*$ is manifestly an edge midpoint, given as the midpoint of
vertices $\mb{s}_j^* \pm {\rho}_{j+2}(X) X \mb{s}_j^*/\det(X)$.
\end{remark}

The following fact is an easy corollary of Reinhardt's observations
about the significance of hyperbolic arcs.

%% DONE[] Moved. Out of order: the hexagon in the proof associated
%%   to a general multi-curve does not get constructed until
%%   Remark~\ref{rem:hexagon}.  We are not assuming in the next lemma
%%   that $K=K_{\min}$. Move to where it gets used.}

\begin{lemma}\label{lem:hyp-arc}
  Suppose that $K\in\Kccs$.
  Assume the boundary is parameterized by a $C^1$ multi-curve $\sigma$.  
If two curves $\sigma_{i-1}$ and
$\sigma_{i+1}$ move along straight lines during some time interval,
then the third curve $\sigma_{i}$ moves along a hyperbolic arc, whose
asymptotes are the lines determined by $\sigma_{i-1}$ and $\sigma_{i+1}$.
\end{lemma}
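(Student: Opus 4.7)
The plan is to exploit the multi-point identities in~\eqref{eqn:multi-point-properties} to reduce the statement to an elementary computation in affine coordinates. The first step is to extract the ``parallelogram identity''
\[
\sigma_i(t) = \sigma_{i-1}(t) + \sigma_{i+1}(t),
\]
which one obtains by setting $j = i-1$ in $\sigma_j + \sigma_{j+2} + \sigma_{j+4} = 0$ and then using $\sigma_{i+3} = -\sigma_i$. This is the curve-valued analogue of the identity $\zeta^{k-1} + \zeta^{k+1} = \zeta^k$ for sixth roots of unity $\zeta$. Combined with the area normalization $\det(\sigma_{i-1},\sigma_i) = \sqrt{3}/2$ and bilinearity of the determinant, it yields
\[
\det(\sigma_{i-1}(t),\sigma_{i+1}(t)) = \det(\sigma_{i-1}(t),\sigma_{i-1}(t)+\sigma_{i+1}(t)) = \frac{\sqrt{3}}{2}
\]
as well, a fact that will immediately do most of the work.

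Next, let $L_{i-1}$ and $L_{i+1}$ denote the lines along which $\sigma_{i-1}$ and $\sigma_{i+1}$ respectively travel on the given time interval. The constancy and nonvanishing of the determinant just computed forces their direction vectors to be linearly independent, so $L_{i-1}$ and $L_{i+1}$ meet in a unique point $P$. I would pass to the affine coordinate system centered at $P$ whose (oblique) basis vectors $\mathbf{v}_{i-1},\mathbf{v}_{i+1}$ are direction vectors of the two lines. In these coordinates
\[
\sigma_{i-1}(t) = \alpha(t)\mathbf{v}_{i-1},\qquad \sigma_{i+1}(t) = \beta(t)\mathbf{v}_{i+1},
\]
and the determinant constraint collapses to the single scalar equation $\alpha(t)\beta(t) = C$ for a fixed nonzero constant $C$.

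Finally, the parallelogram identity yields $\sigma_i(t) = \alpha(t)\mathbf{v}_{i-1} + \beta(t)\mathbf{v}_{i+1}$, so in the oblique basis $\sigma_i$ has coordinates $(\alpha(t),\beta(t))$ satisfying $XY = C$. This is precisely a hyperbola whose asymptotes are the two coordinate axes, that is, $L_{i-1}$ and $L_{i+1}$. Invertible affine changes of variable send hyperbolas to hyperbolas and asymptotes to asymptotes, so the conclusion transfers back to the original Euclidean plane.

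There is no serious obstacle in this argument once the parallelogram identity is in hand; every subsequent step is a routine calculation. The only subtlety to keep in mind is ruling out the possibility that $L_{i-1}$ and $L_{i+1}$ are parallel --- in which case one would have a ``hyperbola'' without a center --- and this is handled automatically by the nonvanishing of $\det(\sigma_{i-1},\sigma_{i+1}) = \sqrt{3}/2$.
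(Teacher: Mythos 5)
Your proof takes a genuinely different and more elementary route than the paper's. The paper passes to the time-dependent critical hexagon $H_K(t)$: the lines $L_{i\pm1}$ and their central reflections bound a fixed parallelogram $P_K$; the tangent lines at $\pm\sigma_i(t)$ cut a region of constant area from $P_K$; and Reinhardt's classical observation that the envelope of a pencil of lines cutting constant area from two adjacent edges of a parallelogram is a hyperbola (with the parallelogram's edge-lines as asymptotes) then puts $\sigma_i(t)$ on a hyperbolic arc via the midpoint property. Your direct algebraic computation from the multi-point identities sidesteps the envelope observation entirely, which is a genuine simplification.

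However, two steps in your write-up are incorrect as stated. First, constancy and nonvanishing of $\det(\sigma_{i-1},\sigma_{i+1})$ does \emph{not} force the direction vectors of $L_{i-1}$ and $L_{i+1}$ to be linearly independent: that determinant tracks the moving position vectors, not the velocities, and it is easy to set up two parallel lines and points moving along them so that $\det(\sigma_{i-1}(t),\sigma_{i+1}(t))$ stays constant. What actually rules out parallelism is the convexity of $K$ via the star conditions (Lemma~\ref{lem:star-conditions}): translating the two tangent cones to the origin, the cone for $\sigma_{i-1}'$ is spanned by $\sigma_{i+1}$ and $\sigma_{i-1}+\sigma_{i+1}$, while the cone for $\sigma_{i+1}'$ is spanned by $-\sigma_{i-1}-\sigma_{i+1}$ and $-\sigma_{i-1}$, and these are disjoint even up to sign, so the velocities can never be parallel. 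This must be addressed: if the lines \emph{were} parallel, then $\sigma_i'=\sigma_{i-1}'+\sigma_{i+1}'$ would be parallel to them too, $\sigma_i$ would trace a straight line rather than a hyperbola, and the lemma would be false. Second, the change to coordinates centered at $P=p_1\mathbf{v}_{i-1}+p_2\mathbf{v}_{i+1}$ is handled loosely: neither $\det(\sigma_{i-1},\sigma_{i+1})$ nor the parallelogram identity $\sigma_i=\sigma_{i-1}+\sigma_{i+1}$ is translation-invariant, so in the recentered coordinates the constraint becomes $(\alpha+p_1)(\beta+p_2)=C'$, and the recentered $\sigma_i$ has coordinates $(\alpha+p_1,\beta+p_2)$, not $(\alpha,\beta)$ with $\alpha\beta=C$ as you wrote. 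The two offsets cancel and the final conclusion $XY=\text{const}$ is right, but as stated both intermediate claims fail whenever $P\ne\mb{0}$, so a reader checking them against your definitions would conclude they are inconsistent. Both issues are repairable; the underlying approach is sound.
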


\index[n]{P@$P_K$, parallelogram}

\begin{proof}
For simplicity and without loss of generality, take $i=0$.  The curves
$\pm\sigma_1$ and $\pm\sigma_{-1}$ trace out lines that form four of
the edges (forming a fixed parallelogram $P_K$) of the time dependent
critical hexagon $H_K(t)$.  The tangent lines to $\pm\sigma_0(t)$ form
the final two edges of the critical hexagon, and $\pm\sigma_0(t)$ are
the midpoints of those edges.  As $t$ varies, the area cut off by
these two tangent lines from the parallelogram $P_K$ is constant,
because the areas of $P_K$ and $H_K(t)$ are both constant.  As
Reinhardt observed, the pencil of lines cutting a constant area from
(two adjacent edges of) a parallelogram has an envelope that is a
hyperbola whose asymptotes are the lines extending the edges of the
parallelogram.  The midpoints $\sigma_0(t)$ must lie on that
envelope, a hyperbola with the required properties.
\end{proof}

If $K\in\Kbal$ has boundary parameterization $\sigma_j(t) =
g(t)\mb{s}_j^*$, then the midpoint hexagon $H_K$ of $K$ at the
multi-point $g(t_0)\mb{s}_j^*$ is the left translate by $g(t_0)$ of
the hexagon constructed in the Remark~\ref{rem:hexagon} using
$X=X(t_0)=g^{-1}g'(t_0)$.

%% DONE[] TCH 4/20/2023. Sentence added.  isn't this hexagon the
%% hexagon $H_K$? Perhaps we can mention that.

We have one equation for our state space dynamics viz., equation
$g'=gX$. Before deriving dynamics in the Lie algebra, 
we shift to a more convenient choice of parameterization.

\index[n]{0@$-\tilde{\phantom{-}}$, transformed quantity}

\begin{proposition}\label{prop:reparam-lipschitz}
Let $s$ denote the arclength parameter of $\sigma_0$, and let
$\tilde{X}(t)$ denote the reparameterization of the matrix-valued
curve $X$, using the reparameterizion of $g$ to a time variable $t$
such that $\det(\tilde{X}(t))=1$. Then we have that
$t\mapsto\tilde{X}(t)$ is a Lipschitz continuous function.
\end{proposition}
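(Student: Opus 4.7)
The plan is to exhibit $\tilde X(t)$ as the composition of a Lipschitz normalization of $X(s)$ with a bi-Lipschitz change of variables $s\leftrightarrow t$. First I would derive an explicit formula. Set $\tilde g(t) = g(s(t))$; the chain rule gives $\tilde g'(t) = \tilde g(t)\,X(s(t))\,s'(t)$, so $\tilde X(t) = X(s(t))\,s'(t)$. Because $X(s)\in\sl(\R)$ is a traceless $2\times 2$ matrix, $\det(\tilde X(t)) = (s'(t))^2\,\det X(s(t))$, and the normalization $\det\tilde X\equiv 1$ forces $dt/ds = \sqrt{\det X(s)}$, giving
\[
\tilde X(t) = \frac{X(s(t))}{\sqrt{\det X(s(t))}}.
\]

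The next step is to establish a uniform positive lower bound on $\det X(s)$ over the compact boundary of $K$. By Corollary~\ref{cor:X-props} and equation~\eqref{eqn:det-rho}, $\det X(s) = \rho_0\rho_1 + \rho_1\rho_2 + \rho_2\rho_0$ evaluated at $X(s)$, with all three $\rho_i(X(s)) > 0$ pointwise from the star inequalities. Since $s\mapsto X(s)$ is continuous (in fact Lipschitz, by Theorem~\ref{thm:defn-X} combined with Lemma~\ref{lem:tangents-lipschitz}), the continuous function $s\mapsto\det X(s)$ attains a strictly positive minimum $\epsilon>0$ and a finite maximum $M$ on the compact parameter circle $\R/L\Z$ (where $L$ is the perimeter of $\partial K$).

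Given these bounds, $\sqrt{\epsilon}\le dt/ds\le\sqrt{M}$ shows that the change of variables $t\mapsto s(t)$ is globally Lipschitz with constant $\epsilon^{-1/2}$. Moreover, $s\mapsto\det X(s)$ is Lipschitz (as a polynomial in the entries of the Lipschitz function $X$) and bounded away from zero, so $s\mapsto 1/\sqrt{\det X(s)}$ is Lipschitz, and the product $s\mapsto X(s)/\sqrt{\det X(s)}$ is Lipschitz in $s$. Composing with the Lipschitz reparameterization $t\mapsto s(t)$ gives that $\tilde X(t)$ is Lipschitz in $t$. The only non-routine ingredient is the strict positive lower bound on $\det X(s)$; once that is secured via compactness together with the pointwise star inequalities, everything else follows from the chain rule and the stability of Lipschitz continuity under products, compositions, and bounded quotients.
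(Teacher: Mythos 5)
Your proof is correct and follows essentially the same route as the paper: derive $\tilde X(t)=X(s(t))/\sqrt{\det X(s(t))}$ from the chain rule and the normalization, bound $\det X$ away from zero by compactness together with the star inequalities, and conclude Lipschitz continuity from the stability of that class under products, bounded quotients, and composition with a Lipschitz change of variable. Your write-up is somewhat more explicit than the paper's (which compresses the final steps into "Then $ds/dt$ is Lipschitz. This proves that $\tilde X$ is Lipschitz as well."), but the underlying argument is identical.
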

\begin{proof}
Reparameterize $\tilde{g}(t) := g(s(t))$. In this new parameterization,
we define $\tilde X$ by the differential relation
$d\tilde{g}(t)/dt= \tilde{g}(t)\tilde{X}(t)$.  We have by
the chain rule:
\begin{align*}
\frac{d\tilde{g}(t)}{dt} = \frac{d}{dt}g(s(t)) &= 
\frac{d}{ds}g(s(t))\frac{ds}{dt} 
\\ 
    &= g(s)X(s)\frac{ds}{dt}.
\end{align*}
So that $\tilde{X}(t) = X(s(t))\frac{ds}{dt}$. Now we require $\det(\tilde{X}(t))=1$, which forces
\[
\frac{ds}{dt} = \frac{1}{\det(X(s))^{1/2}},
\]
which gives us the reparameterization equation. By
Corollary \ref{cor:X-props} we have $\det(X(s)) > 0$ so that the
right-hand side is real and finite. Recall that we have that $X(s)$ is
Lipschitz by Theorem $\ref{thm:defn-X}$ and $\det(X(s))$ is bounded
away from zero since it a continuous function on a compact
interval. Then $ds/dt$ is Lipschitz.  This proves that $\tilde{X}$ is
Lipschitz as well.
\end{proof}

By abuse of notation, we let $t$ denote the new parameter, so that
$\det(X(t)) = 1$.

\begin{corollary}\label{cor:X-diff-ae}
With respect to the parameterization making $\det(X(t)) = 1$, the
curve $X$ is differentiable almost everywhere.
\end{corollary}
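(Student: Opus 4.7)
The plan is to invoke the Lipschitz continuity just established in Proposition \ref{prop:reparam-lipschitz} and combine it with Rademacher's theorem, exactly in the same spirit as Corollary \ref{lem:multi-curve-c2} was obtained from Lemma \ref{lem:tangents-lipschitz}.

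More concretely, under the reparameterization introduced in Proposition \ref{prop:reparam-lipschitz}, the matrix-valued curve $t\mapsto X(t)$ (identified with $\tilde X$ after the abuse of notation declared immediately before the corollary) takes values in the finite-dimensional vector space $\sl(\R)$, and Proposition \ref{prop:reparam-lipschitz} asserts that it is Lipschitz continuous on the compact parameter interval. Since Lipschitz continuity is defined entrywise or via any equivalent norm on $\sl(\R)$, each of the four matrix entries of $X(t)$ is a Lipschitz continuous real-valued function of $t$.

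I would then apply Rademacher's theorem to each matrix entry: a Lipschitz function $\R\to\R$ on a compact interval is differentiable almost everywhere. Taking the (countable) union of the four null sets of non-differentiability still yields a null set, so all four entries are simultaneously differentiable off a set of measure zero; equivalently, the matrix-valued function $X$ is differentiable almost everywhere.

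There is no real obstacle here; the statement is essentially a one-line corollary of Proposition \ref{prop:reparam-lipschitz} together with Rademacher's theorem, exactly parallel to how Corollary \ref{lem:multi-curve-c2} was deduced from Lemma \ref{lem:tangents-lipschitz}. The only things to be mildly careful about are (i) noting explicitly that Lipschitz continuity in $\sl(\R)$ reduces to Lipschitz continuity of the matrix entries, so that the scalar-valued version of Rademacher's theorem suffices, and (ii) observing that the reparameterization $s\mapsto t$ is defined on a compact interval (inherited from the compact domain of the original boundary parameterization of $K=K_{\min}$), so that the Lipschitz constant is uniform and the almost-everywhere statement is meaningful with respect to Lebesgue measure on that interval.
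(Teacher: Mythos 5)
Your proposal is correct and follows exactly the same route as the paper: Rademacher's theorem applied to the Lipschitz curve from Proposition~\ref{prop:reparam-lipschitz}. The paper states this in one line; your elaboration (entrywise reduction, union of null sets, compactness of the domain) merely fills in standard details.
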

\begin{proof}
This follows from Rademacher's theorem and Proposition \ref{prop:reparam-lipschitz}.
\end{proof}

\index[n]{GL@$\mathrm{GL}_n$, general linear group}
\index{Jacobi's formula}

\begin{corollary}\label{cor:param-equiv}
A parameterization which makes $\det(X(t))$ a constant also makes $X
+ X^{-1}X' \in \sl(\R)$ almost everywhere, and conversely.
\end{corollary}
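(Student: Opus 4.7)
The plan is to reduce both conditions to the single pointwise identity $\tr(X^{-1} X') = 0$, where the derivatives exist almost everywhere thanks to Corollary~\ref{cor:X-diff-ae}. Throughout, we use that $X(t) \in \sl(\R)$, so $\tr(X(t)) = 0$, and that $\det(X(t)) > 0$ by Corollary~\ref{cor:X-props}, so $X(t)$ is invertible.

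First I would handle the right-hand condition. Since trace is linear, we have
\[
\tr(X + X^{-1} X') = \tr(X) + \tr(X^{-1} X') = \tr(X^{-1} X'),
\]
using $X \in \sl(\R)$. A $2\times 2$ matrix $M$ lies in $\sl(\R)$ iff $\tr(M) = 0$, so $X + X^{-1} X' \in \sl(\R)$ almost everywhere is equivalent to $\tr(X^{-1} X') = 0$ almost everywhere.

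Next I would handle the left-hand condition via Jacobi's formula, which for a differentiable invertible matrix-valued function gives
\[
\frac{d}{dt} \det(X(t)) = \det(X(t)) \, \tr\!\bigl(X(t)^{-1} X'(t)\bigr).
\]
Since $\det(X(t)) > 0$ everywhere, $\det(X(t))$ being a constant is equivalent to $\tr(X^{-1} X') = 0$ almost everywhere. (More carefully, the absolutely continuous function $\det(X(t))$ is constant iff its almost-everywhere derivative vanishes; and $\det(X(t))$ is absolutely continuous because $X$ is Lipschitz by Proposition~\ref{prop:reparam-lipschitz}.) Combining the two equivalences gives the claim in both directions.

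The only delicate step is the almost-everywhere bookkeeping: strictly speaking we need Jacobi's formula to hold at almost every $t$ (where $X'$ exists), and we need the fundamental theorem of calculus in the absolutely continuous setting to conclude that a function with a.e.-vanishing derivative is constant. Both are standard for Lipschitz curves, so this is a routine point rather than a real obstacle.
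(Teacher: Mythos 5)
Your proof is correct and takes essentially the same route as the paper: reduce both conditions to $\tr(X^{-1}X')=0$, via Jacobi's formula on the one side and $\tr(X)=0$ on the other. The paper states this in one line; your extra care about the a.e.\ bookkeeping and absolute continuity is a reasonable (if optional) elaboration of the same argument.
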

\begin{proof}
This is immediate from the Jacobi's formula for the
derivative of a determinant (Lemma~\ref{lem:Jacobi}):
\begin{equation}\label{eqn:log-deriv}
\frac{\det(X)'}{\det(X)} = \tr\left(X^{-1}X'\right),
\end{equation}
\index[n]{zl@$\lambda$, eigenvalue}
and from $\tr(X)=0$.  
\end{proof}

\index[n]{L@$L$, matrix}
\index[n]{A@$A$, matrix or linear map}
\index[n]{D@$D$, matrix}
\index[n]{I@$I$, identity matrix}
\index[n]{zl@$\lambda$, eigenvalue}

\begin{lemma}[Jacobi's formula]\label{lem:Jacobi}
Let $A$ be a differentiable function taking values in 
$\mathrm{GL}_n(\C)$.  Then
\[
{\det(A)'} = {\det(A)}\tr\left(A^{-1}A'\right).
\]
(The same formula holds without the assumption that
$A$ is invertible, if $\det(A)A^{-1}$ is replaced with
the adjugate of $A$.)
\end{lemma}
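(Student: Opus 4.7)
The plan is to reduce the formula to the single identity
\[
\det(I + \epsilon B) = 1 + \epsilon\,\tr(B) + O(\epsilon^2)
\qquad (\epsilon \to 0),
\]
which I would first establish by expanding via the Leibniz formula (or, equivalently, by noting that the coefficient of $\epsilon$ in the characteristic polynomial $\det(\epsilon B + I)$ is $\tr(B)$; only the identity permutation contributes a term of order $\epsilon^0$, and the permutations with a single non-fixed point contribute $\epsilon^2$ or higher while the $n$ fixed-point-only terms give $\prod_i(1 + \epsilon b_{ii}) = 1 + \epsilon\sum_i b_{ii} + O(\epsilon^2)$).

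Next, I would fix a point $t_0$ in the domain and exploit invertibility of $A(t_0)$ to factor out the matrix. Using differentiability of $A$, write
\[
A(t) = A(t_0) + (t - t_0) A'(t_0) + o(t - t_0)
     = A(t_0)\bigl( I + (t - t_0)\,A(t_0)^{-1} A'(t_0) + o(t - t_0) \bigr).
\]
Multiplicativity of the determinant then gives
\[
\det A(t) = \det A(t_0)\cdot\det\!\bigl( I + (t-t_0)\,A(t_0)^{-1} A'(t_0) + o(t - t_0) \bigr).
\]
Applying the preliminary expansion with $B = A(t_0)^{-1} A'(t_0)$ and absorbing the little-$o$ correction (using continuity of $\det$ and $\tr$ near the identity), I obtain
\[
\det A(t) = \det A(t_0)\bigl( 1 + (t - t_0)\,\tr( A(t_0)^{-1} A'(t_0)) + o(t - t_0) \bigr).
\]
Dividing by $t - t_0$ and letting $t \to t_0$ yields the claim at $t_0$; since $t_0$ was arbitrary, Jacobi's formula holds throughout the domain.

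The only genuinely nontrivial step is the linearization $\det(I + \epsilon B) = 1 + \epsilon\,\tr(B) + O(\epsilon^2)$; everything else is the product rule and the multiplicativity of $\det$. I would not expect any real obstacle: the argument is purely algebraic once this linearization is in hand, and the hypothesis $A(t) \in \mathrm{GL}_n(\C)$ is used only to pass to $A(t_0)^{-1} A'(t_0)$. For the parenthetical extension to non-invertible $A$, I would argue by continuity: the identity $\det(A)' = \tr(\mathrm{adj}(A)\,A')$ is polynomial in the entries of $A$ and $A'$, agrees with the invertible formula on the dense open set $\{\det A \ne 0\}$ via $\mathrm{adj}(A) = \det(A)\,A^{-1}$, and therefore holds identically.
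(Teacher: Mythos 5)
Your proof is correct, but it takes a genuinely different route from the paper's. You establish the first-order expansion $\det(I+\epsilon B) = 1 + \epsilon\,\tr(B) + O(\epsilon^2)$ directly from the Leibniz formula, then factor $A(t) = A(t_0)\bigl(I + (t-t_0)A(t_0)^{-1}A'(t_0) + o(t-t_0)\bigr)$ and apply multiplicativity of $\det$; this is the direct analytic ``linearize $\det$ at the identity and translate'' proof. The paper instead argues that both sides of the formula are polynomial in the entries of $A$ and $A'$, so it suffices to verify the identity on the dense open set of matrices with distinct nonzero eigenvalues; there it diagonalizes $A = L^{-1} D L$, uses the additivity $\tr(A^{-1}A') = \tr(A_1^{-1}A_1') + \tr(A_2^{-1}A_2')$ for products, and reduces to the trivially verified diagonal case $\det(A)'/\det(A) = \sum \lambda_i'/\lambda_i$. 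Your approach is more elementary and self-contained in its use of invertibility (needed only once, at the point $t_0$ under consideration) and yields the result pointwise without any genericity argument. The paper's approach is a cleaner way to expose the multiplicative structure behind the formula and already builds in the density argument that both of you need for the adjugate version; your treatment of that version (by Zariski-density of $\{\det A \ne 0\}$ for a polynomial identity) is also sound. Both proofs are standard and both are valid.
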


\begin{proof}
Both sides of the Jacobi formula are polynomials in the matrix
coefficients of $A$ and $A'$.  It is therefore sufficient to verify
the polynomial identity on the dense subset where the eigenvalues of
$A$ are distinct and nonzero.

If $A$ factors differentiably as $A=A_1A_2$, then
\[
\mathrm{tr}(A^{-1}A')=
\mathrm{tr}(A_1^{-1}A_1')+
\mathrm{tr}(A_2^{-1}A_2').
\]
In particular, if $L$ is invertible, then $I=L^{-1}L$ and
\[
0=\mathrm{tr}(I^{-1}I')=
\mathrm{tr}(L(L^{-1})')+
\mathrm{tr}(L^{-1}L').
\]
Since $A$ has distinct eigenvalues, there exists a differentiable
complex invertible matrix $L$ such that $A = L^{-1}DL$ and $D$ is
diagonal.  Then
\[
\mathrm{tr}(A^{-1}A')=
\mathrm{tr}(L(L^{-1})')+
\mathrm{tr}(D^{-1}D')+
\mathrm{tr}(L^{-1}L')=
\mathrm{tr}(D^{-1}D').
\]
Let $\lambda_i$, $i=1,\ldots,n$ be the eigenvalues of $A$.  Then
\[
\frac{\det(A)'}{\det(A)} = \sum_{i=1}^n \lambda_i'/\lambda_i = 
\mathrm{tr}(D^{-1}D')=
\mathrm{tr}(A^{-1}A'),
\]
which is the Jacobi formula for matrices $A$ with distinct nonzero
eigenvalues.
\end{proof}

\section{The Cost Functional}\label{sec:the-cost-functional}

We now compute the cost functional in terms of the matrix-valued curve
$X$ parameterized as above. From the balanced Reinhardt
problem \ref{pbm:reinhardt-reduction-2} in circle representation, we
see that the quantity to be minimized is the area of a convex disk in
$\Kbal$. Our strategy is to compute this area using Green's
theorem, by using the pullback of the one-form
$xdy-ydx$ on $\R^2$.  We let $\cdot^{tr}$ denote the transpose of a matrix.

\index{Reinhardt problem!balanced}
\index{Green's theorem}
\index[n]{zc@$\gamma$, planar curve}
\index[n]{zh@$\theta$, differential one-form}
\index[n]{0@$-^{tr}$, transpose}
\index[n]{u@$\mb{u}$, vector!in $\R^2$}
\index[n]{w@$\mb{w}$, vector!in $\R^2$}
\index[n]{0@$-^*$, pullback of differential form}

\begin{lemma}\label{lem:area-helper-1}
Let $g : [0,t_f] \to \SL(\R)$ be a path so that $g' = gX$ as above and
let $\mb{u} \in \R^2$. Define $\gamma : [0,t_f] \to \R^2$
by
$\gamma(t):=g(t)\mb{u}$.  Consider the one-form $\theta = xdy - ydx$ on
$\R^2$. Then we have the following formula for the pullback of
$\theta$ to $[0,t_f]$:
\[
\gamma^*\theta = -\mb{u}^{tr} J X \mb{u}\, dt.
\]
\end{lemma}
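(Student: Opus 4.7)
The plan is a direct computation via the standard area interpretation of the one-form $\theta = x\,dy - y\,dx$. First I would write $\gamma(t) = g(t)\mb{u}$ and differentiate using the defining relation $g' = gX$ to obtain $\gamma'(t) = g(t)X(t)\mb{u}$. Then the pullback of $\theta$ is
\[
\gamma^*\theta = \bigl(\gamma_1(t)\gamma_2'(t) - \gamma_2(t)\gamma_1'(t)\bigr)\,dt = \det(\gamma(t),\gamma'(t))\,dt,
\]
interpreting the $2\times 2$ determinant as having $\gamma$ and $\gamma'$ as its columns.

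Next, I would exploit the multiplicativity of $\det$ together with $g(t) \in \SL(\R)$:
\[
\det(\gamma(t),\gamma'(t)) = \det\bigl(g(t)\mb{u},\, g(t)X(t)\mb{u}\bigr) = \det(g(t))\,\det(\mb{u},X(t)\mb{u}) = \det(\mb{u},X(t)\mb{u}).
\]
This eliminates the dependence on $g$ entirely and reduces the problem to a single $2\times 2$ determinant in $\R^2$.

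Finally, I would translate this determinant into the quadratic form on the right-hand side by using the explicit matrix $J = \bigl(\begin{smallmatrix} 0 & -1 \\ 1 & 0 \end{smallmatrix}\bigr)$. For any $\mb{v},\mb{w} \in \R^2$, a direct entry-by-entry check gives $\mb{v}^{tr} J \mb{w} = -(v_1 w_2 - v_2 w_1) = -\det(\mb{v},\mb{w})$. Applying this with $\mb{v} = \mb{u}$ and $\mb{w} = X(t)\mb{u}$ yields $\det(\mb{u},X(t)\mb{u}) = -\mb{u}^{tr} J X(t) \mb{u}$, completing the identity $\gamma^*\theta = -\mb{u}^{tr} J X \mb{u}\,dt$.

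There is no substantive obstacle here; the lemma is essentially a bookkeeping identity, and the only care required is keeping track of the sign convention for $J$ and the orientation of the determinant.
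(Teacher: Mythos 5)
Your proposal is correct and follows essentially the same path as the paper: pull back $\theta$ to $\det(\gamma,\gamma')\,dt$, use $\det(g)=1$ to reduce to $\det(\mb{u},X\mb{u})$, and convert the determinant to the quadratic form $-\mb{u}^{tr}JX\mb{u}$ via the identity $\mb{v}^{tr}J\mb{w}=-\det(\mb{v},\mb{w})$. The signs and conventions check out.
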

\begin{proof}
If we write $\gamma(t) = (\gamma_1(t),\gamma_2(t))$, then 
\begin{align*}
\gamma^*\theta &= \theta(\gamma(t))
\\ 
&= (\gamma_1(t)\gamma_2'(t) -\gamma_2(t)\gamma_1'(t))\,dt
\\ 
&= (\det(\gamma, \gamma'))\,dt 
\\ 
&= (\det(g \mb{u} , g X \mb{u}))\, dt 
\\ 
&= (\det(\mb{u} , X \mb{u}))\, dt 
\\ 
&=-\mb{u}^{tr} JX \mb{u}\, dt,
\end{align*}
\index[n]{u@$\mb{u}$, vector!in $\R^2$}
since $\det(g) = 1$ and 
\begin{equation}\label{eqn:wedge-inner}
\det(\mb{u} , X \mb{v}) = -\mb{u}^{tr}JX \mb{v},
\end{equation} 
for all $X \in \sl(\R)$ and $\mb{u},\mb{v} \in \R^2$.
\end{proof}
The lemma above enables us to compute pullbacks of $\theta$ by the
multi-curves $\sigma_j$. Indeed, % if we fix an initial $\mb{v} \in \R^2$ on
the boundary $\partial K$ of an arbitrary balanced convex disk
$K \in\Kbal$ 
is a simple closed curve parameterized by the curves
$\sigma$, given by $\sigma_j(t) = g(t)\mb{s}_j^* = g(t)(R^{j}\mb{s}_0^*)$. 

%% DONE[TCH  4/21/2023 ]:  removed assumption of circle representation.  The
%% assumption is not used, if we allow g(0) to be other than I.

\index[n]{Y@$Y\in\mathfrak{g}$, Lie algebra element!in $\sl$}

\begin{lemma}\label{lem:area-helper-2}
Let $Y \in \sl(\R)$. Then we have 
\[
JY + (R^2)^{tr}JYR^2 + (R^4)^{tr}JYR^4 = \frac{3\tr(JY)}{2}I_2,
\]
where $I_2$ is the $2\times 2$ identity matrix.
\index[n]{I@$I_2$, $2\times2$ identity matrix}
\end{lemma}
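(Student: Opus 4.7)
The plan is to reduce the identity to a very small amount of structural information about the $\mathbb{Z}/3$-action generated by $\Ad_{R^2}$ on $\sl(\R)$, and then extract one scalar to match the right-hand side.

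First, I would exploit the orthogonality of rotations and the commutativity of $J$ with the rotation group. Since $R = \exp(J\pi/3)$ lies in $\SO$, we have $(R^{2k})^{tr} = R^{-2k}$, and $J$ commutes with every power of $R$. Therefore the left-hand side can be rewritten as
\[
J\bigl(Y + R^{-2} Y R^2 + R^{-4} Y R^4\bigr) = J \cdot \Pi(Y),
\]
where $\Pi(Y) := Y + \Ad_{R^{-2}}(Y) + \Ad_{R^{-4}}(Y)$. So it suffices to prove
\[
\Pi(Y) = -\tfrac{3}{2}\tr(JY)\, J,
\]
after which $J^2 = -I_2$ yields the stated formula.

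Second, I would use the standard decomposition $\sl(\R) = \R J \oplus \mathfrak{p}$, where $\mathfrak{p}$ denotes the two-dimensional subspace of symmetric traceless matrices. The operator $\Ad_{R^\theta}$ fixes $\R J$ pointwise and acts on $\mathfrak{p}$ as rotation by $2\theta$ (in an orthonormal basis). In particular, $\Ad_{R^{2}}$ acts on $\mathfrak{p}$ as a nontrivial rotation of order three, so over $\C$ its eigenvalues on $\mathfrak{p}$ are the two nontrivial cube roots of unity. Consequently $\Pi$ restricted to $\mathfrak{p}$ is zero (sum of all cube roots of unity), while $\Pi$ restricted to $\R J$ is multiplication by three. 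Hence $\Pi(Y) = 3\alpha J$, where $\alpha$ is the $J$-component of $Y$ in this direct sum.

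The remaining step is to identify $\alpha$ in terms of $\tr(JY)$. Writing $Y = \mattwo{a}{b}{c}{-a}$, one checks that $Y = \alpha J + Y_s$ with $Y_s \in \mathfrak{p}$ forces $\alpha = (c-b)/2$; meanwhile $\tr(JY) = b - c$, so $\alpha = -\tr(JY)/2$. Thus $\Pi(Y) = -\tfrac{3}{2}\tr(JY)\, J$ and multiplying by $J$ on the left completes the proof. The main conceptual point is the identification of $\Pi$ with three times the projector onto the $\Ad_{R^2}$-fixed line; everything else is a one-line calculation. No step looks like a real obstacle, but the small bookkeeping subtlety is sign conventions in the eigenvalue argument on $\mathfrak{p}$ — a purely brute-force verification in the basis $\{J, H, E+F\}$ (where $H$ and $E+F$ span $\mathfrak{p}$) is available as a fallback.
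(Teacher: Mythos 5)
Your argument is correct, and it takes a genuinely different route from the paper's, which simply records the result as ``a simple computation'' (i.e.\ a direct $2\times 2$ matrix verification in the entries $a,b,c$ of $Y$). You instead pull $J$ out using $J R^{2k} = R^{2k} J$ and $(R^{2k})^{tr}=R^{-2k}$, and then recognize $\Pi = I + \Ad_{R^{-2}} + \Ad_{R^{-4}}$ as three times the projector onto the $\Ad_{R^2}$-fixed line $\R J$, because $\Ad_{R^{2}}$ acts trivially on $\R J$ and with primitive cube-root-of-unity eigenvalues on the symmetric traceless complement $\mathfrak{p}$; the scalar $\alpha = -\tr(JY)/2$ then falls out from the decomposition $Y = \alpha J + Y_s$ and $J^2 = -I_2$ finishes it. The brute-force calculation is shorter to write down and adequate for a one-off identity; your argument costs a bit more setup but explains \emph{why} the identity holds (it is a group-averaging/projection statement) and generalizes immediately, e.g.\ to sums over $\{R^{2k}: 0\le k < n\}$ when $n\equiv 1\pmod 3$, or to higher-dimensional analogues of $\sl(\R)=\R J\oplus\mathfrak{p}$. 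One small note: you should confirm (as you do implicitly) that $\Ad_{R^2}$ is \emph{nontrivial} on $\mathfrak{p}$, which holds because it is rotation by $4\pi/3 \ne 0 \pmod{2\pi}$; if $R^2$ had been a rotation by $\pi$ the averaging argument would fail, so it is worth stating explicitly.
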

\begin{proof}
This is a simple computation. 
\end{proof}

We now derive a formula for the area of $K\in\Kbal$.  
By Green's theorem and the lemmas from above,
we have
\begin{align*}
\op{area}(K)
&= \frac{1}{2}\oint_{\partial K}\theta 
\\ 
    &= \frac{1}{2}\int_{0}^{t_f}\gamma^*\theta dt 
\\ 
    &= \int_{0}^{t_f}\sigma_0^*\theta +\sigma_2^*\theta +\sigma_4^*\theta dt \quad 
\text{(by Lemma \ref{lem:area-helper-1})}
\\ 
    &= \int_{0}^{t_f}-\mb{u}^{tr} JX \mb{u} -(R^2\mb{u})^{tr} JX (R^2\mb{u})
    - (R^4\mb{u})^{tr} JX (R^4 \mb{u}) dt 
\\ 
    &= -\int_{0}^{t_f} \mb{u}^{tr} 
\left(JX + (R^2)^{tr}JXR^2 + (R^4)^{tr} JXR^4\right)\mb{u} dt 
\\ 
    &=-\int_{0}^{t_f} |\mb{u}|^2 \frac{3\tr(JX)}{2} dt \quad 
\text{(by Lemma \ref{lem:area-helper-2}).}
\end{align*}
The parameterization of convex disk $K$ is of the form $\sigma_0(t) =
g(t)\mb{s}_0^*$, which means that $\mb{u}=\mb{s}_0^*=(1,0)$.
So $|\mb{u}|=1$ and
\begin{equation}\label{eqn:cost}
\op{area}(K) = -\frac{3}{2}\int_{0}^{t_f} \tr(JX) dt,
\end{equation}
which is the quantity to be minimized.  

\begin{example} 
We show that the area of a unit disk $K$ is $\pi$, as expected, by
using \eqref{eqn:cost}.  In this case,
\[
g(t) = \exp(J t),\quad X(t) = J,\quad t_f = \pi/3\quad \text{(one-sixth the circumference)},
\]
where $\exp(-)$ is the matrix exponential.
\index[n]{exp@$\exp$, exponential and matrix exponential}
Note that $X$ is a constant curve in this case.
Then
\[
\op{area}(K) = -\frac{3}{2} \int_{0}^{t_f} \tr(JX) dt = -\frac{3}{2} \tr(J^2) 
\int_{0}^{\pi/3} dt 
= -\frac{3}{2} (-2) \frac{\pi}{3} = \pi.
\]
\end{example}

\section{Control Sets}\label{sec:control-sets}
We now investigate the \emph{control} parameters which affect this
cost. It is intuitively obvious that the curvature of the curves
making up the boundary of the convex disk $K \in\Kbal$ affect its
area. So, it makes sense to allow the curvatures to play the role
of the controls. Problems in which curvature plays the role of a
control are well-studied in the literature, the Dubins-Delauney
problem being one a prominent such
example~\cite{jurdjevic2014delauney}. Other examples, such as
Kirchoff's problem and the elastic problem are discussed in
\cite{jurdjevic2016optimal}.

To begin, recall that Lemma \ref{lem:local-curvature-constrt} says
that $\det(\sigma_j'(t), \sigma_j''(t)) \ge 0$ almost everywhere in
$t$.  Now since our convex disk is in circle representation, we have
$\sigma_j(t) = g(t)\mb{s}_{j}^*$. Then we have, for $j=0,1,2$,
\begin{align}
\kappa_j(t) := \det(\sigma_{2j}'(t),\sigma_{2j}''(t)) &= \det(g'(t)\mb{s}_{2j}^*, g''(t) \mb{s}_{2j}^*) \nonumber 
\\ 
&= \det(gX\mb{s}_{2j}^*, (gX^2+gX') \mb{s}_{2j}^*) \nonumber 
\\ 
&= \det(X\mb{s}_{2j}^*, (X^2+X') \mb{s}_{2j}^*) \nonumber 
\\ 
&=\det(\mb{s}_{2j}^*,\left(X+X^{-1}X'\right) \mb{s}_{2j}^*) \ge 0 \label{eqn:state-dep-curvature},
\end{align}
\index[n]{zk@$\kappa$, curvature!$\kappa_j$, state-dependent curvature}
almost everywhere in $t$. Here we used
Proposition~\ref{prop:reparam-lipschitz} and
Corollaries \ref{cor:X-props} and \ref{cor:X-diff-ae}. We call
$\kappa_j(t)$ the \emph{state-dependent curvature} as it depends
on where we are in the state space.
Note the indexing conventions $j\leftrightarrow 2j$ relating
$\kappa_j$ and $\sigma_{2j}$.

\begin{lemma}\label{lem:curvature-pos}
Let $g : [0,t_f] \to \SL(\R)$ be $C^1$ with Lipschitz derivative satisfying
the star and curvature conditions \eqref{eqn:state-dep-curvature}. 
Then almost everywhere,
there exists an index $j$ so that $\kappa_j(t) > 0$.
\end{lemma}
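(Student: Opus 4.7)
The plan is a proof by contradiction. Suppose the set $E \subset [0,t_f]$ on which $\kappa_0(t) = \kappa_1(t) = \kappa_2(t) = 0$ has positive measure. Since $g$ is $C^1$ with Lipschitz derivative, Corollary~\ref{cor:X-diff-ae} (together with Corollary~\ref{cor:param-equiv}) gives that $X$ is differentiable almost everywhere, and at such points of differentiability $M(t) := X(t) + X(t)^{-1}X'(t)$ lies in $\sl(\R)$ because the parameterization makes $\det(X(t)) = 1$. Replacing $E$ by its intersection with the set of differentiability points of $X$ does not change its measure, so we may pick a single $t \in E$ at which everything is well defined.

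At such a $t$, the formula \eqref{eqn:state-dep-curvature} reads $\kappa_j(t) = \det(\mb{s}_{2j}^*, M(t)\mb{s}_{2j}^*) = 0$ for $j = 0, 1, 2$. This says precisely that $M(t)\mb{s}_{2j}^*$ is a scalar multiple of $\mb{s}_{2j}^*$ for each $j$, i.e., the three vectors $\mb{s}_0^*, \mb{s}_2^*, \mb{s}_4^*$ are all eigenvectors of the $2\times 2$ matrix $M(t)$. Since these three sixth roots of unity span three pairwise distinct lines through the origin in $\R^2$, and a $2 \times 2$ matrix which is not a scalar multiple of the identity has at most two eigenlines, we conclude that $M(t) = \lambda I$ for some $\lambda \in \R$. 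Combining with $M(t) \in \sl(\R)$ gives $\tr(M(t)) = 2\lambda = 0$, hence $M(t) = 0$.

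The equation $M(t) = 0$ unwinds to $X'(t) = -X(t)^2$. Since $X(t) \in \sl(\R)$, we have $\tr(X(t)) = 0$, so Cayley-Hamilton yields $X(t)^2 + \det(X(t))\, I = 0$, that is $X(t)^2 = -I$ in our $\det X \equiv 1$ parameterization. Therefore $X'(t) = I$. But $X(t) \in \sl(\R)$ for all $t$ forces $X'(t) \in \sl(\R)$, in particular $\tr(X'(t)) = 0$, whereas $\tr(I) = 2$. This contradiction shows that $E$ has measure zero, which is the desired conclusion.

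The argument is short and the only real obstacle is lining up the a.e.\ hypotheses: we must use Corollary~\ref{cor:X-diff-ae} and Corollary~\ref{cor:param-equiv} to guarantee both that $X'(t)$ exists and that $M(t) \in \sl(\R)$ at the chosen point; once this is arranged, the collision between the eigenvector condition (forcing $M$ scalar) and the trace-zero condition (forcing the scalar to vanish), combined with Cayley-Hamilton on the unit-determinant $X$, yields the contradiction immediately.
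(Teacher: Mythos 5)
Your proof is correct, and it takes a genuinely different route from the paper's. The paper's primary proof assumes only $\kappa_0=\kappa_1=0$ and shows by explicit computation that $\kappa_2 = \sqrt{3}\det(X)/\rho_0(X) > 0$, which is slightly stronger in that it pins down $\kappa_2$ as a ratio involving the star functions $\rho_j$ (a formula the paper explicitly remarks on and reuses). Your argument instead assumes all three vanish and exploits a linear-algebra rigidity: the condition $\det(\mb{s}_{2j}^*, M\mb{s}_{2j}^*)=0$ for $j=0,1,2$ means $M:=X+X^{-1}X'$ has three pairwise distinct invariant lines, hence is scalar, hence (being in $\sl(\R)$) zero; then $X'=-X^2$, Cayley--Hamilton with $\tr X=0,\det X=1$ gives $X'=I$, contradicting $\tr X'=0$. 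This is cleaner conceptually and avoids computing the $\rho_j$. Incidentally, the step from "$M$ annihilates the three $\det$-pairings against a multi-point" to "$M=0$" is exactly Lemma~\ref{lem:sl2-lemmas}(2) specialized to $Y=0$, so you could have invoked it directly. Both approaches rely on $\det(X)=1$ (which is the parameterization under which \eqref{eqn:state-dep-curvature} was derived and which uses the star conditions to ensure $\det X>0$), so the hypotheses line up correctly.
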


%\index[n]{abcd@$a,b,c,d$, matrix entries!of $X$}

\begin{proof}
Take $X = g^{-1}g'$.  Assuming $\kappa_0,\kappa_1=0$,
a short calculation shows almost everywhere that
\[
\mcite{MCA:kappa2-pos}
\kappa_2(t) = \det(\mb{s}_{4}^*, \left(X+X^{-1}X'\right) \mb{s}_{4}^*)
= \frac{\sqrt{3}\det(X)}{\rho_0(X)}.
\]
This is strictly positive by the star inequalities in
Corollary \ref{cor:X-props}. (This gives a second interpretation of
the functions $\rho_i$ in terms of state-dependent curvatures.)

A second proof can be obtained from Lemma~\ref{lem:hyp-arc}: if two of
the state-dependent curvatures are zero, then the third curvature is
such that associated curve is an arc of a hyperbola. Hence the third
curvature is positive.

A third proof appears at the end of the proof of
Theorem \ref{thm:X-dynamics-sl2}, which gives a formula for
$\kappa_0+\kappa_1+\kappa_2$ as a ratio of negative numbers.
\end{proof}

Since the state-dependent curvatures $\kappa_j(t)$ depend on $X$ and $X'$
in general, they are not suitable as control variables for our control
problem. To this end, we introduce normalizations of the state-dependent
curvatures as follows.

\index[n]{u@$u$, control!$u_j$, $j$th component of control}

\begin{definition}[control variables]\label{def:control-variables}\index{control!variable}
For each $j=0,1,2$, define control variables given by
the normalized state-dependent curvatures as
\[
u_j := \frac {\kappa_j}{\kappa_0+\kappa_1+\kappa_2}.
\]
%We will denote $\kappa := \kappa_0+\kappa_1 + \kappa_2$ so that $u_j
%= \kappa_j(t)/v(t)$.
\end{definition}

Note that the denominator is positive 
by Lemma \ref{lem:curvature-pos}. The control
variables $u_i$ evidently satisfy $0\le u_i \le 1$ and $u_0 + u_1 +
u_2 = 1$. Note also that the control variables are functions of time.

\begin{definition}[Triangular control set]\label{def:triang-control-set}
The \emph{triangular} or \emph{simplex} control set is the set 
\[
U_T
:= \left\{(u_0,u_1,u_2)\mid 0\le u_i \le 1, ~u_0+u_1+u_2 = 1\right\},
\index[n]{U@$U$, control set!$U_T$, triangular}
\]
which is just the two-simplex in $\R^3$.
\end{definition}

%% DONE[] I'm not sure what this sentence means. -TCH
% However, our control sets are in $\R^3$, which is not the state space
% of our control problem. As equation \eqref{eqn:state-dep-curvature}
% shows, the dynamics for $X$ seems to be related to the
% curvatures. Thus, our first task is to map these control sets into the
% Lie algebra $\sl(\R)$ and to derive a precise relation between
% the \emph{control matrices} and the evolution of the matrix $X$.

We map the control set $U_T$ into the Lie algebra
$\sl(\R)$ using the following transformation:
\begin{equation}\label{eqn:Z0}
Z_u=
\left(
\begin{matrix}
\frac{u_1 - u_2}{\sqrt{3}} & \frac{u_0 - 2u_1 - 2u_2}{3} 
\\ 
    u_0 & \frac{u_2 - u_1}{\sqrt{3}}
\end{matrix}
\right)    
\in \sl(\R),\quad u=(u_0,u_1,u_2)\in U_T.
\index[n]{YZ@$Z_u$, control matrix}
\end{equation}

This \emph{control matrix} $Z_u\in\sl(\R)$ is uniquely
determined by the equations
\begin{equation}\label{eqn:def-Z0}
u_j = \det(\mb{s}_{2j}^*, Z_u \mb{s}_{2j}^*)  \qquad j=0,1,2.
\end{equation}
% DONE[]{ Add the equivalent control matrices in $\su$. Also add facts
% about $\SU$ in the appendix.  TCH: This issue has been addressed to
% my satisfaction by the appendix.}

% DONE[] Edit and restore I'm not sure what this means. TCH.
In summary, the optimal control function of the control problem
takes values in the two-simplex $U_T$.  The values in $U_T$ specify
the values of curvature functions, which determine the boundary curves
of a convex disk $K \in\Kbal$.  The optimal control function minimizes
the area of $K$.

\index[n]{zr@$\rho_j$, star function}
\index[n]{0@$\bracks{-}{-}$, bilinear form!trace form on Lie algebra}

Henceforth, we adopt the notation $\bracks{X}{Y}:=\tr(XY)$, for any two
matrices $X,Y \in \sl(\R)$. This form is a nondegenerate invariant
bilinear form on $\sl(\R)$.

We can now prove an equivalent star condition.
\begin{lemma}\label{lem:sl2-star-condition}
The star inequalities on $X$ hold if and only if $\bracks{Z_u}{X}<0$
for all controls $u \in U_T$.
\end{lemma}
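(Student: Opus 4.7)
The plan is to show that $\bracks{Z_u}{X}$ is, up to a negative scalar, a convex combination of the three star functions $\rho_0(X), \rho_1(X), \rho_2(X)$, with the convex combination coefficients being the control components $u_0,u_1,u_2$ (after a reindexing). Both directions of the equivalence then follow by inspecting what happens at the vertices $e_0,e_1,e_2$ of the two-simplex $U_T$.

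The first step is to observe that the matrix $Z_u$ depends linearly on $u$, so $Z_u = u_0 Z_{e_0} + u_1 Z_{e_1} + u_2 Z_{e_2}$, and consequently
\[
\bracks{Z_u}{X} = u_0 \bracks{Z_{e_0}}{X} + u_1 \bracks{Z_{e_1}}{X} + u_2 \bracks{Z_{e_2}}{X}.
\]
This reduces the problem to evaluating $\bracks{Z_{e_j}}{X}$ for $j=0,1,2$. Reading the three vertex matrices off \eqref{eqn:Z0} and taking traces against $X=\mattwo{a}{b}{c}{-a}$ gives, after a short computation,
\[
\bracks{Z_{e_0}}{X} = b + \tfrac{c}{3}, \qquad
\bracks{Z_{e_1}}{X} = \tfrac{2a}{\sqrt3} - \tfrac{2c}{3}, \qquad
\bracks{Z_{e_2}}{X} = -\tfrac{2a}{\sqrt3} - \tfrac{2c}{3}.
\]
Comparing with the explicit formulas \eqref{eqn:rho} for $\rho_0,\rho_1,\rho_2$, each of these is manifestly a negative scalar multiple of one of the star functions, so we obtain the key identity
\[
\bracks{Z_u}{X} = -\tfrac{2}{\sqrt3}\bigl(u_0\,\rho_2(X) + u_1\,\rho_1(X) + u_2\,\rho_0(X)\bigr).
\]

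With the identity in hand, both implications are immediate. If the star inequalities hold, then for any $u\in U_T$ the bracketed combination is strictly positive (since $u\in U_T$ means the $u_j$ are nonnegative and sum to $1$, hence not all zero, while each $\rho_j(X)$ is strictly positive), so $\bracks{Z_u}{X}<0$. Conversely, evaluating at the three vertices $u=e_0,e_1,e_2$ of $U_T$ isolates each $\rho_j(X)$ in turn, so $\bracks{Z_u}{X}<0$ for all $u\in U_T$ forces $\rho_0(X),\rho_1(X),\rho_2(X)>0$.

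There is no real obstacle: the only content is the linearity-in-$u$ observation plus the three trace computations at the vertices. The main thing to get right is the indexing convention, since the natural pairing comes out as $u_j\leftrightarrow\rho_{-j\pmod 3}$ (that is, $u_0$ multiplies $\rho_2$, $u_1$ multiplies $\rho_1$, $u_2$ multiplies $\rho_0$), which one could alternatively derive more symmetrically by invoking the defining relation \eqref{eqn:def-Z0} for $Z_u$ together with the expansion \eqref{eqn:rho-star-inequalities} of $X\mb{s}_j^*$ in the basis $\{\mb{s}_{j+1}^*,\mb{s}_{j+2}^*\}$ and Lemma~\ref{lem:area-helper-2} to average over the three sixth-roots.
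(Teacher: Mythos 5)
Your proposal is correct and follows the same route as the paper's proof: both establish the identity $\bracks{Z_u}{X} = -\frac{2}{\sqrt3}\bigl(u_0\rho_2(X) + u_1\rho_1(X) + u_2\rho_0(X)\bigr)$ and read off the equivalence from the fact that a linear function on a simplex is everywhere negative if and only if it is negative at each vertex. The only difference is that you spell out the ``easy calculation'' the paper leaves implicit, by expanding $Z_u$ linearly over the vertices $\ep_j$ and computing the three traces $\bracks{Z_{\ep_j}}{X}$ directly, which is a natural and correct way to do it.
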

\begin{proof}
%% Z is the function. Z_u is the value. See comment about $Z$ vs $Z_u$.
An easy calculation gives the following identity.
\[
\bracks{Z_u}{X} = -\frac{2}{\sqrt3} (\rho_2(X) u_0 + \rho_1(X) u_1 + \rho_0(X) u_2).
\]
The right-hand side is everywhere negative on $U_T$ if and only if
$\rho_j(X)>0$, for $j=0,1,2$. These are the star inequalities on $X$.
\end{proof}

\section{Lie Algebra Dynamics}\label{sec:lie-algebra-dynamics}

\index[n]{X@$X$, Lie algebra element!$X,Y,Z,W\in\sl$}
\index[n]{0@$[-,-]$, Lie bracket}

Let us first collect a number of results about
matrices in $\sl(\R)$ which we will need. All of these are elementary
and so we admit them without proofs.  Let $[X,Y] = XY - YX$ be the Lie
algebra commutator of two matrices $X,Y \in \sl(\R)$.

\begin{lemma}\label{lem:sl2-lemmas}\mcite{MCA:sl2-lemmas}
We have the following results about matrices in $\sl(\R)$.
\begin{enumerate}
\item If $X \in \sl(\R)$, $\bracks{X}{X} = -2\det(X)$.
\item If $X,Y$ are any two matrices in $\sl(\R)$, and $\multi$ is a multi-point
\[
\det(\multi_{j}, X \multi_{j}) = \det(\multi_{j}, Y \multi_{j}),\quad j=0,1,2,
\]
then $X=Y$.
\item If $X,Y \in \sl(\R)$ then $XY + YX = \bracks{X}{Y} I_2$.
\item For any matrices $X,Y,Z \in \sl(\R)$ we have $\bracks{X}{[Y,Z]} = \bracks{[X,Y]}{Z}$.
\end{enumerate}
\end{lemma}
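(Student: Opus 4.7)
The plan is to dispatch the four items in order, since each is a short algebraic computation, and to organize the proof around the identification $\sl(\R)\cong\R^3$ via the entries $a,b,c$ of a matrix $X=\mattwo{a}{b}{c}{-a}$.

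For (1), I would simply compute $\tr(X^2)=2a^2+2bc$ and compare with $\det(X)=-a^2-bc$. For (3), the cleanest route is to invoke the Cayley--Hamilton theorem in the form $X^2+\det(X)\,I_2=0$ (valid because $\tr(X)=0$), and then polarize: applying Cayley--Hamilton to $X+Y\in\sl(\R)$ and subtracting the identities for $X$ and $Y$ gives
\[
XY+YX=\bigl(\det(X)+\det(Y)-\det(X+Y)\bigr)I_2,
\]
after which one recognizes the scalar as $\tr(XY)=\bracks{X}{Y}$ via (1). Item (4) is the standard cyclic property of trace: both $\bracks{X}{[Y,Z]}$ and $\bracks{[X,Y]}{Z}$ expand to $\tr(XYZ)-\tr(XZY)$ after using $\tr(ABC)=\tr(CAB)$; no property of $\sl(\R)$ beyond trace cyclicity is used.

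The only item requiring a bit more thought is (2), and this is the main obstacle, though still routine. The map
\[
\Phi:\sl(\R)\to\R^3,\qquad \Phi(X)=\bigl(\det(\multi_0,X\multi_0),\det(\multi_1,X\multi_1),\det(\multi_2,X\multi_2)\bigr)
\]
is linear, and both source and target are $3$-dimensional, so it suffices to verify that $\Phi$ is injective; equivalently, setting $W=X-Y\in\sl(\R)$, I must show that $\det(\multi_j,W\multi_j)=0$ for $j=0,1,2$ forces $W=0$. Using the identity \eqref{eqn:wedge-inner}, these equations read $\multi_j^{tr}JW\multi_j=0$, which is a homogeneous linear system in the three entries $a,b,c$ of $W$. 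I would carry out the computation for the canonical multi-point $\multi_j^*$ (sixth roots of unity) first, where the coefficient matrix of the system is easily seen to be nonsingular -- indeed, the functionals $X\mapsto\det(\mb{s}_{2j}^*,X\mb{s}_{2j}^*)$ are essentially the control functionals $u_j$ of \eqref{eqn:def-Z0}, known to recover $X$ uniquely from its triple of values. For a general multi-point $\multi$, I would either repeat the calculation, or remark that any multi-point is obtained from $\multi^*$ by an affine transformation in $\SL(\R)$ (by the proof of Theorem~\ref{thm:defn-g}), under which the three determinant functionals transform covariantly, so nondegeneracy is preserved.

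Throughout I would use only the explicit matrix parameterization and the already established identity \eqref{eqn:wedge-inner}; no appeal to the dynamics or to $g(t)$ is needed. The whole lemma should fit in well under a page.
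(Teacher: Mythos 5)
The paper gives no proof of this lemma; it explicitly states that all four items are elementary and ``we admit them without proofs.'' Your proposal fills that gap correctly.

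Items (1), (3), (4) are fine as you describe. The polarization of Cayley--Hamilton for (3) is a clean one-line route once (1) is in hand (since $\det(X+Y)=\det(X)+\det(Y)-\bracks{X}{Y}$ follows from $\det = -\tfrac12\bracks{\cdot}{\cdot}$). Direct verification on matrix entries would do just as well, but your approach is tidier.

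For (2), the linear-algebra reformulation --- showing the linear map $\Phi:\sl(\R)\to\R^3$, $\Phi(X)=\bigl(\det(\multi_j,X\multi_j)\bigr)_{j=0,1,2}$, is injective between equi-dimensional spaces --- is the right idea, and the covariance remark $\det(g\mb{s}_j^*, X\,g\mb{s}_j^*)=\det(\mb{s}_j^*,\Ad_{g^{-1}}X\,\mb{s}_j^*)$ correctly reduces the general multi-point to the canonical one via Theorem~\ref{thm:defn-g}. Two small cautions. First, the appeal to \eqref{eqn:def-Z0} (``known to recover $X$ uniquely'') is not an independent confirmation: the paper merely asserts that uniqueness and does not prove it, so it is essentially the same nondegeneracy fact you are establishing; the explicit $3\times 3$ coefficient matrix computation should be treated as the load-bearing step, and it does close the argument (the determinant is $3\sqrt3/4\ne0$). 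Second, note that item (2) uses consecutive indices $\multi_0,\multi_1,\multi_2$, whereas \eqref{eqn:def-Z0} evaluates at $\mb{s}_0^*,\mb{s}_2^*,\mb{s}_4^*$; both triples of functionals are linearly independent, but they are not the same triple, so the analogy is illustrative rather than a direct citation.
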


We return to $X$ as the trajectory defined by $g'=gX$.
Let us now derive the control-dependent dynamics for $X$.

\begin{theorem}[Dynamics for $X$]\label{thm:X-dynamics-sl2}
The dynamics for $X$ (which is control-dependent) is given by
\[
X' = \frac{\left[Z_u,X\right]}{\bracks{Z_u}{X}}.
% \frac{-\langle X,X \rangle}{2\langle X,Z_u \rangle}\left[Z_u, X\right] = 
\]
\end{theorem}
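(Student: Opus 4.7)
The plan is to go through the control-variable identity connecting $Z_u$ to $X$ and then manipulate the resulting equation using the $\sl(\R)$ identities from Lemma~\ref{lem:sl2-lemmas} together with the parameterization condition $\det(X)=1$.

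First I would show the key intermediate identity
\[
X + X^{-1}X' \;=\; c\,Z_u,\qquad c := \kappa_0+\kappa_1+\kappa_2.
\]
Indeed, Corollary~\ref{cor:param-equiv} says that under the parameterization with $\det(X(t))=1$, the matrix $X+X^{-1}X'$ lies in $\sl(\R)$ almost everywhere. Evaluating the bilinear form $\mb{v}\mapsto\det(\mb{v},M\mb{v})$ at $\mb{v}=\mb{s}_{2j}^*$ for $M=X+X^{-1}X'$ gives exactly $\kappa_j$ by the definition of the state-dependent curvature in~\eqref{eqn:state-dep-curvature}, while the same form applied to $Z_u$ gives $u_j=\kappa_j/c$ by \eqref{eqn:def-Z0} and Definition~\ref{def:control-variables}. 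Hence $X+X^{-1}X'$ and $cZ_u$ agree on all three test vectors $\mb{s}_0^*,\mb{s}_2^*,\mb{s}_4^*$, so Lemma~\ref{lem:sl2-lemmas}(2) forces the identity. Note $c>0$ by Lemma~\ref{lem:curvature-pos}.

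Next I would clear denominators by left-multiplying by $X$ to obtain
\[
X' \;=\; cXZ_u - X^2.
\]
Because $X\in\sl(\R)$ and $\det(X)=1$, Cayley--Hamilton reduces to $X^2=-I_2$, so
\[
X' \;=\; cXZ_u + I_2.
\]
To pin down $c$, take traces: $\tr(X')=0$ (since $X\in\sl(\R)$ for all $t$) and $\tr(I_2)=2$, yielding
\[
c \;=\; \frac{-2}{\bracks{Z_u}{X}}.
\]
The denominator is nonzero—indeed negative—by Lemma~\ref{lem:sl2-star-condition}.

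Finally I would convert the product $XZ_u$ into a commutator using Lemma~\ref{lem:sl2-lemmas}(3), which gives $Z_uX+XZ_u=\bracks{Z_u}{X}I_2$, hence $[Z_u,X]=\bracks{Z_u}{X}I_2-2XZ_u$. Substituting $c=-2/\bracks{Z_u}{X}$ yields
\[
X' \;=\; \frac{-2\,XZ_u}{\bracks{Z_u}{X}} + I_2 \;=\; \frac{\bracks{Z_u}{X}I_2-2XZ_u}{\bracks{Z_u}{X}} \;=\; \frac{[Z_u,X]}{\bracks{Z_u}{X}},
\]
which is the claim. The only subtle point, and the main obstacle, is the justification of the intermediate identity $X+X^{-1}X'=cZ_u$: it requires the reparameterization to $\det(X)=1$ so that $X^{-1}X'$ lies in $\sl(\R)$, it requires almost-everywhere differentiability (Corollary~\ref{cor:X-diff-ae}), and it requires the three scalar identifications at $\mb{s}_{2j}^*$ to be simultaneously valid; once that identity is in hand the remaining computations are algebraic manipulations in $\sl(\R)$.
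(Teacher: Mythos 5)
Your proof is correct and follows essentially the same route as the paper. You establish the same key intermediate identity $X+X^{-1}X' = \kappa Z_u$ (where $\kappa=\kappa_0+\kappa_1+\kappa_2$) by comparing the bilinear form $\mb{v}\mapsto\det(\mb{v},M\mb{v})$ at the test vectors $\mb{s}_{2j}^*$ and invoking Lemma~\ref{lem:sl2-lemmas}(2), clear denominators to get $X'=\kappa X Z_u - X^2$, pin down $\kappa$ by tracing, and then convert $XZ_u$ into the commutator using Lemma~\ref{lem:sl2-lemmas}(3); the paper does the same manipulations, just organized with $P=Z_u/\bracks{X}{Z_u}$ and working from $[P,X]$ toward $X'$ rather than the other way around, so the difference is purely presentational.
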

It is shown in Lemma~\ref{lem:sl2-star-condition} that the
star conditions imply $\bracks{Z_u}{X}<0$, for all controls $u\in
U_T$.  The denominator
$\bracks{Z_u}{X}$ appearing in the theorem is therefore nonzero.
\begin{proof}
By Corollary \ref{cor:param-equiv}, we find $X + X^{-1}X' \in \sl(\R)$. 
From Equations \eqref{eqn:state-dep-curvature} and \eqref{eqn:def-Z0} we find 
\index[n]{zk@$\kappa$, curvature!$\kappa=\kappa_1+\kappa_2+\kappa_3$}
\begin{align*}
u_j &= \det( \mb{s}_{2j}^*, Z_u \mb{s}_{2j}^*) 
\\ 
\kappa_j &=  \det(\mb{s}_{2j}^* , \left(X+X^{-1}X'\right)\mb{s}_{2j}^*),
\end{align*}
for each $j$. Let $\kappa=\kappa_1+\kappa_2+\kappa_3$.
Since, by Definition \ref{def:control-variables}, we
have $\kappa u_j = \kappa_j$, by Lemma \ref{lem:sl2-lemmas},(2) we obtain
that
\[
X+X^{-1}X' = \kappa Z_u,
\]
from which we obtain 
\begin{equation}\label{eqn:X-orig}
X' = X(\kappa Z_u - X).
\end{equation}
Taking traces and using $\tr(X') = 0$, we obtain $\kappa =
\bracks{X}{X}/\bracks{X}{Z_u} = -2/\bracks{X}{Z_u}$, where the last
equality uses Lemma~\ref{lem:sl2-lemmas},(1).
Let $P = Z_u/\bracks{X}{Z_u}$.  We have $\bracks{P}{X}=1$ and
  \begin{align*}
    [P,X] &= PX- XP = -2XP + (PX + XP) \\
    &= -2XP +\bracks{P}{X}I_2\quad 
\mathrm{from~Lemma~\ref{lem:sl2-lemmas},(3)}\\
    &= -2XP + I_2\\
    &= \kappa X Z_u - X^2\quad
\mathrm{from~Lemma~\ref{lem:sl2-lemmas},(1,3)}\\
    &= X',
  \end{align*}
  which proves the claimed differential equation.
\end{proof}

\index{Lax equation}
\index{Lax equation}
\index[n]{P@$P$, normalized control matrix!Lax equation}

\begin{remark}\label{rmk:X-det-one}
\normalfont \leavevmode
\begin{enumerate}
\item 
The equation $X' = [P,X]$ where $P,X$ are time-dependent
matrices is called the \emph{Lax equation} and $X,P$ so related are
called a \emph{Lax equation}. Lax equations are well-studied in the theory of
integrable systems (see Perelomov~\cite{perelomov1990integrable},
Jurdjevic~\cite{jurdjevic2016optimal}, Babelon et
al.~\cite{babelon2003introduction}). Lax representations of integrable
systems are quite desirable since the evolution of a Lax equation
is \emph{isospectral},\index{isospectral} meaning that the spectrum of the matrix $X$
is an invariant of motion.
\item 
The dynamics for $X$ is Hamiltonian for a particular Hamiltonian
defined on $\sl(\R)$, with respect to a Poisson structure on $\sl(\R)$
called the \emph{Lie-Poisson structure}.\index{Lie-Poisson structure} See
Appendix \ref{sec:X-lie-poisson} for more details.
\item 
As explained in Perelomov~\cite[p.~52]{perelomov1990integrable}, the
spectral invariants guaranteed by the dynamics for $X$ are \emph{trivial}
integrals, and so it is more accurate to consider the dynamics for $X$
as giving a control-dependent infinitesimal generator for the
(co)adjoint action of $\SL(\R)$ on $\sl(\R)$, rather than to regard it
as describing the dynamics of an integrable system.
\item 
The equation \eqref{eqn:X-orig} appears in~\cite{hales2017reinhardt}, 
where its Lax equation reformulation was not explicitly recognized.
\end{enumerate}
\end{remark}

\section{Initial and Terminal Conditions}\label{sec:X-init-term-conds}
We now have dynamics for $g$ and $X$ in the Lie group and Lie algebra
respectively. We also have an associated cost objective. The only
thing remaining is to specify initial and terminal conditions.
% We first rescale our interval of time $[t_0,t_1]$ to be $[0,t_f]$
% where $t_f$ is the free terminal time to be determined.
Since our convex disk is in circle representation, this means that
$\sigma_j(0) = \mb{s}_j^*$ so that we start out at the sixth roots of
unity, and so we set $g(0) = I_2$.  The initial condition $X(0)=X_0$
\index[n]{X@$X$, Lie algebra element!$X_0$, initial condition for $X(t)$}
may be an arbitrary matrix in $\sl(\R)$ of determinant $1$,
provided it satisfies the star conditions in
Corollary \ref{cor:X-props}.

The terminal conditions $g(t_f)$ should be such that the curves
$\sigma_j$ close up seamlessly to form a simple closed curve:
\begin{equation}
g(t_f)\mb{s}_{j}^* = g(0)\mb{s}_{j+1}^* \Leftrightarrow g(t_f) = R,
\end{equation}
where $R$ is the usual rotation matrix. For terminal
conditions on $X$, note that we have the following conditions on $g$
which we obtain by the remark following Theorem \ref{thm:defn-g} (and
setting $t_0=0$ there):
\[
g(t+t_f)\mb{s}_j^* = g(s(t))\mb{s}_{j+1}^* = g(s(t))R\mb{s}_j^*; \qquad g(t+t_f)=g(s(t))R,
\]
for some orientation-preserving reparameterization $s(t)$ such that $s(0)=0$.
Differentiating, we obtain
\[
X(t+t_f) = R^{-1}X(s(t))R\frac{ds}{dt},
\]
which gives us $X(t_f) = R^{-1}X_0R ds/dt$.  Using
$\det(X)=1$ and $ds/dt>0$, we get $ds/dt=1$, $s(t)=t$,
and 
\[
X(t_f)=R^{-1}X_0R.
\]

\section{Reinhardt Optimal Control Problem}\label{sec:ROC}

Summarizing the discussion so far, we are finally ready to state the
Reinhardt conjecture as an optimal control problem. Let us begin with
a well-known proposition on the trivialization of the tangent and
cotangent bundles of a Lie group.

\index[n]{0@$-^*$, dual!linear}

\begin{proposition}\label{prop:trivialization}
Let $G$ be any real Lie group and let $\mathfrak{g}$ be its Lie
algebra. Then we have $T^*G \cong G \times \mathfrak{g}^*$ and
$TG \cong G \times \mathfrak{g}$,
where $\mathfrak{g}^*$ is the linear dual of $\mathfrak{g}$.
\end{proposition}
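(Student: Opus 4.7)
The plan is to construct explicit bundle isomorphisms using left translation in $G$. For each $g \in G$, let $L_g : G \to G$ denote the left translation $L_g(h) = gh$. Since $L_g$ is a diffeomorphism of $G$ with smooth inverse $L_{g^{-1}}$, its differential at the identity element $e \in G$ gives a linear isomorphism
\[
(dL_g)_e : T_e G \xrightarrow{\;\sim\;} T_g G.
\]
By definition, $T_e G = \mathfrak{g}$, so this identifies every tangent space $T_g G$ with the single vector space $\mathfrak{g}$.

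Next I would assemble these pointwise isomorphisms into a global bundle map. Define
\[
\Phi : G \times \mathfrak{g} \longrightarrow TG, \qquad \Phi(g, X) := (g,\, (dL_g)_e X).
\]
Smoothness of $\Phi$ follows from smoothness of the group multiplication $m : G \times G \to G$: indeed $(dL_g)_e X$ is the partial derivative of $m$ in its second argument evaluated at $(g,e)$ in direction $X$, which depends smoothly on $(g, X)$. The inverse map sends $(g, v) \in TG$ (with $v \in T_g G$) to $(g, (dL_{g^{-1}})_g v) \in G \times \mathfrak{g}$, which is also smooth for the same reason. Since $\Phi$ is fiberwise linear, it is a vector bundle isomorphism covering the identity on $G$, yielding $TG \cong G \times \mathfrak{g}$.

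For the cotangent bundle, I would dualize fiberwise. The transpose $(dL_g)_e^* : T_g^* G \to \mathfrak{g}^*$ is a linear isomorphism for each $g$, and I would package these into
\[
\Psi : T^* G \longrightarrow G \times \mathfrak{g}^*, \qquad \Psi(g, \alpha) := (g,\, \alpha \circ (dL_g)_e),
\]
for $\alpha \in T_g^* G$. Smoothness and bijectivity follow from the corresponding properties established for $\Phi$, since taking fiberwise duals of a trivial vector bundle yields a trivial bundle of the same rank. Thus $T^* G \cong G \times \mathfrak{g}^*$.

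No real obstacle arises here; this is a standard construction, and the only step that requires any care is verifying smoothness of $\Phi$ as a map of manifolds, which reduces to smoothness of group multiplication. (Right translation would give an analogous trivialization; the choice of left translation is merely a convention, and it is the convention used throughout the rest of the book.)
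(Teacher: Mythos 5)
Your proof is correct and follows the same approach as the paper: left translation $L_g$ gives fiberwise isomorphisms $(dL_g)_e : \mathfrak{g} \to T_g G$, which assemble into the trivialization of $TG$, and dualizing fiberwise trivializes $T^*G$. The paper's proof is terser (it omits the smoothness verification you spell out), but the underlying argument is identical.
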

\index[n]{G@$G$, Lie group}
\index{Lie group}
\index[n]{G@$G$, Lie group!$\mathfrak{g}$, Lie algebra}

\begin{proof}
The Lie algebra $\mathfrak{g}$\index{Lie
algebra} is the tangent space
of $G$ at the neutral element\index{element!neutral}
$e\in{G}$. \index[n]{e@$e\in G$, neutral element} Let $L_g:G\to G$\index[n]{L@$L_g$, left multiplication by $g$} be
left-multiplication by $g$, given by $L_g(h):= g h$, The tangent map
\[
TL_g:T_hG\to T_{gh} G,
\index[n]{T@$Tf:T_qM\to T_{f(q)}N$, tangent map of $f$}
\]
at $h=e$ is an isomorphism from $T_eG$ to $T_{g}G$, the tangent spaces
at $e$ and $g$.  This isomorphism gives the trivialization of the
tangent bundle $TG\cong G\times \mathfrak{g}$.  Dually, each fiber
$T^*_gG$\index[n]{T@$T^*_qM$, cotangent space at $q$} of the cotangent
bundle is canonically isomorphic to the dual $T^*_eG
= \mathfrak{g}^*$.  This trivializes the cotangent bundle.
\end{proof}

The above proposition and remark apply to $\SL(\R)$. Using this, we
group together the state equations, controls and cost functional to give 
a well-defined control problem.
\index{state equations}

\begin{problem}[Reinhardt Control Problem]\label{pbm:reinhardt-optimal-control}
The convex disks in $\Kbal$ in circle representation
arise via the following optimal control problem. On the manifold
$\SL(\R) \times \sl(\R) \cong T\SL(\R)$, consider the following
optimal control problem with free-terminal time.
\begin{align}
g' &= gX, \qquad \qquad \qquad \qquad g:[0,t_f] \to \SL(\R) \label{eqn:trisystem-g};
\\ 
X' &=  \frac{\left[Z_u, X\right]}{\langle Z_u,X \rangle},  \qquad\qquad\quad~~ 
X:[0,t_f] \to \sl(\R) \label{eqn:trisystem-X}; 
\\ 
-\frac{3}{2}\int_0^{t_f}&\langle J,X\rangle dt \to \mathrm{min}, \qquad \quad \quad 
J = \left(\begin{matrix}  0 & -1 \\ 1 & 0 \end{matrix}\right), 
\label{eqn:trisystem-cost}
\end{align}
where the set of controls for this problem is the image of the
two-simplex $U_T$ in $\R^3$ inside the Lie algebra $\sl(\R)$ via the
affine map $Z_u$.
\begin{align}
    Z : U_T &= \left\{(u_0,u_1,u_2) \ | \ \sum_i u_i = 1,  \ u_i \ge 0 \right\} 
\to \sl(\R) \label{eqn:trisystem-U} 
\\ 
    Z_u &= \left(\begin{matrix}
\frac{u_1 - u_2}{\sqrt{3}} & \frac{u_0 - 2u_1 - 2u_2}{3} 
\\ 
    u_0 & \frac{u_2 - u_1}{\sqrt{3}}
\end{matrix}\right). \label{eqn:trisystem-Z0}
\end{align}
The initial conditions are $g(0) = I_2 \in \SL(\R)$ and $X(0) =
X_0 \in \sl(\R)$ satisfying the star conditions. Also, the terminal
conditions are $g(t_f) = R$ and $X(t_f)=RX_0R^{-1}$ where $R$ is
the usual rotation matrix~\eqref{eqn:R}.
\end{problem}
% fixed gtf terminal condition 3/20

\newpage

\chapter{The Upper Half-Plane}

Now that we have the optimal control problem fully stated, a natural
next step would be to write down the necessary conditions for
optimality of trajectories. But before we do that, we will first 
cut down the state space of the problem.

\section{The Adjoint Orbit}

Recall that the star conditions (Corollary \ref{cor:X-props}) on the
matrix $X$ imply that $\bracks{J}{X}$  
is negative. We have also imposed the condition
$\det(X)=1$. We begin with a characterization of such matrices.

\begin{lemma}\label{lem:X-hyperbolic}
The set of matrices $X \in \sl(\R)$ with $\det(X)=1$ and
$\bracks{J}{X}<0$ is the adjoint orbit\index{adjoint orbit} 
\(
\O_J:=\{\Ad_gJ\mid g\in \SL(\R) \}
\)
\index[n]{O@$\O_-$, adjoint or coadjoint orbit}
in $\sl(\R)$ of the infinitesimal generator $J$.
\end{lemma}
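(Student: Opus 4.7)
My approach is to prove the two inclusions separately. For the inclusion $\O_J \subseteq \{X \in \sl(\R) : \det X = 1,\ \langle J, X\rangle < 0\}$, the determinant condition is immediate since $\det$ is conjugation-invariant and $\det(J)=1$. For the sign condition I would compute directly: writing $g=\mattwo{p}{q}{r}{s}\in\SL(\R)$ with $g^{-1}=\mattwo{s}{-q}{-r}{p}$, a short matrix multiplication gives
\[
\langle J,\Ad_g J\rangle = \tr(J g J g^{-1}) = -(p^2+q^2+r^2+s^2),
\]
which is strictly negative because $g$ is invertible (hence nonzero as a matrix).

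For the reverse inclusion I would argue structurally. Write $X=\mattwo{a}{b}{c}{-a}\in\sl(\R)$, so that $\det X = -a^2-bc$, and let $M:=\{X\in\sl(\R):\det X=1\}$. The defining equation $a^2+bc=-1$ is (after the change of variables $b\mapsto v-u$, $c\mapsto u+v$) a standard two-sheeted hyperboloid in the $3$-dimensional space $\sl(\R)$, so $M$ is a smooth $2$-manifold with exactly two connected components. The orbit $\O_J$ lies in $M$ by the forward direction and is connected as the image of the connected group $\SL(\R)$. The stabilizer of $J$ under the adjoint action has Lie algebra $\{Y\in\sl(\R):[Y,J]=0\}=\R J$ (since a trace-zero commutant of $J$ must be a scalar multiple of $J$), which is $1$-dimensional; hence $\dim \O_J = 3-1 = 2 = \dim M$. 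By the same argument every $\SL(\R)$-orbit contained in $M$ is open in $M$, so $M$ is partitioned into disjoint open orbits. Such a partition of a $2$-component space by connected open sets forces each orbit to equal a single connected component of $M$.

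It remains to identify which component is $\O_J$ by a sign test. The function $X\mapsto\langle J,X\rangle = b-c$ never vanishes on $M$: indeed $b=c$ combined with $a^2+bc=-1$ would force $a^2+b^2=-1$, which is impossible in $\R$. Hence $\langle J,\cdot\rangle$ has constant sign on each connected component of $M$. Evaluating at $J$, where $\langle J,J\rangle = \tr(J^2) = -2 < 0$, and at $-J$, where $\langle J,-J\rangle = 2 > 0$, shows that $J$ and $-J$ lie in opposite components (reconfirming along the way that the two orbits in $M$ are $\O_J$ and $\O_{-J}$). The component containing $J$ is therefore exactly $\{X\in M:\langle J,X\rangle<0\}$, which must then equal $\O_J$. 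I expect the main bookkeeping step to be the openness/dimension count establishing that orbits fill out entire components; everything else is elementary linear algebra together with the $\Ad$-invariance of $\det$.
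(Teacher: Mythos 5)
Your proof is correct, but it follows a genuinely different route from the paper's. The paper proves the lemma by an explicit Iwasawa-decomposition computation: writing $g\in{}h\,\SO$ and noting $\SO$ centralizes $J$, it computes $gJg^{-1}=hJh^{-1}=\Phi(z)$ in closed form, and then observes that $\Phi(z)$ ranges over exactly the set of matrices satisfying the two conditions. The advantage of the paper's route is that the map $\Phi:\h\to\O_J$ falls out of the proof and is used extensively afterward (it is the coordinate change to the upper half-plane). Your argument is more structural: the forward inclusion by $\Ad$-invariance of $\det$ plus a direct trace computation, and the reverse inclusion by showing that the level set $M=\{\det=1\}$ is a two-sheeted hyperboloid (hence a $2$-manifold with exactly two components), that orbits inside $M$ are open because every element of $M$ is regular semisimple with $1$-dimensional centralizer so the orbit dimension matches $\dim M$, and that connectedness therefore forces each component to be a single orbit; the sign of $\langle J,\cdot\rangle$, which never vanishes on $M$, then picks out the component of $J$. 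This is cleaner as a self-contained proof of the lemma and would generalize to other regular semisimple orbits, but it does not produce the explicit parameterization $\Phi$, so if one adopted your argument one would still need a separate computation to write down $\Phi$ for later use. One tiny point worth spelling out in your write-up: the phrase \emph{``by the same argument every $\SL(\R)$-orbit contained in $M$ is open''} relies on the fact that every $X\in M$ (not only $J$) has $1$-dimensional centralizer; this holds because $\det X=1$ forces eigenvalues $\pm i$, so $X$ is regular semisimple and its centralizer is $\R X$, but it is worth saying explicitly rather than leaving it implicit.
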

\begin{proof}
The adjoint orbit $\O_J$
of $J$  in $\sl(\R)$ consists of elements 
$gJg^{-1}$ for $g \in \SL(\R)$. The Iwasawa decomposition\index{Iwasawa decomposition} 
of $\SL(\R)$
implies that $g$ belongs to a left coset $h\,\SO$ where
\index[n]{g@$g$, group element!in $\SL(\R)$}
\index[n]{h@$h\in G$, element of Lie group!component of Iwasawa decomposition}
\index[n]{x@$x$, real part of $z$}
\index[n]{y@$y$, imaginary part of $z$}
\begin{align*}
     h  &= \mattwo 1 {x} 0 1\mattwo {\sqrt{y}} 0 0 {1/\sqrt{y}}, \ y > 0.
\end{align*}
Since $\SO$ centralizes $J$, the orbit consists of elements
\begin{equation}
\index[n]{zV@$\Phi:\h\to\sl(\R)$}
gJg^{-1} = hJh^{-1}= \left(
\begin{array}{cc}
      x/y & -(x^2+y^2)/y 
\\
      1/y & -x/y 
\\
\end{array}
\right)
=:\Phi(z),\quad z = x+iy\in\h.
\end{equation}
This is precisely the form of a general element $X$ satisfying the
conditions of the lemma.  Hence, the result follows.
\end{proof}
\begin{remark}\normalfont
Since $\sl(\R)$ admits a nondegenerate invariant symmetric bilinear form
$\bracks{\cdot}{\cdot}$, the Lie algebra can be identified with its
linear dual, by identifying $X\in\sl(\R)$ with the linear functional
\[
Y\mapsto\bracks{Y}{X}
\]
on $\sl(\R)$.
Under this identification, the coadjoint orbits and
adjoint orbits become identified. See Appendix \ref{sec:kirillov} and
also Chapter 5 of Jurdjevic~\cite{jurdjevic_1996}.
\end{remark}

\section{Transfer of Dynamics to the Upper Half-Plane}\label{sec:transfer-dynamics}

\index{M\"obius transformation}
\index{linear fractional transformation}
\index[n]{abcd@$a,b,c,d$, matrix entries!of linear fractional transformation}
\index[n]{h@$\h$, upper-half plane}

The group $\SL(\R)$ acts on the upper-half plane\index{upper-half plane}
\[
\h = \left\{x+iy\mid y>0\right\}
\]
by linear fractional transformations (or M\"obius transformations).
\begin{equation}
\index[n]{0@$\cdot$, action!linear fractional on $\h$}
\SL(\R)\times\h\to \h,\quad \begin{pmatrix}a & b\\c & d\end{pmatrix}\cdot z = 
\frac{az+b}{cz+d}.
\end{equation}
We denote the action by $(\cdot)$.

By the orbit-stabilizer theorem, we
have $\O_J \cong \SL(\R)/\SO$, since the stabilizer of $J$ in
$\SL(\R)$ under the conjugation action (the centralizer) is
$\SO$. Viewing this in a different way, the group $\SL(\R)$ acts on
the upper half-plane $\h$ by linear fractional transformations, with
stabilizer of $i \in \h$ being given by $\SO$. Thus, the quotient is
isomorphic to the Poincar\'{e} upper half-plane $\h$. Putting all
of this together, we have

\index{orbit-stabilizer theorem}

\index[n]{zV@$\Phi:\h\to\sl(\R)$}
\index[n]{yz@$z\in\h$!$z=x+iy$}

\begin{lemma}\label{lem:def-phi}
The following map $\Phi$ is a isomorphism.
\begin{align*}
\Phi : \h &\to \mathcal{O}_J 
\\
     z = x+iy &\mapsto \Phi(z):=\mattwo {x/y} {-(x^2 + y^2)/y} {1/y} {-x/y}.
\end{align*} 
\end{lemma}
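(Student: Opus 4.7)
The plan is to recognize $\Phi$ as the orbit map of $J$ under the conjugation action of $\SL(\R)$, pulled back along the standard identification $\h \cong \SL(\R)/\SO$. In fact, the formula in the statement was already derived inside the proof of Lemma~\ref{lem:X-hyperbolic}, so the task is largely to package that computation and verify injectivity and smoothness of the inverse.

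First I would exhibit the smooth section $h:\h\to\SL(\R)$ given by
\[
h(z)=\mattwo{1}{x}{0}{1}\mattwo{\sqrt{y}}{0}{0}{1/\sqrt{y}},\qquad z=x+iy\in\h,
\]
which satisfies $h(z)\cdot i=z$ under the M\"obius action. Define $\tilde\Phi(z):=\Ad_{h(z)}J=h(z)Jh(z)^{-1}$. Because $\SO$ centralizes $J$, this does not depend on the choice of coset representative in $h(z)\SO$, so $\tilde\Phi$ is well-defined on the quotient $\SL(\R)/\SO\cong\h$. A direct multiplication shows $\tilde\Phi(z)$ equals the matrix $\Phi(z)$ given in the statement; hence $\Phi$ is well-defined and its image lies in $\mathcal{O}_J$.

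Surjectivity of $\Phi:\h\to\mathcal{O}_J$ is immediate from Lemma~\ref{lem:X-hyperbolic}, whose proof shows that every element of $\mathcal{O}_J$ has precisely the matrix form $\Phi(x+iy)$ for some $y>0$. For injectivity, suppose $\Phi(z_1)=\Phi(z_2)$: reading off the $(2,1)$-entry gives $1/y_1=1/y_2$, so $y_1=y_2$, and then the $(1,1)$-entry $x_j/y_j$ forces $x_1=x_2$. Thus $\Phi$ is a bijection, and the inverse can be written explicitly from the matrix entries of $X=(X_{ij})\in\mathcal{O}_J$ by
\[
\Phi^{-1}(X)= -\frac{X_{11}}{X_{21}}+\frac{i}{X_{21}},
\]
which is a smooth (in fact real-analytic) function of the entries, since $X_{21}=1/y>0$ on $\mathcal{O}_J$. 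Smoothness of $\Phi$ itself is clear from the rational formula, so $\Phi$ is a diffeomorphism.

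I do not expect any serious obstacle here; the one point that requires care is ensuring that the formula for $\Phi$ coincides with the intrinsic orbit map, and in particular that the $\SL(\R)$-equivariance with respect to M\"obius transformations on $\h$ and conjugation on $\mathcal{O}_J$ is compatible with the section $h$. This is where Iwasawa is used: the section $h$ together with the fact that $\SO$ stabilizes both $i\in\h$ and $J\in\mathcal{O}_J$ makes the identification canonical, and from this equivariance one automatically obtains that $\Phi$ is a manifold isomorphism of homogeneous spaces.
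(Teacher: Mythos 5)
Your approach mirrors what the paper does: the text preceding the lemma establishes the isomorphism via the orbit-stabilizer theorem applied to the conjugation action on $\mathcal{O}_J$ and the M\"obius action on $\h$ (both with stabilizer $\SO$), and the explicit matrix formula comes from the Iwasawa-decomposition computation in Lemma~\ref{lem:X-hyperbolic}, which is exactly what you package as the section $h$. One small slip: your explicit inverse has a spurious minus sign — from $X_{11}=x/y$ and $X_{21}=1/y$ one gets $\Phi^{-1}(X)=X_{11}/X_{21}+i/X_{21}$, not $-X_{11}/X_{21}+i/X_{21}$ — but this does not affect the argument, since you have already established injectivity directly by comparing the $(2,1)$ and $(1,1)$ entries.
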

\begin{remark}\leavevmode\normalfont
\begin{itemize}
\item 
Note that $\mathcal{O}_J = \mathcal{O}_{\Phi(z)}$ as
$\Phi(z) \in \mathcal{O}_J$.
\item  
We write $X$ in place of $\Phi(z)$ for simplicity, bearing in mind that
$\Phi$ is surjective onto $\O_J$.
\item  
Note that $\Phi(z)$ is a regular semisimple element of the Lie algebra
$\sl(\R)$ because the element $J$ is.
\item\mcite{MCA:Phi-equivariant}
The map $\Phi$ is $\SL(\R)$-equivariant for the action by linear
fractional transformations on $\h$. That is, for every $g\in\SL(\R)$,
\[
g \Phi(z) g^{-1} = \Phi(g\cdot z).
\]
\end{itemize}
\end{remark}

This map $\Phi$ allows us to move back and forth between the upper
half-planes and the adjoint orbit in the Lie algebra $\sl(\R)$. Also,
the map $\Phi$ is more than just a bijection --- we show later that
this map is actually an
anti-symplectomorphism\index{symplectomorphism} and use this to
transfer the state and costate dynamics from the Lie algebra to the
upper half-plane. But first, we compute the tangent map $T \Phi$ at a
$z \in \h$.

\begin{lemma}\label{lem:half-plane-lie-algebra-iso}
For any $X \in \sl(\R)$, we have 
\[ 
\index[n]{R@$\RX$, span of $X$}
T_X \OX = \{[Y,X] \ | \
 Y \in \sl(\R) \}\cong \sl(\R)/\RX,
\index[n]{T@$T_qM$, tangent space at $q\in{M}$}
\] 
where $\RX$ is the
span of the element $X$ and 
 $T_X \OX$ denotes the tangent space to $\OX$ at $X$.  
\end{lemma}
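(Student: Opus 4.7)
The plan is to realize $T_X \O_X$ as the image of the differential at $g = I_2$ of the orbit map $\SL(\R) \to \O_X$, $g \mapsto gXg^{-1}$, and then invoke the first isomorphism theorem to express that image as $\sl(\R)/\R X$.

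First I would establish the set-theoretic equality $T_X \O_X = \{[Y,X] \mid Y \in \sl(\R)\}$. For one inclusion, every smooth curve in $\O_X$ passing through $X$ at $t = 0$ is of the form $\gamma(t) = g(t)Xg(t)^{-1}$ with $g(0) = I_2$ and $g'(0) = Y \in T_{I_2}\SL(\R) = \sl(\R)$. Differentiating at $t = 0$, and using $(g^{-1})'(0) = -Y$, yields $\gamma'(0) = YX - XY = [Y,X]$. For the reverse inclusion, given any $Y \in \sl(\R)$, the curve $g(t) = \exp(tY)$ lies in $\SL(\R)$ and the conjugation $g(t)Xg(t)^{-1}$ is a smooth curve in $\O_X$ whose velocity at $t = 0$ is $[Y,X]$.

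Next I would apply the first isomorphism theorem to the linear map $\mathrm{ad}_X \colon \sl(\R) \to \sl(\R)$, $Y \mapsto [Y,X]$. Its image is $T_X \O_X$ by the preceding step, and is canonically isomorphic to $\sl(\R)/\ker(\mathrm{ad}_X)$. It therefore remains to identify $\ker(\mathrm{ad}_X)$ with $\R X$. The inclusion $\R X \subseteq \ker(\mathrm{ad}_X)$ is immediate from $[X,X] = 0$. For the reverse inclusion, I would observe that the lemma is applied in context only to $X \in \O_J$, where $X$ is regular semisimple (see the remark preceding Lemma~\ref{lem:def-phi}); for any nonzero $X \in \sl(\R)$, a direct check in the standard basis $\{J, H, E\}$ of $\sl(\R)$ shows that the $3 \times 3$ matrix representation of $\mathrm{ad}_X$ has rank $2$, so its kernel is one-dimensional and hence equals $\R X$.

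The main potential obstacle is the rank computation in the final step, but this is routine: in $\sl_2$ every nonzero element has one-dimensional centralizer, whether one verifies it by the rank-nullity argument above or by appealing to the classification of nonzero elements as hyperbolic, elliptic, or nilpotent (each of which has a one-dimensional centralizer). No deeper structure theory is needed.
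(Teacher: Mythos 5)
Your proof is correct and takes essentially the same route as the paper's: differentiate the conjugation curve to identify the tangent space with the image of $\mathrm{ad}_X$, then pass to the quotient by the centralizer, which is $\RX$ because every nonzero element of the rank-one algebra $\sl(\R)$ is regular. Your version is slightly more explicit in proving both inclusions of the set equality and in verifying the rank of $\mathrm{ad}_X$ directly, but these are presentational refinements of the same argument.
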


% isotropy algebra of the element $X$ (the centralizer of $X$) and
Explicitly, the first equality views the tangent space at $X$ of the
 manifold $\OX$ as a subspace of $\sl(\R)\cong T_X\sl(\R)$, 
the tangent space at $X$ of the 
 ambient space $\sl(\R)$.  The isomorphism on the right is given by
 $[Y,X]\mapsto Y + \RX$.

\begin{proof}
Note that $\OX = \{\Ad_g X \ | \ g \in \SL(\R) \}$. We have to
describe tangent vectors to $\OX$. For any $Y \in \sl(\R)$,
$\Ad_{\exp(tY)} X$ is a curve in $\OX$. Thus, the tangent vector to
this curve is computed as
\[ 
\frac{d}{dt} \Ad_{\exp(tY)}X\Bigr|_{t=0} = \ad_Y X := [Y,X] \in T_X\OX.
\]
\index[n]{ad@$\ad$, adjoint representation of the Lie algebra}
This calculation is actually finding infinitesimal generators of the
adjoint action. 
There is an isomorphism
\[
\sl(\R)/\sl(\R)_X \cong
\{[Y,X] \ | \
 Y \in \sl(\R) \},\quad Y \mapsto [Y,X],
\]
where $\sl(\R)_X$ is the isotropy algebra\index{isotropy algebra} (the centralizer)
\index{centralizer}
of the element $X$. 
The element $X$ is regular in the rank one algebra $\sl(\R)$, so
that its centralizer $\sl(\R)_X$\index[n]{0@$-_X$, centralizer of $X$} 
is the span $\RX$ of $X$.
\end{proof}

% DONE[] I recommend cleaning up this lemma and proof, or even deleting
%   if possible.  However, note that lem:tangent-maps is used in the
%   appendix. KV(04/22/23): Perhaps we should discuss it on the 27th:
%   the expressions for $T\Phi$ are used also in Section 7.8. Perhaps
%   that section should be cleaned up/deleted too. Discussed 4/27/2023. We
%   decided to remove part about the inverse. 
% DONE[TCH  4/27/2023 ]: Removed part about the inverse (T\Phi)^[-1]. I'm
% happy with how it looks now.

We write $[Y]$ for the coset $Y+\RX$ in $\sl(\R)/\RX$.

%\index[n]{abcd@$a,b,c,d$, matrix entries}

\begin{lemma}\label{lem:tangent-maps}\mcite{MCA:6479593}
  We have the following expression for the tangent map $T \Phi$.
% and its inverse $(T \Phi)^{-1}$:
\begin{align}
\index[n]{r@$r$, real number!$r_i$, scalar}
     T \Phi : T_z \h &\to T_{X}\OX \cong \sl(\R)/\RX 
\\
     (r_1 \delx + r_2 \dely) &\mapsto \mattwo {r_2 /2y}{(yr_1 - r_2 x)/y}{0}{-r_2/2y} \mod \RX.
\end{align}
\end{lemma}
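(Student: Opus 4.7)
The plan is to exploit the identification in Lemma~\ref{lem:half-plane-lie-algebra-iso},
\[
T_X\O_X \;=\; \{[Y,X] \mid Y \in \sl(\R)\} \;\cong\; \sl(\R)/\RX,\qquad [Y] \longmapsto [Y,X],
\]
and to exhibit the stated upper-triangular matrix as a representative of the class $Y+\RX$ solving $[Y,X] = T\Phi(v)$ for $v = r_1\delx + r_2\dely$.

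First I would compute $T\Phi(v)$ directly by entry-wise differentiation. From the formula for $\Phi(z)$ in Lemma~\ref{lem:def-phi},
\[
\partials{\Phi}{x} = \frac{1}{y}\mattwo{1}{-2x}{0}{-1},\qquad
\partials{\Phi}{y} = \frac{1}{y^2}\mattwo{-x}{x^2-y^2}{-1}{x},
\]
so $T\Phi(v) = r_1\,\partial_x\Phi + r_2\,\partial_y\Phi$ is an explicit traceless $2\times2$ matrix, sitting inside $T_X\O_X\subset\sl(\R)$ as expected.

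Second, taking the ansatz
\[
Y \;=\; \mattwo{r_2/(2y)}{(yr_1-r_2 x)/y}{0}{-r_2/(2y)},
\]
I would compute $YX-XY$ with $X=\Phi(z)$ and verify entry-by-entry that the result equals $T\Phi(v)$. The $(2,1)$ entry of the commutator reduces immediately to $-r_2/y^2$, and the two diagonal entries to $\pm(yr_1-r_2 x)/y^2$; the $(1,2)$ entry is the spot where the $-y^2$ piece of $(\partial_y\Phi)_{12}$ has to combine correctly with the $x^2$ cross terms coming from $YX$ and $XY$. This matching is the main step and also the main bookkeeping hazard, though ultimately it is a routine linear-algebra check.

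Third, to secure well-definedness of the class $[Y]\in\sl(\R)/\RX$, I would note that $X=\Phi(z)$ has $(2,1)$ entry $1/y\neq 0$, so every coset in $\sl(\R)/\RX$ has a unique representative with vanishing $(2,1)$ entry; the matrix $Y$ above is exactly that canonical representative. An alternative streamlining (which I would use only if the direct entry-chase became unwieldy) is to invoke $\SL(\R)$-equivariance of $\Phi$ to reduce the verification to the base point $z=i$, where $X=J$, and then transport the result along orbits via the adjoint action; but the direct computation is compact enough that this reduction is not strictly necessary.
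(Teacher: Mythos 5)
Your proposal is correct and follows essentially the same route as the paper: compute $T\Phi(v) = r_1\,\partial_x\Phi + r_2\,\partial_y\Phi$ by entrywise differentiation, then use Lemma~\ref{lem:half-plane-lie-algebra-iso} to realize this tangent vector as $[Y,X]$ and read off the coset $[Y]\in\sl(\R)/\RX$. The only cosmetic difference is directionality (the paper ``solves for'' $Y_z$ while you verify the given ansatz), and your remark that the $(2,1)$ entry $X_{21}=1/y\neq0$ singles out the unique upper-triangular representative of each coset is a small but worthwhile clarification the paper leaves implicit.
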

\begin{proof}
\index[n]{Y@$Y\in\mathfrak{g}$, Lie algebra element!in $\sl$}
We have, at $z = x+iy$ and $X = \Phi(z)$:
\begin{align}
     T_z \Phi (r_1,r_2) &= \frac{d}{dt}{\Phi}(x + t r_1, y + t r_2)\Bigr|_{t=0} 
\\
     &= r_1 \frac{\partial {\Phi}}{\partial x} + r_2 \frac{\partial {\Phi}}{\partial y} \in T_{X}\OX.
\end{align}
We know by the previous lemma that there exists a matrix $Y_z$  such
that 
\[
T_z \Phi(r_1,r_2) = r_1 \frac{\partial {\Phi}}{\partial x} +
r_2 \frac{\partial {\Phi}}{\partial y} = [Y_z(r_1,r_2),{X}].
\] 
Using this
equation to solve for this matrix $Y_z$ gives us the following.
%
%\index[n]{Y@$Y\in\mathfrak{g}$, Lie algebra element!$Y_{12}\in\sl$}
\begin{align}
Y_z 
%&= \underbrace{Y_{12}\mattwo {x} {-x^2 -y^2} {1} {-x}}_{\in \RX}+ 
%\mattwo {r_2 /2y}{(yr_1 - r_2 x)/y}{0}{-r_2/2y} 
%\\ 
&\equiv \mattwo {r_2 /2y}{(yr_1 - r_2 x)/y}{0}{-r_2/2y} \mod \RX.
\end{align}
So, for any arbitrary vector $(r_1,r_2) \in T_z \h$ we obtain its image
inside the quotient space $\sl(\R)/\RX$.
\end{proof}

\section{The Cost Functional in Half-Plane Coordinates}
We can also compute the cost functional that we derived in
Section~\ref{sec:the-cost-functional} in half-plane coordinates. From
equation \eqref{eqn:cost}, we have
\begin{align}
-\frac{3}{2}\int_0^{t_f}\bracks{J}{X}dt &=    
%\frac{-3}{2}\int_0^{t_f}\tr\left(\mattwo{0}{-1}{1}{0}
%\mattwo {x/y} {-(x^2 + y^2)/y} {1/y} {-x/y}\right)dt 
%\nonumber 
%\\
%    &=
\frac{3}{2}\int_0^{t_f}\frac{x^2+y^2+1}{y}dt 
\rightarrow \text{min}. \label{eqn:cost-upper-half-plane}
\end{align}
\mcite{MCA:6654903}

\index[n]{A@$A$, matrix or linear map!rotation}

The cost functional is $\SO$-invariant, because if $A$
is any rotation
matrix, then
\[
\bracks{J}{\Ad_A X}=\bracks{\Ad_{A^{-1}} J}{X}=\bracks{J}{X}.
\]
The circular symmetry is also apparent in this reinterpretation. The
level sets of $(x^2+y^2+1)/y$ are concentric circles (with respect to the
hyperbolic metric)\index{hyperbolic!metric} centered at the
point $i$ in the upper half-plane. Thus, the cost is $\SO$-invariant.

% DONE[] I couldn't find this anywhere and inserted it on 5/8/2023,
% hoping it isn't redundant. TCH 4/26/2024: No, it isn't redundant.
\begin{lemma}\label{lem:tf-pi3}
The global minimizer of the Reinhardt control problem has terminal
time $t_f<\pi/3$.
\end{lemma}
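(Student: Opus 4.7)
The plan is to bound the area integrand from below and compare against a known disk (the smoothed octagon) that achieves area strictly less than $\pi$.

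First, I would rewrite the cost in the upper half-plane form \eqref{eqn:cost-upper-half-plane}, namely
\[
\op{area}(K) = \frac{3}{2}\int_0^{t_f}\frac{x(t)^2+y(t)^2+1}{y(t)}\,dt,
\]
and observe the pointwise inequality
\[
\frac{x^2+y^2+1}{y} = \frac{x^2}{y} + y + \frac{1}{y} \;\ge\; 2
\]
for all $z=x+iy\in\h$, by AM--GM applied to the two positive terms $y$ and $1/y$, with equality only at $z=i$. This immediately yields the a priori lower bound
\[
\op{area}(K) \;\ge\; 3\,t_f
\]
for any trajectory of the Reinhardt control problem with terminal time $t_f$.

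Next I would exhibit a competitor disk $K_0\in\Kbal$ whose area is strictly less than $\pi$. The natural candidate is the smoothed octagon of Figure~\ref{fig:smoothed}: it satisfies all the regularity and multi-curve conditions of Definition~\ref{def:balanced} (its boundary is $C^1$, pieced together from line segments and hyperbolic arcs by Lemma~\ref{lem:hyp-arc}, with Lipschitz tangents), so it lies in $\Kbal$. Under the area normalization $\op{area}(H_K)=\sqrt{12}$, its area equals $\sqrt{12}\cdot\delta(\text{smoothed octagon}) = \sqrt{12}\cdot(8-\sqrt{32}-\ln 2)/(\sqrt{8}-1)\approx 3.1260$, which is strictly less than $\pi\approx 3.1416$. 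In particular the global minimizer of the Reinhardt control problem satisfies
\[
\op{area}(K_{\min}) \;\le\; \op{area}(\text{smoothed octagon}) \;<\; \pi.
\]

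Combining the two displays gives $3 t_f \le \op{area}(K_{\min}) < \pi$, hence $t_f < \pi/3$. The main obstacle I anticipate is verifying that the smoothed octagon, as described by Reinhardt and Mahler, actually belongs to $\Kbal$ and realizes the packing density recorded in \eqref{eqn:density-formula}; this is a bookkeeping verification of the Definition~\ref{def:balanced} conditions together with the closed-form area computation promised in the discussion preceding Theorem~\ref{thm:6k+2}. Once that is in place, the comparison argument above is purely pointwise and requires no further regularity hypotheses on the minimizer beyond the state space constraint $z(t)\in\h$.
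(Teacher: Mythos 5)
Your proof takes essentially the same approach as the paper: the identical pointwise estimate $(x^2+y^2+1)/y \ge y + 1/y \ge 2$, giving $\op{area}(K) \ge 3t_f$, combined with the bound $\op{area}(K_{\min}) < \pi$. The only difference is the competitor: the paper simply asserts that the minimizer's area is less than the area $\pi$ of the unit circle (relying on the known fact that the circle is not optimal), whereas you exhibit the smoothed octagon explicitly. Your route is marginally more self-contained in that it gives strict inequality from a concrete numerical value, but at the cost of the bookkeeping you flag (verifying that the smoothed octagon lies in $\Kbal$). The paper's comparison with the circle is simpler, though it does silently invoke the strictness established elsewhere (e.g.\ Theorem~\ref{thm:no-singular-arcs} shows the circle is not even a local minimizer).
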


\begin{proof}
The global minimizer has area less than the area $\pi$ of
the unit circle $K$.  We have $(x^2 + y^2 + 1)/y \ge (y + 1/y)\ge 2$,
so that
\[
\pi> \op{area}(K_{\min}) =\frac{3}{2}\int_0^{t_f} \frac{x^2 + y^2 + 1}{y} dt
\ge \frac{3}{2}\int_0^{t_f} 2\,dt = 3t_f.
\]
\end{proof}

\begin{remark}\normalfont
The Poincar\'{e} upper half-plane is conformally
equivalent\index{conformal equivalence} to other models of hyperbolic
geometry such as the Poincar\'{e} disk and the hyperboloid model. The
cost functional derived above can also be derived in these models. In
the disk model, $\mathbb{D}=\{w \in \C\mid|w| <
1\}$,\index[n]{D@$\mathbb{D}$, disk model of hyperbolic geometry} for
example, the cost of a path $w:[0,t_f]\to\mathbb{D}$
\index[n]{w@$w$ path in the hyperbolic disk}
becomes
\[
3\int_0^{t_f}\frac{1+|w|^2}{1-|w|^2}dt \rightarrow \text{min}.
\]
In the hyperboloid model of hyperbolic geometry, the model is
the upper sheet of the two-sheeted hyperboloid.  In that model,
the cost functional becomes the integral of the height function on the
hyperboloid sheet.  See~\cite{hales2017reinhardt}.
 
\end{remark}
\section{The Star Domain in the Upper Half-Plane}
We can now prove our first state space reduction result.

\index[n]{0@$-^\star$, star domain}
\index[n]{h@$\h$, upper-half plane!$\hstar$, star domain}
\index{star!domain}

\begin{theorem}\label{prop:star-domain}\mcite{MCA:2104115}
The dynamics of the Reinhardt control problem is constrained to an
\emph{ideal triangle}\index{ideal triangle} in the upper half-plane.
\begin{equation}\label{eqn:star}
\hstar := 
\left\{x+iy \in \h\mid-\frac{1}{\sqrt{3}}<x<\frac{1}{\sqrt{3}},
\quad\frac{1}{3}<x^2 + y^2\right\}.
\end{equation}
 Thus, the new
state-space of the control problem is $\SL(\R)\times \hstar$.
\end{theorem}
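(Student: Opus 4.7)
The plan is to reinterpret the pointwise star inequalities already established for balanced disks as constraints on the upper half-plane coordinates via the map $\Phi$. By Corollary \ref{cor:X-props}, at every time $t$ the state matrix $X(t)$, with entries $a,b,c,-a$, satisfies $\det X = 1$ together with the three star inequalities $\sqrt{3}|a| < c$, $3b + c < 0$, and $c > 0$. In particular $\bracks{J}{X} = b - c < 0$, so Lemma \ref{lem:X-hyperbolic} places $X(t) \in \mathcal{O}_J$, and by Lemma \ref{lem:def-phi} there is a unique $z(t) = x(t) + iy(t) \in \h$ with $X(t) = \Phi(z(t))$.

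The rest is a direct coordinate computation. From the explicit form of $\Phi$ in Lemma \ref{lem:def-phi}, I read off $a = x/y$, $b = -(x^2+y^2)/y$, and $c = 1/y$. Since $y > 0$, the inequality $\sqrt{3}|a| < c$ is equivalent to $|x| < 1/\sqrt{3}$; the inequality $3b + c < 0$ is equivalent (after multiplying by $y$) to $1 - 3(x^2+y^2) < 0$, i.e.\ $x^2 + y^2 > 1/3$; and $c > 0$ is automatic from $z \in \h$. These are precisely the inequalities defining $\hstar$ in \eqref{eqn:star}. The two vertical lines $x = \pm 1/\sqrt{3}$ and the circular arc $x^2 + y^2 = 1/3$ are hyperbolic geodesics of $\h$ meeting only at the three points $\pm 1/\sqrt{3}$ and $\infty$ of the ideal boundary $\partial\h$, which is why the region they cut out is an ideal triangle.

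There is no real obstacle to overcome: the analytic work has been done in Corollary \ref{cor:X-props} and the proof is essentially a change-of-coordinates verification. The content of the theorem is geometric rather than dynamical --- I am not verifying forward invariance of $\hstar$ under the Lax flow \eqref{eqn:trisystem-X}; rather, the star inequalities are known to hold \emph{pointwise for every $t$} on any balanced trajectory, and their pullback along $\Phi$ is precisely the ideal triangle $\hstar$. Consequently, the effective state space of the Reinhardt control problem reduces from $\SL(\R) \times \sl(\R)$ to $\SL(\R) \times \hstar$.
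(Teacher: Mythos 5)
Your proof is correct and follows essentially the same route as the paper: both apply the star inequalities from Corollary~\ref{cor:X-props} to $X = \Phi(z)$ and read off the defining inequalities of $\hstar$ by substituting $a=x/y$, $b=-(x^2+y^2)/y$, $c=1/y$. The paper states this more tersely, whereas you make the change-of-coordinates verification explicit and add the (correct) observation that the bounding curves are geodesics meeting at the ideal points $\pm 1/\sqrt{3}$ and $\infty$.
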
 
\begin{proof}
The star conditions on $X$ in Corollary \ref{cor:X-props} applied to
$X = \Phi(z) = \Phi(x+iy)$ give us the conditions on $x$ and $y$ which an
\emph{admissible trajectory}\label{admissible trajectory} should satisfy.

This region, called the \emph{star domain},\index{star!domain} is the
interior of an ideal triangle in the upper half-plane. The vertices of
this triangle are the points $z=\pm\frac{1}{\sqrt{3}}$ and $z
= \infty$. A picture of the star domain is shown in
Figure \ref{fig:star-dom}.
\end{proof}

\begin{figure}[htbp]
\centering
\includegraphics{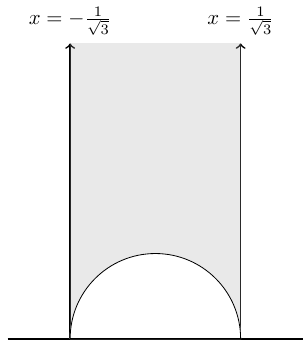}
\caption{The star domain in the upper half-plane.}
\label{fig:star-dom}
\end{figure}

Summarizing the results so far, we have parameterized the boundary of
convex disks in $\Kbal$ as $U_T$-controlled paths
$(g(t),z(t)) \in \SL(\R)\times \hstar$ subject to the terminal
conditions. Our task is to find a \emph{control
function}\index{control!function} $u(t) \in U_T$ which minimizes the
area enclosed by the resulting curve, given in hyperbolic coordinates
by equation \eqref{eqn:cost-upper-half-plane}.

\section{Control Problem in the Half-Plane}

Evolution of $X(t)=h(t)Jh(t)^{-1}$ by adjoint action in
$\mathcal{O}_J$ corresponds to evolution by linear fractional
transformations of the corresponding element $z_0$ in the upper
half-plane picture.  If $\Phi(z_0)=X_0$ and $\Phi(z(t))=X(t)$, then by
the $\SL(\R)$-equivariance of $\Phi$,
the initial and terminal conditions derived in
Section \ref{sec:X-init-term-conds} are transformed as
\begin{align*}
X(0) = X_0 &\Longleftrightarrow z(0) = z_0, 
\\
X(t_f) = R^{-1}X_0 R &\Longleftrightarrow z(t_f) = R^{-1}\cdot z_0,
\end{align*}
where $R$ is the usual rotation matrix \eqref{eqn:R}. 

Thus, we obtain the following reformulation of
the Reinhardt conjecture from the coadjoint orbit of the Lie algebra
to the Poincar\'{e} upper half-plane.

\begin{problem}[Half-Plane Control Problem]\label{pbm:plane-optimal-control-problem}
On the set $\SL(\R)\times\hstar \subset T\SL(\R)$, consider the
following free-terminal time optimal control problem.
\begin{align*}
\mcite{MCA:4861125}
g' &= gX, \quad X = \mattwo {x/y} {-(x^2 + y^2)/y} {1/y} {-x/y}=\Phi(x+iy),  \\[5mm]
x' &= f_1(x,y;u) := 
\frac{y \left(2 a x+b-c x^2+c y^2\right)}{2 a x+b-c x^2-c y^2}, \\[5mm]
y' &=  f_2(x,y;u):= \frac{2 y^2 (a-c x)}{2 a x+b-c x^2-c y^2},         \\[5mm]
\frac{3}{2}&\int_0^{t_f}\frac{x^2+y^2+1}{y}dt 
\rightarrow \text{\normalfont min},  \\[5mm]
 & g:[0,t_f] \to \SL(\R),\quad x,y:[0,t_f] \to \hstar,
\end{align*}
where the coefficients $a,b,c$ are the following 
affine functions of the control \eqref{eqn:trisystem-Z0}.
%ODEs for $x,y$ are control-dependent:
\begin{equation*}
    a = a(u)=\frac{u_2 - u_1}{\sqrt{3}}, \quad
    b = b(u)=\frac{u_0 - 2u_1 - 2u_2}{3}, \quad
    c = c(u)=u_0, 
\end{equation*}
with $u=(u_0,u_1,u_2) \in U_T$, which is the two-simplex in $\R^3$. This
problem has intial conditions $g(0) = I_2 \in \SL(\R)$ and $z(0) =
z_0 \in \hstar$ and terminal conditions $g(t_f) = R$ and
$z(t_f)=R^{-1}\cdot z_0$ where $R$ is the usual rotation. 
% $ = \exp(J\pi/3)$.
% DONE: Corrected terminal gtf 3/20/2023.
%Furthermore, the optimal trajectory is constrained to lie in a
%compact subset of the star domain $\truncstar \subset \hstar$.
\end{problem}

\begin{lemma}\label{lem:X-dynamics}
The ODE \eqref{eqn:trisystem-X} for $X$
%\[
%     X' = \frac{[Z_u,X]}{\langle Z_u,X\rangle}
%\]
implies the system of ODEs for $x',y'$ in the half-plane optimal
control problem \ref{pbm:plane-optimal-control-problem}.
\end{lemma}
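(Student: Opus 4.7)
The plan is to carry out a direct matrix computation in $\sl(\R)$. Since $\Phi$ is given explicitly in Lemma~\ref{lem:def-phi}, I differentiate $X(t) = \Phi(x(t)+iy(t))$ entry by entry using the chain rule, obtaining $X'$ as a $2\times 2$ traceless matrix whose entries are linear in $(x', y')$ with coefficients that are rational functions of $(x, y)$. In parallel, writing $Z_u = \begin{pmatrix} a & b \\ c & -a \end{pmatrix}$, I compute the commutator $[Z_u, X]$ entrywise and the trace form $\bracks{Z_u}{X} = \tr(Z_u X)$, which simplifies to $(2ax + b - c(x^2+y^2))/y$. By Lemma~\ref{lem:sl2-star-condition} this denominator is nonzero on the star domain, so we may divide.

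The equation $X' = [Z_u, X]/\bracks{Z_u}{X}$ is then an equality of traceless $2\times 2$ matrices, giving three scalar equations in the two unknowns $x', y'$. The cleanest extraction is the $(2,1)$ entry: since the $(2,1)$ entry of $\Phi(x+iy)$ is $1/y$ and involves only $y$, we have $X'_{21} = -y'/y^2$, so the $(2,1)$-equation is linear in $y'$ and solves immediately to $y' = 2y^2(a - cx)/(2ax + b - cx^2 - cy^2)$. Substituting this back into the $(1,1)$-equation $x'/y - xy'/y^2 = (b + c(x^2+y^2))/(2ax+b-c(x^2+y^2))$ and solving for $x'$ yields the stated formula after the algebraic collapse $b + c(x^2+y^2) + 2x(a-cx) = 2ax + b - cx^2 + cy^2$.

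The $(1,2)$-entry then checks automatically, reflecting the fact that both sides of $X' = [Z_u, X]/\bracks{Z_u}{X}$ lie in the two-dimensional tangent space $T_X\O_J$ by Lemma~\ref{lem:half-plane-lie-algebra-iso}, so only two of the three scalar equations are independent. Alternatively, one could bypass the $(1,2)$-consistency computation by using the tangent map $T\Phi$ from Lemma~\ref{lem:tangent-maps} to pull $X'$ back to $T_z\h$ and compare it with the image of $Z_u/\bracks{Z_u}{X}$ in $\sl(\R)/\R X$. No conceptual obstacle arises; the only difficulty is algebraic bookkeeping, which is routine and could be mechanically verified.
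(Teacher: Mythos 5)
Your proposal is correct and follows essentially the same approach as the paper: differentiate $\Phi(z(t))$ entrywise, compute $[Z_u,X]/\bracks{Z_u}{X}$, and compare the $(1,1)$ and $(2,1)$ matrix entries (the paper displays exactly these two entries and leaves the others as stars). Your extra remark that the $(1,2)$ entry is automatically consistent, and the alternative via $T\Phi$, mirror what the paper records in its identity \eqref{eqn:tangent-map-f1f2}.
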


\begin{proof}\mcite{MCA:4861125}
We compute
\[
\begin{pmatrix}
(y x' - x y')/{y^2} & * \\ -y'/{y^2} & *
\end{pmatrix}
= \Phi(z(t))' = \frac{[Z_u,\Phi(z)]}{\langle Z_u,\Phi(z)\rangle} =
\begin{pmatrix}
(y f_1 - x f_2)/y^2 & * \\ - f_2/y^2 & *
\end{pmatrix}.
\]
Comparing the left and right-hand sides of this equation,
%with the right-hand side of the ODE for $X$,
find that $x'=f_1$ and $y' = f_2$.
This also shows that
\begin{equation}\label{eqn:tangent-map-f1f2}
     T\Phi(f_1,f_2) = 
\left[\frac{Z_u}{\langle Z_u,X \rangle}\right] \in T_X\OX.
\end{equation}
\end{proof}

%% DONE[] Combine different mentions to state this once clearly.

Thus, we have transferred the Lie algebra dynamics to the upper
half-plane. In Appendix~\ref{sec:kirillov}, we also prove that the map
$\Phi$ is actually an anti-symplectomorphism onto the upper half-plane.
Thus, it is entirely equivalent to study the control problem in the Lie algebra
picture or the half-plane picture. We may also transfer the dynamics
to other models of hyperbolic geometry: for example, the Poincar\'{e}
disk model\index{disk model of hyperbolic geometry} or the hyperboloid
model.\index{hyperboloid!model of hyperbolic geometry} Each picture
has its advantages, with some simplifying equations while others are
better since the symmetries are more apparent.

We have finally reached the end of the reduction chain and have
transformed a problem in discrete geometry to an optimal control problem on
$T\SL(\R)$. Already, we see that this problem is remarkably rich, with
connections to Hamiltonian mechanics and hyperbolic geometry.

\section{Dihedral Symmetry}\label{sec:dihedral}

\index[n]{D@$\Dih$, dihedral group of order $12$} 

The dihedral group\index{dihedral group} $\Dih$
of order $12$ 
of the hexagon acts on the sixth roots of unity
through orthogonal transformations.  This action of the dihedral group
extends to many of the constructions throughout this book.

\index[n]{O@$\OR$, orthogonal group}
\index[n]{SO@$\mathrm{SO}$, special orthogonal group!$\mathrm{O}_n$, orthogonal group}
\index[n]{A@$A$, matrix or linear map!in dihedral group}
\index[n]{ze@$\epsilon\in\{-1,0,1\}$, sign!$\epsilon_A$, determinant of $A$}

Let $\sigma$ be a multi-curve parameterizing the boundary of
$K\in\Kbal$.  We assume that $K$ is in the circle representation, and
that $\sigma_j(0)=\mb{s}_j^*$.  Let $A$ be an element of the dihedral group of the hexagon, considered
as an element of the orthogonal group $\OR(\R)$.
Let $\epsilon_A=\det(A)\in\{\pm1\}$ be the
determinant.  Let $\sigma_j(t)=g(t)\mb{s}_j^*$ as usual, with $g(0)=I_2$.
Then $\tilde{g}(t)=Ag(\epsilon_At)A^{-1}$ determines a multi-curve
\index[n]{0@$-\tilde{\phantom{-}}$, transformed quantity}
\[
\tilde\sigma_j(t) = A g(\epsilon_A t)A^{-1}\mb{s}_j^*,
\]
parameterizing the boundary of a convex disk $AK\in\Kbal$ with the
same area as $K$.  The sign $\epsilon_A$ is chosen to make the
multi-curve parameterize the boundary of $AK$ in a counterclockwise
direction.  Then the action extends to the Lie algebra
\[
(\tilde{g}^{-1} \tilde g')(t) 
= \tilde X(t) = \epsilon_A A X(\epsilon_A t) A^{-1}.
\]
Generators of the dihedral group are the rotation $R$
and the reflection across the vertical axis:
\[
S=\begin{pmatrix}-1 & 0 \\ 0 & 1\end{pmatrix},
\index[n]{S@$S\in\sl(\R)$, reflection matrix}
\]
with $\epsilon_S=\det(S)=-1$.
Writing $X = \Phi(z)$, the reflection acts by $z\mapsto -\bar z$, where $\bar z$
is complex conjugation:
\index[n]{yz@$z\in\h$!$\bar{z}$, complex conjugate}
\index[n]{0@$-\bar{\phantom -}$, complex conjugate}
\[
\epsilon_S S \Phi(z) S^{-1} = \Phi(-\bar z).
\]
This preserves the upper-half plane, but is not orientation preserving.
The rotation $R$ (with $\epsilon_R=1$) acts by linear fractional transformation
\[
R \Phi(z) R^{-1} = \Phi(R\cdot z).
\]
\index[n]{zh@$\theta$, angle} 
%
% {DONE[]: I am a bit confused by this
%   paragraph.  From a narrative standpoint, why are we bringing up
%   $T_i\h$ here?  Also, isn't the last sentence always true of the
%   $\Dih$ action (and not just for the action on $T_i\h$) since $\Dih
%   \cong \mathbf{Dih}_3 \times \Z/2\Z$? Edited 4/27/2023 to be more
%   explicit about compatibility and equivariance. }  
%
The actions of the dihedral group $\Dih$ on the multi-point
$\mb{s}_j$, on multi-curves, on the control set, on the star
conditions, on the ideal triangle, and on the upper half plane through
M\"obius transformation are all compatible.  That is, many of our maps
are equivariant with respect to the dihedral group.  Recall that the
linear fractional action of $\exp(J\theta)$ on $\h$ acts on the
tangent space $T_i\h$ at $i=\sqrt{-1}$ by a \emph{clockwise} rotation
by angle $2\theta$.  $R^3=-I_2$ acts trivially, so that the action of
the dihedral group of the hexagon factors through the dihedral group
of an equilateral triangle -- the symmetric group on three letters.

The action of the dihedral group permutes the star inequalities.
In terms of the linear functions $\rho_j:\sl(\R)\to\R$ defined in
Corollary~\ref{cor:X-props}, we have
\begin{align*}
\rho_j(\epsilon_S S X S^{-1}) &= \rho_{1-j}(X),
\\
\rho_j(R X R^{-1}) &= \rho_{j-1}(X),\quad j\in \Z/6\Z.
\end{align*}
It follows that the dihedral group acts on the star domain $\hstar$.  
The group permutes the ideal vertices of $\hstar$.  
The action on the ideal vertices $\pm1/\sqrt3$ and $+\infty$ is
by linear fractional transformations on 
% DONE[]: should we say $\mathbb{CP}^1?$ since we consider the
% points to be complex numbers? Real line plus point at
% infinity. 4/27/2023 added sentence to text to clarify.
$\mathbb{RP}^1$. Here we are viewing the boundary of the upper-half
plane, consisting of the real axis and the point at infinity, as a real
projective line.  We have
\begin{align*}
R\cdot(+\infty)&=1/\sqrt3,\quad R\cdot(1/\sqrt3)=-1/\sqrt3,\quad R\cdot(-1/\sqrt3)=+\infty,\\
S\cdot(\pm1/\sqrt3)&=\mp1/\sqrt3.
% 3/21/2023 . -> cdot
\end{align*}
\mcite{MCA:rho-rotate}
\index[n]{RP@$\mathbb{RP}^1$, real projective line}

We describe a fundamental domain for the action of the dihedral group
(the symmetric group on three letters) on $\hstar$.  The positive 
imaginary axis is a geodesic in the upper-half plane.  Under the action,
the orbit of this geodesic is a set of three geodesics.  The other
two geodesics are the circles of radius $2/\sqrt3$ centered at 
the two cusps $(0,\pm1/\sqrt3)$ on the real axis.  These three geodesics
meet at $z = 0+i\in\hstar$ and partition $\hstar$ into six sectors.
Each of these sectors is a fundamental domain for the action.
See Figure~\ref{fig:dihedral-fund-domain}.
Specifically, one such fundamental domain is given by
\[
\{z=x+iy \in \hstar \mid x \ge 0,\quad (x-1/\sqrt3)^2 + y^2 \le 4/3\}.
\]

%{DONE[(Kody)](02/22/24): add a picture here.}
\begin{figure}[htbp]
  \centering
  \includegraphics{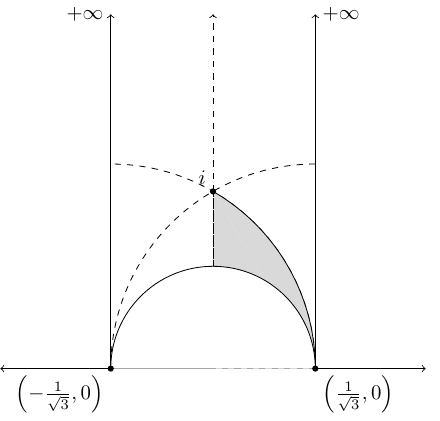}
  \caption{A fundamental domain for the dihedral action on $\hstar$ is shaded 
  in gray. The generators of the dihedral group $R$ and $S$ take the shaded
  domain to the other unshaded ones.}
  \label{fig:dihedral-fund-domain}
  \end{figure}

The dihedral group acts on \emph{everything in sight}, such as the
control set $U_T$, and so forth.  The dihedral group acts on 
the control by the rule
\[
\index[n]{0@$\cdot$, action!of dihedral group on control}
\epsilon_A AZ_u A^{-1} = Z_{A\cdot u},\quad A\in \Dih,\quad u\in U_T.
\]
%Then set $A\cdot P_u = \epsilon_A P_{A\cdot u}$.
Explicitly,
\begin{align*}
% TCH 4/22/2024. Formulas corrected.
R\cdot(0,0,1)&=(0,1,0)\quad
R\cdot(0,1,0)=(1,0,0),\quad
R\cdot(1,0,0)=(0,0,1)\in{}U_T.
\\
RZ_{(0,0,1)}R^{-1}&=Z_{(0,1,0)},\quad RZ_{(0,1,0)}R^{-1} = Z_{(1,0,0)},\quad
RZ_{(1,0,0)}R^{-1}=Z_{(0,0,1)}.\\
-SZ_{(0,1,0)}S^{-1}&=Z_{(0,0,1)},\quad -SZ_{(1,0,0)}S^{-1}=Z_{(1,0,0)}.\\
\end{align*}

The action on $U_T$ is such that if $X$ is a solution to the Lie
algebra state equation with constant control $u$, then the transform
$\tilde X$ by $A$ is a solution to the state equation with constant
control $\tilde u = A\cdot u$, as can be checked directly from the
ODE $X'=[Z_u,X]/\langle Z_u,X \rangle$.

For example, the trajectory with control $(0,0,1)$ has state-dependent
curvatures $\kappa_0=0$ and $\kappa_1=0$, so that $g(t)\mb{s}_0^*$ and
$g(t)\mb{s}_2^*$ are straight lines, while $g(t)\mb{s}_4^*$ moves in a
hyperbolic arc.  Taking $A=R$, we see that $\tilde\sigma_2(t) =
Rg(t)R^{-1}\mb{s}_2^*=-Rg(t)\mb{s}_3^*$ also moves in a hyperbolic arc, and
its control is $\tilde u=(0,1,0) = R\cdot (0,0,1)$. 
%Similarly,
%$R\cdot (0,1,0) = (1,0,0)$.

\chapter{Compactification of the Star Domain}\label{sec:compactification}

%% DONE[] WARNING the $\mb{e}_j^* \to \mb{s}_j^*$ notation change for
%% the sixth is not yet done from here on. Finished by TCH 4/16/2023.

%% DONE[]: tildes removed by TCH 4/18/2023.

While the star domain is a reduction of the state space, it is an
ideal triangle with one vertex at infinity. Thus, it is open and
unbounded in the upper half-plane. Our task in this section will be to
explore a further reduction of this admissible region.

Empirical observations show that if $z \in \hstar$ is close to the
boundary curves of the star domain, then the corresponding critical hexagon 
(constructed in Lemma \ref{lem:pt-to-hexagon}) is close to a
parallelogram. Classical results of Mahler and Reinhardt state that
the only convex disks in $\Kccs$ with a parallelogram for a
(degenerate) critical hexagon are parallelograms themselves. This
suggests that there is a neighborhood of the boundary of the star
domain which gives rise to convex disks in $\Kccs$ whose packing
density is close to one and so these convex disks can be
excluded from consideration, since they are never optimal for our
control problem. We make this intuition precise presently.

Our hope is that if can cut down the state space to a compact region,
eventually computer numerical solutions of the dynamics will become
feasible.  We have obtained the following compactification.
In this chapter, by \emph{compactification}, we mean an explicit
compact subset of the star domain $\hstar$, such that all trajectories
of interest must lie inside that compact set.
We have not optimized parameters to obtain the smallest possible compact
region.  We leave that for future work.

%% XX Global Fix the indexing of the B_i \Pi_j.

\index{compactification}
\index[n]{0@$-^{\star\star}$, truncated star domain}
\index[n]{h@$\h$, upper-half plane!$\h^{\star\star}$, star domain compactification}
\index{horocycle}
\index{horoball}
% DONE[] Define horocycle, horoball

We define a \emph{horocycle} in the upper-half plane to be a
horizontal line, or a Euclidean circle in the upper half-plane that is
tangent to the real axis.  We define a \emph{horoball} to be the
region in the upper-half plane that is bounded by the horocycle:
either the region above the horizontal line or the interior of the
circle. In general
the image of the horocycle $y=y_0 \in \hstar$ under a linear fractional
transformation
\index[n]{B@$B$, horoball at a cusp}
\index{horoball}
\index[n]{abcd@$a,b,c,d$, matrix entries!of linear fractional transformation}
\index[n]{r@$r$, real number!radius}
\[
A=\begin{pmatrix}a& b\\ c&d\end{pmatrix}
\]
is a Euclidean circle tangent to the real axis, with center
$(A\cdot\infty) + i r=a/c+ir$ and radius $r = \det\!A/(2y_0c^2)$.  
% TCH 4/22/2024: detA added

\begin{definition}[compactification]
Let $\h^{\star\star}\subset\hstar$ be the compact set defined by the
following inequalities.  The inequality $y>4.5$ defines an open
horoball $B(i\infty)$ around the cusp of $\hstar$ at $z=+i\infty$. By
linear fractional transformations $R,R^2\in\SL(\R)$ acting on $\h$, we
obtain open horoballs $B(1/\sqrt3)$ and $B(-1/\sqrt3)$ at the other
cusps (that is, at the ideal vertices) $z=\pm1/\sqrt3$ of $\h$.  If
the linear fractional transformation is $A=R^{\pm1}$ and $y_0=4.5$,
the radius $r$ is $4/27$, and the center is
$(A\cdot\infty)=(R^{\pm}\cdot\infty)=\pm1/\sqrt3$.

\index[n]{zP@$\Pi^+_i$, open half-plane}

The open half-plane $\Pi^+_0$ defined by $y > 15 (1/\sqrt3 - x)$ 
includes the boundary curve $x=1/\sqrt{3}$, $y>0$ of
$\hstar$.  By linear fractional transformations $R,R^2$, we obtain
transformed regions $\Pi^+_1$ and $\Pi^+_2$ around the other boundary
curves. Set
\[
\h^{\star\star} = \hstar \setminus (B(i\infty) \cup B(1/\sqrt3) \cup B(-1/\sqrt3) 
\cup \Pi^+_0 \cup \Pi^+_1 \cup \Pi^+_2).
\]
\end{definition}

\tikzfig{hstarstar}{The central region away from the boundary of the star
domain is the compactification $\h^{\star\star}$ of the star
domain.}  {
\def\rt{1.732}
\begin{scope}[xshift=0in,yshift=0in,scale=2]
\shade[top color=white,bottom color=gray] (-1/\rt,0) rectangle (1/\rt,3cm);
\draw (-1/\rt,0)--(-1/\rt,5);
\draw (1/\rt,0)--(1/\rt,5);
\draw (0,3) node [anchor=north] {$\h^{**}$};
\begin{scope}
\clip (-1/\rt,0) rectangle (1/\rt,3cm);
\draw[fill=white] (0,0) circle (0.577cm);
\draw[fill=white] (-1/\rt,0) circle (0.148cm);
\draw[fill=white] (1/\rt,0) circle (0.148cm);
\draw (0,0.038447) circle (0.578cm);
\end{scope}
\draw (-2,0) -- (2,0);
\draw (-1/\rt,4.5) -- (1/\rt,4.5);
\draw (-1/\rt,0) -- (-0.277,4.5);
\draw (1/\rt,0) -- (0.277,4.5);
\end{scope}
%\begin{scope}[xshift=2in,yshift=1.5cm
%\draw[fill=gray] (0,0) circle (1cm);
%\clip (0,0) circle (1cm);
%\draw[fill=white] (-2,0) circle (\rt cm);
%\draw[fill=white] (1,\rt) circle (\rt cm);
%\draw[fill=white] (1,-\rt) circle (\rt cm);
%\end{scope}
}
% DONE[] graph region.

The set $\h^{\star\star}\subset\hstar$ is compact.
See Figure~\ref{fig:hstarstar}. 
The shape of the compactification
$\h^{\star\star}$ has been chosen to be invariant under the action of the
dihedral group.
The entire chapter is devoted to the proof of the following theorem.

\begin{theorem}\label{th:compactification}
Let $K$ be a convex disk in $\Kbal$, with corresponding boundary
trajectory $(g,X)$.  Define a trajectory $z$ in $\hstar$ by
$\Phi\circ z=X$.  If any point of the trajectory $z$ is not in $\h^{\star\star}$,
then the cost of the trajectory is strictly greater than the area of
the smoothed octagon.  Hence $K$ is not a global minimizer.
\end{theorem}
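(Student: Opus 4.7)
The plan is to exploit the dihedral symmetry of the star domain together with the explicit form of the cost functional, reducing to a small number of cases each handled by a direct estimate. By the action of $\Dih$ on $\hstar$ described in Section~\ref{sec:dihedral}, the six excluded regions fall into two orbits: the horoballs $\{B(i\infty), B(1/\sqrt3), B(-1/\sqrt3)\}$ at the cusps, and the half-planes $\{\Pi_0^+, \Pi_1^+, \Pi_2^+\}$ adjacent to the geodesic edges. Since the dihedral action preserves the cost~\eqref{eqn:cost-upper-half-plane} and sends balanced disks to balanced disks of the same area, it suffices to handle one representative from each orbit, say $B(i\infty)$ and $\Pi_0^+$. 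In both cases the target is to show that the area of the corresponding $K \in \Kbal$ strictly exceeds the area of the smoothed octagon, namely $(8 - \sqrt{32} - \ln 2)\sqrt{12}/(\sqrt{8} - 1) \approx 3.126$.

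For the horoball case, suppose $y(t_0) > 4.5$ for some $t_0 \in [0, t_f]$. The integrand satisfies $(x^2 + y^2 + 1)/y \ge y + 1/y$, so pointwise the integrand is at least $4.5 + 2/9$ at $t_0$. To turn this pointwise bound into an integral bound, I would invoke the Lipschitz continuity of $y(t)$ supplied by Proposition~\ref{prop:reparam-lipschitz}: an explicit upper bound $L$ on $|f_2(x, y; u)|$ over $(x,y) \in \hstar$ adjacent to the horoball and $u \in U_T$ yields $y(t) \ge 4.5 - L|t - t_0|$ on a neighborhood of $t_0$, producing an interval of definite length on which the integrand is bounded below by a value well above $2$. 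Combining this local surplus with the trivial bound $(x^2+y^2+1)/y \ge 2$ elsewhere and the bound $t_f < \pi/3$ from Lemma~\ref{lem:tf-pi3} (which also provides a useful upper estimate on $t_f$ in the denominator) should yield the required area excess. An essentially equivalent formulation is that $(x^2+y^2+1)/(2y) = \cosh d_H(i, z)$, so the cost measures how long the trajectory spends far from $i$ in hyperbolic distance; reaching $y=4.5$ means attaining $d_H(i, z) \ge \log 4.5$, and Lipschitz regularity forces a definite duration at that depth.

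For the half-plane case, I would pursue a more geometric argument: entering $\Pi_0^+$ forces the trajectory close to the boundary edge $x = 1/\sqrt 3$ of $\hstar$, which by the identification $\Phi$ and the formulas for $\rho_j$ in Corollary~\ref{cor:X-props} corresponds to $\rho_0(X) \to 0$. By the reconstruction in Remark~\ref{rem:hexagon}, the vertex of the critical hexagon at $\mb{s}_0^* + \rho_2(X) X\mb{s}_0^*/\det X$ moves off to infinity proportionally to $1/\rho_0$, so the critical hexagon degenerates toward a parallelogram as $\rho_0 \to 0$. Reinhardt's observation that parallelograms tile the plane (see the discussion after Lemma~\ref{lem:reinhardt-tiling}) means that a disk whose critical hexagon is a near-parallelogram has greatest packing density close to $1$, hence area close to the normalization $\sqrt{12} \approx 3.464 > 3.126$. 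Quantitatively, I would estimate the ratio $\mathrm{area}(K)/\mathrm{area}(H_K)$ as a function of $\rho_0(X)$ along the edge $y = 15(1/\sqrt 3 - x)$, showing that along this locus the density is forced above $0.902414$.

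The main obstacle I anticipate is calibrating the constants ($y > 4.5$, slope $15$) tightly enough so that both estimates close: the Lipschitz constant $L$ and the density-versus-$\rho_0$ bound must both be explicit and sharp. In the horoball case this is a largely calculational matter, but in the half-plane case one must carefully relate the geometric degeneration (a near-parallelogram critical hexagon) to an area estimate uniform over the rest of the trajectory, since a single instant of near-degeneracy does not by itself determine the area of $K$; one likely needs that the trajectory spends enough time in $\Pi_0^+$, or that entering $\Pi_0^+$ at all is incompatible with the periodic boundary conditions $g(t_f) = R$, $X(t_f) = R^{-1}X_0 R$ at acceptable cost. The prospect that these estimates require computer-assisted verification, foreshadowed in the abstract, is entirely plausible here.
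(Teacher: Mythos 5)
Your overall structure — use the dihedral symmetry to reduce to one horoball and one half-plane, then show the cost exceeds the smoothed octagon's area — matches the paper's outline, but you pursue a fundamentally different (and weaker) route in both cases, and in the half-plane case you yourself flag the gap without resolving it.

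The key idea you are missing is the \emph{static, pointwise} geometric bound that the paper proves immediately before the compactification theorem: if $K\in\Kbal$ (in circle representation) passes through a multi-point whose critical hexagon corresponds to $z\in\hstar$, then already
\[
\op{area}(K) \;\ge\; \frac{3\sqrt3}{2} \;+\; 2\sum_{i=0}^2 \I_i(z)\,\mathrm{area}_i(z),
\]
where $\mathrm{area}_i(z) = T_i(z)-\sqrt{T_i^{ext}(z)T_i(z)}$ and $\I_i$ is the indicator of the region $\h_i$ where $T_i\ge T_i^{ext}$. This is not an integral bound: it comes from exhibiting extra boundary points of $K$ (forced by the minimality of the circumscribing hexagon, via Lemma~\ref{lem:geom-thales}) lying strictly outside the inscribed hexagon $h_K$. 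A \emph{single instant} on the trajectory therefore yields a definite area surplus, with no need to control how long the trajectory lingers. The paper then divides this bound by $\sqrt{12}$ to get $\delta(z)$, and Lemma~\ref{lem:compactification-lemma} establishes the required monotonicity of $\mathrm{area}_0,\mathrm{area}_2$ in $x$ and $y$ so that $\delta(z) > \delta_{oct}$ over all of $B(i\infty)$ and over $\Pi_0^+ \cap \{1\le y\le 4.5\}$. Both the horoball and the half-plane case are dispatched by the same lemma, with the half-plane case reduced (by monotonicity in $x$) to the single affine curve $y = 15(1/\sqrt3-x)$.

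Your horoball argument via Lipschitz regularity of $y(t)$ and a lower bound on time-in-the-horoball faces a real difficulty you should not underestimate: the baseline bound $(x^2+y^2+1)/y\ge 2$ gives only $\op{area}(K)\ge 3t_f$, and the smoothed octagon has $t_f \approx 1.02$, so the baseline $\approx 3.07$ is \emph{below} the octagon's area $\approx 3.126$. You would therefore need to show that the excursion to $y>4.5$ adds a reliable surplus of at least $\approx 0.06$ (and you have no control over how close $t_f$ is to its infimum). That surplus depends on a uniform lower bound on the time the trajectory spends at large $y$, which in turn needs an explicit uniform Lipschitz constant for $f_2$ over the entire relevant domain and control set, and a careful accounting against the baseline. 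None of this is impossible, but it is a considerably more delicate (and different) estimate than the one the paper actually makes, and it is not merely a ``largely calculational matter.'' Your half-plane discussion is more candid about the gap — you correctly note that a single instant of near-degenerate critical hexagon does not by itself determine the area of $K$ via any density heuristic — but you do not supply the resolution, which is precisely the static area bound above.
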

\index[n]{0@$\circ$, function composition}

Given a convex disk $K$ in $\Kbal$ in the circle representation, we
parameterize the boundary multi-curve $\sigma_j(t) = g(t)\mb{s}_j^*$, with
$g(0)=I_2$.  At $t=0$, we obtain an element $z\in\hstar$ such that 
$\Phi(z)=X(0)=g^{-1}(0)g'(0)$.  Also associated with $K$ is a critical
hexagon $H_K$ with midpoints at the points $\{\mb{s}_j^*\}$.
Conversely, an element $z\in\hstar$ can be used to reconstruct a
centrally symmetric hexagon $H_K$ with midpoints $\{\mb{s}_j^*\}$ as
follows.

\begin{lemma}\label{lem:pt-to-hexagon}
Every $z\in\hstar$ gives rise to a centrally symmetric hexagon $H_K(z)$
whose midpoints are at $\{\mb{s}_j^*\}$ and whose oriented directions along
the edges point into the star domain.  If $z$ is constructed from the
boundary parameterization of a convex disk $K\in\Kbal$ in the
circle representation as described
above at multi-point $\{\mb{s}_j^*\}$, then $H_K(z)$ is the critical hexagon
at the multi-point $\{\mb{s}_j^*\}$ of $K$.
\end{lemma}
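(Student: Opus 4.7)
The plan is to realize the lemma as essentially a bookkeeping exercise that composes two earlier results: Proposition~\ref{prop:star-domain} (which identifies $\hstar$ with the locus of $X \in \O_J$ satisfying the star inequalities) and Remark~\ref{rem:hexagon} (which explains how to reconstruct a centrally symmetric hexagon from such an $X$). The only genuine content beyond this is verifying the compatibility statement in the second sentence.

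First, given $z \in \hstar$, I would set $X = \Phi(z) \in \O_J \subset \sl(\R)$, which has $\det(X) = 1$ by construction of $\Phi$ in Lemma~\ref{lem:def-phi}. By Proposition~\ref{prop:star-domain}, the defining inequalities of $\hstar$ in \eqref{eqn:star} are exactly the star inequalities $\rho_i(X) > 0$ for $i=0,1,2$ from Corollary~\ref{cor:X-props}. Hence $X$ meets the hypotheses of Remark~\ref{rem:hexagon}, and I take $H_K(z)$ to be the hexagon produced there, with midpoints $\mb{s}_j^*$ and vertices given by \eqref{eqn:hex-vertex}. The oriented edge direction at $\mb{s}_j^*$ is the vector $X\mb{s}_j^*$, and the star inequalities are precisely the condition that each $X\mb{s}_j^*$ lies in the open cone with apex $\mb{s}_j^*$ through $\mb{s}_{j+1}^*$ and $\mb{s}_j^* + \mb{s}_{j+1}^*$ (cf.\ \eqref{eqn:rho-star-inequalities} and Lemma~\ref{lem:star-conditions}). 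This gives the first assertion.

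For the second assertion, suppose $z$ arises from a boundary parameterization $\sigma_j(t) = g(t)\mb{s}_j^*$ of $K \in \Kbal$ with $g(0) = I_2$ and $\Phi(z) = X(0) = g^{-1}(0)g'(0)$. The critical hexagon of $K$ at the multi-point $\{\mb{s}_j^*\}$ has, by Reinhardt's characterization recalled in Section~\ref{sec:reinhardt-approach} and Theorem~\ref{thm:mahler-circumscribe}, its $j$-th edge tangent to $K$ at $\mb{s}_j^*$ with edge direction $\sigma_j'(0)$. But $\sigma_j'(0) = g'(0)\mb{s}_j^* = X(0)\mb{s}_j^* = X\mb{s}_j^*$ since $g(0) = I_2$. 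So the edge directions of the critical hexagon match exactly the edge directions of $H_K(z)$, and since a centrally symmetric hexagon with prescribed midpoints $\mb{s}_j^*$ is uniquely determined by the oriented directions of its six edges through those midpoints (the vertices being the pairwise intersections of adjacent edge lines, as in \eqref{eqn:hex-vertex}), the two hexagons coincide.

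I do not foresee a real obstacle here; the only thing to be careful about is invoking the correct identification of the critical hexagon (via Theorem~\ref{thm:min-critical}) so that it is indeed the hexagon whose edge at $\mb{s}_j^*$ has tangent $\sigma_j'(0)$, rather than some other circumscribing hexagon. Once that is fixed, the argument is a direct comparison, and no new estimates or analysis are required.
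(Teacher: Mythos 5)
Your proposal is correct and takes essentially the same route as the paper: set $X = \Phi(z)$ and invoke the construction of Remark~\ref{rem:hexagon}, with the second assertion reducing to a comparison of edge directions $\sigma_j'(0) = X\mb{s}_j^*$. The paper's own proof is terser (it establishes only the first assertion and leaves the compatibility claim to the remark following the lemma), so if anything your version is more complete.
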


\begin{proof}
The element $z$ in the upper half-plane determines a matrix
$\Phi(z)$ in the adjoint orbit of $J$ in the Lie algebra
$\mathfrak{sl}_2$.  The centrally symmetric hexagon $H_K(z)$ is then
reconstructed from $\Phi(z)$ according to Remark~\ref{rem:hexagon}.
\end{proof}

\begin{figure}[htbp]
\centering
\includegraphics{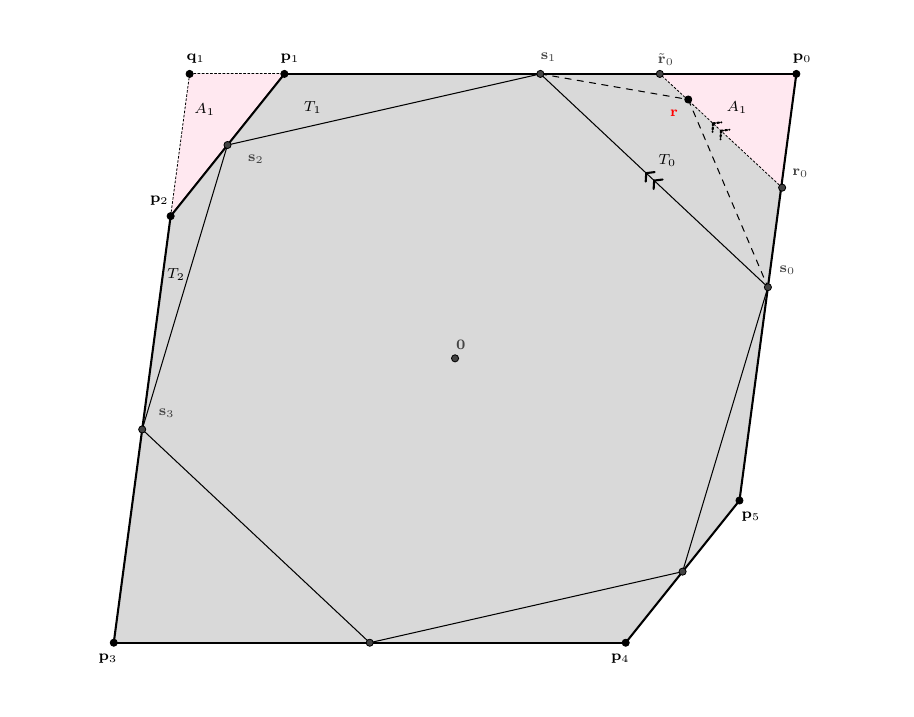}
\caption{Critical Hexagon. (The figure has been rotated to make
    $\mb{p}_0\mb{p}_1$ horizontal, so that the
    sixth roots of unity $\mb{s}_i$ are also in a rotated position.) The triangles shown in pink are a result of the construction in Lemma~\ref{lem:geom-thales}.}
    %DONE[](kody 03/23/24) update graphics notation.)}
\label{fig:compactification-result}
\end{figure}

%For notational convenience, we relabel $\mb{s}_j=\mb{s}_j^*$ for $j=0,1,2$.  
Lower case bold letters will denote
points $\mb{p}_i,\mb{q}_i,\mb{r}_i,\mb{s}_i$. Upper case
will denote triangles $T_i,T^{ext}_i$ in the plane and
convex regions $H,K$.   We often use the same upper case letter for a
triangle and its area with respect to Lebesgue measure.  The correct
interpretation can be inferred by context.  
\index[n]{s@$\multi_i$, multi-point}
\index[n]{p@$\mb{p}_i,\mb{q}_i,\mb{r}_i$, points in the plane}
\index[n]{T@$T$, triangle}
\index[n]{T@$T$, triangle!$T^{ext}$, exterior triangle}
\index[n]{H@$H$, Euclidean region!$H,K$, convex regions in the plane}
\index[n]{r@$\mb{r}$, point in the plane}
\index[n]{O@$\mathbf{0}$, origin}

%\index[n]{0@$-\tilde{\phantom{-}}$, transformed quantity!reflection through the origin}

%% TCH 4/23/2024: replaced \mb{r}_1 with \tilde{\mb{r}}_0, and
%% \mb{u}_1 with \tilde{\mb{u}}_0 to make indices rotationally
%% symmetric mod 6.  
%% DONE[TCH 5/4/2024: edited critical_hexagon.tex] Figures must be updated.

We consider subscripts modulo $6$, as we do elsewhere in the book,
with the understanding that when it comes to area computations, the
area is preserved under central reflection, so that area calculations
have a smaller period of $3$.

\index[n]{1@$\triangle$, triangle}
The situation is depicted in
Figure \ref{fig:compactification-result}. Recall that our convex disk
is in circle representation, having the sixth roots of unity on its
boundary.  Let $\mb{p}_0,\mb{p}_1,\mb{p}_2,\mb{p}_3,\mb{p}_4,\mb{p}_5$
be the vertices of the critical hexagon with midpoints at
$\mb{s}_j=\mb{s}_j^*$.  We define (interior) triangles
$T_0=\triangle \mb{s}_0 \mb{p}_0 \mb{s}_1$,
$T_1=\triangle \mb{s}_1 \mb{p}_1 \mb{s}_2$, $T_2
= \triangle \mb{s}_2 \mb{p}_2 \mb{s}_3$. Let $\mb{q}_1$ denote the
point of intersection of the lines ${\mb{p}_3\mb{p}_2}$ and
${\mb{p}_0\mb{p}_1}$ through nonadjacent edges of the hexagon.
Similarly for $\mb{q}_0$ and $\mb{q}_2$. This now determines the
exterior triangles $T^{ext}_0=\triangle{\mb{p}_1 \mb{q}_1 \mb{p}_2}$,
$T^{ext}_1 = \triangle{\mb{p}_2\mb{q}_2\mb{p}_3}$, and
$T^{ext}_2=\triangle{\mb{p}_0\mb{q}_0\mb{p}_1}$. (The latter two
triangles are not depicted in the figure.) For our compactification
result, we will need the areas of these triangles in terms of $z = x +
iy$.

The functions $\rho_0,\rho_1,\rho_2$ of Equation \eqref{eqn:rho} are
linear functions of $X\in\sl(\R)$.  Considering them as a function of
$z\in\h$ through the map $X=\Phi(z)$, we abuse notation slightly by
writing $\rho_j(z)$ for $\rho_j(\Phi(z))$, where now $\rho_j:\h\to\R$.
The star domain $\h^*$ is defined in $\h$ by the star inequalities
$\rho_j(z)>0$ for $j=0,1,2$.

\index[n]{zr@$\rho_j$, star function!$\rho_j(z):=\rho_j(\Phi(z))$}

\begin{lemma}\label{lem:triangle-areas}
We have
\[
\op{area}(T_0) = \frac{\sqrt3}{4} \rho_0\rho_2,
\qquad 
\op{area}(T_1) = \frac{\sqrt{3}}{4}\rho_0\rho_1,
\qquad
\op{area}(T_2) =  \frac{\sqrt{3}}{4}\rho_1\rho_2,
\]
and 
\[
\op{area}(T^{ext}_0) = \sqrt{3}\rho_1^2, %\frac{(\sqrt{3}\rho_1)^2}{\sqrt{3}},
\qquad
\op{area}(T^{ext}_1) = \sqrt{3}\rho_2^2, %\frac{(\sqrt{3}\rho_2)^2}{\sqrt{3}},
\qquad 
\op{area}(T^{ext}_2) = \sqrt{3}\rho_0^2,
\]
where $\op{area}$ is the Lebesgue measure on $\R^2$.  
Furthermore, we have 
\[
\op{area}(T_0) + \op{area}(T_1) +\op{area}(T_2) = \frac{\sqrt{3}}{4}.
\]
\end{lemma}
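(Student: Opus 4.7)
The plan is to read each area directly from the explicit vertex formula of Remark~\ref{rem:hexagon}, exploiting the normalization $\det(X)=1$ (equivalently, by \eqref{eqn:det-rho}, $\rho_0\rho_1+\rho_1\rho_2+\rho_2\rho_0=1$), the identity $\det(X\mb{u},X\mb{v})=\det(\mb{u},\mb{v})$, and the elementary value $\det(\mb{s}_j,\mb{s}_{j+1})=\sqrt{3}/2$.

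For the interior triangles I would use equation \eqref{eqn:hex-vertex} with $\det(X)=1$ to write $\mb{p}_0$ simultaneously as $\mb{s}_0+\rho_2 X\mb{s}_0$ and $\mb{s}_1-\rho_0 X\mb{s}_1$. Twice the area of $T_0$ is then the absolute determinant of the two edge vectors emanating from $\mb{p}_0$, yielding $\rho_0\rho_2\cdot\sqrt{3}/2$ in one step. The formulas for $T_1,T_2$ follow by the same calculation after cyclic relabeling, using $\rho_{j+3}=\rho_j$. Summing the three and invoking \eqref{eqn:det-rho} immediately gives $\op{area}(T_0)+\op{area}(T_1)+\op{area}(T_2)=(\sqrt{3}/4)(\rho_0\rho_1+\rho_0\rho_2+\rho_1\rho_2)=\sqrt{3}/4$.

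For the exterior triangles the real task is to locate the vertex $\mb{q}_1$. The structural observation is the linear dependence
\[
X\mb{s}_1=X\mb{s}_0+X\mb{s}_2,
\]
coming from $\mb{s}_0+\mb{s}_2+\mb{s}_4=0$ and $\mb{s}_4=-\mb{s}_1$ applied under $X$. I would parameterize $\mb{q}_1=\mb{p}_1+t_1X\mb{s}_1=\mb{p}_2+t_2X\mb{s}_0$ along the two extended edges, and note that $\mb{p}_2-\mb{p}_1=2\rho_1X\mb{s}_2$ directly from Remark~\ref{rem:hexagon}. The intersection equation then becomes $(t_1-t_2)X\mb{s}_0+t_1X\mb{s}_2=2\rho_1X\mb{s}_2$, and linear independence of $X\mb{s}_0,X\mb{s}_2$ forces $t_1=t_2=2\rho_1$. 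Consequently $\mb{q}_1-\mb{p}_1=2\rho_1X\mb{s}_1$ and $\mb{q}_1-\mb{p}_2=2\rho_1X\mb{s}_0$, so
\[
2\op{area}(T_0^{ext})=|\det(2\rho_1X\mb{s}_1,\,2\rho_1X\mb{s}_0)|=4\rho_1^2|\det(\mb{s}_1,\mb{s}_0)|=2\sqrt{3}\,\rho_1^2,
\]
giving $\op{area}(T_0^{ext})=\sqrt{3}\,\rho_1^2$. The expressions for $T_1^{ext}$ and $T_2^{ext}$ then follow by the dihedral symmetry of the construction (a rotation by $R$ cyclically permutes hexagon vertices, midpoints, and $\rho$-indices).

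The whole argument reduces to bookkeeping once the identity $X\mb{s}_1=X\mb{s}_0+X\mb{s}_2$ is spotted; without it, locating $\mb{q}_1$ is an awkward $2\times 2$ system whose clean solution $t_1=t_2=2\rho_1$ emerges only after invoking $\rho_0\rho_1+\rho_1\rho_2+\rho_2\rho_0=1$. Recognizing this linear dependence, and choosing the right moment to apply the normalization, is the only subtle step I anticipate.
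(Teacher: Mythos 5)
Your proof is correct and takes essentially the same route as the paper: read the vertex coordinates $\mb{p}_j$ and $\mb{q}_1$ off the hexagon formula \eqref{eqn:hex-vertex}, compute each area as half the determinant of two edge vectors using $\det(X\mb{u},X\mb{v})=\det(\mb{u},\mb{v})$ (valid since $\det X=1$) and $\det(\mb{s}_j,\mb{s}_{j+1})=\sqrt{3}/2$, then obtain the sum from \eqref{eqn:det-rho}. The one step you spell out that the paper merely asserts is the formula $\mb{q}_1=\mb{p}_1+2\rho_1 X\mb{s}_1$; your derivation via the linear dependence $X\mb{s}_1=X\mb{s}_0+X\mb{s}_2$ is a clean way to obtain it, and incidentally requires no normalization at all, contrary to your closing remark.
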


\begin{proof}
Lemma \ref{lem:pt-to-hexagon} leads to an explicit construction of the
coordinates of the points $\mb{p}_i$ and $\mb{q}_i$ for
$i=0,1,2$. Once we have coordinates of all these points, finding the
areas of the associated triangles is straightforward from Equations
\eqref{eqn:rho}, \eqref{eqn:rho-star-inequalities} and
\eqref{eqn:hex-vertex}, with $\det(X)=1$.  For example, 
\[
\mb{p}_0=\mb{s}_0+\rho_2Xs_0 = 
\mb{s}_0+\rho_2(\rho_0\mb{s}_1+\rho_1\mb{s}_2)
\]
gives
\begin{align*}
\op{area}(T_0) &= \frac{1}{2} \det((\mb{p}_0 - \mb{s}_0), \mb{s}_2)
= \frac{1}{2} {\rho}_2 \det(X \mb{s}_0 , \mb{s}_2)
= \frac{1}{2} {\rho}_2 \rho_0 \det(\mb{s}_1, \mb{s}_2)
= \frac{\sqrt{3}}{4} \rho_0 \rho_2.
%= \frac{(\sqrt{3}\rho_0)(\sqrt{3}\rho_2)}{4\sqrt3}.
\end{align*}
\mcite{MCA:7511280}
We have $\mb{q}_1=\mb{p}_1 + 2\rho_1 X\mb{s}_1=\mb{p}_2-2\rho_1 X\mb{s}_3$, and
\begin{align*}
% TCH: 4/24/2024 intermediate formulas corrected. final result correct
\op{area}(T^{ext}_0) &= \frac{1}{2}\det((\mb{p}_2-\mb{q}_1), (\mb{p}_1-\mb{q}_1) )
\\
&= -2 \rho_1^2 \det(X\mb{s}_3, X\mb{s}_1)
= -2 \rho_1^2 \det(\mb{s}_3, \mb{s}_1)
= \sqrt3\rho_1^2. %\frac{(\sqrt{3}\rho_1)^2}{\sqrt3}.
\end{align*}
The other cases are similar, by shift of indices.

The sum of the areas $T_i$ is obtained by Equation \eqref{eqn:det-rho}.
\[
\frac{\sqrt{3}}{4}(\rho_0 \rho_2 + \rho_1 \rho_2 + \rho_0 \rho_1) = \frac{\sqrt{3}}{4}\det(X)
=\frac{\sqrt3}{4}.
\]
To give a second proof
that the sum of the areas of $T_i$ is $\sqrt{3}/4$, an equilateral
triangle of edge length $1$ can be dissected into three
triangles congruent to $T_1,T_2,T_3$.
\end{proof}

\begin{lemma}\label{lem:geom-thales}
\begin{figure}[t]
\centering
\includegraphics{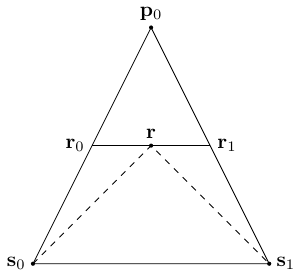}
\caption{Area of cutoff triangles.}
\label{fig:triangle-lemma}
\end{figure}
As shown in Figure~\ref{fig:triangle-lemma}, in triangle $\triangle
\mb{s}_0\mb{s}_1\mb{p}_0$, let $\tilde{\mb{r}}_0$ and $\mb{r}_0$ be points on $\mb{s}_1\mb{p}_0$ and $\mb{s}_0\mb{p}_0$
respectively such that $\mb{r}_0\tilde{\mb{r}}_0$ is parallel to $\mb{s}_0\mb{s}_1$. 
If $\mb{r}$ is any
point on $\tilde{\mb{r}}_0\mb{r}_0$, then 
\[
\triangle \mb{s}_0\mb{s}_1\mb{r} = \triangle \mb{s}_0\mb{p}_0\mb{s}_1
- \sqrt{\triangle \mb{r}_0\mb{p}_0\tilde{\mb{r}}_0\,\triangle \mb{s}_0\mb{p}_0\mb{s}_1}.
\]
\end{lemma}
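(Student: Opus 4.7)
The plan is to exploit the fact that the segment $\mb{r}_0\tilde{\mb{r}}_0$, being parallel to the base $\mb{s}_0\mb{s}_1$, makes the inner triangle $\triangle\mb{r}_0\mb{p}_0\tilde{\mb{r}}_0$ similar to the outer triangle $\triangle\mb{s}_0\mb{p}_0\mb{s}_1$, and that any point $\mb{r}$ on the parallel segment $\tilde{\mb{r}}_0\mb{r}_0$ has the same distance to the base $\mb{s}_0\mb{s}_1$.  This reduces the whole statement to a one-dimensional computation in heights.

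First I would set up notation. Let $S=\triangle\mb{s}_0\mb{p}_0\mb{s}_1$ and $s=\triangle\mb{r}_0\mb{p}_0\tilde{\mb{r}}_0$. Let $h$ be the distance from $\mb{p}_0$ to the line $\mb{s}_0\mb{s}_1$, and $h'$ the distance from $\mb{p}_0$ to the parallel line through $\mb{r}_0\tilde{\mb{r}}_0$. By Thales' intercept theorem applied to the two sides of the outer triangle meeting at $\mb{p}_0$, the parallel line cuts off a similar triangle with ratio $h'/h$, so $|\mb{r}_0\tilde{\mb{r}}_0|=(h'/h)\,|\mb{s}_0\mb{s}_1|$. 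Consequently
\begin{equation*}
\frac{s}{S}=\frac{\tfrac12|\mb{r}_0\tilde{\mb{r}}_0|\,h'}{\tfrac12|\mb{s}_0\mb{s}_1|\,h}=\Bigl(\frac{h'}{h}\Bigr)^{2},
\end{equation*}
so $\sqrt{sS}=S\,(h'/h)=\tfrac12|\mb{s}_0\mb{s}_1|\,h'$.

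Next I would compute $\triangle\mb{s}_0\mb{s}_1\mb{r}$. Since $\mb{r}$ lies on the segment $\tilde{\mb{r}}_0\mb{r}_0$ (a translate of $\mb{s}_0\mb{s}_1$), its distance to the line $\mb{s}_0\mb{s}_1$ is exactly $h-h'$, independent of where $\mb{r}$ sits on the segment. Therefore
\begin{equation*}
\triangle\mb{s}_0\mb{s}_1\mb{r}=\tfrac12|\mb{s}_0\mb{s}_1|(h-h')=\tfrac12|\mb{s}_0\mb{s}_1|\,h-\tfrac12|\mb{s}_0\mb{s}_1|\,h'=S-\sqrt{sS},
\end{equation*}
which is the claimed identity.

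There is no real obstacle here; the only point to watch is that one must use the \emph{unsigned} area (equivalently, the perpendicular distance), which is justified because $\mb{r}$ lies on the same side of $\mb{s}_0\mb{s}_1$ as $\mb{p}_0$, so $h>h'>0$ and the expression $h-h'$ is genuinely the height of $\triangle\mb{s}_0\mb{s}_1\mb{r}$. The fact that the answer does not depend on the position of $\mb{r}$ along $\tilde{\mb{r}}_0\mb{r}_0$ is a manifestation of the standard invariance of triangle area under translation of a vertex parallel to the opposite side.
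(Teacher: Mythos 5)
Your proof is correct. The paper takes a shorter but less transparent route: it uses affine invariance of the assertion (the claimed identity involves only ratios of areas, which are affine invariants) to normalize to the specific configuration $\mb{s}_1=(0,0)$, $\mb{s}_0=(1,0)$, $\mb{p}_0=(0,1)$, $\tilde{\mb{r}}_0=(0,r)$, after which the identity reduces to $\tfrac{r}{2}=\tfrac{1-\sqrt{(1-r)^2}}{2}$. You instead keep the configuration arbitrary and argue invariantly via Thales' intercept theorem, deriving the area ratio $s/S=(h'/h)^2$ and expressing $\triangle\mb{s}_0\mb{s}_1\mb{r}$ in terms of the height difference $h-h'$. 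Both hinge on the same geometric fact (the inner triangle is similar to the outer one), but yours avoids the appeal to affine normalization and makes the mechanism visible: $\sqrt{sS}$ is precisely the area $\tfrac12|\mb{s}_0\mb{s}_1|\,h'$ of the cut-off strip's lower triangle, and the independence of $\triangle\mb{s}_0\mb{s}_1\mb{r}$ of the position of $\mb{r}$ on the parallel segment is explicit. The paper's normalization buys brevity at the cost of hiding the role of the geometric mean; your version makes that role plain. Both proofs are complete.
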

\begin{proof}
\index[n]{r@$r$, real number}
By an affine transformation, we may assume the angle at $\mb{s}_1$ is a
right angle, $\mb{s}_0\mb{s}_1=1,~\mb{s}_1\mb{p}_0=1,~\mb{s}_1\tilde{\mb{r}}_0=r$ where $r\in(0,1)$. Then
the identity to be proved is
\[
\frac{r}{2} =  \frac{1-\sqrt{(1-r)^2}}{2},
\]
which is immediate. 
\end{proof}

We now prove a lower bound on  area. Let us denote by 
\[
\index[n]{h@$\h$, upper-half plane!$\h_i$, subset of $\hstar$}
\h_i
:= \left\{z \in \hstar \ | \ T_i(z) \ge T^{ext}_{i}(z)\right\},\quad
i=0,1,2.
\] 
Here and below, we consider the indices modulo $6$, but $\h_i$ has
period three: $\h_i = \h_{i+3}$.  
We derive a lower bound
for all convex disks $K$ having $H_K(z)$ as a minimal midpoint
hexagon, where $z$ belongs to the regions $\h_0, \h_1$ or $\h_2$. For
$i=0,1,2$ define
\index[n]{area@$\mathrm{area}$!$\mathrm{area}_i$, area of a triangle}
\[ 
\mathrm{area}_i(z) :=  T_i(z) - \sqrt{T^{ext}_{i}(z) \, T_i(z)}.
\]
\index[n]{I@$\I_i$, indicator function}
Let $\I_{i}:\h\to\{0,1\}$ be the indicator function of the set $\h_i$. 
% \mathbf{I}_{\h_i}(z)

\begin{theorem}
If $K \in\Kbal$ is in circle representation and has $H_K(z)$ as a
critical hexagon, where $z\in\hstar$,
then we have that
\[
\op{area}(K) \ge \frac{3\sqrt{3}}{2} + 2 \ \sum_{i=0}^2\I_i(z)\mathrm{area}_i(z).
\]
\end{theorem}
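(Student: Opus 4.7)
The plan is to decompose $\op{area}(K)$ using the partition of $K$ induced by the inner hexagon $h_K$ and the critical hexagon $H_K(z)$, reduce the six corner caps to three via central symmetry, and bound each cap via the Thales-style identity of Lemma~\ref{lem:geom-thales}. Convexity of $K$ together with $\mb{s}_0^*,\ldots,\mb{s}_5^*\in\partial K$ forces $h_K\subset K$, and $h_K$ is the regular hexagon inscribed in the unit circle with area $6\cdot\sqrt{3}/4=3\sqrt{3}/2$. The edges of $h_K$ cut $K\setminus h_K$ into six caps $K\cap T_j$, and central symmetry identifies $K\cap T_j$ with $K\cap T_{j+3}$, so $\op{area}(K)=\frac{3\sqrt{3}}{2}+2\sum_{i=0}^{2}\op{area}(K\cap T_i)$. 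The theorem therefore reduces to the three claims $\op{area}(K\cap T_i)\ge \I_i(z)\,\mathrm{area}_i(z)$ for $i\in\{0,1,2\}$, which are trivial when $\I_i(z)=0$.

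When $\I_i(z)=1$, i.e.\ $T_i\ge T^{ext}_i$, I would apply Lemma~\ref{lem:geom-thales} inside $T_i=\triangle\mb{s}_i\mb{p}_i\mb{s}_{i+1}$: any point $\mb{r}\in\partial K\cap T_i$ determines a line through $\mb{r}$ parallel to $\mb{s}_i\mb{s}_{i+1}$ which cuts off a triangle of area $A$ near $\mb{p}_i$, and the ``pink triangle'' $\triangle\mb{s}_i\mb{s}_{i+1}\mb{r}$ of Figure~\ref{fig:compactification-result} has area $T_i-\sqrt{T_i\,A}$. Since $\mb{s}_i,\mb{s}_{i+1},\mb{r}\in K$ and $K$ is convex, this pink triangle lies inside $K\cap T_i$, giving $\op{area}(K\cap T_i)\ge T_i-\sqrt{T_i\,A}$ for every such $\mb{r}$. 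The target bound $\mathrm{area}_i(z)=T_i-\sqrt{T_i\,T^{ext}_i}$ will then follow as soon as I exhibit one $\mb{r}\in\partial K\cap T_i$ with cutoff $A\le T^{ext}_i$, i.e.\ reaching at least as far into $T_i$ from the chord $\mb{s}_i\mb{s}_{i+1}$ as the parallel line corresponding to area $T^{ext}_i$.

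The principal obstacle is locating such an $\mb{r}$, and this will rely on the criticality of $H_K(z)$. By Reinhardt's monotonicity and continuity (Lemmas~\ref{lem:no-corner}, \ref{lem:tri-interior}, and~\ref{lem:multi-cont}) there is a continuous one-parameter family of critical hexagons of $K$, all of common circumscribing area $\sqrt{12}$, whose edge midpoints sweep monotonically over $\partial K\cap T_i$. The multi-point conditions \eqref{eqn:multi} link the midpoints across $T_i,T_{i+1},T_{i+2}$ via $\multi_i-\multi_{i+1}+\multi_{i+2}=0$ and $\det(\multi_i,\multi_{i+1})=\sqrt{3}/2$, so the angular variation available to these hexagons is controlled by $\partial K$. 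Pairing this angular-variation constraint with the area identity of Lemma~\ref{lem:triangle-areas} (which yields, for example, $T_0\,T^{ext}_0=(3/4)\rho_0\rho_1^2\rho_2$ and analogues by index rotation) should force the midpoint inside $T_i$ to attain a position with cutoff area at most $T^{ext}_i$: intuitively, $T^{ext}_i$ measures how close $H_K(z)$ is to a degenerate parallelogram, and the farther $H_K(z)$ is from degenerate, the more $\partial K$ is forced to bulge toward $\mb{p}_i$. Once this reach is established, summing the three inequalities with the central-symmetry factor $2$ and adding $3\sqrt{3}/2$ from $h_K$ produces the claimed lower bound. The hard part will be turning the ``reach'' statement into a clean argument; the Thales identity and the area bookkeeping are then immediate.
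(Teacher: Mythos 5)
Your overall framework is sound: reduce to showing $\op{area}(K\cap T_i)\ge \I_i(z)\,\mathrm{area}_i(z)$, use Lemma~\ref{lem:geom-thales} to convert a boundary point $\mb{r}\in\partial K\cap T_i$ at depth-$A$ into the bound $T_i - \sqrt{T_iA}$, and then try to exhibit an $\mb{r}$ with $A\le T^{ext}_i$. But your argument for exhibiting such an $\mb{r}$ has a genuine gap, and the route you sketch would not close it.

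First, the lemmas you cite — Lemma~\ref{lem:no-corner}, Lemma~\ref{lem:tri-interior}, Lemma~\ref{lem:multi-cont} — are all proved for $K=K_{\min}$, the disk with worst greatest packing density, not for an arbitrary $K\in\Kbal$. Reinhardt's monotonicity and continuity arguments exploit the \emph{extremality} of $K_{\min}$ (via shaving arguments); there is no reason a general balanced disk should enjoy the same regularity of its family of critical hexagons. So "there is a continuous one-parameter family of critical hexagons, all of common area, whose midpoints sweep monotonically" is not available to you. Second, and more fundamentally, you acknowledge that the ``reach'' statement itself is unproven, and the area identities of Lemma~\ref{lem:triangle-areas} do not by themselves force the boundary of $K$ outward; they are just bookkeeping for $H_K(z)$, not for $K$. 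The missing idea is a \emph{contradiction with the minimality of $H_K(z)$ as a circumscribing hexagon}: construct the chord $\mb{r}_0\tilde{\mb{r}}_0$ parallel to $\mb{s}_0\mb{s}_1$ cutting off a triangle at $\mb{p}_0$ of area exactly $T^{ext}_0$. If $\partial K$ avoided this chord, then $K$ would sit inside the centrally symmetric hexagon with vertices $\mb{r}_0,\tilde{\mb{r}}_0,\mb{q}_1$ and reflections — which has the \emph{same} area as $H_K(z)$, because the triangle removed at $\mb{p}_0$ is area-matched with the triangle appended at $\mb{q}_1$ — and then sliding the edge $\mb{r}_0\tilde{\mb{r}}_0$ inward until it touches $K$ would produce a strictly smaller circumscribing centrally symmetric hexagon, contradicting minimality of $H_K(z)$. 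This single-step contradiction works for any $K\in\Kbal$ and gives precisely the $\mb{r}$ you need; nothing about the \emph{family} of critical hexagons is required.

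Two smaller remarks. (1) You assert $\op{area}(K)=\tfrac{3\sqrt3}{2}+2\sum_i\op{area}(K\cap T_i)$ as an equality, which is fine provided $T_i$ denotes the triangle $\triangle\mb{s}_i\mb{p}_i\mb{s}_{i+1}$ cut from the critical hexagon (so that $h_K\cup\bigcup_jT_j=H_K(z)\supseteq K$); be careful not to confuse these with the equilateral $T_j$ of Figure~\ref{fig:starofdavid}. The paper sidesteps the decomposition altogether by just noting $K$ contains the convex hull of $\{\pm\mb{s}_0,\pm\mb{s}_1,\pm\mb{s}_2,\pm\mb{r}\}$, but your version also works. (2) You should also dispose of the trivial case $\I_0=\I_1=\I_2=0$ by $K\supseteq h_K$, and observe (via Cauchy--Schwarz on $\sum\rho_i\rho_j\le\sum\rho_i^2$) that at most two of the $\I_i$ can be nonzero, which limits the case analysis.
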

\begin{proof}
As in the Figure \ref{fig:compactification-result} above, let
$\mb{p}_0\mb{p}_1\mb{p}_2\mb{p}_3\mb{p}_4\mb{p}_5$ be the critical hexagon of an undepicted convex
disk $K$. The convex disk $K$ is inscribed in this hexagon and
passes through the points $\mb{s}_j^*=\mb{s}_j$ which are midpoints of its sides.

We may assume that $z\in\hstar$ lies in the set
\[
\h_0 \cup \h_1 \cup \h_2 = \{z \in \hstar \mid \
T_0(z) \ge T^{ext}_0(z) \ \mathrm{ or } \ T_1(z) \ge T^{ext}_1(z) \ \mathrm{ or
} \ T_2(z) \ge T^{ext}_2(z) \}.
\]
Otherwise, the inequality to be shown reduces to $\op{area}(K)\ge \op{area}(h_K) =
3\sqrt{3}/2$, where $h_K$ is the convex hull of the points $\mb{s}_i$.
This area inequality holds because $K\supset h_K$.  (This area inequality
appears in Reinhardt's 1934 article and was used in his proof of the
existence of a minimizer.)

We show that the set $\h_0 \cap \h_1 \cap \h_2$ is
empty.  By the Cauchy-Schwarz inequality and 
the area formulas of Lemma~\ref{lem:triangle-areas},
we have
\[
\sum_{i=1}^3 T_i = 
\frac{\sqrt3}{4} (\rho_0\rho_2 + \rho_0\rho_1 + \rho_1\rho_2)
\le
\frac{\sqrt3}{4} (\rho_0^2+\rho_1^2+\rho_2^2)
=\frac{1}{4} \sum_{i=1}^3 T^{ext}_i < \sum_{i=1}^3 T^{ext}_i.
\]
This shows we cannot have $T_i(z) \ge T^{ext}_i(z)$ for all
$i=0,1,2$. 
So the inequalities defining $\h_i$ must hold individually or
pairwise. The regions $\h_i$ and the other data have a three-fold
symmetry given by shifting indices $i$ modulo $3$.  This gives us two
cases.

\textbf{Case 1}: 
Without loss of generality, by symmetry, assume that $z \in \h_0$ and
$z \notin \h_1 \cup \h_2$. That means $T_0(z) \ge T^{ext}_0(z)$. 
In Figure \ref{fig:compactification-result} above, this gives an
inequality between areas 
\( 
\triangle\mb{s}_0\mb{p}_0\mb{s}_1\ge
\triangle\mb{p}_1\mb{q}_1\mb{p}_2 
\).  
Construct a triangle
$\triangle\mb{r}_0\mb{p}_0\tilde{\mb{r}}_0$ such that 
\(
\op{area}(\triangle\mb{r}_0\mb{p}_0\tilde{\mb{r}}_0)=
\op{area}(\triangle\mb{p}_1\mb{q}_1\mb{p}_2) 
\)
and such that the line segment $\mb{r}_0\tilde{\mb{r}}_0$ is parallel to
$\mb{s}_0\mb{s}_1$. The triangles with equal area are shown in pink.

% TCH 4/24/2024. Proof edited to remove \mb{u}_i.
We claim that there is at least one point $\mb{r}$ on the line segment
$\mb{r}_0\tilde{\mb{r}}_0$ which also lies on the boundary of the
undepicted convex disk $K$. Otherwise, if there were no such point,
then the convex disk $K$ would be contained in the centrally symmetric
hexagon with vertices $\mb{r}_0,\tilde{\mb{r}}_0,\mb{q}_1$, and their
reflections.  This hexagon has the same area as the hexagon of
$\mb{p}_0\mb{p}_1\mb{p}_2\mb{p}_3\mb{p}_4\mb{p}_5$, which has minimal
area.  We reach a contradiction by constructing a centrally symmetric
hexagon containing $K$ of even smaller area: make an inward parallel shift
of the line through the edge $\mb{r}_0\tilde{\mb{r}}_0$ (and its
reflection) until it meets $K$.

The above argument exhibits the point $\mb{r}$ on
$\mb{r}_0\tilde{\mb{r}}_0$ and its reflection $-\mb{r}$ on the
reflected edge respectively, which are also on the boundary of the
convex disk $K$. Since $K$ is convex, it contains the convex hull
$H$\index[n]{H@$H$, Euclidean region!convex hull} of the points
$\mb{s}_0,\mb{s}_1,\mb{s}_2,\mb{r}$ and their reflections.  Thus, we
have
\begin{align*}
\op{area}(K) \ge& \op{area}(H) 
\\
=&  \op{area}(\mb{s}_0\mb{s}_1\mb{s}_2\mb{s}_3\mb{s}_4\mb{s}_5) + 2\triangle \mb{s}_0\mb{r}\mb{s}_1
\\
=& \frac{3\sqrt{3}}{2} + 2\triangle \mb{s}_0\mb{r}\mb{s}_1 
\\
=& \frac{3\sqrt{3}}{2} + 2\left(T_0 - \sqrt{T^{ext}_0 T_0}\right)\qquad 
\text{(using~Lemma~\ref{lem:geom-thales}).}
\end{align*}

\textbf{Case 2}: Assume without loss of generality that
$z\in\h_0\cap\h_1$. We have $T_0(z)\ge T^{ext}_0(z)$ and
$T_1(z)\ge{}T^{ext}_1(z)$. The above argument can also be adapted here
again to exhibit four points (two new points inside the triangles
$\triangle \mb{s}_0\mb{p}_0\mb{s}_1$ and
$\triangle\mb{s}_1\mb{p}_1\mb{s}_2$ respectively, along with their
reflections) also on the boundary of the convex disk $K$, so that we
have
\[
\op{area}(K) \ge \frac{3\sqrt{3}}{2} + 
2\left(T_0 - \sqrt{T^{ext}_0\, T_0}\right) +  2\left(T_1 - \sqrt{T^{ext}_1\, T_1}\right).
\]
Accounting for all cases, we have
\[ 
\op{area}(K) \ge \frac{3\sqrt{3}}{2} + 2 \ \sum_{z \in \h_i} \I_i(z)\mathrm{area}_i(z).
\]
\end{proof}

The regions $\h_0,\h_1,\h_2$ are shown in 
Figure \ref{fig:union-contains-boundary}.
The boundaries of $\h_0$ and $\h_2$ meet along the
imaginary axis at $(x,y)=(0,\sqrt3)$.
\begin{figure}[htbp]
\centering \includegraphics[scale=0.5]{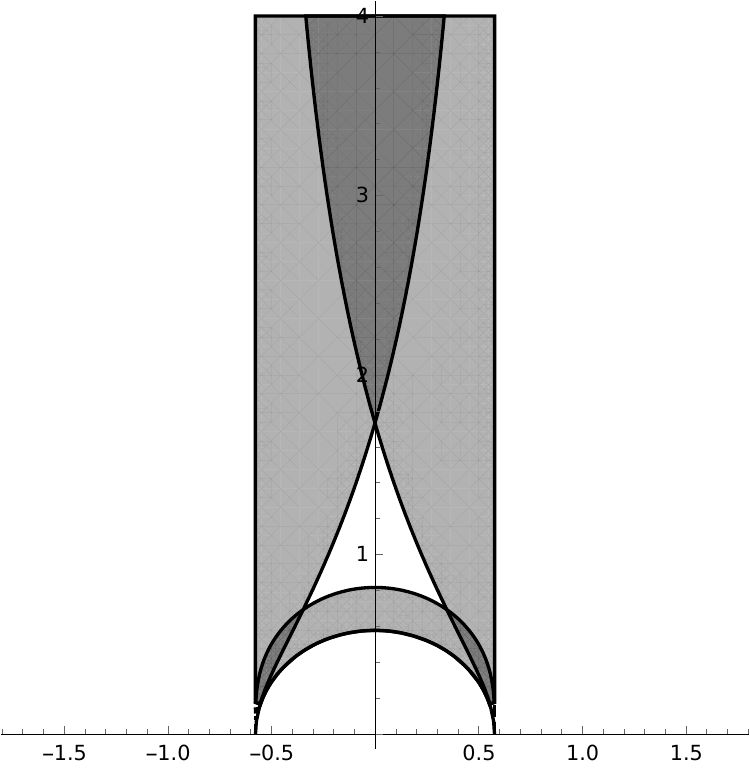} 
\caption{Covered
    boundary of the star domain.  The shaded region along the right edge is
    $\h_0$.  The shaded region along the left edge is $\h_2$, and the shaded
    region along the lower bounding circle is $\h_1$.}
\label{fig:union-contains-boundary}
\end{figure}

Recall that $\op{area}(H_K(z))=\sqrt{12}$.  
Let us write
\index[n]{zd@$\delta$, density!$\delta(z)$ density bound}
\index[n]{zd@$\delta$, density!$\delta_{oct}$, of smoothed octagon}
\[
\delta(z)
:= \frac{1}{\sqrt{12}}\left(\frac{3\sqrt{3}}{2} +
2 \ \sum_{z \in \h_i} \I_i(z)\mathrm{area}_i(z) \right) = \frac{3}{4}
+ \sum_{z \in \h_i} \frac{\I_i(z)\mathrm{area}_i(z)}{\sqrt{3}},
\] 
for the lower bound of the packing density for convex disks $K \in
\Kbal$ having $H_K(z)$ as a critical hexagon.  Also, let
$\delta_{oct}$ denote the packing density of the smoothed octagon.  If
$z \in \hstar$ is such that $\delta(z) > \delta_{oct}$, then by the
above theorem, the density of every convex disk $K$ having $H_K(z)$ as
a critical hexagon is greater than $\delta_{oct}$ and is not a global
minimizer.  Such $K$ can be dropped from consideration. The next
result shows that all but a compact subset of $\hstar$ can be excluded
in this way.  In the next lemma, we write $\I_2,\mathrm{area}_i$, and
$\delta$ as functions of $(x,y)$ instead of $z=x+iy$.

\begin{lemma}\label{lem:compactification-lemma}\leavevmode
\begin{enumerate}
\item 
For all $y\ge 1$, the function $\mathrm{area}_0(x,y)$ is
monotonically increasing in $x$ on $0<x<1/\sqrt{3}$.
\item 
For all $y\ge \sqrt3$, the function
$\mathrm{area}_0(x,y)+\mathrm{area}_2(x,y)$ is monotonically
increasing in $x$ on $0<x<1/\sqrt{3}$.
\item 
For all fixed $y\ge\sqrt3$, the function
$\I_0(x,y)\mathrm{area}_0(x,y)+\I_2(x,y)\mathrm{area}_2(x,y)$ is
minimized at $x=0$ on the domain $x\in(-1/\sqrt{3},1/\sqrt{3})$.
\item For all $x\in(-1/\sqrt3,1/\sqrt3)$ and all $y\ge 4.5$,
we have
\[
\delta(x,y)>\delta_{oct}.
\]
\end{enumerate}
\end{lemma}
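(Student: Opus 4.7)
My approach is to substitute the explicit formulas
\(\rho_0(z)=(1+\sqrt{3}x)/(\sqrt{3}y)\), \(\rho_1(z)=(1-\sqrt{3}x)/(\sqrt{3}y)\),
\(\rho_2(z)=(3(x^2+y^2)-1)/(2\sqrt{3}y)\) (from Corollary~\ref{cor:X-props} applied to \(X=\Phi(z)\)) into the area formulas of Lemma~\ref{lem:triangle-areas}, so that
\[
\mathrm{area}_0(z)=\tfrac{\sqrt{3}}{4}\sqrt{\rho_0\rho_2}\,\bigl(\sqrt{\rho_0\rho_2}-2\rho_1\bigr),
\]
with cyclically shifted analogues for \(\mathrm{area}_1,\mathrm{area}_2\). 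The reflection \(x\mapsto -x\) swaps \(\rho_0\) and \(\rho_1\) while fixing \(\rho_2\), hence swaps \(\mathrm{area}_0\leftrightarrow\mathrm{area}_2\) and fixes \(\mathrm{area}_1\); this symmetry will be used throughout. Parts (1)--(2) are derivative checks, (3) combines (2) with the symmetry and a trivial observation about the indicators, and (4) reduces to a one-variable estimate at \(x=0\).

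For (1), differentiating and using \(\partial_x\rho_1=-1/y\) yields
\[
\partial_x\mathrm{area}_0 \;=\; \frac{\sqrt{3}}{4\sqrt{\rho_0\rho_2}}
\Bigl[\,\partial_x(\rho_0\rho_2)\,\bigl(\sqrt{\rho_0\rho_2}-\rho_1\bigr)+\tfrac{2\rho_0\rho_2}{y}\,\Bigr].
\]
On \(\{0<x<1/\sqrt{3},\,y\ge 1\}\), the factor \(\partial_x(\rho_0\rho_2)=(\rho_2+\sqrt{3}x\rho_0)/y\) is strictly positive because every term is positive on the star domain. A short calculation also gives
\[
\rho_0\rho_2-\rho_1^2 \;=\; \frac{\,y^2-1 + \sqrt{3}x(1+x^2+y^2)-x^2\,}{2y^2},
\]
which is non-negative since \(y^2-1\ge 0\) and, because \(x<1/\sqrt{3}<1\), we have \(\sqrt{3}(1+x^2+y^2)>x\). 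Thus \(\sqrt{\rho_0\rho_2}\ge\rho_1\), both bracketed summands are non-negative with the second strictly positive, and (1) follows.

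For (2), the symmetry \(\rho_0\leftrightarrow\rho_1\) together with \(\rho_0+\rho_1=2/(\sqrt{3}y)\) lets me rewrite
\[
F(x,y) \;:=\; \mathrm{area}_0+\mathrm{area}_2 \;=\; \frac{\rho_2}{2y}-\frac{\sqrt{3}}{2}\sqrt{\rho_0\rho_1\rho_2}\bigl(\sqrt{\rho_0}+\sqrt{\rho_1}\bigr),
\]
which is manifestly even in \(x\). Differentiating and clearing common positive factors reduces \(\partial_x F>0\) on \(\{0<x<1/\sqrt{3},\,y\ge\sqrt{3}\}\) to a single polynomial inequality in \((x,y)\); the threshold \(y\ge\sqrt{3}\) is exactly what prevents the mixed square-root term from reversing the sign. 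I would carry out this check by factoring and bounding the negative contribution by the positive one. Claim (3) then drops out: at \(x=0\) we have \(\rho_0\rho_2-4\rho_1^2=(y^2-3)/(2y^2)\ge0\) for \(y\ge\sqrt{3}\), so \(\I_0(0,y)=\I_2(0,y)=1\) and \(\I_0\mathrm{area}_0+\I_2\mathrm{area}_2\) equals \(F(0,y)\) there. For \(x\ne 0\), \(\I_i\mathrm{area}_i\ge\mathrm{area}_i\) trivially (as \(\max(a,0)\ge a\)), so \(\I_0\mathrm{area}_0+\I_2\mathrm{area}_2\ge F(x,y)\ge F(0,y)\) by (2) and evenness.

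For (4), (3) reduces the claim to \(\delta(0,y)>\delta_{oct}\) for all \(y\ge 4.5\). On the imaginary axis \(\rho_0=\rho_1\), and a short check shows \(\I_1(0,y)=0\) for \(y\ge\sqrt{3}\), so
\[
\delta(0,y) \;=\; \tfrac{3}{4}+\tfrac{2}{\sqrt{3}}\,\mathrm{area}_0(0,y),\qquad
\mathrm{area}_0(0,y)=\tfrac{\sqrt{3}}{4}\sqrt{p(y)}\bigl(\sqrt{p(y)}-q(y)\bigr),
\]
with \(p(y)=(3y^2-1)/(6y^2)\) and \(q(y)=2/(\sqrt{3}y)\). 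Since \(p(y)\) is increasing to \(1/2\) and \(q(y)\) is decreasing to \(0\) for \(y\ge\sqrt{3}\), both \(\sqrt{p(y)}\) and \(\sqrt{p(y)}-q(y)\) are positive and increasing, so \(y\mapsto\mathrm{area}_0(0,y)\) is monotonically increasing on \([\sqrt{3},\infty)\). It then suffices to evaluate at \(y=4.5\), where numerical substitution gives \(\delta(0,4.5)\approx 0.906>0.9025\approx\delta_{oct}\). The main obstacle in the plan is the polynomial positivity step in (2); parts (1), (3), and (4) are one-variable monotonicity arguments or direct symmetry considerations, while (4) additionally needs the explicit quantitative comparison with the smoothed octagon density.
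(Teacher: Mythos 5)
Your parts (1), (3), and (4) are correct, with (3) being cleaner than the paper's argument and (1) taking a different but valid route. But (2) is a genuine gap: you set up the reduction and correctly identify that the threshold $y\ge\sqrt{3}$ is what makes the inequality work, then write ``I would carry out this check by factoring and bounding the negative contribution by the positive one.'' That is a plan, not a proof. This is the most delicate part of the lemma, and the paper does not leave it to a one-liner: after isolating the positive contribution $\partial_x\bigl[(\rho_0+\rho_1)\rho_2\bigr]=2x/y^2$, the paper reduces to showing
\[
\frac{(\rho_0\rho_1^2\rho_2)_x}{\sqrt{\rho_1}}+\frac{(\rho_0^2\rho_1\rho_2)_x}{\sqrt{\rho_0}}<0
\]
and then argues by a case split: the two polynomial numerators are each negative when $x\ge 1/4$, while on $0\le x\le 1/4$ each term is decreasing in $x$ with sum zero at $x=0$. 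Your claim that the verification ``reduces to a single polynomial inequality'' is also not obviously right, since after differentiation you have two algebraically independent radicals $\sqrt{\rho_0}$ and $\sqrt{\rho_1}$; the paper sidesteps this by treating the two terms separately rather than clearing all radicals at once. Until you actually execute the sign analysis in (2), the lemma is unproved, and (3) and (4) --- which explicitly depend on (2) --- inherit the gap.

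Where you differ from the paper in ways that do hold up: in (1) you compute the full derivative of $\mathrm{area}_0$ in the factored form $\tfrac{\sqrt3}{4}\sqrt{\rho_0\rho_2}(\sqrt{\rho_0\rho_2}-2\rho_1)$ and reduce positivity to the single auxiliary inequality $\rho_0\rho_2\ge\rho_1^2$, which you verify directly; the paper instead splits $\partial_x\mathrm{area}_0$ into three summands and checks each sign, in particular $-(\rho_0\rho_1^2\rho_2)_x>0$. Both work. In (3) your observation that $\I_i\,\mathrm{area}_i=\max(\mathrm{area}_i,0)\ge\mathrm{area}_i$, combined with the fact that $\I_0(0,y)=\I_2(0,y)=1$ for $y\ge\sqrt3$ (via $\rho_0\rho_2-4\rho_1^2=(y^2-3)/(2y^2)\ge0$), is slicker and more self-contained than the paper's piecewise-monotonicity-plus-continuity argument on $\h_0\setminus\h_2$ and $\h_0\cap\h_2$.
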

\begin{proof}

\index[n]{0@$-_x$, partial derivative}
The functions $\rho_j$ are positive on $\hstar$.
Write $(\cdot)_x$ for the partial derivative with respect to $x$.
Then, $\rho_{0,x} >0$ and $\rho_{1,x}<0$ on $\hstar$.
The sign of $\rho_{2,x}=\sqrt3x/y$ is the same as the sign of $x$.

(1) 
%By definition, $\mathrm{area}_0=0$ outside $\h_0$.  
We compute the partial derivative with respect to $x$ of
\[
\mathrm{area}_0(x,y) = \frac{\sqrt3}{4}
\left(\rho_0\rho_2 - 2\sqrt{\rho_0\rho_1^2\rho_2}\right).
%\frac{(\sqrt{3}\rho_0)(\sqrt{3}\rho_2) 
%- 2\sqrt{(\sqrt{3}\rho_0) (\sqrt{3}\rho_1)^2(\sqrt{3}\rho_2)}}{4\sqrt3}.
\]
The partial derivative is positive on the given domain $y\ge 1$ and
$0<x<1/\sqrt{3}$ because $\rho_{0,x}\rho_{2} >0$, $\rho_0\rho_{2,x}
>0$, and $-(\rho_0\rho_1^2\rho_2)_x>0$, the final inequality being a
polynomial inequality in $x$ and $y$, which is easily checked.

%DONE[TCH 4/26/2024: Overly explicit
%formulas have been deleted.] FIXED?? Something is wrong with the
%formulas.  This looks more like $8\sqrt{3}y^2$ times the partial.
%
%Why are the factors $24 y^2$ and $12\sqrt{2}y^2$ different in parts
%(1) and (2)?  For all $y\ge 1$ and all $0<x<1/\sqrt{3}$, the terms in
%square parentheses above are positive, which means that the entire
%right hand side is positive.

(2) 
We consider the domain $\hstar\cap\{x>0,\  y\ge\sqrt3\}$.
We compute the partial derivative
with respect to $x$ of 
\[
\mathrm{area}_0(x,y) + \mathrm{area}_2(x,y) = 
\frac{\sqrt3}{4}
\left(
(\rho_0+\rho_1)\rho_2 - 
2\sqrt{\rho_0\rho_1^2\rho_2}-2\sqrt{\rho_0^2\rho_1\rho_2}
\right)
\] 
and show that this partial derivative is positive.
The first term $(\rho_0+\rho_1)\rho_2$ in the numerator 
has positive partial derivative $2x/y^2>0$
on the given domain.  It remains to show that
\[
0 > \frac{(\rho_0\rho_1^2\rho_2)_x}{\sqrt{\rho_1}}+
\frac{(\rho_0^2\rho_1\rho_2)_x}{\sqrt{\rho_0}}.
\]
The two polynomial numerators on the right are separately negative
when $x\ge1/4$.  When $0\le x\le 1/4$, the two terms on the right are
separately decreasing functions in $x$, and their sum is $0$ at $x=0$.
(This too is a polynomial verification.)  These routine checks prove
the result.

(3) 
Along the boundary of $\h_i$, where $\I_i$ jumps,
we have $\mathrm{area}_i(x,y)=0$.  Thus, 
\begin{equation}\label{eqn:area0-area2}
\sum_i \I_i (x,y) \mathrm{area}_i(x,y)
\end{equation}
is continuous on $\hstar$.  
Since it is monotonic increasing in
$x\in(0,1/\sqrt3)$ on each of $\h_0\setminus \h_2$ and $\h_0\cap\h_2$,
it is also monotonic increasing in $x$ on $\h_0\cap \{x>0,\ y\ge\sqrt3\}$.

Note that $\mathrm{area}_2(x,y)= \mathrm{area}_0(-x,y)$,
because $\rho_0(-x,y)=\rho_1(x,y)$.
%Thus, the function \eqref{eqn:area0-area2} is even in $x$.  
Since the function \eqref{eqn:area0-area2} is even, it must
be monotonic decreasing in $x$ on $\h_0\cap\{x<0,\ y\ge\sqrt3\}$.
Thus, the critical point at $x=0$ is a minimum.  

(4) Along the imaginary axis, for $y\ge\sqrt{3}$, we have
\[
\mathrm{area}_0(0,y)=\mathrm{area}_2(0,y) = 
\frac{\sqrt3}{4}
\left(
\sqrt{\rho_0\rho_2} \left({\sqrt{\rho_0\rho_2} - 2\rho_1}\right).
\right)
\]
Both $\rho_0\rho_2 =(y^2 - 1/3)/(2y^2)$ and
$\sqrt{\rho_0\rho_2} - 2\rho_1$ are increasing functions of $y$.
Hence $\mathrm{area}_0(0,y)$ is increasing.
If $x\in (-1/\sqrt3,1/\sqrt3)$ and $y\ge 4.5$,
\[
\delta(x,y)\ge \delta(0,y)\ge
\delta(0,4.5) = 0.9059\ldots > \delta_{oct}.
\]
\end{proof}

\begin{proof}[Proof of Theorem~\ref{th:compactification}]
Everything in this subsection up until this point has been
equivariant with respect to the action of the dihedral group on
$\hstar$.   The region $\h^{\star\star}$ described in the statement of the
theorem is likewise stable under the action of the dihedral group.
Thus, it is enough to prove the theorem for all points $z$ in a
fundamental domain for the action of the dihedral group on $\hstar$.
One such fundamental domain is given in subsection~\ref{sec:dihedral}
as
\[
\{z = x + i y \in \hstar \mid x\ge 0,\ (x-1/\sqrt3)^2 + y^2 = 4/3\}.
\]
We work on the slightly larger subset of $\hstar$ defined by the
inequalities $x\ge0$ and $y\ge1$.  The only horoball (among the three)
meeting this set is $B(i\infty)$, and the only half-plane
meeting this set is $\Pi^+_0$.

If any point of the trajectory $(g,X)$ passes through a point $X =
\Phi(z)$ with $z = x + i y$, where $y>4.5$, then the previous lemma shows
that $\delta(x,y) > \delta_{oct}$.  This shows that we may
exclude all trajectories that enter the horoball $B(i\infty)$.  

Next we show that we can exclude all trajectories that meet the
half-plane $\Pi^+_0$ defined by $y > 15(1/\sqrt3-x)$.  
We show that
\index[n]{zd@$\delta$, density!$\delta_0(z)$, density bound}
\[
\delta(z)\ge \delta_0(z):=
\frac{3}{4} + \frac{\mathrm{area}_0(z)}{\sqrt3} > \delta_{oct},
\]
for all $z\in\hstar\cap \Pi^+_0\cap \{1\le y\le 4.5\}\subset \h_0$.
Proving this inequality will complete the proof of the theorem, because
the regions $\Pi^+_1$ and $\Pi^+_2$ do not meet the fundamental domain.

By the first part of the lemma, we have monotonicity of $\mathrm{area}_0$ in
$x$ along each horizontal slice $y=y_0$ such that $y\ge 1$.
Thus, it is enough to prove the inequality along the graph of the affine map
$y(x)=15(1/\sqrt3-x)$.
Furthermore, we may assume that $y\le 4.5$.
The preimage of $[1,4.5]$ under the map $y$ is contained in $[0.277,0.511]$.
On this domain, we have $\delta_0(x,y(x))>\delta_{oct}$.
%DONE[TCH 4/26/2024. This is a single variable calculation. There is
%nothing to worry about] (Check this say with interval arith or
%something. The graph is clear.)
\end{proof}

\newpage
\chapter{Hamiltonian and Maximum Principle}

\section{Existence of Optimal Control}\label{sec:existence-opt-control}
The existence of optimal solutions to problems such as the Reinhardt
control problem (Problem \ref{pbm:reinhardt-optimal-control}) is based on
Filippov's theorem which gives conditions under which the
corresponding \emph{attainable set} of the control system in question
is compact. In informal terms, the attainable set corresponds to all
the points in the manifold which are reachable provided one is only
allowed to move according to the control. The compactness of attainable
sets implies the existence of optimal control. The optimal control
function is a \emph{measurable function}.

\index{measurable}
\index{Filippov's theorem}
\index[n]{q@$q$, point on manifold}
\index[n]{zv@$\phi$, cost integrand}
\index[n]{m@$m$, integer dimension}
\index[n]{f@$f$, vector field}
\index{attainable set}
\begin{theorem}[Filippov]
    On a smooth manifold $M$, let $q' = f(q,u)$ be an optimal control
    system with an associated cost objective $\int_0^t \phi(q,u)
    dt \rightarrow \text{\emph{min}}$. Here $u \in U \subset \R^m$
    which is a compact set (the control set).  Assume that the velocity set
    $f(q,U) = \{ f(q,u)\mid u\in U\}$ is convex for each $q \in M$ and
    that the support of $f$ is a compact subset of $M\times U$.  Then,
    the \emph{attainable sets} are compact and an optimal control exists.
\end{theorem}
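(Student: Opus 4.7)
The plan is to prove compactness of the attainable set $\mathcal{A}(T,q_0) := \{q(T) : q(\cdot)\text{ admissible},~q(0)=q_0\}$ for each $T>0$, and then extract an optimal control by augmenting the state with a cost coordinate. First I would fix $q_0\in M$ and consider a sequence $q_n:[0,T]\to M$ of admissible trajectories driven by measurable controls $u_n:[0,T]\to U$. Because the support of $f$ is a compact subset of $M\times U$, the derivatives $\dot q_n(t)=f(q_n(t),u_n(t))$ are uniformly bounded in a local chart, so the trajectories $q_n$ are uniformly Lipschitz and confined to a compact subset of $M$. The Ascoli--Arzel\`a theorem then furnishes a subsequence (not relabeled) converging uniformly to some Lipschitz $q^*:[0,T]\to M$.

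The main obstacle, and the place where the convexity hypothesis enters, is to show that this limit $q^*$ is itself admissible, i.e.\ that there is a measurable $u^*:[0,T]\to U$ with $\dot q^*(s)=f(q^*(s),u^*(s))$ a.e. Uniform Lipschitz bounds give weak-$*$ compactness of $\dot q_n$ in $L^\infty([0,T])$, so after a further subsequence $\dot q_n \rightharpoonup v$ weakly, and $v=\dot q^*$ by distributional identification. A Mazur-type argument converts this weak convergence into strong convergence of convex combinations, and the convexity of $f(q,U)$ combined with the continuity of $f$ and uniform convergence $q_n\to q^*$ implies that $\dot q^*(s)$ lies in the closed convex set $f(q^*(s),U)$ for a.e.\ $s$. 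An appeal to the Filippov measurable selection lemma then yields $u^*$ measurable with $\dot q^*=f(q^*,u^*)$. Hence $q^*(T)\in\mathcal{A}(T,q_0)$, proving sequential compactness, and compactness follows since $\mathcal{A}(T,q_0)$ is contained in a compact subset of $M$.

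Existence of an optimal control is then deduced by a standard augmentation. Enlarge the state to $\tilde q=(q,q^0)\in M\times\R$ with $\dot q^0=\phi(q,u)$ and $q^0(0)=0$, so that the total cost of a trajectory equals $q^0(T)$ at the (free or fixed) terminal time $T$. To reuse the compactness argument one needs convexity of the augmented velocity set; the customary device is to replace it by its epigraph in the cost coordinate, i.e.\ $\{(f(q,u),r) : u\in U,~r\geq\phi(q,u)\}$, whose convexity reduces to convexity of $f(q,U)$ together with convexity of $\phi(q,\cdot)$ along fibers (or can be taken as an additional hypothesis, as in many textbook versions of Filippov). Compactness of the attainable set in the augmented system, plus continuity of the projection $\tilde q(T)\mapsto q^0(T)$, then forces the infimum of the cost to be attained, yielding an optimal measurable control. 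The technical heart of the argument is the closure step via convexity and measurable selection; the existence of an optimum is a formal consequence.
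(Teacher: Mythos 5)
The paper only cites Filippov's theorem as a standard background result from the optimal control literature and gives no proof, so there is no internal argument to compare against. Your sketch is the classical proof, and its overall structure is sound: uniform velocity bounds from the compact support of $f$, Arzel\`a--Ascoli to extract a uniformly convergent subsequence of trajectories, weak-$*$ compactness of the derivatives in $L^\infty$, Mazur's lemma together with fiberwise convexity of $f(q,U)$ to close the differential inclusion, and a Filippov measurable selection lemma to recover a control realizing the limit velocity. This gives compactness of the fixed-time attainable set, and the state augmentation reduces existence to it.

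Two points worth noting. First, the theorem as stated in the paper is for a free-terminal-time problem (the upper integration limit $t$ is a variable), so compactness of each fixed-$T$ attainable set is not by itself sufficient: one also needs a uniform a priori bound on admissible terminal times and a diagonal/limiting argument along a minimizing sequence $T_n\to T^*$. The paper supplies the required bound separately (Lemma~\ref{lem:tf-pi3}), but your proof as written stops at fixed $T$. Second, you are right that the existence step needs convexity of the cost-extended velocity set, not merely of $f(q,U)$, and that the theorem statement elides this. In the paper's application the running cost $\phi = -\tfrac{3}{2}\langle J,X\rangle$ is independent of the control $u$, so the extended velocity set is $f(q,U)\times\{\phi(q)\}$ and the extra convexity hypothesis is automatic; but as a stand-alone theorem the hypotheses are slightly incomplete, and you were correct to flag that.
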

%% DONE[TCH 5/4/2024. Reworded conclusion. See sentence before theorem
%% statement.] TCH 4/25/2024: This is not the Filippov lemma I was
%% expecting.  The Filippov lemma I expected was one asserting the
%% existence of lifted trajectories in the cotangent bundle satisfying
%% PMP. Also, the conclusion below "optimal control exists" does not
%% match the theorem statement.

For the Reinhardt control problem, all assumptions of the Filippov
theorem are shown to hold as follows.
\begin{itemize}
\item 
The control set is the simplex $U_T$ defined in
Definition \ref{def:triang-control-set}, which is obviously compact.
\item 
The velocity sets are convex; in fact, each velocity set is the convex
hull of the velocities at the three vertices the control set, as shown
in Lemma~\ref{lem:control-convex} for the simplex $U_T$.
    % DONE[TCH 4/26/2024.  Save this issue for another paper.]
    % Does this same argument also extend to control sets $U_C$ and
    % $U_I$? Also, do we need to extend it?
\item 
%Is this not circular? Are we not assuming an ``optimal'' trajectory
%exists before proving its existence? DONE[TCH 4/25/2024]: I have
%added some context that I hope is sufficient.
Recall that Reinhardt has proved the existence of a optimal solution
to the Reinhardt problem.  We claim that the velocity sets can be
assumed to be compactly supported.  By Definition~\ref{def:X-defn} and
Lemma~\ref{lem:X-dynamics}, the relevant vectors fields are 
\begin{equation}\label{eqn:vf}
[Z_u,\Phi(z)]/\bracks{Z_u}{\Phi(z)}
\quad\text{and}\quad\ g\Phi(z),\quad z\in\hstar.
\end{equation}
By Theorem~\ref{th:compactification}, we may assume that $z$ lies in
the compact set $\truncstar$.  Then $\Phi(z)$ is also confined to a
compact set, as well as the first vector field in \eqref{eqn:vf}.  The
second vector field $g\Phi(z)$ lies in a compact set if $g\in\SL(\R)$
can be shown to be bounded. We have an upper bound $t_f\le\pi/3$ on
the terminal time by Lemma~\ref{lem:tf-pi3}.  Gronwall's inequality
applied to the ODE $g'=gX$ gives a bound on $g$
(Appendix~\ref{sec:gronwall}).  This shows that the vector fields are
confined to a compact subset.

\index{Gronwall inequality}

To make the vector field smooth,
we can multiply the vector fields in the star domain in
the Reinhardt control problem \ref{pbm:plane-optimal-control-problem}
by a smooth cutoff function of compact support $\psi : \hstar \to \R$
with $\psi|_{\truncstar} = 1$, to obtain smooth vector fields of compact
support.  
\end{itemize}

We can apply Filippov's theorem to these smoothed vector fields of
compact support.  Thus, the optimal control exists for the optimal
control system.  The proof used the following lemma.

\index[n]{zy@$\psi$, local auxiliary function or integral!$\psi:\hstar\to\R$, cutoff function}

\begin{lemma}\label{lem:control-convex}
Let $f(x,y;u)=(f_1(x,y;u),f_2(x,y;u))$ be the control-dependent vector
field on $\hstar$ defined by the half-plane control problem
\ref{pbm:plane-optimal-control-problem}.  For each $z=x+iy\in\hstar$,
the image $f(x,y;U_T)$ of $U_T$ in $\R^2$ is a convex set.  Moreover,
the affine
hull of the image $f(x,y;U_T)$ is all of $\R^2$.
\end{lemma}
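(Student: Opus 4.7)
\noindent The plan is to exploit the characterization \eqref{eqn:tangent-map-f1f2} of the vector field, namely
\[
T\Phi(f_1,f_2) \;=\; \left[\frac{Z_u}{\bracks{Z_u}{X}}\right] \;\in\; \sl(\R)/\RX,
\]
where $X=\Phi(x+iy)$ and $T\Phi\colon T_z\h\to T_X\OX\cong\sl(\R)/\RX$ is a linear isomorphism by Lemmas~\ref{lem:half-plane-lie-algebra-iso} and~\ref{lem:tangent-maps}. Because linear isomorphisms preserve convexity and affine hulls, it suffices to show that the image of $U_T$ under the map $u\mapsto[Z_u/\bracks{Z_u}{X}]$ in $\sl(\R)/\RX$ is convex with affine hull equal to the full quotient.

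The central step is to linearize the projective dependence on $u$ by a reparameterization of the simplex. Let $e_0,e_1,e_2$ denote the vertices of $U_T$, and set $t_i := -\bracks{Z_{e_i}}{X}$, which are strictly positive on $\hstar$ by the star inequalities (Lemma~\ref{lem:sl2-star-condition}). The identity
\[
\frac{Z_u}{\bracks{Z_u}{X}} \;=\; -\sum_{i=0}^{2} w_i\,\frac{Z_{e_i}}{t_i}, \qquad w_i \;:=\; \frac{u_i t_i}{\sum_j u_j t_j},
\]
expresses $Z_u/\bracks{Z_u}{X}$ as an honest convex combination, since $w_i\ge 0$ and $\sum_i w_i=1$. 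The map $u\mapsto w$ is a bijection $U_T\to U_T$, with inverse $u_i\propto w_i/t_i$, so as $u$ runs over $U_T$ the right-hand side runs over the entire convex hull of $\{-Z_{e_0}/t_0,-Z_{e_1}/t_1,-Z_{e_2}/t_2\}\subset\sl(\R)$. Pushing through the linear quotient $\sl(\R)\to\sl(\R)/\RX$ and the linear isomorphism $(T\Phi)^{-1}$, I conclude that $f(x,y;U_T)$ is the triangle in $\R^2$ with vertices $f(x,y;e_i)$, and in particular it is convex.

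For the affine hull claim, I need to show the triangle is non-degenerate, i.e., the three cosets $[-Z_{e_i}/t_i]$ are non-collinear in $\sl(\R)/\RX$. Direct inspection of \eqref{eqn:trisystem-Z0} shows that $Z_{e_0},Z_{e_1},Z_{e_2}$ are linearly independent in the $3$-dimensional space $\sl(\R)$, so they form a basis and I can write $X=\sum_i c_i Z_{e_i}$. A short linear-algebra calculation reduces non-collinearity of $[-Z_{e_i}/t_i]$ in $\sl(\R)/\RX$ to the single condition $\sum_i c_i t_i\neq 0$. Using the definition of $t_i$ and Lemma~\ref{lem:sl2-lemmas}(1),
\[
\sum_i c_i t_i \;=\; -\sum_i c_i\,\bracks{Z_{e_i}}{X} \;=\; -\bracks{X}{X} \;=\; 2\det(X) \;=\; 2 \;\neq\; 0,
\]
since $X\in\mathcal{O}_J$ so $\det X=1$. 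Hence the affine hull is all of $\R^2$.

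The main conceptual obstacle is that $u\mapsto Z_u/\bracks{Z_u}{X}$ is a genuine projective (not affine) map, so a naive image would only be some projectively distorted region rather than a convex hull. The reparameterization $w_i=u_it_i/(\sum_j u_j t_j)$ is the key device: it trades the original simplex of controls for a new simplex whose weights act affinely on the rescaled vertices $-Z_{e_i}/t_i$. Once this trick is in place, the rest is routine linear algebra, with the only pleasant surprise being that the obstruction $\sum_i c_i t_i$ is automatically $-\bracks{X}{X}=2$ on the adjoint orbit.
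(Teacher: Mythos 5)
Your proof is correct, and it takes a cleaner, more conceptual route than the paper's own argument, though the two are fundamentally the same reparameterization viewed in different coordinates. The paper works explicitly in $\R^2$: it constructs the affine functionals $L$ that cut out the triangle spanned by the $f(x,y;\ep_i)$, computes the composite $L\circ f$ explicitly (for example $L(f(x,y;u))=u_2\rho_0/(u_0\rho_2+u_1\rho_1+u_2\rho_0)$), and reads off the sign constraints on the image from the positivity of the numerator and denominator; non-degeneracy is implicit in the fact that $L$ exists and nonzero, and is verified by an explicit calculation that $f(x,y,\ep_3)-f(x,y,\ep_2)\ne(0,0)$. You instead pull the problem back to the Lie algebra via the linear isomorphism $T\Phi$, where the linearity $Z_u=\sum_i u_i Z_{\ep_i}$ makes the projective structure transparent; your reparameterization $w_i\propto u_it_i$ with $t_i=-\bracks{Z_{\ep_i}}{X}$ is exactly the paper's barycentric coordinates $L_k\circ f$ in disguise (since $t_i=\frac{2}{\sqrt3}\rho_{\sigma(i)}$ up to a permutation), but you never have to compute anything in $(x,y)$ coordinates. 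Your non-degeneracy argument is also slicker: reducing non-collinearity in $\sl(\R)/\RX$ to $\sum_i c_it_i\ne0$ and then evaluating $\sum_ic_it_i=-\bracks{X}{X}=2\det(X)=2$ is a pleasant identity that the paper does not exhibit. What the paper's approach buys is complete explicitness (useful for the downstream Mathematica calculations); what your approach buys is a coordinate-free proof that makes the structural reason for convexity — a rational map whose numerator and denominator are simultaneously affine in $u$ — immediately visible.
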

% TCH 4/29/2024: Moreover sentence added.

\begin{proof}
Let $\rho_j$ be the positive functions of $z\in\hstar$ from
Lemma~\ref{lem:triangle-areas}.  Fix $z=x+i y\in \hstar$ and let
$\mb{e}_1,\mb{e}_2,\mb{e}_3$ be the standard basis of $\R^3$, giving
the vertices of $U_T$.  We prove that the image $f(x,y;U_T)$ is in
fact the convex hull of
\index[n]{e@$\mb{e}_i$, standard basis!extreme point of the control set}
%% Strange macro name \ep. Why not \ups? DONE[TCH 4/2/2023 ]: now \bf{e}_i.
\[
\{ f(x,y;\mb{e}_j)\mid j=1,2,3\}.
\]
By explicit calculation, the basis vectors map to distinct points in
the velocity set.  For example, \index[n]{L@$L$, affine function}
\[
\mcite{MCA:4087162}
f(x,y,\mb{e}_3)-f(x,y,\mb{e}_2) = \left(0,\frac{2}{\sqrt{3}\rho_0\rho_1}\right)
\ne (0,0),
\]
which is finite and nonzero by the star inequalities.  Let
$L:\R^2\to \R$ be the nonzero affine function that vanishes at
$f(x,y;\ep_i)$ and $f(x,y;\ep_j)\in\R^2$ and takes value $1$ at
$f(x,y;\ep_k)$, where $(i,j,k)$ is any chosen permutation of
$(1,2,3)$.  Computing, we find that $L(f(x,y;u))$ is the ratio of two
affine functions of $U_T$ (depending on the parameters $x,y$), where
the denominator is positive on $U_T$.  For example, if
$u=(u_0,u_1,u_2)$ and $(i,j,k)=(1,2,3)$, we compute that
\[
\mcite{MCA:8165998}
L(f(x,y;u)) =
% u2 rho0z/(u0 rho2z + u1 rho1z + u2 rho0z Mathematica
\frac{u_2 \rho_0(z)}{u_0\rho_2(z) + u_1\rho_1(z) + u_2\rho_0(z)}.
\]
We observe that the numerator of $L$
vanishes along the segment $[\ep_1,\ep_2]\subset U_T$ and that the
numerator is nonnegative on $U_T$ (in fact strictly positive, except
on the edge segment).  Similarly, for each permutation $(i,j,k)$ of
the vertices of $U_T$, the corresponding line $L=0$ defines a boundary
segment of $f(x,y;U_T)$.  We conclude that the image $f(x,y;U_T)$ is the
convex hull of three points as claimed.  Since the image is a triangle,
its affine hull is all of $\R^2$.
\end{proof}

% \section{The Costate Equations}

\section{The Pontryagin Maximum Principle}\label{sec:PMP}

%% TCH 4/1/2023: made the control set closed.

\index{PMP@Pontryagin Maximum Principle}
\index{PMP, Pontryagin Maximum Principle}
The Pontryagin Maximum Principle (PMP) is a powerful first-order
necessary condition for optimality of solutions to an optimal control
problem on a smooth manifold $M$ with \emph{closed} control set $U
\subseteq \R^m$ and free-terminal time. We summarize the basic ideas
below.  For full details and a proof of the PMP, we refer to
\cite{zelikin2004control}.

Given a control system
\begin{align}
q' &= f(q,u) \in T_qM 
\\
q(0) &= q_0 \in M
\end{align}
on a manifold $M$ with an associated cost objective
\[
\min_{u \in U}\int_0^{t_f} \phi(q,u) dt,
\]
we assume that the control-dependent
vector field $f(q,u)$ satisfies
\begin{enumerate}
\item 
$q \mapsto f(q,u)$ is a smooth vector field on $M$ for any fixed $u\in
U$.
\item 
$(q,u) \mapsto f(q,u)$ is a continuous mapping for $q \in M,
u \in U$,
\end{enumerate}
\index{Hamiltonian!control-dependent}
\index{Hamiltonian!cost-extended}
\index[n]{H@$\H$, Hamiltonian}
\index[n]{0@$\bracks{-}{-}_*$, canonical pairing}
\index[n]{zL@$\Lambda$, costate!$\lambda_{cost}$, cost multiplier}
\index[n]{p@$p$, point in bundle!vector in cotangent space}
%where $\bar{U}$ stands for the closure of $U$ in $\R^m$.
%TCH 4/1/2023 : Now assume $U$ is closed throughout.
This optimal control system is denoted by the tuple $(M,U,f,\phi)$. We
sometimes denote the vector field $f(u,q)$ for $u\in U$ by
$f_u(q)$. The PMP relies on the following \emph{control-dependent
Hamiltonian} which is \emph{cost-extended} on $T^*M$:
\[
\H(q,p,u) = \langle p,f(q,u) \rangle_{*} + \lambda_{cost} \phi(q,u) \qquad 
p \in T^*_qM \quad \lambda_{cost} \in \R_{\le 0},
\]
% 
%pairing between a vector space and dual}
where $\bracks{\cdot}{\cdot}_{*}$ is the natural pairing between a
vector space and its dual and
$\lambda_{cost}$, Pontryagin multiplier
is a constant nonpositive scalar. Note that $\H$ is linear in $p$.

\index[n]{u@$u$, control!$u^*$, optimal}
\index[n]{H@$\H$, Hamiltonian!$\Hstar$, maximized}
\index{singular subarc}
\index{Hamiltonian!maximized}

Let $u^*=u^*(p,q)$ denote a function defined implicitly as a function
of $(q,p) \in T^*M$ by
\begin{equation}\label{eqn:maximized-hamiltonian}
     \Hstar(q,p):=
     \H(q,p,u^*) = \max_{u\in U}\H(q,p,u).
\end{equation}
$\Hstar$ is called the \emph{maximized Hamiltonian}.  Note that it is
quite possible that $u^*$ might not be uniquely determined by the
maximization condition. This is related to \emph{singular} subarcs,
discussed later. See
Definition~\ref{def:normal-abonormal-singular-extremals}.  Regardless,
the value of the maximized Hamiltonian $\Hstar$ is independent of the
choice of $u^*$.

The PMP says that the
extremals of the optimal control problem are projections (from $T^*M$
to $M$) of the flow-trajectory of $\vec{\H}^\bstar$, which is
the Hamiltonian vector field corresponding to $\Hstar$ with respect to
the canonical symplectic structure on the cotangent bundle
$T^*M$. Integral curves of the vector field $\vec{\H}^\bstar$
satisfy
\begin{equation}\label{eqn:hamiltons-equations}
q' = \frac{\partial \Hstar}{\partial p} = f(x,u^*), \qquad 
p' = -\frac{\partial \Hstar}{\partial q}.
\end{equation}
This trajectory $(u^*(t),q(t),p(t))$ in $T^*M$ is called
the \emph{lifted controlled trajectory}.
% u-> u^* on 3/21/2023

The PMP (for free terminal time periodic problems) also guarantees the
following of lifted trajectories.
\begin{enumerate}
\item Transversality conditions (endpoint conditions for the
    co-state trajectories) hold.  
\item The Hamiltonian $\H(q,p,u^*)$
    vanishes identically along the lifted controlled
    trajectory.  
\item 
The cotangent vector $(\lambda_{cost},p(t))\in\R_{\le0}\times{T^*_{q(t)}M}$ is
    nonzero for all $t \in [0,t_f]$.
\item The scalar
    $\lambda_{cost}$ is constant, and when it is non-zero, may be
    taken to be $\lambda_{cost} = -1$ by rescaling the covector, using
    the linearity of the ODE for $p$.
\end{enumerate} 

\index{Pontryagin extremal}
\index{extremal}
The lifted curves which satisfy the conditions of the PMP are
called \emph{Pontryagin extremals} or simply \emph{extremals}.

\index{abnormal extremal}
\index{normal extremal}
\index{singular subarc}
\begin{definition}[Normal, Abnormal and Singular extremals]
\label{def:normal-abonormal-singular-extremals}\leavevmode
\begin{enumerate}
\item 
An extremal for which $\lambda_{cost} = 0$ is called an \emph{abnormal
    extremal}.  
\item 
An extremal for which $\lambda_{cost} \ne 0$ is
    called a \emph{normal extremal}. Recall that in the normal case a
    renormalization allows us to take $\lambda_{cost} = -1$.
% DONE[] Duplicated sentence about renormalization.
\item 
If there is an open time interval on which
equation \eqref{eqn:maximized-hamiltonian} fails to uniquely determine
the function $u^*(t)$, the trajectory during that interval is called a \emph{singular
subarc}.
% or \emph{singular curve}.
\end{enumerate}
\end{definition}

Our strategy is to apply the maximum principle to our problem with the
hope that these necessary conditions will provide us more information
about the structure of the extremals. Recall the framework of the
Reinhardt control problem \ref{pbm:reinhardt-optimal-control}, where
we have dynamics occurring in the Lie group $\SL(\R)$ and the Lie
algebra $\sl(\R)$. If we apply the PMP to this problem, the lifted
trajectories live in
\[T^*(\SL(\R) \times \sl(\R)) 
\cong \left(\SL(\R) \times \sl(\R)\right)\times 
\left (\sl(\R) \times \sl(\R)\right),
\]
where we have used the cotangent bundle trivialization
of Proposition \ref{prop:trivialization} making
$T^*\SL(\R) \cong \SL(\R)\times \sl(\R)^*$ and the identification
$\sl(\R)^* \cong \sl(\R)$ via the nondegenerate trace form as in
Appendix \ref{sec:X-lie-poisson}. So, for the state variables $q =
(g,X) \in \SL(\R) \times \sl(\R)$
the PMP gives corresponding costate variables $p=
(\Lambda_1,\Lambda_2) \in \sl(\R) \times \sl(\R)$.
% 3/21/2023, state and costate p and q were backwards, fixed

\index[n]{g@$g$, group element!$(g,X)$, state variables}
\index[n]{zL@$\Lambda$, costate!$\Lambda_i\in\sl$}

Note that the PMP system is a Hamiltonian system on $T^*T\SL(\R)$ and is 
an instance of a \textit{higher-order variational system on a Lie group}. 
Similar problems and the background theory is described in Gay-Balmaz~et.~al.~\cite{gay2012invariant} 
and Colombo~and~de Deigo~\cite{colombo2014higher}. 

We now derive expressions for the Hamiltonian and the costate
equations in both the Lie algebra coordinates and and the upper
half-plane coordinates via the isomorphism described in
Lemma \ref{lem:def-phi}.

\section{Left-invariance}

% DONE[TCH 5/4/2024:F->f] TCH 4/25/2024: Notation shift. Here the
% vector field is F, but f in previous section.

\index[n]{h@$h\in G$, element of Lie group}

\begin{definition}[Jurdjevic~\cite{jurdjevic_1996}]\label{def:left-invariant-control-system}
An arbitrary optimal problem with control system $dg/dt = f(g,u)$
defined on a real Lie group $G$ with control functions $u(t) \in
U \subseteq \R^m$ is said to be \emph{left-invariant} if
$TL_hf(g,u)=f(hg,u)$ for each $g,h \in G$. Here $L_h(g) = hg$ is the
left-multiplication map, and $TL_h:T_gG \to T_{hg}G$ is its tangent map.

Also, we require the associated cost function $\phi(g,u)$ to be
left-invariant: $\phi(g,u) = \phi(e,u)$ for all $g$ and $u$, where
$e\in G$ is the neutral element.
%% DONE[issue:fcost] f vs phi for cost.
%% DONE[] Do you mean f(g,u) = f(e,u), where e is the neutral element.
\end{definition}

The dynamical system breaks into the 
ordinary differential equation (ODE) for the group \eqref{eqn:trisystem-g}
and the ODE for the Lie algebra \eqref{eqn:trisystem-X}. 
We refer to these two subsystems as the dynamics \emph{at the Lie group level}
and the dynamics \emph{at the Lie algebra level}.  The dynamics are
coupled through $X$, which appears in both levels.
\index{level of ODE subsystem}

The dynamics of the Reinhardt optimal problem at the Lie group level
is clearly left-invariant in the sense that the cost function depends
on $X$ but not $g$ and in the sense that the ODE for $g$ can be
left-multiplied by a constant $h\in\SL(\R)$.
Left-invariance of a dynamical system on a Lie group implies that we can
reduce its dynamics to co-adjoint orbits of the associated Lie
algebra.

Since the cost and dynamics for $X$ are independent of $g$, we note
that the only purpose served by the Lie group dynamics for $g$ is to
describe an endpoint (transversality) condition $g(t_f)=R$.  Because
of the minor purpose served by the Lie group dynamics, we can often
focus on the control problem exclusively at the Lie algebra level, and
postpone the endpoint condition on $g$ until the very last step. In
later sections, we will drop the group dynamics and exclusively focus
on state/costate dynamics in the Lie algebra.
% 3/21/2023, fixed gtf=R.

\section{Hamiltonian in the Lie Algebra}\label{sec:max-ham}

Following the Reinhardt optimal control
problem \ref{pbm:reinhardt-optimal-control}, the Hamiltonian 
is the sum of the Hamiltonians for the Lie group part and the Lie
algebra part.

The costate variable corresponding to the Lie group element
$g \in \SL(\R)$ is denoted $\Lambda_1 \in \sl(\R)$. Ignoring the Lie
algebra dynamics for a moment, we derive the Hamiltonian corresponding
to the group element $g$. As pointed out earlier, the control problem
is left-invariant (see
Definition \ref{def:left-invariant-control-system}) and Hamiltonians
of left-invariant systems are functions of $\sl(\R)^*$ only.

\begin{proposition}[Jurdjevic~\cite{jurdjevic_1996}]\label{prop:left-invariant-hams}
Consider an arbitrary left-invariant control system $dg/dt = f(g,u)$
on a real Lie group $G$, with control $u \in U \subseteq \R^m$.
Let us also assume that the Lie algebra
$\mathfrak{g}$ of $G$ is equipped with a nondegenerate invariant symmetric
bilinear form
$\bracks{\cdot}{\cdot}$. Then the Hamiltonian function corresponding
to this system is
\[
\H(g,p) = \bracks{p}{f(e,u)}, \quad p \in \mathfrak{g},
\]
where $e \in G$ is the group identity.
\end{proposition}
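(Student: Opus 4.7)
The plan is to derive the stated formula by pulling the natural Hamiltonian on $T^*G$ back to the Lie algebra through left-trivialization, then converting the dual pairing into the given bilinear form by means of the nondegenerate identification $\mathfrak{g}^*\cong\mathfrak{g}$.

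First I would write down the cost-extended Hamiltonian in its most neutral form on $T^*G$: if $\tilde p \in T^*_gG$, the standard recipe from the maximum principle gives
\[
\H(g,\tilde p,u) = \langle \tilde p, f(g,u)\rangle_{*} + \lambda_{cost}\,\phi(g,u),
\]
where $\langle\cdot,\cdot\rangle_{*}$ is the canonical pairing between $T^*_gG$ and $T_gG$. Because the problem is left-invariant in the sense of Definition~\ref{def:left-invariant-control-system}, one has $f(g,u) = TL_g\,f(e,u)$ with $f(e,u)\in T_eG = \mathfrak{g}$, and the cost satisfies $\phi(g,u)=\phi(e,u)$.

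Next I would invoke the trivialization $T^*G \cong G\times \mathfrak{g}^*$ of Proposition~\ref{prop:trivialization}: the fiber $T^*_gG$ is identified with $\mathfrak{g}^*$ via pullback by $L_g$. Concretely, to $\tilde p\in T^*_gG$ corresponds $p^* := (TL_g)^{*}\tilde p\in\mathfrak{g}^*$. Then naturality of the pairing yields
\[
\langle \tilde p,\, f(g,u)\rangle_{*} = \langle \tilde p,\, TL_g f(e,u)\rangle_{*} = \langle (TL_g)^{*}\tilde p,\, f(e,u)\rangle_{*} = \langle p^*,\, f(e,u)\rangle_{*},
\]
so the drift part of the Hamiltonian depends on the base point $g$ only through the trivialization, and its functional form involves $f(e,u)$ alone.

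Finally I would use the nondegenerate invariant symmetric form $\langle\cdot,\cdot\rangle$ on $\mathfrak{g}$ to identify $\mathfrak{g}^*\cong\mathfrak{g}$: to $p^*\in\mathfrak{g}^*$ associate the unique $p\in\mathfrak{g}$ with $\langle p^*, Y\rangle_{*} = \langle p, Y\rangle$ for all $Y\in\mathfrak{g}$. Applying this with $Y=f(e,u)$ converts the canonical pairing into the bilinear form, giving the claimed expression $\H(g,p) = \langle p, f(e,u)\rangle$ (the cost term $\lambda_{cost}\phi(e,u)$, being independent of $p$, is typically absorbed or suppressed in this statement, and in any case depends on $u$ alone by left-invariance of $\phi$). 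There is no real obstacle here; the only point that needs care is a consistent convention for whether one left- or right-trivializes $T^*G$, since this flips the sign of the induced Poisson bracket on $\mathfrak{g}^*$ and thus affects later statements about Lie--Poisson dynamics, but does not alter the functional form of $\H$ itself.
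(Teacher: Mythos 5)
Your proposal is correct and follows essentially the same route as the paper: left-trivialize $T^*G$ via Proposition~\ref{prop:trivialization}, use naturality of the canonical pairing together with left-invariance $f(g,u)=TL_gf(e,u)$ to reduce the drift term to $\langle p^*,f(e,u)\rangle_*$, and then convert the pairing to the bilinear form via the nondegenerate identification $\mathfrak{g}^*\cong\mathfrak{g}$. The additional remarks about the cost term and the trivialization sign convention are reasonable but not needed for the statement as given, which (like the paper's proof) records only the $p$-dependent drift part.
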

\begin{proof}
\index[n]{p@$p$, point in bundle!${\tilde p}\in T^*_gG$, cotangent vector}
Let 
$\bracks{\cdot}{\cdot}_*$ be the canonical pairing between a vector
space and its dual.  Let ${\tilde p}\in T^*_gG$.
Using the trivialization
$T^*G \cong G \times \mathfrak{g}^*$ of 
Proposition \ref{prop:trivialization}, 
we write ${\tilde p} =
T^*L_{g^{-1}}(p)$ for some 
$p \in \mathfrak{g}^*$. Then we have by the definition of the Hamiltonian
for a control system,
\begin{align*}
\H(g,{\tilde p}) &=\bracks{{\tilde p}}{f(g,u)}_*\\
&= \bracks{p}{TL_{g^{-1}}\left(f(g,u)\right)}_* 
= \bracks{p}{TL_{g^{-1}}\left(TL_gf(e,u)\right)}_* \\
&= \bracks{p}{f(e,u)}_*,
\end{align*}
since the control system is left-invariant. Since $\mathfrak{g}$ is
equipped with a nondegenerate invariant form $\bracks{\cdot}{\cdot}$,
we have that $\mathfrak{g} \cong \mathfrak{g}^*$. Using this
identification, we have $\H(g,p) = \bracks{p}{f(e,u)}$, where $p$ is
now identified with an element of $\mathfrak{g}$.
\end{proof}
We need a slight extension of this result, where the control
system has the form 
\[
dg/dt = f(g,TL_g X,u),\quad u\in U,\quad X\in \mathfrak{g}.
\]
Left-invariance is expressed as $TL_hf(g,TL_gX,u)=f(hg,TL_{hg}X,u)$,
and the corresponding Hamiltonian in the proposition
becomes $\bracks{p}{f(e,X,u)}$.

In our case, taking $p=\Lambda_1$ and $f(e,X,u)=X$,
this means that the term the Hamiltonian
corresponding to the group is
$\bracks{\Lambda_1}{X}$.
The cost extended term of the Hamiltonian is
\index[n]{H@$\H$, Hamiltonian!$\H_1$, Lie group level term}
\[
\H_1(\Lambda_1,X) := \bracks{\Lambda_1}{X} - \frac{3}{2}\lambda_{cost}\bracks{J}{X}.
\]

\index[n]{H@$\H$, Hamiltonian!$\H_2$, Lie algebra level term}
The Lie algebra part of the Hamiltonian, corresponding to the costate
variable $\Lambda_2$, is
\[
\H_2(\Lambda_2,X;Z_u) := 
\frac{\bracks{\Lambda_2}{[Z_u,X]}}{\bracks{X}{Z_u}} 
= -\frac{\bracks{[\Lambda_2,X]}{Z_u}}{\bracks{X}{Z_u}}.
\]
\index[n]{zL@$\Lambda$, costate!$\Lambda_R\in\sl$, reduced}
The form of the Hamiltonian suggests introducing a new variable
$\Lambda_R := [\Lambda_2,X]$.
The full Hamiltonian of the problem is now
\begin{align}
\H(\Lambda_1, \Lambda_R, X ; Z_u) 
&:= \H_1(\Lambda_1,X) + \H_2(\Lambda_2,X;Z_u) \nonumber 
\\
&= \left\langle \Lambda_1 - \frac{3}{2}\lambda_{cost} J, X \right\rangle 
- \frac{\bracks{[\Lambda_2,X]}{Z_u}}{\bracks{X}{Z_u}} \nonumber 
\\
&= \left\langle \Lambda_1 - \frac{3}{2}\lambda_{cost} J, X \right\rangle  
- \frac{\langle \Lambda_R,Z_u\rangle }{\langle X, Z_u \rangle}.
\label{eqn:full-hamiltonian}
\end{align}

The maximum principle states that the extremals of the control problem
are integral curves of the maximized Hamiltonian, which is
the pointwise maximum of the control-dependent Hamiltonian over the
control set. For our immediate application, we take the control set to
be the simplex $U_T$ (see Definition \ref{def:triang-control-set}).
\begin{align}
\Hstar(\Lambda_1,\Lambda_2, X) 
&:= \max_{u\in U_T}\H(\Lambda_1, \Lambda_R, X ; Z_u) \nonumber 
\\
&=\left\langle \Lambda_1 - \frac{3}{2}\lambda_{cost} J, X \right\rangle 
+ \max_{u\in U_T} \frac{ \langle -\Lambda_R , Z_u \rangle}
{\langle X,Z_u \rangle} \label{eqn:max-ham}.
\end{align}

\section{Costate Variables in Lie Algebra} \label{sec:costate-variables}

\index{costate variables}
\begin{proposition}[Lie algebra costate variables]
The costate variables evolve as
\begin{align}
% TCH 4/25/2024: Rewrote equation to equivalent form to avoid mixing P
% and Z_u in the same equation.
\Lambda_1' &= [\Lambda_1, X] 
\\
\Lambda_R' &= \left(\left[P,\Lambda_R\right] - 
%\frac{\langle \Lambda_R,Z_u \rangle}{\langle X, Z_u\rangle} \left[P,X\right]\right) 
{\langle\Lambda_R,P\rangle} \left[P,X\right]\right) 
+ \left[ -\Lambda_1 + \frac{3}{2}\lambda_{cost} J ,X\right],
\label{eqn:lamR}
\end{align}
\index[n]{P@$P$, normalized control matrix}
\index[n]{zL@$\Lambda$, costate!$\Lambda_R\in\sl$, reduced}
where $P = Z_u/\bracks X {Z_u}$ and $\Lambda_R := [\Lambda_2,X]$.
\end{proposition}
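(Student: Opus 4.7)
The plan is to derive each costate equation separately, handling the Lie group and Lie algebra levels with different techniques, and then repackage the $\Lambda_2$-equation as an equation for $\Lambda_R = [\Lambda_2, X]$.

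For $\Lambda_1$, I would invoke the Lie--Poisson reduction for left-invariant Hamiltonian systems on $\SL(\R)$. After trivializing $T^*\SL(\R) \cong \SL(\R) \times \sl(\R)^*$ by left translation, and identifying $\sl(\R)^* \cong \sl(\R)$ via the trace form $\bracks{\cdot}{\cdot}$, the reduced Poisson bracket is the Lie--Poisson bracket on $\sl(\R)$. The reduced Hamiltonian equation reads $\Lambda_1' = \ad^*_{\partial \H/\partial \Lambda_1}\Lambda_1$. Since $\H$ depends on $\Lambda_1$ only through the term $\bracks{\Lambda_1}{X}$, we have $\partial \H/\partial \Lambda_1 = X$. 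Translating $\ad^*$ back to the Lie algebra through the trace form gives $\ad^*_X \Lambda_1 = [\Lambda_1, X]$, since $\bracks{[\Lambda_1,X]}{Y}=\bracks{\Lambda_1}{[X,Y]}$ for all $Y$ by Lemma~\ref{lem:sl2-lemmas}(4). This yields the first equation.

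For $\Lambda_2$, the state $X$ lives in the vector space $\sl(\R)$ with its standard symplectic structure, so Hamilton's equation is simply $\Lambda_2' = -\partial \H/\partial X$. Computing the derivative of $\H_2 = \bracks{\Lambda_2}{[Z_u,X]}/\bracks{X}{Z_u}$ in direction $Y\in\sl(\R)$ via the quotient rule, and applying the invariance identity from Lemma~\ref{lem:sl2-lemmas}(4) to rewrite $\bracks{\Lambda_2}{[Z_u,Y]} = \bracks{[\Lambda_2,Z_u]}{Y}$ and $\bracks{\Lambda_2}{[Z_u,X]} = -\bracks{\Lambda_R}{Z_u}$, I would obtain
\[
\frac{\partial \H_2}{\partial X} = [\Lambda_2,P] + \bracks{\Lambda_R}{P} P,
\]
where $P = Z_u/\bracks{X}{Z_u}$. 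Combining with the derivative of $\bracks{\Lambda_1 - \tfrac{3}{2}\lambda_{cost}J}{X}$ gives an explicit formula for $\Lambda_2'$.

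Finally, to obtain the equation for $\Lambda_R = [\Lambda_2, X]$, I would differentiate using the product rule $\Lambda_R' = [\Lambda_2', X] + [\Lambda_2, X']$, substitute $X' = [P,X]$ from the Lax equation in Theorem~\ref{thm:X-dynamics-sl2}, and apply the Jacobi identity to rewrite $[\Lambda_2, [P,X]] = [[\Lambda_2,P],X] + [P,[\Lambda_2,X]] = [[\Lambda_2,P],X] + [P,\Lambda_R]$. The $[[\Lambda_2,P],X]$ contributions from the two terms cancel exactly, and what remains simplifies to the stated expression. The main technical care will be the bookkeeping of signs in the Lie--Poisson identification and in the application of the invariance identity, together with the cancellation via Jacobi; the rest is routine differentiation.
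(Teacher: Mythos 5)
Your proposal is correct and matches the paper's proof essentially step for step: Euler--Arnold (Lie--Poisson) reduction on $T^*\SL(\R)$ for the $\Lambda_1$ equation, canonical Hamilton's equations $\Lambda_2'=-\delta\H/\delta X$ with the quotient rule and trace-form invariance for the functional derivative, and then the product rule on $\Lambda_R=[\Lambda_2,X]$ with the Jacobi identity producing exactly the $[[\Lambda_2,P],X]$ cancellation you identify.
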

\begin{proof}
Let $\Hstar$ denote the maximized Hamiltonian in equation \eqref{eqn:max-ham}. 
Let
\[(g,X,\Lambda_1,\Lambda_2) \in \SL(\R) \times \sl(\R) \times 
\sl(\R) \times \sl(\R)\cong T^*(\SL(\R)\times \sl(\R)) 
\] 
denote the state and costate variables in the trivialized bundles.
Since the Hamiltonian is independent of $\SL(\R)$, it is left-invariant. 

By the maximum principle, we have that the state and costate equations
are Hamilton's equations in the cotangent bundle with respect to an
appropriate symplectic form. For the $(g,\Lambda_1)$ pair, the
dynamics is Hamiltonian with respect to the pullback of the canonical
symplectic form on $T^*(\SL(\R))$ to $\SL(\R) \times \sl^*(\R)$. These
equations are called the Euler-Arnold equations\index{Euler-Arnold
equations} for a left-invariant Hamiltonian
(see \cite[pp.~285]{cushman1997global}) and are given by
\index[n]{ad@$\ad^*$, coadjoint representation of the Lie algebra}
\index[n]{zd@$\delta/\delta X$, functional derivative} 
\index{functional derivative}
\begin{align*}
    g'&=g\frac{\delta \Hstar}{\delta \Lambda_1} = gX 
\\
\Lambda_1' &= \ad^*_{\delta \Hstar/\delta \Lambda_1} \Lambda_1 
= -\ad_{X}\Lambda_1 = [\Lambda_1,X],
\end{align*}
where we identify $\sl(\R)^*$ with $\sl(\R)$ as usual, sending the
$\ad^*$-operator to $\ad$-operator, as described in Appendix~\ref{sec:Lie}.
%% DONE[TCH 5/4/2024. See explanation in Issue:8084788 below]: ad^* <-> - ad, note sign. [TCH 4/26/2024:  This seems OK to me.]
%%
%
%% DONE: We should say enough about the functional dervative here in context
% that the reader doesn't have to read the appendix first.
% DONE: in appendix: Define $\ad,\Ad,\ad^*$ etc.
%
%It is now a +
%(Note that the star on $\Hstar$ has
%a different meaning than the star on $\ad^*$). 
%
The expression $\delta \Hstar/\delta \Lambda_1$ denotes the functional
derivative of $\Hstar$ with respect to $\Lambda_1$ and is defined in
Appendix \ref{sec:X-lie-poisson} in
equation \eqref{eqn:functional-derivative}.  For the pair
$(X,\Lambda_2) \in \sl(\R) \times \sl(\R)^*$, the dynamics is
Hamiltonian with respect to the canonical symplectic structure on the
trivial cotangent bundle $T^*(\sl(\R))$ which gives us Hamilton's
equations in the usual form.
\begin{align*}
    X' &= \frac{\delta \Hstar}{\delta \Lambda_2} =  [P,X] 
\\ 
\Lambda_2' &= -\frac{\delta \Hstar}{\delta X} 
= -\Lambda_1 + \frac{3}{2}\lambda_{cost}J - [\Lambda_2, P] 
+ \bracks {[\Lambda_2,P]} {X} P.
\end{align*}
Using this, we can derive
\begin{align*}
\Lambda_R' &= [\Lambda_2,X]' = [\Lambda_2',X] + [\Lambda_2,X']
\\ 
    &=\left[-\Lambda_1 + \frac{3}{2}\lambda_{cost}J ,X\right] 
+  \bracks {[\Lambda_2,P]} {X} [P,X] - [[\Lambda_2, P],X] + [\Lambda_2,[P,X]] 
\\ 
    &= \left[-\Lambda_1 + \frac{3}{2}\lambda_{cost}J ,X\right] 
+ \bracks {[\Lambda_2,P]} {X} [P,X] +[P,[\Lambda_2,X]] 
\\ 
    &= \left[-\Lambda_1 + \frac{3}{2}\lambda_{cost}J ,X\right] 
- \bracks {\Lambda_R} {P} [P,X] +[P,\Lambda_R] 
\\ 
    &= \left(\left[P,\Lambda_R\right] 
- \frac{\langle \Lambda_R,Z_u \rangle}{\langle X, Z_u\rangle} \left[P,X\right]\right) 
+ \left[ -\Lambda_1 + \frac{3}{2}\lambda_{cost} J ,X\right].
\end{align*}
\end{proof}

\begin{remark}\normalfont \leavevmode
  \begin{itemize}
  \item Note that the variable $\Lambda_R$ is constrained to lie in the
  two-dimensional subspace $\{A\in\sl(\R)\mid\bracks{A}{X} = 0\}$.
  Thus, we can consider $\Lambda_R$ to be the \emph{reduced}
  costate\index{reduced costate} variable.  (The subscript $R$ stands
  for \emph{reduced}.)
  \item In Appendix~\ref{sec:poisson-bracket}, we show that there is a Poisson bracket 
  with respect to which the Reinhardt control system admits a Poisson bracket representation.   
  \end{itemize}
\end{remark} 

\begin{corollary}[Jurdjevic~\cite{jurdjevic_1996,jurdjevic2016optimal}]\label{cor:lam1-gen-sol}
% TCH 4/25/2024: Replaced word coadjoint with adjoint, and Ad^* with
% Ad, since we are in the Lie algebra. Then added note to proof.
The costate variable $\Lambda_1$, whose dynamics is given by a Lax equation,
evolves in an adjoint orbit of $\sl(\R)$ through the initial value
$\Lambda_1(0)$ and its general solution is given by
\[
\Lambda_1(t) = \Ad_{g(t)^{-1}}(\Lambda_1(0)) = g(t)^{-1}\Lambda_1(0)g(t).
\]
\index[n]{Ad@$\Ad^*$, coadjoint representation of the Lie group}
Moreover, the determinant $\det(\Lambda_1(t))$ is a constant of motion.
\end{corollary}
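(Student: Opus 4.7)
The plan is straightforward: verify that the proposed closed-form expression satisfies both the ODE $\Lambda_1' = [\Lambda_1, X]$ established in the preceding proposition and the correct initial condition, then invoke uniqueness for linear ODEs. The conserved determinant then drops out as an immediate consequence of the conjugation form of the solution.

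\textbf{Step 1: Differentiate the candidate solution.} Define $\tilde\Lambda_1(t) := g(t)^{-1}\Lambda_1(0)g(t)$. From the state equation $g' = gX$, I get the standard identity $(g^{-1})' = -g^{-1}g'g^{-1} = -Xg^{-1}$. Applying the product rule,
\[
\tilde\Lambda_1'(t) = (g^{-1})'\Lambda_1(0)g + g^{-1}\Lambda_1(0)g' = -Xg^{-1}\Lambda_1(0)g + g^{-1}\Lambda_1(0)gX = [\tilde\Lambda_1, X].
\]
Since $g(0)=I_2$ by the initial condition in Problem~\ref{pbm:reinhardt-optimal-control}, we also have $\tilde\Lambda_1(0) = \Lambda_1(0)$. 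The Lax equation $\Lambda_1' = [\Lambda_1, X]$ is linear in $\Lambda_1$ with Lipschitz-continuous right-hand side (since $X$ is Lipschitz on $[0,t_f]$ by Proposition~\ref{prop:reparam-lipschitz}), so Picard–Lindel\"of uniqueness gives $\Lambda_1(t) = \tilde\Lambda_1(t)$ on the whole interval. This is the main content; I do not expect any obstacle here, as it is essentially the standard proof that solutions of Lax equations on Lie algebras are isospectral.

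\textbf{Step 2: Adjoint orbit and conservation of determinant.} By construction, $\Lambda_1(t) = \Ad_{g(t)^{-1}}\Lambda_1(0)$ with $g(t)^{-1}\in\SL(\R)$, so the trajectory lies in the single adjoint orbit of $\Lambda_1(0)$ in $\sl(\R)$. For the conserved determinant, since $g(t)\in\SL(\R)$ has $\det g(t) = 1$, we compute
\[
\det(\Lambda_1(t)) = \det(g(t)^{-1})\,\det(\Lambda_1(0))\,\det(g(t)) = \det(\Lambda_1(0)),
\]
which is constant in $t$. Equivalently, using Lemma~\ref{lem:sl2-lemmas}(1), $\det(\Lambda_1) = -\tfrac12\langle\Lambda_1,\Lambda_1\rangle$, and the invariance of the trace form under the adjoint action gives the same conclusion. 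Either derivation is a one-line check, so no real difficulty arises in this step either. The corollary is essentially a packaged application of the general principle that left-invariant Hamiltonian systems on Lie groups produce Lax-type evolution on the dual Lie algebra, as recorded in Jurdjevic~\cite{jurdjevic_1996}.
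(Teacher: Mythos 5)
Your proof is correct and follows the same route the paper takes: the paper's own proof is simply "This can be verified by differentiating," and your Step~1 carries out exactly that differentiation, with the uniqueness invocation and the one-line determinant argument in Step~2 filling in the details the paper leaves implicit. No discrepancy to report.
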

\begin{proof}
This can be verified by differentiating. We immediately find the
determinant is constant. If the identification of the Lie algebra with
its dual is not made, the evolution is in a coadjoint orbit through
the representation $\Ad^*$.
\end{proof}

% Note that in the previous proposition, if we hadn't chosen that
% particular parametrization, we would have ended up with a multiple
% of $X$ in the dynamics for $\Lambda_2$, which would have vanished in
% the dynamics for $\Lambda_R$.

\begin{corollary} \label{cor:control-dep-ad-alternate}
In the ODE for $\Lambda_R$ in equation \eqref{eqn:lamR}, the control
dependent term has the following expression.
\begin{equation}
\left(\left[P,\Lambda_R\right] - \frac{\langle \Lambda_R,Z_u \rangle}{\langle X, Z_u\rangle} 
\left[P,X\right]\right) =  \ad_{Z_u} \frac{\delta}{\delta Z_u} 
\frac{\langle \Lambda_R, Z_u\rangle}{\langle X, Z_u\rangle} 
= \ad_{Z_u} \frac{\delta \H_2}{\delta Z_u},
\end{equation}
\index{functional derivative}
where $\frac{\delta}{\delta Z_u}$ denotes the functional
derivative (defined in Section~\ref{sec:X-lie-poisson}).
\end{corollary}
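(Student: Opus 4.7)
The claim is a purely algebraic identity in $\sl(\R)$, and the plan is a direct verification in three steps. I will (i) compute the functional derivative of the scalar function
\[
f(Z_u) := \frac{\bracks{\Lambda_R}{Z_u}}{\bracks{X}{Z_u}}
\]
using the quotient rule for the trace pairing, (ii) apply $\ad_{Z_u}$ to the result, and (iii) check that the outcome coincides with the given left-hand side by substituting $P = Z_u/\bracks{X}{Z_u}$.

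For step (i), observe that both the numerator $\bracks{\Lambda_R}{Z_u}$ and the denominator $\bracks{X}{Z_u}$ are linear functionals of $Z_u$ whose trace-dual representatives are $\Lambda_R$ and $X$, respectively. Since the trace form $\bracks{\cdot}{\cdot}$ on $\sl(\R)$ is nondegenerate (Lemma~\ref{lem:sl2-lemmas}), the usual quotient rule for functional derivatives recalled in Appendix~\ref{sec:X-lie-poisson} gives
\[
\frac{\delta f}{\delta Z_u} = \frac{\Lambda_R}{\bracks{X}{Z_u}} - \frac{\bracks{\Lambda_R}{Z_u}}{\bracks{X}{Z_u}^{2}}\,X.
\]
For step (ii), I apply $\ad_{Z_u}$ and use bilinearity of the bracket to obtain
\[
\ad_{Z_u}\frac{\delta f}{\delta Z_u} = \frac{[Z_u,\Lambda_R]}{\bracks{X}{Z_u}} - \frac{\bracks{\Lambda_R}{Z_u}}{\bracks{X}{Z_u}^{2}}\,[Z_u,X].
\]

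For step (iii), I substitute $P = Z_u/\bracks{X}{Z_u}$ on the left-hand side of the claim. Bilinearity of the bracket gives $[P,\Lambda_R] = [Z_u,\Lambda_R]/\bracks{X}{Z_u}$ and $[P,X] = [Z_u,X]/\bracks{X}{Z_u}$, and so
\[
[P,\Lambda_R] - \frac{\bracks{\Lambda_R}{Z_u}}{\bracks{X}{Z_u}}[P,X] = \frac{[Z_u,\Lambda_R]}{\bracks{X}{Z_u}} - \frac{\bracks{\Lambda_R}{Z_u}}{\bracks{X}{Z_u}^{2}}[Z_u,X],
\]
which agrees term by term with $\ad_{Z_u}(\delta f/\delta Z_u)$; this proves the first equality. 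The second equality, $\ad_{Z_u}(\delta f/\delta Z_u) = \ad_{Z_u}(\delta \H_2/\delta Z_u)$, then follows immediately from the formula $\H_2 = -\bracks{\Lambda_R}{Z_u}/\bracks{X}{Z_u}$ displayed in Section~\ref{sec:max-ham}, together with the sign convention for the functional derivative fixed in Appendix~\ref{sec:X-lie-poisson}. There is no real obstacle in this proof; the only care required is to use the trace-pairing identification $\sl(\R)^*\cong\sl(\R)$ consistently when forming the functional derivative.
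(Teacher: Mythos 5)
Your proof follows the same route as the paper's: compute $\delta f/\delta Z_u$ for $f=\bracks{\Lambda_R}{Z_u}/\bracks{X}{Z_u}$ by the quotient rule, apply $\ad_{Z_u}$, and recognize $P=Z_u/\bracks{X}{Z_u}$ in the result, so the verification of the first equality is essentially identical. One caveat on your closing sentence: you invoke a ``sign convention for the functional derivative'' to get the final equality with $\delta\H_2/\delta Z_u$, but the appendix's definition of the functional derivative carries no compensating sign, and since $\H_2=-\bracks{\Lambda_R}{Z_u}/\bracks{X}{Z_u}$, the quantity $\ad_{Z_u}\,\delta\H_2/\delta Z_u$ is the \emph{negative} of $\ad_{Z_u}\,\delta f/\delta Z_u$; the last equality as printed in the corollary appears to be off by a sign, a discrepancy the paper's own proof also does not address since it only establishes the first equality.
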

\begin{proof}
The proof is by computation.
\begin{align*}
\ad_{Z_u} \frac{\delta}{\delta Z_u}
\left( \frac{\langle \Lambda_R, Z_u\rangle}{\langle X, Z_u\rangle}\right) &= 
\left[Z_u,\frac{\delta}{\delta Z_u}
\left( \frac{\langle \Lambda_R, Z_u\rangle}{\langle X, Z_u\rangle}\right) \right] 
\\
&=   \left[Z_u, \frac{\Lambda_R \langle X, Z_u \rangle 
- \langle \Lambda_R, Z_u\rangle X}{ \langle X, Z_u \rangle^2} \right] 
\\
&=  \frac{\left[ Z_u, \Lambda_R \right]}{\langle X, Z_u\rangle} 
 -\frac{\langle\Lambda_R, Z_u \rangle}{\langle X, Z_u\rangle^2}\left[ Z_u, X \right] 
\\
&= [P,\Lambda_R]- \frac{\langle\Lambda_R,Z_u \rangle}{\langle X,Z_u\rangle}[P,X].
\end{align*}
\end{proof}

\section{Transversality Conditions}\label{sec:transversality}
For free terminal time optimal control problems, the Pontryagin Maximum
Principle
specifies \emph{transversality}\index{transversality} conditions which
are endpoint conditions which the extremals need to satisfy.

On a manifold $M$, if $(q(t),u(t))$ is the projection of the lifted
extremal trajectory $p(t) \in T^*_{q(t)}M$ in the cotangent bundle,
then transversality requires that
\index[n]{M@$M$, manifold!optimal control on}
\index{final submanifold}
\index[n]{p@$p$, point in bundle!lifted extremal trajectory}
\[ 
\bracks{p(t_f)}{v}_{*} = 0 \quad v \in T_{q(t_f)}M_f,
\]
where $T_{q(t_f)}M_f$ is the tangent space at $q(t_f)$ of
the \emph{final submanifold} $M_f$ (which is
the submanifold in which the terminal point $q(t_f)$ is allowed to
vary).

More generally, if the initial point $q(0)$ is also allowed to vary in
an \emph{initial submanifold}\index{initial submanifold} $M_0$, then
transversality requires that the lifted extremal trajectory $p(t)$
in the cotangent bundle annihilates the vectors in the tangent spaces
of the initial and terminal manifolds at the initial and terminal
times.
\begin{align*}
\bracks{p(0)}{\mb{v}}_{*} &= 0, \quad \mb{v} \in T_{q(0)}M_0;
\\
\bracks{p(t_f)}{\mb{v}}_{*} &= 0, \quad \mb{v} \in T_{q(t_f)}M_f.
\end{align*} 
\index[n]{v@$\mb{v}$, vector!tangent vector}

Letting $R$ be the usual rotation matrix, we have that our system is periodic up to
a rotation by $R$. In this case, the initial and terminal submanifolds
coincide after rotation by $R$ and so, transversality simply means
that the lifted extremal trajectories are periodic functions modulo
rotation by $R$. Rotations act
through the adjoint action $Y \mapsto \Ad_{R} Y = RYR^{-1}$ 
on the Lie algebra $\sl(\R)$.

For our system, we have $p(t)
= \left(g(t),X(t),\Lambda_1(t),\Lambda_R(t)\right)$. We have already
seen endpoint conditions for $g$ and $X$ in
Section \ref{sec:X-init-term-conds}.  Collecting everything we obtain
\begin{align}\label{eqn:transversality}
\begin{split}
g(t_f) &= R 
\\ 
X(t_f) &= R^{-1} X(0) R 
\\
\Lambda_1(t_f) &= R^{-1}\Lambda_1(0)R 
\\
\Lambda_R(t_f) &= R^{-1}\Lambda_R(0)R.
\end{split}
\end{align}

\begin{remark}\normalfont\leavevmode
\begin{itemize}
\item The transversality conditions in the 
Reinhardt problem require the terminal time to satisfy 
\[
    X(t_f) = R^{-1}X_0R,
\]
where $R$ is the usual rotation matrix. Thus, if we extend time, every optimal
solution can be made into a periodic one.
\[
    X(3t_f) = X_0.
\]
The same requirement also holds of $g, \Lambda_1$ and $\Lambda_R$,
except that the period for $g$ is larger: $g(6t_f)=I_2$.  Thus, every
transversal trajectory determines a periodic solution (with discrete
rotational symmetry) of the lifted trajectories in the cotangent
bundle.
\item 
The transversality condition for
$\Lambda_1$ is a consequence of the transversality condition for $g$.
In fact, we know the general solution for $\Lambda_1$ in terms of $g$.
See Corollary \ref{cor:lam1-gen-sol}.
\end{itemize}
\end{remark}

\section{Summary of State and Costate Equations}
At this point, we have the state and costate equations fully stated in
both the coadjoint orbit picture and the upper half-plane picture. We
also have the transversality conditions stated. We collect them as

\begin{problem}[State-Costate Equations]\label{pbm:state-costate-reinhardt}
The Reinhardt control problem \ref{pbm:reinhardt-optimal-control} is an optimal control
problem on the manifold $M = \SL(\R) \times \sl(\R)$. This problem has
the Hamiltonian
\[
\H(\Lambda_1,\Lambda_R,X;Z_u) = 
\left\langle \Lambda_1 - \frac{3}{2}\lambda_{cost} J, X \right\rangle  
- \frac{\langle \Lambda_R,Z_u\rangle }{\langle X, Z_u \rangle}.
\]
We lift the state trajectories described by $(g,X)$ in the Reinhardt
control problem \ref{pbm:reinhardt-optimal-control} to the following
Hamiltonian system on the cotangent bundle of $M$:
\begin{align*}
g'&=gX 
\\
X' &= [P,X] 
\\
\Lambda_1' &= [\Lambda_1, X] 
\\
\Lambda_R' &= \left(\left[P,\Lambda_R\right] - 
\langle \Lambda_R,P\rangle
\left[P,X\right]\right) 
+ \left[ -\Lambda_1 + \frac{3}{2}\lambda_{cost} J ,X\right],
\end{align*}
where $P = {Z_u}/{\langle Z_u,X\rangle}$. The transversality
conditions described in Section \ref{sec:transversality} hold and the
pair $(X,\Lambda_R)$ have equivalent dynamics in the upper half-plane
described in Lemma \ref{lem:X-dynamics}
and Theorem \ref{thm:lamR-dynamics}. The control dependent part of the
Hamiltonian also has an expression in the upper half-plane picture as
Theorem \ref{thm:plane-hamiltonian}.
\end{problem}

\section{Hamiltonian and Costate Equations in the Upper-half plane}

An earlier preprint expressed the Hamiltonian and the maximum
principle in terms of the cotangent variables
$\nu=\nu_1dx+\nu_2dy\in{T^*_z\h}$~\cite{hales2017reinhardt}.  It turns
out that it is significantly simpler to express the costate
differential equations in terms of the coordinate
$\Lambda_R\in\sl(\R)$ instead of $(\nu_1,\nu_2)$. In this book we use
$\Lambda_R$, rather than $\nu$.  However, for reasons of compatibility
with the earlier preprint, this section briefly reviews the
correspondence between the two coordinate system.  This section will
not be used elsewhere in the book.

With this aim in mind, we now proceed to transport the Hamiltonian from
Lie algebra coordinates to upper half-plane coordinates. Recall that
we have an isomorphism $\Phi : \mathfrak{h} \to \O_X \subset \sl(\R)$
defined in Lemma \ref{lem:def-phi}. This induces the tangent map
$T\Phi : T_z\h \to T_X\O_X$ as described in Lemma
\ref{lem:tangent-maps}. This also induces the dual (cotangent) map:
$T^*\Phi:T^*_X\O_X\to{T^*_z\h}$.

\begin{lemma}\label{lem:cotangent-space-OX}
\index[n]{0@$-^\perp$, orthogonal complement}
\index[n]{W@$W\in\sl(\R)$}
\[ 
     T^*_X \OX \cong X^{\perp} = \{ [W,X] \ | \ W \in \sl(\R)\},
\]
where $X^{\perp} = \{ Y \in \sl(\R) \ | \ \langle Y, X \rangle = 0 \}$.
\end{lemma}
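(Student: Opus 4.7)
The plan is to obtain the cotangent space by dualizing the tangent space description from Lemma~\ref{lem:half-plane-lie-algebra-iso}, and then use the trace form $\langle\cdot,\cdot\rangle$ to transport the resulting dual into $\sl(\R)$ itself. Concretely, Lemma~\ref{lem:half-plane-lie-algebra-iso} gives the canonical isomorphism $T_X\mathcal{O}_X \cong \sl(\R)/\R X$. Taking linear duals, we get
\[
T^*_X\mathcal{O}_X \cong (\sl(\R)/\R X)^* \cong (\R X)^\circ,
\]
where $(\R X)^\circ \subset \sl(\R)^*$ denotes the annihilator of the line $\R X$. Since the trace form on $\sl(\R)$ is nondegenerate and invariant, the induced linear isomorphism $\sl(\R)\to\sl(\R)^*$, $Y\mapsto\langle Y,\cdot\rangle$, identifies $(\R X)^\circ$ precisely with $\{Y\in\sl(\R)\mid \langle Y,X\rangle=0\} = X^\perp$. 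This yields the first half of the claim: $T^*_X\mathcal{O}_X \cong X^\perp$.

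The second half is the identity $X^\perp = \{[W,X]\mid W\in\sl(\R)\} = \operatorname{image}(\ad_X)$. For the inclusion $\operatorname{image}(\ad_X)\subset X^\perp$, invariance of the trace form immediately gives
\[
\langle [W,X],X\rangle = \langle W,[X,X]\rangle = 0
\]
for every $W\in\sl(\R)$. The reverse inclusion follows by a dimension count: the kernel of $\ad_X:\sl(\R)\to\sl(\R)$ is the centralizer $\sl(\R)_X$, which for regular semisimple $X$ (and $\Phi(z)\in\mathcal{O}_J$ is regular semisimple as remarked after Lemma~\ref{lem:def-phi}) equals $\R X$. Hence $\dim \operatorname{image}(\ad_X) = 3 - 1 = 2 = \dim X^\perp$, so the image exhausts $X^\perp$.

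Putting the two pieces together, we obtain the chain of isomorphisms
\[
T^*_X\mathcal{O}_X \cong X^\perp = \{[W,X]\mid W\in\sl(\R)\},
\]
which is the statement of the lemma. There is no real obstacle here beyond checking the compatibility of conventions: the only thing to be careful about is that the identification $\sl(\R)\cong\sl(\R)^*$ used is the one via $\langle\cdot,\cdot\rangle$ (the same one used throughout the preceding sections to identify coadjoint with adjoint orbits), so that the pairing of $Y\in X^\perp$ with a tangent vector $[V,X]\in T_X\mathcal{O}_X$ is the natural one, $\langle Y,[V,X]\rangle = \langle [X,Y],V\rangle$, consistent with the earlier use of $\Lambda_R=[\Lambda_2,X]$ as the reduced cotangent variable.
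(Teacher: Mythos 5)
Your proof is correct and follows essentially the same route as the paper's: dualize the quotient description of the tangent space from Lemma~\ref{lem:half-plane-lie-algebra-iso} to get the annihilator $(\R X)^\circ$, identify it with $X^\perp$ via the nondegenerate trace form, verify $\operatorname{image}(\ad_X)\subset X^\perp$ by invariance, and finish with a dimension count using regularity of $X$. The only difference is that you spell out the dimension count a bit more explicitly than the paper does.
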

Explicitly, the isomorphism identifies the cotangent
space with the dual of the tangent space, under the identification
of the tangent space with the quotient $\sl(\R)/\RX$ of 
Lemma~\ref{lem:half-plane-lie-algebra-iso}.
\begin{proof}
By a general fact in elementary linear algebra we have
\[ 
\index[n]{0@$-^\circ$, annihilator}
     T^*_X \OX \cong \left(\sl(\R)/\RX\right)^* \cong \RX^\circ = X^\perp,
\]
where 
\[
\RX^\circ = 
\{ Y \in \sl(\R) \ | \ \langle Y, W \rangle 
=
0 \text{ for all } W \in \RX \}
\] 
is the annihilator of the span of $X$.  Here the annihilator,
which is defined as a subspace of the dual vector space, is identified
with a subspace of the Lie algebra itself via the nondegenerate trace
form.

It is clear that any Lie algebra element of the form $[W,X]$ is
orthogonal to $X$ since $\bracks{[W,X]}{X}=\bracks{W}{[X,X]}=0$.  
Dimension counting again gives us that
$T^*_X\OX=\{[W,X]\mid \ W \in \sl(\R)\}$.
\end{proof}

\index[n]{znu@$\nu\in T^*_z\h$, cotangent variable}
\index[n]{znu@$\nu\in T^*_z\h$, cotangent variable!$\nu_1,\nu_2$, components of $\nu$}

We define 
\begin{equation}\label{eqn:nu}
\nu := T^*\Phi(-\Lambda_R) = \nu_1 dx + \nu_2 dy \in T_z^*\h.
\end{equation}
The covector $\nu \in T^*_z\h$ is well-defined since $\Lambda_R \in
X^{\perp}$ by Lemma \ref{lem:cotangent-space-OX}.  

\begin{theorem}[Hamiltonian in Upper Half-Plane]\label{thm:plane-hamiltonian}
    Let $\Phi$ be as in Lemma \ref{lem:def-phi} and let $\Lambda_R$ be
    as above.  Then we have that the Hamiltonian
    $\H(\Lambda_1,\Lambda_R,X;Z_u)$ in
    equation \eqref{eqn:full-hamiltonian}
     in upper half-plane coordinates becomes
\[
     \H(\Lambda_1,x,y,\nu_1,\nu_2;u) =  
\left\langle \Lambda_1 - \frac{3}{2}\lambda_{cost} J, X \right\rangle 
+ \nu_1 f_1 + \nu_2 f_2.
\]
\end{theorem}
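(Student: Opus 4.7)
The plan is to show that the two summands of the Hamiltonian \eqref{eqn:full-hamiltonian} transform separately. The first summand $\bracks{\Lambda_1 - \tfrac{3}{2}\lambda_{cost}J}{X}$ requires no work: once we use $X=\Phi(z)$ to view $X$ as a function of $(x,y)$, the expression is literally unchanged. All the content is in rewriting the control-dependent term
\[
-\frac{\bracks{\Lambda_R}{Z_u}}{\bracks{X}{Z_u}}
\]
as the coordinate pairing $\nu_1 f_1 + \nu_2 f_2$ between the cotangent vector $\nu = \nu_1\,dx+\nu_2\,dy$ defined in \eqref{eqn:nu} and the velocity vector $f_1\partial_x+f_2\partial_y$ appearing in Problem~\ref{pbm:plane-optimal-control-problem}.

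The key identity is the one packaged in equation \eqref{eqn:tangent-map-f1f2}: the tangent map $T\Phi$ sends the half-plane velocity $(f_1,f_2)$ to the class $[Z_u/\bracks{X}{Z_u}]$ in $T_X\O_X \cong \sl(\R)/\RX$. Combined with the definition $\nu = T^*\Phi(-\Lambda_R)$ and the identification of $T_X^*\O_X$ with $X^\perp\subset\sl(\R)$ via the trace form (Lemma~\ref{lem:cotangent-space-OX}), naturality of the cotangent map gives, for any tangent vector $v\in T_z\h$,
\[
\bracks{\nu}{v}_* \;=\; \bracks{T^*\Phi(-\Lambda_R)}{v}_* \;=\; \bracks{-\Lambda_R}{T\Phi(v)}.
\]
Applying this with $v = f_1\partial_x+f_2\partial_y$ and using \eqref{eqn:tangent-map-f1f2} yields
\[
\nu_1 f_1 + \nu_2 f_2 \;=\; \left\langle -\Lambda_R,\,\frac{Z_u}{\bracks{X}{Z_u}}\right\rangle \;=\; -\frac{\bracks{\Lambda_R}{Z_u}}{\bracks{X}{Z_u}},
\]
which is exactly the control-dependent summand of \eqref{eqn:full-hamiltonian}. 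Summing the two pieces gives the claimed formula.

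The only subtlety, and the step I would handle with care, is verifying that the trace-form pairing between $\Lambda_R\in X^\perp$ and a representative $Y\in\sl(\R)$ of a class $[Y]\in\sl(\R)/\RX$ is independent of the representative: this is immediate because $\bracks{\Lambda_R}{X}=0$ by construction of $\Lambda_R = [\Lambda_2,X]$, so any shift $Y\mapsto Y + cX$ leaves $\bracks{\Lambda_R}{Y}$ invariant. This well-definedness is what makes the final line of the computation meaningful, and beyond this the proof is a one-line application of the definitions of $\nu$ and of $T\Phi$ already established in Lemmas \ref{lem:cotangent-space-OX}, \ref{lem:tangent-maps}, and \ref{lem:X-dynamics}.
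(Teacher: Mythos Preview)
Your proof is correct and follows essentially the same approach as the paper: both arguments leave the first summand untouched and rewrite the control-dependent term by pairing $\nu = T^*\Phi(-\Lambda_R)$ with the half-plane velocity $(f_1,f_2)$, invoking \eqref{eqn:tangent-map-f1f2} and the fact that $\Lambda_R\in X^\perp$ makes the pairing well-defined on $\sl(\R)/\RX$. Your explicit remark on the representative-independence of $\bracks{\Lambda_R}{Y}$ is exactly the step the paper records as passing to the coset $Z_u/\bracks{Z_u}{X} + \RX$.
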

\begin{proof}
The only part of
the Hamiltonian in equation \eqref{eqn:full-hamiltonian} which will
change is
$\H_2(\Lambda_2,X;Z_u)$.  
\begin{align*} 
\H_2(\Lambda_R,X;Z_u)
= \frac{\langle -\Lambda_R,Z_u \rangle}{\langle Z_u,X \rangle}
&=\left\langle -\Lambda_R, \frac{Z_u}{\langle
Z_u,X \rangle}\right\rangle \\ &= \left\langle
-\Lambda_R, \frac{Z_u}{\langle Z_u,X \rangle}
+ \RX \right\rangle \\ &= \langle -\Lambda_R,
T \Phi(f_1,f_2)\rangle \qquad \text{by \eqref{eqn:tangent-map-f1f2}} 
\\
&=\langle T^*\Phi(-\Lambda_R), (f_1,f_2)\rangle_* \\ &= \langle \nu,
(f_1,f_2) \rangle_*\\ &= \nu_1 f_1 + \nu_2 f_2,
\end{align*} 
where we
have used the definition of the cotangent map and the fact that
$\Lambda_R \in X^{\perp}$, the annihilator subspace of $X$ in
$\sl(\R)$.
\end{proof}

\begin{theorem}\label{thm:lamR-dynamics}
    The ODE for $\Lambda_R$ is transformed to the ODE
\[
\nu' = -d\H = -\frac{\partial \H}{\partial x} dx - \frac{\partial \H}{\partial y} dy,
\]
    where $\H$ is the Hamiltonian derived in equation \eqref{eqn:full-hamiltonian}. 
\end{theorem}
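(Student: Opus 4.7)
The plan is to derive the stated covector ODE from the Lie algebra ODE for $\Lambda_R$ by using the relationship $\nu = T^*\Phi(-\Lambda_R)$ together with the anti-symplectomorphism property of $\Phi$. Concretely, $\Phi : \h \to \O_J$ is an anti-symplectomorphism between $\h$ (equipped with a suitable $\SL(\R)$-invariant symplectic form) and $\O_J$ (equipped with its Kirillov--Kostant--Souriau form) — this is established in Appendix~\ref{sec:kirillov}. The Lie algebra dynamics of the pair $(X, \Lambda_R)$ is nothing but the Hamiltonian flow on $T^*\O_J$ of the restriction of $\H$ to the coadjoint orbit (the role of $\Lambda_R \in X^{\perp}$ is precisely to represent a cotangent vector to $\O_J$ at $X$, by Lemma~\ref{lem:cotangent-space-OX}). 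Meanwhile $(x, y, \nu_1, \nu_2)$ are canonical Darboux coordinates on $T^*\h$. Since $T^*\Phi$ (together with the sign in $\nu = T^*\Phi(-\Lambda_R)$) intertwines the two symplectic structures, it carries Hamiltonian vector fields to Hamiltonian vector fields.

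First I would verify, using the formula from Theorem~\ref{thm:plane-hamiltonian}, that the Hamiltonian $\H$ has the same numerical value in the two coordinate systems. Combined with the fact that the transformation is (anti-)symplectic in the way compensated by the sign in the definition of $\nu$, this yields automatically that Hamilton's equations in the upper-half-plane picture take the canonical form
\[
x' = \frac{\partial \H}{\partial \nu_1}, \quad y' = \frac{\partial \H}{\partial \nu_2}, \quad \nu_1' = -\frac{\partial \H}{\partial x}, \quad \nu_2' = -\frac{\partial \H}{\partial y}.
\]
The first two equations have already been checked in Lemma~\ref{lem:X-dynamics} (which confirms the symplectic bookkeeping), so only the last two — i.e.\ $\nu' = -d\H$ — remain, and these follow by the same principle.

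As a cross-check (and as a fallback if one prefers to avoid invoking the appendix), I would perform a direct verification. From Lemma~\ref{lem:tangent-maps}, write $T\Phi(\partial_x) = [Y_x]$ and $T\Phi(\partial_y) = [Y_y]$ for explicit matrices $Y_x, Y_y$ depending on $(x,y)$. Then $\nu_i = -\langle \Lambda_R, Y_{x_i}\rangle$, which is well-defined because $\Lambda_R \in X^{\perp}$. Differentiating in $t$ and substituting the ODE \eqref{eqn:lamR} for $\Lambda_R'$ together with the ODEs for $x'$ and $y'$, one expands $\nu_i'$ into (i) terms coming from the control-dependent piece $[P,\Lambda_R] - \langle \Lambda_R, P\rangle [P,X]$ of $\Lambda_R'$, which by Corollary~\ref{cor:control-dep-ad-alternate} assemble into $-\partial/\partial x_i$ of $\langle \Lambda_R, Z_u\rangle/\langle X, Z_u\rangle = -\nu_1 f_1 - \nu_2 f_2$, and (ii) terms coming from $[-\Lambda_1 + (3/2)\lambda_{cost} J, X]$ in $\Lambda_R'$, which produce $-\partial/\partial x_i$ of $\langle \Lambda_1 - (3/2)\lambda_{cost}J, X\rangle$.

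The main obstacle is bookkeeping the sign conventions and the fact that $T\Phi(\partial_{x_i})$ is only defined modulo $\R X$: one must verify that all computations are independent of the representative, which is guaranteed by $\Lambda_R \in X^{\perp}$ but requires care when differentiating in $t$ (the subspace $X^{\perp}$ itself moves). The cleanest way to handle this is to work with the symplectic formulation throughout, treating $X^\perp$ invariantly as $T^*_X \O_J$ and appealing to the anti-symplectic property of $\Phi$; the direct computation then serves only as a confirmation of the sign in $\nu = T^*\Phi(-\Lambda_R)$.
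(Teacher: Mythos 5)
The paper's own proof of this theorem is minimal: it notes that the displayed equation is Hamilton's equation for the costate and explicitly defers the long direct calculation to the thesis~\cite{vajjha-phdthesis}. Your ``cross-check'' direct verification is exactly that deferred calculation, so in that respect you take the same route as the paper, and your handling of the $\R X$-ambiguity and the use of Corollary~\ref{cor:control-dep-ad-alternate} are the right ingredients.

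Your primary symplectic argument, however, misattributes the role of the anti-symplectomorphism property of $\Phi$. What Appendix~\ref{sec:kirillov} establishes is that $\Phi$ intertwines the hyperbolic area form on $\h$ with the Kirillov form on $\O_J$ --- a statement about the \emph{base}. The costate ODE lives on the cotangent bundle $T^*\O_J$ with its \emph{canonical} symplectic form, which is unrelated to the Kirillov form, and the cotangent lift $T^*\Phi : T^*\O_J \to T^*\h$ is a symplectomorphism of canonical forms for any diffeomorphism $\Phi$ of the base spaces, whether or not $\Phi$ is (anti-)symplectic. So the anti-symplectomorphism property is not the ingredient doing the work. What does matter is (i) the unproved assertion that the $(X,\Lambda_R)$ dynamics \emph{is} the canonical Hamiltonian system on $T^*\O_J$ --- the paper only establishes the $(X,\Lambda_2)$ system on $T^*\sl(\R)$ (Section~\ref{sec:costate-variables}), so a symplectic-reduction step from $T^*\sl(\R)$ to $T^*\O_J$ is silently needed; and (ii) the fiberwise negation hidden in $\nu = T^*\Phi(-\Lambda_R)$, which turns the composed map $T^*\O_J \to T^*\h$ into an \emph{anti}-symplectomorphism of cotangent bundles and therefore threatens to flip the sign in Hamilton's equations. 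Whether that flip and the sign convention built into $\H_2$ (Theorem~\ref{thm:plane-hamiltonian}) combine to produce precisely the sign stated in the theorem is exactly the delicate bookkeeping that the direct computation is needed to settle; the conceptual argument as sketched does not pin it down. You essentially acknowledge this yourself by relegating the sign to the direct check, so the overall plan is workable, but the symplectic framing should not be presented as if the anti-symplectomorphism lemma were what makes the signs come out right.
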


\begin{proof}
We note that this equation is Hamilton's equation for the
costate.  We omit the long direct calculation that this
equation is equivalent to the ODE for $\Lambda_R$.
Details can be found in~\cite{vajjha-phdthesis}.
\end{proof}

\newpage
\part{Solutions}
\chapter{Bang Bang Solutions}

We have a well-defined control problem on the cotangent bundle, and we
now turn to describing special solutions of this system. We start
with the easiest case, where the control is constant.

\section{Solutions for Constant Control}

% We denote $P := {Z_u}/{\langle X,Z_u \rangle}$. 
\begin{lemma}\label{cor:X-det-const}
The quantity $\bracks{X}{Z_u}$ for a fixed control matrix $Z_u$ is a
constant of motion along $X$.
\end{lemma}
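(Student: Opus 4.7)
The plan is to differentiate $\langle X, Z_u\rangle$ directly and use the state equation together with the invariance of the trace form on $\sl(\R)$.

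Since $Z_u$ is a fixed control matrix (constant in time), we have
\[
\frac{d}{dt}\langle X, Z_u \rangle = \langle X', Z_u \rangle.
\]
Substituting the state equation (Theorem \ref{thm:X-dynamics-sl2}),
\[
\langle X', Z_u \rangle = \left\langle \frac{[Z_u,X]}{\langle Z_u,X\rangle}, Z_u \right\rangle = \frac{\langle [Z_u,X], Z_u\rangle}{\langle Z_u,X\rangle}.
\]
The denominator is nonzero by the star inequalities together with Lemma \ref{lem:sl2-star-condition}, so this quotient is well-defined.

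The next step is to show the numerator vanishes. By the invariance property in Lemma \ref{lem:sl2-lemmas}(4), $\langle [Z_u,X], Z_u\rangle = \langle Z_u, [X, Z_u] \rangle = -\langle Z_u, [Z_u, X]\rangle$; equivalently, applying the identity directly with the first argument $Z_u$ yields $\langle Z_u, [Z_u,X]\rangle = \langle [Z_u,Z_u], X\rangle = 0$. Hence $\langle X', Z_u\rangle = 0$, which gives the claim.

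There is no real obstacle here; the only thing to be careful about is invoking the star inequalities to ensure that $\langle Z_u, X\rangle \ne 0$ throughout the trajectory, so that the quotient appearing in the state equation is always defined and the computation is valid along the entire interval of existence. Since $Z_u$ is held fixed, the argument does not require any regularity beyond Lipschitz continuity of $X$, which has already been established.
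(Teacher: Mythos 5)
Your proof is correct and follows essentially the same route as the paper: differentiate $\langle X, Z_u\rangle$, substitute the state equation, and observe that the numerator $\langle [Z_u,X], Z_u\rangle$ vanishes by the invariance identity $\langle [X,Y],Z\rangle = \langle X,[Y,Z]\rangle$. The extra remarks about the denominator being nonzero via the star inequalities and about regularity are fine but not required for the argument as the paper presents it.
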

\begin{proof}
The quantity in question is constant since
\begin{equation}\label{eqn:X-Z0-constant}
\langle X, Z_u \rangle' = \langle X', Z_u \rangle 
= \frac{\langle [Z_u, X], Z_u \rangle}{\langle X,Z_u \rangle} = 0,
\end{equation}
where we have used the fact that $\langle [X,Y],Z \rangle = \langle
X,[Y,Z] \rangle$.

\index[n]{X@$X$, Lie algebra element!$X,Y,Z,W\in\sl$}

\end{proof}

\index{Riemannian metric!invariant}

\begin{lemma}\label{lem:Riemannian-speed}
Assume that the control $u\in{}U_T$ is constant.
Then the speed $\bracks{X'}{X'}^{1/2}$ of $X$ is constant.
Moreover, the trajectory $z$ in $\h$, defined by
$X = \Phi\circ z$, has constant speed with respect to the
invariant Riemannian metric on $\h$.
\end{lemma}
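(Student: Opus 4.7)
The plan is to exploit the Lax form $X' = [P, X]$ together with $\Ad$-invariance of the trace form, and then transfer this to the upper half-plane via the equivariance of $\Phi$.

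First I would show that, for constant control, the matrix $P = Z_u/\bracks{X}{Z_u}$ is itself constant in $t$. Indeed, $Z_u$ is constant because $u$ is, and Lemma~\ref{cor:X-det-const} asserts that $\bracks{X}{Z_u}$ is a constant of motion. Hence the equation from Theorem~\ref{thm:X-dynamics-sl2} simplifies to $X' = [P, X]$ with $P$ a fixed element of $\sl(\R)$, which integrates explicitly to $X(t) = \Ad_{\exp(tP)} X_0$. Differentiating gives $X'(t) = \Ad_{\exp(tP)}[P, X_0]$, and since the trace form is $\Ad$-invariant,
\[
\bracks{X'(t)}{X'(t)} = \bracks{[P,X_0]}{[P,X_0]},
\]
which is independent of $t$. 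This proves the first statement.

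For the second statement, the key point is that $\Phi : \h \to \O_J$ is $\SL(\R)$-equivariant, so the symmetric bilinear form on $\h$ obtained by pulling back $\bracks{\cdot}{\cdot}$ via the tangent map $T\Phi$ (mod $\RX$ as in Lemma~\ref{lem:half-plane-lie-algebra-iso}) is $\SL(\R)$-invariant for the action by linear fractional transformations. Since the invariant Riemannian metric $(dx^2+dy^2)/y^2$ on $\h$ is, up to scalar, the unique $\SL(\R)$-invariant metric, the pullback must be a constant multiple of it. I would then pin down the constant by a single evaluation at the basepoint $z=i$ (where $\Phi(i)=J$): using the explicit $Y_z$ from Lemma~\ref{lem:tangent-maps} with $(r_1,r_2)$ a tangent vector at $z = i$, a two-by-two matrix computation gives
\[
\bracks{[Y_z, J]}{[Y_z, J]} = 2(r_1^2 + r_2^2),
\]
matching $2$ times the hyperbolic inner product at $i$. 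Hence the pullback equals $2\,(dx^2+dy^2)/y^2$ everywhere.

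Combining the two steps, if $X = \Phi \circ z$, then $X'(t)$ corresponds under $T\Phi$ to $z'(t)$, and
\[
\bracks{X'(t)}{X'(t)} = 2\,\frac{x'(t)^2 + y'(t)^2}{y(t)^2}.
\]
The left-hand side was shown to be constant, so the hyperbolic speed of $z$ is constant as well. The only real obstacle is the verification of the proportionality constant at a single point; everything else follows formally from the Lax structure and equivariance.
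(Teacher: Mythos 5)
Your proof is correct. The two routes differ in flavor but not in substance, so it is worth noting what each buys.

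For the first claim, the paper does not pass through the explicit solution at all: it differentiates $\bracks{X'}{X'}$ directly, uses $X''=[P,X']$ (since $P$ is constant) and the identity $\bracks{[P,X']}{X'}=\bracks{P}{[X',X']}=0$ from Lemma~\ref{lem:sl2-lemmas}. Your argument instead integrates the Lax equation to $X(t)=\Ad_{\exp(tP)}X_0$, differentiates to $X'(t)=\Ad_{\exp(tP)}[P,X_0]$, and then uses $\Ad$-invariance of the trace form. Both are two lines; the paper's is marginally more self-contained since it never needs the closed-form flow, while yours makes the geometric mechanism (the trajectory moving in a single adjoint orbit under a one-parameter subgroup) explicit, which dovetails nicely with the displayed solutions in \eqref{eqn:const-control-g}--\eqref{eqn:const-control-z-X}.

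For the second claim, the paper simply cites Lemma~\ref{lem:riemannian}, whose proof picks an explicit basis of $\mathfrak{p}_X=X^\perp$ at a general $z$ and verifies $\bracks{[\tilde{\mb{e}}_i,\Phi(z)]}{[\tilde{\mb{e}}_j,\Phi(z)]}=2\delta_{ij}/y^2$ directly. You replace that general-point computation with a softer argument: equivariance of $\Phi$ forces the pullback of the trace form to be an $\SL(\R)$-invariant metric on $\h$, uniqueness up to scalar of such metrics pins the form down to a multiple of $(dx^2+dy^2)/y^2$, and a single evaluation at $z=i$ fixes the constant to $2$. Your computation at $i$ is right: with $Y_i=\bigl(\begin{smallmatrix}r_2/2 & r_1\\ 0 & -r_2/2\end{smallmatrix}\bigr)$ one gets $[Y_i,J]=\bigl(\begin{smallmatrix}r_1 & -r_2\\ -r_2 & -r_1\end{smallmatrix}\bigr)$ and hence trace form $2(r_1^2+r_2^2)$. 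One small point of care, which you handled correctly but is worth flagging: $T\Phi$ in Lemma~\ref{lem:tangent-maps} lands in the quotient $\sl(\R)/\RX$, so to compare with $\bracks{X'}{X'}$ one must pass through the isomorphism $[Y]\mapsto[Y,X]$ onto $X^\perp$; that is why the quantity to compute is $\bracks{[Y_z,J]}{[Y_z,J]}$ and not anything involving $Y_z$ alone. The paper essentially reproves the uniqueness statement by doing the computation at arbitrary $z$; your route trades that for the standard classification of invariant metrics on $\h$, which is more conceptual but rests on an external fact.
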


\begin{proof}
Let $P=Z_u/\bracks{X}{Z_u}$ with constant control $u\in U_T$.
By the previous lemma, $P$ is constant.
To show that the speed is constant, we differentiate
\begin{align*}
\bracks{X'}{X'}' &= 2\bracks{X''}{X'} 
= 2\bracks{[P,X]'}{X'}=2\bracks{[P,X']}{X'}
\\
&= 2\bracks{P}{[X',X']} = 0.
\end{align*}
%where we have used Lemma~\ref{prop:brack-brack-inner-prod}.

The trajectory $z$ also has constant speed because of the
compatibility of the invariant metric on the upper half plane with the
trace form on $\sl(\R)$, by Lemma~\ref{lem:riemannian}.
\end{proof}
%% DONE[] Give reference for metric compatibility.

In this section, we keep the control matrix $Z_u$ constant and derive
general solutions to the state and costate equations.  This means that
$\bracks{Z_u}{X}$ is a constant of motion (by
equation \eqref{eqn:X-Z0-constant}), and hence
$P={Z_u}/{\bracks{Z_u}{X}}$ is also constant. So, for $g(0) = I_2$ and
$X(0)=X_0$ (or, equivalently $z(0)=z_0$), write
\index[n]{P@$P$, normalized control matrix!$P_0$, constant}
$P_0:={Z_u}/{\bracks{Z_u}{X_0}}$. The general solutions for $(g,X)$
are
\begin{align} 
g(t) &= \exp(t(X_0 + P_0))\exp(-t P_0), \label{eqn:const-control-g} 
\\
z(t) &= \exp\left(tP_0 \right) \cdot z_0,
\\
X(t) &= \exp(tP_0)X_0\exp(-tP_0) = \Ad_{\exp(tP_0)}X_0. 
\label{eqn:const-control-z-X}
\end{align}

As previously noted in Corollary \ref{cor:lam1-gen-sol}, the general
solution for $\Lambda_1$ is
\[
\Lambda_1(t) = \Ad_{g(t)^{-1}}\Lambda_1(0) = g(t)^{-1}\Lambda_1(0)g(t).
\]

We also have a rather complicated (but ultimately elementary) 
expression for the general solution
for $\Lambda_R$.
\[
\Lambda_R(t) = \Ad_{\exp(tP_0)}{\tilde\Lambda}_R(t),
\]
\index[n]{s@$s$, real parameter!dummy integration variable}
\index[n]{zL@$\Lambda$, costate!$\tilde\Lambda_R$, solution specification for $\Lambda_R$}
\index[n]{zy@$\psi$, local auxiliary function or integral!$\psi$, solution specification for $\Lambda_R$}
\index[n]{zY@$\Psi$, auxiliary function in ODE solution of $\Lambda_R$}
where 
\begin{align*}
    {\tilde\Lambda}_R(t) &:= \Lambda_R(0) - [\Psi(t) + \psi(t)P_0,X_0], 
\\
\psi(t) &:= \int_0^t \bracks{P_0}{\Lambda_R(0) - [\Psi(s),X_0]}ds, 
\\
    \Psi(t) &:= \int_0^t \Ad_{\exp(-(X_0+P_0)s)}\Lambda_1(0) 
- \frac{3}{2}\lambda_{cost}\Ad_{\exp(-P_0s)}J ds.
\end{align*}
\index[n]{zl@$\lambda$, eigenvalue}
The two quadratures can be carried out explicitly for any given
matrices $X_0$ and $X_0+P_0$.  The exponentials of these matrices are
expressed in terms of the exponentials $\exp(\lambda s)$ of the
eigenvalues $\lambda$ of these matrices.  The integrands are
exponentials (possibly multiplied by polynomials), and the integrals
are easily computed.  In computing the solution $\Lambda_R$, we first
compute $\Psi$, then $\psi$, then ${\tilde\Lambda}_R$, and finally $\Lambda_R$.

By inspection of the formula for $\psi$, we note that if
${\tilde\Lambda}_{R,0}(t)$ is the specialization of $\tilde\Lambda_R(t)$ to
the initial condition $\Lambda_R(0) = 0$, then the general solution
adds an affine term
% TCH:sign corrected Jul8,2023.
\[
{\tilde\Lambda}_R(t) = {\tilde\Lambda}_{R,0}(t) + \Lambda_R(0) - t\bracks{P_0}{\Lambda_R(0)}[P_0,X_0].
\]

\index[n]{YZ@$Z_u$, control matrix!$Z_u$, constant}

\begin{lemma}
The matrices $P_0$ and $X_0+P_0$ have the same characteristic
polynomial (and hence the same eigenvalues).  If $\det(Z_u)<0$, the
eigenvalues are real: $\pm\sqrt{-\det(Z_u)}/\bracks{X_0}{Z_u}$.
\end{lemma}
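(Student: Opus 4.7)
The plan is to exploit the fact that both $P_0$ and $X_0+P_0$ lie in $\sl(\R)$ so their characteristic polynomials are determined by their determinants alone, and then to compute those two determinants using the trace form identity $\langle Y,Y\rangle = -2\det(Y)$ from Lemma~\ref{lem:sl2-lemmas}(1).

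First I would observe that $P_0 = Z_u/\langle X_0,Z_u\rangle$ is traceless (scalar multiple of $Z_u\in\sl(\R)$) and $X_0\in\sl(\R)$, so $X_0+P_0\in\sl(\R)$ as well. For any $A\in\sl(\R)$ the characteristic polynomial reduces to $\lambda^2+\det(A)$, so it suffices to prove $\det(P_0)=\det(X_0+P_0)$.

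Next I would expand using $\det(A) = -\tfrac12\langle A,A\rangle$ and the bilinearity of the trace form:
\[
\det(X_0+P_0) = -\tfrac12\langle X_0+P_0,\,X_0+P_0\rangle = \det(X_0)+\det(P_0)-\langle X_0,P_0\rangle.
\]
The normalization $\det(X_0)=1$ holds because $X_0\in\O_J$ (Lemma~\ref{lem:X-hyperbolic}), and by definition of $P_0$,
\[
\langle X_0,P_0\rangle = \frac{\langle X_0,Z_u\rangle}{\langle X_0,Z_u\rangle} = 1.
\]
These two terms cancel, leaving $\det(X_0+P_0)=\det(P_0)$, so the characteristic polynomials agree.

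Finally, for the eigenvalue formula, I would read off $\det(P_0) = \det(Z_u)/\langle X_0,Z_u\rangle^2$ using the homogeneity of $\det$ on $2\times 2$ matrices, so that the roots of $\lambda^2 + \det(P_0)=0$ are $\pm\sqrt{-\det(Z_u)}/\langle X_0,Z_u\rangle$; these are real precisely when $\det(Z_u)<0$. There is essentially no obstacle here — the only thing to be careful about is distinguishing the $2\times 2$ identity $\det(\alpha A)=\alpha^2\det(A)$ from a general-dimension identity, but in $\sl(\R)$ everything is two-dimensional and traceless, which makes both the characteristic polynomial reduction and the trace-form computation entirely mechanical.
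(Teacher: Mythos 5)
Your proof is correct and follows essentially the same route as the paper's: both reduce the claim to showing $\det(P_0)=\det(X_0+P_0)$, both expand via the trace form identity $\det(A)=-\tfrac12\langle A,A\rangle$, and both use the cancellation of $\det(X_0)=1$ against $\langle X_0,P_0\rangle=1$. Your treatment of the eigenvalue formula is slightly more explicit than the paper's (which just notes $\det(P_0)=\det(Z_u)/\langle X_0,Z_u\rangle^2$ implicitly via the scalar multiple relation), but there is no substantive difference.
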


The most important case occurs when $u$ is a vertex of the control
set $U_T$, where $\det(Z_u)=-1/3<0$.

\begin{proof}
The characteristic polynomial of matrices $P_0,X_0+P_0\in\sl$ is
determined by the determinant.  We have $\det(X_0)=1$, and
$\bracks{X_0}{P_0}=1$.  Then by Lemma~\ref{lem:sl2-lemmas}, we have
\begin{align*}
-2\det(X_0+P_0) &= \bracks{X_0+P_0}{X_0+P_0}
\\ 
&= \bracks{X_0}{X_0}+2\bracks{X_0}{P_0}+\bracks{P_0}{P_0}
\\
&=-2\det(P_0).
\end{align*}
Recall that $\tr(Z_u)=0$ and that $Z_u$ and $P_0$ are scalar
multiples of each other.  When $\det(Z_u)<0$, the control matrix $P_0$
has two real eigenvalues.
\end{proof}

We analyze the solution $z(t) = \exp(t P_0)\cdot z_0$ in greater
detail.  Let $Z_0$ be the value of $Z_u$ at $t=0$.  If $\det(Z_0)<0$,
let $\pm\lambda$ be the real eigenvalues of $P_0$, chosen so that
$\lambda>0$.  The matrix $P_0$ can be diagonalized over $\R$:
\[
\exp(tP_0) = A\, \mathrm{diag}(\exp(t \lambda),\exp(-t\lambda)) A^{-1},
\]
\index[n]{v@$\mb{v}$, vector!$\mb{v}_\pm$, eigenvectors}
\index[n]{A@$A$, matrix or linear map!of eigenvectors}
for some $A\in\SL(\R)$.  The columns of $A=(\mb{v}_+,\mb{v}_-)$ are the column
eigenvectors $\mb{v}_{\pm}$ of $Z_0$, associated to the positive and
negative eigenvectors, respectively.  The matrix $P_0$ has the same
eigenvectors.  The solution is then
\index[n]{yz@$z\in\h$!$\tilde{z}_0$}
\index[n]{0@$\infty$, boundary point of $\h$}
\[
z(t) = A\cdot (\exp(2t\lambda) \tilde{z}_0),\quad \tilde{z}_0 := A^{-1}\cdot z_0.
\]
The image of the trajectory $t\mapsto \exp(2t\lambda) \tilde{z}_0$ is a
Euclidean line through $0+0i$ and $\tilde{z}_0\in\h$.  By adopting the
convention that $\lambda>0$, this linear trajectory tends to $0$ as
time $t$ tends to $-\infty$, reaches $\tilde{z}_0$ at $t=0$, and tends to
infinity as $t$ tends to infinity.  Linear fractional transformations
send generalized circles (that is Euclidean circles or lines) to
generalized circles.  Thus, the image of $z$ is the unique generalized
circle through $A\cdot 0$, $z_0 = A\cdot \tilde{z}_0$, and $A\cdot \infty$.

\index[n]{0@$[-]$, line through $-$}
\index{circle, generalized}
The boundary of $\h$ can be identified with the real projective line
$\R\cup\{\infty\}$.  From the description of $A$ as the column of
eigenvectors, we have $A\cdot 0 = [\mb{v}_-]$ and $A\cdot \infty = [\mb{v}_+]$,
where $[\mb{v}_-]$ and $[\mb{v}_+]$ are the lines through the origin spanned by
the eigenvectors, viewed as points in the real projective line.  Thus,
the trajectories are arcs of generalized circles, from $[\mb{v}_-]$ to
$[\mb{v}_+]$ on the boundary of $\h$.

If $\det(Z_0)>0$, then the eigenvalues are pure imaginary.  The
solutions $z(t)$ in $\h$ are then periodic.  In fact, the solutions
are circles whose center (with respect to the hyperbolic metric) is
the point $z_0\in\h$, defined by the fixed point condition
$Z_0\cdot{z}_0=z_0$.  (Equivalently, $(z_0,1)$ is a complex
eigenvector of $Z_0$, chosen so that $z_0\in\h$.)  Each trajectory
moves at constant speed with respect to the hyperbolic metric on $\h$.
When $u=(1/3,1/3,1/3)$ (the center of $U_T$), $Z_0=J/3$, and the fixed
point is $i\in\h$.
\mcite{MCA:9041208 J/3}

If $\det(Z_0)=0$, then the eigenvalues are $0$ (but $Z_0$ has rank
$1$).  (For example, take control $u=(2/3,1/6,1/6)$.) The solutions $z(t)$ in
$\h$ move along horocycles centered at an ideal point in the real
projective line (viewed as the boundary of $\h$).  That ideal point is
the line formed by the kernel of $Z_0$.
%% TCH todo: Check these claims. DONE[TCH 2/25/2024]. 

\section{Constant Control at the Vertices}
\label{sec:e2e3}

As we will see in Lemma~\ref{lem:control-face-lemma}, the constant
controls at the vertices of the control triangle have particular
significance, because they often maximize the Hamiltonian.
Assume that the control remains at a
vertex $u$ of the control triangle $U_T$ during some time interval
$t\in[t_1,t_2]$.  By the construction of the control from
state-dependent curvatures, two of the state-dependent curvatures
$\kappa_j,\kappa_{j+1}$ are zero.  Thus the corresponding trajectories
\[
\sigma_{i}(t) = g(t)\mb{s}_{i}^*,\quad i=2j,2j+2,
\]
move along straight lines.  According to Lemma~\ref{lem:hyp-arc}, the
third curve $\sigma_{2j+1}(t)$ moves along an arc of a hyperbola.  The
solution \eqref{eqn:const-control-g} gives explicit parameterizations
of these straight lines and hyperbolic arcs.

If $u$ is a vertex of the control triangle $U_T$, then $\det(Z_u)=
-1/3<0$, and the eigenvalues of $-Z_u$ are $\pm 1/\sqrt3$.  Let
$\mb{v}_\pm$ be eigenvectors for $1/\sqrt3$ and $-1/\sqrt3$
respectively.  
The remarks of the earlier paragraph apply, to show that the
trajectories in $\h$ are generalized circles moving from $[\mb{v}_-]$
toward $[\mb{v}_+]$ on the boundary of $\h$.  The explicit parameterization
is in terms of exponentials, as described above.

If $u=(0,0,1)$, the eigenvectors of $-Z_u$ are computed to be
\[
\mb{v}_- = (-1/\sqrt3,1),\quad \mb{v}_+ = (1,0).
\]
Trajectories $z$ are straight lines moving from the ideal vertex
$[\mb{v}_-] = -1/\sqrt3$ toward the ideal vertex $[\mb{v}_+]=+\infty$.
Explicitly, we have
\index[n]{r@$r$, real number!slope of ODE solution}
\index[n]{C@$C,C_0,C_1,C_2$, local real constant}
\[
x(t) = [\mb{v}_-] + C_0\exp(r t),\quad y(t) = C_0r \exp(r t),
\]
where the constants of integration $r,C_0>0$
are uniquely determined at $t=0$.
\[
x_0=x(0) = [\mb{v}_-] + C_0,\quad y_0= y(0) = C_0r.
\]

If $u=(0,1,0)$, the eigenvectors of $-Z_u$ are computed to be
\[
\mb{v}_- = (1,0),\quad \mb{v}_+=(1/\sqrt3,1).
\]
The trajectories $z$ are straight lines moving from the ideal vertex
$[\mb{v}_-]=+\infty$ toward the ideal vertex $[\mb{v}_+]=+1/\sqrt3$.
The trajectory is
\[
x(t) = [\mb{v}_+] + C_0\exp(r t),\quad y(t) = C_0r \exp(r t),
\]
where now $C_0,r<0$.
Note that when $u=(0,*,*)$, the matrix $Z_u$ is
upper triangular, so that the eigenvectors 
and $\exp(P t)$ are trivial
to compute.

If $u=(1,0,0)$, the eigenvectors of $-Z_u$ are computed to be
\[
\mb{v}_-=(1/\sqrt3,1),\quad \mb{v}_+=(-1/\sqrt3,1).
\]
The trajectories $z$ are Euclidean circles moving from the ideal vertex
$[\mb{v}_-]= 1/\sqrt3$ toward the ideal vertex $[\mb{v}_+]= -1/\sqrt3$.

Note that the solutions at the different vertices are related by
linear fractional transformations $R$, which rotates the star domain
$\hstar$, and permutes the ideal vertices.

\tikzfig{traj-upper}{The trajectories with constant control $u$
  are generalized circles, shown here in the star domain of the upper
  half-plane.}{
\def\rt{1.732}
% e3
\begin{scope}[xshift=3in,yshift=0in]
\draw (-1/\rt,0)--(-1/\rt,3);
\draw (1/\rt,0)--(1/\rt,3);
\draw (-1,0) -- (1,0);
\draw (0,0) node [anchor=north] {$u=(0,0,1)$};
\begin{scope}
\clip (-1/\rt,0) rectangle (1/\rt,3cm);
\foreach \t in {10,20,30,40,50,60,70,80} \draw[->,semithick]
(-1/\rt,0) -- ++ (\t:4);
\foreach \t in {10,20,30,40,50,60,70,80} \draw[->,semithick]
(-1/\rt,0) -- ++ (\t:1.5);
\draw[fill=white, fill opacity=0.75] (0,0) circle (0.577cm);
\end{scope}
\end{scope}
% e2
\begin{scope}[xshift=1.5in,yshift=0in]
\draw (-1/\rt,0)--(-1/\rt,3);
\draw (1/\rt,0)--(1/\rt,3);
\draw (-1,0) -- (1,0);
\draw (0,0) node [anchor=north] {$u=(0,1,0)$};
% \draw[->,semithick] (0,2) -- ++ (-70:0.4);
\begin{scope}
\clip (-1/\rt,0) rectangle (1/\rt,3cm);
\foreach \t in {10,20,30,40,50,60,70,80} \draw[semithick]
(1/\rt,0) -- ++ (180-\t:4);
% TCH 4/26/2024. Reversed arrow.
\foreach \t in {10,20,30,40,50,60,70,80} \draw[-<,semithick]
(1/\rt,0) -- ++ (180-\t:1.5);
\draw[fill=white, fill opacity=0.75] (0,0) circle (0.577cm);
\end{scope}
\end{scope}
% e1
\begin{scope}[xshift=0in,yshift=0in]
\draw (-1/\rt,0)--(-1/\rt,3);
\draw (1/\rt,0)--(1/\rt,3);
\draw (-1,0) -- (1,0);
\draw (0,0) node [anchor=north] {$u=(1,0,0)$};
\clip (-3,0) rectangle (3,3);
\foreach \t/\r in {0/0.577,0.2/0.611,0.4/0.702,0.6/0.832,
0.8/0.986,1/1.154,1.2/1.33,1.4/1.51}
\draw[semithick,gray!30]
(0,\t) circle (\r);
\begin{scope}
\clip (-1/\rt,0) rectangle (1/\rt,3cm);
% r = sqrt(1/3 + t^2):
\foreach \t/\r in {0/0.577,0.2/0.611,0.4/0.702,0.6/0.832,
0.8/0.986,1/1.154,1.2/1.33,1.4/1.51,1.6/1.7}
\draw[semithick]
(0,\t) circle (\r);
\draw[->,semithick] (0,2.33) -- ++ (-180:0.3);
%
%\foreach \t in {10,20,30,40,50,60,70,80} \draw[->,semithick]
%(1/\rt,0) -- ++ (180-\t:1.5);
\draw[fill=white] (0,0) circle (0.577cm);
\end{scope}
\end{scope}
}

We record the preceding discussion in the form of a lemma.

\begin{lemma}\label{lem:const-control-trajectories}
If the control function is a constant at one of the vertices of $U_T$,
then the trajectories in the star domain are arcs generalized circles.
If the control is $(0,0,1)$ or $(0,1,0)$, then each trajectory moves
along a Euclidean straight line.
\end{lemma}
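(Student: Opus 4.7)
The plan is to invoke the general constant-control solution formula \eqref{eqn:const-control-z-X}, namely $z(t) = \exp(tP_0)\cdot z_0$ with $P_0 = Z_u/\langle Z_u, X_0\rangle$, and then read off the geometry of the trajectory from the eigenstructure of $P_0$. Since the lemma only concerns $u$ at a vertex of $U_T$, a direct check gives $\det(Z_u) = -1/3 < 0$, so $P_0$ has two distinct real eigenvalues $\pm\lambda$ with $\lambda > 0$. This is the case already discussed immediately before the lemma, so most of the work is simply to assemble the pieces.

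Let $\mathbf{v}_+,\mathbf{v}_-\in\R^2$ be eigenvectors of $P_0$ corresponding to $\lambda,-\lambda$, and let $A = (\mathbf{v}_+,\mathbf{v}_-)\in\SL(\R)$ (after normalizing the determinant to $1$). Then $\exp(tP_0) = A\,\mathrm{diag}(e^{t\lambda},e^{-t\lambda})A^{-1}$, and writing $\tilde z_0 = A^{-1}\cdot z_0$ we obtain
\[
z(t) \;=\; A\cdot\bigl(e^{2t\lambda}\,\tilde z_0\bigr).
\]
The inner trajectory $t\mapsto e^{2t\lambda}\tilde z_0$ traces a Euclidean ray in $\h$ through the origin, which is a generalized circle passing through the three boundary-extended points $0$, $\tilde z_0$, and $\infty$. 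Since linear fractional transformations send generalized circles to generalized circles, the image of $z$ is the generalized circle through $A\cdot 0 = [\mathbf{v}_-]$, $z_0$, and $A\cdot\infty = [\mathbf{v}_+]$. This proves the first assertion.

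For the second assertion I would just note that when $u = (0,0,1)$ or $u = (0,1,0)$, the explicit form of $Z_u$ in \eqref{eqn:trisystem-Z0} is upper triangular, so the standard basis vector $(1,0)$ is an eigenvector of $Z_u$ (hence of $P_0$). Thus either $[\mathbf{v}_+]$ or $[\mathbf{v}_-]$ equals the ideal point $+\infty\in\partial\h$. A generalized circle in $\hat\C$ that passes through $\infty$ is a Euclidean straight line, so the trajectory $z(t)$ is a Euclidean straight line in the star domain. (In fact, the explicit parameterizations $x(t) = [\mathbf{v}_-] + C_0 e^{rt}$, $y(t) = C_0 r\, e^{rt}$ given just before the lemma exhibit the line with slope determined by the sign of the nonzero eigenvalue.)

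There is no real obstacle here: the lemma is essentially a summary of the computation already carried out in the preceding paragraphs. The only subtlety is keeping the identifications straight between (i) eigenvectors of $P_0$ in $\R^2$, (ii) the corresponding points $[\mathbf{v}_\pm]\in\mathbb{RP}^1 \cong \partial\h$, and (iii) the images $A\cdot 0,\ A\cdot\infty\in\partial\h$ under the M\"obius transformation; all three coincide, which is what makes the generalized-circle description clean.
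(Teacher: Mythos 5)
Your proposal is correct and follows the same route as the paper: the paper's proof of this lemma is intentionally empty (``We record the preceding discussion in the form of a lemma''), and your argument is a faithful writeup of that preceding discussion — diagonalizing $P_0$ via the eigenvector matrix $A$, identifying the boundary endpoints $[\mathbf{v}_\pm]$, and observing that the upper-triangular $Z_u$ for $u=(0,0,1)$ or $(0,1,0)$ forces one ideal endpoint to be $\infty$, making the generalized circle a Euclidean line.
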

\begin{proof}  
\end{proof}

\section{Partition of the Cotangent Space}\label{sec:bang}

% https://math.stackexchange.com/questions/83278/two-definitions-of-a-face-of-a-convex-set-are-they-equivalent

\index[n]{K@$K$, convex disk!compact convex set in $\R^n$}
\index{face}
\index[n]{F@$F$, face of a convex set}

Let $K$ be a compact convex set in $\R^n$.  A nonempty convex subset
$F$ of $K$ is called a \emph{face} if and only if for all $x,y\in{K}$ and all
$t\in(0,1)$, the membership $t x + (1 - t) y \in F$ implies endpoint
membership: $x,y \in F$.

\begin{lemma}\label{lem:control-face-lemma}
Assume that the control set is $U$ a compact convex set in the affine
plane $\{(u_0,u_1,u_2)\mid\sum u_i =1\}$. For each point in the
cotangent space $(g,X,\Lambda_1,\Lambda_R)$, 
the set of controls $u\in U$ maximizing the
Hamiltonian $\H(\Lambda_1,\Lambda_R,X,Z_u)$ in
equation \eqref{eqn:max-ham} is equal to a face of the control set.
\end{lemma}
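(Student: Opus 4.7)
The plan is to reduce the maximization to a linear (affine) problem, so that the maximizer set is the intersection of $U$ with a supporting hyperplane.

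First I would isolate the $u$-dependence: in the Hamiltonian
\[
\H(\Lambda_1,\Lambda_R,X;Z_u) = \left\langle \Lambda_1 - \tfrac{3}{2}\lambda_{cost} J, X \right\rangle - \frac{\langle \Lambda_R,Z_u\rangle}{\langle X, Z_u\rangle},
\]
the first summand is independent of $u$, so maximizing $\H$ over $U$ is the same as maximizing $u \mapsto -\langle\Lambda_R,Z_u\rangle/\langle X,Z_u\rangle$. The map $u \mapsto Z_u$ defined by \eqref{eqn:trisystem-Z0} is affine (in fact linear), hence both $a(u) := \langle \Lambda_R, Z_u\rangle$ and $b(u) := \langle X, Z_u\rangle$ are affine functions on the ambient affine plane $\{\sum u_i = 1\}$ containing $U$.

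Next I would use the star condition. By Lemma~\ref{lem:sl2-star-condition}, $b(u) = \langle X,Z_u\rangle < 0$ for every $u \in U$, so the denominator is bounded away from zero on the compact set $U$ and the quantity to be maximized is continuous. Let
\[
c := \max_{u \in U} \Bigl(-\frac{a(u)}{b(u)}\Bigr),
\]
attained at some $u^*\in U$. Because $b(u)<0$, multiplying the inequality $-a(u)/b(u) \le c$ through by $b(u)$ \emph{reverses} the sign, giving the equivalent affine inequality
\[
G(u) := a(u) + c\,b(u) \;\le\; 0 \qquad \text{for all } u \in U,
\]
with equality at $u^*$. Thus $G$ is a nonconstant affine functional (or identically zero) on $U$ that attains its maximum $0$ on $U$, so the hyperplane $\{G = 0\}$ is a supporting hyperplane of the convex set $U$, and the set of maximizers of $\H$ is exactly
\[
\{\,u \in U \mid G(u) = 0\,\} \;=\; U \cap \{G = 0\}.
\]

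Finally, I would invoke the standard fact that the intersection of a compact convex set with one of its supporting hyperplanes is a face: if $tx+(1-t)y \in U\cap\{G=0\}$ with $t\in(0,1)$ and $x,y\in U$, then $t\,G(x)+(1-t)\,G(y)=0$ while $G(x),G(y)\le 0$, forcing $G(x)=G(y)=0$. (In the degenerate case where $G \equiv 0$ on $U$, every $u\in U$ is a maximizer and $U$ is itself a face, so the conclusion still holds.) The main \emph{conceptual} point is simply recognizing that the nonlinearity of $\H$ in $u$ is only apparent: the strict sign of $\langle X,Z_u\rangle$ lets one clear denominators without introducing spurious solutions, turning a ratio-of-affine-functionals problem into a linear programming problem whose optimum is achieved on a face.
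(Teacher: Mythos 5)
Your proof is correct, but it takes a genuinely different route from the paper's. The paper observes that along any segment $tu+(1-t)v$ in $U$ the control-dependent term of the Hamiltonian is a fractional-linear function $\H_2(t)=(at+b)/(ct+d)$ with nonzero denominator, so its derivative $(ad-bc)/(ct+d)^2$ has fixed sign and $\H_2$ is either strictly monotone (in which case no interior point of the segment maximizes) or constant (every point maximizes); either way an interior maximizer forces both endpoints to be maximizers, which is exactly the face condition. You instead use the fixed negative sign of $\langle X,Z_u\rangle$ to clear denominators, converting the fractional objective into the affine functional $G(u)=a(u)+c\,b(u)$ with $c$ the optimal value, and identify the maximizer set as $U\cap\{G=0\}$, the intersection of $U$ with a supporting hyperplane, which is a face by the standard convexity argument you give. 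The paper's version verifies the face definition directly from monotonicity and needs no case split; yours yields a more explicit description of the face as a hyperplane slice and surfaces the hidden linear-programming structure of the maximization, at the modest cost of the extra degenerate case $G\equiv 0$. Both arguments rely on $\langle X,Z_u\rangle$ being nonzero (hence of fixed sign) for every $u\in U$: the paper asserts this without comment, while you trace it to Lemma~\ref{lem:sl2-star-condition}. Strictly speaking that lemma is stated for $U_T$, so for a larger control set $U$ one needs the analogous star condition on $U$; but this is the same implicit hypothesis the paper relies on, since the Hamiltonian is not even defined where the denominator vanishes.
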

\index[n]{u@$u$, control!$u,v\in U$}
\index[n]{t@$t$, real number}
\begin{proof}
Fix $(g,X,\Lambda_1,\Lambda_R)$.  We consider the dependence of the
control-dependent part (denoted $\H_2$) of the Hamiltonian in
equation \eqref{eqn:max-ham}.  As a function of $u\in U$, the
Hamiltonian is a ratio of two affine functions.  Fixing $u,v\in U$,
the dependence on $t$ along the segment $t u + (1-t)v\in U$, for $0\le
t \le 1$, of the control-dependent part of the Hamiltonian takes the
general form
\[
\H_2(t) = \frac{a\,t+b}{c\,t+d}
\]
with nonzero denominator.
The derivative $(ad-bc)/(c\,t+d)^2$ of this expression has fixed
sign. Thus, Hamiltonian is monotonic along the segment. If an internal
point of the segment is a maximizer, then both endpoints are
also maximizers.  According to the definition of face, the set of
maximizers must be a face.
\end{proof}

Thus, if we consider the control set $U_T$, the set of maximizers are
either the entire control set $U_T$, or one of its edges or vertices.

The control-dependent part of the Hamiltonian depends on state and
costate variables through $(X,\Lambda_R)$.  Such pairs can be
identified with the cotangent space of $\O_J$:
\index[n]{I@$I\subseteq\{1,2,3\}$}
\index[n]{U@$U$, control set!$(U_T)_I$, face}
\index[n]{e@$\mb{e}_i$, standard basis!extreme point of the control set}
\index{argmax}
\[
T^*\O_J \cong \{(X,\Lambda_R)\in\O_J\times\sl(\R)\mid \Lambda_R\in X^\perp\}.
\]
For each nonempty subset $\emptyset\ne I\subseteq \{1,2,3\}$, we have a face
$(U_T)_I \subseteq U_T$ defined by the convex hull of $\{\ep_i\mid i\in I\}$,
where $\ep_1=(1,0,0)$, $\ep_2=(0,1,0)$, $\ep_3=(0,0,1)$ is the
standard basis of $\R^3$.
These subsets classify faces of $U_T$.
For each $I$, there is a corresponding region of $T^*\O_J$.
\[
(T^*\O_J)_I :=\{(X,\Lambda_R)\in T^*\O_J\mid
\op{argmax}_{u\in{}U_T}\H_{2}(\cdot,\Lambda_R,X,Z_u) =(U_T)_I\}.
%\text{ is maximized along } (U_T)_I\subseteq U_T\}.
% reworded 4/28/2024.
\]
As $I$ runs over nonempty subsets of $\{1,2,3\}$, the sets $(T^*\O_J)_I$
partition $T^*\O_J$ into locally closed subsets.

\index{bang-bang!solution}
The union of the three sets $(T^*\O_J)_I$, for $|I|=1$ is a dense open
subset of $(T^*\O_J)$.  On this dense open subset, the Hamiltonian is
maximized at a uniquely determined vertex.  In general, the control
function $u:[0,t_f]\to{U}_T$ is allowed to be any measurable function.
The solutions of the control system ODEs do not change by modifying
the control $u$ on a set of zero measure in $[0,t_f]$.  If the control
$u$ remains in the dense open subset for all $t\in[0,t_f]$, (that is,
if the image of the control function $u$ is contained in the set of
vertices of $U_T$), we will call the solution a \emph{bang-bang} solution.
Note that the control function is necessarily discontinuous where it
jumps from one vertex to another.  This chapter is concerned with
bang-bang solutions, but later chapters will extend the investigation
to solutions that are not bang-bang.

\index{bang-bang!control}
\begin{definition}[Bang-bang control]
A control function is said to be \emph{bang-bang} if its range is
contained in the set of extreme points of the control set, with
discontinuous switching.
\end{definition}

%% DONE[] Do we allow infinite switching with bang-bang? Yes, we do.

\index{wall} We call the three sets $(T^*\O_J)_I$, for $|I|=2$ the
\emph{walls} in $T^*\O_J$.  The walls have codimension $1$ in
$T^*\O_J$.  The wall $\{i,j\}$ is contained in boundary between the
open regions with indices $I=\{i\}$ and $\{j\}$.  Finally, there is a
set $(T^*\O_J)_{\{1,2,3\}}$, where the three walls meet.

\section{Constant Control Splines}

By a \emph{spline} we mean a trajectory that has been pieced together
from constant control trajectories, by matching the endpoints of one
trajectory on one subinterval with the initial conditions on the next
subinterval.  In this section, we give explicit constructions of
splines.  In this section, we do not assume that the curves satisfy
the Pontryagin Maximum Principle conditions in the cotangent space.
However, the trajectories are assumed to satisfy the state space ODEs
(for $(g,X)$) and controls at the vertices of $U_T$.

Fix a vertex $u_0=\ep_j\in{}U_T$ in the control simplex.
For $t\ge 0$ and initial position $z=z_0\in\h$,
let $g_0(z,t)\in\SL(\R)$ be the trajectory with solving the state ODE
for $g$ with constant control $u_0$ and initial conditions
\index[n]{g@$g$, group element!$g_i$, trajectory in $\SL(\R)$, constant control}
\index[n]{e@$\mb{e}_i$, standard basis!extreme point of the control set}
\[
g_0(z,0)=I_2,\quad g_0'(z,0) =  \Phi(z).
\]
(As always, prime denotes the derivative with respect to $t$.)  Let
$g_i(z,t)\in \SL(\R)$, for $t\ge 0$, $i\in\Z$, and $z\in \h$ be the
trajectory
\[
g_i (z,t) := R^i g_0(z,t) R^{-i}.
\]
We have 
\[
g_i(z,0) = I_2,\quad 
g_i'(z,0) = \Phi(R^i\cdot z),
\]
with constant control $u_i=R^i\cdot u_0$, using the action of the
cyclic subgroup $\langle R\rangle$ of the dihedral group $\Dih$ on the
control simplex $U_T$.

\index[n]{t@$t\in\R$, time!$\tilde{t}_i$, time parameter}
\index[n]{I@$\mathcal{I}$, tuple of control data}
\index[n]{k@$k$, integer}
\index[n]{g@$g$, group element!$g(\mathcal{I},z,t)\in\SL(\R)$, trajectory with bang-bang control}
\index[n]{X@$X(\mathcal{I},z,t)$, Lie algebra trajectory with bang-bang control}

We define a continuous (shifted) extension of $g_i$ that is
non-constant only for $t\in [\tilde{t}_1,\tilde{t}_2]$:
\[
g_i(z,\tilde{t}_1,\tilde{t}_2,t) := \begin{cases} 
I_2, & \text{if } t \le \tilde{t}_1;
\\
g_i(z,t-\tilde{t}_1), &\text{if } \tilde{t}_1\le t\le \tilde{t}_2;
\\
g_i(z,\tilde{t}_2-\tilde{t}_1), &\text{if } \tilde{t}_2\le t.
\end{cases}
\]
The derivative $g_i'$ has jump discontinuities at $\tilde{t}_1$ and
$\tilde{t}_2$.  Let $z(z_0,t)$ be the solution to the ODE
\eqref{pbm:plane-optimal-control-problem} with constant control $u_0$
and initial condition $z_0$.  For any tuple
\[
\mathcal{I} =
((k_1,t_1),(k_2,t_2),\ldots,(k_{n},t_{n})),
\]
 with $k_i\in\Z$ and
$t_i\ge 0$, and for any $z_0\in \hstar$, let
\begin{align}\label{eqn:gamma-bang-bang}
\begin{split}
\tilde{t}_0 &= 0;
\\
\tilde{t}_{i+1} &= \tilde{t}_i + t_{i+1};
\\
z_{i} &= R^{k_i -k_{i+1}}. z(z_{i-1},t_i);
\\
g(\mathcal{I},z_0,t) &= g_{k_1}(z_0,\tilde{t}_0,\tilde{t}_1,t) 
g_{k_2}(z_1,\tilde{t}_1,\tilde{t}_2,t)\cdots g_{k_n}(z_{n-1},\tilde{t}_{n-1},\tilde{t}_n,t).
\end{split}
\end{align}
Note that on the right-hand side of the last equation, only one factor
at a time is non-constant.  Then $g(\mathcal{I},z,t)$ is continuous in
$t$ and has unit speed parametrization.  Set $X(\mathcal{I},z,t):=
g(\mathcal{I},z,t)^{-1} g'(\mathcal{I},z,t)$.  Note that for $t\in
[\tilde{t}_{i-1},\tilde{t}_{i}]$, when the $i$th factor is active, we have
\begin{align*}
X(\mathcal{I},z_0,t) &= g_{k_i}(z_{i-1},\tilde{t}_{i-1},\tilde{t}_{i},t)^{-1} 
g_{k_i}'(z_{i-1},\tilde{t}_{i-1},\tilde{t}_{i},t) 
\\
  &=g_{k_i}(z_{i-1},t-\tilde{t}_{i-1})^{-1}g_{k_i}'(z_{i-1},t-\tilde{t}_{i-1})
\\
  &= R^{k_i} X(z_{i-1},t - \tilde{t}_{i-1}) R^{-k_i},
\end{align*}
where $X(z,t) = g_0(z,t)^{-1} g_0'(z,t)$.  Comparing left
and right limits of $X(\mathcal{I},z_0,t)$ at the boundary value $t=\tilde{t}_i$, we
find that $X(\mathcal{I},z_0)$ is continuous in $t$.
\begin{align*}
X(\mathcal{I},z_0,\tilde{t}_i^-) &= \Phi(R^{k_i}. z(z_{i-1},t_i)) = \Phi(R^{k_{i+1}}\cdot z_i);
\\
X(\mathcal{I},z_0,\tilde{t}_i^+) &= \Phi(R^{k_{i+1}}\cdot z_i).
\end{align*}

%DONE[TCH 4/26/2024. replaced \ep_3 with u_0=\ep_j.] This is a bad
%convention $k$ for control $R^{k}\cdot \ep_3$.  It is better if
%$k=1,2,3$ according to $\ep_k$.

From this, it is easy to see that $g(\mathcal{I},z_0)$ is the general
bang-bang trajectory with finitely many switches (at times
$\tilde{t}_0,\ldots, \tilde{t}_n$), as we vary $\mathcal{I}$ and
$z_0$. The control on the interval $[\tilde{t}_{i-1},\tilde{t}_{i}]$
is $u=R^{k_i}\cdot u_0\in U$.

\index[n]{cost@$\mathrm{cost}$, cost function}
The total $\mathrm{cost}(z_0,[0,t])$ of the trajectory
\eqref{eqn:gamma-bang-bang} with initial condition $z_0$ up to time
$t$ is an easy (freshman calculus) integral to compute from Equation
\eqref{eqn:cost-upper-half-plane}, which we do not display here.  The total cost
of $g(\mathcal{I},z_0,t)$ from time $0$ to $\tilde{t}_n$ is the sum of the costs
on each constant control segment.
\begin{equation}\label{eqn:total-cost}
\sum_{i=0}^{n-1} \mathrm{cost}(z_i,[0,t_{i+1}]).
\end{equation}

\section{Smoothed Polygons}\label{sec:6k+2-gons}
%\section{the smoothed regular polygon}\label{sec:polygon}

\index{smoothed polygon}

In this section, we construct a family of Pontryagin extremals
of the control problem, all given by a bang-bang control. 
Reinhardt conjectured that the smoothed octagon is the solution to his
problem.  The smoothed octagon belongs to a family of
smoothed \((6k+2)\)-gons, which are all given by bang-bang
controls. The $X$-component of the 
trajectory of the smoothed $8$-gon and the $14$-gon are
shown in Figure~\ref{fig:8-14-gons}.  These are periodic solutions in
$\h$, repeatedly retracing the edges of equilateral triangles in $\h$
(with edges formed by generalized circular arcs).  The triangle has
full dihedral group symmetry about the center $i\in\h$ with respect to
the hyperbolic metric.  The triangles shrink
toward the central point $i \in \hstar$ as $k \to \infty$.  In
$\Kbal$, the smoothed polygons are converging to the circle as $k$
grows.

\begin{figure}[htbp]
\centering
\includegraphics[scale=0.40]{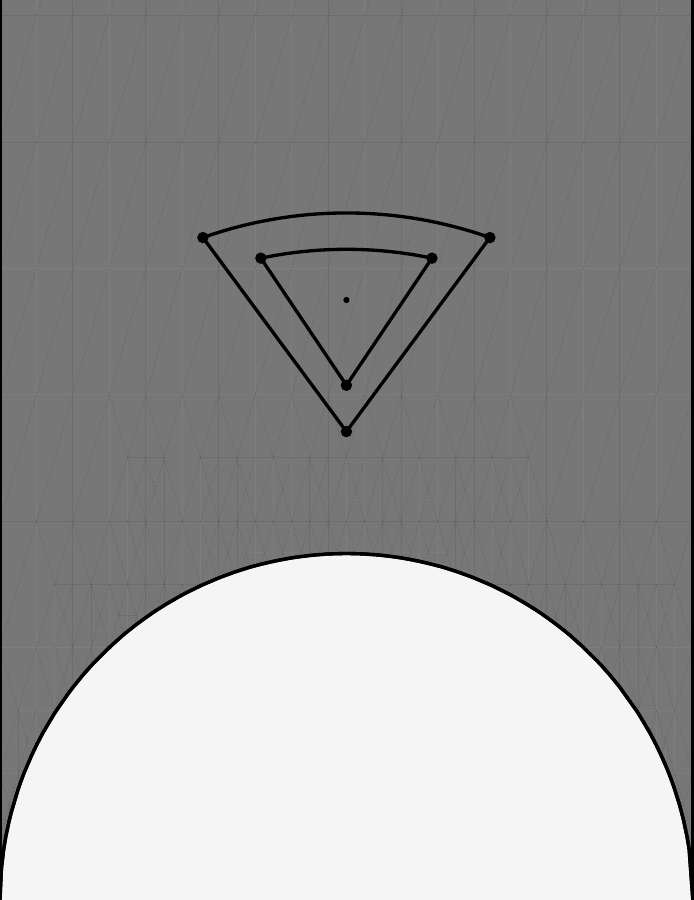}
\caption{Periodic trajectories of the smoothed $8$-gon and $14$-gon in \(\hstar\).
The larger triangle is the orbit of the $8$-gon, 
and the smaller is that of $14$-gon.}
\label{fig:8-14-gons}
\end{figure}
\index{control!mode}

The smoothed octagon comes from a periodic bang-bang control to the
state equations with three modes, corresponding to the three vertices
of the control simplex $U_T$ and the three edges of the triangle in
$\h$.  The trajectory moves in a counterclockwise direction around the
triangle in $\h$, at constant speed in the hyperbolic metric,
completing the four-step mode sequence one vertex
counterclockwise from the starting vertex.  The smoothed octagon
itself can be visualized in $24$ segments: $8$ smoothed
corners and $16$ straight half-edges.  These $24$ segments are arranged into
four groups, each consisting of a multi-curve of $6$ arcs.  The four
groups are congruent to one another, under the rotational symmetry $R$. These 
six arcs are shown in Figure~\ref{fig:multi-curve-oct}.

Now we turn to the rigorous specification of these smoothed polygons,
generalizing the smoothed octagon as follows.  Let $k$ be a positive
integer.  We consider a trajectory $t\mapsto g(\mathcal{I},z_0,t)$ with
a control mode sequence $\mathcal{I}$ of $3k+1$ parts of the same switching time
$t_{sw}$, taking the form
\begin{equation}\label{eqn:3k+1}
\mathcal{I} = ((0,t_{sw}),(-1,t_{sw}),(-2,t_{sw}),\ldots,(-3k,t_{sw})),
\end{equation}
where $t_{sw}>0$ and $z_0\in\h$ are to be determined as functions of
$k\ge1$ in Lemma~\ref{lem:discrete-k}. We use the initial control
$u_0=\ep_3=(0,0,1)\in{}U_T$.

\index[n]{y@$y_0$, $0+iy_0$, smoothed polygon initial condition}
\index[n]{t@$t\in\R$, time!$t_{sw}$, switching time}

We impose the strong boundary condition
\begin{equation}\label{eqn:poly-z}
z(z_0,t_{sw}) = R^{-1}\cdot z_0,\quad 
\text{where } z_0 = 0 + i y_0,\quad y_0\in(1/\sqrt3,1).
\end{equation}
This boundary condition imposes the congruence of the sides of the
triangle in $\h$.  The endpoints $1/\sqrt3$ and $1$ for $y_0$ are
natural; one endpoint is the boundary of the star domain, and the
other endpoint center $i$ of the triangles.  Solving
\eqref{eqn:poly-z} for $t_{sw}$ (the switching time), we
obtain
\begin{equation}\label{eqn:t1}\mcite{MCA:t1}
t_{sw} = \frac{\ln (4/(3y_0^2+1))}{\sqrt{3} y_0}\in(0,\ln2).
\end{equation}

We view $t_{sw}$ as a function of a real variable $y_0\in(1/\sqrt3,1)$.
It is useful to delay imposing the transversality condition $g(t_f)=R$
for as long as possible, which discretizes the problem using the
parameter $k$, and to leave $y_0$ as a continuous variable for now.

The proof that the smoothed $6k+2$-gon are extremals has been broken
into steps.  Theorem~\ref{thm:strong-transversality} constructs one edge of the
triangle in $\h$.  Lemma~\ref{lem:H-max-6k+2} shows the Hamiltonian
maximizing property.  Lemma~\ref{lem:discrete-k} shows how the
terminal condition $g(t_f)=R$ places a discreteness condition on the
size of the triangle in $\h$ to give $6k+2$-gons.  Finally
Theorem~\ref{thm:6k+2} proves extremality of the $6k+2$-gons.

\index[n]{zx@$\chi_{ij}$, switching function}
\index[n]{P@$P$, normalized control matrix!$P_{i,j}$, at mixed controls $\ep_j,\ep_i$}

We define the switching functions $\chi_{ij}$ by
\begin{align}\label{eqn:switch}
\begin{split}
\chi_{ij}(t) &:= 
\bracks{\Lambda_R(t)}{P_{i,j}(t)-P_{i,i}}
\\
P_{i,j}(t) &:= {Z_{\ep_j}}/{\bracks{Z_{\ep_j}}{X(t)}},\quad
(X,\Lambda_R\ \text{with constant control }u=\ep_i)
\end{split}
\end{align}
where $X$ and $\Lambda_R$ are both computed with respect to the
constant control $u=\ep_i$.  By \eqref{eqn:X-Z0-constant}, the
matrix $P_{i,i}$ is a constant $Z_{\ep_i}/\bracks{Z_{\ep_i}}{X_0}$.

%% DONE[TCH 5/4/2024. No problem detected during final proofreading.]
%% Make sure no mistake was made in assuming that the denominator
%  is constant, when $k\ne{}i$.

\bigskip
\begin{theorem}\leavevmode\label{thm:strong-transversality}
Let $y_0\in(1/\sqrt3,1)$, $z_0=0+iy_0$, and let $t_{sw}>0$ be given by
\eqref{eqn:t1}.  Let $z(z_0,t)$ and $g(t)=g_0(z_0,t)$ be the
solutions to the state equations with constant control $u=\ep_3=(0,0,1)$ on
$[0,t_{sw}]$.  Then these solutions lift uniquely (up to scalar factor)
to a costate trajectory $(\Lambda_1,\Lambda_R,\lambda_{cost})$
satisfying 
\begin{equation}\label{eqn:init-lambdaR}
\Lambda_R(0)\in X(0)^\perp,\quad
\chi_{23}(0)=0,\quad 
\end{equation}
and
\begin{equation}\label{eqn:init-H-chi}
\H_{u=\ep_3}=0,\quad \chi_{31}(t_{sw})=0,
\end{equation}
and strong transversality conditions
\[
\Lambda_R(t_{sw})=R^{-1}\Lambda_R(0)R,\quad
\Lambda_1(t_{sw})=R^{-1}\Lambda_1(0)R.
\]
The trajectory is normal: $\lambda_{cost}\ne0$.
\end{theorem}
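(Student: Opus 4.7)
The plan is to treat the lift as a homogeneous linear problem in the costate initial data $(\Lambda_1(0),\Lambda_R(0),\lambda_{cost})$. Since the control is the constant $\ep_3$ on $[0,t_{sw}]$, the closed-form solutions derived in the preceding section on constant control express $\Lambda_1(t_{sw})$ and $\Lambda_R(t_{sw})$ as explicit linear functions of the initial data, and every condition in the theorem is homogeneous linear in that data. So ``unique up to scalar'' reduces to exhibiting a one-dimensional kernel of a finite-dimensional linear map.

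First I would handle $\Lambda_1$: by Corollary~\ref{cor:lam1-gen-sol}, $\Lambda_1(t)=g(t)^{-1}\Lambda_1(0) g(t)$, so the transversality $\Lambda_1(t_{sw}) = R^{-1}\Lambda_1(0) R$ amounts to requiring $\Lambda_1(0)$ to commute with $A:=g(t_{sw})R^{-1}$ under the adjoint action. A direct computation of $g(t_{sw}) = \exp(t_{sw}(X_0+P_0))\exp(-t_{sw}P_0)$ in elementary functions of $y_0$ shows that $A$ is regular semisimple for every $y_0\in(1/\sqrt{3},1)$, so the centralizer of $A$ in $\sl(\R)$ is one-dimensional and $\Lambda_1(0) = \alpha_1 L$ is pinned down up to a single scalar $\alpha_1$.

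Next, the dihedral symmetry collapses the two switching conditions into one. From \eqref{eqn:poly-z} and the $\SL(\R)$-equivariance of $\Phi$, one obtains $X(t_{sw}) = R^{-1}X(0) R$; combined with the identities $RZ_{\ep_1}R^{-1}=Z_{\ep_3}$ and $RZ_{\ep_3}R^{-1}=Z_{\ep_2}$, a direct substitution shows that under the $\Lambda_R$-transversality $\Lambda_R(t_{sw})=R^{-1}\Lambda_R(0)R$ the identity $\chi_{31}(t_{sw})=\chi_{23}(0)$ holds. After fixing $\Lambda_1(0)=\alpha_1 L$, the remaining unknowns $(\Lambda_R(0),\alpha_1,\lambda_{cost})$ parametrize a four-dimensional space (using $\Lambda_R(0)\in X(0)^\perp$), and the outstanding linear conditions are $\chi_{23}(0)=0$, $\H(0)=0$, and the $\Lambda_R$-transversality (two equations), totalling four. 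Uniqueness up to scalar then amounts to showing that this $4\times 4$ homogeneous linear system has rank exactly three.

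The hard part will be the corank-one verification itself, which supplies both existence and uniqueness. Since $\det(Z_{\ep_3})=-1/3<0$, the matrix $\exp(tP_0)$ is real-diagonalizable and every entry of the coefficient matrix is an elementary function of $t_{sw}$ and $y_0$; the required $3\times 3$ minor can be written in closed form and should be shown not to vanish on $y_0\in(1/\sqrt{3},1)$, a straightforward but tedious computer-algebra task. Finally, normality ($\lambda_{cost}\neq 0$) follows by checking that the $3\times 3$ submatrix obtained by deleting the $\lambda_{cost}$-column (that is, the restriction to $\lambda_{cost}=0$) is nonsingular, so that subsystem forces the trivial solution; any nontrivial lift must therefore have $\lambda_{cost}\neq 0$, and rescaling normalizes to $\lambda_{cost}=-1$.
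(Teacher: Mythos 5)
Your proposal is correct in outline and captures all the genuine ingredients, but it organizes the verification differently from the paper.  The $\Lambda_1$-step is identical in both (your ``regular semisimple'' is the right condition, though the paper pins down the specific case: $\tr(h)=4/(1+3y_0^2)\in(1,2)$, so $h$ is regular \emph{elliptic}; either way the centralizer is one-dimensional).  Your observation that under $X(t_{sw})=R^{-1}X(0)R$ and $\Lambda_R(t_{sw})=R^{-1}\Lambda_R(0)R$ one has $\chi_{31}(t_{sw})=\chi_{23}(0)$ is a genuine symmetry identity that the paper does not spell out.  The real divergence is how the ``miracle'' is packaged.  The paper first uses $\Lambda_R(0)\in X(0)^\perp$ together with $\chi_{23}(0)=0$ to collapse $\Lambda_R(0)$ to a single scalar, so the remaining problem is $\H=0$, $\chi_{31}(t_{sw})=0$ on $(\lambda_1,\lambda_R,\lambda_{cost})\in\R^3$: an evidently corank-one system whose solution is then written in closed form; the paper \emph{then} verifies, as a separate and flagged ``remarkable'' fact, that the resulting lift already satisfies $\Lambda_R(t_{sw})=R^{-1}\Lambda_R(0)R$.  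You instead impose the $\Lambda_R$-transversality (two scalar equations) \emph{as} a constraint, drop $\chi_{31}(t_{sw})=0$ (since it now follows), and ask for a $4\times4$ homogeneous system to have rank exactly three.  The two formulations describe the same one-dimensional kernel, but your corank-one verification is precisely the paper's ``remarkable'' linear dependency in disguise: if transversality did \emph{not} hold automatically, your system would have rank $4$ and there would be no lift.  So your rank-3 claim is not a generic linear-algebra fact; it is the theorem's content, and the proof still hinges on that (computer-assisted) check.  The paper's organization is more modular and, by producing explicit formulas for $\lambda_1,\lambda_R$ in terms of $\ell(y_0)$, feeds directly into Lemma~\ref{lem:H-max-6k+2}.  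One small slip in your normality argument: deleting the $\lambda_{cost}$-column from a $4\times4$ system leaves a $4\times3$ matrix, not a $3\times3$ one; what you want is that this $4\times3$ matrix has full column rank (equivalently, some $3\times3$ minor is nonzero), so that the kernel of the full system has no nontrivial vector with $\lambda_{cost}=0$.
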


\begin{proof}

\index[n]{h@$h\in G$, element of Lie group!in $\SL(\R)$}
\index[n]{r@$r$, real number!trace}
\index{elliptic element of a Lie group}
\index[n]{zL@$\Lambda$, costate!$\Lambda_{10}\in\sl(\R)$}
\index[n]{zL@$\Lambda$, costate!$\lambda_1,\lambda_R$, multiplier}

\mcite{MCA:9670173}
We start with the endpoint condition for
$\Lambda_1$.  Using \eqref{cor:lam1-gen-sol}, we have the condition
\begin{equation}\label{eqn:trans-Lambda}
g(t_{sw})^{-1}\Lambda_1(0)g(t_{sw})=\Lambda_1(t_{sw}) = R^{-1} \Lambda_1(0) R.
\end{equation}
In other words, $\Lambda_1(0)\in\sl(\R)$ centralizes the
element $h:=g(t_{sw})R^{-1}\in\SL(\R)$.  A calculation using the explicit solution
for $g(t_{sw})$ shows that the trace of $h$ is $r:=4/(1+3y_0^2)\in (1,2)$.
This implies that $h$ is a regular elliptic element. Its centralizer in $\sl(\R)$
is $\R{\Lambda_{10}}$, where 
\[
\Lambda_{10}:={h}-r I_2/2
=
\frac{1}{2y_0^2 (1+3 y_0^2)}
\begin{pmatrix}
0 & 4\sqrt3 y_0^4\\
-\sqrt3 (1+y_0^2)(3y_0^2-1)&0
\end{pmatrix}
\in\sl(\R).
\]
Thus $\Lambda_1(0)=\lambda_1\Lambda_{10}$, for some $\lambda_1\in\R$ to be determined.
\mcite{MCA:lambda1}

Next, we turn to the choice of $\Lambda_R(0)\in\sl(\R)$.
The two initial conditions \eqref{eqn:init-lambdaR} force $\Lambda_R(0)$
to have the form
\[
\Lambda_R(0) = \lambda_R 
\begin{pmatrix} 0 & y_0^2\\ 1 & 0\end{pmatrix},
\]
for some $\lambda_R\in\R$.
At this point, the initial state $(\Lambda_1(0),\Lambda_R(0),\lambda_{cost})$
is determined by three scalars: $\lambda_1,\lambda_R,\lambda_{cost}$, where $y_0$
is held fixed.
Equations \eqref{eqn:init-H-chi} place two independent homogenous linear
relations on these three variables, determining them up to a single scalar
multiple.  To avoid the zero solution, we set $\lambda_{cost}=-1$.
A calculation, using the explicit solutions for $\Lambda_R$ gives
\index[n]{l@$\ell(y_0)\in\R$}
\begin{align*}
\lambda_1 &= -((1 + 3y_0^2)(-3 - 
      6y_0^2 + (1 + 3y_0^2)
       \ell(y_0) ))/
   (12\sqrt3y_0^4),\\
\quad
\lambda_R&= -(3 - 12y_0^2 - 
     9y_0^4 + 18y_0^6 + 
     (-1 + 3y_0^2 + 21y_0^4 + 
       9y_0^6)\ell(y_0))/
   (24y_0^6),
\end{align*}
where
$\ell(y_0) = \ln(4/(3y_0^2+1))$.

Finally, we have the transversality conditions at $t_{sw}$.
Remarkably, a calculation shows that the transversality conditions
hold for the calculated values of parameters $\lambda_1,\lambda_R$.

Lemma~\ref{lem:H-max-6k+2} supplements the proof, which shows that the
maximum property is met for the Hamiltonian.
\end{proof}

\begin{remark}
We have a constant of motion
\index[n]{d@$d$, determinant!$d=\det(\Lambda_1)$, a constant of motion}
\begin{align*}
d=d(y_0)&:=\det(\Lambda_1(t))=\lambda_1^2\det(\Lambda_{10})\\
&\phantom{:}=(-1 + 2y_0^2 + 3y_0^4)
  (3 + 6y_0^2 - (1 + 3y_0^2)
     \ell(y_0))^2/
 (144y_0^8).
\end{align*}
The function $d$ is monotonic increasing in $y_0\in(1/\sqrt3,1)$ with
range $(0,9/4)$.  Thus, the determinant uniquely determines the
parameter $y_0$ of the triangle.
%% DONE[TCH 4/26/2024. The derivative factors in a nice way
%% . Check it is indeed monotonic, remove the word
%% appears.
\mcite{MCA:9670173}
\end{remark}

\begin{lemma}\label{lem:H-max-6k+2}\mcite{MCA:6384505}
Fix $y_0\in(1/\sqrt3,1)$ and corresponding time $t_{sw}$.  Let
$\chi_{ij}$ be the switching functions, defined for the costate
trajectory constructed in Theorem~\ref{thm:strong-transversality} with
constant control $\ep_3=(0,0,1)$.  Then the PMP conditions hold:
\[
\chi_{31}(t)\ge0,\quad\chi_{32}(t)\ge0,\quad\text{for } t\in[0,t_{sw}].
\]
The functions are zero only when $\chi_{32}(0)=\chi_{31}(t_{sw})=0$.
\end{lemma}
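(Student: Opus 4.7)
The plan is to verify the inequalities directly, exploiting the closed-form solutions for $X$ and $\Lambda_R$ available under constant control together with a reflection symmetry that reduces the work in half. Because the control $u=\ep_3$ is held constant on $[0,t_{sw}]$, the quantity $\bracks{Z_{\ep_3}}{X(t)}$ is a constant of motion (Lemma~\ref{cor:X-det-const}), so $P_{3,3}$ is in fact constant, and the trajectory $X(t)=\exp(tP_{3,3})X_0\exp(-tP_{3,3})$ is given by the diagonalization with real eigenvalues $\pm\mu$ (where $\mu=1/(\sqrt3\bracks{Z_{\ep_3}}{X_0})$). Consequently $\bracks{Z_{\ep_j}}{X(t)}$ for $j=1,2$ is a real linear combination of $e^{\pm t\mu}$ with coefficients in $y_0$. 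The general $\Lambda_R(t)$ formula from Section~4.1, specialized to the initial data $(\lambda_1,\lambda_R,\lambda_{cost}=-1)$ produced in Theorem~\ref{thm:strong-transversality}, is likewise expressible in terms of $e^{\pm t\mu}$ and $t$. Substituting these into $\chi_{3j}(t)=\bracks{\Lambda_R(t)}{P_{3,j}(t)-P_{3,3}}$ yields an explicit ratio of exponential polynomials whose denominator is $\bracks{Z_{\ep_j}}{X(t)}$, which is nonzero by the star inequalities.

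The second step is a symmetry reduction. Let $S\in\Dih$ be the reflection from Section~\ref{sec:dihedral}; it fixes $\ep_3$ and interchanges $\ep_1\leftrightarrow\ep_2$. The initial point $z_0=0+iy_0$ lies on the axis fixed by $z\mapsto-\bar z$, and the terminal point $R^{-1}\cdot z_0$ is its image under rotation. Combining the $S$-action on the costate with the time reversal $t\mapsto t_{sw}-t$ (and the conjugation by $R$ that swaps the two endpoint conditions), one checks that the constructed triple $(X,\Lambda_1,\Lambda_R)$ is carried to itself, with the switching functions interchanged:
\begin{equation*}
\chi_{32}(t)\;=\;c\,\chi_{31}(t_{sw}-t),\qquad c>0.
\end{equation*}
In particular this exchanges the boundary zeros $\chi_{32}(0)=0$ and $\chi_{31}(t_{sw})=0$, so it is enough to prove $\chi_{31}(t)\ge 0$ on $[0,t_{sw}]$ with equality only at $t=t_{sw}$.

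The third step is the positivity check for the single function $\chi_{31}$. After clearing the positive denominator $\bracks{Z_{\ep_1}}{X(t)}$, I change variables to $\tau:=e^{t\mu}$, which maps $[0,t_{sw}]$ bijectively onto an explicit interval $[1,\tau_1(y_0)]$ with $\tau_1(y_0)=2/(1+3y_0^2)^{1/2}$ (determined by \eqref{eqn:t1}). The resulting numerator becomes a polynomial $Q(\tau,y_0)$; by construction $Q(\tau_1(y_0),y_0)=0$, so $(\tau-\tau_1(y_0))$ divides $Q$. The quotient $\tilde Q(\tau,y_0)$ is a polynomial of low degree in $\tau$ whose nonpositivity on the rectangle $1\le\tau\le\tau_1(y_0)$, $y_0\in(1/\sqrt3,1)$, can be certified by inspection of its sign at $\tau=1$ and $\tau=\tau_1(y_0)$ together with monotonicity of $\partial_\tau\tilde Q$. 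This reduces to a finite list of polynomial inequalities in $y_0$ that can be discharged either by elementary algebra or, for robustness, by the computer algebra system flagged in the lemma statement.

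The principal obstacle is the algebraic bulk of $\tilde Q$: even after the symmetry halving, the expressions mix transcendentals in $y_0$ (through $\lambda_1,\lambda_R$, which contain $\ln(4/(1+3y_0^2))$) with the exponential variable $\tau$. Two observations contain the blowup. First, $t_{sw}\mu=\ln(4/(3y_0^2+1))/2$ so that $\tau_1$ is algebraic in $y_0$, and the logarithmic term appearing in $\lambda_1,\lambda_R$ coincides with $2\ln\tau_1$; this lets one eliminate the logarithm globally by writing everything as a polynomial in $(y_0,\tau,\tau_1)$ subject to the algebraic relation $\tau_1^2(1+3y_0^2)=4$. Second, the factor $(\tau-\tau_1)$ is guaranteed by construction, so no ingenuity is required to locate it. With these two simplifications, the remaining verification is routine and amenable to computer-algebra confirmation, matching the pattern used elsewhere in the book.
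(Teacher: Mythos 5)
Your first reduction (the relation between $\chi_{31}$ and $\chi_{32}$ under $t\mapsto t_{sw}-t$) reaches the right conclusion, but the symmetry you cite is misstated. From Section~\ref{sec:dihedral} the reflection $S$ satisfies $-SZ_{(0,1,0)}S^{-1}=Z_{(0,0,1)}$ and $-SZ_{(1,0,0)}S^{-1}=Z_{(1,0,0)}$; that is, $S$ fixes $\ep_1$ and interchanges $\ep_2\leftrightarrow\ep_3$, not $\ep_3$ fixed with $\ep_1\leftrightarrow\ep_2$ as you wrote. A conjugate $R^kSR^{-k}$ does fix $\ep_3$, but its fixed geodesic is not the imaginary axis, so it does not obviously preserve $z_0=iy_0$. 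The paper sidesteps all of this by directly verifying $\chi_{31}(t_{sw}-t)=\chi_{32}(t)$ from the closed-form solutions; if you want the symmetry route, you must identify the correct involution (a composition of $R$, $S$, and time reversal, reflecting the constant-control arc into itself), and establish that the costate triple from Theorem~\ref{thm:strong-transversality} is actually invariant under it, which is not free.

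The more serious gap is in your positivity step. You assert that after clearing denominators and substituting $\tau=e^{t\mu}$, the numerator becomes a polynomial $Q(\tau,y_0)$, and that the $\ell(y_0)=\ln(4/(3y_0^2+1))$ appearing in $\lambda_1,\lambda_R$ is ``eliminated'' by writing it as $2\ln\tau_1$ subject to the relation $\tau_1^2(1+3y_0^2)=4$. That relation is algebraic, but $\ln\tau_1$ is not a polynomial in $\tau_1$: renaming a transcendental does not eliminate it. The paper's own change of variables $(y,r)=(1+3y_0^2,\,y\exp(\sqrt3 y_0 t))$ makes this transparent — the resulting function $f(y,r)$ contains genuine $\ln r$ and $\ln y$ terms that do not cancel, and in your coordinates these are $\ln\tau$ and $\ln\tau_1$. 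So factoring out $(\tau-\tau_1)$ does not leave a polynomial $\tilde Q$, and ``sign at the endpoints plus monotonicity of $\partial_\tau\tilde Q$'' is not a finite polynomial check. The paper's proof attacks the nonnegativity of $f$ on a triangle $T$ by a structured three-step argument — $f$ vanishes on the diagonal $r=y$, $\partial_y f|_{r=y}\le0$, and $\partial_y^2 f\ge0$ on $T$ — which deliberately reduces the order of the transcendental terms at each step until what remains is an inequality that is ``tedious but elementary.'' Your proposal needs some analogous structural argument; as written it would stall at an expression mixing polynomials and logarithms that a computer algebra system cannot simply certify.
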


\begin{proof}
An easy substitution gives
\[
\chi_{31}(t_{sw}-t) = \chi_{32}(t),\quad t\in[0,t_{sw}].
\]
Thus, it is enough to show that $\chi_{32}(t)\ge 0$ with equality only
at $t=0$.

\index[n]{y@$y\in\R$, local variable}
\index[n]{r@$r$, real number}
We define new variables $(y,r)$:
\[
y = 1+ 3y_0^2,\quad r = y \exp(\sqrt{3} y_0 t).
\]
The region defined
by $y_0\in(1/\sqrt3,1)$ and $t\in[0,t_{sw}]$ transforms to the triangle 
\index[n]{T@$T$, triangle}
\index[n]{f@$f$, function!$f:\R^2\to\R$}
\[
T=\{(y,r)\in [2,4]^2\mid  y\le r\}.
\]
Note that $t=0$ is transformed to the diagonal $y=r$ of $T$.
Discarding obviously positive multiplicative factors, the inequality
to be proved is $f(y,r)\ge0$ on $T$, where
\begin{align*}
f(y,r) :&= 2r(-1 + y)y\ln(r)
\\ 
&- 
 (r - y)(r(-1 + y) + 
   y(-5 + 2y) + 
   y^2\ln(4/y))
\\ &- 
 2r(-1 + y)y\ln(y).
\end{align*}
\index{Mathematica}

We show that $f$ is nonnegative on the triangle $T$ as follows.
(These calculations appear in the accompanying Mathematica code.
There are several thousand lines of Mathematica code that are used to
support the claims in this book.)  First, an easy substitution gives
$f(y,y)=0$.  (This was already verified above in a different manner,
when we showed that $t=0$ is a switching time.)  Second, the
derivative is negative on the diagonal.
\begin{equation}\label{eqn:f'}
\partials {f}{y}|_{r=y} = y((y-4) + y \ln (4/y)) \le 0.
\end{equation}
Finally, the second derivative is positive on $T$.
\[
\frac{\partial^2 f}{\partial y^2} = 
-10 - 5r + (2r)/y + 7y + 
 4r\ln(r) - 2(r - 3y)
  \ln(4/y) - 4r\ln(y)\ge0.
% TCH 4/26/2024. v->r corrected.
\]
(We leave this last inequality as a tedious but elementary exercise
for the reader.)  Nonnegativity follows.
\end{proof}

\begin{remark}
Looking more closely at the cases of equality, we see that the only
zero of the switching function on $[0,t_{sw}]$ occurs at $t=0$, and that
the derivative is strictly positive at $t=0$.  (The derivative is zero
in \eqref{eqn:f'} at the corner $r=y=4$ of the disk, but this
corresponds to the unrealizable limiting case $y_0=1$.)
\end{remark}

The following lemma uses transversality conditions to place an
integrality condition $k$ on the size of the triangles in $\h$.

\begin{lemma}\label{lem:discrete-k}
Consider a trajectory $g:[0,t_f]\to\SL(\R)$, with $g(0)=I_2$ following
the dynamical system \eqref{eqn:3k+1}.  
The trajectory reaches $g(t_f)=R$ after a sequence of $3k+1$ control
mode parts $t_f=(3k+1)t_{sw}$ of equal duration $t_{sw}$, provided
\index[n]{zh@$\theta$, angle!$\theta_k=\pi k/(3k+1)$}
\begin{equation}\label{eqn:theta-k}
\frac{4}{3y_0^2+1}= 2\cos\theta_k,\quad\text{where }
\theta_k = \frac{\pi k}{3k+1}.
\end{equation}
\end{lemma}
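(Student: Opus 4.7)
The plan is to reduce the terminal condition $g(t_f) = R$ to a spectral condition on a single element of $\SL_2(\R)$, namely $h_0 R^{-1}$ where $h_0 := g_0(z_0,t_{sw})$.

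First I will show that all the intermediate reference points $z_i$ appearing in the definition \eqref{eqn:gamma-bang-bang} collapse to $z_0$. For the mode sequence \eqref{eqn:3k+1} the shifts satisfy $k_i - k_{i+1} = 1$, so by the boundary condition \eqref{eqn:poly-z} one has $z_i = R\cdot z(z_{i-1},t_{sw}) = R\cdot R^{-1}\cdot z_{i-1} = z_{i-1}$; induction then gives $z_i = z_0$ for all $i$. Hence each non-trivial factor of $g(\mathcal{I},z_0,t_f)$ becomes $R^{k_i} h_0 R^{-k_i} = R^{-(i-1)} h_0 R^{i-1}$, and so
\[
g(t_f) \;=\; \prod_{j=0}^{3k} R^{-j} h_0 R^{j}.
\]
An easy induction on $n$ (using $R^{-j} A R^{j}\cdot R^{-(j+1)} A R^{j+1} = R^{-j} A R^{-1} A R^{j+1}$) yields the telescoping identity $\prod_{j=0}^{n-1} R^{-j} A R^{j} = (A R^{-1})^{n-1} A R^{n-1}$, so with $n = 3k+1$ and $A = h_0$,
\[
g(t_f) \;=\; (h_0 R^{-1})^{3k}\, h_0\, R^{3k}.
\]
Imposing $g(t_f) = R$ and using $R^{3} = \exp(J\pi) = -I_2$, hence $R^{-3k} = (-1)^k I_2$, gives
\[
(h_0 R^{-1})^{3k+1} \;=\; (-1)^k I_2.
\]

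Next I will invoke the spectral computation already established inside the proof of Theorem~\ref{thm:strong-transversality}, namely $\mathrm{tr}(h_0 R^{-1}) = 4/(1+3y_0^2) \in (1,2)$. Thus $M := h_0 R^{-1}$ is an elliptic element of $\SL_2(\R)$, conjugate to $\exp(J\phi)$ with $\phi \in (0,\pi/3)$ determined by $2\cos\phi = 4/(1+3y_0^2)$. The condition $M^{3k+1} = (-1)^k I_2$ is equivalent to $\exp((3k+1)J\phi)$ being the scalar $(-1)^k I_2$, i.e.\ $(3k+1)\phi = m\pi$ for some integer $m$ with $(-1)^m = (-1)^k$. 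Taking the principal branch $m = k$ gives
\[
\phi \;=\; \theta_k \;:=\; \frac{k\pi}{3k+1} \;\in\; \Bigl(0,\tfrac{\pi}{3}\Bigr),
\]
and then $2\cos\theta_k = 4/(3y_0^2+1)$ is precisely \eqref{eqn:theta-k}.

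The proof is essentially routine once the telescoping identity is in place; the only delicate point is bookkeeping the exponents of $R$ through the product and confirming $R^{-3k} = (-1)^k I_2$. I note in passing that other branches $m \in \{1,\dots,k-1\}$ of the same parity as $k$ also satisfy the congruence and so would produce additional triangles in $\h$ of different size, but the lemma only asserts the sufficiency of the choice $m = k$.
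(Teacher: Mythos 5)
Your proof is correct and takes essentially the same approach as the paper's: expand $g(t_f)$ as a product of $R$-conjugates of $g_{sw}=g_0(z_0,t_{sw})$, telescope it to the power condition $(h_0R^{-1})^{3k+1}=(-I_2)^k$, and then use the elliptic spectrum (trace $4/(1+3y_0^2)\in(1,2)$) to solve for the angle. Your $h_0R^{-1}$ and the paper's $R^{-1}g_{sw}$ are conjugate (hence share both trace and the $(3k+1)$-th power), and your remark about the other branches $m\equiv k\ (\mathrm{mod}\ 2)$ matches the paper's note that other eigenvalue choices give the wrong winding number.
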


\index[n]{g@$g$, group element!$g_{sw}\in\SL$, at switching time}

\begin{proof}
Let $g_{sw} = g(z_0,t_{sw})\in \SL(\R)$ be the position at the switching time.
By the spline equations, the condition $g(t_f)=R$ for
\eqref{eqn:3k+1} is
\[
R = g_{sw} (R^{-1} g_{sw} R^1) (R^{-2} g_{sw} R^2) \cdots (R^{-3k} g_{sw} R^{3k}),
\]
or equivalently,
\begin{equation}\label{eqn:eigen}
(R^{-1} g_{sw})^{3k+1} = R^{-3k} = (-I_2)^k.
\end{equation}
\index[n]{zl@$\lambda$, eigenvalue}
Let $\lambda,\lambda^{-1}$ be the eigenvalues of $R^{-1} g_{sw}\in \SL(\R)$.
Comparing eigenvalues on the two sides of \eqref{eqn:eigen}, we obtain
$\lambda^{3k+1} = (-1)^k$, and
\[
\lambda = \exp(\pi i k/(3k+1) + 2\pi i \ell/(3k+1)),\quad \ell \in\Z.
\]
We pick the eigenvalues $\lambda^{\pm1}$ that place $g_{sw}$ in the smallest
neighborhood of $1$; that is, we take $\ell=0,-k$.  (Other pairs of
eigenvalues will produce the right boundary conditions, but the
corresponding multi-curves $\sigma_i$ will have the wrong winding
number around the origin.)  Then
\[
\mathrm{trace}(R^{-1} g_{sw}) = \lambda+\lambda^{-1}=2\cos\theta_k.
%\text{where } \theta_k= \frac{\pi k}{3k+1}.
\]
The trace $r=4/(1+3y_0^2)$ of $R^{-1}g_{sw}=R^{-1}hR$ is
computed in the proof of Theorem~\ref{thm:strong-transversality}.
\end{proof}

\index[n]{K@$K$, convex disk!$K_{oct}$, smoothed octagon}
\begin{example}[Smoothed Octagon]
For example, $k=1$ for the smoothed octagon $K_{oct}$, and $\theta_k=\pi/4$.
The trace is $\sqrt{2}$, and 
\[
y_0=\sqrt{(\sqrt8-1)/3}\approx0.781,\quad 
t_{sw}=\ln2/\left(2\sqrt{\sqrt8-1}\right)\approx0.256.
\]
If we initialize $X_0=\Phi(0+iy_0)$ according to this value, then we
can compute the density of a packing of smoothed octagons in the plane
using the cost \eqref{eqn:cost}, terminal time $t_f=4t_{sw}$, and
explicit ODE solution with constant control \eqref{eqn:const-control-z-X}.
\begin{align*}
\delta &= \op{area}(K_{oct})/\sqrt{12},
\\
\op{area}(K_{oct}) &= -6\int_0^{t_{sw}}\bracks{J}{X} dt 
\\
&=\sqrt{12} \frac{8 - \sqrt{32} - \ln 2}{\sqrt{8} - 1}\\
&\approx 3.126<\pi.
\end{align*}
This value appears as \eqref{eqn:density-formula}.
\end{example}

\begin{theorem}\label{thm:6k+2}
For each positive integer $k$, the smoothed $6k+2$-gon is a Pontryagin
extremal given by a bang-bang control.
\end{theorem}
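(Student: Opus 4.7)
The plan is to construct the smoothed $(6k+2)$-gon as a concatenation of $3k+1$ rotated copies of the single-edge solution supplied by Theorem~\ref{thm:strong-transversality}, using for each $k\ge 1$ the specific initial condition $y_0=y_0(k)\in(1/\sqrt3,1)$ singled out by Lemma~\ref{lem:discrete-k} through the trace identity $4/(1+3y_0^2)=2\cos(\pi k/(3k+1))$. With this $y_0$, let $t_{sw}$ be the corresponding switching time from \eqref{eqn:t1}, let $z_0=0+iy_0$, and let $(\tilde g,\tilde X,\tilde\Lambda_1,\tilde\Lambda_R,\lambda_{cost}=-1)$ denote the state and costate on $[0,t_{sw}]$ built in Theorem~\ref{thm:strong-transversality}. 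On the spline $g(\mathcal{I},z_0,t)$ of \eqref{eqn:gamma-bang-bang} with mode sequence \eqref{eqn:3k+1}, I would lift to a costate by setting, on the $j$th segment $[jt_{sw},(j+1)t_{sw}]$, the matrices $X(t)=R^{-j}\tilde X(t-jt_{sw})R^{j}$ and analogously $\Lambda_1(t),\Lambda_R(t)$. By the equivariance of the full Reinhardt control system under the dihedral action (Section~\ref{sec:dihedral}), each such conjugate genuinely solves the state and costate ODEs for the constant control $R^{-j}\cdot\ep_3$; under the cycle $R\cdot\ep_3=\ep_2,\ R\cdot\ep_2=\ep_1,\ R\cdot\ep_1=\ep_3$ this is exactly the control prescribed by the mode index $-j$ in \eqref{eqn:3k+1}.

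The first thing to verify is continuity of $(X,\Lambda_1,\Lambda_R)$ across every switching instant. The strong transversality identities of Theorem~\ref{thm:strong-transversality}, namely $\tilde X(t_{sw})=R^{-1}\tilde X(0)R$, $\tilde\Lambda_1(t_{sw})=R^{-1}\tilde\Lambda_1(0)R$, and $\tilde\Lambda_R(t_{sw})=R^{-1}\tilde\Lambda_R(0)R$, say precisely that the left-limit at $(j+1)t_{sw}$ agrees with the right-limit, so the lifted trajectory is continuous on $[0,(3k+1)t_{sw}]$. Next I would verify the Pontryagin Maximum Principle globally. On the interior of each segment, Lemma~\ref{lem:H-max-6k+2} applied after the appropriate rotation shows that the Hamiltonian on $U_T$ attains its maximum only at the prescribed vertex. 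At the switch between segments $j$ and $j{+}1$, with controls $u_j$ and $u_{j+1}=R^{-1}\cdot u_j$, the identity $\chi_{31}(t_{sw})=0$ transported by rotation becomes the statement that the maximizing face of $U_T$ contains the edge $[u_j,u_{j+1}]$; by Lemma~\ref{lem:control-face-lemma} this is exactly what permits the bang-bang switch from $u_j$ to $u_{j+1}$, and the strict inequality $\chi_{32}(t)>0$ in the interior, also from Lemma~\ref{lem:H-max-6k+2}, shows the third vertex is strictly suboptimal so the switching is unambiguous. Vanishing of $\H$ along the trajectory, the normalization $\lambda_{cost}=-1$, and the nonvanishing of $(\lambda_{cost},\Lambda_1(t),\Lambda_R(t))$ are inherited from Theorem~\ref{thm:strong-transversality} and preserved by conjugation.

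Finally, the terminal transversality conditions of Section~\ref{sec:transversality} must be checked at $t_f=(3k+1)t_{sw}$. The identity $g(t_f)=R$ is exactly Lemma~\ref{lem:discrete-k} for the selected $y_0$, and the corresponding conditions for $X$, $\Lambda_1$, and $\Lambda_R$ follow by iterating the one-segment rotation identity $3k+1$ times, noting that $R^{-(3k+1)}=R^{-1}(-I_2)^{-k}$ acts by conjugation on $\sl(\R)$ the same as $R^{-1}$. The trajectory is bang-bang by construction, with finitely many ($3k$) switches. The main obstacle I anticipate is not analytical but combinatorial bookkeeping: tracking sign conventions in the mode indices $-j$ of \eqref{eqn:3k+1}, reconciling the direction of the dihedral action on controls with the direction of conjugation by $R$ on the state/costate, and verifying that the one-parameter family of costates of Theorem~\ref{thm:strong-transversality} (pinned down by the choice $\lambda_{cost}=-1$) is transported consistently along the chain. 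Once these indices are settled, the proof reduces to a rotation-equivariant concatenation of Theorem~\ref{thm:strong-transversality}, Lemma~\ref{lem:H-max-6k+2}, and Lemma~\ref{lem:discrete-k} at the discrete parameter values selected by \eqref{eqn:theta-k}.
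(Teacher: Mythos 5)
Your proposal is correct and follows essentially the same route as the paper: the paper's proof of Theorem~\ref{thm:6k+2} is a two-sentence pointer that combines equation~\eqref{eqn:poly-z}, the strong transversality conditions from Theorem~\ref{thm:strong-transversality}, Lemma~\ref{lem:H-max-6k+2}, and Lemma~\ref{lem:discrete-k} in exactly the way you describe, leaving the rotation-equivariant concatenation implicit. What you have written is a careful unpacking of that argument, making explicit the transport of the costate by conjugation, the role of the boundary identities $\chi_{23}(0)=\chi_{31}(t_{sw})=0$ in justifying each bang-bang switch via Lemma~\ref{lem:control-face-lemma}, and the reduction $R^{-(3k+1)}\equiv R^{-1}$ on $\sl(\R)$ via $R^3=-I_2$; none of this departs from the paper's intended proof.
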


\begin{proof}
From \eqref{eqn:poly-z}, it follows that transversality for $z$ from the
half-plane control problem \eqref{pbm:plane-optimal-control-problem} holds with $t_f =
(3k+1) t_{sw}$:
\[
z(z_0,t_f) = R^{-(3k+1)}\cdot z_0 = R^{-1}\cdot z_0.
\]
The strong transversality conditions for $\Lambda_1,\Lambda_R$
imply the transversality conditions at time $t_f=(3k+1)t_{sw}$.
\end{proof}

\section{Supplementary Remarks on Smoothed Polygons}\label{sec:n-gon}

% DONE[TCH 5/3/2024. There is no need to say more.] Say more
% here. Should we also mention the 4-cycle conjecture.  TCH 4/19/2024:
% Since the 4-cycle conjecture has been proved impossible for the
% Fuller system, I would de-emphaisze 4-cycles in the book.

\begin{remark}
A discreteness condition on $y_0$ comes from the transversality
condition $g(t_f)=R$, described in Lemma~\ref{lem:discrete-k}.  We
have solved the nonlinear equations \eqref{eqn:theta-k} and
\eqref{eqn:t1} explicitly for $t_{sw}$ and $y_0$ in the accompanying
code, but we do not display the solution here.  For each positive
integer $k$, the trajectory for the smoothed $6k+2$-gon is now
completely determined by these values of $t_{sw}$ and $y_0$, given as
solutions to nonlinear equations.

We can use Equation \eqref{eqn:theta-k} and the lemma to define
$k$ as a continuous function of $y_0$.  The cost function can then be
interpolated to a function of a real variable $y_0$ (or $k$).
Figure \ref{fig:cost} graphs the area of the smoothed $6k+2$-gon as a
function of $k$.  It appears that the area function is increasing in
$k$ and tends to the area $\pi$ of the circular disk.
\end{remark}

\begin{remark}
A related construction gives a trajectory with $3k-1$ parts in the
control mode sequence -- the smoothed $6k-2$-gon, for $k\ge
2$.  The changes are minor.  We replace equation \eqref{eqn:3k+1} with
\begin{equation}\label{eqn:3k+2}
\mathcal{I} = ((1,t_{sw}),(2,t_{sw}),(3,t_{sw}),\ldots,(3k-1,t_{sw})).
\end{equation}
The parameters are
\[
z_0=0+iy_0,\quad
t_{sw} = -\frac{\ln (4/(3y_0^2+1))}{\sqrt{3} y_0},\quad y_0>1.
\]
The initial control mode is
$u=\ep_2$.
\end{remark}

\begin{remark} 
It seems that the smoothed $6k-2$-gon is {\it not} a Pontryagin
extremal trajectory.  Specifically, all of the conditions seem to
hold, except that the Pontryagin multiplier $\lambda_{cost} >0$ has
the wrong sign.  This suggests that these smoothed polygons are
Pontryagin extremal trajectories for the problem of maximizing the
area.
\end{remark}

\begin{remark}
When $k=1$, the smoothed $6k-2$-gon degenerates to a rectangle with
corners (Figure \ref{fig:rectangle}) and area $\sqrt{12}$.  Allowing
$k$ to be non-integral, for small values of $k>1$, we obtained
smoothed rectangles (that do not quite satisfy the boundary
conditions).
\end{remark}

\tikzfig{rectangle}{ By taking a smoothed $6k-2$-gon and interpolating
  formulas to a fractional number of sides (here $k=1.03$), we see
  that the shape appears to be tending to a rectangle of area
  $\sqrt{12}$ as $k\mapsto 1$.  }{
\def\rt{1.732};
\draw[blue!30] (0,-\rt) -- (1.5,\rt/2) -- (-1.5,\rt/2) -- cycle;
\draw[blue!30] (0,\rt)-- (-1.5,-\rt/2) -- (1.5,-\rt/2) --cycle;
% generated from plotDec with k=1.03.
\draw 
(1., 0.)-- (1., 0.0169549)-- (1., 0.048467)-- (1., 0.107035)-- (1., 
  0.215889)-- (1., 0.418202)-- (0.991058, 0.620692)-- (0.960685, 
  0.730146)-- (0.896834, 0.789974)-- (0.77418, 0.823909)-- (0.544074, 
  0.845408);
\draw (0.5, 0.866025) --  (0.269022, 0.870587) --  (0.144746, 
  0.873041) --  (0.0778794, 0.874361) --  (0.0419025, 0.875072) --  (0.0225454,
   0.875454) --  (0.00318826, 0.875836) --  (-0.0327886, 
  0.876547) --  (-0.0996547, 0.877867) --  (-0.223931, 
  0.880321) --  (-0.454909, 0.884882);
\draw (-0.5, 0.866025) --  (-0.730978, 0.853632) --  (-0.855254, 
  0.824574) --  (-0.922121, 0.767326) --  (-0.958097, 0.659183) --  (-0.977455,
   0.457252) --  (-0.98787, 0.255144) --  (-0.993473, 0.146401) --  (-0.996488,
   0.0878927) --  (-0.998111, 0.0564126) --  (-0.998983, 0.0394749);
\draw (-1., 0.) --  (-1., -0.0169549) --  (-1., -0.048467) --  (-1., -0.107035) --   
(-1., -0.215889) --  (-1., -0.418202) --  (-0.991058, -0.620692) --   
(-0.960685, -0.730146) --  (-0.896834, -0.789974) --  (-0.77418,  
-0.823909) --  (-0.544074, -0.845408);
\draw (-0.5, -0.866025) --  (-0.269022, -0.870587) --  (-0.144746, -0.873041) --   
(-0.0778794, -0.874361) --  (-0.0419025, -0.875072) --  (-0.0225454,  
-0.875454) --  (-0.00318826, -0.875836) --  (0.0327886, -0.876547) --   
(0.0996547, -0.877867) --  (0.223931, -0.880321) --  (0.454909, -0.884882);
\draw (0.5, -0.866025) --  (0.730978, -0.853632) --  (0.855254, -0.824574) --   
(0.922121, -0.767326) --  (0.958097, -0.659183) --  (0.977455, -0.457252) --   
(0.98787, -0.255144) --  (0.993473, -0.146401) --  (0.996488, -0.0878927) --   
(0.998111, -0.0564126) --  (0.998983, -0.0394749);
}

The trajectory $X$ in $\h$ for the $6k+2$-gon follows a triangle (with edges
following the arcs of Figure \ref{fig:traj-upper}) centered at
$z=i\in\h$.  It moves counterclockwise around $i$, traversing one edge
for each control mode (Figure \ref{fig:tri}).  The trajectory $X$ in $\h$ for
the $6k-2$-gon also follows an inverted triangle
centered at $z=i\in \h$.  It moves clockwise.

\tikzfig{tri}{The trajectory in the upper-half plane of a smoothed
  $6k+2$-gon follows $3k+1$ edges moving counterclockwise on a
  triangular path centered at $i\in\h$ (left).  The trajectory for the
  smoothed $6k-2$-gon follows $3k-1$ edges moving clockwise on an
  inverted triangle centered at $i\in\h$ (right).}  {
\def\rt{1.732}
\begin{scope}
\node (a) at (0,-1) {};
\node (b) at (\rt/2,0.5) {};
\node (c) at (-\rt/2,0.5) {};
\draw[->] (a) to node {} (b);
\draw[->] (b) to node {} (c);
\draw[->] (c) to node {} (a);
\smalldot{0,0};
\end{scope}
\begin{scope}[xshift=1.5in]
\node (a) at (0,1) {};
\node (b) at (-\rt/2,-0.5) {};
\node (c) at (\rt/2,-0.5) {};
\draw[<-] (a) to node {} (b);
\draw[<-] (b) to node {} (c);
\draw[<-] (c) to node {} (a);
\smalldot{0,0};
\end{scope}
}

The cost increases with $k$ for the $6k+2$-gon and decreases with $k$ for
$6k-2$-gon.  In both cases, the limit of the cost is $\pi$ as
$k\mapsto\infty$.  We show a graph of the costs of the smoothed
polygons as a function of the number $n=6k\pm 2$ of sides (Figure
\ref{fig:cost}). 

\tikzfig{cost}{The graph interpolates the cost $c$ of known critical
  points as a function of the number $n=6k\pm 2$ of straight edge
  segments in the corresponding smoothed polygon.  The cost tends to
  $\pi$ as $n$ increases.  The data is consistent with Reinhardt's conjecture.}{
\begin{scope}[xscale=0.5,yscale=40]
\draw[->,gray] (6,3.14159) to (22,3.14159);
\draw[->,gray] (6,3.1) to (6,3.2);
\node (n) at (23,3.14159) {$n$};
\node (pi) at (5,3.14159) {$\pi$};
\node (c) at (6,3.22) {$c$};
%\node (8) at (8,3.08) {$8$};
\clip (4,3.1) rectangle (22,3.21);
\draw  (4,3.456) to [out=0,in=180] (8,3.126);
\draw (8,3.126) to [out=0,in=180] (10,3.15);
\draw (10,3.15) to [out=0,in=180] (14,3.139);
\draw (14,3.139) to [out=0,in=180] (16,3.14374);
\draw (16,3.14374) to [out=0,in=180] (20,3.14058);
\foreach \x in {8,10,14,16,20} \draw[gray] (\x,3.14159+0.01) -- 
(\x,3.14159 -0.02) node[anchor=north,black] {$\x$};
\foreach \y in {3.12,3.16} \draw[gray] (6+0.5,\y) -- 
(5.5,\y) node[anchor=east,black] {$\y$};
\end{scope}
}

\section{Smoothed Octagon is an Isolated Extremal}

% TCH: Not quite right when $d\ne d_{oct}$. DONE[TCH:4/27/2024. Mass
% deletion. OK now.]

The previous section showed that the smoothed octagon is an extremal
trajectory.  In this section we show that it is an isolated extremal.

\begin{theorem}\label{thm:oct-isolated}
The lifted trajectory of the smoothed octagon is an isolated
extremal.  That is, in some neighborhood of the set of initial conditions
in the cotangent space, the only extremal trajectory satisfying the
transversality conditions at the endpoints is that of the smoothed octagon.
\end{theorem}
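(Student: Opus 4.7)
The strategy is to show that any extremal whose initial data in the cotangent space lies sufficiently close to that of the smoothed octagon must itself be a bang-bang extremal with exactly the same combinatorial structure (four constant-control segments at the vertices of $U_T$), and then to verify that the transversality conditions pin down the parameters of such an extremal uniquely near those of the octagon.

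The first step is a continuity argument. By Lemma~\ref{lem:H-max-6k+2} applied with $k=1$, the switching functions $\chi_{31}$ and $\chi_{32}$ along the octagon's trajectory are strictly positive on the open interval $(0,t_{sw})$ and have nonzero derivatives at their boundary zeros $t=0$ and $t=t_{sw}$. Since the solutions of the lifted Hamiltonian system depend continuously on the initial data, for any extremal whose initial point lies in a small enough neighborhood of the octagon's initial point, the corresponding switching functions remain strictly signed away from slightly perturbed switching instants, and the maximizing vertex of $U_T$ is therefore uniquely determined except at an isolated sequence of switching times. Hence any such nearby extremal is bang-bang with the same cyclic sequence of control vertices $\ep_3,\ep_2,\ep_1,\ep_3$ (the sequence obtained by successive $R$-conjugations, as in \eqref{eqn:3k+1} with $k=1$), and with exactly three interior switches.

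The second step is a finite-dimensional uniqueness argument. A nearby bang-bang extremal with this combinatorial structure is parameterized by its initial cotangent data $(X_0,\Lambda_1(0),\Lambda_R(0))$ together with the four segment durations; imposing $\lambda_{cost}=-1$, the vanishing of the Hamiltonian, the three switching conditions, and the transversality conditions $g(t_f)=R$, $\Lambda_1(t_f)=R^{-1}\Lambda_1(0)R$, $\Lambda_R(t_f)=R^{-1}\Lambda_R(0)R$ gives a system with as many equations as unknowns. Using the $R$-rotational transversality to collapse the four segments into a single fundamental one, every such nearby extremal reduces to a solution of the strong transversality problem of Theorem~\ref{thm:strong-transversality}, and is therefore parameterized by a single continuous variable $y_0\in(1/\sqrt3,1)$ via the formulas of that theorem and \eqref{eqn:t1}. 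The remaining closure condition $g(t_f)=R$ becomes, by Lemma~\ref{lem:discrete-k}, the algebraic equation
\[
\frac{4}{3y_0^2+1} \;=\; 2\cos\!\left(\frac{\pi k}{3k+1}\right),
\]
whose solutions correspond to integers $k\ge 1$. The octagon is the value $k=1$, while the next admissible value $k=2$ yields the smoothed $14$-gon, which is far from the octagon in $y_0$ (and hence in initial data). Consequently the octagon is an isolated solution of the transversality system.

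The main obstacle I anticipate is justifying rigorously that a nearby extremal's four segments must indeed have equal durations and matching initial data of the form prescribed by Theorem~\ref{thm:strong-transversality}, rather than merely being approximately so. The cleanest way to handle this is to view the $R$-conjugation transversality conditions as producing, after one period, a Poincar\'e-type return map on the cotangent space, and to argue that the octagon is a regular (nondegenerate) fixed point of this map: the linearization at the octagon, combined with the nondegeneracy of the switching function derivatives at the switching times, yields a nonsingular Jacobian and hence, via the implicit function theorem, local uniqueness of the solution to the transversality system.
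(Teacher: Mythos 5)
Your overall strategy -- reduce to a Poincar\'e return map on the cotangent space, show the octagon is a nondegenerate fixed point, and invoke the implicit function theorem -- is exactly the paper's strategy, and your first step (continuous dependence plus transversality of switching zeros forces nearby extremals to be bang-bang with the same control sequence and three switches) is fine. But the proof as written has a genuine gap at the decisive point, which your own third paragraph flags and then waves away.

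The gap is this: you claim that "the linearization at the octagon, combined with the nondegeneracy of the switching function derivatives at the switching times, yields a nonsingular Jacobian." That does not follow. Nonzero derivatives of the switching functions at the switching times guarantee only that the switching time $t_{sw}(q_0)$ is a smooth function of the initial condition $q_0$ near the octagon, hence that the return map $f_d$ is well defined and smooth. They say nothing about whether $Df_d^4 - I$ is invertible at the fixed point (equivalently, whether $1$ is an eigenvalue of the linearized return map). The paper establishes the much stronger fact -- that the fixed point $q_{oct}$ is hyperbolic, i.e.\ $A_{d_{oct}} = T f_{d_{oct}}$ has no eigenvalues of modulus $1$ -- by an explicit (Mathematica-assisted) computation, and only then invokes the implicit function theorem on $f_d^4 - I$. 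Without supplying that spectral computation (or some analytic substitute), the argument does not close.

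A secondary remark, by way of comparison with the paper's bookkeeping. Your second step tries to show directly that any nearby extremal lies in the one-parameter strong-transversality family of Theorem~\ref{thm:strong-transversality}, and then uses Lemma~\ref{lem:discrete-k} to discretize $y_0$. As you yourself observe, that reduction is unjustified: a nearby extremal only needs to satisfy transversality after four segments, and there is no a priori reason its four segment lengths coincide or that it satisfies the one-step transversality of Theorem~\ref{thm:strong-transversality}. The paper avoids this entirely: it fixes $d=\det(\Lambda_1)$ near $d_{oct}$, proves that $q_{fix}(d)$ is an isolated fixed point of $f_d^4$ on the Poincar\'e section $N_d$ (via hyperbolicity plus the implicit function theorem), and only at the end uses the transversality on $g$ (Lemma~\ref{lem:discrete-k}, via Equation~\eqref{eqn:theta-k}) to single out $d_{oct}$ among the one-parameter family $d \mapsto q_{fix}(d)$. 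Since $d$ is a monotonic function of $y_0$ (see the remark after Theorem~\ref{thm:strong-transversality}), this is essentially your $y_0$-family parameterized differently, but the paper's order of quantifiers makes the argument watertight in a way your second step is not.
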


\index[n]{d@$d$, determinant!$d_{oct}=\det(\Lambda_1)$, for smoothed octagon}
\begin{proof}[Proof sketch]
Fix $d>0$ close to the value $d_{oct}=\det(\Lambda_1)$ obtained for a
smoothed octagon.  In the proof, we ignore the transversality on the
group element $g\in G$, and the discreteness parameter $k$ it
produces, until the final lines of the proof.  Instead, we let
$d$ run over a small interval containing $d_{oct}$.

\index[n]{M@$M$, manifold!$M_d$, for smoothed octagon}
\index[n]{N@$N_d$, Poincar\'e section}
\index[n]{q@$q$, point on manifold!$q_0$, in submanifold}
\index[n]{d@$d$, determinant!$d\in\R$}
\index{Poincar\'e section}

We consider the five dimensional manifold $M=M_d$ given by
$q=(X,\Lambda_1,\Lambda_R)\in\sl(\R)^3$ in a neighborhood of the
smoothed octagon parameters subject to four constraints:
\[
\det(X)=1,\quad \bracks{X}{\Lambda_R}=0,\quad\det(\Lambda_1)=d,\quad\H=0.
\]
Fix two vertices $i,j$ of the control simplex $U_T$.  We consider the
four-dimensional \emph{Poincar\'e section} ${N}_d$ obtained by
requiring the vanishing of a switching function
$\chi_{ij}=\bracks{\Lambda_R}{P_i-P_j}=0$.  For each $q_0\in {N}_d$,
let $q(t,q_0)$ be the extremal trajectory in $M$, starting at initial
condition $q_0$.

\index[n]{y@$y_0$, $0+iy_0$, smoothed polygon initial condition}
\index[n]{t@$t\in\R$, time!$t_{sw}$, switching time}
\index[n]{q@$q$, point on manifold!$q_i$}

Fixing $d$, we let $y_0(d)$ be the real number constructed in
Section~\ref{sec:6k+2-gons}.  Associated with $y_0$ and $d$, we have
constructed an extremal lifted periodic 
trajectory $q(t,q_{fix}(d))=(X(t),\Lambda_R(t),\Lambda_1(t))$ in $M_d$
starting at the appropriate initial condition $q_{fix}(d)\in {N}_d$.

Let $t_{sw}(q_{fix}(d))>0$ be the first positive switching time of the
trajectory $q(t,q_{fix}(d))$.  By transversality of switching times
(justified in the remark following Lemma~\ref{lem:H-max-6k+2}), there
is a unique first switching time $t_{sw}(q_0)$ near $t_{sw}(q_{fix}(d))$ for the
extremal trajectory $q(t,q_0)$, when $q_0$ is near $q_{fix}(d)$.

Define a Poincar\'e map $f_d:{N}_d\to {N}_d$ by
\[
f_d(q_0)= \Ad(R)q(t_{sw}(q_0),q_0),
\]
where $\Ad$ acts componentwise on ${N}_d\subset \sl(\R)^3$.  By the strong
transversality conditions for the trajectories in
Section~\ref{sec:6k+2-gons}, $f_d$ has a fixed point at $q_{fix}(d)$.  If
$q_0$ is a nearby initial condition that gives an extremal satisfying
transversality, then setting $q_{i+1}=f_d(q_i)$, we have
\begin{equation}\label{eqn:fp}
\Ad(R^{-4})q_4 = \Ad(R^{-1})q_0,\quad\text{or } f^4q_0=q_0.
\end{equation}
The \emph{four} is half the number of edges of the smoothed octagon;
that is, the terminal time is $t_f=4t_{sw}(q_{oct})$.

\index{inverse function theorem}
\index[n]{I@$I$, identity map}

Let $A_d:=Tf_d:T_q{N}_d\to{}T_q{N}_d$ be the tangent map of $f_d$ at
the fixed point $q=q_{fix}(d)$.  Direct calculation shows that the
fixed point $q_{oct}:=q_{fix}(d_{oct})$ is hyperbolic; that is,
$A_{d_{oct}}$ has no eigenvalues of absolute value $1$.  Then the
fixed point $q_{fix}(d)$ is also hyperbolic for the map $f_d^4$ for
sufficiently nearby parameters $d$.  By the implicit function theorem,
for each $d$ near $d_{oct}$, the function $f_d^4-I$ can be inverted in
an open neighborhood of the fixed point $q_{fix}(d)$, forcing the
fixed point to be isolated (where $I$ is the identity map).  Thus, the
only fixed points near $q_{oct}$ are the fixed points $q_{fix}(d)$.
For $d$ near $d_{oct}$, the only fixed point $q_{fix}(d)$ satisfying
the final transversality conditions \eqref{eqn:theta-k} is $q_{oct}$.
\end{proof}

\begin{remark}
A different proof that the smoothed octagon is isolated 
appears in the preprint \cite{hales2017reinhardt}.
\end{remark}

\newpage
\chapter{Singular Locus}
\label{sec:sing}

\index{singular locus}
\index{edge of control simplex}

% TCH 4/29/2024: renamed Walls -> Edges of control Simplex. globally.

\section{Edges of the Control Simplex}\label{sec:edges}
% DONE[TCH 5/4/2024] TCH 4/30/2024: Make one final proof reading of
% this section.

\index{abnormal extremal} 

At the end of this section, Remark~\ref{rem:edge-abnormal} shows that
abnormal extremals can be constructed to the Reinhardt control problem
with an arbitrary measurable control function, taking values in a
fixed edge of the control set $U_T$.  These anomalous abnormal
extremals seem to be an artifact of way we have chosen to encode the
convexity conditions of the convex disk $K$ into the control problem.
These abnormal solutions indicate that the Reinhardt control problem
has unnecessarily many extremals.

In this section, we consider a modified control problem
\eqref{eqn:xy-edge}, which we call the \emph{edge control problem}.
As we will see, in this modified control problem, these particular
abnormal extremals disappear. We will then consider trajectories that
are extremal in two respects: with respect to the Reinhardt optimal
control problem, but also with respect to the edge control problem.

%% TCH 4/30/2024: Inconsistent domain [t0,t1] vs [t1,t2]. 
%% DONE[TCH5/3/2024: Fixed].

%% TCH 4/29/2024: Quick fix via definition> Something like the
%% following should be true: a normal extremal solution should also be
%% edge extremal.

\begin{definition}[Edge Control Problem]\label{def:edge-extremal} 
The edge control system on an interval $[t_0,t_1]$ with free terminal
time $t_1$ is the control problem with state equations
\eqref{eqn:xy-edge}, control set $[-1/2,1/2]$, endpoint
conditions \eqref{eqn:edge:endpoint}, cost functional
\eqref{eqn:edge:cost}, and Hamiltonian \eqref{eqn:edge:ham}.  The
state variables $x,y,s:[t_0,t_1]\to\R$ have range restrictions $y>0$
and $-1/\sqrt3<x<1/\sqrt3$.
\end{definition}

Let $(g,X):[t_0,t_1]\to\SL(\R)\times\hstar$ be a trajectory of the
Reinhardt state equations whose control is restricted to the edge
$u=(0,u_1,u_2)$ of the control set $U_T$.  We show how to define state
variables $s,x,y$ and an edge control problem.  We write the control
in this edge as
\[
u(t)\in \left\{\left(0,\frac{1}{2}+\uedge,\frac{1}{2}-\uedge\right)\in U_T\mid 
\quad -1/2\le\uedge\le 1/2\right\}.
%\index[n]{u@$\uedge\in\R$}
\index[n]{u@$u$, control!$\uedge$, on edge}
\]
Thus the control function on an interval $[t_0,t_1]$ is determined by
a measurable function $\uedge:[t_0,t_1]\to[-1/2,1/2]$.

The variables
$x,y$ are the same as in the Reinhardt problem: $z=x+iy\in\hstar$.
The vector field on $\hstar$ controlling the
Reinhardt state equations (Lemma~\ref{lem:X-dynamics}) is
\[
(x',y')=(f_1,f_2) = (y,f_2(x,y,\uedge)) = 
\left(y,\frac{2\sqrt{3} y^2 \uedge}{-1 + 2\sqrt3 x \uedge}
\right).
\]
% TCH 4/8/2023 : Denominator sign corrected.
%% Are derivatives in this section with respect to s or t. Answer: wrt t.}
In particular, $x'=y>0$, so that $x$ is monotonically increasing.
In this setting, we can solve the state ODE $g' = gX$ with initial
condition $g(t_0)=g_0$ explicitly.
\index[n]{s@$s$, real parameter!$s(t)$, reparameterization}
\index[n]{h@$h\in G$, element of Lie group!$h(t)\in\SL(\R)$}
\begin{align}\label{eqn:g-edge}
% better version % TCH 5/4/2024.
g(t) &= g_0h(t_0)^{-1}h(t),\quad\text{where}
\\
h(t) &= 
\begin{pmatrix} 
1 & -x(t)\\
s(t) & 1- s(t)x(t)
%s(t) & 1- s(t) x(t)
%\\
%-1 & x(t)
\end{pmatrix},
\quad\text{and}\quad
s(t) := \int_{t_0}^t \frac{dt}{y(t)}.
\end{align}
The state equations for the edge control problem take the form
\begin{equation}\label{eqn:xy-edge}
s' = 1/y,\quad x' = y,\quad y' = f_2(x,y,\uedge)
\end{equation}
subject to boundary conditions
\begin{equation}\label{eqn:edge:endpoint}
(s(t_0),s(t_1),x(t_0),x(t_1),y(t_0),y(t_1))=(s_0,s_1,x_0,x_1,y_0,y_1)
\end{equation}
that are chosen to agree with the boundary conditions of the Reinhardt
trajectory $(g,X)$.  We can take $s_0=0$.  (If a constant of
integration is added to $s$, the path $g(t)\in\SL(\R)$ is unchanged.)

\index[n]{zv@$\phi$, cost integrand!$\phi_{edge} = x^2/y$, on edge}
\index[n]{f@$f$, vector field!$f_2$, component}

Up to a positive constant, the cost is given by Equation
\eqref{eqn:cost-upper-half-plane}.
\begin{align*}
\int_{t_0}^{t_1}\frac{1+y^2+x^2}{y}dt 
&= \int_{t_0}^{t_1} (s'(t)+x'(t))dt +
\int_{t_0}^{t_1} \phi_{edge}(x,y)dt 
\\
&=
(s_1+x_1)-(s_0 +x_0) +\int\phi_{edge}(x,y)dt.
\end{align*}
where $\phi_{edge}(x,y) = x^2/y$.
Subtracting a constant determined by the boundary conditions, we
can take the cost to be 
\begin{equation}\label{eqn:edge:cost}
\int\phi_{edge}(x,y)dt.
\end{equation}

This data specifies a control problem with fixed initial time $t_0$
and free terminal time $t_1$.  The state variables are $x,y,s$,
satisfying the ODE \eqref{eqn:xy-edge}.  The control is measurable
control $u_{edge}:[t_0,t_1]\to[-1/2,1/2]$.

\index[n]{zL@$\Lambda$, costate!$\lambda_i$, on edge}

The state variables $(x,y,s)$ take values in an open subset of $\R^3$.
The cotangent space is therefore $T^*\R^3=\R^6$ with variables
$x,y,s,\lambda_1,\lambda_2,\lambda_3$.  The Hamiltonian is
\begin{equation}\label{eqn:edge:ham}
\H = \sum_i\lambda_i f_i +\lambda_{cost}\phi_{edge}= 
\lambda_1 y + \lambda_2 f_2(x,y,\uedge) +\lambda_3 /y 
+\lambda_{cost} \phi_{edge}(x,y).
\end{equation}
The term $\lambda_{cost}$ is constant and is nonpositive.  This
completes our description of the edge control system and its relation
to the Reinhardt control problem with edge-constrained control.  By
rotational symmetry, we obtain edge control systems likewise for the
other edges of $U_T$.

\begin{definition}
We say that a trajectory $(g,X)$ is \emph{edge extremal}, if on every
subinterval of the domain on which one of the components
of the control function $u=(u_0,u_1,u_2)$) is zero a.e. (say $u_j=0$
a.e.), the trajectory is extremal with respect to the
corresponding edge control problem.
\end{definition}

\begin{proposition}\label{prop:edge-extremal} 
The global solution of the Reinhardt problem
is an extremal for the Reinhardt control problem, and it is
also edge extremal.
\end{proposition}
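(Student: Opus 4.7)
The plan is to handle the two claims separately: extremality for the Reinhardt control problem, and extremality for the edge control problem on subintervals where one component of the control vanishes.

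For the first claim, existence of a global minimizer has already been established (Reinhardt's existence theorem, together with the compactification of Chapter \ref{sec:compactification} and Filippov's theorem from Section \ref{sec:existence-opt-control}). Applying the Pontryagin Maximum Principle to the Reinhardt control problem \ref{pbm:reinhardt-optimal-control} then yields the extremal property immediately: the global minimizer lifts to a trajectory in the cotangent bundle satisfying the Hamiltonian system of Problem \ref{pbm:state-costate-reinhardt} together with the transversality conditions of Section \ref{sec:transversality}.

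For the edge extremality, I will proceed by a splicing/contradiction argument. Fix a subinterval $[t_0,t_1]$ on which, say, $u_0 = 0$ a.e. along the minimizer, and let $(x,y,s)$ be the corresponding state of the edge control problem, with boundary values $(x_0,x_1,y_0,y_1,s_0,s_1)$ read off from the Reinhardt trajectory (normalizing $s_0 = 0$). The sub-claim is that this restriction is \emph{optimal} for the edge control problem of Definition \ref{def:edge-extremal} under these boundary conditions. Suppose, for contradiction, that there exists a competitor $(\tilde{x},\tilde{y},\tilde{s},\tilde{u}_{edge})$ on $[t_0,t_1]$ satisfying \eqref{eqn:xy-edge} and \eqref{eqn:edge:endpoint} with strictly smaller edge cost \eqref{eqn:edge:cost}. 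Define a spliced Reinhardt trajectory by keeping the original on $[0,t_0] \cup [t_1,t_f]$ and inserting the competitor on $[t_0,t_1]$, with control $u = (0, 1/2 + \tilde{u}_{edge}, 1/2 - \tilde{u}_{edge})$ on the middle interval. The key point is that the explicit formula \eqref{eqn:g-edge} shows $g(t)$ is reconstructed from $(x,y,s)$ up to left translation, so matching the triple $(x,y,s)$ at both $t_0$ and $t_1$ guarantees that the spliced $g$ is continuous and satisfies $g'=gX$ on the whole $[0,t_f]$ with the correct terminal value $g(t_f)=R$. The identity derived preceding \eqref{eqn:edge:cost} shows that the Reinhardt cost on $[t_0,t_1]$ equals the edge cost plus $(s_1+x_1)-(s_0+x_0)$, a boundary-determined constant; hence the spliced Reinhardt trajectory has strictly smaller total cost than the global minimizer, a contradiction. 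Applying the PMP to the edge control problem to the restriction then produces an edge Pontryagin extremal, which is precisely edge extremality.

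The one point that needs care is admissibility of the spliced trajectory as a balanced convex disk: in particular, the state must remain in $\hstar$ and the resulting multi-curve must close up into a convex disk in $\Kbal$. The range restriction $-1/\sqrt{3} < x < 1/\sqrt{3}$, $y > 0$ is built into the edge control problem, so the trajectory stays in $\hstar$; the convexity and closing-up properties of $K$ are preserved by the splice because the controls $u_0,u_1,u_2$ remain in $U_T$, the state equations are satisfied in both pieces, and the boundary conditions at $t_0$ and $t_1$ match. The most delicate obstacle is verifying that competitors for the edge problem can in fact be chosen within the admissible set and that the resulting Reinhardt trajectory still lies in $\Kbal$ after splicing — but both issues reduce to checking that the edge control problem's constraints faithfully capture the relevant Reinhardt constraints on the subinterval, which follows from the derivation of the edge problem at the start of Section \ref{sec:edges}.
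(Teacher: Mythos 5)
Your proposal is correct and takes essentially the same approach as the paper: the first claim follows from existence plus the PMP, and edge extremality follows from the observation that a global minimizer must be locally optimal on any subinterval among competitors with the same endpoint data and edge-restricted control, after which the PMP for the edge problem gives a Pontryagin extremal. The paper's proof is considerably more terse — it simply states the local-optimality-on-subintervals principle without the explicit splicing construction or the admissibility discussion — but your expanded version fills in exactly the details the paper leaves implicit, and your identified delicate point (that the spliced trajectory remains admissible) is handled correctly by noting that the edge problem's range restrictions keep the state in $\hstar$ and the control in $U_T$.
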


\begin{proof}  We have seen that the global minimizer must be
extremal for the Reinhardt optimal control problem for some optimal
control function $u$ taking values in $U_T$.  On any subinterval of
the domain where the optimal control function takes values in an edge
of $U_T$, then the globally minimizing trajectory must minimize cost
among all trajectories with the same endpoint conditions on the
subinterval and that have their control function similarly restricted
to the edge. Thus, the global minimizer is also edge extremal.
\end{proof}

\index{a.e. almost everywhere}
\index{edge extremal}

The purpose of this section is to prove the following bang-bang
behavior on edges with finite switching.

\begin{theorem}\label{thm:edge}
Consider an extremal lifted trajectory in the edge control system on 
a closed interval $[t_0,t_1]$.
Then the optimal control function $\uedge$ is bang-bang with finitely
many switches.
That is, the control function is 
equal a.e. to a piecewise constant function
\[
\uedge(t)=\pm 1/2,\quad\text{for all } t\in[t_0,t_1],
\]
with finitely many switches on $[t_0,t_1]$.
\end{theorem}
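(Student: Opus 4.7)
\medskip

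The plan is to apply the Pontryagin Maximum Principle to the edge control system and analyze the switching function determined by $\lambda_2$. First I would observe that in the Hamiltonian \eqref{eqn:edge:ham} the control $\uedge$ enters only through the term $\lambda_2 f_2(x,y,\uedge)$. A short calculation yields
\[
\frac{\partial \H}{\partial \uedge} \;=\; \frac{-2\sqrt{3}\,y^2\,\lambda_2}{\bigl(-1 + 2\sqrt{3}\,x\,\uedge\bigr)^2},
\]
so that, whenever $\lambda_2 \neq 0$, the derivative has constant sign on $[-1/2,1/2]$, and the maximum of $\H$ over the control set is attained at one of the endpoints $\uedge = \pm 1/2$. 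Explicitly, $\uedge = +1/2$ when $\lambda_2 < 0$ and $\uedge = -1/2$ when $\lambda_2 > 0$, so the control is bang--bang wherever $\lambda_2$ does not vanish.

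Next I would rule out singular arcs on which $\lambda_2 \equiv 0$. On such an arc, differentiating gives $\lambda_2' = -\lambda_1 + (\lambda_3 + \lambda_{cost} x^2)/y^2 = 0$. Combining with the identity $\H \equiv 0$ in the form $\lambda_1 y^2 + \lambda_3 + \lambda_{cost} x^2 = 0$ forces $\lambda_3 + \lambda_{cost} x^2 \equiv 0$ on the interval. Since $\lambda_3' = -\partial\H/\partial s = 0$, the constant $\lambda_3$ equals $-\lambda_{cost} x^2$, but $x' = y > 0$ so $x$ strictly increases; hence $\lambda_{cost} = 0$, and in turn $\lambda_3 = 0$, then $\lambda_1 = 0$. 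This makes $(\lambda_{cost},\lambda_1,\lambda_2,\lambda_3)$ vanish, contradicting the nontriviality condition of the PMP. Thus $\lambda_2$ has only isolated zeros, and the control is everywhere bang--bang.

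To rule out accumulation of switching times, I would suppose toward contradiction that switches $t_n \to t^{\ast} \in [t_0,t_1]$. By continuity $\lambda_2(t^{\ast}) = 0$, and on each interval between two consecutive switches $\lambda_2$ has constant sign and vanishes at both endpoints; Rolle's theorem gives $\lambda_2'(\tau_n) = 0$ with $\tau_n \to t^{\ast}$. Crucially, at points where $\lambda_2 = 0$, the expression for $\lambda_2'$ from the costate equation does \emph{not} involve $\uedge$ (the $u$-dependence sits in the term $\lambda_2\,\partial f_2/\partial y$), so $\lambda_2'$ is continuous at $t^{\ast}$ and $\lambda_2'(t^{\ast}) = 0$. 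The conditions $\lambda_2(t^{\ast}) = \lambda_2'(t^{\ast}) = 0$ together with $\H(t^{\ast}) = 0$ force $\lambda_1(t^{\ast}) = 0$ and $\lambda_3 + \lambda_{cost} x(t^{\ast})^2 = 0$, exactly as in the singular-arc analysis but only at the single point $t^{\ast}$.

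The final step, which I anticipate to be the main obstacle, is a higher-order Taylor analysis at $t^{\ast}$. A direct but careful computation (again using that $u$-dependent terms carry a factor of $\lambda_2$ or $\lambda_2'$) yields
\[
\lambda_2''(t^{\ast}) \;=\; \frac{4\,\lambda_{cost}\,x(t^{\ast})}{y(t^{\ast})},
\]
independent of the control. If $\lambda_{cost}\,x(t^{\ast}) \neq 0$, then $\lambda_2$ has an isolated double zero at $t^{\ast}$ and retains constant sign in a punctured neighborhood, contradicting the existence of switches $t_n \to t^{\ast}$. If $\lambda_{cost} = 0$, then $\lambda_3 = 0$ as well, and combined with $\lambda_1(t^{\ast}) = \lambda_2(t^{\ast}) = 0$ this contradicts nontriviality. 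The remaining degenerate case is $\lambda_{cost} \neq 0$ with $x(t^{\ast}) = 0$; since $x$ is strictly increasing this can happen at only one point, and here I would compute one order further to obtain $\lambda_2'''(t^{\ast}) = 4\lambda_{cost} \neq 0$ (independent of $u$, because all lower-order $u$-dependent terms carry a factor that vanishes at $t^{\ast}$), giving an isolated simple zero of order three and again contradicting accumulation. This completes the reduction to a finite switching sequence on $[t_0,t_1]$.
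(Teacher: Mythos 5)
Your overall strategy coincides with the paper's: identify $\lambda_2$ as the switching function, observe that zeros of $\lambda_2$ with $\lambda_2'\neq 0$ are isolated, then handle the degenerate points $\lambda_2=\lambda_2'=0$ (which by monotonicity of $x$ and the constraint $\lambda_3=-\lambda_{cost}x(t_2)^2$ occur at most twice) by a higher-order Taylor/asymptotic expansion. The asymptotic constants you obtain match the paper's Lemma~\ref{lem:lambda2-Landau} after a small algebraic check: $\lambda_2''(t^\ast)=4\lambda_{cost}x(t^\ast)/y(t^\ast)$ gives the leading coefficient $C=2\lambda_{cost}x_0/y_0=-2x_0/y_0$ (the paper's stated $C=-x_0(1+y_0)/y_0^2$ appears to be a transcription slip; with $(x^2-x_0^2)/y^2=2x_0t/y_0+O(t^2)$ and $\tilde\lambda_1=2x_0t/y_0+O(t^2)$ the sum is $4x_0t/y_0$, not $2x_0(1+y_0)t/y_0^2$), and $\lambda_2'''(t^\ast)=4\lambda_{cost}$ gives $C=-2/3$, in agreement.

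Where your write-up is lighter than the paper is in the justification that the $u$-dependent terms really are subdominant. Since the control is only measurable, $\lambda_2''$ need not exist pointwise; what one has is the two-sided difference quotient. For the $x(t^\ast)\ne 0$ case this is genuinely harmless: $\lambda_2$ is Lipschitz-$C^1$ with $\lambda_2(t^\ast)=\lambda_2'(t^\ast)=0$, so $\lambda_2=O((t-t^\ast)^2)$ outright, and the $u$-dependent term $\lambda_2 h$ in the costate equation is one order higher than the $\Theta(t-t^\ast)$ leading term of $\lambda_2'$. For the $x(t^\ast)=0$ case, however, $\lambda_2 h$ and the leading term of $\lambda_2'$ are a priori both $O((t-t^\ast)^2)$, so ``carries a factor that vanishes at $t^\ast$'' is not by itself enough. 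The estimate closes, but via a genuine bootstrap: first establish $\lambda_2''(t^\ast)=0$ in the difference-quotient sense (this is fine, $\lambda_2 h/(t-t^\ast)=O(t-t^\ast)$), which upgrades $\lambda_2$ to $o((t-t^\ast)^2)$, which in turn makes $\lambda_2 h=o((t-t^\ast)^2)$ strictly below the $\Theta((t-t^\ast)^2)$ leading term of $\lambda_2'$; only then does integration give the Peano-type expansion $\lambda_2=\tfrac{2}{3}\lambda_{cost}(t-t^\ast)^3+o((t-t^\ast)^3)$. You should spell this iterated $o(\cdot)$ accounting out, because without it the argument is circular. The paper isolates exactly this point as Lemma~\ref{lem:lambda2-Landau}, which it proves by a Gronwall inequality so as to work uniformly over \emph{all} measurable controls with the fixed initial data — a more modular formulation than your in-line Taylor argument, but amounting to the same content. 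Your explicit exclusion of an arc with $\lambda_2\equiv 0$ is correct but technically redundant: once the degenerate zeros are shown isolated, no such arc can exist. With the $o(\cdot)$ bookkeeping filled in, your proposal is sound and is the same proof in a different wrapper.
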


\begin{proof}
The costate equations of the Hamiltonian \eqref{eqn:edge:ham} are
\begin{align}\label{eqn:costate-edge}
\begin{split}
%% TCH 4/8/2023 : Denominator corrected.
%% TCH 4/27/2024 : lambda_2': lambda2->lambda1 corrected!!
\lambda_1' &= -\frac{\partial \H}{\partial x} 
= \frac{\lambda_2 12y^2 \uedge^2 }{(1-2\sqrt3 x \uedge)^2} 
- \frac{2\lambda_{cost} x}{y}.
\\
\lambda_2' &= -\frac{\partial \H}{\partial y} = 
\frac{\lambda_3}{y^2} - \lambda_1 
- \frac{\lambda_2 4 \sqrt3  y \uedge}{-1+2\sqrt3 x \uedge} + \frac{\lambda_{cost} x^2}{y^2},
\\
\lambda_3' &= -\frac{\partial \H}{\partial s} = 0.
\end{split}
\end{align}
We note that $\lambda_3$ is constant.

The only term of the Hamiltonian depending on the control is
$\lambda_2f_2$.  The function $f_2$ is monotonic decreasing in $\uedge$.
Maximizing the Hamiltonian, when $\lambda_2\ne0$, 
the control is $\uedge=\mp 1/2$ depending on the
sign of $\lambda_2$.  Thus, $\lambda_2$ is a switching function for the
control.

The functions $x,y,s,\lambda_1,\lambda_2,\lambda_3$ are continuous
by construction.  The function $\lambda_2'$ is also continuous by the
form of the ODE it satisfies.  (Although the ODE depends a measurable
control function $\uedge$, when $\lambda_2$ has fixed nonzero sign, the
control function is constant and hence continuous.  Also,
$\lambda_2{f_2(x,y,\uedge)}$ tends to zero with $\lambda_2$.  Thus,
$\lambda_2'$ is continuous.)

We claim that at any point $t_2\in[t_0,t_1]$
where $\lambda_2(t_2)=\lambda_2'(t_2)=0$,
the costate is given by
\begin{align}\label{eqn:edge-zero}
\lambda_3=-x(t_2)^2\lambda_{cost},\quad
\lambda_1(t_2)=0,\quad 
\lambda_2(t_2)=0,\quad
\lambda_{cost}\ne0.
\end{align}
(In particular, $-\lambda_3/\lambda_{cost}\ge0$ and
$x(t_2) = \pm \sqrt{-\lambda_3/\lambda_{cost}}$.)
In fact, under these vanishing conditions on $\lambda_2$ and $\lambda_2'$,
we obtain a nonsingular linear system of two equations and two unknowns 
\[
\H = \frac{\partial \H}{\partial y} = 0
\]
for $\lambda_3$ and $\lambda_1(t_2)$.  The unique solution to this
linear system is as given.  If $\lambda_{cost}=0$, then all of the
costate variables are zero at $t_2$, which is contrary to the
Pontryagin extremality conditions.  Thus, $\lambda_{cost}\ne0$ and the
solution is normal.

In \eqref{eqn:edge-zero},
since $x$ is monotonic increasing, along any extremal lifted
trajectory, there are at most two times $t\in[t_0,t_1]$ such that
$x(t)= \pm \sqrt{-\lambda_3/\lambda_{cost}}$.  At any other switching
time $t$ with $\lambda_2(t)=0$, the trajectory passes transversally
through the wall $\lambda_2=0$ (that is, $\lambda_2'(t)\ne0$). In
such a case the zero of the switching function $\lambda_2$ is isolated.

The next lemma shows that even at times when the conditions of
\eqref{eqn:edge-zero} are met, the zeros of the switching function
$\lambda_2$ are isolated.  By translation in time, we may assume
without loss of generality that $t_2=0$ in the lemma.

In conclusion, all the zeros of the switching function $\lambda_2$ are
isolated and there are at most finitely many switches on any finite
time interval.  Adjusting the control $\uedge$ on a set of measure
zero, we may assume that $\uedge$ is piecewise constant, taking values
$\pm1/2$.
\end{proof}

\index[n]{O@$O$, Landau big oh}
\index{Landau big oh}
\index[n]{C@$C,C_0,C_1,C_2$, local real constant}
\index[n]{t@$t\in\R$, time!$t_1$}
\index[n]{f@$f$, function!$f_i$}

In the next lemma, we write $f_1 = O(f_2)$ to mean that there exist
$t_1>0$ and $C_1 > 0$ such that $f_1,f_2$ are defined on $(-t_1,t_1)$ and
\[
|f_1(t)| \le C_1|f_2(t)|,\quad\text{ for all } t\in(-t_1,t_1).
\]
\begin{lemma}\label{lem:lambda2-Landau}
Fix constants $\lambda_{cost}=-1$, $\lambda_3 = -\lambda_{cost}x_0^2$,
where $|x_0|<1/\sqrt3$, and $y_0>0$.  Let $t_1>0$ and choose any
measurable function $\uedge:(-t_1,t_1)\to[-1/2,1/2]$.  Let
$x,y,\lambda_1,\lambda_2$ be solutions to the state \eqref{eqn:xy-edge}
and costate equations \eqref{eqn:costate-edge} on $(-t_1,t_1)$ with
initial conditions $(x(0),y(0))=(x_0,y_0)$,
$(\lambda_1(0),\lambda_2(0))=(0,0)$, and control function $\uedge$.
Then there exist an integer $n\ge 2$ and a real nonzero constant
$C\ne0$ (both $n$ and $C$ depending on the initial data $x_0,y_0$ but
not on the choice of control function $\uedge$) such that
\[
\lambda_2(t) = Ct^n + O(t^{n+1}).
\]
In particular, having this form, the switching function $\lambda_2$
has an isolated zero at $t=0$ with multiplicity $n$.
\end{lemma}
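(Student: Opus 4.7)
The plan is to express $\lambda_2$ as the solution of the linear ODE $\lambda_2' = a(t) + b(t)\lambda_2(t)$ where
\[
a(t) = \frac{x_0^2 - x(t)^2}{y(t)^2} - \lambda_1(t), \qquad
b(t) = -\frac{4\sqrt{3}\,y(t)\,\uedge(t)}{-1 + 2\sqrt{3}\,x(t)\,\uedge(t)},
\]
using $\lambda_{cost}=-1$ and $\lambda_3 = x_0^2$. The integrating-factor representation is
\[
\lambda_2(t) = e^{A(t)}\int_0^t a(s)\, e^{-A(s)}\,ds, \qquad A(t) = \int_0^t b(\tau)\,d\tau.
\]
Because $|\uedge|\le 1/2$ and $(x,y)$ stay in a compact subset of $\hstar$ on a sufficiently small interval around $0$ (so that the denominator $-1+2\sqrt{3}x\uedge$ is bounded away from zero), the functions $x,y,a,b$ are all uniformly bounded and Lipschitz on $(-t_1,t_1)$ with constants independent of $\uedge$. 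Consequently $A(t)=O(t)$ and $e^{\pm A(t)} = 1+O(t)$, so
\[
\lambda_2(t) = (1+O(t))\int_0^t a(s)(1+O(s))\,ds,
\]
where the $O$ terms are $\uedge$-uniform. Reading off the leading behavior of $\lambda_2$ therefore reduces to expanding $a(t)$ near $t=0$.

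From $x(0)=x_0$ and $\lambda_1(0)=0$ we have $a(0)=0$, and differentiating $a$ and substituting the state/costate equations gives
\[
a'(0) = -\frac{4x_0}{y_0},
\]
with the control-dependent contributions to $a'(0)$ vanishing because they are multiplied by either $(x_0^2 - x(0)^2)=0$ or $\lambda_2(0)=0$. If $x_0\ne 0$, this gives $a(t) = -(4x_0/y_0)\,t + O(t^2)$ and hence
\[
\lambda_2(t) = -\frac{2x_0}{y_0}\,t^2 + O(t^3),
\]
so $n=2$ and $C=-2x_0/y_0\ne 0$. In the remaining case $x_0=0$ we have $\lambda_3=0$ and $a'(0)=0$, and I will handle this by bootstrapping orders of vanishing from the integral representation: since $|a(s)|\le Ks$ uniformly in $\uedge$, the formula gives $|\lambda_2(s)|\le K_2 s^2$; plugging this into $\lambda_1' = 2x/y + O(\lambda_2)$ and using $x(t)=y_0 t + O(t^2)$, $y(t)=y_0+O(t)$ yields $\lambda_1(t) = t^2 + O(t^3)$ and $x(t)^2/y(t)^2 = t^2 + O(t^3)$, so $a(t) = -2t^2 + O(t^3)$ and therefore
\[
\lambda_2(t) = -\tfrac{2}{3}t^3 + O(t^4),
\]
giving $n=3$ and $C=-2/3\ne 0$.

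The main obstacle is the low regularity of $\uedge$: since $\uedge$ is only measurable, the solutions $x,y,\lambda_1,\lambda_2$ are not classically smooth and we cannot Taylor-expand them pointwise. The plan for circumventing this is to perform every expansion inside an integral --- either the integrating-factor formula for $\lambda_2$ or the definitions of $\lambda_1$ and $x$ as integrals of their derivatives. Each such integral is Lipschitz with $\uedge$-uniform constants because the integrands are bounded uniformly in $\uedge$. Crucially, every location where $\uedge$ enters the asymptotic expansion carries a factor that already vanishes at $t=0$ to sufficient order (namely $\lambda_2$ or $x_0^2-x^2$), so the $\uedge$-dependence is pushed into the higher-order $O$-remainder and never contaminates the leading coefficient $C$. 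Once the leading expansion is established, the fact that $\lambda_2(t) = Ct^n + O(t^{n+1})$ with $C\ne 0$ immediately implies that $t=0$ is an isolated zero of multiplicity $n$, as claimed.
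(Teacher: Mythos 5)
Your proof is correct, and it takes a route that differs from the paper's in a genuine way.  The paper defines explicit polynomial approximations $\tilde\lambda_1,\tilde\lambda_2$ by dropping the control-dependent terms, works with the coupled $2\times2$ error vector $\mb{v}=(\lambda_1-\tilde\lambda_1,\lambda_2-\tilde\lambda_2)$, derives $\mb{v}'=-A\mb{v}-\tilde\lambda_2\mb{b}$, and applies Gronwall to the vector norm.  You instead decouple the system: you put the $\lambda_2$-equation into the scalar linear form $\lambda_2'=a(t)+b(t)\lambda_2$, write $\lambda_2$ via the variation-of-parameters formula, observe $e^{\pm A(t)}=1+O(t)$ uniformly in $\uedge$ since the denominator $-1+2\sqrt3 x\uedge$ is bounded away from zero for $|x|$ strictly below $1/\sqrt3$, and then reduce everything to expanding the inhomogeneity $a(t)=\frac{x_0^2-x^2}{y^2}-\lambda_1$.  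Both arguments exploit the same structural fact --- that every occurrence of the measurable control $\uedge$ is multiplied by a quantity that already vanishes at $t=0$ (either $\lambda_2$ in the $\lambda_1$-equation or the factor $x_0^2-x^2$ implicitly through the costate ODE), so the leading coefficient cannot depend on the control --- but your scalar Duhamel route is a bit more streamlined and makes the uniformity in $\uedge$ more transparent, while the paper's coupled-error formulation is perhaps more obviously symmetric in how it handles $\lambda_1$ and $\lambda_2$.  Your bootstrapping in the $x_0=0$ case ($\lambda_2=O(t^2)$ feeding into $\lambda_1$, then into $a$) is essentially the same as what the paper is doing implicitly when it expands $\tilde\lambda_1,\tilde\lambda_2$.

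One small but notable point: your leading constant $C=-2x_0/y_0$ for $x_0\ne0$ differs from the paper's stated $C=-x_0(1+y_0)/y_0^2$.  Recomputing, one has $\frac{x^2-x_0^2}{y^2}=\frac{2x_0 t}{y_0}+O(t^2)$ and $\tilde\lambda_1=\frac{2x_0 t}{y_0}+O(t^2)$, whose sum is $\frac{4x_0 t}{y_0}+O(t^2)$, giving $C=-2x_0/y_0$; the paper's expression $\frac{2x_0(1+y_0)}{y_0^2}$ equals this only when $y_0=1$, so the paper appears to have a small algebra slip there.  This has no bearing on the validity of the lemma --- only $C\ne0$ is used --- but your value is the correct one.
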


\index[n]{zL@$\Lambda$, costate!$\tilde\lambda$, approximation to costate on edge}
\index[n]{v@$\mb{v}$, vector!components $v_i$}
\index[n]{A@$A$, matrix or linear map}
\index[n]{b@$\mb{b}\in\R^2$, vector}
\index[n]{zt@$\tau$, dummy variable of integration}

\begin{proof}
If $x_0\ne0$, set $n=2$. Otherwise, set $n=3$.  If $x_0\ne0$, set $C =
-x_0(1+y_0)/y_0^2$.  Otherwise set $C=-2/3$.  
We approximate the functions $\lambda_1,\lambda_2$ by
the functions $\tilde\lambda_1,\tilde\lambda_2$, 
where
\[
\tilde\lambda_1(t):=\int_0^t \frac{2x(\tau)}{y(\tau)} d\tau,\quad
\tilde\lambda_2(t) := -\int_0^t \left(\frac{x(\tau)^2-x_0^2}{y(\tau)^2} 
+ \tilde\lambda_1(\tau)\right) d\tau.
\]
Then it is enough to show that $\tilde\lambda_2(t)=Ct^n+O(t^{n+1})$
and $\lambda_2(t)=\tilde\lambda_2(t)+O(t^{n+1})$.  Consider the error
term $\mb{v}=(v_1,v_2)$, where $v_i:=\lambda_i-\tilde\lambda_i$.
The costate equations for $\lambda_1$ and $\lambda_2$, when expressed
in terms of $\mb{v}$, become $\mb{v}'=-A\mb{v}-\tilde\lambda_2\mb{b}$,
where
\[
A =
\begin{pmatrix}
0 & f_{2 x}\\
1 & f_{2 y}
\end{pmatrix},
\quad
\mb{v} = \begin{pmatrix} v_1\\v_2
\end{pmatrix},
\quad
\mb{b} = \begin{pmatrix} f_{2x}\\  f_{2y}
\end{pmatrix}.
\]
Here $f_{2 x}$ and $f_{2 y}$ are the partial derivatives of $f_2$
evaluated at $(x(t),y(t),\uedge(t))$.

If $x_0\ne0$, we compute
\begin{align*}
% TCH 4/27/2024: corrections made.
y &= y_0 + O(t),\\
x &= x_0 + y_0t + O(t^2),\\
\frac{x^2-x_0^2}{y^2} &= \frac{2 x_0 y_0t}{y_0^2} + O(t^2),\\ 
% TCH 4/9/2023 : corrected y_0^2 in denom
\tilde\lambda_1 &= \frac{2 x_0 t}{y_0} + O(t^2),\\
\tilde\lambda_2 &= -\int_0^t \frac{2x_0(1+y_0)}{y_0^2} \tau + O(\tau^2)d\tau =
C t^n + O(t^{n+1}).
\end{align*}
If $x_0=0$, we compute
\begin{align*}
y &= y_0 + O(t),\\
x &= y_0t + O(t^2),\\ % TCH 4/9/2023 : corrected O(t^2).
\tilde\lambda_1 &= t^2 + O(t^3),\\
\tilde\lambda_2 &= -\frac{2t^3}{3} + O(t^4) = Ct^n + O(t^{n+1}).
\end{align*}

\index[n]{zt@$\tau$, dummy variable of integration}
\index{norm!natural matrix}
\index{norm!Euclidean}
\index[n]{0@$\vvert-\vvert$, norm}
\index[n]{t@$t\in\R$, time!$t_2$}
\index[n]{C@$C,C_0,C_1,C_2$, local real constant}
\index{Gronwall inequality}

We use the Euclidean norm on $\R^2$ and the natural matrix norm on the
vector space of $2\times 2$ matrices.  By the Cauchy-Schwarz
inequality, we have $\|\mb{v}\|'\le \|\mb{v}'\|$.  Pick $0<t_2<t_1$
such that the denominator of $f_2(x(t),y(t),\uedge(t))$ is bounded
away from zero on $(-t_2,t_2)$.  Then there exists $C_0>0$ such that
$\|A(x(t),y(t),\uedge(t))\|\le C_0$ for all $t\in (-t_2,t_2)$.  We
have for some $C_1>0$,
\[
\|\mb{v}\|' \le \|\mb{v}'\|=\|\tilde\lambda_2 \mb{b}+A\mb{v}\|
\le |\tilde\lambda_2|\|\mb{b}\| + C_0\|\mb{v}\|\le C_1 |t|^n + C_0\|\mb{v}\|.
\]
In integral form,
\[
\|\mb{v}\|\le \frac{C_1\, |t|^{n+1}}{n+1} + C_0\int_0^t \|\mb{v}\|dt.
\]
By the Gronwall inequality (Corollary~\ref{thm:Gronwall}), we have
$\|\mb{v}\| = O(t^{n+1})$.  Then
\[
|\lambda_2- C t^n| = |v_2+\tilde\lambda_2-Ct^n|
\le \|\mb{v}\|+|\tilde\lambda_2-Ct^n| = O(t^{n+1}).
\]
This completes the lemma.
\end{proof}

We adjust the control function $\uedge$ along a set of measure zero in
$[t_0,t_1]$ and assume without loss of generality that
$\uedge\in\{-1/2,1/2\}$.

\index[n]{C@$C,C_0,C_1,C_2$, local real constant}
\index[n]{r@$r$, real number!slope of ODE solution}

If the conditions \eqref{eqn:edge-zero} hold, for each constant
control $\uedge=\pm1/2$, we can solve the state and costate ODEs
explicitly for
$x^{\pm},y^{\pm},\lambda_1^{\pm},\lambda_2^{\pm},\lambda_3^{\pm}$ as a
function of $t\in[t_0,t_1]$.  The solutions for $x,y$ agree with the
solutions obtained in Section~\ref{sec:e2e3}.  The costate ODEs can be
solved without difficulty in Mathematica, but we do not record the
(rather unruly) formulas here.

\index{Mathematica}

\begin{remark}\label{rem:edge-abnormal}\mcite{MCA:4647835}
We have an anomalous situation.  In the proof of
Theorem~\ref{thm:edge}, we showed that $\lambda_{cost}\ne0$ (that is,
the trajectory is normal) when a point exists on the trajectory such
that both $\lambda_2(t_2)=\lambda_2'(t_2)=0$.  At such a point, the
trajectory lies on the wall $\lambda_2=0$ and is tangent to the wall.

However, if we return to the full system of state and costate
equations, still restricting the control function to
$\uedge(t)\in[-1/2,1/2]$, we show that an abnormal solution in fact
exists!  (This is similar to \cite[Sec.~10]{hartl1995survey}, where
the existence of an abnormal solution can depend on how state
constraints are encoded.)  Explicitly, there is an abnormal solution
$\lambda_{cost}=0$ such that the state equations are given by $(g,X)$,
where $g$ is given by \eqref{eqn:g-edge}, $X = \Phi(z)$, and $z=(x,y)$
given by ODE \eqref{eqn:xy-edge}.  The costate solutions are
\[
\Lambda_R = \begin{pmatrix}
x & y^2 - x^2\\
1 & -x
\end{pmatrix},\quad
\Lambda_1 = \begin{pmatrix}
-x & x^2 \\
-1 & x
\end{pmatrix}.
\]
It can be checked that $\Lambda_R$ and $\Lambda_1$ satisfy the costate
ODEs given in Section~\ref{sec:costate-variables}.  It can be checked
that the Hamiltonian is identically zero for these choices.

In particular, the Hamiltonian is independent of the control.  Thus
any measurable control function $\uedge(t)\in[-1/2,1/2]$ maximizes the
Hamiltonian, and we obtain a large family of abnormal trajectories.
However, under the alternative encoding that was used in this section,
these abnormal trajectories do not appear.
\end{remark}

\begin{remark}
A result about edges similar to this section is claimed in the
preprint \cite{hales2017reinhardt}, but the proof there is shaky.
\end{remark}
\section{Singular Locus and Singular Subarcs}\label{sec:singular-locus}

\index[n]{YZ@$Z_u$, control matrix!$Z_u^*$, optimal}
\index[n]{I@$I\subseteq\{1,2,3\}$}

We have stated earlier that the optimal control matrix $Z_u^*$ is
implicitly determined by the Hamiltonian maximization condition in
equation \eqref{eqn:max-ham}. Singular subarcs arise when this
maximization condition fails to produce a unique candidate for the
control matrix $Z_u^*$ over an entire time interval. Throughout this section,
we let $J$ denote the infinitesimal generator of the rotation group,
as usual. 

Recall that we have partitioned the cotangent space $T^*\O_J$
according to subsets $I$ of $\{1,2,3\}$, according to the set of
maximizers in $U_T$ of the Hamiltonian.  The set $I=\{1,2,3\}$
corresponds to the part of the cotangent space on which the
Hamiltonian is independent of the control $u\in U_T$.

\begin{lemma}\label{lem:LambdaR}
% TCH 4/29/2024 Reworded with quantifiers to allow \Lambda_R other than the costate.
For all $X\in\sl(\R)$ and all $\Lambda\in{}X^\perp$, if the control
dependent term of the Hamiltonian $\H_2(\Lambda,X,Z_u)$ is independent
of the control $u\in{}U_T$, then $\Lambda=0$.  If $\Lambda\ne0$, then
the set of controls maximizing the Hamiltonian is a vertex or edge of
the control set $U_T$. 
\end{lemma}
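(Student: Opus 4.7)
The plan is to analyze the control-dependent Hamiltonian
\[
\H_2(\Lambda,X,Z_u) = -\frac{\bracks{\Lambda}{Z_u}}{\bracks{X}{Z_u}},
\]
whose denominator is nonzero (in fact negative) by the star inequalities and Lemma~\ref{lem:sl2-star-condition}. If $\H_2$ is constantly equal to some $c\in\R$ on $U_T$, then clearing denominators gives the linear condition
\[
\bracks{\Lambda + cX}{\,Z_u\,} = 0 \quad\text{for all } u\in U_T.
\]
So the proof reduces to the linear-algebraic claim that the only $W\in\sl(\R)$ annihilating every $Z_u$, $u\in U_T$, is $W=0$.

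To establish this claim I would first check that the affine map $u\mapsto Z_u$ from the $2$-simplex $U_T$ into $\sl(\R)$ has a $3$-dimensional affine hull, hence its image linearly spans $\sl(\R)$. Concretely, choose the center $u_c=(1/3,1/3,1/3)$, giving $Z_{u_c}=J/3$, and note that the displacements $Z_{\ep_1}-Z_{\ep_3}$ and $Z_{\ep_2}-Z_{\ep_3}$ are two explicit symmetric traceless matrices that are linearly independent and whose span consists of symmetric matrices; together with the antisymmetric element $J/3$ they span all of $\sl(\R)$. Consequently the orthogonality condition forces $\Lambda+cX=0$, i.e.\ $\Lambda=-cX$. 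Imposing the hypothesis $\Lambda\in X^\perp$ and using $\bracks{X}{X}=-2\det(X)=-2$ from Lemma~\ref{lem:sl2-lemmas}(1), we obtain $0=\bracks{\Lambda}{X}=2c$, so $c=0$ and thus $\Lambda=0$, proving the first assertion.

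For the second assertion, the argument is immediate by contraposition. By Lemma~\ref{lem:control-face-lemma}, the argmax set of $\H(\Lambda_1,\Lambda,X;Z_u)$ on $U_T$ is always a face of $U_T$; the faces of the $2$-simplex are its vertices, its edges, and $U_T$ itself. If the maximizing face were the full simplex $U_T$, then $\H_2(\Lambda,X,Z_u)$ would be constant in $u$ (the control-independent piece of the full Hamiltonian drops out), and the first part of the lemma would give $\Lambda=0$, contradicting $\Lambda\ne 0$. Hence the maximizing face must be a vertex or edge.

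I do not expect any genuine obstacle: the argument is essentially a rank computation for the affine map $u\mapsto Z_u$ combined with the identification $\bracks{X}{X}=-2\det(X)\ne 0$ on the adjoint orbit $\O_J$. The only thing to keep explicit is the spanning check for $\{Z_{\ep_1},Z_{\ep_2},Z_{\ep_3}\}$, which is routine from the matrix formulas for $Z_u$ already recorded in \eqref{eqn:trisystem-Z0}.
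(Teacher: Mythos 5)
Your proof is correct, but it takes a genuinely different route to the key linear-algebra step. The paper works in the quotient $\sl(\R)/\R X\cong T_X\O_X$: it writes $\H_2=-\langle\Lambda,[P]\rangle$ with $P=Z_u/\langle X,Z_u\rangle$, invokes Lemma~\ref{lem:control-convex} to get that the affine hull of the image $\{[P_u]:u\in U_T\}$ is all of the two-dimensional tangent space, and concludes $\Lambda=0$ as a linear functional on the quotient. You instead clear denominators to get the condition $\langle\Lambda+cX,Z_u\rangle=0$ in the full three-dimensional $\sl(\R)$, show by explicit matrix computation that $\{Z_u:u\in U_T\}$ linearly spans $\sl(\R)$, and then use $\Lambda\in X^\perp$ together with $\langle X,X\rangle=-2\det(X)\ne 0$ to kill the spurious solution $\Lambda=-cX$. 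Your version is more elementary and self-contained (it bypasses Lemma~\ref{lem:control-convex} and the tangent-space machinery), at the small cost of needing the explicit nondegeneracy $\langle X,X\rangle\ne 0$, which you get from $\det(X)=1$; the paper's approach never needs to isolate this. Both readings implicitly require $X$ to satisfy the star conditions (the denominator $\langle X,Z_u\rangle$ must be nonvanishing on $U_T$), which forces $\det(X)>0$, so your use of it is justified. The second assertion is handled the same way in both proofs via Lemma~\ref{lem:control-face-lemma}.

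One small phrasing error: the affine hull of $\{Z_u:u\in U_T\}$ cannot be $3$-dimensional, since $U_T$ is a $2$-simplex and $Z$ is affine; what you actually establish in the next sentence is that the $2$-dimensional affine hull does not pass through the origin (it contains $J/3\ne 0$), hence its \emph{linear span} is all of $\sl(\R)$. The concrete computation with $J/3$, $Z_{\ep_1}-Z_{\ep_3}$, and $Z_{\ep_2}-Z_{\ep_3}$ is exactly right and proves what you need, so this is a wording slip rather than a mathematical gap.
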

%If $(X,\Lambda_R)\in (T^*\O_J)_{\{1,2,3\}}$, then 

\index[n]{znu@$\nu\in T^*_z\h$, cotangent variable}

\begin{proof}
The part of the Hamiltonian that is dependent on the control can be
written 
\[
\bracks{\Lambda}{Z_u/\bracks{X}{Z_u}}.
\]
We fix $X\in\sl(\R)$ and % let $\Lambda\inT^*_X\O_X=X^\perp$.  let let
$f$ be the vector field on $T_X\O_X$ defined by 
\eqref{eqn:tangent-map-f1f2}. The affine hull of the image of the
vector field under $U_T$ is the entire tangent space, by
Lemma~\ref{lem:control-convex}.  The value of $\H_2$ must then be
zero, and $\Lambda$ must lie in the orthogonal complement $\{0\}$ of
the entire tangent space.  Thus, $\Lambda=0$.

In contrapositive form, if $\Lambda\ne0$, then the Hamiltonian is not
constant as a function of the control.  By Lemma~\ref{lem:control-face-lemma},
the set of maximizers is a vertex or edge.
% TCH 4/29/2024: Contrapositive added and proof shortened, using
% reworded lem:convex-control.
\end{proof}

\begin{lemma}\label{lem:Lambda1}
The costate trajectory function $\Lambda_R'$ is continuous at every time $t=t_0$ such
that $\Lambda_R(t_0)=0$. 
If $\Lambda_R(t_0)=\Lambda_R'(t_0)=0$, for some $t=t_0$, then
\[
\lambda_{cost}\ne0,\quad\Lambda_1(t_0) = \frac{3}{2}\lambda_{cost} J.
\]
\end{lemma}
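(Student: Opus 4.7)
The plan is to read off both conclusions directly from the costate ODE for $\Lambda_R$ by exploiting that $\Lambda_R=0$ kills every term in the ODE that depends on the control $u$.

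\textbf{Continuity.} From the formula for $\Lambda_R'$ in Problem~\ref{pbm:state-costate-reinhardt},
\[
\Lambda_R' = [P,\Lambda_R] - \bracks{\Lambda_R}{P}[P,X] + \Bigl[-\Lambda_1 + \tfrac{3}{2}\lambda_{cost}J,\ X\Bigr],
\]
the first two terms both carry a factor of $\Lambda_R$ and hence vanish when $\Lambda_R(t_0)=0$. The remaining term $[-\Lambda_1(t_0)+\tfrac{3}{2}\lambda_{cost}J,X(t_0)]$ is independent of $u$, and $\Lambda_1,X$ are continuous, so $\Lambda_R'$ is continuous at $t_0$ regardless of any discontinuity of the control.

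\textbf{Identification of $\Lambda_1(t_0)$.} Suppose also $\Lambda_R'(t_0)=0$. The continuity formula above gives
\[
\Bigl[-\Lambda_1(t_0)+\tfrac{3}{2}\lambda_{cost}J,\ X(t_0)\Bigr] = 0.
\]
Since $X(t_0)\in\mathcal{O}_J$ is regular semisimple in $\sl(\R)$, its centralizer is the line $\R X(t_0)$ (see Lemma~\ref{lem:half-plane-lie-algebra-iso} and the discussion in Corollary~\ref{cor:X-props}), so there exists $c\in\R$ with
\[
-\Lambda_1(t_0)+\tfrac{3}{2}\lambda_{cost}J = c\,X(t_0).
\]
Now use the Pontryagin identity $\Hstar\equiv 0$. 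By Lemma~\ref{lem:LambdaR} the control-dependent part of $\Hstar$ at $t_0$ is zero when $\Lambda_R(t_0)=0$, leaving
\[
0 = \bracks{\Lambda_1(t_0)-\tfrac{3}{2}\lambda_{cost}J}{X(t_0)} = -c\bracks{X(t_0)}{X(t_0)} = 2c,
\]
using $\bracks{X}{X}=-2\det(X)=-2$ from Lemma~\ref{lem:sl2-lemmas}(1). Hence $c=0$ and $\Lambda_1(t_0)=\tfrac{3}{2}\lambda_{cost}J$.

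\textbf{Normality.} It remains to rule out $\lambda_{cost}=0$. If $\lambda_{cost}=0$, the previous step gives $\Lambda_1(t_0)=0$, and by Corollary~\ref{cor:lam1-gen-sol} the general solution $\Lambda_1(t)=\Ad_{g(t)^{-1}}\Lambda_1(0)$ then forces $\Lambda_1\equiv 0$. The $\Lambda_R$-equation reduces to the linear homogeneous ODE $\Lambda_R'=[P,\Lambda_R]-\bracks{\Lambda_R}{P}[P,X]$, whose unique solution with $\Lambda_R(t_0)=0$ is $\Lambda_R\equiv 0$. Then $(\lambda_{cost},\Lambda_1(t),\Lambda_R(t))\equiv 0$, contradicting the nontriviality clause of the PMP (Section~\ref{sec:PMP}). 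Therefore $\lambda_{cost}\neq 0$, and the proof is complete. The only mildly delicate point is the continuity claim at $t_0$, which relies on the fact that $\Lambda_R=0$ eliminates precisely the control-dependent terms of the ODE; the rest is linear algebra in $\sl(\R)$.
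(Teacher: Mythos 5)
Your proof is correct and follows essentially the same path as the paper's: read the centralizer condition $[\Lambda_{1,cost},X(t_0)]=0$ off the $\Lambda_R$ ODE, combine with $\H=0$ (which forces $\Lambda_{1,cost}(t_0)\in X(t_0)^\perp$) to conclude $\Lambda_{1,cost}(t_0)\in\R X(t_0)\cap X(t_0)^\perp=\{0\}$, and then invoke PMP nontriviality. Your version writes the intersection argument as ``set $\Lambda_{1,cost}=-cX$ and show $c=0$,'' and your normality step propagates $\Lambda_1\equiv0$, $\Lambda_R\equiv0$ to all times via uniqueness rather than just evaluating at $t_0$ — a bit more explicit than the paper, but the same idea.
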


\begin{proof}
By inspection of the ODE, the right-hand side of the costate ODE for
$\Lambda_R'(t_0)$ has a point of continuity when $\Lambda_R(t_0)=0$.
The maximum principle states that the Hamiltonian vanishes identically
along the lifted extremal (See Section \ref{sec:PMP}). Thus, we have
\[
\H(\Lambda_1,\Lambda_R,X; Z_u) = \langle \Lambda_1 - \frac{3}{2}\lambda_{cost} J, X \rangle 
- \frac{\langle \Lambda_R,Z_u\rangle }{\langle X, Z_u \rangle} 
=  \langle \Lambda_{1,cost}, X \rangle 
= 0 \quad \text{ at } t=t_0,
\]
\index[n]{zL@$\Lambda$, costate!$\Lambda_{1,cost}=\Lambda_1 - 3\lambda_{cost}J/2$}
where we set 
\begin{equation}\label{eqn:Lambda1cost}
\Lambda_{1,cost} = \Lambda_1 - 3\lambda_{cost} J/2.
\end{equation}
Thus, $\Lambda_{1,cost}(t_0)\in X(t_0)^\perp$.
The ODE for $\Lambda_R$ gives
\[ 
0 = \Lambda_R'(t_0) =  -[ \Lambda_{1,cost},X(t_0)].
\]
Together, these imply that $\Lambda_{1,cost}\in \R X\cap X^\perp = \{0\}$ at $t=t_0$.
Thus, $\Lambda_{1,cost}(t_0)=0$ and 
$\Lambda_1(t_0) = \frac{3}{2}\lambda_{cost} J$. 

We must have $\lambda_{cost} \ne 0$ for
otherwise we will have $\lambda_{cost} = 0$, $\Lambda_R(t_0)= 0$ and
$\Lambda_1(t_0)=0$, contradicting the non-vanishing of the costate
variables in the maximum principle (see Section \ref{sec:PMP}).  
\end{proof}

The following theorem describes the behavior when $\Lambda_R$
vanishes on an interval.

\index[n]{0@$(-,-)\subset\R$, open interval}

\begin{theorem}\label{thm:circle-singular}
If a lifted extremal has $\Lambda_R$ vanishing identically on an
interval $(t_1,t_2)$, then the control function is constant
$u=(1/3,1/3,1/3)$ (the center of the control set $U_T$) for $t \in
(t_1,t_2)$. Also, the optimal control matrix is $Z_u^*(t) =
\frac{1}{3}J$ on this interval, and this determines an arc
$g(t)\mb{s}_i^*$ of the circle as a singular subarc.  Moreover, the
trajectory is normal, and $X,\Lambda_1$ are constant:
\[
\lambda_{cost}\ne0,\quad
X = J,\quad \Lambda_1 = \frac{3}{2}\lambda_{cost} J,\quad g(t)=g_0 \exp(J t).
\]
\end{theorem}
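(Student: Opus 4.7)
The plan is to propagate the hypothesis $\Lambda_R \equiv 0$ on $(t_1,t_2)$ through the costate and state equations, using Lie-algebraic rigidity in $\sl(\R)$ to pin everything down uniquely. Because $\Lambda_R$ vanishes identically on the interval, so does its derivative $\Lambda_R'$, and Lemma~\ref{lem:Lambda1} applies at every $t \in (t_1,t_2)$. This immediately delivers $\lambda_{cost}\ne 0$ (normality) together with the constant costate identity $\Lambda_1(t) = \tfrac{3}{2}\lambda_{cost} J$ throughout the interval.

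Next, since $\Lambda_1$ is constant on $(t_1,t_2)$, its own ODE $\Lambda_1' = [\Lambda_1, X]$ from Problem~\ref{pbm:state-costate-reinhardt} forces $[J,X] = 0$. The element $J$ is regular semisimple in $\sl(\R)$, so its centralizer is the one-dimensional subalgebra $\R J$. Therefore $X(t) = c(t)\,J$ for some scalar function $c$. The normalization $\det X = 1$ from Proposition~\ref{prop:reparam-lipschitz} gives $c^2 = 1$, and the star inequality $\langle J,X\rangle < 0$ from Corollary~\ref{cor:X-props} (recall $\langle J,J\rangle = -2$) selects the root $c = 1$, so $X \equiv J$ on the interval.

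With $X$ constant, the state ODE \eqref{eqn:trisystem-X} yields $0 = X' = [P,X] = [P,J]$, so $P \in \R J$ as well. Since $P = Z_u/\langle Z_u, X\rangle$, the control matrix $Z_u$ itself lies in $\R J$. Reading off the matrix entries of $Z_u$ in \eqref{eqn:trisystem-Z0}, the conditions that the diagonal vanishes ($u_1 = u_2$) and the two off-diagonal entries are negatives of each other ($u_0 - 2u_1 - 2u_2 = -3u_0$), combined with the simplex constraint $u_0 + u_1 + u_2 = 1$, form three linear equations with the unique solution $u = (1/3,1/3,1/3)$. At this control, $Z_u = J/3$ and $P = -J/2$ consistently. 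Finally, integrating $g' = gJ$ gives $g(t) = g_0\exp(Jt)$, parameterizing a circular arc through each $g(t)\mb{s}_j^*$.

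No genuine obstacle arises: the proof is a short chain assembling Lemma~\ref{lem:Lambda1}, the computation of the centralizer of $J$, and an elementary linear-algebra verification that $Z_u \in \R J$ together with the simplex condition forces $u$ to be the centroid of $U_T$. The only point requiring mild attention is that $u$ is an essentially-bounded measurable function, so the conclusions $X \equiv J$ and $u \equiv (1/3,1/3,1/3)$ hold almost everywhere on $(t_1,t_2)$; but since $X$ is Lipschitz and the control enters only through $Z_u$, which has been uniquely determined, we may take the stated identities to hold pointwise on the open interval.
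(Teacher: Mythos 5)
Your proof is correct and follows essentially the same route as the paper's: invoke Lemma~\ref{lem:Lambda1} to get normality and $\Lambda_1 = \tfrac{3}{2}\lambda_{cost}J$, use the $\Lambda_1$ costate ODE to force $[J,X]=0$ and hence $X\equiv J$, and then read off the control from $[P,J]=0$. The only cosmetic differences are that you identify $X$ via $\det X = 1$ together with the sign from the star inequality rather than citing $\R J\cap\O_J=\{J\}$, and that you spell out the linear algebra showing $Z_u\in\R J$ forces $u=(1/3,1/3,1/3)$, both of which are fine.
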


\begin{proof}
Assume that along an extremal curve, for all $t \in [t_1,t_2]$ we
have $\Lambda_R(t) \equiv 0$. 
On this interval we have $\Lambda_R(t)=\Lambda_R'(t) = 0$.
By the lemma, $\lambda_{cost}\ne0$ and $\Lambda_1(t)\equiv 3\lambda_{cost}J/2$.
The costate equation for $\Lambda_1$ gives
\[
0=\Lambda_1' = [\Lambda_1,X] = \frac{3}{2}\lambda_{cost}\left[J,X \right].
\]
Thus,
we have $[J,X] = 0$ and $X\in \R J\cap \O_J = \{J\}$. 
So $X \equiv J$ on $t \in (t_1,t_2)$.

Note that $\Lambda_R=0$ means that
the Hamiltonian \eqref{eqn:maximized-hamiltonian} does not involve the control
matrix $Z_u^*$ and so the maximization fails to uniquely determine the
control matrix in this interval. Thus, the lifted extremal in this
interval is singular (according to
Definition \ref{def:normal-abonormal-singular-extremals}).
The unique control function $u(t)$ which gives $0=X'=[P,X]=[P,J]$ is
$u(t) = (1/3,1/3,1/3)$ (almost everywhere) 
which is the centroid of the control
set $U_T$.
% $U_I,U_C$ not yet defined.

Now, the curve $g(t) = \exp(Jt)$ satisfies $g'=gX=gJ$ and this is a rotation
matrix in $\SL(\R)$ which gives rise to the circle in the packing plane
as a centrally symmetric convex disk, assuming $g(0) = I_2$.
\end{proof}

Thus, in the \emph{singular locus} of the cotangent space, we must
necessarily have
\begin{align*}
g(t) &= g_0\exp(Jt), &&X(t) = J, &&z(t) = i,
\\
\Lambda_1(t) &= \frac{3}{2}\lambda_{cost}J, &&\Lambda_R(t) = 0,
\end{align*}
where $g(0) = g_0$. 

\index{singular locus}

\begin{definition}[Singular Locus]
The region of the extended state space $T^*(\SL(\R) \times \sl(\R))$ given by 
\[
\Ssing := \left\{(g,\Lambda_1,X,\Lambda_R)=
\left(g_0,\frac{3}{2}\lambda_{cost}J,J,0\right)\mid g_0\in\SL(\R),
\quad\lambda_{cost}\ne0\right\} 
%\subset T^*(\SL(\R) \times \sl(\R))
\index[n]{S@$\Ssing$, singular locus}
\]
is called the \emph{singular locus}\index{singular locus}. In the star
domain of the upper half-plane, the singular locus lies over the point
$z=i \in \hstar$.  (That is, $\Phi(i)=J$.)
\end{definition}

\begin{remark}\normalfont
Note that $\Ssing$ gives the initial conditions corresponding
to the circle in $\Kccs$ (which has $g_0 = I_2 \in \SL(\R)$ up to a
transformation in $\SL(\R)$).
\end{remark}

%% Broke the theorem into two parts, to permit easier quoting.
We have seen in Theorem~\ref{thm:edge} that every Pontryagin extremal
of the edge optimal control problem has a bang-bang control function
with finitely many switches.  The other possibility is a singular arc
along which the Hamiltonian is independent of the control function.
That is, the Hamiltonian-maximizing face of $U_T$ is the entire
two-simplex $U_T$.  This is the situation considered in the following
theorem.

\begin{theorem}\label{thm:circle-iff-singular}
  Consider a Pontryagin extremal to the Reinhardt problem that
  contains a singular subarc along which the Hamiltonian is
  independent of the control.  Then during that time interval, the
  extremal remains in the singular locus.  Moreover, the unique
  solution to the system of state and costate equations on that
  interval is a multi-curve of circular arcs, up to affine
  transformation.  Conversely, the lifted trajectory attached to a
  multi-curve of circular arcs is a Pontryagin extremal singular
  subarc.
\end{theorem}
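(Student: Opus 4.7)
The plan is to read the theorem as a straightforward consequence of the two preparatory lemmas just proved, together with the converse check that the singular locus data does satisfy the PMP.

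First, suppose the lifted extremal contains a subinterval $(t_1,t_2)$ on which $\H$ is independent of $u$. The only control-dependent piece of the Hamiltonian in \eqref{eqn:full-hamiltonian} is $\H_2 = -\langle \Lambda_R, Z_u\rangle/\langle X, Z_u\rangle$, and Lemma~\ref{lem:LambdaR} asserts that whenever this function of $u \in U_T$ is constant, necessarily $\Lambda_R = 0$. Applying this pointwise in $t$, I get $\Lambda_R(t) \equiv 0$ on $(t_1,t_2)$. This puts me exactly in the hypothesis of Theorem~\ref{thm:circle-singular}, which delivers in one step that $\lambda_{cost}\ne 0$, $X(t) \equiv J$, $\Lambda_1(t) \equiv \tfrac{3}{2}\lambda_{cost} J$, and $g(t) = g_0 \exp(Jt)$; in other words, the trajectory lies in $\Ssing$ throughout $(t_1,t_2)$. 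The uniqueness claim is immediate: the state and costate values have all been pinned down, and $g_0 \in \SL(\R)$ is the only remaining degree of freedom, which is precisely the affine ambiguity in the statement (since the $\SL(\R)$-action on the boundary parameterization corresponds to an affine transformation of the planar disk in circle representation). Unwinding the definition $\sigma_j(t) = g(t)\mb{s}_j^* = g_0 \exp(Jt)\mb{s}_j^*$ shows the boundary is a multi-curve of arcs of the circle of radius one centered at the origin.

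For the converse, I start from a multi-curve of circular arcs: up to affine transformation this is the unit circle parameterization $\sigma_j(t) = g_0 \exp(Jt)\mb{s}_j^*$ with $X(t)=J$ and $z(t)=i$. I must exhibit costate data $(\Lambda_1,\Lambda_R,\lambda_{cost})$ together with a control function that makes this a Pontryagin extremal with singular subarc. The natural candidates are $\Lambda_R \equiv 0$, $\Lambda_1 \equiv \tfrac{3}{2}\lambda_{cost}J$ with $\lambda_{cost} = -1$, and constant control $u \equiv (1/3,1/3,1/3)$ (for which $Z_u = J/3$ as noted after Lemma~\ref{cor:X-det-const}). I then verify: (i) the state equation $X' = [P,X]$ holds because $P = J/(3\langle J,J\rangle^{-1})^{-1}\cdot(\text{scalar})$ commutes with $X=J$, forcing $X'=0$, matching $X\equiv J$; (ii) the costate equation $\Lambda_1' = [\Lambda_1,X] = 0$ is satisfied since $\Lambda_1 \parallel J = X$; (iii) the ODE for $\Lambda_R$ in \eqref{eqn:lamR} reduces, with $\Lambda_R=0$, to $[-\Lambda_1 + \tfrac{3}{2}\lambda_{cost}J, X] = [0,J] = 0$; (iv) the Hamiltonian along this trajectory evaluates to $\langle \Lambda_{1,cost}, X\rangle - 0 = \langle 0, J\rangle = 0$, matching the PMP transversal condition $\H \equiv 0$; and (v) the maximization condition is trivially satisfied because $\H_2 = -\langle 0, Z_u\rangle/\langle J, Z_u\rangle \equiv 0$ is independent of $u$, which is exactly the definition of a singular subarc. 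The nondegeneracy of $(\lambda_{cost},\Lambda_1,\Lambda_R)$ holds because $\lambda_{cost}=-1 \ne 0$.

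The forward direction is essentially an application of two results already in hand, so there is no real obstacle; the only thing requiring care is the word \emph{unique}, and the correct interpretation is that uniqueness holds modulo the left-$\SL(\R)$ (equivalently, planar affine) action on the boundary parameterization. The converse is a direct substitution check, and the only mild subtlety is confirming that $u \equiv (1/3,1/3,1/3)$ is a valid measurable selector of the maximizing face $U_T$ (which it is, since any $u \in U_T$ maximizes $\H_2$ when $\Lambda_R=0$). Thus the only genuinely substantive content in the theorem has already been packaged in Lemmas~\ref{lem:LambdaR}, \ref{lem:Lambda1}, and Theorem~\ref{thm:circle-singular}; the present statement is their logical closure together with the converse consistency check.
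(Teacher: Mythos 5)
Your proposal is correct, and the forward direction follows the paper's own proof exactly: apply Lemma~\ref{lem:LambdaR} pointwise to conclude $\Lambda_R\equiv 0$ on the subinterval, then invoke Theorem~\ref{thm:circle-singular} to land in the singular locus with $X\equiv J$, $\Lambda_1\equiv\tfrac{3}{2}\lambda_{cost}J$, $g=g_0\exp(Jt)$, and the circular arcs. The only place you deviate is the converse, where the two arguments run in opposite logical directions: the paper deduces what the lift \emph{must} be (from $X'=0$ it recovers $u\equiv(1/3,1/3,1/3)$; since that is interior to $U_T$ the maximizing face is all of $U_T$, so $\H$ is control-independent, whence $\Lambda_R=0$ by Lemma~\ref{lem:LambdaR} and $\Lambda_1=\tfrac{3}{2}\lambda_{cost}J$, $\lambda_{cost}\ne 0$ by Lemma~\ref{lem:Lambda1}), whereas you posit the natural candidate lift $(\Lambda_R,\Lambda_1,\lambda_{cost},u)=(0,\tfrac{3}{2}\lambda_{cost}J,-1,(1/3,1/3,1/3))$ and check all five PMP conditions by substitution. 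Your route is slightly more watertight on existence of the lift, while the paper's gives necessity; both are correct and invoke the identical set of preparatory results. Only nitpick: your display $P = J/(3\langle J,J\rangle^{-1})^{-1}\cdot(\text{scalar})$ is garbled as written, but the intended point -- that $P=Z_u/\langle Z_u,X\rangle$ is a nonzero scalar multiple of $J$ when $Z_u=J/3$ and $X=J$, so $[P,X]=0$ -- is clear and correct.
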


\begin{proof} 
  The proof is a summary of results already obtained.

  If the maximum principle fails to determine a unique control over an
  open time interval,
  then the trajectory remains in the set
  \begin{equation}\label{eqn:edge-singular}
\bigcup_{|I|\ge2} (T^*\O_J)_I
  \end{equation}
By the assumptions of the theorem, the Hamiltonian is independent
of the control.
 Thus, a singular subarc must have the form of Lemma~\ref{lem:LambdaR}.
That is, $\Lambda_R=0$ over some time interval.  By
Theorem~\ref{thm:circle-singular}, the singular subarc is contained in
the singular locus and gives an arc of a circle in $\Kccs$.

Conversely, the multi-curve of circular arcs is represented by
$g=\exp(t J)$, and by the ODE $g'=Xg$, where $X=J$, and $X'=0$, and
$g=g_0\exp(Jt)$.  As remarked in the proof of
Lemma~\ref{thm:circle-singular}, $0=X'=[P,X]$ implies that the control
function is constant almost everywhere, taking value $(1/3,1/3,1/3)$
at the center of the control set $U_T$.  Along the trajectory, the
Hamiltonian is then independent of the control.  By
Lemma~\ref{lem:LambdaR}, we have $\Lambda_R=0$.  By
Lemma~\ref{lem:Lambda1}, we have $\Lambda_1=\frac{3}{2}\lambda_{cost}
J$ and $\lambda_{cost}\ne0$.  These costate values 
lie in the singular locus.
\end{proof}

%% DONE[] Theorem is only true if we include
%% secondary edge optimal control problem.  Added comment in intro
%% about secondary edge optimal control.

Although the multi-curve of circular arcs is a Pontryagin extremal, we
can invoke second-order conditions to show that it is not a global
minimizer.  By considering a second variation, Mahler proved that the
circle is not a local minimizer of the Reinhardt
problem~\cite{mahler1947area}.
% TCH 4/27/2024: Mahler citation added.

% Remark. We say Reinhardt problem here rather than Reinhardt control
% problem because we want to include the edge control problem.

\begin{theorem}\label{thm:no-singular-arcs}
The global minimizer of the Reinhardt problem does not
contain any singular subarcs.
\end{theorem}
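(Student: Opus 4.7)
The plan is a proof by contradiction based on Mahler's second variation at the circle, applied locally to any putative singular subarc.

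Suppose the lifted trajectory of $K_{\min}$ contains a singular subarc on a non-degenerate interval $(t_1, t_2)$. By Lemma~\ref{lem:control-face-lemma}, the face of $U_T$ that maximizes the Hamiltonian along this subarc has positive dimension, so it is either an edge or the full two-simplex. The edge case is excluded immediately: by Proposition~\ref{prop:edge-extremal} the minimizer is edge extremal, and Theorem~\ref{thm:edge} then forces bang-bang behavior on any subinterval where the control lies in an edge of $U_T$, which is incompatible with singular behavior. The remaining possibility is that the Hamiltonian is independent of the control on $(t_1, t_2)$, so Theorem~\ref{thm:circle-iff-singular} applies: there we have $X \equiv J$, $\Lambda_R \equiv 0$, $\Lambda_1 \equiv \tfrac{3}{2}\lambda_{cost} J$, and $u \equiv (1/3,1/3,1/3)$, and the six boundary curves $\sigma_j$ trace arcs of a common circle (up to affine transformation).

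Now I will construct a one-parameter family $u_\varepsilon$ of admissible control perturbations, supported in a closed sub-interval $[a,b] \subset (t_1, t_2)$, with two properties: the perturbed state trajectory returns to the nominal value at $t = b$ (hence coincides with the original trajectory on $[b, t_f]$ and satisfies all transversality conditions), and the area cost \eqref{eqn:cost-upper-half-plane} strictly decreases for small $\varepsilon > 0$. Admissibility is automatic because $(1/3, 1/3, 1/3)$ is interior to $U_T$, and convexity of the perturbed disk is preserved for small $\varepsilon$ since the star and curvature conditions of Definition~\ref{def:balanced} are open conditions satisfied strictly by the circle. The linearization of the state equations \eqref{eqn:trisystem-g}--\eqref{eqn:trisystem-X} about the singular trajectory is a controllable time-invariant system on $T_J \mathcal{O}_J$: the affine control space has two effective directions (the two-simplex is two-dimensional) which surject onto the two-dimensional tangent space of the state at $J$. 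Hence the affine space of variations supported in $[a,b]$ that annihilate the first-order change of state at $t = b$ has finite codimension, and Mahler's explicit second variation of the area at the circle (described in the introduction and carried out in \cite[\S5.1,\S5.2]{hales2011}) furnishes a direction in this endpoint-preserving subspace along which the second variation of the cost is strictly negative. The implicit function theorem then upgrades this to an exact one-parameter family $u_\varepsilon$ with the stated properties, yielding a competitor in $\Kbal$ of strictly smaller area and contradicting the minimality of $K_{\min}$.

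The main obstacle is verifying that a single admissible perturbation supported in $[a,b]$ can simultaneously (i)~restore the state at $t = b$ to its nominal value and (ii)~deliver a strictly negative second variation of the area. This is a standard accessory-problem calculation, and its essential content is the combination of controllability of the linearized dynamics at $J$ with the strict negativity of Mahler's second variation at the circle---a calculation that is local in time and therefore transfers verbatim to any sub-interval on which the unperturbed trajectory coincides with a circular arc. Once endpoint preservation and strict cost decrease are reconciled in a single family, the theorem follows.
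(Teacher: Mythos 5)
Your overall strategy — reduce to the circular subarc via Theorem~\ref{thm:circle-iff-singular} and derive a contradiction from a negative second variation of the area — is exactly the paper's. Your explicit exclusion of the edge case is a reasonable supplementary observation (the paper's own proof simply invokes ``the previous theorem'' and implicitly treats only full-simplex singularity), though as stated ``bang-bang is incompatible with singular behavior'' is too coarse: a control can be constant at a vertex while the Hamiltonian maximizer is a whole edge, so the real contradiction comes from the identically-vanishing switching function of the edge problem having only isolated zeros (Lemma~\ref{lem:lambda2-Landau}), not from bang-bang structure per se.

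The genuine gap is in the second half. You replace the paper's explicit construction with an abstract controllability-plus-Mahler argument, and two steps there are not actually supplied. First, you argue controllability only of the $X$-component on $T_J\O_J$, but the endpoint you must restore at $t=b$ lives in $\SL(\R)\times\O_J$ (the $g$-factor is essential, since $g' = gX$ propagates any perturbation of $X$ forward), so a three-dimensional constraint on $\delta g(b)$ is silently dropped. Second, the assertion that Mahler's second variation ``transfers verbatim'' to a compactly supported local perturbation is not automatic: Mahler's computation is for the full closed circle with periodic data, and you need a variation that is simultaneously endpoint-preserving on $[a,b]$ and gives strictly negative second variation. Those two requirements interact, and ``a standard accessory-problem calculation'' is a placeholder, not an argument. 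The paper sidesteps all of this by perturbing $g$ multiplicatively: setting $g_s(t)=\Psi_s(t)\exp(Jt)$ with $\Psi_s=\exp(s(\cdot))$ a compactly supported $\mathfrak{sl}_2$-valued deformation makes $g_s\in\SL_2(\R)$ automatic, makes $g_s(b)=g_0(b)$ and $X_s(b)=X_0(b)$ automatic, and then a short Cartan–Maurer computation gives the cost to second order as $-3s^2\int(\psi_1'\psi_2-\psi_1\psi_2')\,dt + O(s^3)$, which is made negative by an explicit choice of $\psi_1,\psi_2$. You should either reproduce this kind of explicit perturbation (in which case controllability is not needed at all) or fill in the controllability-on-the-full-state and accessory-problem details that you currently only name.
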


\begin{proof}
  We assume for a contradiction that the global minimizer contains
  a singular subarc.
  The previous theorem shows that the singular subarc comes from
  a multi-curve of circular arcs.
  
We use second order conditions to show that the circular arc is not a
local minimizer on any time interval $(t_1,t_2)$ so that the solution
to the Reinhardt problem contains no circular arcs.  We consider a
deformation of a circular arc of the form
\index[n]{s@$s$, real parameter!$s\in\R$, deformation parameter}
\index[n]{g@$g$, group element!$g_{s}(t)$, deformed curve in $\SL$}
\index[n]{zY@$\Psi_{s}(t)=\exp({s}(-))$, deformation of identity matrix}
\index[n]{zy@$\psi$, local auxiliary function or integral!$\psi_i$, compactly supported functions}
\[
g_{s}(t) = \exp\left({s} 
\begin{pmatrix}\psi_{1}(t) & \psi_{2}(t)\\ \psi_{2}(t) & -\psi_{1}(t)\end{pmatrix}
\right) \exp(J t) =: \Psi_{s}(t)\exp(Jt),
\]
for sufficiently small ${s}>0$ and compactly supported $C^\infty$
functions $\psi_{1}$, $\psi_{2}$ to be determined on the interval
$[t_1,t_2]$.  

\index{Cartan-Maurer one-form}

We point out that $t$ is not a unit speed parameter so that $\det(X)$
need not equal $1$.
The cost is
\[
-\frac{3}{2}\int_{t_1}^{t_2} \bracks{J}{X} dt
= -\frac{3}{2}\int_{t_1}^{t_2} \bracks{J}{g^{-1} dg},
\]
where $g^{-1}dg$ is the Cartan-Maurer one-form on $\SL(\R)$.  This
shows that the cost is independent of parameterization.  If $g(t) =
\Psi(t) \exp(J t)$, where $\Psi$ is an invertible $C^\infty$ matrix of
$t$, then the product rule gives
\[
\bracks{J}{g^{-1}dg}= \bracks{J}{J}dt+\bracks{J}{\Psi^{-1}d\Psi}.
\]

\index[n]{cost@$\mathrm{cost}$, cost function}

Computing the cost of $g_{s}$ on $[t_1,t_2]$ by this formula, we find that
\mcite{MCA:1049350}
\[
\mathrm{cost}(g_{s}) = \mathrm{cost}(g_0) 
% -6{s}^2 % Corrected again 4/28/2024 
- 3s^2
\int_{t_1}^{t_2} \left(\psi_{1}'(t) \psi_{2}(t) - \psi_{1}(t) \psi_{2}'(t)\right)\,dt 
+ O({s}^3).
%6 {s}^2 \int_{t_1}^{t_2} \psi_{1}(t) \psi_{2}'(t)\,dt + O({s}^3).
% Corrected 11/12/2022
\]
This is a second order variation that is not detected by Pontryagin
first order conditions.  Choose nonnegative $C^\infty$ compactly
supported functions $\psi_{1}(t),\psi_{2}\ge0$ on $(t_1,t_2)$ such
that $\psi_{1}'(t)>0$ and $\psi_{2}'(t)<0$ on their common support to
make the ${s}^2$-contribution negative.
Then for all
sufficiently small ${s}>0$, we have
\[
\mathrm{cost}(g_{s}) < \mathrm{cost}(g_0)=\pi.
\]
The curvatures of the curves
$t\mapsto \sigma_i(t)=g_{s}(t)\ee{i}$ are $C^\infty$ functions of
${s}$ and $t$.  The curvature functions converge uniformly to the
constant positive curvature of the circle as ${s}$ tends to $0$.
We may pick ${s}>0$ sufficiently small so that the curvatures of
the curves are positive.  The corresponding centrally symmetric convex disk
in $\Kccs$ shows that the circle is not a local minimizer of cost.
\end{proof}

\begin{remark} To obtain rough intuition about the perturbation of the circle
considered in the theorem,  we consider piecewise linear continuous functions 
$\psi_{1}$ and $\psi_{2}$ that are periodic modulo $\pi/3$, where
\begin{align*}
\psi_{1}(t) &= 
\begin{cases}
0, & \\
%t\in[0,\pi/9];\\
t-\pi/9, & \\
%t\in[\pi/9,2\pi/9];\\
-t + 3\pi/9, & \\
%t\in[2\pi/9,3\pi/9].
\end{cases}
&&
\psi_{2}(t) = 
\begin{cases}
t, & \qquad\text{if } t\in[0,\pi/9];\\
-t+2\pi/9, & \qquad\text{if } t\in[\pi/9,2\pi/9];\\
0, & \qquad\text{if } t\in[2\pi/9,3\pi/9].
\end{cases}
\end{align*}
Then $\psi_{1}'\psi_{2} - \psi_{1} \psi_{2}' = \pi/9 >0$ on
$(\pi/9,2\pi/9)$ and is zero on $(0,\pi/9)$ and $(2\pi/9,\pi/3)$.  We
plot the multi-curve
$\sigma_j(t)=\exp(Js_0)\Psi_{s}(t)\exp(Jt)\mb{s}_j^*$ for
$t\in[0,\pi/3]$, ${s}=0.12$, and $s_0=-1/8$ (to rotate the entire
figure) in Figure~\ref{fig:deformed-circle}.  The figure is
approximately a smoothed octagon.  The functions $\psi_{i}$ are not
$C^\infty$ and ${s}$ is so large that the simple closed curve is not
convex.  Nevertheless, this crude numerical example suggests that the
deformation in the theorem can be used to smoothly interpolate between
the circle and the smoothed octagon along an interpolation path that
is strictly decreasing in area.
% TCH 4/28/2024. Small corrections.
\end{remark}

\begin{figure}[ht]
\centering
\mcite{MCA:9575710}
\includegraphics[scale=0.65]{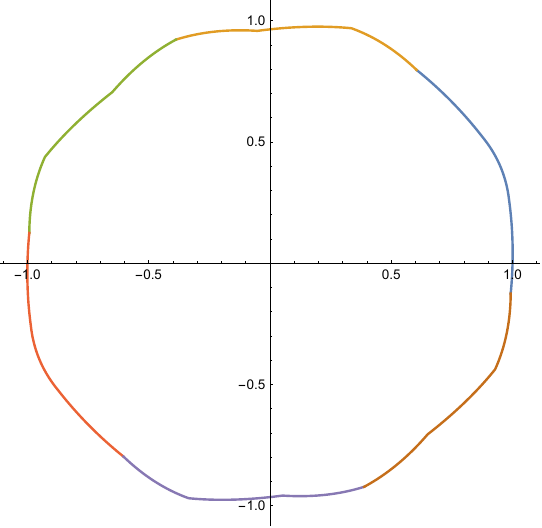}
\caption{The deformation of Theorem~\ref{thm:no-singular-arcs} leads
  to a simple closed (multi) curve $\exp(Js_0)
  \Psi_{s}(t)\exp(Jt)\mb{s}_j^*$ that approximates the smoothed
  octagon.}
\label{fig:deformed-circle}
\end{figure}

\section{Non-chattering away from the Singular Locus}

%DONE[TCH 4/28/2024]: There is a problem with the statement because
%  we use the edge-control problem in the proof.  Quick fix has been
%  added, but it would be better to treat this in a more satisfactory
%  way.

\begin{theorem}\label{thm:avoid-singular} 
Consider an extremal lifted trajectory on $[t_1,t_2]$ and
$t_0\in[t_1,t_2]$ such that the lifted trajectory does not meet the
singular locus at time $t_0$.  Assume that the trajectory 
is also edge-extremal \eqref{def:edge-extremal}.
Then there exists an edge $\{i,j\}$ and a neighborhood of $t_0$ in
$[t_1,t_2]$ on which
\[
(X(t),\Lambda_R(t))\in \bigcup_{I\subseteq\{i,j\}}(T^*\O_J)_I.
\]
% TCH 4/28/2024: reworded in contrapositive form to avoid awkward negations.
\end{theorem}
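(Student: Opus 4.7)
The plan is to combine Lemma~\ref{lem:LambdaR} (argmax of the control-dependent Hamiltonian is a proper face when $\Lambda_R \ne 0$), Lemma~\ref{lem:control-face-lemma} (argmax is always a face of $U_T$), and continuity of the maximized Hamiltonian in the state and costate variables.

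The first step is to show $\Lambda_R(t_0) \ne 0$. If $\Lambda_R$ vanished identically on any subinterval containing $t_0$, Theorem~\ref{thm:circle-singular} would place the trajectory in $\Ssing$ at $t_0$, contradicting the hypothesis. The remaining possibility is that $\Lambda_R(t_0) = 0$ is an isolated zero. I would rule this out using the edge-extremality hypothesis: along any one-sided approach of $t$ to $t_0$, the trajectory would approach the wall $\Lambda_R = 0$ through a codimension-one stratum $(T^*\O_J)_{\{i,j\}}$, so that the argmax concentrates on an edge of $U_T$ and the switching function within that edge vanishes at $t_0$. On such an edge subinterval, by Proposition~\ref{prop:edge-extremal} and Theorem~\ref{thm:edge}, the control is bang-bang with only finitely many switches and $\Lambda_R$ is determined from the explicit costate ODE~\eqref{eqn:costate-edge}; an isolated zero of $\Lambda_R$ at $t_0$ outside the singular locus is incompatible with this finite-switch structure. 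So $\Lambda_R(t_0) \ne 0$.

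Having this, Lemma~\ref{lem:LambdaR} gives a proper face $(U_T)_{I_0} \subsetneq U_T$ as the argmax of $u \mapsto \H_2(Z_u;X(t_0),\Lambda_R(t_0))$, with $|I_0| \in \{1,2\}$. I then pick an edge index set $\{i,j\} \supseteq I_0$ (if $I_0$ is already an edge, take $\{i,j\} = I_0$; otherwise pick any edge through the vertex $I_0$), and let $k$ denote the remaining vertex index. By choice of $\{i,j\}$, at $t_0$ we have the strict inequality
\[
\H_2(Z_{\ep_k}; X(t_0), \Lambda_R(t_0)) < \H^{\bstar}(t_0).
\]

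The final step is persistence of this strict inequality. The functions $X(t)$ and $\Lambda_R(t)$ are continuous along the lifted extremal, and by the star inequalities (Lemma~\ref{lem:sl2-star-condition}) the denominator $\langle Z_u, X(t)\rangle$ is bounded away from zero uniformly for $u \in U_T$ on a compact time neighborhood of $t_0$. Hence $(t,u) \mapsto \H_2(Z_u; X(t), \Lambda_R(t))$ and $\H^{\bstar}$ are continuous there, and the strict inequality above persists on an open interval $N \ni t_0$. For each $t \in N$, the argmax over $U_T$ cannot contain $\ep_k$, and by Lemma~\ref{lem:control-face-lemma} it is a face of $U_T$, hence a face contained in $(U_T)_{\{i,j\}}$. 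Therefore $(X(t),\Lambda_R(t)) \in \bigcup_{I \subseteq \{i,j\}} (T^*\O_J)_I$ for every $t \in N$, as required.

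The main obstacle is the first step: ruling out isolated zeros of $\Lambda_R$ for a non-singular-locus extremal. Pure continuity is not enough, since the singular locus is characterized by $\Lambda_R \equiv 0$ on an interval rather than at a point; the edge-extremality hypothesis is used essentially via Theorem~\ref{thm:edge}. Once $\Lambda_R(t_0) \ne 0$ is secured, the rest is a routine persistence-of-strict-inequality argument combined with the face-valued nature of the argmax.
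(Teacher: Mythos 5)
The persistence-of-strict-inequality step (given $\Lambda_R(t_0)\ne 0$) is correct and essentially matches the paper's approach, which is also based on Lemma~\ref{lem:LambdaR} and Lemma~\ref{lem:control-face-lemma}. The trouble is with the first step, and you have located the hard part of the proof but not actually supplied it.

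The paper does not try to show $\Lambda_R(t_0)\ne 0$ directly.  It argues in contrapositive form: if in every neighborhood of $t_0$ the set of maximizers fails to be confined to an edge, then the trajectory is at the singular locus at $t_0$.  The bulk of the argument is then a Gronwall estimate showing that $\Lambda_R(t) = -[\Lambda_{1,cost}(0),X(0)]\,t + O(t^2)$ near $t_0=0$, which is used to prove the two nontrivial equalities $\Lambda_{1,cost}(0)=0$ and $\Lambda_{1,cost}'(0)=0$.  Combined with $\Lambda_R(0)=0$ and the vanishing Hamiltonian, these force $X(0)=J$, $\Lambda_1(0)=\tfrac{3}{2}\lambda_{cost}J$, and $\lambda_{cost}\ne 0$, which is precisely the singular locus.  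None of this analytic machinery appears in your sketch.

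Your argument for ruling out an isolated zero of $\Lambda_R$ does not go through as written.  You assert that ``along any one-sided approach of $t$ to $t_0$, the trajectory would approach the wall $\Lambda_R = 0$ through a codimension-one stratum $(T^*\O_J)_{\{i,j\}}$.''  This is not justified: $\Lambda_R = 0$ corresponds to the stratum $I=\{1,2,3\}$, and there is no reason for the approaching trajectory to lie in a single wall $(T^*\O_J)_{\{i,j\}}$ (codimension one) rather than passing through the open strata $(T^*\O_J)_{\{i\}}$, or even oscillating among several of them.  The assertion that the argmax ``concentrates on an edge'' is precisely what has to be proved, and in the paper it requires establishing the linear leading term of $\Lambda_R$ via the Gronwall bound first.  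Your closing claim that ``an isolated zero of $\Lambda_R$ at $t_0$ outside the singular locus is incompatible with this finite-switch structure'' is asserted, not argued: Theorem~\ref{thm:edge} bounds the switching of the \emph{edge} switching function $\lambda_2$, not the vanishing locus of the Reinhardt costate $\Lambda_R$, and there is no immediate incompatibility.  There is also a small missing case: your dichotomy covers $\Lambda_R\equiv 0$ on a subinterval versus an isolated zero, but zeros accumulating at $t_0$ without identical vanishing are not addressed.

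In short, the persistence step is fine and standard, but the place where the real work lies — proving that $\Lambda_R(t_0)=0$ (under extremality and edge-extremality) forces the full singular-locus conditions $X(t_0)=J$, $\Lambda_{1,cost}(t_0)=0$ — is handled by a gesture rather than an argument.  To fill the gap you would need to carry out the Gronwall-type estimate and the case analysis on $\Lambda_{1,cost}(0)$ and $\Lambda_{1,cost}'(0)$ as the paper does.
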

That is, near $t_0$, the set of maximizers of the Hamiltonian is
confined to a single edge of $U_T$.

\index[n]{P@$P$, normalized control matrix}
\index[n]{zx@$\chi_{ij}$, switching function}

\begin{proof}
We prove the theorem in contrapositive form.  Assume that in every
neighborhood of $t_0$, the set of maximizers of the Hamiltonian is not
confined to any edge of $U_T$.  Reparameterizing by a time
translation, we may assume that $t_0=0$.  We will prove that the
trajectory meets the singular locus at $t=0$.  We let
$(X,\Lambda_1,\Lambda_R)$ be the controlled trajectory with optimal
control function $u(t)$.  By the continuity of the Hamiltonian, our
assumption gives
\[
(X(0),\Lambda_R(0))\in(T^*\O_J)_{\{1,2,3\}}.
\]
By Lemma~\ref{lem:LambdaR}, we have $\Lambda_R(0)=0$.

\index[n]{Y@$Y\in\mathfrak{g}$, Lie algebra element!in $\sl$}
\index[n]{A@$A$, matrix or linear map!$A:\sl(\R)\to\sl(\R)$}

%TCH todo: Something is wrong with the costate ODE for LambdaR.
%  A is used both as a function of one variable and as two variables.
% DONE[TCH 4/26/2024. Easy fix. Inserted t.]
%  Also, clean up the different versions of $P$.}

\index[n]{zL@$\Lambda$, costate!$\Lambda_{1,cost}=\Lambda_1 - 3\lambda_{cost}J/2$}

We let $P(t)=Z_{u(t)}/\bracks{Z_{u(t)}}{X(t)}$ be the normalized
control matrix.  Set $\Lambda_{1,cost}=\Lambda_1 -
3\lambda_{cost}J/2$.  It satisfies (by the costate
equations~\ref{pbm:state-costate-reinhardt})
\[
\Lambda_{1,cost}' = [\Lambda_{1,cost},X] + \frac{3}{2} \lambda_{cost}[J,X].
\]
By the form of the right-hand-side, $\Lambda_{1,cost}$ is continuously
differentiable.  Define $Y(t) := -\int_{0}^t [\Lambda_{1,cost},X]dt$,
and set $\Lambda:=\Lambda_R-Y$.  Let
\[
A(\Lambda,t) = [P(t),\Lambda]-\bracks{\Lambda}{P(t)}[P(t),X(t)],
\]
viewed as a time-dependent linear function $A$ on $\sl(\R)$, along the
state trajectory given by $P=P(t)$ and $X=X(t)$.  The costate ODE for
$\Lambda_R$ takes the form
\[
\Lambda' = A(\Lambda,t) + A(Y,t).
\]
\index{norm!natural matrix}
\index{Gronwall inequality}
\index[n]{C@$C,C_0,C_1,C_2$, local real constant}
We consider the initial value problem for $\Lambda$ with initial
conditions $\Lambda(0)=\Lambda_R(0)={Y}(0)=0$.  Identifying $\sl(\R)$
with $\R^3$, we use the Euclidean norm on the Lie algebra and use the
natural matrix norm for $A$.  By Cauchy-Schwarz, $\|\Lambda\|'\le
\|\Lambda'\|$.  Then
\[
\|\Lambda\|' \le \|\Lambda'\|=\|A[\Lambda]+A[{Y}]\|
\le C_0\|\Lambda\| + C_0\|{Y}\|,
\]
where 
$C_0>0$ is any time-independent bound on the matrix norms $\|A[\cdot,t]\|$ in 
a small neighborhood of $t=0$.
In integral form
\begin{equation}\label{eqn:Lambda-bellman}
\|\Lambda\| \le C_0 \int_0^t \|{Y}\|dt + C_0 \int_0^t \|\Lambda\|dt.
\end{equation}

We claim $\Lambda_{1,cost}(0)=0$.  Note that $\Lambda_{1,cost}(0)\in
X(0)^\perp$ by the vanishing of the Hamiltonian
$\bracks{\Lambda_{1,cost}}{X}-\bracks{\Lambda_R}{P}$ at $t=0$.  Thus,
$\Lambda_{1,cost}(0)=0$ if and only if $[\Lambda_{1,cost}(0),X(0)]=0$.  Suppose
for a contradiction that $[\Lambda_{1,cost}(0),X(0)]\ne0$. Then
$\|{Y}\| = C_2 t + O(t^2)$, for some $C_2\ne0$, and the Gronwall
inequality (Corollary~\ref{thm:Gronwall}) applied to
\eqref{eqn:Lambda-bellman} gives
\[
\|\Lambda_R-{Y}\| = \|\Lambda\| =O(t^2)
\]
and
\[
\Lambda_R = Y + O(t^2) = -[\Lambda_{1,cost}(0),X(0)]t + O(t^2).
\]
The control dependent term of the Hamiltonian is
\begin{align*}
-\bracks{\Lambda_R}{P} &= t\bracks{[\Lambda_{1,cost}(0),X(0)]}
{\frac{Z_{u(t)}}{\bracks{X(0)}{Z_{u(t)}}}} + O(t^2).
\end{align*}
By Lemma~\ref{lem:LambdaR}, for sufficiently small $t>0$ (or for
sufficiently small $t<0)$, there exists a vertex or edge of $U_T$,
that maximizes this Hamiltonian.  If the set of maximizers is an edge
when $t$ has one sign, then the maximizer is the complementary vertex
of $U_T$ when $t$ has the other sign.
If the set of maximizers is an edge (for a given sign of $t$), by the
results of Theorem~\ref{thm:edge}, the control function has finite
bang-bang switching near $t=0$.  This is contrary to the assumption
that the set of maximizers is not confined to any edge of $U_T$.

\index{Gronwall inequality}

We claim that $\Lambda_{1,cost}'(0)=0$. The proof is similar, and
we give a brief sketch.  By the
form of the ODE for $\Lambda_{1,cost}$, we have
$\Lambda_{1,cost}'(0)\in X(0)^\perp$.  It is enough to show
$[\Lambda_{1,cost}'(0),X(0)]=0$.  Suppose for a contradiction that
$[\Lambda_{1,cost}(0),X(0)]=0$, but $[\Lambda_{1,cost}'(0),X(0)]\ne0$.
We again use estimates on the size of $\|{Y}\|$ and the
Gronwall inequality (Corollary~\ref{thm:Gronwall}) to
show that $\Lambda_R$ has an isolated zero at $t=0$.  Then a switching
function has an isolated zero at $t=0$, which again contradicts the
hypothesis of the lemma.

$\Lambda_{1,cost}(0)=0$ implies $\Lambda_1(0)=3\lambda_{cost}J/2$.  
Also $\Lambda_{1,cost}'(0)=0$
implies
\[
0=\Lambda_1'(0)=[\Lambda_1(0),X(0)]=(3/2)\lambda_{cost}[J,X(0)].
\]
The non-vanishing of the costate vector at $t=0$ implies that
$\lambda_{cost}\ne0$.  $X(0)\in\R{J}\cap\O_J=\{J\}$, so $X(0)=J$.
By definition, this is a point in the singular locus.
\end{proof}

%% DONE[TCH 4/29/2024]: I had to add the extra hypothesis here too.

\begin{theorem}\label{thm:finite-bang-bang}
  Consider a controlled extremal lifted trajectory that does not meet
  the singular locus $\Ssing$.  Assume that the trajectory $(g,X)$
  is also edge-extremal.
Then the control function is bang-bang with
  finitely many switches.
\end{theorem}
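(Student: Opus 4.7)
The plan is to combine Theorem~\ref{thm:avoid-singular}, which localizes the Hamiltonian-maximizing face to a single edge of the control simplex $U_T$ at each point off the singular locus, with Theorem~\ref{thm:edge}, which provides finitely many bang-bang switches on any edge-constrained extremal arc. A compactness argument then converts the local finiteness into global finiteness.

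First, I would fix the domain $[t_1,t_2]$ of the trajectory and, for each $t_0\in[t_1,t_2]$, invoke Theorem~\ref{thm:avoid-singular} (the hypothesis that the trajectory avoids $\Ssing$ is satisfied at every $t_0$, and edge-extremality is assumed throughout). This yields an open neighborhood $V_{t_0}\subseteq[t_1,t_2]$ and a pair of indices $\{i(t_0),j(t_0)\}$ such that for every $t\in V_{t_0}$,
\[
(X(t),\Lambda_R(t))\in\bigcup_{I\subseteq\{i(t_0),j(t_0)\}}(T^*\O_J)_I,
\]
so the Hamiltonian-maximizing face of $U_T$ is contained in the single edge with vertices $\ep_{i(t_0)},\ep_{j(t_0)}$. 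Consequently, up to a set of measure zero, the optimal control function $u(t)$ takes values in that edge on $V_{t_0}$.

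Next, I would use compactness of $[t_1,t_2]$ to extract a finite subcover $V_{t_0^{(1)}},\ldots,V_{t_0^{(N)}}$. On each closed subinterval $I_k\subseteq V_{t_0^{(k)}}$ the control takes values a.e.\ in a fixed edge of $U_T$, so the restriction of the trajectory to $I_k$ is an admissible trajectory of the corresponding edge control problem (Definition~\ref{def:edge-extremal}). Because the given trajectory is assumed edge-extremal, its restriction to $I_k$ is in fact a Pontryagin extremal of that edge problem. Theorem~\ref{thm:edge} then applies: the control on $I_k$ is bang-bang with finitely many switches, taking values $\uedge=\pm1/2$, which corresponds to the two vertices $\ep_{i(t_0^{(k)})},\ep_{j(t_0^{(k)})}$ of $U_T$.

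Summing the finitely many switches over the finite collection of intervals $I_1,\ldots,I_N$ (including the boundary junctions between them, where the active edge may change, but in number bounded by $N-1$), we conclude that the control on the whole interval $[t_1,t_2]$ is bang-bang with finitely many switches. The main subtlety to verify is that a junction between two neighborhoods $V_{t_0^{(k)}}$ and $V_{t_0^{(k+1)}}$ with distinct active edges contributes only a single additional switch; this follows because the control value at the junction is forced to lie in the intersection of the two edges, which is a common vertex of $U_T$, so the finitely-many-switches property passes through the junction without accumulation.
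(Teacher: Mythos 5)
Your proof is correct and follows the same route as the paper: apply Theorem~\ref{thm:avoid-singular} pointwise to localize the active face to a single edge, invoke Theorem~\ref{thm:edge} on each such local edge-constrained arc, and use compactness of $[t_1,t_2]$ to conclude global finiteness of the switches. The paper compresses the compactness and junction bookkeeping into a single sentence; your added observation that on the overlap of two neighborhoods with distinct active edges the maximizer set must lie in the intersection of the two edges (a single vertex of $U_T$) is a worthwhile clarification of why switches cannot accumulate at the junctions.
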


\begin{proof} According to the theorem, for every $t_0\in[t_1,t_2]$, there
exists a neighborhood of $t_0$ on which the control takes values in a
single edge of $U_T$ almost everywhere.  By Theorem~\ref{thm:edge},
the control is bang-bang with finite switching along the edge of the control
simplex  in a
neighborhood of each $t_0$. By the compactness of the interval
$[t_1,t_2]$ the finite switching holds for the entire interval.
\end{proof}

\index{balanced disk}
\index[n]{K@$K$, convex disk!$K(g,X)$, attached to data $(g,X)$}

\begin{remark} In terms of Reinhardt's problem, the theorem implies
  that a trajectory $(g,X)$ that is extremal and edge-extremal that
  does not meet the singular locus $\Ssing$ defines a balanced disk
  $K(g,X)\in \Kbal$ whose boundary is a smoothed polygon, consisting
  of finitely many straight edges and hyperbolic arcs.
\end{remark}

\begin{lemma}\label{lem:constant-not-singular} 
An extremal trajectory
with constant control at a vertex of $U_T$ does not meet the singular
 locus.
\end{lemma}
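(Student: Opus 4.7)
The plan is to argue by contradiction. Suppose an extremal trajectory with constant control $u=\mb{e}_i$ meets the singular locus at some time, which I translate to $t_0=0$. Then $X(0)=J$, $\Lambda_R(0)=0$, $\Lambda_1(0)=\tfrac{3}{2}\lambda_{cost}J$, and $\lambda_{cost}\ne 0$. The strategy is to Taylor-expand $\Lambda_R(h)$ around $0$ along the constant-control trajectory, identify the first nonzero term, and show that the resulting leading-order expression for the control-dependent part of the Hamiltonian $\H_2$ prevents $\mb{e}_i$ from being a maximizer on any one-sided neighborhood of $0$.

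To execute this, I first introduce $W := \tfrac{3}{2}\lambda_{cost} J - \Lambda_1$, which vanishes at $0$ and satisfies the Lax-type equation $W' = [W,X] - \tfrac{3}{2}\lambda_{cost}[J,X]$ coming from $\Lambda_1' = [\Lambda_1,X]$. With $X'(0) = [P_0,J]$ where $P_0 = Z_{\mb{e}_i}/\bracks{J}{Z_{\mb{e}_i}}$, differentiation gives $W'(0) = 0$ and $W''(0) = -\tfrac{3}{2}\lambda_{cost}[J,[P_0,J]]$, which is nonzero since a $2\times 2$ check shows $Z_{\mb{e}_i}$ is not proportional to $J$ at any vertex. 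Feeding this into the costate ODE \eqref{eqn:lamR} and exploiting $\Lambda_R(0) = \Lambda_R'(0) = \Lambda_R''(0) = 0$ (each forced to vanish because every term on the right of the ODE contains a factor that vanishes at $0$), one arrives at $\Lambda_R'''(0) = [W''(0),J]$. The matrix identity $[[J,[A,J]],J] = 4[A,J]$ for $A \in \sl(\R)$ then collapses this to $\Lambda_R'''(0) = -6\lambda_{cost}[P_0,J]$. Combined with the observation that $\bracks{J}{Z_u} = -2/3$ is independent of $u \in U_T$, this yields the expansion
\[
\H_2(u;h) \;=\; c(u)\,h^3 + O(h^4), \qquad c(u) \;=\; \tfrac{9\lambda_{cost}}{4}\bracks{[Z_{\mb{e}_i},J]}{Z_u},
\]
an affine function of $u$ on the simplex $U_T$, with $c(\mb{e}_i) = 0$ by invariance of the trace form under commutators.

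To close the argument, I invoke the maximum principle: $\mb{e}_i$ must maximize $\H_2$ on a neighborhood of $0$, so $c(u)h^3 \le c(\mb{e}_i)h^3 = 0$ for all $u \in U_T$ and all small $h$ of the appropriate sign. An explicit evaluation at the two remaining vertices gives $c(\mb{e}_j) = \pm 2\sqrt{3}\,\lambda_{cost}$ with opposite signs for the two choices $j \ne i$, a fact that one can extract either by direct computation at $i=1$ and then by the $R$-equivariance of Section~\ref{sec:dihedral}, or by three direct calculations. Since these two values have opposite signs whenever $\lambda_{cost} \ne 0$, the sign condition on $c$ cannot be met for either sign of $h$, forcing $\lambda_{cost} = 0$ and contradicting the defining condition of $\Ssing$. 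The main obstacle will be the bookkeeping to verify the triple-vanishing $\Lambda_R^{(k)}(0) = 0$ for $k=0,1,2$ before the first nontrivial cubic term appears; once that cascade is established and the commutator identity above is checked, the contradiction follows from an elementary sign analysis at the three vertices of the control simplex.
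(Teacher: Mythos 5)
Your proof is correct and follows essentially the same route as the paper's: assume the trajectory meets $\Ssing$ at $t=0$ with constant control at a vertex, exploit the cascade of vanishing derivatives $\Lambda_R^{(k)}(0)=0$ for $k=0,1,2$ to isolate the cubic term $\Lambda_R'''(0)$ as a multiple of $[P_i,J]$, pair this with the normalized control matrices $P_j$ at the three vertices, note that the result vanishes at $j=i$ but has opposite nonzero signs at the other two vertices, and conclude that the maximum principle is violated for small $|t|$ of either sign. Your introduction of $W=\tfrac{3}{2}\lambda_{cost}J-\Lambda_1$ is just a sign-flipped relabeling of the paper's $\Lambda_{1,cost}$, and your explicit spelling out of the identity $[[J,[A,J]],J]=4[A,J]$ and the observation that $\bracks{J}{Z_u}$ is constant on $U_T$ are small additions of detail to the same computation the paper sketches.
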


\index{chattering arc}

(This lemma does not rule out the possibility of a chattering arc
converging to the singular locus, as discussed in the next part of
the book.)

\begin{proof} 
We assume for a contradiction that the trajectory meets the singular
locus at time $t=0$ with constant control $u=\mb{e}_3\in{}U_T$ (say).
Consider the solutions to the state and costate ODEs with constant
control at enumerated the vertex $\mb{e}_3$ of $U_T$ and initial
conditions at the singular locus.  The solutions are analytic.  At the
singular locus, we have $\Lambda_{1,cost}(0)=\Lambda_R(0)=0$ given by
\eqref{eqn:Lambda1cost} and $X(0)=J$.

Let $P_i=Z_{\mb{e}_i}/\bracks{J}{Z_{\mb{e}_i}}$ be the normalized
control matrix, evaluated at $t=0$, $X(0)$, with the controls at the
three vertices $\mb{e}_i\in{}U_T$.  By using the costate ODEs, we also
compute $\Lambda_{1,cost}'(0)=0$.  The following derivatives exist and
have the following values.
\[
\Lambda_R'(0)=\Lambda_R''(0)=0,\quad 
\Lambda_R'''(0)=-6\lambda_{cost}[J,P_3].
\]
The Lie bracket
$[J,P_3]$ is orthogonal to $J$ and $P_3$.
Also $P_1+P_2+P_3 \in \R{J}$. 
We have $\bracks{[J,P_3]}{P_1}=-\bracks{[J,P_3]}{P_2}\ne0$. 
The leading term of the control term of the Hamiltonian 
evaluated at control $u=\mb{e}_i$ is
\begin{align*}
-\bracks{\Lambda_R(t)}{P_i}=
-&\frac{t^3}{3!} \bracks{\Lambda_R'''(0)}{P_i} + O(t^4) 
\\
&t^3 \bracks{[J,P_3]}{P_i} + O(t^4).
\end{align*}
%\chi_{31}(t)=\frac{t^3}{3!} \bracks{\Lambda_R'''(0)}{P_1-P_3} + O(t^4)
%= -\frac{t^3}{3!} \bracks{\Lambda_R'''(0)}{P_2-P_3} + O(t^4) 
%= -\chi_{32}.
%\]
The leading term is zero when $i=3$, but it is nonzero with opposite
signs when $i=1,2$.  Hence, the maximizer of the Hamiltonian is never
$\mb{e}_3$, when $t$ is small.
This is contrary to first-order conditions of extremality.
\end{proof}

\newpage
\part{Circular Control}\label{part:circular}
\chapter{Circular Control Set}\label{sec:circular-control}

We are interested in a modification of the triangular control set to a
circular disk.  This will lead to a modified control problem that we
view as a \emph{toy control problem}.  Insights from the toy problem
will be applied in Part~\ref{part:mahler} to give a proof of Mahler's
First conjecture.  This part of the book is logically independent from
the other parts, and it can be skipped without interrupting the flow
of the text.

\index{toy control problem}
\index[n]{U@$U$, control set!$U_C$, circumscribed}
\index[n]{U@$U$, control set!$U_I$, inscribed}
\index[n]{U@$U$, control set!$U_r$, intermediate}

\begin{definition}[Circular control sets]\label{def:circ-control-set}\leavevmode
\begin{itemize}
\item The \emph{circumscribed} or \emph{disk} control set is the
    set $U_C$ which is the circumscribing disk of the two-simplex in
    $\R^3$:
\[ 
U_C := \left\{(u_0,u_1,u_2)\mid 0\le u_i \le 1, ~\sum_i
    u_i = 1,~\sum_i u_i^2 \le 1\right\}.
\] 
\item The \emph{inscribed}
    control set is the set $U_I$ which is the inscribed disk of the
    two-simplex in $\R^3$: 
\[ 
U_I := \left\{(u_0,u_1,u_2)\mid 0\le
    u_i \le 1, ~\sum_i u_i = 1,~\sum_i
    u_i^2 \le \frac{1}{2}\right\}.
\] 
\item Later (in
Section \ref{sec:su11-control-sets}), we will also be interested in
    control sets which interpolate between $U_I$ and $U_C$. For
    $1/3 \le r^2$ we define
\[ U_r := \left\{(u_0,u_1,u_2)\mid u_0 +
    u_1 + u_2 = 1,~u_0^2+ u_1^2 + u_2^2 \le r^2 \right\}.
\] 
We denote
    the boundary (relative to the affine hull) of these sets by
    $\partial U_C$, $\partial U_I$ and $\partial U_r$ respectively.
\end{itemize}

\end{definition}
\begin{remark}\normalfont
Controls $(u_0,u_1,u_2)$ from $U_C \setminus U_T$ give rise to
boundary curves $\sigma_j$ of regions which fail to be convex. This is
a consequence of the interpretation of the state-dependent
curvatures $u_j\ge0$ as a nonnegative curvature
condition.
\end{remark}

\index{Noether's theorem}

One motivation in considering these control sets is the following: we
can observe that the triangular control set is invariant under
$\Z/3\Z$-rotations while the disk control set is invariant under
rotations by the circle group $S^1$. The latter is important for our
investigations, as it will allow us to employ Noether's theorem to
derive a \emph{first integral} or a conserved quantity of the
dynamics. A conserved quantity is useful because it facilitates a
reduction in dimension of the original problem --- if the group of
symmetries is large enough, reduction by that group may even lead to a
direct solution.  Other motivations are enumerated in the introduction
to the book.

Until now, we have been exclusively working with the simplex
control set $U_T$. Now, we change our control set to be $U_C$, the
circumscribing disk of $U_T$ as described in
Definition \ref{def:circ-control-set}. This change is motivated by the
following theorem.
 
\index[n]{A@$A$, matrix or linear map!rotation}

\begin{theorem}\label{thm:symmetries}
Let $\H$ be the Hamiltonian of the Reinhardt optimal control
problem. Assuming that the control set is closed under rotations
(i.e., if $A \in \SO$ is a rotation and $Z_u$ is in the control
set, then so is $\Ad_A Z_u$), the Hamiltonian $\H$ is invariant
under the action of the subgroup $\SO$ of $\SL(\R)$.
\end{theorem}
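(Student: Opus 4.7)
The plan is to decompose $\H$ into the Lie group part and the Lie algebra part, using the expression
\[
\H(\Lambda_1,\Lambda_R,X;Z_u) = \left\langle \Lambda_1 - \tfrac{3}{2}\lambda_{cost} J, X \right\rangle - \frac{\langle \Lambda_R,Z_u\rangle}{\langle X,Z_u\rangle},
\]
and check that both summands are invariant under the diagonal action $Y \mapsto \Ad_A Y$ for $A \in \SO$, where the action is extended componentwise to $(X,\Lambda_1,\Lambda_R,Z_u)$. The two facts I will use repeatedly are (i) the $\Ad$-invariance of the trace form $\langle\cdot,\cdot\rangle$ on $\sl(\R)$, i.e.\ $\langle \Ad_A Y, \Ad_A Z\rangle = \langle Y,Z\rangle$, and (ii) that $\SO$ centralizes $J$ (since $J$ is the infinitesimal generator of $\SO$), so $\Ad_A J = J$ for every $A \in \SO$.

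First I would handle the Lie group part $\H_1 = \langle \Lambda_1 - \tfrac{3}{2}\lambda_{cost} J, X\rangle$. Under the substitutions $X \mapsto \Ad_A X$ and $\Lambda_1 \mapsto \Ad_A \Lambda_1$, $\Ad$-invariance of the trace form and $\Ad_A J = J$ give
\[
\langle \Ad_A \Lambda_1 - \tfrac{3}{2}\lambda_{cost} J,\ \Ad_A X\rangle
= \langle \Ad_A(\Lambda_1 - \tfrac{3}{2}\lambda_{cost} J),\ \Ad_A X\rangle
= \langle \Lambda_1 - \tfrac{3}{2}\lambda_{cost} J, X\rangle.
\]
Next, for the Lie algebra part $\H_2 = -\langle \Lambda_R,Z_u\rangle/\langle X,Z_u\rangle$, the same $\Ad$-invariance of the trace form makes both numerator and denominator individually invariant once we replace $(X,\Lambda_R,Z_u)$ by $(\Ad_A X,\Ad_A \Lambda_R,\Ad_A Z_u)$. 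The hypothesis that the control set is closed under rotations guarantees that $\Ad_A Z_u$ is again of the form $Z_{u'}$ for some admissible control $u'$, so the expression is well-defined as a function on the control set after the $\SO$-action.

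Putting the two computations together yields $\H(\Ad_A\Lambda_1,\Ad_A\Lambda_R,\Ad_A X;\Ad_A Z_u) = \H(\Lambda_1,\Lambda_R,X;Z_u)$ for every $A\in\SO$, which is the desired $\SO$-invariance. The only step requiring any care is the one that depends on the hypothesis on the control set: without stability of the controls under rotation, the rotated $\Ad_A Z_u$ would leave the control set and the rotated Hamiltonian would not even be defined as a function on the same control-parametrized family. Since the symmetry on $(X,\Lambda_1,\Lambda_R)$ is entirely carried by the $\Ad$-invariance of $\langle\cdot,\cdot\rangle$ together with $\Ad_A J = J$, and these are classical, I expect no real obstacle; the content of the theorem lies in identifying the correct hypothesis on the control set, which is precisely what rules out the triangular $U_T$ and motivates switching to $U_C$ (or $U_I$, $U_r$) in the sequel.
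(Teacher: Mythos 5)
Your proof is correct and takes essentially the same approach as the paper: decompose $\H$ into $\H_1 + \H_2$, apply $\Ad$-invariance of the trace form together with $\Ad_A J = J$, and observe that closure of the control set under rotation is exactly what makes the rotated control-dependent term well-defined. The paper spends slightly more space justifying that $(X,\Lambda_1,\Lambda_R)$ transform by the adjoint action (e.g.\ by checking the ODE for $\Lambda_1$) before doing the computation you carry out directly, but the underlying argument is the same.
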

\begin{proof}
\index[n]{0@$-\tilde{\phantom{-}}$, transformed quantity}
The Hamiltonian depends on the quantities $X,\Lambda_1,\Lambda_R$ and
the control matrix $Z_u$. Ignoring $Z_u$ for the time being, if
$A \in \SO$ is an arbitrary rotation, then let us see how these
quantities transform. $A$ acts on trajectories in $\h$ by
linear fractional transformations and so, equivalently, we have
$X \mapsto \Ad_A X= A X A^{-1} =: \tilde{X}$ in the adjoint
orbit picture. Now, $\Lambda_1$ transforms as
$\Lambda_1 \mapsto A \Lambda_1 A^{-1} =: \tilde{\Lambda}_1$
since these transformed quantities satisfy the same ODE:
\[
\tilde{\Lambda}_1' = \Ad_A \Lambda_1' 
= \Ad_A [\Lambda_1,X] = [\Ad_A\Lambda_1,\Ad_A X] 
= [\tilde{\Lambda}_1,\tilde{X}].
\]

Similarly, we can also see that $\Lambda_R$ transforms as
$\Lambda_R \mapsto A \Lambda_R A^{-1}$.  Now, the control set
transforms as $Z_u \mapsto A Z_u A^{-1}$, which may, in general,
fall outside the control set given by $U_T$. So, if we modify the
control set so that it does not, a simple computation now using the
expression for the control-dependent Hamiltonian in
equation \eqref{eqn:full-hamiltonian} shows that it is unchanged under
these transformations by $A$.
\end{proof}

\begin{remark}\leavevmode\normalfont
Following the discussion at the end of Section \ref{sec:control-sets},
the control set $U_T$ is only symmetric by discrete
$\Z/3\Z$-rotations and not under general $\SO$ rotations.
\end{remark}

\section{Conserved Quantity for the Circular Control Set}\label{sec:angular-momentum}
Recalling the discussion at the end of Section \ref{sec:control-sets},
we note that we enlarge the control set $U_T$ to $U_C$ to have
continuous $\SO$-symmetry in order to manufacture a conserved quantity
for the dynamics. This is achieved by Noether's theorem.

\index{Noether-Sussmann theorem}

\index[n]{zv@$\phi$, cost integrand}
\index[n]{f@$f$, vector field}
\index[n]{U@$U$, control set}
\index[n]{G@$G$, Lie group}
\index[n]{G@$G$, Lie group!$\mathfrak{g}$, Lie algebra}
\index[n]{Q@$\mathcal{Q}$, optimal control system} % P->Q, TCH 3/27/2023 
\index[n]{M@$M$, manifold!for Noether-Sussmann}
\index[n]{V@$V_i\subset M$, open neighborhood of a manifold}
\index[n]{u@$u$, control!$u_i\in U$}
\index[n]{zy@$\psi$, local auxiliary function or integral!$\psi:V_1\to V_2$, diffeomorphism of open neighborhoods}
\index{infinitesimal group of symmetries}

The version of Noether's theorem which we use for optimal control is
the one described by Sussmann~\cite{sussman}. Before recalling the
statement, we begin with a few definitions.  We let $\mathcal{Q} =
(M,U,f,\phi)$ denote a general optimal control system satisfying the
regularity conditions of Section \ref{sec:PMP}.  Also, we assume
that the cost functional $\phi$ is independent of the control as in
our case. In what follows, we will denote the vector field
$f(u,q) \in T_{q}M$ for $u\in U$ by $f_u(q)$. The definitions below
are all from Sussmann~\cite{sussman}.

\begin{definition}[Symmetry of a control system]\label{def:symmetry}
Let $G$ be a Lie group with Lie algebra $\mathfrak{g}$ and let
$\mathcal{Q} = (M,U,f,\phi)$ be an optimal control
system. A \emph{symmetry} of this optimal control system is a
diffeomorphism $\psi : V_1 \to V_2$ where $V_1,V_2 \subset M$ are
open such that for every $u \in U$ there exist $u_1,u_2 \in U$ for
which $d\psi(q)(f_u(q)) = f_{u_1}(\psi(q))$ and
$d\psi(q)(f_{u_2}(q)) = f_{u}(\psi(q))$ for all $q \in V_1$.
\end{definition}

\index[n]{zt@$\tau$, infinitesimal group of symmetries}
\index[n]{zC@$\Gamma^\infty$, smooth sections of a vector bundle}
\index[n]{X@$X$, Lie algebra element!$X,Y,Z\in\mathfrak{g}$}

\begin{definition}[Infinitesimal Group of Symmetries]\label{def:infinitesimal-symmetry}
An \emph{infinitesimal group of symmetries} of a control system
$\mathcal{Q}$ is a smooth action $\tau
: \mathfrak{g} \to \Gamma^{\infty}(TM)$, which assigns to every Lie
algebra element $X \in \mathfrak{g}$ a smooth vector field on $M$,
such that every diffeomorphism $\exp(t\tau(X))$ is a symmetry of
$\mathcal{Q}$.
\end{definition}

\index[n]{exp@$\exp$, exponential and matrix exponential!solution to ODE}
\index[n]{J@$\mathbf{J}^\tau$, momentum map}
\index[n]{0@$-^*$, dual!linear}
\index[n]{q@$q$, point on manifold}

\begin{definition}[Momentum Map]\label{def:momentum-map}
To an infinitesimal group of symmetries
$\tau:\mathfrak{g}\to\Gamma^{\infty}(TM)$ of an optimal control system
$\mathcal{Q}$, we associate the \emph{momentum map}\index{momentum
  map} $\mathbf{J}^\tau:T^*M\to\mathfrak{g}^*$
\index[n]{J@$\mathbf{J}^\tau$, momentum map} given by
\[
\mathbf{J}^\tau(q,p)(X) = \bracks{p}{\tau(X)(q)}_*,
\]
where $X\in\mathfrak{g}$, $q \in M,\ p\in T_q^*M$ and $\tau(X)(q) \in T_q M$. 
\end{definition}

See Abraham~and~Marsden~\cite{abraham2008foundations} for the general
theory of momentum maps in symplectic geometry.

\index[n]{p@$p$, point in bundle!lifted controlled trajectory}
\index{PMP, Pontryagin Maximum Principle}

%% DONE[TCH 5/4/2024. Fixed.] TCH 4/30/2024: In Noether-Sussmann, the
%% lifted trajectory should be extremal?

\begin{theorem}[Noether-Sussmann Theorem]
Assume that $\mathcal{Q} = (M,U,f,\phi)$ is an optimal control system
as above, let $\mathfrak{g}$ be the Lie algebra of the Lie group $G$
and let $\tau : \mathfrak{g} \to \Gamma^{\infty}(TM)$ be an
infinitesimal group of symmetries of $\mathcal{Q}$. Let $p$ be a
lifted controlled trajectory in $T^*M$ satisfying the Pontryagin
Maximum Principle. Then the function
$\mathbf{J}^\tau:T^*M\to\mathfrak{g}^*$ is constant along the
trajectory $p$.
\end{theorem}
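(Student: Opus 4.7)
The plan is to differentiate $t\mapsto \mathbf{J}^\tau(q(t),p(t))(X)$ along a lifted extremal trajectory, express this derivative as a Poisson bracket with the control-dependent Hamiltonian, and then show that this Poisson bracket vanishes identically as a consequence of the infinitesimal symmetry hypothesis. Fix $X\in\mathfrak{g}$ and set $\mathbf{J}_X(q,p):=\mathbf{J}^\tau(q,p)(X)=\langle p,\tau(X)(q)\rangle_{*}$, which is a smooth function on $T^*M$ linear in the fiber variable $p$. By the PMP Hamilton equations \eqref{eqn:hamiltons-equations}, for almost every $t$ one has $q'(t)=\partial_p\H(q(t),p(t),u^*(t))$ and $p'(t)=-\partial_q\H(q(t),p(t),u^*(t))$, hence
\[
\frac{d}{dt}\mathbf{J}_X(q(t),p(t)) \;=\; \{\mathbf{J}_X,\H(\cdot,\cdot,u^*(t))\}(q(t),p(t)),
\]
with $\{\cdot,\cdot\}$ the canonical Poisson bracket on $T^*M$. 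It therefore suffices to prove that, for every fixed $u\in U$, the bracket $\{\mathbf{J}_X,\H(\cdot,\cdot,u)\}$ vanishes identically on $T^*M$.

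Writing $f_u(q):=f(q,u)$ and using that $\mathbf{J}_X$ is linear in $p$ while $\H(q,p,u)=\langle p,f_u(q)\rangle_{*}+\lambda_{cost}\phi(q)$, a direct Darboux-coordinate computation gives
\[
\{\mathbf{J}_X,\H(\cdot,\cdot,u)\}(q,p) \;=\; \langle p,[\tau(X),f_u](q)\rangle_{*} \;-\; \lambda_{cost}\,\tau(X)(q)\!\cdot\!\phi,
\]
where $[\tau(X),f_u]$ is the Lie bracket of vector fields. Now I would invoke Definition~\ref{def:infinitesimal-symmetry}: setting $\psi_s:=\exp(s\tau(X))$, each diffeomorphism $\psi_s$ is a symmetry of $\mathcal{Q}$, and Definition~\ref{def:symmetry} then provides, for each $u$, a family $s\mapsto u_1(s)\in U$ with $u_1(0)=u$ satisfying $d\psi_s\!\cdot\!f_u=f_{u_1(s)}\circ\psi_s$. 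Differentiating this identity at $s=0$ yields $[\tau(X),f_u]=\partial_s f_{u_1(s)}\!\mid_{s=0}$ as a vector field on $M$, so that $\langle p,[\tau(X),f_u]\rangle_{*}$ is an infinitesimal variation of the map $v\mapsto \langle p,f_v\rangle_{*}$ inside the control set. Meanwhile, the notion of symmetry of an \emph{optimal} control system also requires $\phi\circ\psi_s=\phi$, which makes the second term vanish and, after reassembly, yields $\{\mathbf{J}_X,\H(\cdot,\cdot,u)\}\equiv 0$.

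Combining the two steps, the derivative $\tfrac{d}{dt}\mathbf{J}_X(q(t),p(t))$ is zero for almost every $t$, and the absolute continuity of $t\mapsto\mathbf{J}_X(q(t),p(t))$, inherited from that of the lifted trajectory, upgrades this to constancy. Since $X\in\mathfrak{g}$ was arbitrary, the full momentum map $t\mapsto\mathbf{J}^\tau(q(t),p(t))$ is constant along the lifted extremal.

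The main obstacle I anticipate is the differentiation step producing the Lie bracket identity: Definition~\ref{def:symmetry} only asserts the \emph{existence} of the auxiliary control $u_1(s)$, not its smoothness in $s$, so one must either use the smooth action structure of Definition~\ref{def:infinitesimal-symmetry} to promote the family $s\mapsto u_1(s)$ to a smooth selection, or replace the bracket $[\tau(X),f_u]$ by the image of $\partial_s f_{u_1(s)}|_{s=0}$ in an appropriate quotient. This measurable/smooth selection issue, together with checking that the invariance of the cost integrand is indeed built into ``symmetry of the optimal control system,'' is where the Sussmann version of Noether's theorem requires genuine care beyond its classical Lagrangian counterpart.
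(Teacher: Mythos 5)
The paper does not prove this theorem; it is quoted with a citation to Sussmann's paper~\cite{sussman}, so there is no internal proof to compare against. Judged on its own terms, your argument has a genuine gap that is more fundamental than the smooth-selection issue you flag at the end.

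The problem is the last step, where you assert that $\{\mathbf{J}_X,\H(\cdot,\cdot,u)\}\equiv 0$ on $T^*M$. This is false: the first term in your bracket formula (with corrected sign,
$-\langle p,[\tau(X),f_u]\rangle_* - \lambda_{cost}\,\tau(X)\!\cdot\!\phi$) does \emph{not} vanish identically. You correctly identify $\langle p,[\tau(X),f_u]\rangle_*$ with an infinitesimal variation $\partial_s\langle p,f_{u_1(s)}\rangle_*|_{s=0}$ of the control-dependent Hamiltonian inside the control set, but a first-order variation of $v\mapsto\H(q,p,v)$ at an arbitrary $u$ has no reason to be zero. It vanishes precisely because the PMP forces $u^*(t)$ to be a \emph{maximizer} of $v\mapsto\H(q(t),p(t),v)$ over $U$; since $u_1(\cdot)$ is a curve in $U$ through $u_1(0)=u^*(t)$ defined for $s$ in a two-sided neighborhood of $0$, the maximum condition kills the first-order variation at $s=0$. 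That maximization step is the entire content of the ``Sussmann'' in Noether--Sussmann: a symmetry of the control system alone does not produce a conserved quantity, a symmetry plus Pontryagin extremality does. Your proof never invokes the maximization condition, so the assertion that the bracket vanishes is unsupported, and as written the argument would ``prove'' conservation of $\mathbf{J}^\tau$ along \emph{any} controlled trajectory, not just extremals, which is not true.

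Two secondary remarks. First, the canonical bracket of two fiber-linear functions comes out as $\{\langle p,V\rangle,\langle p,W\rangle\}=-\langle p,[V,W]\rangle$, so the sign on your first term is off; harmless here but worth fixing. Second, the measurability/smoothness of the selection $s\mapsto u_1(s)$ that you flag is indeed a real technical point (this is where Sussmann's paper does the hard work), but it is downstream of the missing maximization argument: even granting a smooth selection, the proof does not close without it.
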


\begin{problem}[Circular Control Problem]
We start with problem \ref{pbm:state-costate-reinhardt} and enlarge
the control set $U_T$ to its circumscribing disk $U_C$. This control
problem is called the \emph{circular control problem}.
\end{problem}
Let us now apply the Noether-Sussmann theorem to the circular control
problem.  Define the angular momentum to be $\A :=
\bracks{\Jsu}{\Lambda_1+\Lambda_R}$.

\index{angular momentum}
\index[n]{A@$\A$, angular momentum}

\begin{theorem}[Angular Momentum]\label{thm:angular-momentum}
We have that the angular momentum $\A=\bracks{J}{\Lambda_1+\Lambda_R}$
is conserved along the optimal trajectory of the circular control
problem.
\end{theorem}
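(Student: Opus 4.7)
The plan is to apply the Noether--Sussmann theorem with the Lie group $G = \SO(2)$ acting on the state manifold $M = \SL_2(\R) \times \sl_2(\R)$ by simultaneous conjugation:
\[
A \cdot (g, X) = (A g A^{-1},\, A X A^{-1}), \qquad A \in \SO(2).
\]
The first step is to verify that this action is an infinitesimal group of symmetries of the circular control system in the sense of Definition~\ref{def:infinitesimal-symmetry}. This is essentially Theorem~\ref{thm:symmetries}: because the circumscribed control set $U_C$ is defined by the $\SO(2)$-invariant quadratic form $\sum u_i^2 \le 1$ (equivalently, conjugation by $\SO(2)$ preserves $U_C$ as a subset of $\sl_2(\R)$ via the map $u \mapsto Z_u$), the controlled vector fields $f_u(g,X) = (gX,\, [P_u, X])$ satisfy the intertwining $d\psi \cdot f_u = f_{u'} \circ \psi$ with $u' = A \cdot u$. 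One also checks that the cost integrand $-\tfrac{3}{2}\bracks{J}{X}$ is $\SO(2)$-invariant because $A$ commutes with $J$.

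Next, I would compute the infinitesimal generator of the one-parameter subgroup $\exp(sJ) \in \SO(2)$, which yields
\[
\tau(J)(g, X) = ([J, g],\, [J, X]) \in T_g\SL_2(\R) \oplus T_X \sl_2(\R).
\]
Using the trivialization of Proposition~\ref{prop:trivialization} in which a covector at $(g,X)$ corresponds to $(\Lambda_1, \Lambda_2) \in \sl_2(\R) \oplus \sl_2(\R)$ via $T^*L_{g^{-1}}$ on the first factor and the trace form identification on the second, the momentum map of Definition~\ref{def:momentum-map} becomes
\[
\mathbf{J}^\tau(g, X, \Lambda_1, \Lambda_2)(J) = \bracks{\Lambda_1}{g^{-1} J g - J} + \bracks{\Lambda_2}{[J, X]}.
\]
The second summand simplifies by the invariance of the trace form to $\bracks{J}{[X, \Lambda_2]} = -\bracks{J}{\Lambda_R}$. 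For the first summand, Corollary~\ref{cor:lam1-gen-sol} gives $\Ad_g \Lambda_1(t) = \Lambda_1(0)$, so $\bracks{\Lambda_1}{g^{-1} J g} = \bracks{J}{\Ad_g \Lambda_1}$ is already a constant of motion. Subtracting this constant from the conserved $\mathbf{J}^\tau(J)$ leaves $-\bracks{J}{\Lambda_1 + \Lambda_R} = -\A$ as conserved, proving the theorem.

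The main obstacle is the careful bookkeeping between the trivialized cotangent bundle and the intrinsic pairings, in particular remembering that our state costate is $\Lambda_R = [\Lambda_2, X]$ rather than $\Lambda_2$ itself, and that the group factor contributes the $\Ad_g$-dependent term which must be absorbed into the constant of motion for $\Lambda_1$. As a sanity check (and as a self-contained alternative to invoking Noether--Sussmann), one can differentiate $\A$ directly using the costate equations of Problem~\ref{pbm:state-costate-reinhardt}: the terms $\bracks{J}{[\Lambda_1, X]}$ and $\bracks{J}{[-\Lambda_1, X]}$ cancel, the term $\bracks{J}{[J, X]}$ vanishes, and the remaining expression
\[
\A' = \bracks{[J, P]}{\Lambda_R} - \bracks{\Lambda_R}{P}\bracks{[J, P]}{X}
\]
is precisely the first-order optimality condition for the maximization of $\H_2$ in the tangent direction $[J, Z_u]$ to the orbit of $\SO(2)$ on $U_C$. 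Since this tangent direction lies in $\partial U_C$ at any boundary maximizer (and vanishes trivially at an interior critical point), the expression is zero along any extremal, providing an explicit cross-check of the symmetry argument.
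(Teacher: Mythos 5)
Your main argument—invoking Noether–Sussmann for the $\SO(2)$ conjugation action, computing the momentum map as $\bracks{\Lambda_1}{g^{-1}Jg - J} + \bracks{\Lambda_2}{[J,X]}$, and absorbing the $\Ad_{g^{-1}}$-dependent term into the constant of motion supplied by Corollary~\ref{cor:lam1-gen-sol}—is exactly the paper's proof. Your sanity-check by direct differentiation of $\A$ along the costate flow, reducing $\A'$ to the first-order optimality condition for $\H_2$ in the tangent direction $[J,Z_u]$ to the $\SO(2)$-orbit, is also valid and is essentially the computation the paper performs in the theorem immediately following (equation~\eqref{eqn:deriv-ang-momentum}), there used in the reverse direction to extract a constraint on the optimal control.
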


\index[n]{zh@$\theta$, angle}

\begin{proof}
The proof analyzes the $\SO$ symmetry.
Our optimal control system consists of data $(g,X,\Lambda_1,\Lambda_2) \in
T^*(T\SL(\R))$.  Note that our Lie group here is $\SO$ and hence its
Lie algebra is one-dimensional $\mathfrak{so}_2(\R) = J\R$, where 
$J$ is the infinitesimal generator of
rotations. This infinitesimal symmetry gives rise to rotations
$\exp(J\theta) \in \SO$, which in turn give rise to the following
action on our manifold:
\begin{align*}
\SO \times T^*(T\SL(\R)) &\to T^*(T\SL(\R)) 
\\
(\exp(J\theta), (g,X,\Lambda_1,\Lambda_2)) 
&\mapsto (\exp(J\theta)g\exp(-J\theta), 
\Ad_{\exp(J\theta)}X, \Ad_{\exp(J\theta)}\Lambda_1, \Ad_{\exp(J\theta)}\Lambda_2),
\end{align*}
where the action on $g$ is by inner automorphisms and the rest are
given by the adjoint action on each copy of $\sl(\R)$. (Note that
throughout, we make the identification $\sl(\R)^* \cong \sl(\R)$ via
the nondegenerate trace form.)  These are \emph{symmetries} by the
proof of Theorem \ref{thm:symmetries} and also since the rotation
action on the control matrix $Z_u$ is
\[
Z_u \mapsto \Ad_{\exp(J\theta)} Z_u \in U_C.
\]
The momentum map is computed
by the canonical pairing between the
costate variables $(\Lambda_1,\Lambda_2) \in T_{(g,X)}^*(T\SL(\R))$
and tangent vectors in $T_{(g,X)}(T\SL(\R))$ giving the infinitesimal
rotation action. The first component of this tangent vector is given
by
\begin{align*}
\frac{d}{d\theta}\exp(J\theta)g\exp(-J\theta)\Bigr|_{\theta=0} 
&= \underbrace{Jg - gJ}_{\in T_g\SL(\R)} 
\\
&=\underbrace{g}_{\in \SL(\R)}\underbrace{(g^{-1}Jg - J)}_{\in \sl(\R)}.
\end{align*}
Thus, we can identify the first component with $g^{-1}Jg - J$ in the
Lie algebra $\sl(\R)$. The second component is given by
\[
\frac{d}{d\theta}\Ad_{\exp(J\theta)}X = \ad_J X = [J,X],
\]
which is already in the Lie algebra $\sl(\R)$. 
So, putting all this together, we obtain the momentum map.
\begin{align*}
\mathbf{J}^\tau\left((g,\Lambda_1,X,\Lambda_2) \right) 
&= \bracks{(\Lambda_1,\Lambda_2)}{(\Ad_{g^{-1}}J - J, [J,X])}
% _{*} dropped TCH: 4/30/2024.
\\
&= \bracks{\Lambda_1}{\Ad_{g^{-1}}J - J} + \bracks{\Lambda_2}{[J,X]} 
\\
&= \bracks{\Lambda_1}{\Ad_{g^{-1}}J} -\bracks{\Lambda_1}{J} - \bracks{[\Lambda_2,X]}{J} 
\\
&= \bracks{\Lambda_1}{\Ad_{g^{-1}}J} - \bracks{J}{\Lambda_1 + \Lambda_R},
\end{align*}
where, as usual, $\bracks{\cdot}{\cdot}_*$ denotes the natural pairing
between a vector space and its dual.  Note however, from
Corollary \ref{cor:lam1-gen-sol}, that $\Lambda_1(t) =
g^{-1}\Lambda_1(0)g(t)$ and so $\bracks{\Lambda_1}{\Ad_{g^{-1}}J}$ is
a constant.

Thus, by the Noether-Sussmann theorem
$\bracks{J}{\Lambda_1+\Lambda_R}$ is a constant of motion along the
optimal trajectory of the circular control problem.
\end{proof}

\begin{remark}\normalfont \leavevmode
\begin{itemize}
\item 
Since it is the conserved quantitiy arising from a rotational
symmetry, $\bracks{J}{\Lambda_1 + \Lambda_R}$ will be called
the \emph{angular momentum}.
\item 
The spurious constant in the expression for the momentum map is a
consequence of the action of $\SO$ on $\SL(\R)$ by inner
automorphisms. This means that we could also modify the action to be
$g \mapsto g \exp(-J\theta)$ to obtain a valid conserved quantity.
\item 
We also obtain the same conserved quantity for a control set $U_r$ which
is a disk of any radius $r$ whenever $r^2 > 1/3$.
    % where \[U_r := \left\{(u_0,u_1,u_2)\mid u_0 + u_1 + u_2 \le
    % 1,~u_0^2+ u_1^2 + u_2^2 = r^2 \right\}\]
    (See Definition \ref{def:circ-control-set}.) 
\end{itemize}
\end{remark}
\index[n]{r@$r$, real number!radius}

Conserved quantities are very useful since they introduce constraints
and cut down the dimension of the problem. But an immediate
application is that they give us a constraint on the optimal control
matrix in terms of the state-costate variables, which is what we aim
to derive. But before we do, we make a quick detour to understand the
structure of the control sets when viewed in the Lie algebra $\su$
of a unitary group.

\index[n]{SU@$\mathrm{SU}(1,1)$, special unitary group!$\su$, Lie algebra}

\section{Control Sets in the Special Unitary Group}\label{sec:su11-control-sets}

\index{Cayley transform}
\index[n]{Cayley@$\mathrm{Cayley}$, Cayley transform}

The special unitary group $\mathrm{SU}(1,1)$ and its
Lie algebra $\su$ are introduced in Appendix~\ref{sec:Lie}.
From that appendix, we know that the Lie algebras
$\su$ and $\sl(\R)$ are isomorphic. This isomorphism is given by the
Cayley transform.
\[
\mathrm{Cayley}(\sl(\R)) = \su.
\]
Under the Cayley transform to $\su$, the image of the control matrix $Z_u$, given in
equation \eqref{eqn:Z0}, is
\index[n]{zf@$\zeta=\exp(2\pi{i}/3)$, cube root of unity}
\begin{equation}\label{eqn:Z0-su11}
\frac{1}{3}\mattwo{-i}{2(u_0 + \zeta u_1 + \zeta^2 u_2)}
{2(u_0 + \zeta^2 u_1 + \zeta u_2)}{i},
\end{equation}
where $\zeta=\exp(2\pi i/3)$ is a primitive cube root of unity.

\index[n]{0@$\partial$, boundary}
\index[n]{U@$U$, control set!$U_r$, intermediate}

Now, let $\partial U_r$ denote the boundary of the disk control set
$U_r$ of radius $r$ in $\R^3$.
\[
U_r := \left\{(u_0,u_1,u_2)~\mid
u_0 + u_1 + u_2 = 1,~u_0^2+ u_1^2 + u_2^2 \le r^2 \right\}.
\]
The following lemma gives a simplification of the control matrix.

\begin{lemma}\label{lem:circle-plane-bijection}
The set $\partial U_r$ and the circle 
\[
\left\{z\in \C\mid|z|^2 
= \left(\frac{3r^2-1}{2}\right)\right\}
\]
are in bijection by
the map $(u_0,u_1,u_2)\mapsto u_0 + \zeta u_1 + \zeta^2 u_2$.
\end{lemma}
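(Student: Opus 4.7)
The plan is to prove this by a direct algebraic computation combined with a simple linear-algebra argument for bijectivity. First I would set $w := u_0 + \zeta u_1 + \zeta^2 u_2$ and compute $|w|^2 = w\bar w$. Since $\bar\zeta = \zeta^2$, expanding the product gives
\[
|w|^2 = u_0^2 + u_1^2 + u_2^2 + (\zeta + \zeta^2)(u_0 u_1 + u_1 u_2 + u_0 u_2).
\]
Using the standard identity $1 + \zeta + \zeta^2 = 0$, so $\zeta + \zeta^2 = -1$, this becomes $|w|^2 = (u_0^2 + u_1^2 + u_2^2) - (u_0 u_1 + u_1 u_2 + u_0 u_2)$.

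Next I would eliminate the mixed product using the affine constraint $u_0 + u_1 + u_2 = 1$. Squaring gives $1 = (u_0^2+u_1^2+u_2^2) + 2(u_0 u_1 + u_1 u_2 + u_0 u_2)$, so on $\partial U_r$ (where $\sum u_i^2 = r^2$) we get $u_0u_1+u_1u_2+u_0u_2 = (1-r^2)/2$. Substituting yields
\[
|w|^2 = r^2 - \frac{1-r^2}{2} = \frac{3r^2 - 1}{2},
\]
which is exactly the stated circle. The calculation is reversible: given the affine constraint, $|w|^2 = (3r^2-1)/2$ forces $\sum u_i^2 = r^2$, so the map sends $\partial U_r$ \emph{onto} the circle.

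For bijectivity, I would observe that the real-linear map $T:\R^3\to\C$ given by $T(u_0,u_1,u_2) = u_0 + \zeta u_1 + \zeta^2 u_2$, when paired with the linear functional $(u_0,u_1,u_2)\mapsto u_0 + u_1 + u_2$, is essentially the discrete Fourier transform on $\Z/3\Z$: its matrix (viewing $\C\cong\R^2$) is $\begin{pmatrix}1 & 1 & 1\\ 1 & \zeta & \zeta^2\\ 1 & \zeta^2 & \zeta\end{pmatrix}$, which is invertible. Hence $T$ restricts to an $\R$-affine isomorphism from the plane $\{u_0+u_1+u_2=1\}$ onto $\C$, and this isomorphism sends the circle $\partial U_r$ bijectively onto the circle $\{|w|^2 = (3r^2-1)/2\}$ by the computation above.

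The main obstacle is essentially zero here; the identity $\zeta+\zeta^2=-1$ does all the work, and the only subtlety is keeping track that $\bar\zeta = \zeta^2$ so that the real-valued $u_i$ produce a manifestly real expression for $|w|^2$. I would write the argument as a short self-contained computation, without needing any further appeal to representation theory or to the ambient control-theoretic setup.
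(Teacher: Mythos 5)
Your proof is correct and takes essentially the same route as the paper: both establish bijectivity via the invertibility of the $3\times 3$ DFT-type matrix restricted to the affine plane, and both reduce the norm computation to the identity $|u_0+\zeta u_1+\zeta^2 u_2|^2 = \tfrac{3}{2}\sum u_i^2 - \tfrac{1}{2}(\sum u_i)^2$, which you simply expand in one more intermediate step than the paper does.
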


\index[n]{zP@$\Pi$, affine plane}
\index[n]{L@$L$, affine function}
\index[n]{yz@$z$, optimal control}

\begin{proof}
Consider the affine plane $\Pi = \left\{(u_0,u_1,u_2)\mid u_0 + u_1 +
u_2 = 1 \right\}$ and consider the map $L : \Pi \to \C $,
defined by $L(u_0,u_1,u_2) := (u_0 + \zeta u_1 + \zeta^2 u_2)$. This
map is the restriction of the linear map
\[
(u_0,u_1,u_2) \mapsto 
\begin{pmatrix}
{1}&{1}&{1}\\{1}&{\zeta}&{\zeta^2}\\{1}&{\zeta^2}&{\zeta}
\end{pmatrix}
\begin{pmatrix}
u_0\\u_1\\u_2
\end{pmatrix}
= 
\begin{pmatrix}1\\z\\\bar{z}\end{pmatrix},
\]
which has non-zero determinant and so $L$ is an isomorphism of affine
planes. This isomorphism restricts to a bijection between the circles
$\partial U_r$ and 
\[
\{z\in \C\mid|z|^2=\left(\frac{3r^2-1}{2}\right)\}
\]
since
\begin{align*}
|(u_0 + \zeta u_1 + \zeta^2 u_2)|^2 &= \frac{3}{2}(u_0^2 + u_1^2 + u_2^2) 
- \frac{(u_0+u_1+u_2)^2}{2}
\\
&= \frac{3r^2 - 1}{2}.
\end{align*}
\end{proof}

\index[n]{za@$\alpha,\beta$, control matrix parameters}

The above lemma shows that if the control set is $U_r$, then we can
take $Z_u$ in general to be
\begin{align}\label{eqn:general-Z0}
\begin{split}
Z_u &=\mattwo{-i \alpha}{\beta z}{\beta \bar{z}}{i \alpha} \in \su,\quad
\text{where} 
\\
|z| &=1,\quad\text{ and } \beta>0,
\end{split}
\end{align}
\index[n]{za@$\alpha,\beta$, control matrix parameters}
where $z\in\C$ and $\alpha,\beta\in \R$.  Also, $\beta z$ gives the
polar coordinate decomposition of upper-right matrix entry of
$Z_u$. With this notation, $\alpha = \frac{1}{3}$ and 
\(
\beta=\frac{2}{3}|u_0+\zeta u_1 + \zeta^2 u_2|
\). 
We obtain
\begin{equation}\label{eqn:det-Z0}
\det(Z_u) = (\alpha^2 - \beta^2) = \frac{1 - 4\left(\frac{3r^2-1}{2}\right)}{9} = \frac{1 - 2r^2}{3}.
\end{equation}
Later chapters will use the parameter $\rho$ instead of radius $r$, where
\[
\rho:=\beta/\alpha,\quad \rho^2 = 6r^2 - 2.
\]

\index[n]{zr@$\rho=\beta/\alpha >0$, control parameter}

Table \ref{tab:control-sets} shows the square of the radii $r^2$ of
the circumscribing disk, inscribed disk and center of the control
simplex $U_T$ and their corresponding radii when viewed as disks in
the complex plane (following
Lemma \ref{lem:circle-plane-bijection}). \newline

\begin{table}[htbp]
\centering
\begin{tabular}{|c|c|c|c|c|c|}
\hline\rule{0pt}{3ex} 
        $\det(Z_u)$ & Relation to $U_T$& $r^2$ 
&Radius $(3r^2-1)/2$ in $\C$& $(\alpha,\beta)$ & $\rho=\beta/\alpha$ 
\\[1ex]
\hline\rule{0pt}{3ex} 
        $-1/3$ & Circumscribing disk $U_C$ & 1 & 1 & (1/3,2/3) & 2
\\[1ex]
        $0$ & Inscribed disk $U_I$ & $1/2$ & 1/4 & (1/3,1/3) & 1
\\[1ex]
        $1/9$ &  Center $(1/3,1/3,1/3)$& $1/3$ & 0 & (1/3,0) & 0
\\[1ex]
\hline
\end{tabular}
\caption{Various control sets and their radii.}
\label{tab:control-sets}
\end{table}

\section{Quadratic Equation for Optimal Control}\label{sec:quadratic-equation}
Henceforth, we let $Z^*$ denote the optimal control matrix for the costate
equations \ref{pbm:state-costate-reinhardt} (with the control
set $U_r$). It depends on a complex control variable $z$.
We derive a constraint on $Z^*$
from angular momentum conservation.

We begin with a few lemmas.

\begin{lemma}\label{lem:boundary-circle-control}
The set of maximizers of the Hamiltonian (considered as a function of
the control) when the control set is the disk $U_r$ is either the
entire disk or just a point on $\partial U_r$.
\end{lemma}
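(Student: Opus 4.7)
The strategy is to mimic the proof of Lemma~\ref{lem:control-face-lemma}, which established the analogous statement for the triangular control set $U_T$. The key observation is that, on any line segment contained in $U_r$ on which the denominator $\langle X, Z_u\rangle$ does not vanish, the control-dependent part of the Hamiltonian
\[
\H_2(u) = -\frac{\langle \Lambda_R, Z_u\rangle}{\langle X, Z_u\rangle}
\]
is a ratio of two affine functions of $u$, with derivative along the segment of the form $(ad-bc)/(ct+d)^2$ of fixed sign. Hence $\H_2$ is monotonic along such a segment; if its values at the two endpoints coincide, then $\H_2$ is constant on the entire segment. Before applying this, one first checks that the denominator does not vanish on $U_r$, which follows from an extension of the star inequalities (Lemma~\ref{lem:sl2-star-condition}) to the circumscribing disk. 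Let $\Mstar \subseteq U_r$ denote the set of maximizers.

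The first step is to show that if $\Mstar$ contains a point $p$ in the relative interior of $U_r$, then $\Mstar = U_r$. Indeed, any line through $p$ meets $U_r$ in a chord whose endpoints lie on $\partial U_r$. Along this chord the function $\H_2$ is monotonic and attains its maximum at the interior point $p$, which forces $\H_2$ to be constant on the chord, equal to the maximum value. Taking lines through $p$ in all directions sweeps out all of $U_r$, so every point of $U_r$ is a maximizer.

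For the second step, suppose instead that $\Mstar \subseteq \partial U_r$ and that it contains two distinct points $p_1, p_2$. The chord joining $p_1$ and $p_2$ lies entirely in $U_r$, with its relative interior lying in the open disk. By monotonicity of $\H_2$ along this chord and equality of its endpoint values, $\H_2$ is constant on the chord, producing interior points of $U_r$ at which $\H_2$ achieves its maximum. By the previous step this forces $\Mstar = U_r$, contradicting $\Mstar \subseteq \partial U_r$. Since $U_r$ is compact and $\H_2$ is continuous, $\Mstar$ is nonempty, so in this case it consists of exactly one point of $\partial U_r$.

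The main obstacle is not conceptual but technical: one must justify that the denominator $\langle X, Z_u\rangle$ is nowhere zero on the enlarged disk $U_r$. This needs a small computation extending the star inequalities from the vertices of $U_T$ (where they hold by Corollary~\ref{cor:X-props}) to the whole circumscribing disk, using linearity of $u \mapsto Z_u$ and the sign of $\langle X, Z_u\rangle$ at the center of $U_T$. Once this is in place, the monotonicity argument above delivers the dichotomy between $\Mstar = U_r$ and $\Mstar$ being a single point on $\partial U_r$.
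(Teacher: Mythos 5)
Your proof is correct, but it re-derives the content of a lemma the paper already proved in sufficient generality. Lemma~\ref{lem:control-face-lemma} is stated for an \emph{arbitrary} compact convex control set $U$ in the affine plane $\{\sum u_i=1\}$, not just for $U_T$. The paper's proof of Lemma~\ref{lem:boundary-circle-control} is therefore a one-liner: apply Lemma~\ref{lem:control-face-lemma} with $U=U_r$, then observe that the faces of a closed disk are the whole disk and the singleton boundary points (by the definition of face given in Section~\ref{sec:bang}). What you have written is essentially a re-proof of the face lemma specialized to the disk — the monotonicity-along-chords argument, followed by the classification of which subsets of a disk can arise as maximizer sets — so your argument is valid, just longer than necessary. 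The route you took would have saved effort had you noticed that the hypothesis of Lemma~\ref{lem:control-face-lemma} already covers $U_r$.

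On the ``technical obstacle'' you flag (nonvanishing of $\langle X,Z_u\rangle$ over $U_r$): this is not something the paper proves; it is imposed as a standing hypothesis when the control set is enlarged to $U_r$. The paper restricts attention to the region where $\bracks{X}{Z^*}<0$ (equivalently $\mu(w,z^*)>0$ in hyperboloid coordinates), which is a slightly modified star domain adapted to circular control. So the nonvanishing of the denominator is part of the ambient setup rather than a claim to be verified, and your worry, while reasonable, is resolved by hypothesis rather than computation.
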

\begin{proof}
This is a direct corollary of Lemma \ref{lem:control-face-lemma}.
\end{proof}

\begin{theorem}
Let $Z^*$ be the optimal control matrix for the costate equations
\eqref{pbm:state-costate-reinhardt} (with the control set $U_r$). We
then have
\[
\left \langle \Jsu, \ad_{Z^*}\frac{\delta}{\delta Z^*}
\frac{\langle \Lambda_R, Z^*\rangle}{\langle X, Z^*\rangle} 
\right\rangle = 0,
\]
\index{functional derivative}
\index[n]{zd@$\delta/\delta X$, functional derivative}
where $\frac{\delta}{\delta Z^*}$ is the functional derivative with
respect to $Z^*$.%
\footnote{Note that the $\Jsu$ in this theorem is
an element of $\su$ and is given by by Cayley transform of $J$.
\[
\Jsu =
\mathrm{Cayley}\mattwo{0}{-1}{1}{0} = \mattwo{-i}{0}{0}{i}.
\]
}
\end{theorem}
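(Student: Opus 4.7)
The plan is to derive the identity directly from the conservation of angular momentum established in Theorem~\ref{thm:angular-momentum}. Since the Cayley transform is a Lie algebra isomorphism that preserves the trace form and carries $J$ to $\Jsu$, it suffices to verify the analogous identity in $\sl(\R)$ with $\Jsu$ replaced by $J$; the statement in $\su$ then follows by transport of structure.

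First I would write out $\A' = \langle J, \Lambda_1' + \Lambda_R' \rangle = 0$ using the costate evolution equations from Problem~\ref{pbm:state-costate-reinhardt}. Substituting
\[
\Lambda_1' = [\Lambda_1, X], \qquad \Lambda_R' = [P, \Lambda_R] - \langle \Lambda_R, P\rangle [P, X] + \left[-\Lambda_1 + \tfrac{3}{2}\lambda_{cost}J, X\right],
\]
the $[\Lambda_1, X]$ contributions cancel, leaving
\[
\Lambda_1' + \Lambda_R' = \tfrac{3}{2}\lambda_{cost}[J, X] + \left([P, \Lambda_R] - \langle \Lambda_R, P\rangle [P, X]\right),
\]
where $P = Z^*/\langle X, Z^* \rangle$ evaluated at the optimal control.

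Next I would pair both sides with $J$. The first term contributes $\tfrac{3}{2}\lambda_{cost}\langle J, [J, X]\rangle$, which vanishes by the invariance of the trace form (equivalently $\langle J, [J, X]\rangle = \langle [J,J], X\rangle = 0$). For the second term, Corollary~\ref{cor:control-dep-ad-alternate} gives precisely
\[
[P, \Lambda_R] - \langle \Lambda_R, P\rangle [P, X] = \ad_{Z^*}\frac{\delta}{\delta Z^*}\frac{\langle \Lambda_R, Z^*\rangle}{\langle X, Z^*\rangle}.
\]
Combining these observations with $\langle J, \Lambda_1' + \Lambda_R'\rangle = 0$ yields the desired conclusion in $\sl(\R)$, and then in $\su$ after applying the Cayley transform.

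There is no serious obstacle here; the proof is essentially a bookkeeping exercise that packages the conservation law into a pointwise constraint on the optimal control matrix. The only subtlety worth flagging is verifying that the passage through the Cayley transform is legitimate: this requires observing that the transform intertwines the adjoint representations, preserves the trace pairing $\langle \cdot, \cdot\rangle$, and sends the generator $J$ of $\SO$ in $\sl(\R)$ to $\Jsu$ in $\su$, so the conservation law derived from the $\SO$ symmetry in Theorem~\ref{thm:angular-momentum} is expressed equivalently in either Lie algebra.
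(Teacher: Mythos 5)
Your proof is correct and follows the paper's first argument essentially verbatim: differentiate the conserved angular momentum, substitute the costate ODEs, use invariance of the trace form to drop the $[\Lambda_1,X]$ and $\langle J,[J,X]\rangle$ terms, and apply Corollary~\ref{cor:control-dep-ad-alternate}. The paper carries out the computation directly in $\su$ using $\Jsu$ (so the Cayley-transform step you flag is already built into the framing there), and it also supplies a second proof via Lagrange multipliers on the constrained Hamiltonian maximization, which you do not address.
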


\index[n]{P@$P$, normalized control matrix!$P^*$, optimal}

\begin{proof}
We give two proofs of this fact.  For the first proof, we
differentiate the angular momentum (in
Theorem \ref{thm:angular-momentum}) with respect to time, to obtain
the following.
\begin{align}
    0 = \langle \Jsu, \Lambda_1' + \Lambda_R' \rangle 
&= \left\langle \Jsu, [\Lambda_1, X] + [P^*,\Lambda_R] + 
\left[-\Lambda_1 + \frac{3}{2}\lambda_{cost} \Jsu, X \right] - 
\langle P^*,\Lambda_R \rangle [P^*,X] \right\rangle 
\nonumber 
\\
    &=\left\langle \Jsu, [P^*,\Lambda_R] -
 \langle P^*,\Lambda_R\rangle [P^*,X] \right\rangle  
\label{eqn:deriv-ang-momentum}
\\
    &= \left \langle \Jsu, \ad_{Z^*}\frac{\delta}{\delta Z^*}
\frac{\langle \Lambda_R, Z^*\rangle}{\langle X, Z^*\rangle} \right\rangle \qquad 
\text{(by Corollary \ref{cor:control-dep-ad-alternate}),} \nonumber
\end{align}
where $P^* = Z^*/\bracks{Z^*}{X}$.
%\item[\emph{Proof 2}.]

For the second proof, note that from
Lemma \ref{lem:boundary-circle-control}, the Hamiltonian is maximized
at a point on $\partial U_r$. By the form of the control matrix
$Z^*$, on the boundary of $U_r$ we obtain two constraints.
\[ 
\bracks{\Jsu}{Z^*} = \mathrm{constant}, \qquad \bracks{Z^*}{Z^*} = \mathrm{constant}.
\]
The Hamiltonian maximization of the maximum principle can be
considered as a constrained maximization problem subject to the above
two constraints. The functional derivatives of the above two
constraints are $\Jsu$ and $2Z^*$ respectively. By Lagrange
multipliers, we find the derivative $\delta \H/\delta Z^*$ should lie
in the span of the derivatives $\Jsu$, $Z^*$ of the
constraints. That is, $\ad_{Z^*}(\frac{\delta\H}{\delta Z^*})$ is in
the span of $\ad_{Z^*}\Jsu$. Thus,
\[
\bracks{\ad_{Z^*}\frac{\delta \H}{\delta Z^*}}{\Jsu} = 0,
\]
which gives us the required. 
\end{proof}
We also have the following result, which shows that the angular
momentum and the Hamiltonian are in involution with respect to the
Poisson bracket\index{Poisson!bracket} $\{\cdot,\cdot\}_{ex}$ on the
extended state space $T^*T\SU$.
% DONE[TCH 5/4/2024. I changed the proof.] TCH 4/30/2024. Can't we use
% A.9.2 directly, which shows {<J,Lambda1+LambdaR>,H}_ex =
% <J,Lamdba1'+LambdaR'>=0?
\index[n]{0@$\{-,-\}$, Poisson bracket!$-_{ex}$, extended}
\index{angular momentum}
\index[n]{A@$\A$, angular momentum}

\index{Poisson!commuting}
\begin{proposition}
The angular momentum $\A$ Poisson
commutes with the Hamiltonian on $T^*(T\SU)$, provided the control
set is rotationally invariant.
\end{proposition}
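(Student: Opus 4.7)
The plan is to derive the Poisson commutativity directly from the Hamiltonian nature of the $\SO$ action on $T^*(T\SU)$ together with the rotational invariance of the Hamiltonian already established in Theorem~\ref{thm:symmetries}. Recall that in the proof of Theorem~\ref{thm:angular-momentum} the $\SO$-action on the extended state space was written explicitly, and its momentum map was computed to be (up to an additive constant that depends only on initial data via $\Lambda_1 = \Ad_{g^{-1}}\Lambda_1(0)$) precisely $\A = \bracks{J}{\Lambda_1 + \Lambda_R}$. So the first step is to record this: viewed as a function on $T^*(T\SU)$ with its canonical symplectic (hence Poisson) structure, $\A$ is, up to a Casimir-type constant that has zero Poisson bracket with everything, the momentum map of the $\SO$ action.

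Second, I would invoke the standard symplectic-geometric fact (see e.g.\ Abraham--Marsden~\cite{abraham2008foundations}): if a smooth function $F$ on a symplectic manifold is invariant under a Hamiltonian group action with momentum map $\mathbf{J}$, then $\{F,\mathbf{J}\}=0$. Apply this with $F=\H(\,\cdot\,,\,\cdot\,,u)$ for each fixed $u$ in the rotationally invariant control set. Theorem~\ref{thm:symmetries} gave exactly the required $\SO$-invariance of $\H(q,p,u)$, so the conclusion $\{\A,\H(\,\cdot\,,\,\cdot\,,u)\}_{ex}=0$ follows for each admissible $u$. As a sanity check, differentiating this identity along the flow of $\vec{\Hstar}$ should reproduce (via the maximum principle) the vanishing of $d\A/dt$ established in equation~\eqref{eqn:deriv-ang-momentum} of the preceding proof.

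Third, to upgrade from the control-dependent Hamiltonian to a single scalar statement, note that because the $\SO$ action preserves the control set $U_r$ setwise, pointwise maximization over $u$ commutes with the action, so the maximized Hamiltonian $\Hstar$ is itself $\SO$-invariant. At every point of $T^*(T\SU)$ where $\Hstar$ is $C^1$, the envelope theorem gives $d\Hstar=d\H(\,\cdot\,,\,\cdot\,,u^\ast)$ and hence $\{\A,\Hstar\}_{ex}=\{\A,\H(\,\cdot\,,\,\cdot\,,u^\ast)\}_{ex}=0$. This is the Poisson commutativity claimed.

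The main obstacle is regularity of $\Hstar$: at points where the Hamiltonian-maximizing control is non-unique (in particular on the singular locus $\Ssing$ and above the singular walls of Section~\ref{sec:bang}), $\Hstar$ need not be $C^1$, so the envelope identity is not available pointwise. I expect to handle this either by restricting to the dense open set of $T^*(T\SU)$ where the maximizer is unique and invoking continuity of the Poisson bracket there, or, more robustly, by working directly with the control-dependent Hamiltonian (which \emph{is} smooth in $(q,p)$ for each fixed $u$) and interpreting the proposition in that sense — which is exactly what the application in the sequel requires, since $\A$ is a first integral of each constant-control Hamiltonian flow separately.
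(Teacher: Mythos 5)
Your plan runs into trouble at Step~2, where you claim that Theorem~\ref{thm:symmetries} gives $\SO$-invariance of the Hamiltonian with the control \emph{frozen}, hence $\{\A,\H(\cdot,\cdot,u)\}_{ex}=0$ for each fixed $u$. That is not what Theorem~\ref{thm:symmetries} says. Its invariance is of the full data under the simultaneous transformation $(X,\Lambda_1,\Lambda_R,Z_u)\mapsto(\Ad_AX,\Ad_A\Lambda_1,\Ad_A\Lambda_R,\Ad_AZ_u)$: the control rotates along with the state and costate. For a fixed control $u$, the map $(X,\Lambda_1,\Lambda_R)\mapsto\H(X,\Lambda_1,\Lambda_R;Z_u)$ is $\SO$-invariant only when $Z_u$ is centralized by $\SO$, i.e.\ only at the center $u=(1/3,1/3,1/3)$. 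For any other $u$ the rotated Hamiltonian is $\H(\cdot,\cdot,A^{-1}\cdot u)$, not $\H(\cdot,\cdot,u)$. Concretely, by the same manipulation the paper uses, $\{\A,\H(\cdot,\cdot,u)\}_{ex}=\bracks{J}{[P,\Lambda_R]}-\bracks{\Lambda_R}{P}\bracks{J}{[P,X]}$ with $P=Z_u/\bracks{X}{Z_u}$, and the \emph{vanishing} of this expression is exactly the constraint~\eqref{eqn:opt-control-constraint} that singles out the optimal control; it is not an identity for arbitrary $u$. So the intermediate conclusion ``$\{\A,\H(\,\cdot\,,\,\cdot\,,u)\}_{ex}=0$ for each admissible $u$'' is false, and the envelope step in your Step~3 then rests on nothing.

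Your Step~1 is also on shaky ground. The quantity $\bracks{\Lambda_1}{\Ad_{g^{-1}}J}$ by which the Noether-Sussmann momentum map $\mathbf{J}^\tau$ differs from $-\A$ is not a ``Casimir-type'' function: the canonical symplectic form on $T^*(T\SU)$ is nondegenerate, so the only Casimirs are constants, and $\bracks{\Lambda_1}{\Ad_{g^{-1}}J}$ is a genuinely nonconstant function of $(g,\Lambda_1)$. It is constant along the Pontryagin flow (which is a separate, dynamical fact using $\Lambda_1(t)=g(t)^{-1}\Lambda_1(0)g(t)$), but it is not $\SL$-left-invariant and does not even descend to the reduced space $\sl(\R)^3$ on which $\{-,-\}_{ex}$ lives. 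So the assertion that $\A$ is the momentum map ``up to a Casimir'' cannot be used the way you use it.

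The paper's proof avoids both pitfalls by a direct computation: by linearity of $\{-,-\}_{ex}$ in its first argument and Theorem~\ref{thm:reinhardt-poisson}, it writes $\{\A,\H\}_{ex}=\bracks{J}{\{\Lambda_1+\Lambda_R,\H\}_{ex}}=\bracks{J}{\Lambda_1'+\Lambda_R'}$, and this last expression is exactly the time derivative of $\A$ along the extremal, which vanishes by Theorem~\ref{thm:angular-momentum}. The nontrivial content --- the vanishing of the control-dependent piece, which holds because the optimal control satisfies the quadratic equation --- is thereby imported from the already-established conservation law rather than deduced from an invariance that doesn't hold at fixed $u$. If you want a symmetry-theoretic proof in your spirit, the correct route is to argue with the maximized Hamiltonian $\Hstar$ only (it \emph{is} $\SO$-invariant because rotation of the control set commutes with maximization), carefully identify the true momentum map of the diagonal $\SO$-action on $(\sl(\R)^3,\{-,-\}_{ex})$, and address the regularity of $\Hstar$ as you began to do; but you cannot pass through the claim about the fixed-$u$ Hamiltonian.
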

\index[n]{F@$F,G$, smooth functions}
\begin{proof}
We use the Poisson bracket on the manifold $T^*(T\SU)$ derived in
Section \ref{sec:poisson-bracket}, which we recall here. If $F,G$ are
two left-invariant smooth functions on $T^*T\SU)$, their extended
space Poisson bracket is
\[
\{F,G\}_{ex} := \left\langle \Lambda_1, \left[ \frac{\delta F}{\delta\Lambda_1}, 
\frac{\delta G}{\delta\Lambda_1} \right] \right\rangle
 + \bracks{\frac{\delta F}{\delta X}}{\frac{\delta G}{\delta \Lambda_2}} 
-  \bracks{\frac{\delta F}{\delta \Lambda_2}}{\frac{\delta G}{\delta X}}.
\]
We compute using Theorem~\ref{thm:reinhardt-poisson}
that
\begin{align*}
\{\A,\H\}_{ex} 
&=\{\bracks{\Jsu}{\Lambda_1+\Lambda_R},\H\}_{ex}
=\bracks{\Jsu}{\{\Lambda_1+\Lambda_R,\H\}_{ex}}
\\
&=\bracks{\Jsu}{\Lambda_1'+\Lambda_R'}=0,
\end{align*}
where $\Lambda_1'$ and $\Lambda_R'$ are shorthand for 
the expressions on the right-hand side
of the ODEs for $\Lambda_1$ and $\Lambda_R$.
\end{proof}
\index[n]{P@$P$, normalized control matrix!$P^*$, optimal}

Simplifying equation \eqref{eqn:deriv-ang-momentum} above, we obtain a
more symmetric expression for the optimal control matrix, which is
homogeneous in $Z^*$.
\begin{equation}\label{eqn:symm}
\langle \Jsu,[Z^*,X]\rangle\langle Z^*,\Lambda_R\rangle 
= \langle \Jsu,[Z^*,\Lambda_R]\rangle{\langle Z^*,X\rangle}.
\end{equation}

This is the same as saying
\begin{equation}
\dettwo{ \langle [\Jsu,Z^*],X]\rangle}{\langle [\Jsu,Z^*],
\Lambda_R\rangle}{\langle Z^*,X\rangle}{\langle Z^*,\Lambda_R\rangle} = 0.
\end{equation}

\begin{proposition}
The conservation of angular momentum gives the following constraint on
the optimal control matrix.
\begin{equation}\label{eqn:opt-control-constraint}
\langle Z^*,[\Lambda_R,X] \rangle 
= \frac{\langle Z^*, Z^* \rangle}
{\langle Z^*,\Jsu \rangle}\langle \Jsu, [\Lambda_R,X]\rangle.
\end{equation}
\end{proposition}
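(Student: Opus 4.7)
The plan is to derive \eqref{eqn:opt-control-constraint} directly from \eqref{eqn:symm} by a purely algebraic manipulation inside the three-dimensional Lie algebra $\su\cong\sl(\R)$. The only nontrivial ingredient is the ``double-bracket'' identity
\begin{equation*}
[A,[B,C]] \;=\; 2\bigl(\langle A,B\rangle C - \langle A,C\rangle B\bigr),\qquad A,B,C\in\sl(\R),
\end{equation*}
which is the Lie-algebra incarnation of the vector triple product in three dimensions. I would prove it once and for all in a short lemma: since any $A\in\sl(\R)$ satisfies $A^2=-\det(A)I_2=\tfrac12\langle A,A\rangle I_2$ by Cayley--Hamilton, polarization yields $AB+BA=\langle A,B\rangle I_2$, and expanding $[A,[B,C]]=A(BC-CB)-(BC-CB)A$ while substituting this anticommutator produces the stated identity after cancellation.

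With this lemma in hand, I would proceed as follows. Starting from \eqref{eqn:symm}, use linearity of the bracket and invariance of the trace form to rewrite the equation as
\begin{equation*}
\bigl\langle \Jsu,\, [Z^*,\,\langle Z^*,\Lambda_R\rangle X - \langle Z^*,X\rangle\Lambda_R]\bigr\rangle=0.
\end{equation*}
Applying the double-bracket identity with $A=Z^*$, $B=\Lambda_R$, $C=X$ identifies
\begin{equation*}
\langle Z^*,\Lambda_R\rangle X - \langle Z^*,X\rangle\Lambda_R \;=\; \tfrac12[Z^*,[\Lambda_R,X]],
\end{equation*}
so the equation becomes $\langle\Jsu,[Z^*,[Z^*,[\Lambda_R,X]]]\rangle=0$. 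Applying the same identity a second time, now with $A=B=Z^*$ and $C=[\Lambda_R,X]$, gives
\begin{equation*}
[Z^*,[Z^*,[\Lambda_R,X]]] \;=\; 2\bigl(\langle Z^*,Z^*\rangle[\Lambda_R,X] - \langle Z^*,[\Lambda_R,X]\rangle Z^*\bigr),
\end{equation*}
and pairing with $\Jsu$ collapses the whole calculation to the bilinear relation
\begin{equation*}
\langle Z^*,Z^*\rangle\,\langle\Jsu,[\Lambda_R,X]\rangle \;=\; \langle Z^*,[\Lambda_R,X]\rangle\,\langle\Jsu,Z^*\rangle,
\end{equation*}
which is \eqref{eqn:opt-control-constraint} after dividing by $\langle Z^*,\Jsu\rangle$.

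The only thing I would flag carefully is the division by $\langle Z^*,\Jsu\rangle$. Using the parameterization \eqref{eqn:general-Z0}, a direct trace computation gives $\langle Z^*,\Jsu\rangle=-2\alpha$, and from Table~\ref{tab:control-sets} we have $\alpha=1/3$ on every control set $U_r$ of interest, so the denominator is never zero. This minor bookkeeping aside, the proof is essentially a two-line unwinding, and there is no real analytic obstacle: the entire argument is internal to the associative algebra $M_2(\R)$ and reduces to applying the Cayley--Hamilton identity twice. The deeper content — that such a relation exists at all — is already contained in the conservation of angular momentum; the proposition simply repackages \eqref{eqn:symm} in a form that exhibits $\langle Z^*,[\Lambda_R,X]\rangle$ as an explicit scalar multiple of $\langle\Jsu,[\Lambda_R,X]\rangle$, which will be the useful form in subsequent chapters.
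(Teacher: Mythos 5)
Your proof is correct and is essentially the paper's own argument in different packaging: the paper also starts from \eqref{eqn:symm} and uses the same two Cayley--Hamilton consequences, but states the first as the trace identity of Proposition~\ref{prop:trace-quotient-sub} and the second as the $\ad^2$-formula of Proposition~\ref{prop:brack-brack}, whereas you use the single double-bracket identity twice (the two formulations are equivalent via the invariance $\bracks{A}{[B,C]}=\bracks{[A,B]}{C}$). Your extra care about the nonvanishing of $\langle Z^*,\Jsu\rangle=-2\alpha$ is a sound addition that the paper leaves implicit.
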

\begin{proof}
\index[n]{X@$X$, Lie algebra element!$X,Y,Z,W\in\sl$}
From \eqref{eqn:symm} and Proposition \ref{prop:trace-quotient-sub}
specialized to $[\Jsu,Z^*]$, $Z^*$, $X$, and $\Lambda_R$, we
obtain
\mcite{MCA:5358122 ad-squared}
\begin{equation}\label{eqn:ang-mom-quotient}
0=\langle [\Jsu,Z^*],\Lambda_R \rangle\bracks {Z^*} {X} 
- \langle Z^*,\Lambda_R \rangle\bracks {[\Jsu,Z^*]} {X} 
= -\frac{\left\langle [[\Jsu,Z^*],Z^*], 
[\Lambda_R, X] \right\rangle}{2}.
%% TCH 4/30/2024. minus sign inserted on rightmost term.
\end{equation}
%% TCH 5/2/2024. Rewritten proof.
For any $Z^*,\Jsu\in\sl(\C)$, we have  that
\mcite{MCA:5358122 ad-squared}
\[
[Z^*,[Z^*,\Jsu]]=
(\ad_{Z^*})^2\Jsu = 2\bracks{Z^*}{Z^*}\Jsu - 2\bracks{Z^*}{\Jsu}Z^*.
\]
If we substitute this equation back into \eqref{eqn:ang-mom-quotient},
we obtain the result.
\end{proof}

\index{weighted determinant}
\index[n]{zL@$\Lambda$, costate!$\Lambda\in\sl(\C)$}
\index[n]{l@$l_{ij}$, matrix coefficients of $\Lambda$}
\index[n]{det@$\det(\Lambda,\alpha,\beta)$, weighted determinant}
\index[n]{za@$\alpha,\beta$, control matrix parameters}

% Kody: DONE[TCH 5/2/2024. Sign changed by global search and
% replaced. Now alpha=beta=1 gives the true determinant.]  The
% weighted determinant had the wrong sign. alpha,beta=1 should give
% actual determinanat.

\begin{definition}[Weighted Determinant]
For 
\[
\Lambda = \mattwo{l_{11}}{l_{12}}{l_{21}}{-l_{11}} \in \sl(\C)
\] 
and $\alpha,\beta\in\R$, define the weighted determinant
\[
\det(\Lambda,\alpha,\beta) := -(\alpha^2 l_{11}^2 + \beta^2 l_{12} l_{21}).
\]
\end{definition}
If $\Lambda\in\su$ and $\alpha^2,\beta^2\in\R$, then
$\det(\Lambda,\alpha,\beta)\in\R$.  The ordinary determinant has
weights $\alpha =\beta=1$.
\[
\det(\Lambda,1,1)=\det(\Lambda).
\]

% DONE[TCH 5/4/2024. Made necessary edits.] TCH 5/2/2024. Throughout
% this section, we still write \SL,\sl but we should be in \SU,\su.

% DONE[] The u in Z_u is now a z.  Make Z^*.

\index[n]{yz@$z$, optimal control}

\begin{proposition}\label{prop:control-quadratic}
The optimal control \eqref{eqn:general-Z0} in the circular control case
$U_r$ is given by the root $z$ of the quadratic equation
\eqref{eqn:quadratic-equation}.
\end{proposition}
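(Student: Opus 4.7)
The plan is to derive the quadratic directly from the angular-momentum constraint \eqref{eqn:opt-control-constraint}, by expanding both sides using the explicit parameterization \eqref{eqn:general-Z0} of $Z_u \in \su$ on the boundary circle $\partial U_r$. Since on $\partial U_r$ the parameters $\alpha, \beta$ are constants determined by $r$ (so $\alpha = 1/3$ and $\beta^2 = (3r^2-1)/2 \cdot (\text{const})$), the only free control variable is the unit complex number $z$, with $\bar z = 1/z$. This single complex degree of freedom together with a scalar constraint will naturally produce a quadratic.

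First, I would compute the four trace pairings appearing in \eqref{eqn:opt-control-constraint}. Writing
\[
W := [\Lambda_R, X] = \begin{pmatrix} w_{11} & w_{12} \\ w_{21} & -w_{11} \end{pmatrix} \in \sl(\C),
\]
a direct calculation with $Z^* = \begin{pmatrix} -i\alpha & \beta z \\ \beta\bar z & i\alpha\end{pmatrix}$ and $\Jsu = \mathrm{diag}(-i, i)$ yields
\[
\langle Z^*, W\rangle = -2i\alpha\, w_{11} + \beta(z\, w_{21} + \bar z\, w_{12}),\qquad
\langle \Jsu, W\rangle = -2i\, w_{11},
\]
\[
\langle Z^*, Z^*\rangle = 2(\beta^2-\alpha^2),\qquad \langle Z^*, \Jsu\rangle = -2\alpha.
\]
These four quantities are the ingredients needed for \eqref{eqn:opt-control-constraint}.

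Next, I would substitute into the constraint and simplify. The ratio $\langle Z^*, Z^*\rangle/\langle Z^*, \Jsu\rangle = (\alpha^2-\beta^2)/\alpha$ is a constant depending only on $r$, so the constraint reduces to
\[
-2i\alpha\, w_{11} + \beta(z\, w_{21} + \bar z\, w_{12}) \;=\; \frac{\alpha^2-\beta^2}{\alpha}(-2i\, w_{11}),
\]
which after cancellation becomes the clean identity
\[
z\, w_{21} + \bar z\, w_{12} \;=\; \frac{2i\beta}{\alpha}\, w_{11}.
\]
Finally, using $\bar z = z^{-1}$ from $|z|=1$ and multiplying through by $z$ yields the quadratic
\[
w_{21}\, z^2 \;-\; \frac{2i\beta}{\alpha}\, w_{11}\, z \;+\; w_{12} \;=\; 0,
\]
whose coefficients are expressible as weighted-determinant-type invariants of $[\Lambda_R,X]$ and whose root is the optimal $z$. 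This is the equation to be labeled \eqref{eqn:quadratic-equation}.

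The only real obstacle is bookkeeping: making sure that the conventions for $\Lambda_R \in X^\perp$ (so that $[\Lambda_R, X]$ really has only three independent complex entries) and the sign conventions of the trace form $\langle \cdot, \cdot\rangle$ under the Cayley transform are consistent with the parameterization \eqref{eqn:general-Z0}. I would verify this once at the start (via Lemma~\ref{lem:sl2-lemmas} and the identities from Appendix~\ref{sec:Lie}) so that the subsequent algebra is unambiguous. One should also note that the coefficient $w_{21}$ must be nonzero generically — otherwise the equation degenerates to linear and $z$ is determined directly; this degenerate stratum corresponds precisely to the singular locus identified in Theorem~\ref{thm:circle-singular}, so the quadratic is the honest generic form.
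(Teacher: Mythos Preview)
Your proof is correct and follows essentially the same approach as the paper: substitute the boundary parameterization \eqref{eqn:general-boundary-Z0} of $Z^*$ into the angular-momentum constraint \eqref{eqn:opt-control-constraint}, compute the four trace pairings, and simplify using $|z|=1$ to obtain the quadratic. Your version is in fact more explicit about the intermediate trace computations than the paper's, and after multiplying your final equation by $\alpha$ it coincides exactly with \eqref{eqn:quadratic-equation}. (One minor aside: your remark that the degenerate case $w_{21}=0$ corresponds to the singular locus of Theorem~\ref{thm:circle-singular} is not quite right---the singular locus is where $\Lambda_R=0$ entirely, not merely where one entry of $[\Lambda_R,X]$ vanishes---but this does not affect the argument.)
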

\begin{proof}
We know by Lemma \ref{lem:boundary-circle-control} that the optimal
control matrix $Z^*$ takes values in the boundary of the
disk. By equation \eqref{eqn:general-Z0}, we can take
\begin{equation}\label{eqn:general-boundary-Z0}
Z^* = \left(
\begin{matrix}
-i\alpha & \beta z 
\\
{\beta}/{z} & i \alpha
\end{matrix}
\right),
\end{equation}
where $|z|=1$.
With this notation, \eqref{eqn:opt-control-constraint} becomes
\begin{equation}\label{eqn:opt-control-expl}
\langle Z^*,[\Lambda_R,X] \rangle 
= \frac{(\alpha^2 - \beta^2)}{\alpha}\langle \Jsu, [\Lambda_R,X]\rangle,
\end{equation}
where $\alpha, \beta \in \R$ and $\alpha > 0$. 

Simplifying this, we obtain the following quadratic equation in $z$
\begin{equation}\label{eqn:quadratic-equation}
\alpha[\Lambda_R,X]_{21} z^2 - 2i\beta [\Lambda_R,X]_{11}z + \alpha [\Lambda_R,X]_{12} = 0,
\end{equation}
where the subscripts index matrix entries.
We then solve for $z$ to obtain two roots
\begin{equation}\label{eqn:z-quad-roots}
z_\pm = i\left(\frac{\beta[\Lambda_R,X]_{11} 
\pm  \sqrt{-\det([\Lambda_R,X],\alpha,\beta)}}
{\alpha[\Lambda_R,X]_{21}}\right).
\end{equation}
% TCH 3/25/2023, changed z_{1,2} to z_\pm.  where Kody todo: 

%% DONE[TCH 5/2/2024 Fixed] This formula is inconsistent with the
%% definition of \det. In the definition, alpha is paired with
%% l11^2. TCH: 5/2/2024: Fixed.

% DONE[TCH 5/2/2024. How do we know that the root is the one with a
% positive sign? TCH: We have 2 roots regardless.]  DONE[TCH 5/2/2024.
% What does this even mean if \det<0? TCH: We have 2 roots
% regardless.]
\end{proof}

This determines the optimal control explicitly as a function of the
state-costate variables. We see how modifying the control set to be
more symmetrical has resulted in a conserved quantity which has in
turn given us valuable information about the optimal control.

\newpage

\chapter{Hyperboloid Coordinates}

\index{Cayley transform}
\index[n]{X@$X$, Lie algebra element!$X,Y,Z,W\in\sl$}
\index[n]{yz@$z\in\C$}
\index[n]{t@$t$, real number}
\index[n]{abcd@$a,b,c,d$, matrix entries}

\section{Coordinates}\label{sec:coordinate}
We begin with the following lemma.
% The thesis version contains a sign error from the square root.
\begin{lemma}\label{lem:hyperboloid-coords}
%If $X = \mattwo{a}{b}{c}{-a}$ is an arbitrary matrix in $\sl(\R)$,
The Cayley transform of
\begin{equation}
\mattwo{a}{b}{c}{-a}\in\sl(\R)\quad\text{is}\quad
\mattwo{it}{z}{\bar{z}}{-it} \in \su,
\end{equation}
where $z = (b+c)/{2} + i a\in\C$ and $t=(b-c)/2\in\R$.
\end{lemma}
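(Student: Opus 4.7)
The lemma is a direct computational identity once one fixes the Cayley matrix, so my plan is to recall the explicit form of the Cayley transform from the Lie-theoretic appendix (Appendix~\ref{sec:Lie}, referenced in Section~\ref{sec:su11-control-sets}) and then carry out the $2\times 2$ conjugation. Concretely, the Cayley transform is implemented by conjugation $X \mapsto C^{-1} X C$ for a fixed matrix
\[
C \;=\; \tfrac{1}{\sqrt{2}}\begin{pmatrix} 1 & i \\ i & 1 \end{pmatrix},
\qquad
C^{-1} \;=\; \tfrac{1}{\sqrt{2}}\begin{pmatrix} 1 & -i \\ -i & 1 \end{pmatrix},
\]
which satisfies $\det C = 1$ and, as noted earlier, intertwines $\sl(\R)$ with $\su$.

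The plan is then simply to compute $C^{-1}\bigl(\begin{smallmatrix} a & b \\ c & -a\end{smallmatrix}\bigr)C$ entry by entry. First I would form $XC$, which gives
\[
XC \;=\; \tfrac{1}{\sqrt{2}}\begin{pmatrix} a+bi & ai+b \\ c-ai & ci-a \end{pmatrix},
\]
and then multiply on the left by $C^{-1}$. The $(1,1)$-entry collapses to $\tfrac{1}{2}\bigl((a+bi)-i(c-ai)\bigr) = i(b-c)/2 = it$, and the $(1,2)$-entry collapses to $\tfrac{1}{2}\bigl((ai+b)-i(ci-a)\bigr) = (b+c)/2 + ia = z$. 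By tracelessness the $(2,2)$-entry is $-it$; by the standard fact that Cayley sends $\sl(\R)$ into $\su$, the $(2,1)$-entry must be the complex conjugate of the $(1,2)$-entry, i.e., $\bar z$. (One may of course verify this last point by the same direct multiplication as a sanity check.)

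There is no real obstacle here; the only thing to watch is sign conventions in the choice of $C$, since different sources normalize the Cayley matrix differently. If the convention fixed in the appendix differs from the one above by an inner automorphism or by transposition, one must adjust the correspondence $z = (b+c)/2 + ia$, $t=(b-c)/2$ accordingly, but the proof is otherwise the same two-line matrix multiplication. Alternatively, one could verify the formula on the basis $\{H,E,F\}$ of $\sl(\R)$ with $H=\mathrm{diag}(1,-1)$, $E=\bigl(\begin{smallmatrix}0&1\\0&0\end{smallmatrix}\bigr)$, $F=\bigl(\begin{smallmatrix}0&0\\1&0\end{smallmatrix}\bigr)$ and extend by $\R$-linearity, which amounts to the same three evaluations.
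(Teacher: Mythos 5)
Your proof is correct and takes essentially the same (and really the only) approach: the paper's own proof is just the one line ``This is an easy calculation,'' and you have supplied that calculation using exactly the Cayley matrix $A=\tfrac{1}{\sqrt2}\bigl(\begin{smallmatrix}1&i\\i&1\end{smallmatrix}\bigr)$ fixed in Appendix~\ref{sec:Lie}, so no sign adjustment is needed; the entrywise multiplication checks out.
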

\begin{proof} This is an easy calculation.
\end{proof}
The matrices in $\su$ with a given determinant $d=\det(X)$ are in
bijective correspondence with the points on a hyperboloid
\[
\mattwo{i t}{z}{\bar{z}}{-it}\in\su \leftrightarrow \{(t,z)\in\R\times\C\mid t^2 -|z|^2 = d\}.
\]
This observation justifies our nomenclature of \emph{hyperboloid
coordinates}, for the Cayley transform of the $\sl$ coordinate system.
A matrix in $\su$ is determined by its determinant and the complex
number $z$, up to the ambiguity in the sign of $t$.

\index{hyperboloid!coordinate}
\index[n]{0@$\abss{-}_\epsilon = \sqrt{\epsilon + -^2}$}
\index[n]{0@$\abss{-} = \abss{-}_1$}
\index[n]{R@$\RR(-,-)$, sesquilinear form}
\index[n]{ze@$\epsilon\in\{-1,0,1\}$, sign!$\epsilon\in\{-1,0,1\}$, sign $\det(\Lambda_1)$}
\index[n]{yz@$z\in\C$}

\begin{notation} \normalfont \leavevmode
\begin{itemize}
\item For a complex number $z$, and $\epsilon\in\{-1,0,1\}$,
we set
$\abss{z}_\epsilon := \sqrt{\epsilon + |z|^2}$,
and $\abss{z}:=\abss{z}_1 = \sqrt{1+|z|^2}$.
\item  We write
\begin{equation*}
\RR(z_1,z_2) := \Re(\bar{z}_1z_2)\quad\text{(real part)}
\end{equation*}
to denote the $\R$-bilinear form on $\C$, derived from the real part
of a complex number.
\end{itemize}
\end{notation}

\index[n]{J@$\Jsu=\mathrm{diag}(-i,i)$, Cayley transform of $J$}
\index[n]{d@$\op{diag}$, diagonal matrix}
\index[n]{w@$w$, hyperboloid coordinate!$w,b,c$ coordinates}
\index{hyperboloid!coordinate}
\index[n]{d@$d$, determinant!$d_1$,\ $\det(\Lambda_1)=d=\epsilon d_1^2$}

We transform the ODE for $X,\Lambda_1,\Lambda_R\sl(\R)$ into ODEs
given by hyperboloid coordinates.  We now work consistently with the
Cayley transformed version of $X,\Lambda_R$, and $\Lambda_1$ in $\su$.
The ODEs from the costate equations~\ref{pbm:state-costate-reinhardt}
retain exactly the same form, except that $J$ and $Z^*$ must be
replaced with their Cayley transforms $\Jsu=\op{diag}(-i,i)$ and
$Z_{\su}\in\su$.  Using these, our assumptions, and Lemma
\ref{lem:hyperboloid-coords}, we can write
\begin{equation}\label{eqn:st-costate-hyperboloid}
    X := \mattwo{-i\abss{w}}{w}{\bar{w}}{i\abss{w}}, \quad
\Lambda_1 := d_1 \mattwo{i\abss{b}_\epsilon}{b}{\bar{b}}{-i\abss{b}_\epsilon}, \quad
\Lambda_R := \mattwo{-i\frac{\RR(c,w)}{\abss{w}}}{c}{\bar{c}}{i\frac{\RR(c,w)}{\abss{w}}},
\end{equation}
for variable complex numbers $w,b,c \in \C$, with $|b|^2\ge-\epsilon$.  The form of the
expressions ensure that the constraints $\det(X)=1$ and
$\bracks{X}{\Lambda_R} = 0$ are satisfied.  The parameters
$\epsilon\in\{-1,0,1\}$ and $d_1\in\RR$ are constants of motion, and
$\det(\Lambda_1)=\epsilon{d_1^2}$.  The sign of the upper-left matrix
entry $-i\abss{w}$ of $X$ is determined by the sign convention on the orbit
$\O_J$, described in Lemma~\ref{lem:X-hyperbolic}.  Note that
$(w,\abss{w})$ lies on the upper sheet of the hyperboloid
\(
\{(w,t)\in\C\times\R\mid t^2-|w|^2=1\}
\), 
which justifies our nomenclature.

\index{nilpotent cone}
\index{regular!nilpotent class}
\index{conjugacy class}
\index{regular!semisimple element of $\sl$}
\index{Cartan subalgebra}
\index{split!semisimple element of $\sl$}
\index{elliptic semisimple element of $\sl$}

\begin{remark}\label{rem:wbc-exceptions}
The form of element $\Lambda_1$ is general enough to represent a
general element of $\su$.  We take $d_1\in\R^\times$, except when
$\Lambda_1=0$.  If $\det(\Lambda_1)=0$, then $\Lambda_1$ belongs to
the nilpotent cone, which consists of three conjugacy classes: the
vertex of the cone (the zero element) and the positive and negative
cones (the two regular nilpotent classes).  We can take
$d_1=\epsilon=b=0$ (for the zero element) or $(d_1,\epsilon)=(\pm1,0)$
(for the regular nilpotent elements).  In the case of the zero
element, we omit the ODE for $b$.  When $\Lambda_1$ is regular
semisimple ($\det(\Lambda_1)\ne0$), the scalar $d_1$ parameterizes
nonzero elements within a given Cartan subalgebra.  The hyperboloid
has one or two sheets, depending on whether $\Lambda_1$ is split
($\det(\Lambda_1)<0$), or elliptic $\det(\Lambda_1)>0$.  In the split
case, the coordinate system breaks down around the neck $|b|=1$ of the
hyperboloid, and a better coordinate system is introduced in
Remark~\ref{rem:r-theta-hyperboloid}.
\end{remark}
  
\index[n]{zr@$\rho=\beta/\alpha >0$, control parameter}

In the case of circular control, there are two real parameters
$\alpha,\beta$ such that $\det(Z^*)=\alpha^2-\beta^2$.  See Table
\eqref{tab:control-sets} 
and Equation \eqref{eqn:det-Z0}.  We assume in
this chapter that $\alpha,\beta$ are both positive, and we set
$\rho  = \beta/\alpha >0$.  In the case of circular control, we
assume the star condition in the form $\bracks{X}{Z^*}<0$.  For a
control matrix given by equation \eqref{eqn:general-boundary-Z0} (which
is already in $\su$ by the results of
Section \ref{sec:su11-control-sets}) and using
Lemma \ref{lem:sl2-star-condition}, the star condition in these
coordinates can be expressed as
\begin{equation*}
\mu(w,z) := \abss{w}-\rho\,\RR(w,z) 
=  -\frac{\bracks{X}{Z^*}}{2\alpha}  > 0.
\index[n]{zmu@$\mu:\C^2\to\R$, star denominator}
\index[n]{0@$-^*$, special value!optimizer}
\index[n]{yz@$z$, optimal control!$z^*$}
\end{equation*}
We consider the system on a region slightly larger than the star
domain defined by the condition $\mu(w,z^*)>0$ (and $c\ne 0$), where
$z^*$ is the maximizing root of the quadratic equation for the
control.

\section{Hyperboloid ODE}

The ODEs in Lie algebra coordinates for $X,\Lambda_1$ and $\Lambda_R$
were derived in Sections \ref{sec:costate-variables}
and \ref{sec:lie-algebra-dynamics}. We now write them out in the
hyperboloid coordinates we have defined above. This will enable us to
better understand the dynamics near the singular locus.

\index[n]{w@$w$, hyperboloid coordinate!in $\C$}
\index[n]{b@$b,c$, hyperboloid coordinates}
\index[n]{yz@$z$, optimal control!$z^*$}
\index[n]{znxi@$\aa_i$, quadratic control equation coefficients!$\aa_0=2+\vert w\vert^2 - (w\bar{c}/\vert c\vert)^2$}

\begin{theorem}[ODE in Hyperboloid Coordinates]\label{thm:ode-hyperboloid}
In hyperboloid coordinates, the dynamics for $X,\Lambda_1$ and
$\Lambda_R$ take the form
\begin{align}
    w' &= i\frac{w - \rho  \abss{w} z^*}{\mu(w,z^*)}, 
\label{eqn:eqn-w} \mcite{MCA:eqn-w}
\\
    b' &= 2i\left(\abss{b}_\epsilon w +b\abss{w}\right), 
\label{eqn:eqn-b} \mcite{MCA:eqn-b}
\\
c' &= 
\frac{i(1-\rho ^2) \Re(c\aa_0(w,c),z^*)}{2\abss{w}\mu(w,z^*)^2}z^*
\nonumber
\\
&\quad- i\left((2d_1\abss{b}_\epsilon+3\lambda_{cost})w + 2bd_1\abss{w})\right).
\label{eqn:eqn-c} \mcite{MCA:eqn-c}
\end{align}
where $z^*$ is the Hamiltonian maximizing root of the quadratic
equation for the optimal control, and
\(
\aa_0=\aa_0(w,c) := 2+|w|^2- (w\bar{c}/|c|)^2.
\)
Here $w,b,c$ are complex valued functions, satisfying the
restrictions of Section~\ref{sec:coordinate}.
% TCH 12/4/2022: corrected a1, deleted KV version, and rechecked in
% Mathematica.
\mcite{MCA:eqn-w}
\mcite{MCA:eqn-b}
\mcite{MCA:ham-wbc} 
\mcite{MCA:eqn-c}
\mcite{MCA:eqn:wc'}
\end{theorem}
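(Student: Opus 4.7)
The plan is to transport the three ODEs
\begin{align*}
X' &= [Z^*,X]/\langle Z^*,X\rangle, &
\Lambda_1' &= [\Lambda_1,X], \\
\Lambda_R' &= [P,\Lambda_R] - \langle \Lambda_R,P\rangle[P,X] + [-\Lambda_1+\tfrac{3}{2}\lambda_{cost}\Jsu,X],
\end{align*}
from the formulation at the Lie algebra level (problem~\ref{pbm:state-costate-reinhardt}, Cayley-transformed so that $J$ is replaced by $\Jsu$ and $Z_u$ by its image in $\su$) into the hyperboloid parameters $w,b,c$ defined in \eqref{eqn:st-costate-hyperboloid}. First I would verify that the parametrizations respect the invariants: $\det(X)=1$ is automatic from the shape of $X$; $\det(\Lambda_1)=\epsilon d_1^2$ holds because the entries of $\Lambda_1/d_1$ lie on the appropriate quadric; and the condition $\Lambda_R\in X^\perp$ forces the upper-left entry of $\Lambda_R$ to be $-i\Re(c,w)/\abss{w}$ as written. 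I would also record that the star inequality $\langle Z^*,X\rangle<0$ becomes $\langle Z^*,X\rangle=-2\alpha\mu(w,z^*)$ with $\mu$ as in Section~\ref{sec:coordinate}.

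The equations for $w'$ and $b'$ are then just upper-right matrix entries of explicit commutators. For $w'$, write out $Z^*$ from \eqref{eqn:general-boundary-Z0} and $X$ from \eqref{eqn:st-costate-hyperboloid}, compute the $(1,2)$ entry of $[Z^*,X]$ to obtain $-2i\alpha(w-\rho\abss{w}z^*)$, and divide by $\langle Z^*,X\rangle=-2\alpha\mu(w,z^*)$; this yields \eqref{eqn:eqn-w}. For $b'$, factor out $d_1$ from $\Lambda_1$ and compute the $(1,2)$ entry of the commutator $[\Lambda_1/d_1,X]$, which comes out to $2i(\abss{b}_\epsilon w+b\abss{w})$, giving \eqref{eqn:eqn-b}.

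The substantive part is \eqref{eqn:eqn-c}. I would split $\Lambda_R'$ into the \emph{geometric} piece $[-\Lambda_1+\tfrac{3}{2}\lambda_{cost}\Jsu,X]$ and the \emph{control} piece $\Psi:=[P,\Lambda_R]-\langle\Lambda_R,P\rangle[P,X]$. The geometric piece is a pair of commutator calculations paralleling $b'$: its $(1,2)$ entry is exactly $-i(2d_1\abss{b}_\epsilon w+2bd_1\abss{w})-3i\lambda_{cost}w$, matching the second line of \eqref{eqn:eqn-c}. For the control piece, the identity from Corollary~\ref{cor:control-dep-ad-alternate} rewrites $\Psi$ as $\ad_{Z^*}(\delta/\delta Z^*)(\langle\Lambda_R,Z^*\rangle/\langle X,Z^*\rangle)$, whose denominator is $\langle Z^*,X\rangle^2=4\alpha^2\mu(w,z^*)^2$. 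To obtain the prefactor $1-\rho^2=\det(Z^*)/\alpha^2$, one substitutes the quadratic relation \eqref{eqn:quadratic-equation} for the optimal $z^*$ into $\Psi_{12}$: the quadratic eliminates the two monomials in $z^*$ of degree $\ne 1$ from the combination $\alpha\Psi_{12}$, leaving a single term proportional to $z^*$ multiplied by the combination $\alpha^2-\beta^2$. What remains to identify is the numerator. A direct expansion of $[\Lambda_R,X]_{12}$ in terms of $(w,c)$, combined with the identity $\abss{w}^2=1+|w|^2$ and the reality of $\Re(c,w)$, lets one collect the surviving terms into $\Re(c\,\aa_0(w,c),z^*)/(2\abss{w})$, which is precisely the numerator appearing in \eqref{eqn:eqn-c}.

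The main obstacle is the final bookkeeping in the control piece: showing that after clearing the quadratic in $z^*$, the residual numerator assembles into the compact quantity $\Re(c\aa_0(w,c),z^*)$. I expect the cleanest route is to express $[\Lambda_R,X]_{11},[\Lambda_R,X]_{12},[\Lambda_R,X]_{21}$ in the $(w,c)$ coordinates, use the quadratic relation to solve $[\Lambda_R,X]_{12}$ for a linear combination of the other two matrix entries and $z^*$, and then verify the resulting expression matches $\aa_0(w,c)=2+|w|^2-(w\bar c/|c|)^2$ by regrouping the term $w\bar c/|c|$ that naturally appears when the quadratic is scaled to unit modulus $|z^*|=1$. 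Once this algebraic identification is in place, reading off the $(1,2)$ entry of the total and dividing by $\alpha^2$ to absorb the prefactor into $1-\rho^2$ yields \eqref{eqn:eqn-c}.
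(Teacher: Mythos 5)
Your proposal is correct and follows the same route the paper intends. The paper's own proof of Theorem~\ref{thm:ode-hyperboloid} is a one-sentence sketch (``This is an elementary computation using the hyperboloid form \eqref{eqn:st-costate-hyperboloid}\ldots''), and your write-up is the detailed version of exactly that substitution: read off the $(1,2)$ matrix entries of the commutators from the Lie-algebra-level ODEs, use $\langle Z^*,X\rangle = -2\alpha\mu(w,z^*)$ to produce the denominator in \eqref{eqn:eqn-w}, and handle $c'$ by splitting $\Lambda_R'$ into the $[-\Lambda_1+\tfrac32\lambda_{cost}\Jsu,X]$ piece (which, as you observe, parallels $b'$) and the control piece, with the quadratic relation for $z^*$ reducing the latter to the compact $\Re(c\,\aa_0,z^*)$ numerator. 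Your explicit checks for $w'$ and $b'$ are right, and the plan for the control piece --- via Corollary~\ref{cor:control-dep-ad-alternate}, the factor $\det(Z^*)/\alpha^2 = 1-\rho^2$, and substitution of the quadratic to eliminate the non-linear monomials in $z^*$ --- is the right way to organize the bookkeeping. No gap; this is a faithful expansion of the paper's (minimal) proof.
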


\begin{proof}
This is an elementary computation using the hyperboloid form of the
state and costate equations as in
equation \eqref{eqn:st-costate-hyperboloid} and substituting them into
the state and costate ODE derived in
Sections \ref{sec:lie-algebra-dynamics}
and \ref{sec:costate-variables}.
\end{proof}

\index[n]{H@$\H$, Hamiltonian!in hyperboloid coordinates}
\index[n]{A@$\A$, angular momentum! in hyperboloid coordinates}

Similarly, we have expressions for the Hamiltonian and the angular
momentum.
\mcite{MCA:ang-wbc}
\begin{align}
\A &= 2 d_1\abss{b}_\epsilon - 2 \frac{\RR(w,c)}{\abss{w}} 
\label{eqn:ang-wbc}
\\
%KV formula:
%\H &= (2d_1\RR(w,b) + (2d_1\abss{b}_\epsilon-3)\abss{w})
%-\frac{\RR(w-\rho z^*,c)}{\mu(w,z^*)\abss{w}}.
\H&=(2d_1\,\Re(w,b) + (2 d_1\abss{b}_\epsilon+3\lambda_{cost})\abss{w})
\ -\frac{\Re(w-\rho \abss{w}z^*,c)}{\mu(w,z^*)\abss{w}}
\label{eqn:ham-wbc}
%% TCH: Formula corrected Dec 3, 2022.
%% It was missing a \abss{w} in eqn:ham-wbc.
\end{align}
\mcite{MCA:ham-wbc}

\begin{corollary}  The following additional ordinary differential
equations hold.
\begin{align}
\abss{w}' &= \frac{\Re(w',w)}{\abss{w}} 
= \rho  \frac{\Re(iw,z^*)}{\mu(w,z^*)},
\label{eqn:abss{w}'}
\\[2ex]
\abss{b}_\epsilon' &= \frac{\Re(b',b)}{\abss{b}_\epsilon} =  2\Re(iw,b),\quad
\label{eqn:abss{b}'}
\\[2ex]
\mcite{MCA:eqn:wc'}
\left(\frac{\Re(w,c)}{\abss{w}}\right)' &= 2d_1 \Re(i w,b).
\label{eqn:wc'}
\end{align}
\mcite{MCA:wc'}
Moreover, the overall form of the ODEs is pseudo-linear.
\begin{equation}\label{eqn:A}
\begin{pmatrix}w\\b\\c\end{pmatrix}' = 
i A \begin{pmatrix}w\\b\\c\\z^*\end{pmatrix}
\end{equation}
\index[n]{A@$A$, matrix or linear map!$3\times 4$ system of ODEs}
\index[n]{zmu@$\mu:\C^2\to\R$, star denominator!$\mu^*=\mu(w,z^*)$}
where $A$ is a $3\times4$ matrix with real (rotationally invariant)
entries.
% TCH 5/2/2024: lambda_cost added.
\begin{equation}A=
\begin{pmatrix}
{1/\mu^*} & 0 &0 & -{\rho  \abss{w}}/{\mu^*}
\\
2\abss{b}_\epsilon & 2\abss{w} & 0 & 0
\\
-3\lambda_{cost}-2d_1\abss{b}_\epsilon & -2d_1 \abss{w} &0 & 
{\dfrac{(1-\rho ^2)\Re(c\aa_0(w,c),z^*)}{2\abss{w}\mu^{*2}}}
\end{pmatrix},
\end{equation}
where $\mu^*:=\mu(w,z^*)$.
\end{corollary}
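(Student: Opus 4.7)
The plan is to verify the three scalar identities one at a time and then read off the matrix form directly from the equations of Theorem~\ref{thm:ode-hyperboloid}. The first two are straightforward chain-rule computations; the third is where the real content sits, and my main observation is that it is \emph{not} really a new computation at all, but a direct consequence of the conservation of angular momentum.

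First I would verify \eqref{eqn:abss{w}'}. Differentiating $\abss{w}^2 = 1 + |w|^2$ gives $\abss{w}\,\abss{w}' = \Re(\bar w w')$, i.e.\ $\abss{w}' = \Re(w',w)/\abss{w}$. Substituting $w' = i(w - \rho\abss{w} z^*)/\mu(w,z^*)$ from \eqref{eqn:eqn-w}, the $i|w|^2$ term has zero real part, and the remaining piece $\Re(-i\rho\abss{w}\bar w z^*)/\mu(w,z^*)$ equals $\rho\abss{w}\Re(iw,z^*)/\mu(w,z^*)$ once one unwinds $\Re(iw,z^*) = \Re(-i\bar w z^*)$. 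Dividing through by $\abss{w}$ gives the claim. The analogous computation for \eqref{eqn:abss{b}'} starts from $\abss{b}_\epsilon^2 = \epsilon + |b|^2$, so that $\abss{b}_\epsilon' = \Re(\bar b b')/\abss{b}_\epsilon$; substituting $b' = 2i(\abss{b}_\epsilon w + b\abss{w})$ from \eqref{eqn:eqn-b} and noting that the $b\bar b$ term is purely imaginary, one is left with $2\abss{b}_\epsilon \Re(i\bar b w)/\abss{b}_\epsilon = 2\Re(iw,b)$ via the same manipulation of the bracket notation.

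For the third identity \eqref{eqn:wc'}, rather than grinding through the ODE \eqref{eqn:eqn-c} for $c$ (which would force one to see how the control-dependent term on the right of that equation cancels against the $z^*$-term coming from $\abss{w}'$, a cancellation ultimately guaranteed by the quadratic equation defining $z^*$), I would invoke Theorem~\ref{thm:angular-momentum} and the explicit expression \eqref{eqn:ang-wbc} for the angular momentum in hyperboloid coordinates. Rearranging \eqref{eqn:ang-wbc} gives $\Re(w,c)/\abss{w} = d_1\abss{b}_\epsilon - \A/2$, and since $\A' = 0$ by the conservation law (applicable because the chapter assumes the circular control set $U_r$), differentiation yields $(\Re(w,c)/\abss{w})' = d_1\abss{b}_\epsilon' = 2d_1\Re(iw,b)$ by the second identity just proved. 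This is a one-line proof once the conservation law is in place; the alternative direct route, which I would flag as the main obstacle if one did not yet have angular momentum, requires an involved calculation using $\mu(w,z^*) = \abss{w} - \rho\Re(w,z^*)$ and the vanishing of the quadratic polynomial \eqref{eqn:quadratic-equation} satisfied by $z^*$.

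Finally the pseudo-linear form \eqref{eqn:A} is obtained by simply collecting, in each of \eqref{eqn:eqn-w}--\eqref{eqn:eqn-c}, the coefficients of $w$, $b$, $c$, and $z^*$ after factoring out the common $i$. Equation \eqref{eqn:eqn-w} contributes the first row $(1/\mu^*, 0, 0, -\rho\abss{w}/\mu^*)$; equation \eqref{eqn:eqn-b} the second row $(2\abss{b}_\epsilon, 2\abss{w}, 0, 0)$; and equation \eqref{eqn:eqn-c} the third row, whose $w$- and $b$-coefficients are $-(3\lambda_{cost} + 2d_1\abss{b}_\epsilon)$ and $-2d_1\abss{w}$ respectively, with no $c$-coefficient and a $z^*$-coefficient of $(1-\rho^2)\Re(c\aa_0(w,c),z^*)/(2\abss{w}\mu^{*2})$. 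Each entry of $A$ is manifestly real and, because it depends on $w,b,c$ only through the rotationally invariant quantities $|w|$, $|b|$, $\Re(w,c)$, etc., it is rotationally invariant, as asserted.
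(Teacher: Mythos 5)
Your proof is correct, and for the third identity it takes a genuinely different route than the paper. The paper's proof is the one-line remark that all three scalar identities follow ``directly'' from Theorem~\ref{thm:ode-hyperboloid}, meaning the intended argument for \eqref{eqn:wc'} is to differentiate $\Re(w,c)/\abss{w}$ by the quotient rule, substitute the ODE \eqref{eqn:eqn-c} for $c'$ together with \eqref{eqn:abss{w}'}, and watch the control-dependent terms cancel against the $z^*$-terms coming from $\abss{w}'$ --- a cancellation governed, as you observe, by the quadratic relation satisfied by $z^*$. Your route instead invokes the Noether--Sussmann conservation law \eqref{eqn:ang-wbc} to read off $\Re(w,c)/\abss{w} = d_1\abss{b}_\epsilon - \A/2$ and then differentiates, reducing \eqref{eqn:wc'} to a one-line consequence of \eqref{eqn:abss{b}'}. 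This is valid: Theorem~\ref{thm:ode-hyperboloid} already posits $z^*$ to be the Hamiltonian-maximizing root, we are in the circular-control chapter so Theorem~\ref{thm:angular-momentum} applies, and the quadratic relation on $z^*$ is precisely the infinitesimal form of $\A'=0$, so there is no circularity. The tradeoff is pedagogical rather than logical: the remark immediately following this corollary in the paper ``double-checks'' $\A'=0$ by deducing it from \eqref{eqn:abss{b}'} and \eqref{eqn:wc'}. Under the paper's direct proof of \eqref{eqn:wc'}, that remark is a genuine independent consistency check; under your proof it becomes a tautology, since you already used $\A'=0$ to obtain \eqref{eqn:wc'}. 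Your derivations of \eqref{eqn:abss{w}'}, \eqref{eqn:abss{b}'}, and the matrix $A$ match the paper's approach exactly.
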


\begin{proof} 
These equations are direct consequences of
Theorem~\ref{thm:ode-hyperboloid}.
\end{proof}

\begin{remark}\label{eqn:wbc-reverse} 
The equations have a time-reversal symmetry.  If $(w,b,c,z^*)$ is a
solution, then so is $(\tilde{w},\tilde{b},\tilde{c},\tilde{z}^*)$,
where
\[
\tilde{w}(t)=\bar{w}(-t),\quad
\tilde{b}(t)=\bar{b}(-t),\quad
\tilde{c}(t)=\bar{c}(-t),\quad
\tilde{z}^*(t)=\bar{z}^*(-t).
\]
\end{remark}

\begin{remark}  To double-check answers, we rederive the fact
that the angular momentum and Hamiltonian are constant along
trajectories.  To show the constancy of $\A$, we show its derivative
is zero.  This is a direct consequence of \eqref{eqn:abss{b}'} and
\eqref{eqn:wc'}. The direct verification of the constancy of $\H$ is a
tedious but direct calculation.
\end{remark}
%% TCH 5/2/2024: Removed the verification of \H'=0. Rechecked by hand.

\index[n]{r@$r$, real number!radial component of ODE}
\index[n]{zh@$\theta$, angle}
\index{split!semisimple element of Lie algebra}

\begin{remark}\label{rem:r-theta-hyperboloid}
As mentioned in Remark~\ref{rem:wbc-exceptions}, in the split case
$\epsilon=-1$, the coordinate system for $\Lambda_1$ breaks down
around the neck of the hyperboloid. It is helpful to replace $b$ with
better coordinates $(r,\theta)$.  In the split case, we write
\[
\Lambda_1 = d_1 
\begin{pmatrix} i r & \exp(i\theta) \sqrt{r^2+1}\\
\exp(-i\theta) \sqrt{r^2+1} & - i r
\end{pmatrix},
\quad
r\in\R,\quad d_1>0.
\]
and we replace the ODE for $b$ with the system
\begin{align*}
\mcite{MCA:2384598, neck coordinates}
r' &= 2\sqrt{1+r^2}\,\Re(iw,\exp(i\theta)),
\\
\theta' &= 2\abss{w} + \frac{2r\,\RR(w,\exp(i\theta))}{\sqrt{r^2+1}}.
\end{align*}
\end{remark}

\section{Optimal Control}

Assume
$c\ne0$, and set 
\begin{equation}\label{eqn:tilw}
\tilde{w} := \bar{c}w/|c|,\quad \tilde{z} := \bar{c}z/|c|,
\end{equation}
Note $\abss{w}=\abss{\tilde{w}}$ and $\mu(w,z^*)=\mu(\tilde{w},\tilde{z}^*)$.

\index[n]{w@$w$, hyperboloid coordinate!$\tilde{w}=\bar{c}w/\vert c\vert$}
\index[n]{yz@$z$, optimal control!$\tilde{z}=\bar{c}z/\vert c\vert$}
\index[n]{znxi@$\aa_i$, quadratic control equation coefficients!$\aa_0 = 2 + \vert \tilde{w}\vert^2 - \tilde{w}^2$}
\index[n]{znxi@$\aa_i$, quadratic control equation coefficients!${\aa_1} = 2\rho  (\tilde{w}-\bar{\tilde{w}})\abss{\tilde{w}}$}
\index[n]{Q@$Q$, quadratic polynomial}

\begin{lemma}
If $c\ne0$, the optimal control is $z^* = c \tilde{z}^*/|c|$,
where $\tilde{z}^*$ is a root of the quadratic polynomial
\[
Q(\tilde{z}) := 
\aa_0 + \aa_1 \tilde{z} + \aa_2 \tilde{z}^2,
%= \aa_0 + {\aa_1} \tilde{z} - \bar{\aa}_0\tilde{z}^2,
\]
where
\begin{align}\label{eqn:quadratic-z1}\mcite{MCA:z1-quadratic}
\begin{split}
\aa_0 &= \aa_0(\tilde{w}) := 2 + |\tilde{w}|^2 - \tilde{w}^2
=  2+|w|^2- (w\bar{c}/|c|)^2 =: \aa_0(w,c),
\\
{\aa_1} &= {\aa_1}(\tilde{w}) := 2\rho (\tilde{w}-\bar{\tilde{w}})\abss{\tilde{w}},
\\
\aa_2 &= \aa_2(\tilde w) = -\bar{\aa}_0 = -(2+ |\tilde{w}|^2 - \bar{\tilde{w}}^2).
%() + \tilde{z} 
%- \tilde{z}^2 = 0.
\end{split}
\end{align}
\end{lemma}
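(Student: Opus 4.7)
My plan is to derive the quadratic $Q(\tilde z)=0$ by direct substitution into the quadratic already established in Proposition~\ref{prop:control-quadratic}, namely
\[
\alpha[\Lambda_R,X]_{21}z^2 - 2i\beta[\Lambda_R,X]_{11}z + \alpha[\Lambda_R,X]_{12} = 0,
\]
and then rewriting it in the rotated variable $\tilde z = \bar c z/|c|$, which is designed precisely to eliminate the phase of $c$ from the coefficients.

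First I would compute the three relevant entries of $[\Lambda_R, X]$ using the hyperboloid parameterizations \eqref{eqn:st-costate-hyperboloid}. Setting $\lambda := \RR(c,w)/\abss{w}$, a direct $2\times 2$ multiplication yields
\[
[\Lambda_R,X]_{11} = c\bar w - \bar c w,\quad
[\Lambda_R,X]_{12} = 2i(c\abss{w} - \lambda w),\quad
[\Lambda_R,X]_{21} = -2i(\bar c\abss{w} - \lambda\bar w).
\]
Dividing through by $2i\alpha$ and writing $\rho = \beta/\alpha$ puts the quadratic into a form whose coefficients are linear in $c$ or $\bar c$. Substituting $z = c\tilde z/|c|$, $w = c\tilde w/|c|$ (so that $\bar w = \bar c\bar{\tilde w}/|c|$), together with the derived identity $\lambda = |c|\Re(\tilde w)/\abss{\tilde w}$ and $\abss{w}=\abss{\tilde w}$, each of the three terms factors as $c$ times a polynomial in $(\tilde w,\bar{\tilde w},\tilde z)$. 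The key algebraic identities, both immediate from $\Re(\tilde w)\tilde w = (\tilde w^2+|\tilde w|^2)/2$ and its conjugate, are
\[
\abss{\tilde w}^2 - \Re(\tilde w)\,\tilde w = \tfrac12\aa_0(\tilde w), \qquad
\abss{\tilde w}^2 - \Re(\tilde w)\,\bar{\tilde w} = -\tfrac12\aa_2(\tilde w),
\]
and the middle term simplifies via $c\bar w - \bar c w = |c|(\bar{\tilde w}-\tilde w)$. After cancelling the common factor $c/(2\abss{\tilde w})$, what remains is exactly $Q(\tilde z)=0$ with the coefficients $\aa_0,\aa_1,\aa_2$ as in \eqref{eqn:quadratic-z1}.

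Finally, the designation of $\tilde z^*$ as the Hamiltonian-maximizing root (not merely a root) is inherited from the corresponding choice of $z^*$ in Proposition~\ref{prop:control-quadratic}: since $\tilde z$ and $z$ differ by the unimodular factor $\bar c/|c|$ and the control matrix \eqref{eqn:general-boundary-Z0} is homogeneous in $z$, the maximization condition transfers through the substitution unchanged. The equality $\aa_0(\tilde w) = \aa_0(w,c)$ claimed in \eqref{eqn:quadratic-z1} is just the definition $\tilde w = w\bar c/|c|$. There is no real obstacle here: the proof is purely computational, and the only delicate point is tracking the factors of $c/|c|$ carefully so that the reduction to a polynomial whose coefficients depend on $\tilde w$ alone (rather than on the pair $(w,c)$) becomes manifest.
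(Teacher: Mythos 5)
Your proof is correct, and it takes the same route as the paper's first (and only sketched) argument: the paper states that the lemma ``follows from the explicit description of the quadratic equation for the control in Proposition~\ref{prop:control-quadratic}'' without giving the algebra, and you supply exactly that algebra — computing the entries of $[\Lambda_R,X]$ from \eqref{eqn:st-costate-hyperboloid}, substituting $z=c\tilde z/|c|$, $w=c\tilde w/|c|$, $\lambda = |c|\Re(\tilde w)/\abss{\tilde w}$, and cancelling the common factor of $c$; I verified that the coefficients do come out to $\aa_0,\aa_1,\aa_2$ after the factor $c/(2\abss{\tilde w})$ is removed. The paper also records a second proof by differentiating the control-dependent term of the Hamiltonian with respect to a rotation angle $\theta$ in $\tilde z\exp(i\theta)$ and observing that the derivative is proportional to $Q(\tilde z)$, but that alternative is not needed for your argument.
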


\begin{proof}
The lemma follows from the explicit description of the quadratic
equation for the control in Proposition~\ref{prop:control-quadratic}.  

We can give a second proof as follows.
The dependence of the Hamiltonian on $\tilde{z}$ (that is, on the
circular control set $U_r$), comes through the term
\[
-\frac{\Re(w-\rho \abss{w}z,c)}{\mu(w,z)\abss{w}} = 
-\frac{|c|\Re(\tilde{w}-\rho \abss{\tilde{w}}\tilde{z},1)}
{\mu(\tilde{w},\tilde{z})\abss{\tilde{w}}}
=:\H_0(\tilde{z}).
\]
We compute 
\index[n]{Q@$Q$, quadratic polynomial}
\[
\frac{d\H_0(\tilde{z}\exp(i\theta))}{d\theta}|_{\theta=0}=
\frac{-i Q(\tilde{z})\rho |c|}{4\tilde{z} \mu^2\abss{\tilde{w}}},
\]
where $Q$ is the given quadratic polynomial. 
Thus, the derivative vanishes and 
the Hamiltonian is extremal when $Q(\tilde{z}^*)=0$.
\end{proof}
\noindent
Thus, the root $\tilde{z}^*$ is one of the two roots
\[
\frac{({\aa_1} \pm \sqrt{\Delta})}{2\bar{\aa}_0},\quad
\Delta := {\aa_1}^2 + 4|\aa_0|^2.
\]
\index[n]{zD@$\Delta$, discriminant}
Let us now turn to selecting the Hamiltonian maximizing root.

\index{Landau big oh}

\begin{lemma}\label{lem:maximizing-root}
On a punctured neighborhood of the singular locus, the root $\tilde{z}^* = \bar{c}z^*/|c|$ of the
quadratic equation \eqref{eqn:quadratic-z1} 
giving the optimal control satisfies $\tilde{z}^* = 1 + O(|w|)$ 
and $z^* = c/|c| + O(|w|)$.
\end{lemma}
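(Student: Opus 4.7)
The plan is to analyze the quadratic \eqref{eqn:quadratic-z1} in the limit $\tilde{w}\to 0$ (equivalently $w\to 0$, since $|\tilde{w}|=|w|$), identify its two limiting roots, and then pick out the Hamiltonian--maximizing one by a continuity argument. Throughout I assume $c\ne 0$, which is implicit in the notation $\tilde{z}^*=\bar{c}z^*/|c|$.

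First I would evaluate the coefficients at $\tilde{w}=0$, obtaining $\aa_0(0)=2$, $\aa_1(0)=0$, $\aa_2(0)=-2$, so the limiting quadratic is $2(1-\tilde{z}^2)=0$ with distinct roots $\pm 1$. The coefficients $\aa_i(\tilde{w})$ depend polynomially on $\tilde{w},\bar{\tilde{w}}$, and the discriminant $\Delta=\aa_1^2+4|\aa_0|^2$ equals $16$ at $\tilde{w}=0$, hence is bounded away from zero in a neighborhood. The formula $\tilde{z}_{\pm}=(\aa_1\pm\sqrt{\Delta})/(2\bar{\aa}_0)$ therefore defines two smooth branches satisfying
\[
\tilde{z}_{\pm}(\tilde{w}) \;=\; \pm 1 + O(|\tilde{w}|) \;=\; \pm 1 + O(|w|),
\]
the error estimates being uniform in a neighborhood of $\tilde{w}=0$.

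Next I would identify the maximizing branch. Converting the control-dependent term of \eqref{eqn:ham-wbc} to the tilde variables via \eqref{eqn:tilw}, one gets
\[
\H_0(\tilde{z}) \;=\; -\,\frac{|c|\bigl(\RR(\tilde{w},1)-\rho\abss{\tilde{w}}\RR(\tilde{z},1)\bigr)}{\mu(\tilde{w},\tilde{z})\,\abss{\tilde{w}}},
\]
so $\H_0(\tilde{z})\big|_{\tilde{w}=0}=\rho|c|\,\Re(\tilde{z})$. Since $\rho>0$ and $|c|>0$ on a punctured neighborhood of the singular locus, this quantity is strictly maximized over $|\tilde{z}|=1$ at $\tilde{z}=+1$. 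The separation $\H_0(\tilde{z}_+(\tilde{w}))-\H_0(\tilde{z}_-(\tilde{w}))\to 2\rho|c|>0$ as $\tilde{w}\to 0$, so by continuity the ordering persists on a sufficiently small punctured neighborhood, forcing $\tilde{z}^*=\tilde{z}_+(\tilde{w})=1+O(|w|)$. Finally, $z^*=c\tilde{z}^*/|c|=c/|c|+O(|w|)$ after noting $|c/|c||=1$.

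The only real obstacle is a bookkeeping one: ensuring that the identification of the maximizing branch is uniform in $c$ (not merely pointwise) across the relevant portion of state space. Because $\Delta$ is bounded below near $\tilde{w}=0$ and the Hamiltonian gap is proportional to $|c|$, a routine compactness argument on the compact subset of state space of interest (after the compactification of Chapter~\ref{sec:compactification}) delivers the uniform statement without difficulty.
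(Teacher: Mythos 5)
Your proof is correct and follows essentially the same route as the paper: identify the two branches of the quadratic near $\tilde{w}=0$ via the discriminant, show the Hamiltonian term is $\rho|c|\Re(\tilde z)+O(|c||\tilde w|)$, and select the branch with $\Re(\tilde z)\approx+1$. One remark on your closing paragraph: the uniformity concern you raise resolves itself without any compactness argument. Dividing the control-dependent Hamiltonian term by $|c|$ (as the paper does in its~\eqref{eqn:ham-oh}), the comparison between branches becomes $\rho\Re(\tilde z_\pm)+O(|\tilde w|)$, whose error term depends only on $\tilde w$ and satisfies $|\tilde w|=|w|$; since $|c|>0$ cancels from the sign determination, the branch selection is automatic once $|w|$ is small, uniformly in $c$. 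The appeal to the compactification of Chapter~\ref{sec:compactification} is therefore unnecessary, but this does not affect the correctness of the argument.
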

\begin{proof}
Near the singular locus, the coefficients
are $0\ne \aa_0 = 2 + O(|w|^2)$ and $\aa_1 = O(|w|)$. Using this, we find
the discriminant of the quadratic polynomial \eqref{eqn:quadratic-z1} 
is a positive real number and equals
\[
\Delta = 4|\aa_0|^2+{\aa_1}^2 = 16 + O(|w|^2).
\]
Up to a positive scalar, the Hamiltonian \eqref{eqn:ham-wbc} depends
on the roots $\tilde{z}$ of the quadratic through the term
\begin{equation}\label{eqn:ham-oh}
% TCH: formula corrected (missing \abss{w}) 1/3/2023
% TCH: 5/2/2024. Added missing |c|.
\frac{\rho \abss{w}\Re(z,c)-\Re(w,c)}{|c|\mu(w,z)\abss{w}} 
=\frac{\rho \abss{\tilde{w}}\Re(\tilde{z})-\Re(\tilde{w})}{\mu(\tilde{w},\tilde{z})\abss{\tilde{w}}}
=\frac{\pm\rho \sqrt{\Delta}}{4} + O(|\tilde{w}|).
\end{equation}
The Hamiltonian is not constant in $\tilde{z}$, and $\Delta\ne0$.
The star domain defines a simply connected set given by the inequality
\[
0<\mu(|\tilde{w}|,1) = \abss{\tilde{w}}-\rho  |\tilde{w}|.
\]
If follows that a coherent Hamiltonian-maximizing choice of the sign
of $\pm\sqrt{\Delta}$ can be made throughout the star domain.  The
big-oh estimate \eqref{eqn:ham-oh} shows that the sign of the square
root should be positive.
%From Proposition \ref{prop:control-quadratic}, the maximizing root of
%the quadratic is the one with the positive sign in front of the
%square root.
So, the maximizing root $z^*$ of the quadratic
in \eqref{eqn:quadratic-equation} is
\begin{equation}
% TCH: sign of {\aa_1} term corrected 1/3/2023.
    z^* = \frac{c}{|c|}\tilde{z}^* = \frac{c}{|c|}
\frac{({\aa_1} + \sqrt{\Delta})}{2\bar{\aa}_0} =\frac{c}{|c|} + O(|w|),
\end{equation}
which gives the required. 
\end{proof}

\section{Application to Abnormal Solutions}

\index{abnormal extremal}

In this section, we assume that $\rho =1$ (inscribed circular control
set), $\lambda_{cost}=0$ (abnormal solution),
and give a proof of the following theorem.

\begin{theorem} \mcite{MCA:rho-abnormal}\mcite{MCA:5220458} \mcite{MCA:6051339}
If $\rho =1$ and $\lambda_{cost}=0$, then there does not exist a
periodic solution to the control system of ODEs such that $\Lambda_R$
is nowhere zero.
\end{theorem}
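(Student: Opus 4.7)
The plan is to use the two special hypotheses to promote angular momentum to a full matrix-valued conservation law, then force $\Lambda_R$ into a rigid algebraic form that is incompatible with periodicity when $\Lambda_R\ne 0$.

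First, I would show that $M:=\Lambda_R+\Lambda_1\in\su$ is constant. At $\rho=1$ the prefactor $(1-\rho^2)$ in the first term of \eqref{eqn:eqn-c} vanishes, so with $\lambda_{cost}=0$ equations \eqref{eqn:eqn-b} and \eqref{eqn:eqn-c} give $c'=-d_1\,b'$. Hence the complex quantity $c+d_1 b$ (the off-diagonal of $\Lambda_R+\Lambda_1$) is conserved, and the diagonal of $\Lambda_R+\Lambda_1$ is controlled by the angular momentum of Theorem \ref{thm:angular-momentum}, which is already conserved under circular control. Every entry of $M$ is therefore constant.

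Second, I would extract a structural identity for $\Lambda_R$ and identify the scalar $c_P$ explicitly. Differentiating $\Lambda_R+\Lambda_1=M$ and matching with the $\Lambda_R$-equation of Problem \ref{pbm:state-costate-reinhardt} at $\lambda_{cost}=0$ forces $[P,\Lambda_R]=\langle\Lambda_R,P\rangle\,[P,X]$, so $\Lambda_R-c_P X$ commutes with $P$, where $c_P:=\langle\Lambda_R,P\rangle$. Since the inscribed control set has $\det Z_u=0$ by \eqref{eqn:det-Z0}, the normalized control $P$ is regular nilpotent in $\su$ with centralizer $\R P$; combining this with $\langle\Lambda_R,X\rangle=0$ and $\langle P,P\rangle=0$ pins down the scalar and gives
\[
\Lambda_R=c_P(X+2P),\qquad \det\Lambda_R=-c_P^{\,2},
\]
so $\Lambda_R\ne 0$ if and only if $c_P\ne 0$. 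The PMP condition $\H=0$ with $\lambda_{cost}=0$ reads $\langle\Lambda_1,X\rangle=\langle\Lambda_R,P\rangle=c_P$, and together with $\langle\Lambda_R,X\rangle=0$ produces the clean identification $c_P=\langle\Lambda_1,X\rangle=\langle M,X\rangle$.

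The case $d_1=0$ is then immediate: $\Lambda_1\equiv 0$ forces $M=\Lambda_R$, so $c_P=\langle M,X\rangle=\langle\Lambda_R,X\rangle=0$ identically by the orthogonality built into $\Lambda_R\in X^\perp$. Hence $\Lambda_R=c_P(X+2P)=0$, contradicting the hypothesis.

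The main obstacle is the case $d_1\ne 0$, which I would handle in the hyperboloid coordinates of Theorem \ref{thm:ode-hyperboloid}. There $c_P=2d_1(\abss{b}_\epsilon\abss{w}+\Re(\bar b w))$ and the conservation reads $c=k-d_1 b$. Combining $\det\Lambda_1=\epsilon d_1^2$ (constant) with the expansion $\det\Lambda_R=\det M+\langle M,\Lambda_1\rangle+\det\Lambda_1$ and $\det\Lambda_R=-c_P^{\,2}$ yields the further first integral $c_P^{\,2}+\langle M,\Lambda_1\rangle=-\det M-\epsilon d_1^2$, cutting $(w,b)\in\C^2$ down to a two-dimensional invariant surface. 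I plan to rule out non-trivial periodic orbits on this surface by exhibiting a strictly monotone quantity along the flow away from the singular locus. The natural candidate is $\arg b$, whose derivative
\[
(\arg b)'=2\abss{w}+\frac{2\abss{b}_\epsilon\Re(\bar b w)}{|b|^2}
\]
has a manifestly positive first term; the conservation constraints together with the hypothesis $c_P\ne 0$ should control the second term tightly enough that $\oint(\arg b)'\,dt>0$ over any putative period, contradicting $b(T)=b(0)$. The elliptic case $\epsilon=1$ is cleanest because the Cauchy--Schwarz-type inequality $\abss{b}_1\abss{w}>|\Re(\bar b w)|$ already makes $c_P>0$ automatic, so the nowhere-zero hypothesis is redundant there; the split and nilpotent cases $\epsilon\in\{-1,0\}$ require parallel monotonicity arguments along the same template, and I expect these to be the most delicate part of the argument.
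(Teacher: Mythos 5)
Your opening steps are correct and give a cleaner algebraic handle than the paper's own proof: the conservation of $M=\Lambda_R+\Lambda_1$ from $(1-\rho^2)=0$, $\lambda_{cost}=0$, and the angular momentum; the structural identity $\Lambda_R=c_P(X+2P)$ (using that $P$ is regular nilpotent when $\rho=1$, together with $\langle\Lambda_R,X\rangle=0$ and $\langle P,X\rangle=1$); the relation $\det\Lambda_R=-c_P^2$ and $c_P=\langle M,X\rangle$; and the immediate dispatch of the $d_1=0$ case. But the case $d_1\ne0$ is the substance of the theorem, and the argument you sketch there fails. Strict positivity of $\oint(\arg b)'\,dt$ does \emph{not} contradict periodicity of $b$: a periodic curve in $\C^\times$ may have winding number one or more around the origin (take $b(t)=e^{it}$). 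What you would actually need is that the total change of $\arg b$ over any putative period lies strictly between $0$ and $2\pi$ (or an analogous non-integrality statement), which is a much sharper quantitative claim that your conservation laws and $c_P\ne0$ do not obviously deliver, and which your sketch does not address.

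The paper's route diverges here and relies on a non-compactness argument that your candidate observable lacks. After establishing $K=\Lambda_R+\Lambda_1$ constant, it shows $d_R=\langle\Lambda_R,\Lambda_R\rangle$ has vanishing third time-derivative, hence is a polynomial of degree at most two; periodicity forces $d_R$ constant; writing $K=r_1X+r_2\Lambda_R+r_3[\Lambda_R,X]$ and using orthogonality, the vanishing Hamiltonian, and $K'=0$ forces $r_3=0$, $r_1=\sqrt{d_R/8}$, $r_2=1/2$, and $\langle K,K\rangle=0$; thus $K\ne0$ is regular nilpotent, the constant $\langle X,K\rangle$ confines $X$ to a horocycle, and $X'\ne0$ at constant speed forces $X$ to escape along that non-compact curve, precluding periodicity. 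To repair your proposal you would need to replace the $\arg b$ monotonicity with some analogous escape-to-infinity mechanism --- the horocycle's non-compactness is exactly what your current observable is missing.
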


The first lemma gives a simple formula for the Hamiltonian
and system of equations.
\begin{lemma}\label{lem:ode-abnormal}
Assume $\rho =1$, $\lambda_{cost}=0$.
Assume $\Lambda_R(t)\ne0$, for all $t$.
Then
\index[n]{K@$K$, conserved matrix}
\index[n]{K@$K$, conserved matrix!$K_{12}\in\C$, matrix entry}
\index[n]{r@$r$, real number!scalar}

\begin{align*}
X' &= -\frac{\left[\Lambda_R,X\right]}
{\sqrt{2 \langle \Lambda_R, \Lambda_R\rangle}},
\\
\Lambda_R' &= \left[\Lambda_R - K, X \right],
\\
\H &= \langle X,K\rangle 
+ \sqrt{\frac{\langle \Lambda_R, \Lambda_R\rangle}{2}},
\end{align*}
where 
\[
K:= % := \Lambda_1+\Lambda_R=
\begin{pmatrix}
i\A/2&K_{12}\\
\bar{K}_{12}&-i\A/2
\end{pmatrix},
\] 
is a constant. 
\end{lemma}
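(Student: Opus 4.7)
The plan is to exploit the degenerate structure that $\rho=1$ forces on the optimal control: since $\det(Z^*)=\alpha^2-\beta^2=0$ by Equation \eqref{eqn:det-Z0}, the control matrix is null in the trace form, i.e.\ $\langle Z^*,Z^*\rangle=0$. Setting $P=Z^*/\langle Z^*,X\rangle$ as usual, this gives $\langle P,P\rangle=0$, while the normalization is $\langle P,X\rangle=1$. Moreover, Proposition~\ref{prop:control-quadratic} specializes (with $\langle Z^*,Z^*\rangle=0$ and $\Lambda_R\ne 0$) to the single linear condition $\langle Z^*,[\Lambda_R,X]\rangle=0$, equivalently $\langle P,[\Lambda_R,X]\rangle=0$. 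So $P$ is pinned down modulo a one-parameter sign ambiguity by these three homogeneous-quadratic conditions.

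To solve for $P$ explicitly, I would use that $\{X,\Lambda_R,[\Lambda_R,X]\}$ is an orthogonal basis of $\su$: orthogonality follows from invariance of the trace form, $\Lambda_R\in X^\perp$ by construction, and $[\Lambda_R,X]\ne 0$ because $X\in\O_J$ is regular and $\Lambda_R\ne 0$. Writing $P=aX+b\Lambda_R+\gamma[\Lambda_R,X]$, the constraint $\langle P,[\Lambda_R,X]\rangle=0$ forces $\gamma=0$ (since $\langle[\Lambda_R,X],[\Lambda_R,X]\rangle>0$ on this orbit), the normalization $\langle P,X\rangle=-2a=1$ yields $a=-1/2$, and the null condition $\langle P,P\rangle=-\tfrac12+b^{2}\langle\Lambda_R,\Lambda_R\rangle=0$ gives $b=\pm 1/\sqrt{2\langle\Lambda_R,\Lambda_R\rangle}$. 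The sign is selected by maximizing the Hamiltonian: $-\langle\Lambda_R,P\rangle=-b\langle\Lambda_R,\Lambda_R\rangle$ is largest when $b<0$, so
\[
P = -\tfrac12 X - \frac{\Lambda_R}{\sqrt{2\langle\Lambda_R,\Lambda_R\rangle}}.
\]
Substituting into $X'=[P,X]$ and using $[X,X]=0$ immediately gives the first ODE.

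For the costate, I would substitute this $P$ into the reduced costate ODE $\Lambda_R'=[P,\Lambda_R]-\langle\Lambda_R,P\rangle[P,X]+[-\Lambda_1+\tfrac32\lambda_{cost}J,X]$ from Problem~\ref{pbm:state-costate-reinhardt}, with $\lambda_{cost}=0$. A direct computation shows the two control-dependent terms cancel: $[P,\Lambda_R]=-\tfrac12[X,\Lambda_R]=\tfrac12[\Lambda_R,X]$, while $\langle\Lambda_R,P\rangle=-\sqrt{\langle\Lambda_R,\Lambda_R\rangle/2}$ and $[P,X]=-[\Lambda_R,X]/\sqrt{2\langle\Lambda_R,\Lambda_R\rangle}$ combine to give $\langle\Lambda_R,P\rangle[P,X]=\tfrac12[\Lambda_R,X]$. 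Hence $\Lambda_R'=-[\Lambda_1,X]$. Combining with Corollary~\ref{cor:lam1-gen-sol}'s $\Lambda_1'=[\Lambda_1,X]$ shows that $K:=\Lambda_R+\Lambda_1$ satisfies $K'=0$, and the rewriting $\Lambda_R'=-[\Lambda_1,X]=[\Lambda_R-K,X]$ is immediate.

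Finally, the form of $K$ and the Hamiltonian are bookkeeping. The angular momentum $\A=\langle\Jsu,\Lambda_1+\Lambda_R\rangle=\langle\Jsu,K\rangle$ is conserved by Theorem~\ref{thm:angular-momentum}, and a direct trace computation on the $\su$ ansatz $K=\begin{pmatrix}it&K_{12}\\ \bar K_{12}&-it\end{pmatrix}$ gives $\langle\Jsu,K\rangle=2t$, pinning the $(1,1)$-entry at $i\A/2$. For $\H$, substitute $P$ into \eqref{eqn:full-hamiltonian} with $\lambda_{cost}=0$ to get $\H=\langle\Lambda_1,X\rangle-\langle\Lambda_R,P\rangle=\langle K-\Lambda_R,X\rangle+\sqrt{\langle\Lambda_R,\Lambda_R\rangle/2}=\langle K,X\rangle+\sqrt{\langle\Lambda_R,\Lambda_R\rangle/2}$, using $\langle\Lambda_R,X\rangle=0$. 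The only subtle step is the sign selection in the Hamiltonian maximization; everything else is a short algebraic verification once the $\{X,\Lambda_R,[\Lambda_R,X]\}$ basis is in hand.
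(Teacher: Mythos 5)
Your proof is correct and takes a genuinely different route from the paper's. The paper works entirely in hyperboloid coordinates $(w,b,c)$: it asserts two algebraic identities, \eqref{eqn:X'L} and \eqref{eqn:ham-sqrt-abnormal}, proved by a symbolic Mathematica reduction modulo the quadratic relation \eqref{eqn:quadratic-z1}, then reads off the ODEs for $w,b,c$ and observes that $K_{12}:=bd_1+c$ is constant. Your argument is coordinate-free: you exploit the degeneracy $\langle Z^*,Z^*\rangle=-2\det(Z^*)=0$ forced by $\rho=1$, combine it with the angular-momentum constraint $\langle Z^*,[\Lambda_R,X]\rangle=0$ from Proposition~\ref{prop:control-quadratic}, and then use the orthogonal basis $\{X,\Lambda_R,[\Lambda_R,X]\}$ of $\su$ to solve for the normalized control matrix in closed form, $P=-\tfrac12 X-\Lambda_R/\sqrt{2\langle\Lambda_R,\Lambda_R\rangle}$, with the sign fixed by maximizing the Hamiltonian. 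All three displayed identities then fall out of $X'=[P,X]$, the reduced costate equation, and $\H=\langle\Lambda_1,X\rangle-\langle\Lambda_R,P\rangle$. What your approach buys is transparency and independence from the computer-algebra step: the two nonobvious claims of the paper are replaced by a three-line basis expansion, and it is clear exactly which hypothesis ($\rho=1$, hence $\det Z^*=0$) is doing the work. What the paper's version buys is uniformity with the rest of the chapter, which is already working in hyperboloid coordinates and reuses those explicit formulas; it also delivers the ODEs \eqref{eqn:w-inner}--\eqref{eqn:c-inner} in the form needed for the subsequent lemma on $\langle\Lambda_R,\Lambda_R\rangle'''=0$.

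One small point worth spelling out in your write-up: the positivity $\langle\Lambda_R,\Lambda_R\rangle>0$ (used both for the sign choice of $b$ and to conclude $\gamma=0$ via $\langle[\Lambda_R,X],[\Lambda_R,X]\rangle=4\langle\Lambda_R,\Lambda_R\rangle$) follows because the trace form on $\sl$ has signature $(2,1)$, $X$ spans the negative line since $\langle X,X\rangle=-2$, and therefore the form is positive definite on $X^\perp\ni\Lambda_R$.
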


\begin{proof}
%% TCH 5/2/2024: Cleaned up proof a bit.
$\Lambda_R\ne0$ implies $c\ne0$.  Let $\tilde{z}^*$ be the Hamiltonian maximizing root of
the quadratic equation \eqref{eqn:quadratic-z1}.  
\index[n]{lhs@$\op{lhs}$, left-hand side}
\index[n]{rhs@$\op{rhs}$, right-hand side}
We claim
\begin{align}\label{eqn:X'L}
\frac{w - \rho \abss{w} z^*}{\mu(w,z^*)}&= -\frac{\aa_0c}{|c|\eta};
\\[2ex]
-\frac{\RR(w-\rho \abss{w} z^*,c)}{\mu(w,z^*)\abss{w}} &= \frac{|c|\eta}{2\abss{w}},
\label{eqn:ham-sqrt-abnormal}
\end{align}
where $\aa_0=2+|w|^2- (w \bar{c}/|c|)^2$ is given by its usual formula, and
where $\eta=\eta(\aa_0,\bar{\aa}_0)=\sqrt{\aa_0 + \bar{\aa}_0}>0$ (noting that $\aa_0+\bar{\aa}_0$ is a
positive real number).
We prove both identities at the same time.  Let $\op{lhs}$ and
$\op{rhs}$ be the left and right-hand sides of the first claimed
identity \eqref{eqn:X'L}.  Combined as a single fraction, the
numerator of $\op{lhs}^2-\op{rhs}^2$ can be written as a polynomial in
$\tilde{z}^*$ with coefficients that are functions of
$\tilde{w},c,\bar{c}$.  By a symbolic calculation in Mathematica, this
polynomial is zero modulo the quadratic relation
\eqref{eqn:quadratic-z1}.  Thus, $\op{lhs}=\pm \op{rhs}$.  We then
substitute $\pm{\op{rhs}}$ for $\op{lhs}$ into the second claimed
identity \eqref{eqn:ham-sqrt-abnormal} and choose the sign that makes
the entire expression positive (because this term is to be maximized
in the Hamiltonian).  With this choice of sign, both identities hold.

We have by direct calculation that
\begin{align*}
\frac{\bracks{\Lambda_R}{\Lambda_R}}{2}= \frac{|c|^2\eta^2}{4\abss{w}^2}>0,
\qquad
\sqrt{\frac{\bracks{\Lambda_R}{\Lambda_R}}{2}}=
\frac{|c|\eta}{2\abss{w}},
\end{align*}
and the rightmost term is equal to the control-dependent term of the Hamiltonian
\eqref{eqn:ham-sqrt-abnormal}.
We compute
\[
X' = r[\Lambda_R,X],\quad \text{where } w' = irc\xi_0/\abss{w}^2.
\]
Comparing with 
the ODE
\eqref{eqn:eqn-w} for $w$, the ODE for $X$ follows.

\index{Mathematica}

The ODEs for $w,b,c$ take
the following form
\begin{align}
    w' &= -i\aa_0c/(|c|\eta),\label{eqn:w-inner}
\\
    d_1 b' &= -c' = 2id_1\left(\abss{b}_\epsilon w +b\abss{w}\right).\label{eqn:c-inner}
\end{align}
Hence, $K_{12} := bd_1+c\in\R$ is a constant.  Then using
conservation of angular momentum,
$K=\Lambda_1+\Lambda_R$ equals the constant as stated in the lemma.
The Hamiltonian \eqref{eqn:ham-wbc} is
\begin{align*}
\H=2d_1(\RR(w,b) + \abss{b}_\epsilon\abss{w}) + \frac{|c|\eta}{2\abss{w}} =
\langle{X,\Lambda_1}\rangle + 
\sqrt{\frac{\langle \Lambda_R, \Lambda_R\rangle}{2}}.
%\quad
%\A = 2 \abss{b}_\epsilon - 2\RR(w,c)/\abss{w}.
\end{align*}
Expressing these formulas back in terms of $X,\Lambda_R,K$,
we obtain the result.
The ODE for $\Lambda_R$ is obtained by the ODE for $c$ in \eqref{eqn:eqn-c},
by setting $\rho =1$ and $\lambda_{cost}=0$.
\end{proof}

\begin{remark} If we assume the weaker condition $\rho =1$ and
$\Lambda_R\ne0$ then a similar argument shows that the equations take
a related form
\begin{align*}
    g'&=gX, 
\qquad \qquad \qquad \qquad \quad 
\Lambda_1'= [\Lambda_1,X],
\\
    X'&=- \frac{[\Lambda_R,X]}{\sqrt{2\bracks{\Lambda_R}{\Lambda_R}}},
\qquad \qquad 
\Lambda_R' = \left[-\Lambda_1 + \frac{3}{2}\lambda_{cost} \Jsu, X \right].
\end{align*}
\end{remark}

\bigskip
\index[n]{SU@$\mathrm{SU}(1,1)$, special unitary group}
\index[n]{g@$g$, group element!in $\SU$}

\begin{lemma}\label{lem:su11-invariance}
If $K,X,\Lambda_R$ is a solution to the equations of
Lemma~\ref{lem:ode-abnormal}, then so is $\Ad_g K,\Ad_g
X, \Ad_g \Lambda_R$ (where $g \in \SU$) and they also satisfy the
relations listed at the beginning of the section.
\end{lemma}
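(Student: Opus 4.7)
The plan is to exploit the fact that the ODE system in Lemma~\ref{lem:ode-abnormal}, once rewritten in the stated abstract form
\[
X' = -\frac{[\Lambda_R,X]}{\sqrt{2\langle\Lambda_R,\Lambda_R\rangle}},
\qquad
\Lambda_R' = [\Lambda_R - K,X],
\qquad
\H = \langle X,K\rangle + \sqrt{\langle\Lambda_R,\Lambda_R\rangle/2},
\]
is built entirely from Lie-algebraic data: the bracket $[\cdot,\cdot]$ on $\su$, the trace form $\langle\cdot,\cdot\rangle$, and the time-independent element $K\in\su$. Hence the system is manifestly covariant under the adjoint action of $\SU$, and the lemma is essentially a symmetry statement.

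First I would fix an arbitrary constant $g\in\SU$ and form $\tilde X := \Ad_g X$, $\tilde\Lambda_R := \Ad_g\Lambda_R$, $\tilde K := \Ad_g K$. Because $g$ does not depend on $t$, the map $\Ad_g$ commutes with $d/dt$: $\tilde X' = \Ad_g X'$ and $\tilde\Lambda_R' = \Ad_g\Lambda_R'$. Since $\Ad_g$ is a Lie algebra automorphism, $\Ad_g[Y,Z] = [\Ad_g Y, \Ad_g Z]$ for all $Y,Z\in\su$; and since the trace form is $\Ad$-invariant, $\langle\tilde\Lambda_R,\tilde\Lambda_R\rangle = \langle\Lambda_R,\Lambda_R\rangle$. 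Applying $\Ad_g$ to the two ODEs and substituting gives
\[
\tilde X' = -\frac{[\tilde\Lambda_R,\tilde X]}{\sqrt{2\langle\tilde\Lambda_R,\tilde\Lambda_R\rangle}},
\qquad
\tilde\Lambda_R' = [\tilde\Lambda_R - \tilde K,\tilde X],
\]
and similarly $\langle\tilde X,\tilde K\rangle + \sqrt{\langle\tilde\Lambda_R,\tilde\Lambda_R\rangle/2} = \H$, so the Hamiltonian formula is preserved. Constancy of $\tilde K$ is immediate from constancy of $K$.

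Next I would verify the standing conditions listed at the start of Section (the $\rho=1$, $\lambda_{cost}=0$ hypotheses, plus the constraints $\det(X)=1$, $\langle X,\Lambda_R\rangle = 0$, $X\in\O_J$, and $\Lambda_R\ne 0$). The parameters $\rho$ and $\lambda_{cost}$ are unaffected by the transformation. The determinant and the pairing $\langle X,\Lambda_R\rangle$ are $\Ad$-invariant, so $\det(\tilde X)=1$ and $\tilde\Lambda_R\in\tilde X^\perp$. For $\tilde X\in\O_J$, recall from Lemma~\ref{lem:X-hyperbolic} that $\O_J$ is precisely the adjoint orbit through $J$; its stability under $\Ad_g$ for $g\in\SU$ is immediate (via the Cayley transform identifying $\SU$ with $\SL(\R)$). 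Finally, since $\Ad_g$ is a linear isomorphism, $\Lambda_R\ne 0$ implies $\tilde\Lambda_R\ne 0$.

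There is no real obstacle here; the work is entirely bookkeeping. The only mild subtlety worth flagging is that in the hyperboloid parameterization \eqref{eqn:st-costate-hyperboloid} the matrix $X$ was written with a definite sign $-i\abss{w}$ in the upper-left entry, corresponding to the chosen sheet of $\O_J$. Since $\SU$ is connected and acts on the single sheet $\O_J$, this sign convention is preserved; the transformed $\tilde X$ admits a hyperboloid representation with some new $\tilde w\in\C$, and similarly $\tilde b,\tilde c$ can be read off from the upper-right entries of $\tilde\Lambda_1 := \tilde K - \tilde\Lambda_R$ and $\tilde\Lambda_R$. Thus $(\tilde K,\tilde X,\tilde\Lambda_R)$ is a bona fide solution of the system, completing the proof.
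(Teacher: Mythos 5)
Your proof is correct and takes essentially the same approach as the paper, which simply observes that the verification follows from $\Ad$-invariance of the trace form and the automorphism property of $\Ad_g$ with respect to the bracket. You spell out the details that the paper leaves implicit.
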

\begin{proof}
This is simple to verify once we use the fact that the trace form is a
non-degenerate invariant symmetric bilinear form and using properties
of Lie brackets and bilinear forms.
\end{proof}

%TCH 4/22/2023: I've introduced an error in the Mathematica or
%  text during the latest update. DONE[TCH 5/2/204. Error corrected.]

\begin{lemma}\mcite{MCA:LR-triple}
Assume $\Lambda_R(t)\ne0$, for all $t$.
Assume $\rho =1$, and $\lambda_{cost}=0$. Then
\[
\langle{\Lambda_R,\Lambda_R}\rangle'''=0.
\]
\end{lemma}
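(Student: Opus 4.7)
The plan is to show $F := \langle \Lambda_R,\Lambda_R\rangle$ satisfies $F''' = 0$ by proving that $F''$ is a constant linear combination of conserved quantities. Two supporting facts beyond Lemma~\ref{lem:ode-abnormal} will drive the computation: by the PMP the Hamiltonian vanishes identically along any lifted extremal, so Lemma~\ref{lem:ode-abnormal} yields $\langle X,K\rangle = -\sqrt{F/2}$ along the trajectory; and in $\sl(\R)$, Cayley-Hamilton ($X^2 = -\det(X)\,I$) together with Lemma~\ref{lem:sl2-lemmas}(3) gives the adjoint-squared identity
\[
\ad_A^2 B \;=\; -4\det(A)\,B \;-\; 2\langle A,B\rangle\,A,
\]
which collapses every nested bracket into a linear combination of the matrices at hand.

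First I will use conservation of $\det(\Lambda_1)$ to pin down the pairing $\langle K,\Lambda_R\rangle$ as an affine function of $F$. Expanding $\langle \Lambda_1,\Lambda_1\rangle$ via $\Lambda_1 = K - \Lambda_R$ immediately gives
\[
\langle K,\Lambda_R\rangle \;=\; \tfrac12 F + C_0, \qquad C_0 := \tfrac12\bigl(\langle K,K\rangle - \langle \Lambda_1,\Lambda_1\rangle\bigr),
\]
with $C_0$ constant. Next, differentiating $F$ using $\Lambda_R' = [\Lambda_R - K,X] = -[\Lambda_1,X]$, the invariance identity $\langle [A,B],C\rangle = \langle A,[B,C]\rangle$, and the observation that $\langle [\Lambda_R,X],\Lambda_R\rangle = 0$ (since this equals $\langle X,[\Lambda_R,\Lambda_R]\rangle$), produces the clean first derivative $F' = 2\langle [K,\Lambda_R],X\rangle$.

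For $F''$ I split the derivative into two contributions. The first, $-2\langle [K,[\Lambda_1,X]],X\rangle$, simplifies via Jacobi (the identity $[K,\Lambda_1] = -[K,\Lambda_R]$ kills a term of type $\langle [\,\cdot\,,X],X\rangle = 0$) and then the $\sl(\R)$ adjoint-squared identity applied to $[[K,X],X] = \ad_X^2 K$; after substituting $\langle X,K\rangle = \langle X,\Lambda_1\rangle = -\sqrt{F/2}$ and $\det(X)=1$, this piece becomes $8\langle \Lambda_1,K\rangle + 2F$. The second, $2\langle [K,\Lambda_R],X'\rangle$ with $X' = -[\Lambda_R,X]/\sqrt{2F}$, is handled by $\ad_{\Lambda_R}^2 X = -4\det(\Lambda_R)\,X = 2F\,X$ (using $\det(\Lambda_R) = -F/2$ and $\langle \Lambda_R,X\rangle = 0$), and evaluates to $2F$. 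Adding and substituting $\langle \Lambda_1,K\rangle = \langle K,K\rangle - F/2 - C_0$ from the previous step cancels the $F$-dependence, leaving
\[
F'' \;=\; 4\bigl(\langle K,K\rangle + \langle \Lambda_1,\Lambda_1\rangle\bigr),
\]
a constant, so $F''' = 0$.

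The main obstacle is the bookkeeping through repeated Jacobi and adjoint-squared maneuvers; the conceptual content is that the conditions $\H = 0$ (from the PMP) and conservation of $\det(\Lambda_1)$ together pin down every inner product appearing in $F''$ as either a true constant or an affine function of $F$, and the affine pieces cancel exactly. Without the abnormality assumption a residual $\H\sqrt{F/2}$ term would survive in $F''$, which is the precise obstruction to quadratic growth of $F$ in the normal case.
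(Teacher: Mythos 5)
Your proof is correct, and it takes a genuinely different route from the paper's. The paper establishes the result by explicit computation in hyperboloid coordinates $(w,b,c)$: it computes $d_R' = 2r$ with $r = \langle[\Lambda_R,X],K\rangle$ explicitly, then shows by a Mathematica calculation that a certain auxiliary expression involving $r'$ and $\H$ has vanishing derivative, and finally uses $\H=0$ to extract $r'' = 0$. Your argument, by contrast, never leaves the coordinate-free Lie-algebra level. You use exactly the structural ingredients the setup provides — constancy of $K = \Lambda_1 + \Lambda_R$ and of $\langle\Lambda_1,\Lambda_1\rangle = -2\det(\Lambda_1)$, orthogonality $\langle\Lambda_R,X\rangle = 0$, the vanishing Hamiltonian $\langle X,K\rangle = -\sqrt{F/2}$ from Lemma~\ref{lem:ode-abnormal}, the Jacobi identity, and the rank-one adjoint-squared formula (Proposition~\ref{prop:brack-brack}) — and you arrive at the sharper conclusion
\[
\langle\Lambda_R,\Lambda_R\rangle'' = 4\bigl(\langle K,K\rangle + \langle\Lambda_1,\Lambda_1\rangle\bigr),
\]
which identifies the constant rather than merely showing $F''$ is constant. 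I verified your two pieces of $F''$: the $\Lambda_R'$-contribution reduces via Jacobi and $\ad_X^2 K = -4K - 2\langle X,K\rangle X$ to $8\langle\Lambda_1,K\rangle + 2F$; the $X'$-contribution reduces via $\ad_{\Lambda_R}^2 X = 2F X$ (using $\det\Lambda_R = -F/2$ and $\Lambda_R\perp X$) to $2F$; and the $F$-dependence cancels against the affine relation $\langle K,\Lambda_R\rangle = \tfrac12 F + C_0$ forced by constancy of $\langle\Lambda_1,\Lambda_1\rangle$. The tradeoff: the paper's computation is a direct grind requiring no insight beyond setting it up, while yours is shorter to verify by hand, reveals why the third derivative must vanish, and makes the dependence on the abnormality hypothesis transparent. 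One small imprecision in your closing remark: in the normal case $\H$ still vanishes along extremals by the PMP; what changes is that the Hamiltonian of Lemma~\ref{lem:ode-abnormal} acquires the extra term $\tfrac32\lambda_{cost}\langle J,X\rangle$, so vanishing of $\H$ no longer pins down $\langle X,K\rangle$ as $-\sqrt{F/2}$, which is where the quadratic-growth argument breaks.
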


\index[n]{d@$d$, determinant!$d_R=\bracks{\Lambda_R} {\Lambda_R}\in\R$}
\index[n]{r@$r$, real number!$r=\bracks {[\Lambda_R,X]} K$}
% d_R = old w, renamed to avoid conflict with wbc coordinates.
% v -> r, 4/15/2023.
\begin{proof} 
Let     $d_R := \bracks {\Lambda_R} {\Lambda_R}$. 
The ODE for $\Lambda_R$ gives $d_R' = {2r}$, where
\begin{align*}
    r &:= \bracks {[\Lambda_R,X]} K % old v
= (c\bar{w}-\bar{c}w)i\A + 
\frac{i}{\abss{w}}(\bar{K}_{12}\aa_0 c - K_{12} \bar{\aa}_0 \bar{c}).
\end{align*}
Using the conserved quantities $K_{12}$ and $\A$, 
the ODE \eqref{eqn:c-inner} can be written
%% DONE[TCH 4/22/2023] : d_1 removed from equation in eqn:c-inner-A to agree with Mathematica.
\begin{equation}\label{eqn:c-inner-A}
c' = \frac{i \aa_0 c}{\abss{w}} -i (w \A +2 K_{12} \abss{w}).
\end{equation}
We compute $r'$ by using \eqref{eqn:c-inner-A} 
and \eqref{eqn:w-inner} in
a tedious Mathematica calculation to obtain
\index{Mathematica}
\[
\left(r' + 2\H\left(\frac{|c|\eta}{\abss{w}}- 
(2\RR(w,K_{12})+\A\abss{w})\right)\right)' = 0.
\]
Since $\H=0$ identically, we find that $r'$ is constant, and
$d_R'''=0$ as claimed. (Adding the multiple of $\H$ before
taking the final derivative significantly simplifies the
calculation.)
\end{proof}

\begin{proof}[Proof of Theorem]
Set
\[
d_R = \langle{\Lambda_R,\Lambda_R}\rangle.
\]
By the preceding lemma, $d_R$ is a polynomial in $t$.  If also periodic,
$d_R$ is constant.
By the formulas
derived in the previous proof, we have orthogonality:
\[
d_R'=2\langle{[\Lambda_R,X],K}\rangle =0.
\]
We write $K$ as a linear combination of a basis of
$\su$:
\index[n]{r@$r$, real number!scalar}
\[
K = r_1 X + r_2 \Lambda_R + r_3 [\Lambda_R,X].
\]
Since $K$ and $[\Lambda_R,X]$ are orthogonal, we have $r_3=0$.  The
vanishing Hamiltonian implies $r_1=\sqrt{d_R/8}$.  In particular,
$r_1$ is constant.  Then
\[
\langle{K,K}\rangle = d_R(-1/4 + r_2^2),
\]
and this implies that $r_2$ is constant.  Then using the ODE for $X$
and $\Lambda_R$, we obtain
\[
0= K' = r_1 X' + r_2 \Lambda_R' = (-1/4 + r_2(1-r_2)) [\Lambda_R,X].
\]
\index{horocycle}
This implies that $r_2 =1/2$ and $\langle{K,K}\rangle=0$.  But
$K\ne0$.  Thus, $K$ is regular nilpotent.  We also have that
$\bracks{X}{K}$ is constant.  This is the locus of a horocycle in
hyperbolic space.  Also $X'$, which is proportional to
$[\Lambda_R,X]$, is never zero. Hence $X$ is not periodic, moving
along the horocycle for all time.
\end{proof}

\begin{remark}
We find that nonperiodic abnormal solutions exist.
Following the proof of the lemma, we impose the
condition $\bracks{\Lambda_R}{\Lambda_R}=8$
and set
\(
K=X + \Lambda_R/2
\).
Then $K$ is a nilpotent constant, and the ODE for $X$ becomes
$X'=-[K,X]/2$, which is Lax's equation with constant $-K/2$,
which is easily solved.
\end{remark}

\newpage
\chapter{The Fuller System}

\index{Fuller system}

In this chapter, we restrict to normal solutions, and take $\lambda_{cost}=-1$. 
The singular locus, as defined in Section \ref{sec:singular-locus} is
the region of the cotangent space $T^*(T\SL(\R))$ given by
\index[n]{S@$\Ssing$, singular locus}
\index{singular locus}
\[
\Ssing = \left\{\left(g_0,-\frac{3}{2}J,J,0\right)\mid g_0\in\SL(\R)\right\} 
\subset \SL(\R) \times \sl(\R) \times \sl(\R)\times \sl(\R).
\]

Recall that Pontryagin extremals which avoid the singular locus that
are also edge-extremal are given by bang-bang controls with finitely
many switches. (See Theorem \ref{thm:finite-bang-bang}.)  The global
optimal trajectory of the Reinhardt control problem with the control
set $U_T$ cannot stay within the singular locus for any positive
interval of time. (See Theorem \ref{thm:no-singular-arcs}.)

By Lemma~\ref{lem:constant-not-singular}, if we are considering the
control set $U_T$, for a Pontryagin extremal to approach the singular
locus, the control must switch infinitely many times around the
boundary of $U_T$ in a finite interval of time. This is the
\emph{chattering} phenomenon (see Fuller~\cite{fuller1963study} and
Zelikin and Borisov~\cite{zelikin2012theory}).
\index{chattering}

If we use a control set $U_r$ which has a smooth boundary, we would
expect the optimal control to perform infinitely many rotations along
the boundary $\partial U_r$ to approach the singular locus in finite
time. A system with exactly this behaviour is described in Manita and
Ronzhina and the associated trajectory is
spiral-like~\cite{manita2022optimal}. The results of this section are
motivated by that paper.

These results warrant a study of the behavior of the system near the
singular locus. To this end, we introduce convenient coordinates and
re-express the state, costate and optimal control equations in these
coordinates. Throughout this section, unless otherwise specified, we
work with the circular control sets $U_r$ and we assume that 
the control matrix parameter $\alpha$ is positive.
(See Section \ref{sec:su11-control-sets}).
\index[n]{za@$\alpha,\beta$, control matrix parameters}

We will find that the system of equations we obtain is a special
case of the Fuller system, which we define in the following way.

\index[n]{F@$\mathbb{F}=\R$ or $\C$, archimedean field}
\index[n]{A@$A$, matrix or linear map!nonsingular}
\index[n]{0@$\vvert-\vvert$, norm}
\index[n]{yz@$z_i$, Fuller system!component}
\index[n]{n@$n$, length of Fuller system}
\index[n]{m@$m$, integer dimension}

\begin{definition}[Length-$n$ Fuller system]
Let $\mathbb{F}=\R$ or $\mathbb{F}=\C$.  Let $z_i:(0,\epsilon)\to \mathbb{F}^m$ be functions, where $z_n$
is nonzero on $(0,\epsilon)$, and let $A$
be a nonsingular $m\times m$ matrix with coefficients in $\mathbb{F}$.  
Let $\|\cdot\|$ be the Euclidean
norm on $\mathbb{F}^m$.  The Fuller system of length $n$, (real or complex)
dimension $m$, and multiplier $A$ is the system of ODEs given by
\begin{equation}\label{eqn:fuller-system-len-n} 
    z_n' = z_{n-1}, \quad 
    z_{n-1}' = z_{n-2}, \quad
\dots \quad
    z_2' = z_1, \quad
    z_1' = \frac{Az_n}{\|z_n\|}.
\end{equation}
\end{definition} 

\index[n]{zc@$\gamma\in\C^\times$, Fuller system multiplier}

\begin{remark}\normalfont
By identifying $\C$ with $\R^2$, a Fuller system of complex dimension
$m$ can be written as a Fuller system of real dimension $2m$.  We are
particularly interested in the Fuller system of complex dimension $1$.
In that case $A=\gamma\in\C^\times$, a nonzero complex scalar.  
By scaling each $z_j \mapsto z_j/\gamma$, the constant $\gamma$ in
equation \eqref{eqn:fuller-system-len-n} scales by
$\gamma \mapsto \gamma/|\gamma|$. Thus, there is no loss of generality
in assuming that $|\gamma|=1$.
\end{remark}

\section{Trajectories near the Singular Locus}

An advantage of switching to hyperboloid coordinates is that at
the singular locus, we have $w=b=c=0$.

\begin{assumption}
In general, the determinant $d$ of $\Lambda_1$ may be positive,
negative or zero. The determinant is constant along trajectories.
Different coordinate systems (other than the one presented below) need
to be used when $d=0$ or $d<0$.  Here, we make the assumption $d>0$,
because this is the case for the singular locus, and set
$d_1=\sqrt{d}>0$.  We also assume that the sign of
$\bracks{J}{\Lambda_1}$ is positive, because that is the sign at the
singular locus: $\bracks{J}{3J\lambda_{cost}/2}=3>0$.
The sign $d_1>0$ in $\Lambda_1$ is chosen (according
to our assumptions) to make $\bracks{\Jsu}{\Lambda_1}>0$.
We also have $\epsilon=1$ and $\abss{b}_\epsilon=\abss{b}$, based on the value of
$\epsilon$ at the singular locus.
\end{assumption}

\begin{assumption}
At the singular locus 
\begin{equation}\label{eqn:sing-constants}
\H=0,\quad \A=3,\quad d_1=3/2,\quad\epsilon=1,\quad\lambda_{cost}=-1.
\end{equation}
These values are constant along extremal trajectories. We assume
these values of the constants throughout this section.
We consider an extremal trajectory with the property that $c(t_0)\ne0$
but as we follow the trajectory back in time there is a most recent time
$t_1<t_0$ when $c(t_1)=0$.  We assume at time $t_1$, 
the trajectory meets the
singular locus: $w(t_1)=b(t_1)=c(t_1)=0$. Reparameterizing by a time
shift, we assume $t_1=0$ and find $t_0>0$ such that
\begin{equation}
w(0)=b(0)=c(0)=0,\qquad\text{and } c(t)\ne0,\text{ for } t\in(0,t_0).
\end{equation}
\end{assumption}

\index[n]{t@$t\in\R$, time!$t_0,t_1$}
\index[n]{O@$O$, Landau big oh}
\index{Landau big oh}
\index{punctured neighborhood of singular locus}
\index[n]{C@$C,C_0,C_1,C_2$, local real constant}
\index[n]{f@$f_i$, function}

\begin{definition}\normalfont
We write
\begin{equation}
f_1 = f_2 + O(f_3)
\end{equation}
to mean that for some $t_1>0$, and some $C>0$,
we have $|f_1(t)-f_2(t)|\le C|f_3(t)|$ for all $t\in (0,t_1)$.
By a \emph{punctured neighborhood} of the singular locus, we mean an
interval $(0,t_1)$ on which $c$ is defined and nonzero.  (The
definition of Landau's O here departs slightly from the
definition before Lemma~\ref{lem:lambda2-Landau}, because the
definition here is one-sided and uses absolute values.)
\end{definition}

The aim of this section is to analyze the asymptotic behavior of
solutions as $t$ tends to zero.  With minor modifications, the same
analysis will apply to trajectories approaching the singular locus
from the left on $(-t_1,0)$.

\begin{theorem}\label{thm:ode-oh}
In the context of the assumptions of this section, 
let $w,b,c$ be a solution to the ODE of Theorem~\ref{thm:ode-hyperboloid}
on $(0,t_1)$, with $c(t)\ne0$ for all $t\in(0,t_1)$.
Assume the solution extends continuously to $w(0)=b(0)=c(0)=0$
at time $t=0$.
The ODEs and solutions $w,b,c$ satisfy the following estimates.
\begin{align*}
    w' &= -i\rho  \frac{c}{|c|} + O(|w|),\\
    b' &= 2iw + O(|b|),\\
    c' &= -3ib +O(|c|+|bw^2|+|b|^2|w|).
\end{align*}
\[
|w|=O(t),\quad |b|=O(t^2),\quad |c|=O(t^3),\quad 
|c|+|bw^2|+|b|^2|w| = O(t^3).
\]
\end{theorem}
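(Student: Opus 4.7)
The plan is to verify the ODE estimates by direct Taylor expansion near the singular locus, then bootstrap the polynomial decay of $|w|,|b|,|c|$ by successive applications of Gronwall's inequality.

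First I would establish the three ODE estimates. By the assumptions, $\epsilon=1$, $d_1=3/2$, $\lambda_{cost}=-1$, so $2d_1\abss{b}_\epsilon+3\lambda_{cost}=3(\abss{b}-1)=O(|b|^2)$ and $2bd_1=3b$. Near the singular locus we have $\abss{w}=1+O(|w|^2)$, $\abss{b}=1+O(|b|^2)$, $\mu(w,z^*)=1+O(|w|)$, and by Lemma~\ref{lem:maximizing-root}, $z^*=c/|c|+O(|w|)$. Substituting these into \eqref{eqn:eqn-w} gives $w - \rho\abss{w}z^* = -\rho c/|c| + O(|w|)$, and dividing by $\mu(w,z^*)$ yields the desired estimate for $w'$. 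For $b'$, expanding \eqref{eqn:eqn-b} gives $b' = 2iw + 2ib + O(|b|^2|w|+|b||w|^2) = 2iw + O(|b|)$. For $c'$, expanding $\aa_0(w,c)=2+O(|w|^2)$ and $\Re(c\aa_0,z^*)=2|c|+O(|c||w|)$, the first term in \eqref{eqn:eqn-c} becomes $i(1-\rho^2)c+O(|c||w|)$, which fits into $O(|c|)$; the second term produces $-3ib$ plus errors of size $O(|b|^2|w|+|b||w|^2)$.

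Next I would obtain the polynomial decay estimates by a three-step bootstrap. Since $|c/|c||=1$ and the error term in the estimate for $w'$ is $O(|w|)$, the derivative $|w'|$ is bounded on a neighborhood of $0$ by some constant $C_1$. Integration from $0$, using $w(0)=0$, gives $|w(t)|\le C_1 t$, hence $|w|=O(t)$. Inserting this into the estimate for $b'$ yields $|b'|\le 2C_1 t + C_2|b|$, and since $b(0)=0$, Gronwall's inequality (Appendix~\ref{sec:gronwall}) delivers $|b(t)|=O(t^2)$. Finally, inserting $|w|=O(t)$ and $|b|=O(t^2)$ into the estimate for $c'$ gives
\[
|c'|\le 3|b|+C_3|c|+C_3(|b||w|^2+|b|^2|w|)\le C_4 t^2 + C_3|c|,
\]
since $|b||w|^2=O(t^4)$ and $|b|^2|w|=O(t^5)$. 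A third application of Gronwall with $c(0)=0$ produces $|c|=O(t^3)$.

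The combined estimate follows immediately: $|c|=O(t^3)$ is the dominant term, while $|bw^2|=O(t^4)$ and $|b|^2|w|=O(t^5)$ are strictly smaller.

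The only delicate point is that the vector field for $w'$ involves the direction $c/|c|$, which is not Lipschitz at $c=0$; however, by hypothesis $c$ is nowhere zero on $(0,t_1)$, so the ratio is a well-defined unit-modulus function there, and only the bound $|c/|c||=1$ is used in the bootstrap. Everything else is a routine Taylor expansion combined with Gronwall, so I do not expect a serious obstacle.
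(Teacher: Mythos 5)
Your proposal is correct and follows essentially the same approach as the paper: Taylor-expanding the hyperboloid ODEs near the singular locus using $\abss{w}=1+O(|w|^2)$, $\abss{b}=1+O(|b|^2)$, $\mu=1+O(|w|)$, and $z^*=c/|c|+O(|w|)$ from Lemma~\ref{lem:maximizing-root}, then bootstrapping the polynomial decay $|w|=O(t)$, $|b|=O(t^2)$, $|c|=O(t^3)$ by successive integration. The only difference is organizational: you derive all three ODE estimates first and then bootstrap with Gronwall, whereas the paper interleaves crude $O(t)$ bounds with the refinements (first getting $|b|=O(t)$ then feeding back to get $|b|=O(t^2)$ before doing the asymptotic expansions); both orderings are valid since the expansions only require smallness of $w,b,c$ from the continuity hypothesis, not the polynomial rates.
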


%% TCH. We need the following weaker assumptions for triangular control case.
\begin{remark}
The proof uses the assumptions $d_1=3/2$ and $\lambda_{cost}=-1$, but
not the assumptions $\H=0$ and $\A=3$.  If we do not assume the
circular control condition of Lemma~\ref{lem:maximizing-root}, but
only that $|z^*|\le1$, then the ODE for $w$ takes the form
\[
w' = -i\rho z^* + O(|w|).
\]
\end{remark}

\begin{proof}
We start with some easy approximations of the sizes of terms in
the ODEs.  The terms $\abss{w}$ and $\mu(w,z^*)$ in the
system of ODEs tend to $1$ as $t\to0$.
The right-hand side of the ODEs are bounded near $t=0$
and the initial conditions are $w(0)=b(0)=c(0)=0$.
Thus, $|w|'\le |w'|\le C$, so that $|w|=O(t)$.  Similarly,
\[
|b|=O(t),\quad |c|=O(t).
\]
Feeding these bounds back into the ODE \eqref{eqn:eqn-b} for $b$,
we obtain $|b|'\le |b'| = O(t)$.  
Thus, $|b|=O(t^2)$. Now
\begin{align*}
\abss{w} &= 1 + O(|w|^2),\quad \abss{b}=1+O(|b|^2),\quad
%-3 + 2 \abss{b}d_1 &= O(|b|^2),\text{ where } d_1=3/2.\\
\mu(w,z^*) = 1 + O(|w|).
\end{align*}
We return to the ODE \eqref{eqn:eqn-w} for $w$
and use Lemma~\ref{lem:maximizing-root} to write it
\[
w' = -i\rho  \abss{w} z^*/\mu + i w/\mu = -i\rho  c/|c| + O(|w|).
\]
We return to the ODE \eqref{eqn:eqn-b} for $b$
and write it
\[
b' = 2i\abss{b} w + 2i b\abss{w} = 2 i w + O(|b|).
\]
The ODE \eqref{eqn:eqn-c} for $c$
takes the form $c'=A(c)+f$, where 
\index[n]{A@$A$, matrix or linear map!as bounded operator}
\index[n]{f@$f$, term of an ODE}
\[
A(c)=\frac{i(1-\rho ^2) \Re(c\aa_0(w,c),z^*)}{2\abss{w}\mu(w,z^*)^2}z^* = O(|c|)
\] 
is a bounded operator, which is linear in the real and imaginary parts
of $c$ through the subterm $c\aa_0(w,c)=2c+|w|^2c-\bar{c}w^2$. The
inhomogeneous term $f$ is
\begin{align*} %TCH 5/3/2024. Detail added.
- i(2bd_1\abss{w}&+(2d_1\abss{b}_\epsilon+3\lambda_{cost})w)
\\
&= -3ib\abss{w}  -3i (\abss{b}_\epsilon-1)w
\\
&= -3ib + O(|b||w|^2 + |b|^2|w|).
\end{align*}
%f := -3ib\abss{w} 
So the ODE for $c$ takes the form
\[
c' = -3ib + O(|bw^2|+|b|^2|w|+|c|).
\]
We then have $|c|'\le |c'| \le C_0 |c| + O(t^2)$, for some $C_0>0$.  
So $|c|=O(t^3)$.
This completes the proof.
\end{proof}
\index[n]{C@$C,C_0,C_1,C_2$, local real constant}

\section{Hamiltonian Dynamics of the Truncated System}\label{sec:fuller-system}

\index[n]{0@$-_F$, Fuller truncation}
\index[n]{w@$w$, hyperboloid coordinate!$w_F$, truncated}
\index[n]{b@$b,c$, hyperboloid coordinates!$b_F,c_F$, truncated}
\index[n]{H@$\H$, Hamiltonian!$\H_F$, Fuller system}
\index[n]{A@$\A$, angular momentum!$\A_F$, truncated}
\index[n]{yz@$z_i$, Fuller system!component}

Following Theorem~\ref{thm:ode-oh}, 
we create a \emph{truncated system} of ODEs given by 
\begin{align}
    c_F' &= -3ib_F \label{eqn:eqn-cF} 
\\
    b_F' &= 2iw_F  \label{eqn:eqn-bF} 
\\
    w_F' &= -i\rho  \frac{c_F}{|c_F|},  \label{eqn:eqn-wF}
\end{align}
where the big-oh terms are discarded. This new system governs the
dynamics of the Reinhardt system very close to the singular locus.  We
introduce new coordinates
\begin{align}\label{eqn:wbc-to-z}
% DONE[] N.B.! Corrected z2 on 11/23/2023.
z_3 := c_F/(6\rho),\quad
%z_2 :=c_F'/6\rho  = -ib_F/3\rho ,\quad
z_2 :=c_F'/(6\rho)  = -ib_F/(2\rho) ,\quad
z_1 := c_F''/(6\rho)  =w_F/\rho 
\end{align}
so that the truncated system in
equations \eqref{eqn:eqn-cF}, \eqref{eqn:eqn-bF}, \eqref{eqn:eqn-wF}
becomes
\begin{equation}\label{eqn:fuller-system} 
    z_3' = z_2, \quad 
    z_2' = z_1, \quad
    z_1' = -i\frac{z_3}{|z_3|}.
\end{equation}
This is the Fuller system of length $3$, complex dimension $1$, and
multiplier $-i$.
We also write $z_0 = -i z_3/|z_3|$, so that $z_1'=z_0$.
In this section, we study this Fuller system.

% DONE[TCH 5/3/2024. Yes, the equations were slightly off and are now
% corrected.  Formula for HF AF in wbc coordinates deleted, but a
% remark was added.] Check that the truncated Hamiltonians, in
% eqn:truncated-ham are exactly correct, and not off by a trivial
% scalar.
The truncated Hamiltonian and angular momentum are defined as
\begin{align}\label{eqn:truncated-ham}
\begin{split}
\H_F &:= \frac{i}{2}\left( z_2 \bar{z}_1 
- \bar{z}_2z_1\right) + \sqrt{z_3 \bar{z}_3} 
\\
\A_F &:= {z_2} \bar{z}_2 - \left(z_1 \bar{z}_3 + \bar{z}_1z_3\right).
\end{split}
\end{align}
\begin{remark} %TCH Remark added 5/3/2024.
These definitions come from the leading term of the
Hamiltonian and angular momentum for the Reinhardt system.
Writing the Reinhardt quantities $\H$ and $\A$ as functions of $w,b,c$
and $z^* = c/|c| + O(t)$ and their conjugates, we formally expand
using \eqref{eqn:wbc-to-z}.
\begin{align*}
\H(t w_F,t\bar{w}_F,t^2b_F,t^2\bar{b}_F,t^3c_F,t^3\bar{c}_F,\cdots)&=
6\rho^2 t^3\H_F + O(t^4)\\
\A(t w_F,t\bar{w}_F,t^2 b_F,\cdots) &= 3 + 6\rho^2 t^4\A_F + O(t^5),
\end{align*}
% Remark extended 5/6/2024.
Also, if a Hamiltonian for the Fuller system depends on a control
through a term $\Re(z_3,u)$, where the control $u$ satisfies
$|u|\le1$, then the maximized Hamiltonian is achieved when
$u=z_3/|z_3|$, and the term in the Hamiltonian becomes
$\Re(z_3,u)=\sqrt{z_3\bar{z}_3}$, as we find in the formula for
$\H_F$. Thus, $\H_F$ is to be viewed as the maximized Hamiltonian.
\end{remark}

\begin{theorem}[Fuller Hamiltonian System]
The Fuller system \eqref{eqn:fuller-system} is Hamiltonian with
respect to a non-standard Poisson
bracket~\eqref{eqn:fuller-poisson-brack}. The angular momentum is in
involution with the Hamiltonian with respect to this
bracket~\eqref{eqn:non-standard-involution}.  The Poisson bracket
satisfies the Jacobi identity.
\end{theorem}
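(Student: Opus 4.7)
I will exhibit an explicit constant-coefficient Poisson bracket on $\C^3\setminus\{z_3=0\}$ whose Hamiltonian vector field for $\H_F$ is the Fuller system, and verify the Jacobi identity and involution by direct computation. Reverse-engineering Hamilton's equations $z_i'=\{z_i,\H_F\}$ from the Fuller system~\eqref{eqn:fuller-system} together with the partials
\[
\partial_{\bar z_1}\H_F=\tfrac{i}{2}z_2,\qquad \partial_{\bar z_2}\H_F=-\tfrac{i}{2}z_1,\qquad \partial_{\bar z_3}\H_F=\frac{z_3}{2|z_3|}
\]
forces the non-standard pairings
\[
\{z_1,\bar z_3\}=\{z_3,\bar z_1\}=-2i,\qquad \{z_2,\bar z_2\}=2i,
\]
together with $\{z_i,z_j\}=\{\bar z_i,\bar z_j\}=0$ and every other mixed bracket zero. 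This is the candidate formula~\eqref{eqn:fuller-poisson-brack}; extension to smooth functions is by the Leibniz rule, and the conjugation-compatibility $\overline{\{f,g\}}=\{\bar f,\bar g\}$ of these values shows the bracket is real when restricted to $C^\infty(\R^6,\R)$.

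\textbf{Key steps.} With the bracket in hand, claim~1 is routine: $\{z_3,\H_F\}=\{z_3,\bar z_1\}\partial_{\bar z_1}\H_F=(-2i)(iz_2/2)=z_2$, and similarly $\{z_2,\H_F\}=(2i)(-iz_1/2)=z_1$ and $\{z_1,\H_F\}=(-2i)(z_3/(2|z_3|))=-iz_3/|z_3|$, matching the Fuller system term by term. For claim~3, the structure constants $\pi^{ij}$ are literally constants, so every summand in
\[
\sum_l\bigl(\pi^{il}\partial_l\pi^{jk}+\pi^{jl}\partial_l\pi^{ki}+\pi^{kl}\partial_l\pi^{ij}\bigr)
\]
vanishes identically; equivalently, the Schouten bracket $[\pi,\pi]$ of the defining bivector with itself is zero because no derivative of a coefficient survives. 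For claim~2, the cleanest route is to compute $\A_F'$ along the Fuller flow directly: differentiating $\A_F=z_2\bar z_2-z_1\bar z_3-\bar z_1 z_3$ and substituting the equations yields the six terms $z_1\bar z_2$, $z_2\bar z_1$, $+i|z_3|$, $-z_1\bar z_2$, $-i|z_3|$, $-z_2\bar z_1$, which cancel pairwise. Since the Fuller flow is the Hamiltonian flow of $\H_F$ for our bracket, this equals $\{\A_F,\H_F\}$, whence $\{\H_F,\A_F\}=0$, which is~\eqref{eqn:non-standard-involution}.

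\textbf{The main obstacle.} The difficulty is conceptual rather than computational: one must recognize that the correct pairing crosses the chain structure of the Fuller system, identifying $z_1$ with $\bar z_3$ (first with last) and self-pairing $z_2$ with $\bar z_2$, rather than using the canonical pairing $z_i\leftrightarrow\bar z_i$. Once this is guessed, the bracket is forced by Hamilton's equations and all remaining steps are mechanical. The singularity of $\H_F$ at $z_3=0$ poses no obstruction to the proof: the bracket itself has constant coefficients and is globally smooth and nondegenerate, and the Hamiltonian formulation is needed only on the open set $\{z_3\ne 0\}$ where the Fuller system~\eqref{eqn:fuller-system} is itself defined.
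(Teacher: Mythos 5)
Your proposal is correct and follows essentially the same approach as the paper: define (or, in your case, reverse-engineer) the bracket \eqref{eqn:fuller-poisson-brack}, verify term by term that Hamilton's equations reproduce \eqref{eqn:fuller-system}, and check the Jacobi identity and the involution by direct computation. The paper's proof is more terse — it simply asserts the verifications can be done — while you supply the useful extra observations that the Jacobi identity is automatic because the Poisson bivector has constant coefficients, and that the involution can be read off from $\A_F'=0$ along the flow once claim 1 is in place.
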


\index{involution with respect to the bracket}
\index[n]{F@$F,G$, smooth functions}

\begin{proof}
\index{Poisson!bracket}

We regard $z_1$, $z_2$, and $z_3$ as coordinate functions $\C^3\to\C$
and let $\bar{z}_j$ denote the conjugates of these coordinate
functions.  For smooth functions $F,G : \C^3 \to \C$, 
expressed as functions of $z_j$ and $\bar{z}_j$, we
define their non-standard Poisson bracket as
\begin{align}\label{eqn:fuller-poisson-brack}
\index[n]{0@$\{-,-\}$, Poisson bracket!$-_F$, Fuller}
\{F,G\}_F &:= \sum_{j=1}^3
(-1)^{j} 2i\left(
\frac{\partial F}{\partial z_j}
\frac{\partial G}{\partial \bar{z}_{4-j}} -
\frac{\partial F}{\partial \bar{z}_{4-j}}
\frac{\partial G}{\partial {z}_{j}}
\right)\\
&=
\frac{2}{i} \left(\frac{\partial F}{\partial z_1} 
\frac{\partial G}{\partial \bar{z}_3} 
- \frac{\partial F}{\partial \bar{z}_3} \frac{\partial G}{\partial z_1} \right) 
+ \frac{2}{i}\left( - \frac{\partial F}{\partial z_2} \frac{\partial G}{\partial \bar{z}_2} 
+  \frac{\partial F}{\partial \bar{z}_2} \frac{\partial G}{\partial z_2} \right) 
+ \frac{2}{i}\left(\frac{\partial F}{\partial z_3} \frac{\partial G}{\partial \bar{z}_1} 
-  \frac{\partial F}{\partial \bar{z}_1} 
\frac{\partial G}{\partial z_3} \right).
\end{align}
%% DONE[TCH 5/3/2024: Jacobi identity verified and stated.]
%% Kody todo: Does the Poisson bracket satisfy the Jacobi identity,
%% etc.? Don't need to write this out.  Just assert it.

%% DONE[TCH 5/3/2024. The control is is gone. The question is nonsense]
%% What control is being used? Do you mean $\Hstar_F$ rather
%% than $\H_F$?
We can now verify directly that the Fuller
equations \eqref{eqn:fuller-system} become
\begin{align*}
    z_1' &= \{z_1,\H_F\}_F,
\\
    z_2' &= \{z_2,\H_F\}_F,
\\
    z_3' &= \{z_3,\H_F\}_F,
\end{align*}
which are Hamilton's equations for this Poisson bracket. We can also
verify that 
\begin{equation}\label{eqn:non-standard-involution}
\{\H_F,\A_F\}_F = 0
\end{equation}
and that the Jacobi identity is
satisfied.
\end{proof}

%% DONE[TCH 5/3/2024]: To be consistent with discrete Fuller, make
%% \mathcal{G} the whole thing
\index{virial!action}
\index[n]{0@$\cdot$, action!virial}
\index[n]{G@$\G$, virial group}
\index[n]{zh@$\theta$, angle}

\begin{definition}[virial action]
Define the virial group to be the two-dimensional scaling group
$\G = \SO \times \R_{>0}$.  The virial
group acts on the Fuller system \eqref{eqn:fuller-system} by
the rule
\begin{equation}\label{eqn:virial}
    (\exp(i\theta),r)\cdot(z_1(t),z_2(t),z_3(t)) 
:= (\exp(i\theta)r z_1(t/r),\exp(i\theta)r^2 z_2(t/r),\exp(i\theta)r^3 z_3(t/r)).
\end{equation}
\end{definition}

%% DONE[TCH 5/3/2024. Too late. Keep as is. Sentence moved.] State
%% what the Kepler dynamical system is and what the action is.  Which
%% version of the virial group is this?
The name \emph{virial group} comes from a similar group that goes by this
name, which acts on
the Kepler dynamical
system~\cite{cushman1997global}.

\index{Kepler dynamical system}
\index[n]{zt@$\tau$, time reversal involution}
\index{time reversal, $\tau$}

If $z=(z_1,z_2,z_3)$ is a solution, then
$(\exp(i\theta),r)\cdot z$ is also a solution.  We also have
an involution given by time reversal
\begin{equation*}
\tau \cdot (z_1(t),z_2(t),z_3(t)) 
:= (\bar{z}_1(-t),-\bar{z}_2(-t),\bar{z}_3(-t))
%% TCH sign added to 2nd coordinate on 12/3/2022.
\end{equation*}
that carries solutions to solutions.

It is noteworthy that the rotation group $\SO$ is a symmetry of the
system and as a result of the classical Noether theorem, we recover
$\A_F$ as a conserved quantity.  The truncated angular momentum and
the Hamiltonian are \emph{exactly conserved} for the truncated
system. Both are identically zero along trajectories that approach the
singular locus.  The next proposition shows that the Poisson bracket
in \eqref{eqn:fuller-poisson-brack} arises via a symplectic structure
on $\C^3$.

\index[n]{zz@$\omega$, two-form!on $\C^3$}
\index[n]{F@$F,G$, smooth functions}
\index[n]{0@$-\vec{\phantom{-}}$, vector field of function}
\index[n]{0@$\wedge$, wedge product of differential forms}

\begin{proposition}
Let $F,G$ be smooth, real-valued functions on $\C^3$. Consider
the following symplectic form on $\C^3$:
\begin{equation}\label{eqn:fuller-symplectic-form}
\omega_F := \sum_{j=1}^3 \frac{(-1)^{j}}{2i}  dz_j \wedge d\bar{z}_{4-j},
\end{equation}
Let $\vec{F}$ and $\vec{G}$ denote the
Hamiltonian vector fields of smooth functions
$F,G$ with respect to this symplectic form. Then
we have
\[
\{F,G\}_F = \omega_F(\vec{F},\vec{G}).
\]
\end{proposition}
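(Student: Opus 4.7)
The plan is to verify this identity as a direct computation in Wirtinger coordinates, after writing out $\vec F$ explicitly. First I would observe that $\omega_F$ has constant coefficients and is therefore closed, and that the pairings $\omega_F(\partial/\partial z_j,\partial/\partial\bar z_k) = \frac{(-1)^j}{2i}\delta_{k,4-j}$ assemble into an invertible matrix, so $\omega_F$ is genuinely a symplectic form and the defining equation $i_{\vec F}\omega_F = dF$ determines $\vec F$ uniquely.

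Next I would compute $\vec F$ by expanding an arbitrary real vector field as $\vec V = \sum_{j=1}^{3} a_j\,\partial/\partial z_j + \bar a_j\,\partial/\partial\bar z_j$ and using $i_{\vec V}(dz_j\wedge d\bar z_{4-j}) = a_j\,d\bar z_{4-j} - \bar a_{4-j}\,dz_j$ to obtain
\[
i_{\vec V}\omega_F \;=\; \sum_{j=1}^{3}\frac{(-1)^j}{2i}\bigl(a_j\,d\bar z_{4-j} - \bar a_{4-j}\,dz_j\bigr).
\]
Matching coefficients of $d\bar z_k$ (via the substitution $k=4-j$) against $dF$ gives $a_j = 2i(-1)^j\,\partial F/\partial\bar z_{4-j}$, and the coefficients of $dz_k$ produce the complex-conjugate relation, consistent with $F$ being real. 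Hence
\[
\vec F \;=\; \sum_{j=1}^{3} 2i(-1)^j\!\left(\frac{\partial F}{\partial\bar z_{4-j}}\frac{\partial}{\partial z_j} - \frac{\partial F}{\partial z_{4-j}}\frac{\partial}{\partial\bar z_j}\right).
\]

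Finally I would evaluate $\omega_F(\vec F,\vec G) = (i_{\vec F}\omega_F)(\vec G) = dF(\vec G)$ by applying $dF$ to the analogous formula for $\vec G$, obtaining
\[
\omega_F(\vec F,\vec G) \;=\; \sum_{j=1}^{3} 2i(-1)^j\!\left(\frac{\partial G}{\partial\bar z_{4-j}}\frac{\partial F}{\partial z_j} - \frac{\partial G}{\partial z_{4-j}}\frac{\partial F}{\partial\bar z_j}\right).
\]
The first summand already matches the first term of $\{F,G\}_F$; the second summand becomes the second term of $\{F,G\}_F$ after the reindexing $j\mapsto 4-j$, which is a bijection of $\{1,2,3\}$ preserving $(-1)^j$ since $(-1)^{4-j}=(-1)^j$. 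The only point of care will be fixing the sign convention $i_{\vec F}\omega = dF$, so that $\{F,G\}_F = \vec G[F]$; this convention is consistent with the chapter's usage $z_j' = \{z_j,\H_F\}_F$, as one verifies by the check $\vec{\H_F}[z_1] = -2i\,\partial\H_F/\partial\bar z_3 = -iz_3/|z_3| = z_1'$. There is no genuine obstacle beyond this bookkeeping.
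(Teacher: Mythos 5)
Your proof is correct and follows essentially the same route as the paper: write $\vec F$ in Wirtinger coordinates, pin down its components from the defining condition $i_{\vec F}\omega_F = dF$, and then evaluate $\omega_F(\vec F,\vec G)=dF(\vec G)$, using the observation that $j\mapsto 4-j$ preserves $(-1)^j$ to match the second term of $\{F,G\}_F$. The only cosmetic difference is that you derive the formula for $\vec F$ by interior-product matching where the paper posits it and leaves the verification as an exercise; your explicit sign check against $z_1'=-iz_3/|z_3|$ is a sound sanity test of the convention.
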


%% DONE[]: TCH: I have replaced the proof.

\begin{proof} 
% DONE[TCH 5/3/2024.] Check signs.
We claim that
\begin{equation}\label{eqn:vecF}
\vec{F} 
= \sum_{j=1}^3(-1)^j2i\left(\frac{\partial F}{\partial \bar{z}_{4-j}}
\frac{\partial}{\partial z_j} 
- \frac{\partial F}{\partial z_{4-j}}
\frac{\partial}{\partial \bar{z}_j} \right).
\end{equation}
To see this, we check that if $\vec{F}_{rhs}$ is the right-hand side
of \eqref{eqn:vecF}, then the defining conditions of $\vec{F}$ all
hold
\begin{align*}
\omega_F(\vec{F}_{rhs},\partial/\partial z_k) &= \bracks{dF}{\partial/\partial z_k}_*=
\partial F/\partial z_k\\
\omega_F(\vec{F}_{rhs},\partial/\partial \bar{z}_k) &= \bracks{dF}{\partial/\partial \bar{z}_k}_* =
\partial F/\partial \bar{z}_k,
\quad k=1,2,3.
\end{align*}
We leave this as a routine exercise for the reader.
Similarly,
\begin{align*}
\vec{G} 
= \sum_j(-1)^j2i\left(\frac{\partial G}{\partial \bar{z}_{4-j}}
\frac{\partial}{\partial z_j} 
- \frac{\partial G}{\partial z_{4-j}}
\frac{\partial}{\partial \bar{z}_j} \right).
\end{align*}
From these explicit formulas for $\vec{F}$ and $\vec{G}$, 
we find that the Poisson bracket of two functions $F$ and
$G$ is
\[
% TCH 5/3/2024: swapped F<->G for consistency.
\{F,G\}_F = \bracks{dF}{\vec{G}}_*
= \omega_F(\vec{F},\vec{G})
= 
\sum_j(-1)^j2i\left(
\frac{\partial F}{\partial z_j} 
\frac{\partial G}{\partial \bar{z}_{4-j}}
- 
\frac{\partial F}{\partial \bar{z}_j}
\frac{\partial G}{\partial z_{4-j}} \right).
\]
and thus, we obtain the required. 
\end{proof}

\index[n]{a@$a_j,b_j\in\C$, complex coefficients}

\bigskip

We can also generalize to the length $n$ Fuller system of complex dimension
$1$ and multiplier $\gamma=i^n$.
\begin{equation}\label{eqn:n-Fuller}
z_n' = z_{n-1},\quad z_{n-1}'=z_{n-1},\cdots,\quad z_1' = \gamma z_n/|z_n|=z_0,\quad\gamma=i^n.
\end{equation}

\index[n]{zz@$\omega$, two-form!$\omega_n$, for length $n$ Fuller system}
\index[n]{H@$\H$, Hamiltonian!$\H_n$, length $n$ Fuller system}
\index[n]{A@$\A$, angular momentum!$\A_n$, of length $n$ Fuller}

\begin{definition}[Fuller symplectic form]\label{def:fuller-symplectic-form}
On $\C^n$, we have the following symplectic form.
\begin{equation*}
%% TCH 4/6/2024: Last moment change by a scalar constant. Was:
%% \omega_n := \sum_{j=1}^n \frac{(-1)^{j}}{2i}  dz_j \wedge d\bar{z}_{n-j+1}.
\omega_n := \bar{\gamma}\sum_{j=1}^n (-1)^{j-1}\,  dz_j \wedge d\bar{z}_{n-j+1}.
\end{equation*}
\end{definition}

%% DONE[TCH. page 109 of Kody's thesis gamma = i^n\R^\times.  i^n used here]
%% Fuller equations repeated here.
%% Kody: Give the multiplier that is used.  Bad reference to eqn:fuller-system-len-n.

%% DONE[TCH 5/6/2024: sum should start at j=0. Fixed] TCH:
%% 5/3/2024. Recheck theorem and defs! H_n seems off. H_3\ne\H_F!!
\begin{theorem}
The length $n$ Fuller system \eqref{eqn:n-Fuller} is the Hamiltonian
vector field (with respect to the Fuller symplectic form) of the
Hamiltonian.
\begin{equation*}
\H_n := \sum_{j=0}^n (-1)^j \RR(z_j,i^n z_{n-j}).
\end{equation*}
The angular momentum
\begin{equation*}
\A_n := \sum_{j=1}^n (-1)^j\RR(i^{n+1}z_j,z_{n-j+1})
\end{equation*}
is conserved along this system. 
\end{theorem}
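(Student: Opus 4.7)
The plan is to adapt the $n=3$ proposition verbatim, treating the length-$n$ system as its natural generalization. First I would derive the Poisson bracket $\{\cdot,\cdot\}_n$ dual to $\omega_n$ by solving $\omega_n(\vec F,\cdot) = dF$ in Wirtinger coordinates. Because $\omega_n$ has constant coefficients in the basis $\{dz_j,d\bar z_{n-j+1}\}$, the inversion is linear-algebraic and yields
\[
\vec F \;=\; \gamma \sum_{j=1}^n (-1)^{j-1}\!\left(\frac{\partial F}{\partial \bar z_{n-j+1}}\frac{\partial}{\partial z_j} \;-\; \frac{\partial F}{\partial z_{n-j+1}}\frac{\partial}{\partial \bar z_j}\right),
\]
so that $\{F,G\}_n = \omega_n(\vec F,\vec G)$ is the constant-coefficient bracket obtained by pairing these formulas. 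The $n=3$ computation of the preceding proof is the template, and no new ideas are required.

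Next I would compute $\vec{\H}_n$ and match it against \eqref{eqn:n-Fuller}. Expanding $\RR(z_j,i^n z_{n-j}) = \tfrac{1}{2}(\bar\gamma \bar z_j z_{n-j} + \gamma z_j \bar z_{n-j})$, the interior terms ($1 \le j \le n-1$) are quadratic in the $z_k,\bar z_k$, so their Wirtinger derivatives are linear; the endpoint terms $j=0$ and $j=n$ both collapse to $|z_n|$, using $z_0 = \gamma z_n/|z_n|$ together with $i^{2n}=(-1)^n$, giving a total smooth contribution $2|z_n|$ away from $z_n = 0$ whose Wirtinger derivatives are $\bar z_n/|z_n|$ and $z_n/|z_n|$. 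Upon forming $z_k' = \{z_k,\H_n\}_n$ and applying the index-reversal $j \leftrightarrow n-j+1$ built into $\vec F$, the interior contributions produce $z_k' = z_{k-1}$ for $k \ge 2$, while the endpoint contribution produces $z_1' = \gamma z_n/|z_n|$, precisely \eqref{eqn:n-Fuller}. Any overall scalar mismatch is absorbed by the prefactor $\bar\gamma$ in Definition \ref{def:fuller-symplectic-form}.

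For conservation of $\A_n$, rather than compute $\{\H_n,\A_n\}_n$ directly, I would argue by Noether-type symmetry. Under the diagonal circle action $z_j \mapsto e^{i\theta}z_j$ for $j=1,\dots,n$, each bilinear $\bar z_j z_{n-j}$ appearing in $\H_n$ is invariant, and the endpoint term $|z_n|$ is manifestly rotation invariant. An elementary check shows that $\A_n$ is (up to sign) the moment map of this action with respect to $\omega_n$, because its partial derivatives reproduce the infinitesimal generator $i z_j \,\partial/\partial z_j - i\bar z_j\,\partial/\partial \bar z_j$ after contraction with $\omega_n$. Invariance of $\H_n$ then gives $\{\H_n,\A_n\}_n = 0$, and Hamilton's equation $\A_n' = \{\A_n,\H_n\}_n$ yields conservation along the flow.

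The principal obstacle is purely combinatorial: tracking the $(-1)^{j-1}$ signs from $\omega_n$, the $i^n$ factor in $\H_n$, and the reindexing $j \leftrightarrow n-j+1$ simultaneously, while verifying that the endpoint terms (where $z_0$ is not an independent coordinate but the function $\gamma z_n/|z_n|$) aggregate correctly after $z_0$ is eliminated. The $n=3$ case in the preceding proposition handles each of these complications explicitly, so the general-$n$ verification reduces to a systematic but tedious Wirtinger computation that can, if desired, be delegated to symbolic algebra.
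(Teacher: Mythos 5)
Your plan is sound and the first half (inverting $\omega_n$ to get the Poisson bracket, matching $\vec\H_n$ to the length-$n$ Fuller system) follows the same route as the paper: the paper also writes $\vec G = \tfrac{1}{\bar\gamma}\sum_{j=1}^n(-1)^{j-1}\bigl(\tfrac{\partial G}{\partial\bar z_{n-j+1}}\partial_{z_j} - \tfrac{\partial G}{\partial z_j}\partial_{\bar z_{n-j+1}}\bigr)$ and then verifies $z_{j-1}=\{z_j,\H_n\}$ by direct substitution. (Note $1/\bar\gamma=\gamma$ since $|\gamma|=1$, so your prefactor matches.) Where you genuinely diverge is the conservation of $\A_n$. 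The paper simply states that differentiating $\A_n$ along the flow gives zero; you instead identify $\A_n$ as (up to sign) the moment map of the diagonal circle action $z_j\mapsto e^{i\theta}z_j$ with respect to $\omega_n$, observe that $\H_n$ is manifestly $S^1$-invariant (each interior $\bar z_j z_{n-j}$ has weight zero and $2|z_n|$ is invariant), and conclude $\{\H_n,\A_n\}=0$ by Noether. This is more conceptual than the paper's brute-force differentiation and, if it were carried out, would tie the length-$n$ conservation law cleanly to the same Noether-Sussmann picture used in the Reinhardt problem proper. Both work.

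Two small flags, both of the bookkeeping variety you already anticipated. In the expansion of $\RR$ you wrote $\RR(z_j,i^n z_{n-j}) = \tfrac12(\bar\gamma\bar z_j z_{n-j} + \gamma z_j\bar z_{n-j})$; since $\RR(a,b)=\Re(\bar a b)$ this should be $\tfrac12(\gamma\bar z_j z_{n-j}+\bar\gamma z_j\bar z_{n-j})$, i.e.\ $\gamma$ and $\bar\gamma$ are swapped, which matters for odd $n$ (the relevant case here). And in your second term of $\vec F$ you have $\tfrac{\partial F}{\partial z_{n-j+1}}\partial_{\bar z_j}$ where the paper has $\tfrac{\partial F}{\partial z_j}\partial_{\bar z_{n-j+1}}$; after reindexing $j\leftrightarrow n-j+1$ these agree only for odd $n$, so for a theorem stated for general $n$ you should write it in the paper's form. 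Neither slip threatens the argument's structure; they are exactly the hazards you describe in your final paragraph, and the moment-map check you sketch is indeed correct once the signs are tracked consistently.
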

\begin{proof}
If $G$ is a smooth function, the above proof generalizes in a
straightforward way to give the following expression for the
Hamiltonian vector field of $G$:
\begin{equation*}
%% TCH 4/6/2024 Last minute correction.
\vec{G} = 
%% WAS
%% \frac{2}{i}\sum_{j=1}^n (-1)^j
%% \left(\frac{\partial G}{\partial z_{n-j+1}}
%% \frac{\partial}{\partial \bar{z}_j} - 
%% \frac{\partial G}{\partial \bar{z}_{n-j+1}}
%% \frac{\partial}{\partial z_j} \right).
\frac{1}{\bar{\gamma}}\sum_{j=1}^n (-1)^{j-1} 
\left(\frac{\partial{G}}{\partial\bar{z}_{n-j+1}}
\frac{\partial~~}{\partial{z}_j}
-\frac{\partial{G}}{\partial{z}_j}
\frac{\partial~~~~~~}{\partial\bar{z}_{n-j+1}}\right).
\end{equation*}
Using this expression, we can compute the Hamiltonian vector field of
$\H_n$ and we recover exactly the
system \eqref{eqn:fuller-system-len-n}, showing that
\[
z_{j-1} = \{z_j,\H_n\},\quad j=1,\ldots,n.
\]
Differentiating $\H_n$ and
$\A_n$ along the length-$n$ Fuller system shows that they are
conserved.
\end{proof}

\section{Log-Spiral Solutions}\label{sec:log-spiral-solutions}

\index{log spiral}
\index[n]{0@$-^*$, special value!spiral solution}
\index[n]{i@$i=\sqrt{-1}$}
\index{Euler-Manchin identity}

The system \eqref{eqn:fuller-system} admits the following
outward-moving logarithmic spiral solution, for $t>0$.
\begin{align}\label{eqn:log-spiral}
    z_3^*(t) &= \frac{1}{10} t^{3-i}
\\
    z_2^*(t) &= \frac{(3-i)}{10}t^{2-i} \nonumber
\\
    z_1^*(t) &= \frac{(2-i)(3-i)}{10}t^{1-i} \nonumber
\\
z_0^*(t) &=  -i t^{-i}\nonumber
\\
    u^*(t) &= \rho  t^{-i}.\nonumber
    % z_0^*(t) &= s_0 t^{-i} = \frac{(1-i)(2-i)(3-i)}{10}t^{-i} = -it^{-i}
\end{align}
Here $i=\sqrt{-1}$ in the formulas.%
\footnote{The well-known
Euler-Manchin identity $(3-i)(2-i)(1-i)=-10i$ is used to verify the
solution.  In a different context, Manchin-like formulas are used to
compute digits of $\pi$.
}
Other log-spiral solutions are obtained by the action of
the viral group $\G$.
Note that the log-spiral is self-similar 
by a one-dimensional subgroup of $\G$:
\[
(r^{-i},r)\cdot z^* = z^*.
\]
We can also verify that
$\H_F(z_3^*,z_2^*,z_1^*)=\A_F(z_3^*,z_2^*,z_1^*)=0$.

% DONE[TCH 5/3/2024. Yes it is correct.] Is this the correct
% expression for inward spirals?

Time-reversal $\tau$ 
transforms the outward log-spiral into an inward log-spiral.
The inward spiral is defined for $t<t_1$.
\begin{align*}
z_3^\tau(t)&=\phantom{-}\bar{z}_3^*(t_1-t)
= \phantom{-}\frac{1}{10} (t_1-t)^{3+i},
\\
z_2^\tau(t)&=-\bar{z}_2^*(t_1-t) = -\frac{(3+i)}{10}(t_1-t)^{2+i},
\\
z_1^\tau(t)&=\phantom{-}\bar{z}_1^*(t_1-t) 
= \phantom{-}\frac{(3+i)(2+i)}{10}(t_1-t)^{1+i},
\\
z_0^\tau(t)&= -\bar{z}_0^*(t_1-t) = -i (t_1-t)^i.
\end{align*}
Here $t_1$ is arrival time at the singular locus. The trajectory can
be verified by differentiating.  During approach to the singular
locus, the optimal control for the inward log-spiral performs an
infinite number of rotations along the circle $\partial U_r$ in finite
time.

\section{Literature on Fuller Systems}

\index[n]{x@$x,y$, coordinates of the classical Fuller problem}
\index[n]{u@$u$, control!$u\in[-1,1]$, classical Fuller control}

% \vfill{}
Fuller systems (over $\R$) were first described in
Fuller~\cite{fuller1963study} and arises as the Pontryagin system of
what is now called the classical Fuller optimal control problem. This problem
can be described as
\[
x'=y, \quad y' = u, \quad \int_0^\infty x^2 dt \to \min,
\]
with initial conditions $x(0) = x_0$, $y(0) = y_0$ and $u \in [-1,1]$
is a control variable taking values in an interval. The optimal
trajectory for this problem consists of an arc whose control switches
infinitely many times at the extremes of the control set in a finite
amount of time.

Generalizations of the Fuller phenomenon are studied in the book of
Zelikin and Borisov~\cite{zelikin2012theory}. Problem 5.1 is Chapter 5
of this book is exactly the length $n$ Fuller system in
equation \eqref{eqn:fuller-system-len-n} specialized to $\R$.  This
system is called the \emph{multi-dimensional Fuller problem} with
1-dimensional control.

\index{Fuller system!multi-dimensional}

Our system in equation \eqref{eqn:fuller-system-len-n} is a mild
generalization of that system.
Problem 7.2 of Zelikin and Borisov studies a Fuller problem with
multidimensional control. In particular, equation (7.11) on page 230
of thier book is exactly our system \eqref{eqn:fuller-system-len-n}
for $n=4$. Just as we did, Zelikin and Borisov construct log-spiral
solutions to the Fuller problem for 2-dimensional control but leave
the exploration of other solutions as a research problem in Chapter 7.

The Fuller systems considered in the literature have even length. Our
system, because of left-invariance, has odd chain length. The same
remark also applies in our derivation of the extended state space
Poisson bracket (see Section \ref{sec:poisson-bracket}). Also, in our
case, the extra dimension for the control and the circular symmetry of
the control set gives us an additional symmetry and thus another
conservation law.

\index{Fuller system!ubiquity}

At first, Fuller's problem was viewed  as an
oddity \cite{fuller1963study}, but was later shown to be \emph{ubiquitous} in a very
precise sense in a paper of Kupka~\cite{kupka2017ubiquity}: so long as
the extended state space of our optimal problem is of sufficiently
high dimension, one can find a Fuller trajectory as an extremal.

\index{Poisson!descending bracket}

Recently, Zelikin, Lokutsievskii and
Hildebrand~\cite{zelikin2017typicality} show that for a
linear-quadratic optimal problem with control variables in a
two-dimensional simplex, the extremals perform infinite switchings in
finite time, and their switches are chaotic in nature. Further, they
prove that this behavior is generic for piecewise smooth Hamiltonian
systems near the junction where three hyper-surfaces meet in a
codimension 2 manifold. The main innovation in Zelikin, Lokutsievskii
and Hildebrand~\cite{zelikin2017typicality} is the so-called
\emph{descending system of Poisson brackets}, which is a clever change
of coordinates of the generic system near the singularity made so that
the results of the model problem are applicable. This method is
illustrated in the very recent paper of Manita, Ronzhina and
Lokutsievskii~\cite{ronzhina2021neighborhood}.

A Fuller system with chattering, which is similar to ours in some
respects, has been analyzed by Zelikin, Lokutsievskii, and
Hildebrand~\cite{zelikin2017typicality} They have found that on a set
of full Lebesgue measure, the dynamics of their system is particularly
simple (page 24, sec 2.8).  However, on a set of measure zero, their
dynamical system exhibits complex behavior: chaotic trajectories and a
``Cantor-like structure as in Smale's Horseshoe.''  
However, in our Fuller system, we prove that no such complexities
appear.  It remains to be seen whether Zelikin-type results can be
derived for the Reinhardt problem.

% \vfill
\clearpage
\newpage

\chapter{Global Dynamics of Fuller System}

\index[n]{M@$M$, manifold!for Fuller system dynamics}

In this chapter, we make a thorough analysis of the global
dynamics of the Fuller system (with circular control). 
Define
\[
M := \{z=(z_1,z_2,z_3)\in(\C^{\times})^3\mid 
\H_F(z)=\A_F(z)=0\}, %,\quad z_j\ne0,\quad j=1,2,3\},
\]
where $\H_F$ and $\A_F$ are the truncated Hamiltonian and angular
momentum, defined in \eqref{eqn:truncated-ham}.  Since $\A_F$ and
Hamiltonian $\H_F$ are constant along trajectories, the Fuller system
\eqref{eqn:fuller-system} can be restricted to $M$.

\begin{lemma} $M$ is a real analytic manifold of real dimension four
in $\C^3$.
\end{lemma}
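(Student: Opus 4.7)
The strategy is the standard one for cut-out sets: view $M$ as a level set of the smooth real-valued map $(\H_F,\A_F)\colon (\C^\times)^3\to\R^2$ and apply the real-analytic regular value theorem. Since the ambient space $(\C^\times)^3$ is open in $\C^3$ and has real dimension $6$, a proof that $(0,0)$ is a regular value of $(\H_F,\A_F)$ will give real dimension $6-2=4$.

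First I would check real-analyticity. Writing $z_j=x_j+iy_j$, a short computation converts the definitions \eqref{eqn:truncated-ham} into
\[
\H_F = (x_2 y_1 - x_1 y_2) + \sqrt{x_3^2+y_3^2}, \qquad
\A_F = (x_2^2+y_2^2) - 2(x_1 x_3 + y_1 y_3).
\]
The function $\A_F$ is a polynomial, hence real-analytic on all of $\C^3$, and $\H_F$ is real-analytic on the open set where $z_3\ne 0$, because $|z_3|=\sqrt{x_3^2+y_3^2}$ is analytic off its zero locus. Restricting to $(\C^\times)^3$ is therefore enough.

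The main step is to verify that $d\H_F$ and $d\A_F$ are linearly independent at every point of $M$. I would write out the two gradients in the coordinates $(x_1,y_1,x_2,y_2,x_3,y_3)$,
\[
\nabla\H_F = \bigl(-y_2,\,x_2,\,y_1,\,-x_1,\,x_3/|z_3|,\,y_3/|z_3|\bigr),\quad
\nabla\A_F = \bigl(-2x_3,\,-2y_3,\,2x_2,\,2y_2,\,-2x_1,\,-2y_1\bigr),
\]
and suppose toward a contradiction that $\nabla\H_F=\lambda\,\nabla\A_F$ for some $\lambda\in\R$. Grouping the entries by pairs and reassembling them as complex numbers gives the three relations
\[
z_2 = 2i\lambda\, z_3,\qquad z_1 = 2i\lambda\, z_2,\qquad \frac{z_3}{|z_3|} = -2\lambda\, z_1.
\]
The case $\lambda=0$ forces $z_1=z_2=0$, contradicting $z\in(\C^\times)^3$, so $\lambda\ne 0$. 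Substituting successively yields $z_1=-4\lambda^2 z_3$ and then $8\lambda^3|z_3|=1$, so $\lambda$ is real and positive with $8\lambda^3|z_3|^2=|z_3|$. Plugging these back into $\H_F$ gives
\[
\H_F = 8\lambda^3|z_3|^2 + |z_3| = 2|z_3| > 0,
\]
which contradicts $\H_F(z)=0$ on $M$. Hence $\nabla\H_F$ and $\nabla\A_F$ are linearly independent at every point of $M$, i.e.\ $(0,0)$ is a regular value.

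The real-analytic regular value theorem (a direct application of the real-analytic implicit function theorem) then gives that $M$ is a real-analytic submanifold of $(\C^\times)^3$ of real codimension $2$, hence of real dimension $4$. The only place where I expect subtlety is precisely the rank computation above: it is crucial to exploit the equation $\H_F=0$, since the gradients would otherwise be dependent along a nontrivial real curve where $|z_3|$ can be arbitrary, and using only $z\in(\C^\times)^3$ and $\A_F=0$ would not suffice.
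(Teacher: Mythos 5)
Your proof is correct and is precisely the computation behind the paper's one-line proof, which simply asserts that the gradients of $\A_F$ and $\H_F$ are linearly independent at every point of $M$; you flesh out the gradient computation in real coordinates, reassemble into complex relations $z_2 = 2i\lambda z_3$, $z_1 = 2i\lambda z_2$, $z_3/|z_3| = -2\lambda z_1$, and derive $\H_F = 2|z_3| > 0$ on the would-be dependence locus, contradicting $\H_F = 0$. One small inaccuracy in your closing remark: on that same locus one computes $\A_F = |z_2|^2 - 2\Re(z_1\bar{z}_3) = 4\lambda^2|z_3|^2 + 8\lambda^2|z_3|^2 = 12\lambda^2|z_3|^2 > 0$, so the constraint $\A_F = 0$ together with $z\in(\C^\times)^3$ would in fact also suffice to rule out dependence; the claim that $\H_F = 0$ is "crucial" is therefore not accurate, though this does not affect the correctness of the argument you actually give.
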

\begin{proof}
At every point of $M$, the gradients of $\A_F$ and $\H_F$ are
linearly independent.
\end{proof}

\section{A Fiber Bundle}

\index[n]{zZ@$\Omega\subset[0,2]^2$}
\index[n]{0@$-^0$, interior}
\index[n]{zp@$\pi:M\to\R^2$, function}
\index[n]{x@$x_i$, coordinates $(x_2,x_3)$ of $\Omega$}

Define
\[
\Omega := \{(x_2,x_3)\in\R^2\mid x_2>0,\ x_3>0,\ 
x_3\le x_2,\ \frac{1}{2}x_2^2\le x_3\}\subset [0,2]\times[0,2].
\]
Let $\Omega^0$ be the interior of $\Omega$, obtained by making
the inequalities strict.
Define $\pi:M\to\R^2$ by 
\[
\pi(z)=\pi(z_1,z_2,z_3):=
\left(\frac{|z_2|\phantom{^1}}{|z_1|^2},\frac{|z_3|\phantom{^1}}{|z_1|^3}\right)=
(x_2,x_3).
\]
\index[n]{yz@$z_i$, Fuller system!$z=(z_1,z_2,z_3)\in\C^3$}
\begin{lemma}
The image $\pi(M)$ lies in $\Omega$.
\end{lemma}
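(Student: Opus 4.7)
The plan is to show $\pi(M)\subseteq\Omega$ by reading off the two inequalities defining $\Omega^0$ directly from the vanishing of $\H_F$ and $\A_F$, and observing that the containment $\Omega\subset[0,2]^2$ is automatic.

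First, I would rewrite the two constraint functions \eqref{eqn:truncated-ham} in a more transparent form. A short calculation gives
\begin{equation*}
\H_F(z)=-\Im(z_2\bar z_1)+|z_3|,\qquad \A_F(z)=|z_2|^2-2\,\Re(z_1\bar z_3).
\end{equation*}
On $M$ both vanish, so $|z_3|=\Im(z_2\bar z_1)$ and $|z_2|^2=2\,\Re(z_1\bar z_3)$. Since $z\in(\C^\times)^3$, the quantities $x_2=|z_2|/|z_1|^2$ and $x_3=|z_3|/|z_1|^3$ are strictly positive, giving the first two defining inequalities of $\Omega$.

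For the remaining two inequalities I would apply the elementary bounds $\Im(w)\le|w|$ and $\Re(w)\le|w|$ to the identities above. From $\H_F=0$,
\begin{equation*}
|z_3|=\Im(z_2\bar z_1)\le |z_2\bar z_1|=|z_1|\,|z_2|,
\end{equation*}
and dividing by $|z_1|^3$ yields $x_3\le x_2$. From $\A_F=0$,
\begin{equation*}
|z_2|^2=2\,\Re(z_1\bar z_3)\le 2|z_1|\,|z_3|,
\end{equation*}
and dividing by $2|z_1|^4$ yields $\tfrac12 x_2^2\le x_3$. Hence $(x_2,x_3)\in\Omega$.

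Finally, the ambient bound $\Omega\subset[0,2]^2$ requires no separate argument: the chain $\tfrac12 x_2^2\le x_3\le x_2$ forces $x_2\le 2$, and then $x_3\le x_2\le 2$. There is no real obstacle here — the lemma is essentially a repackaging of the triangle inequality applied to the two conservation laws — so the only thing to be careful about is matching the sign conventions in \eqref{eqn:truncated-ham} when extracting the real and imaginary parts.
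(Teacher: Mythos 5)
Your proof is correct and follows essentially the same route as the paper's: both read off the two inequalities $x_3\le x_2$ and $\tfrac12 x_2^2\le x_3$ by applying the Cauchy–Schwarz bound ($\Im(w)\le|w|$, $\Re(w)\le|w|$) to the identities $|z_3|=\Im(z_2\bar z_1)$ and $|z_2|^2=2\Re(z_1\bar z_3)$ coming from $\H_F=0$ and $\A_F=0$, with positivity forced by $z_j\ne0$. Your closing remark explaining why $\Omega\subset[0,2]^2$ is a useful small addition that the paper leaves implicit.
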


\begin{proof}
The equality $\A_F=0$, after applying the Cauchy-Schwarz inequality to
the term $\RR(z_1,z_3)$ appearing in $\A_F$, gives that $\pi(z)$
satisfies $x_2^2/2\le x_3$.  The equality $\H_F=0$, after applying the
Cauchy-Schwarz inequality to the term $\RR(z_1,z_2i)$ appearing in
$\H_F$, gives that $\pi(z)$ satisfies $x_3\le x_2$.  By definition,
on $M$ we have $z_j\ne0$.  Thus, the image of $\pi$ is contained in
$\Omega$.
\end{proof}

\index[n]{t@$t$, real number!scalar}

Recall that the virial group acts as symmetries of the Fuller system.
The virial group $\G$ restricts to an action on $M$
because of the homogeneities.
\begin{align*}
\H_F(t z_1,t^2 z_2,t^3 z_3)&=t^3\H_F(z_1,z_2,z_3)\\
\A_F(t z_1,t^2 z_2,t^3 z_3)&=t^4\H_F(z_1,z_2,z_3),
\end{align*}
for $t>0$.  
The morphism $\pi:M\to\Omega$ is equivariant with respect to the trivial
action of the virial group on $\Omega$, and each fiber of $\pi$ is a union
of orbits of the group action.

\index[n]{sin@$\sin_i\in\R$, \emph{sine} coordinate over $\Omega$}
\index[n]{cos@$\cos_i\in\R$, \emph{cosine} coordinate over $\Omega$}
\index[n]{ze@$\epsilon\in\{-1,0,1\}$, sign!$\epsilon_i\in\R$, \emph{sign} coordinate over $\Omega$}

\begin{lemma}\label{lem:fiber}
The fiber of $\pi$ over $(x_2,x_3)\in\Omega$ is given by
\begin{align*}
z_1&\in\C^\times,
\\
z_2 &= x_2 z_1|z_1|(\epsilon_2 \cos_2+i \sin_2),
\\
z_3 &= x_3 z_1|z_1|^2(\cos_3 + i \epsilon_3 \sin_3),
\end{align*}
where 
\begin{align}
\sin_2 &:= x_3/x_2,
&\cos_2 &:= \sqrt{1-\sin_2^2} = \sqrt{1 - (x_3/x_2)^2},\nonumber
\\
\cos_3 &:= x_2^2/(2x_3),
&\sin_3 &:= \sqrt{1-\cos_3^2} = \sqrt{1 - x_2^4/(4x_3^2)},\label{eqn:cos2}
\end{align}
and $\epsilon_2,\epsilon_3\in\{\pm1\}$.
\end{lemma}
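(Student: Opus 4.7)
The plan is to prove this by direct computation, expressing $z_2$ and $z_3$ in terms of ``normalized'' variables relative to $z_1$ and then decoding the two constraint equations $\H_F = 0$ and $\A_F = 0$.

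First I would fix $z_1 \in \C^\times$ arbitrarily and introduce the auxiliary quantities
\[
\zeta_2 := \frac{z_2}{z_1 |z_1|}, \qquad \zeta_3 := \frac{z_3}{z_1 |z_1|^2} \in \C.
\]
The defining conditions $|z_2|/|z_1|^2 = x_2$ and $|z_3|/|z_1|^3 = x_3$ of the fiber $\pi^{-1}(x_2,x_3)$ translate immediately into $|\zeta_2| = x_2$ and $|\zeta_3| = x_3$. Thus each fiber point is described by a choice of $z_1 \in \C^\times$ together with $\zeta_2$, $\zeta_3$ on the circles of radii $x_2$ and $x_3$ in $\C$, and the Hamiltonian and angular-momentum constraints will cut out finitely many points on these circles.

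Next I would rewrite the Hamiltonian condition. Using $z_2 \bar z_1 = \zeta_2 |z_1|^3$, the identity $\tfrac{i}{2}(w - \bar w) = -\Im(w)$ gives
\[
\H_F(z) = -\Im(z_2 \bar z_1) + |z_3| = -|z_1|^3\,\Im(\zeta_2) + x_3 |z_1|^3,
\]
so $\H_F = 0$ is equivalent to $\Im(\zeta_2) = x_3$. Combined with $|\zeta_2|^2 = x_2^2$, this forces $\Re(\zeta_2)^2 = x_2^2 - x_3^2 \ge 0$ (which is exactly the defining inequality $x_3 \le x_2$ of $\Omega$), and hence $\Re(\zeta_2) = \epsilon_2 \sqrt{x_2^2 - x_3^2}$ for some sign $\epsilon_2 \in \{\pm 1\}$. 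In terms of $\sin_2 = x_3/x_2$ and $\cos_2 = \sqrt{1 - (x_3/x_2)^2}$ this reads $\zeta_2 = x_2(\epsilon_2 \cos_2 + i\sin_2)$, and multiplying back by $z_1|z_1|$ recovers the stated formula for $z_2$.

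For the third coordinate I would compute $z_1 \bar z_3 = \bar\zeta_3 |z_1|^4$, so that
\[
\A_F(z) = |z_2|^2 - 2\Re(z_1 \bar z_3) = x_2^2 |z_1|^4 - 2|z_1|^4\,\Re(\zeta_3),
\]
whence $\A_F = 0$ is equivalent to $\Re(\zeta_3) = x_2^2/2$. Combined with $|\zeta_3|^2 = x_3^2$, we obtain $\Im(\zeta_3)^2 = x_3^2 - x_2^4/4 \ge 0$ (this is exactly the defining inequality $x_2^2/2 \le x_3$ of $\Omega$), giving $\Im(\zeta_3) = \epsilon_3 \sqrt{x_3^2 - x_2^4/4}$ for some $\epsilon_3 \in \{\pm 1\}$. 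Rewriting via $\cos_3 = x_2^2/(2x_3)$ and $\sin_3 = \sqrt{1 - x_2^4/(4x_3^2)}$ yields $\zeta_3 = x_3(\cos_3 + i\epsilon_3 \sin_3)$, and multiplying by $z_1|z_1|^2$ gives the claimed formula for $z_3$. Conversely, every choice of $z_1 \in \C^\times$ together with signs $\epsilon_2, \epsilon_3 \in \{\pm 1\}$ produces a point of $\pi^{-1}(x_2,x_3)$ in $M$, as one verifies by running the above equivalences backward.

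There is no real obstacle here: the proof is essentially bookkeeping once one recognizes that $\H_F$ controls $\Im(\zeta_2)$ and $\A_F$ controls $\Re(\zeta_3)$, so the two constraints fix precisely one real coordinate on each of the two circles $|\zeta_2| = x_2$, $|\zeta_3| = x_3$, leaving a discrete $\{\pm 1\}^2$ choice of signs; the only thing to keep track of is the correct orientation convention for $\epsilon_2,\epsilon_3$ relative to the trigonometric parametrization.
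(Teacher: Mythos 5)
Your proof is correct and follows essentially the same route as the paper: the paper normalizes $z_1=1$ via the virial action and then reads off $\sin_2 = x_3/x_2$ from $\H_F=0$ and $\cos_3 = x_2^2/(2x_3)$ from $\A_F=0$, which is precisely what your substitution $\zeta_2 = z_2/(z_1|z_1|)$, $\zeta_3 = z_3/(z_1|z_1|^2)$ accomplishes. Your version just carries the general $z_1$ through the computation explicitly instead of normalizing it, but the content is identical.
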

\begin{remark}
The fiber satisfies identities:
\begin{align*}
\frac{z_2}{|z_2|} &= \frac{z_1}{|z_1|} (\epsilon_2\cos_2 + i \sin_2),
\\
\frac{z_3}{|z_3|} &= \frac{z_1}{|z_1|} (\cos_3 + i\epsilon_3 \sin_3).
\end{align*}
\end{remark}
\begin{proof}
We analyze the fibers of $\pi:M\to\Omega$.  Note that $x_2,x_3>0$,
and $\sin_2,\cos_3\in[0,1]$,
so that the formulas are well-defined.  Let $(x_2,x_3)\in\Omega$.
Using the virial action on fibers, if the fiber over $(x_2,x_3)$ is
nonempty, then it contains a point with $z_1=1$, which we now assume
without loss of generality.  Then $|z_2|=x_2\ge0$ and $|z_3|=x_3>0$.
Thus, there exist $\cos_2,\sin_2,\cos_3,\sin_3\in\R$,
and signs $\epsilon_2,\epsilon_3$ such that
\begin{align*}
z_2 &= x_2(\epsilon_2 \cos_2+i\sin_2),
&z_3 &= x_3(\cos_3+i\epsilon_3 \sin_3),\quad\text{where}
\\
1&= \cos_2^2+\sin_2^2=\cos_3^2+\sin_3^2,
&\cos_2&\ge0,\sin_3\ge0.
\quad\epsilon_2,\epsilon_3\in\{\pm1\}.
\end{align*}
The condition $\A_F=0$ gives an additional constraint
$\cos_3=x_2^2/(2x_3)>0$, and the condition $\H_F=0$ gives the constraint
$\sin_2 = x_3/x_2>0$.  Thus, every point in the preimage of $(x_2,x_3)$
has the form asserted in the lemma.

Conversely, every $(z_1,z_2,z_3)$ of the given form belongs to $M$ and
maps to $(x_2,x_3)$ in $\Omega$.  In particular, the image of $\pi$
is $\Omega$.
\end{proof}

\index[n]{zZ@$\Omega\subset[0,2]^2$!$\Omega_{\epsilon_i,\epsilon_j}$, copies of $\Omega$}
\index[n]{R@$\R^2_\Omega$, topological plane}
\index[n]{zZ@$\Omega\subset[0,2]^2$!$\partial\Omega^+_{\pm,\pm}=\partial^+_{\pm,\pm}$, upper boundary curve of $\Omega_{\pm,\pm}$}
\index[n]{zZ@$\Omega\subset[0,2]^2$!$\partial\Omega^-_{\pm,\pm}=\partial^-_{\pm,\pm}$, lower boundary curve of $\Omega_{\pm,\pm}$}

We let
$\Omega_{\epsilon_2,\epsilon_3}:= \Omega\times\{\epsilon_2\}\times\{\epsilon_3\}$,
where $\epsilon_2,\epsilon_3\in\{\pm1\}$, be four copies of $\Omega$.
Let $\partial\Omega^+_{\epsilon_2,\epsilon_3}$ (or
$\partial^+_{\epsilon_2,\epsilon_3}$, for short) be the upper boundary curve
of $\Omega_{\epsilon_2,\epsilon_3}$ defined by $x_3\le x_2$.  Let
$\partial\Omega^-_{\epsilon_2,\epsilon_3}$ (or
$\partial^-_{\epsilon_2,\epsilon_3}$, for short) be the lower boundary curve
of $\Omega_{\epsilon_2,\epsilon_3}$ defined by $x_2^2/2\le x_3$.

We glue these four copies of $\Omega$ together along boundaries to
form a topological plane $\R_\Omega^2$ as follows.  Along the boundary
edge $x_2=x_3$, we identify $\Omega_{+1,\epsilon_3}$ with
$\Omega_{-1,\epsilon_3}$ (for $\epsilon_3=\pm1$), and along the
boundary edge $x_3=x_2^2/2$, we identify $\Omega_{\epsilon_2,+1}$
with $\Omega_{\epsilon_2,-1}$ (for $\epsilon_2=\pm1$).  All four
copies of the corner $(2,2)\in\Omega_{\pm,\pm}$ are identified by this
process.  The corner $(0,0)$ is excluded from $\Omega$ and from
$\Omega_{\pm,\pm}$ by definition.
\[
\R_\Omega^2 = \left(\bigcup_{\epsilon_2,\epsilon_3} \Omega_{\epsilon_2,\epsilon_3}\right)
/
\{ \partial^+_{-+} = \partial^+_{++},\ \partial^+_{--}=\partial^+_{+-},\ 
\partial^-_{--}=\partial^-_{-+},\ \partial^-_{+-}=\partial^-_{++}\}.
\]

Visually, it helps to imagine $\R_\Omega^2$ as follows.  We take a
conformal transformation of $\Omega_{\epsilon_2,\epsilon_3}^0$ onto
the open $(\epsilon_2,\epsilon_3)$ quadrant, which sends the point
$(0,0)$ to $\infty$, the point $(2,2)$ to $(0,0)$, and the boundary
$\partial^+_{\epsilon_2,\epsilon_3}$ with equation $x_2=x_3$ to the
vertical axis, and the boundary $\partial^-_{\epsilon_2,\epsilon_3}$ with
equation $x_3 = x_2^2/2$ to the horizontal axis. See
Figure~\ref{fig:R-Omega}.

\index{conformal map}

\tikzfig{R-Omega}{The four regions $\Omega_{\pm,\pm}$ map conformally
to the four quadrants in the plane.  
Identifying boundary edges, they form a topological
plane $\R_\Omega^2$.}{
\begin{scope}[scale=0.8]
\draw[black, line width=0.30mm] (0,0) node[anchor=north]{$(0,0)$}
--(2,2) node[anchor=south] {$(2,2)$};
\draw[black, line width = 0.30mm]   plot[smooth,domain=0:2] (\x, {(\x*\x)/2});
\node (f) at (2,1) {$\partial^-_{++}$};
\node (e) at (0,1) {$\partial^+_{++}$};
\node (O) at (0,2.2)  {$\Omega_{++}:$};
\end{scope}
\begin{scope}[scale=0.8,xshift=4cm]
\draw[black, line width=0.30mm] (0,0) node[anchor=north]{$(0,0)$}
--(2,2) node[anchor=south] {$(2,2)$};
\draw[black, line width = 0.30mm]   plot[smooth,domain=0:2] (\x, {(\x*\x)/2});
\node (f) at (2,1) {$\partial^-_{+-}$};
\node (e) at (0,1) {$\partial^+_{+-}$};
\node (O) at (0,2.2)  {$\Omega_{+-}:$};
\end{scope}
\begin{scope}[scale=0.8,xshift=8cm]
\draw[black, line width=0.30mm] (0,0) node[anchor=north]{$(0,0)$}
--(2,2) node[anchor=south] {$(2,2)$};
\draw[black, line width = 0.30mm]   plot[smooth,domain=0:2] (\x, {(\x*\x)/2});
\node (f) at (2,1) {$\partial^-_{--}$};
\node (e) at (0,1) {$\partial^+_{--}$};
\node (O) at (0,2.2)  {$\Omega_{--}:$};
\end{scope}
\begin{scope}[scale=0.8,xshift=12cm]
\draw[black, line width=0.30mm] (0,0) node[anchor=north]{$(0,0)$}
--(2,2) node[anchor=south] {$(2,2)$};
\draw[black, line width = 0.30mm]   plot[smooth,domain=0:2] (\x, {(\x*\x)/2});
\node (f) at (2,1) {$\partial^-_{-+}$};
\node (e) at (0,1) {$\partial^+_{-+}$};
\node (O) at (0,2.2)  {$\Omega_{-+}:$};
\end{scope}
\begin{scope}[scale=1.4,yshift=-3cm,xshift=4cm]
\draw[black] (-2,0)--node[anchor=south]{$\partial^-_{-+}$} node[anchor=north]{$\partial^-_{--}$} (0,0)
node[anchor=south west]{$(2,2)$} --
node[anchor=south]{$\partial^-_{++}$} node[anchor=north]{$\partial^-_{+-}$} (2,0);
\draw (0,-2)--node[anchor=east]{$\partial^+_{--}$}
node[anchor=west]{$\partial^+_{+-}$} (0,0)--
node[anchor=west]{$\partial^+_{++}$}
node[anchor=east]{$\partial^+_{-+}$} (0,2);
\smalldot{0,0};
\node (mp) at (-1.5,1.5) {$\Omega_{-+}$};
\node (f) at (1.5,1.5) {$\Omega_{++}$};
\node (mm) at (-1.5,-1.5) {$\Omega_{--}$};
\node (pm) at (1.5,-1.5) {$\Omega_{+-}$};
\end{scope}
}

\index{overloading}

Lemma~\ref{lem:fiber} shows that there is a multiplicity of signs
$\epsilon_2,\epsilon_3$ along each fiber.  This suggests that we
should extend $\pi:M\to\Omega$ to a map $\pi:M\to\R_\Omega^2$
(overloading the notation $\pi$) as follows.  Set
\begin{align}
\begin{split}
\epsilon_2&=\text{sign}(\RR(z_2\bar{z}_1))\in\{\pm1\}
\\
\epsilon_3&=\text{sign}(\Im(z_3\bar{z}_1))\in\{\pm1\},
\end{split}
\end{align}
and extend the definition of $\pi$ so that
\begin{align}
\pi(z_1,z_2,z_3) = 
\left(\frac{|z_2|\phantom{^1}}{|z_1|^2},
\frac{|z_3|\phantom{^1}}{|z_1|^3},\epsilon_2,\epsilon_3\right)
\in\Omega_{\epsilon_2,\epsilon_3}.
\end{align}
Along the boundary edges of $\Omega_{\epsilon_2,\epsilon_3}$ that have been
identified, there are ambiguities, but the definition of $\pi$ has been
crafted in such a way that $\pi:M\to\R_\Omega^2$ is well-defined.
For example, the sign $\epsilon_2$ cannot be determined from the given
formula when $\cos_2=0$, but this occurs precisely along the edge $x_2=x_3$,
where $\Omega_{+,\epsilon_3}$ is glued to $\Omega_{-,\epsilon_3}$.

\index{trivial principal topological bundle}

\begin{theorem}
$\pi:M\to\R_\Omega^2$ is a trivial principal topological bundle of the virial group.
\end{theorem}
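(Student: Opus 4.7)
The plan is to verify in turn the three defining properties of a trivial principal $\G$-bundle: (i) the virial action on $M$ is well-defined, free, and preserves $\pi$-fibers; (ii) the action is transitive on each fiber, so fibers are $\G$-torsors; (iii) there exists a continuous global section $\sigma:\R_\Omega^2\to M$, which then trivializes the bundle via $z\mapsto (\pi(z),g(z))$ with $g(z)\in\G$ the unique element carrying $\sigma(\pi(z))$ to $z$.

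For (i), first observe that $\G$ preserves $M$: since $\H_F$ and $\A_F$ are homogeneous of positive weight under the scaling $(z_1,z_2,z_3)\mapsto (r z_1,r^2 z_2,r^3 z_3)$ and invariant under the diagonal rotation $z_j\mapsto e^{i\theta}z_j$, the conditions $\H_F=\A_F=0$ are preserved. Fiber preservation follows because under $(e^{i\theta},r)$ the ratios $|z_2|/|z_1|^2$ and $|z_3|/|z_1|^3$ are invariant, and the products $z_2\bar z_1$ and $z_3\bar z_1$ are multiplied by the positive scalars $r^3$ and $r^4$ respectively, so the signs $\epsilon_2,\epsilon_3$ defining the extension of $\pi$ to $\R_\Omega^2$ are unchanged. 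Freeness is immediate: if $(e^{i\theta}r z_1,\dots)=(z_1,\dots)$ with $z_1\ne 0$, then $r=1$ and $e^{i\theta}=1$.

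For (ii), Lemma~\ref{lem:fiber} shows that the fiber over $(x_2,x_3,\epsilon_2,\epsilon_3)\in\R_\Omega^2$ is parameterized by the single complex parameter $z_1\in\C^\times$, with $z_2,z_3$ determined from $z_1$ via the formulas of the lemma. The $\G$-action sends $z_1$ to $e^{i\theta}r z_1$, and since the map $\G=\SO\times\R_{>0}\to\C^\times$, $(e^{i\theta},r)\mapsto e^{i\theta}r$, is a homeomorphism, the action on each fiber is simply transitive.

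For (iii), define
\begin{equation*}
\sigma(x_2,x_3,\epsilon_2,\epsilon_3)
:=\bigl(1,\;x_2(\epsilon_2\cos_2+i\sin_2),\;x_3(\cos_3+i\epsilon_3\sin_3)\bigr),
\end{equation*}
using the formulas \eqref{eqn:cos2}. By Lemma~\ref{lem:fiber} this lands in $M$ and satisfies $\pi\circ\sigma=\mathrm{id}$. Continuity on each $\Omega_{\epsilon_2,\epsilon_3}^0$ is clear; the nontrivial check is continuity across the glued boundaries of $\R_\Omega^2$. On $\partial\Omega^+_{\epsilon_2,\epsilon_3}$ we have $x_2=x_3$, hence $\cos_2=0$ and $\sin_2=1$, so $z_2=i x_2$ independently of $\epsilon_2$, matching the identification $\partial^+_{+\epsilon_3}=\partial^+_{-\epsilon_3}$. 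On $\partial\Omega^-_{\epsilon_2,\epsilon_3}$ we have $x_3=x_2^2/2$, hence $\cos_3=1$ and $\sin_3=0$, so $z_3=x_3$ independently of $\epsilon_3$, matching $\partial^-_{\epsilon_2,+}=\partial^-_{\epsilon_2,-}$. Hence $\sigma$ is continuous on all of $\R_\Omega^2$.

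Combining (i)--(iii), the map $\R_\Omega^2\times\G\to M$ sending $((x_2,x_3,\epsilon_2,\epsilon_3),g)$ to $g\cdot\sigma(x_2,x_3,\epsilon_2,\epsilon_3)$ is a continuous bijection, with continuous inverse $z\mapsto(\pi(z),g(z))$ where $g(z)=(z_1/|z_1|,|z_1|)\in\SO\times\R_{>0}$. This is the required trivialization. The only delicate step is the sign bookkeeping at the boundary identifications of $\R_\Omega^2$; but the topology on $\R_\Omega^2$ was precisely designed to identify the sheets exactly where the fiber coordinates $(\cos_2,\sin_2)$ or $(\cos_3,\sin_3)$ become independent of the corresponding sign, which is what makes $\sigma$ globally continuous.
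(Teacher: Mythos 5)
Your proof is correct and follows essentially the same route as the paper: identify each fiber of $\pi$ with a $\G$-orbit via Lemma~\ref{lem:fiber}, note the simply transitive action on the coordinate $z_1\in\C^\times$, and exhibit the same global section (which the paper calls $\psi$), with continuity across the glued boundaries following because $\cos_2=0$ (resp.\ $\sin_3=0$) kills the dependence on $\epsilon_2$ (resp.\ $\epsilon_3$) exactly where the sheets are identified. You spell out the freeness and fiber-preservation checks that the paper leaves implicit, but this is just added detail, not a different argument.
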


\begin{proof}  By the preceding constructions, each
fiber $\pi^{-1}(x_2,x_3,\epsilon_2,\epsilon_3)$ is a single orbit of
the virial group, given by the formula of Lemma~\ref{lem:fiber}.  Note
that the virial group acts simply transitively on $z_1\in\C^\times$,
which serves as a coordinate along each fiber.

\index{section of a bundle}
\index{trivialization of a bundle}
\index[n]{zy@$\psi$, local auxiliary function or integral!$\psi$, section of a bundle}

We give a global trivialization of the bundle
$M\simeq \R_\Omega^2\times\G$, by constructing
a global section:
\begin{align*}
&\psi(x_2,x_3,\epsilon_2,\epsilon_3)=(z_1,z_2,z_3),\text{ where}
\\
z_1&=1,\quad z_2=x_2(\epsilon_2\cos_2+i\sin_2),\quad z_3=x_3(\cos_3+i\epsilon_3 \sin_3)
\end{align*}
and $\sin_2,\cos_2,\sin_3,\cos_3$ are given as above.

This section is continuous.  In fact, the jumps in signs
$\epsilon_2,\epsilon_3$ occur exactly where $\cos_2=0$ or $\sin_3=0$,
and this occurs along the identifications of the boundary curves of
$\R_\Omega^2$.  The section gives the global trivialization of the
bundle.
\end{proof}

\section{A Vector Field on the Base Space}

\index[n]{v@$\mb{v}$, vector!$(v_2,v_3)$, vector field}

We define a vector field $(v_2,v_3)$ on $\Omega_{\epsilon_2,\epsilon_3}$ taking
value
\begin{align}
\begin{split}
% TCH 5/3/2024: Formula corrected.
v_2 &= \epsilon_2\cos_2 - 2 x_2 \epsilon_3 \sin_3,
\\
v_3 &= x_2(\epsilon_2\cos_2\cos_3+\epsilon_3\sin_2\sin_3) - 3x_3\epsilon_3\sin_3 
%= 
%x_2(\epsilon_2\cos_2\cos_3 -  \epsilon_3 \sin_2 \sin_3)
\end{split}
\end{align}
at $(x_2,x_3,\epsilon_2,\epsilon_3)$, where
$\cos_2,\sin_2,\cos_3,\sin_3$ are the functions given earlier
\eqref{eqn:cos2}.  We remark that this gives a well-defined vector
field on $\R_\Omega^2$, because the definitions agree, wherever there
might be an ambiguity along boundary curves that are identified to
form $\R_\Omega^2$.

\index[n]{f@$f$, vector field!on $M$}

\begin{lemma}
Consider the vector field $f$ on $M$ given by the Fuller system.  Then
$(v_2,v_3)$ is the scaled image of $f$ in the tangent space of
$\R_\Omega^2$.  That is, $|z_1|T\pi(f)=(v_2,v_3)\in T\R_\Omega^2$,
which is
independent of the point on the fiber over
$(x_2,x_3,\epsilon_2,\epsilon_3)$.
\end{lemma}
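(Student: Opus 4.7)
The plan is a direct calculation, pushing the Fuller vector field forward along $\pi$ and simplifying using the fiber parameterization of Lemma~\ref{lem:fiber}. The key identity is that for any nonvanishing $C^1$ curve $w(t)\in\C^\times$,
\[
|w|' \;=\; \frac{\RR(w,w')}{|w|}.
\]
Applying this to $z_1,z_2,z_3$ along the Fuller flow $z_3'=z_2$, $z_2'=z_1$, $z_1'=-iz_3/|z_3|$, I would first compute $|z_j|'$ using the substitutions from Lemma~\ref{lem:fiber},
\[
z_2 \;=\; x_2\, z_1 |z_1|(\epsilon_2\cos_2+i\sin_2),\qquad z_3 \;=\; x_3\, z_1|z_1|^2(\cos_3+i\epsilon_3\sin_3).
\]
A brief computation gives
\[
|z_1|' = \epsilon_3\sin_3,\qquad
|z_2|' = |z_1|\,\epsilon_2\cos_2,\qquad
|z_3|' = |z_1|^2\, x_2(\epsilon_2\cos_2\cos_3+\epsilon_3\sin_2\sin_3),
\]
where the dependence on the fiber coordinate $z_1$ shows up only through the overall factor $|z_1|^k$; the factors of $z_1/|z_1|$ coming from the fiber formula cancel against their conjugates, which is precisely why the result will be independent of the fiber point.

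Next I would differentiate $x_2=|z_2|/|z_1|^2$ and $x_3=|z_3|/|z_1|^3$ using the quotient rule and the three identities above, obtaining
\begin{align*}
x_2' &= \frac{|z_2|'}{|z_1|^2} - \frac{2|z_2|\,|z_1|'}{|z_1|^3}
      = \frac{\epsilon_2\cos_2 - 2x_2\epsilon_3\sin_3}{|z_1|}
      = \frac{v_2}{|z_1|},\\
x_3' &= \frac{|z_3|'}{|z_1|^3} - \frac{3|z_3|\,|z_1|'}{|z_1|^4}
      = \frac{x_2(\epsilon_2\cos_2\cos_3+\epsilon_3\sin_2\sin_3) - 3x_3\epsilon_3\sin_3}{|z_1|}
      = \frac{v_3}{|z_1|}.
\end{align*}
Since $T\pi(f)=(x_2',x_3')$ in coordinates on $\R^2_\Omega$, this gives $|z_1|\,T\pi(f)=(v_2,v_3)$. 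The right-hand side depends only on $(x_2,x_3,\epsilon_2,\epsilon_3)$, so the pushforward is constant along fibers, as claimed.

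The only subtlety, which I would address at the end, is the well-definedness of the signs $\epsilon_2,\epsilon_3$ (and hence of the vector field $(v_2,v_3)$) along the glued boundary curves of $\R^2_\Omega$. Along $x_2=x_3$ one has $\cos_2=0$, so the $\epsilon_2$-dependence in both $v_2$ and $v_3$ drops out, making the gluing $\Omega_{+,\epsilon_3}\sim\Omega_{-,\epsilon_3}$ consistent; along $x_3=x_2^2/2$ one has $\sin_3=0$, so the $\epsilon_3$-dependence in both components drops out, consistent with $\Omega_{\epsilon_2,+}\sim\Omega_{\epsilon_2,-}$. This is the only point requiring any care; everything else is bookkeeping. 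I do not anticipate a hard step.
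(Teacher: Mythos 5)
Your proposal is correct and essentially the same as the paper's argument. The paper differentiates $x_j=|z_j|/|z_1|^j$ by the quotient rule, introduces the virial-invariant quantities $r_j=\RR(z_{j-1}/|z_{j-1}|,z_j/|z_j|)$, and then leaves the identification $(v_2,v_3)=(r_2-2x_2r_1,\ x_2r_3-3x_3r_1)$ as "a routine calculation"; you carry out the same computation but substitute the fiber formulas of Lemma~\ref{lem:fiber} first, so that the $z_1/|z_1|$ phases visibly cancel and $r_1=\epsilon_3\sin_3$, $r_2=\epsilon_2\cos_2$, $r_3=\epsilon_2\cos_2\cos_3+\epsilon_3\sin_2\sin_3$ appear explicitly. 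Your closing remark about well-definedness across the glued boundary curves duplicates what the paper already notes immediately after defining $(v_2,v_3)$, so it is not needed inside this proof, but it is not wrong.
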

The rescaling factor $|z_1|$ only affects the integral curves of the
vector field by a time reparameterization.

\index[n]{r@$r$, real number!$r_j$, subexpression in vector field}

\begin{proof}
Let $z=(z_1,z_2,z_3)$ follow a trajectory of the Fuller system in $M$ 
let 
$\pi(z) = (x_2,x_3,\epsilon_2,\epsilon_3)$ be the image trajectory.
We compute for $j=2,3$:
\begin{align*}
|z_1| \frac{d~}{dt}(|z_j|/|z_1|^j)
&=|z_1|\RR(z_{j-1},\frac{z_j}{|z_j|})|z_1|^{-j}
-|z_1|j|z_j||z_1|^{-j-1}\RR(z_0,\frac{z_1}{|z_1|})
\\
&=x_{j-1}r_j - j x_j r_1,\quad\text{where}
\\
r_j &= \RR\left(\frac{z_{j-1}}{|z_{j-1}|},\frac{z_j}{|z_j|}\right).
\end{align*}
The functions $r_j$ are invariant under the virial group and descend
to $\R_\Omega^2$.
It is enough to show that $(v_2,v_3)=(x_1r_2-2x_2r_1,x_2r_3-3x_3r_1)$.
This is a routine calculation.
\end{proof}

\section{Equilibrium Points}

\index{equilibrium point}
\index{odd function}

In this section, we investigate the qualitative behavior of the vector
field $(v_2,v_3)$.  The vector field $(v_2,v_3)$ is \emph{odd}: the
values of the vector field at $(x_2,x_3,\epsilon_2,\epsilon_3)$ and at
$(x_2,x_3,-\epsilon_2,-\epsilon_3)$ have opposite signs.  This means
that trajectories are the same, except reversed in time at points with
opposite signs.  Earlier, we introduced a time-reversal operation
$\tau$ on trajectories on $M$.  The image under $\pi$ of a time
reversed trajectory in $M$ is the sign reversed
$\epsilon_j\mapsto-\epsilon_j$ trajectory in $\R_\Omega^2$.

Next we analyze the zeros of the vector field.
\begin{lemma}
The vector field $(v_2,v_3)$ is zero if and only if
$(x_2,x_3,\epsilon_2,\epsilon_3)$ is one of the following three
points:
\begin{align*}
(x_2,x_3)&=(2,2)\ \text{(all choices of signs give the same point)},
\\
(x_2^*,x_3^*)&:=(2/\sqrt{10},\sqrt{2}/5)\ 
\text{where }\epsilon_2=\epsilon_3\in\{\pm1\}\ 
\text{(one point for each sign choice)}.
\end{align*}
\index[n]{q@$q$, point on manifold!$q_\pm^*$, image of the log-spiral}
Moreover, the image under $\pi$ of the outward log-spiral
$z^*=(z_1^*,z_2^*,z_3^*)$ constructed in \eqref{eqn:log-spiral} is the
single point $q_+^*:=(x_2^*,x_3^*,+,+)\in\Omega_{++}$, while the image
of the inward log-spiral is the single point
$q_-^*:=(x_2^*,x_3^*,-,-)\in\Omega_{--}$.
\end{lemma}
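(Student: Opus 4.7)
The plan is to combine a direct case analysis of the algebraic system $v_2=v_3=0$ using the explicit definitions in \eqref{eqn:cos2} with an explicit evaluation of $\pi$ on the log-spiral \eqref{eqn:log-spiral}. First I would note that $\cos_2,\sin_3\ge 0$ and $x_2>0$, so the equation $v_2=\epsilon_2\cos_2 - 2x_2\epsilon_3\sin_3 = 0$ admits only two possibilities: either $\cos_2=\sin_3=0$, or both are strictly positive and $\epsilon_2=\epsilon_3$. (The mixed cases, where exactly one of $\cos_2,\sin_3$ vanishes, give $v_2\ne 0$ immediately.) In the first case $x_3=x_2$ and $x_2^2=2x_3$, so $(x_2,x_3)=(2,2)$; here $\cos_2=\sin_3=0$ kills every dependence of the vector field on $\epsilon_2,\epsilon_3$, which is why all four sign choices represent the same equilibrium.

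In the remaining case I would record the relation $\cos_2=2x_2\sin_3$ from $v_2=0$ and simplify $v_3=0$. After pulling out the common factor $\epsilon_3$ (using $\epsilon_2=\epsilon_3$) and canceling one factor of $\sin_3>0$, the equation becomes $2x_2^2\cos_3+x_2\sin_2 = 3x_3$. Substituting $\cos_3=x_2^2/(2x_3)$ and $\sin_2=x_3/x_2$ collapses this to $x_2^4/x_3+x_3=3x_3$, i.e., $x_2^4=2x_3^2$. Squaring $\cos_2=2x_2\sin_3$ and using the formulas for $\cos_2^2,\sin_3^2$ together with $x_3^2=x_2^4/2$ then yields $1-x_2^2/2 = 2x_2^2$, hence $x_2^2=2/5$, giving $(x_2,x_3)=(2/\sqrt{10},\sqrt 2/5)=(x_2^*,x_3^*)$ with $\epsilon_2=\epsilon_3$ --- exactly the two stated zeros $q_\pm^*$.

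For the second assertion I would just evaluate $\pi$ on the explicit outward log-spiral \eqref{eqn:log-spiral}. Since $|t^{1-i}|=t$ and $|(2-i)(3-i)|=5\sqrt 2$, one reads off $|z_1^*|=t/\sqrt 2$, $|z_2^*|=t^2/\sqrt{10}$, $|z_3^*|=t^3/10$, so $x_2=|z_2^*|/|z_1^*|^2=x_2^*$ and $x_3=|z_3^*|/|z_1^*|^3=x_3^*$. The phase factors $t^{\pm i}$ cancel in $z_2^*\bar z_1^*$ and $z_3^*\bar z_1^*$, leaving multiples of $(3-i)(2+i)(3+i)=20+10i$ and $(2+i)(3+i)=5+5i$ respectively, both of which have strictly positive real resp.\ imaginary part, so $\epsilon_2=\epsilon_3=+1$ and the outward spiral's image is $q_+^*$. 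The inward log-spiral is $\tau\cdot z^*$, and direct inspection of $\tau$ (defined just after \eqref{eqn:log-spiral}) shows that $\tau$ reverses the signs of both $\RR(z_2\bar z_1)$ and $\Im(z_3\bar z_1)$, so its image is $q_-^*$. The main obstacle is purely calculational --- avoiding spurious solutions when squaring $\cos_2=2x_2\sin_3$ --- and this is safe here precisely because $v_2=0$ has already forced $\epsilon_2=\epsilon_3$, so both sides have the same sign.
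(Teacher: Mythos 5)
Your proof is correct and follows essentially the same route as the paper's: both reduce to the pair of equations $\cos_2=2x_2\sin_3$ and (equivalently) $\cos_2\cos_3 = 2\sin_2\sin_3$ on the region where $\epsilon_2=\epsilon_3$, then solve uniquely for $(x_2^*,x_3^*)$, and both identify the image of the log-spiral by a direct computation. Your organization of the case analysis is a little cleaner than the paper's (you use nonnegativity of $\cos_2,\sin_3,x_2$ to force $\epsilon_2=\epsilon_3$ directly, rather than reducing to $\epsilon_2=+1$ by oddness and then treating $\Omega_{+,\pm}$ separately), and you evaluate $\pi$ on the spiral explicitly where the paper first invokes the virial-orbit argument to know the image is a single point; these are cosmetic differences.
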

\begin{proof} 
It is clear that these three points give zeros of the vector field, by
direct substitution into the formulas for $(v_2,v_3)$.  By the
observation that the vector field is odd, we can assume that
$\epsilon_2=+1$.  Then we consider the different regions
$\Omega_{+1,\epsilon_3}^0$ and its boundary, solving the equations
$v_2=v_3=0$ for $x_2$ and $x_3$.

We illustrate the case $\Omega_{++}^0$, leaving the other cases as
exercises.  From the formulas for $v_2,v_3$, we find that a zero
in $\Omega_{++}^0$ satisfies the equations
\begin{align*}
\cos_2 &= 2 x_2 \sin_3,\quad \cos_2 \cos_3 = 2 \sin_2 \sin_3,
\end{align*}
which has $(x_2^*,x_3^*) = (2/\sqrt{10},\sqrt{2}/5)$ as the unique
solution.

Each spiral trajectory is contained in a single orbit of the virial
group and must map to a single point in $\R_\Omega^2$.  Explicit
formulas have been given for the log spirals and for the map $\pi$,
and it is an easy calculation to determine which spiral is mapped to
which zero of the vector field.
\end{proof}

\index{stability at equilibrium point}

Next we analyze stability at the equilibrium points.  Because of a
square root, the vector field $(v_2,v_3)$ is not differentiable at
$(x_2,x_3)=(2,2)$ and we cannot compute a Jacobian.  However, we can
compute eigenvalues of the Jacobian for the other two equilibrium
points.

\index[n]{J@$\op{Jac}$, Jacobian matrix}

\begin{lemma}
Let $\op{Jac}$ be the $2\times2$ Jacobian matrix with entries
$\partial v_j/\partial x_k$, where $j,k\in\{2,3\}$.  The eigenvalues
of $\op{Jac}$ are $-\sqrt{2}\pm i\sqrt{3}$ at $q_+^*$.  In particular,
the eigenvalues have negative real part, and $q_+^*$ is a stable
equilibrium point.
\end{lemma}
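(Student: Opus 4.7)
The plan is to compute the Jacobian $\op{Jac}$ entrywise at $q_+^*$, extract its trace and determinant, and then read off the eigenvalues from the characteristic polynomial $\lambda^2 - \mathrm{tr}(\op{Jac})\,\lambda + \det(\op{Jac}) = 0$. Specifically, I expect $\mathrm{tr}(\op{Jac}) = -2\sqrt{2}$ and $\det(\op{Jac}) = 5$, which gives the characteristic roots $-\sqrt{2}\pm i\sqrt{3}$ as claimed.

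On the region $\Omega_{++}$ the formulas simplify to $v_2 = \cos_2 - 2x_2\sin_3$ and $v_3 = x_2(\cos_2\cos_3 + \sin_2\sin_3) - 3x_3\sin_3$, with the auxiliary functions $\sin_2,\cos_2,\sin_3,\cos_3$ given by \eqref{eqn:cos2}. At $(x_2^*,x_3^*) = (2/\sqrt{10},\sqrt{2}/5)$ these auxiliaries take the convenient values
\begin{equation*}
\sin_2 = 1/\sqrt{5},\quad \cos_2 = 2/\sqrt{5},\quad \sin_3 = \cos_3 = 1/\sqrt{2},
\end{equation*}
which I will verify directly from the definitions of $x_2^*,x_3^*$. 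First I would compute $\partial \cos_2/\partial x_2,\ \partial \cos_2/\partial x_3,\ \partial \sin_3/\partial x_2,\ \partial \sin_3/\partial x_3$ by the chain rule (treating the square roots as $\cos_2 = \sqrt{1-(x_3/x_2)^2}$ etc.), and similarly for $\sin_2,\cos_3$. Evaluating at $q_+^*$ gives rational multiples of the simple surds above, so the entries of $\op{Jac}$ will be finite sums of products of such surds and are straightforward to reduce.

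Once $\op{Jac}$ is in hand, a direct calculation gives the required trace and determinant. For stability, the eigenvalues $-\sqrt{2}\pm i\sqrt{3}$ have negative real part $-\sqrt{2}<0$, so $q_+^*$ is a hyperbolic sink of the linearization, and the Hartman--Grobman theorem (or equivalently Lyapunov's linearization theorem for asymptotic stability) yields that $q_+^*$ is an asymptotically stable equilibrium of the vector field $(v_2,v_3)$ on $\R_\Omega^2$.

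The main obstacle is purely computational: keeping track of the square roots arising from $\cos_2$ and $\sin_3$ when differentiating, and simplifying the resulting algebraic expressions. There is no conceptual difficulty, since $(v_2,v_3)$ is smooth on a neighborhood of $q_+^*$ (both radicands stay strictly positive there, as $(x_2^*,x_3^*)$ lies in the interior $\Omega^0$); the only care needed is to choose the correct positive branches dictated by the definitions \eqref{eqn:cos2}. The expected symmetry between the values $\sin_3 = \cos_3$ and the relation $\cos_2 = 2\sin_2$ at $q_+^*$ (which mirrors the zero-of-vector-field equations $\cos_2 = 2x_2\sin_3$ and $\cos_2\cos_3 = 2\sin_2\sin_3$ established earlier) should produce substantial cancellations and keep the final numerics manageable. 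I would also cross-check the result against a Mathematica computation, consistent with the computer-assisted calculations used elsewhere in the book.
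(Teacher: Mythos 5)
Your proposal is correct and matches the paper's approach exactly: the paper's proof is simply ``This is an elementary calculation,'' and you have sketched precisely that calculation. The intermediate values you quote check out (at $q_+^*$ one has $\sin_2=1/\sqrt5$, $\cos_2=2/\sqrt5$, $\sin_3=\cos_3=1/\sqrt2$; the Jacobian entries come out to $5/(2\sqrt2)$, $-5\sqrt{10}/4$, $21/(2\sqrt{10})$, $-13/(2\sqrt2)$, giving $\mathrm{tr}=-2\sqrt2$ and $\det=5$ as you predicted), and the final appeal to Lyapunov/Hartman--Grobman for stability from negative real parts is the standard closing step.
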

\noindent
By symmetry, the equilibrium point $q_-^*$ is unstable.

\begin{proof} This is an elementary calculation.
\end{proof}

\section{Global Behavior}

\begin{remark}
We warn the reader that the square roots appearing in the definition
of $\cos_i,\sin_i$ cause the vector field $(v_2,v_3)$ to be
non-Lipschitz along the boundary curves of
$\Omega_{\epsilon_2,\epsilon_3}$.  Thus, trajectories are not uniquely
determined by the vector field $(v_2,v_3)$.  This is not an idle
warning.  The trajectories truly fail to be unique.  Along these
boundary curves where uniqueness breaks down, we make reference to the
trajectory upstairs in $M$ (where trajectories are uniquely
determined) to determine which path the trajectory downstairs should
follow.  Nevertheless, on each interior part
$\Omega_{\epsilon_2,\epsilon_3}^0$, the trajectories are uniquely
determined by the vector field.
\end{remark}

\mcite{MCA:5387527}
\begin{figure}[ht]
\centering
\includegraphics[scale=0.35]{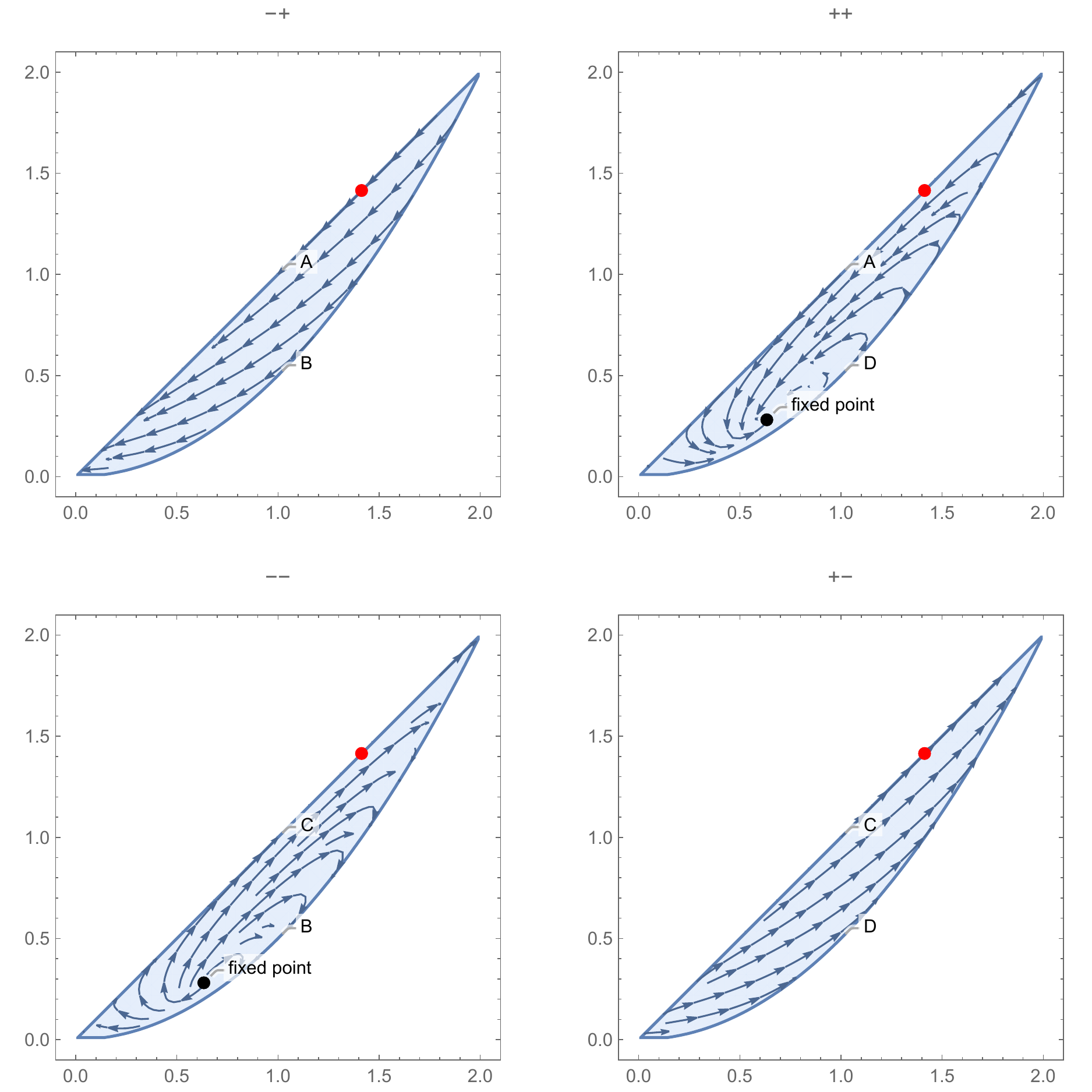}
\caption{The dynamical system on $\R_\Omega^2$.  The unstable $q_{-}^*\in\Omega_{--}$
and stable fixed points $q_{+}^*\in\Omega_{++}$ are shown. In the four frames,
the two edges marked $A$ are to be identified, as are the two edges marked
$B$, the two marked $C$, and the two marked $D$. In this way, the four
frames belong to a single dynamical system in the topological plane $\R_\Omega^2$.
The four red points have coordinates $(\sqrt2,\sqrt2)$.  The direction
of the flow across the upper boundary is reversed at $(\sqrt2,\sqrt2)$.}
\label{fig:omega}
\end{figure}

\FloatBarrier

Figure~\ref{fig:omega} depicts the dynamical system in the plane
$\R_\Omega^2$ and two fixed points. A third fixed point
$(2,2)\in\R_\Omega^2$ lies at the upper corner of the figures.  The
main result of this chapter is the following theorem.  From
Figure~\ref{fig:omega}, we observe that the theorem is geometrically
plausible.

\begin{theorem}\label{thm:global} Let $z(t)$ be any Fuller trajectory in $M$.
Assume that $\pi(z(t))\not\in\{q_{\pm}^*\}$.
Then 
\begin{itemize}
\item The trajectory $z(t)$ is defined for all $t\in\R$.  
\item The trajectory $\pi(z(t))$ remains bounded away from $(0,0)\in\R^2_\Omega$.
\item If $U\subseteq\R_\Omega^2$ is any neighborhood of $q_{+}^*$, the
  trajectory $\pi(z(t))$ eventually enters and remains in $U$.
\item If $U\subseteq\R_\Omega^2$ is any neighborhood of $q_{-}^*$, the
  trajectory $\pi(z(t))$ was in $U$ for all sufficiently negative
  times.
\end{itemize}
\end{theorem}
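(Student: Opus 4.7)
The plan is to reduce the four bullets to a planar analysis on $\R^2_\Omega$ and then invoke Poincaré–Bendixson-type reasoning, finally lifting the conclusions back to $M$. Recall that each fiber of $\pi:M\to\R^2_\Omega$ is a single orbit of the virial group, and that $\pi$ pushes the Fuller vector field on $M$ forward to $(v_2,v_3)$ up to a strictly positive time rescaling by $|z_1|$. Thus a Fuller trajectory $z(t)$ with $\pi(z(t))\notin\{q_\pm^*\}$ projects to a non-constant integral curve of $(v_2,v_3)$, and the last two bullets of the theorem will follow from the identification of the $\omega$- and $\alpha$-limit sets in $\R^2_\Omega$, while the first two require separate completeness and boundedness arguments.

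First I would establish that the downstairs trajectory remains in a compact subset of $\R^2_\Omega$ bounded away from the puncture $(0,0)$. Since $\Omega\subset[0,2]^2$ is compact, all one must rule out is accumulation at $(0,0)$. For this I would seek an explicit function $V(x_2,x_3)\to+\infty$ as $(x_2,x_3)\to(0,0)$ (for instance a suitable combination $-\log x_2 - \log x_3$ weighted with the virial exponents $(2,3)$) and show that $\langle\nabla V,(v_2,v_3)\rangle$ is bounded above in a neighborhood of $(0,0)$ inside each $\Omega_{\epsilon_2,\epsilon_3}$; this alone prevents escape to the puncture in finite or infinite time. The well-posedness of trajectories across the boundary curves $x_2=x_3$ and $2x_3=x_2^2$, where $(v_2,v_3)$ fails to be Lipschitz, is not a problem downstairs either: the trajectory upstairs in $M$ is uniquely determined by the Fuller system (the right-hand side is globally Lipschitz on bounded sets with $z_3\ne 0$), and $\pi$ of that unique trajectory is the correct continuation, with signs $\epsilon_2,\epsilon_3$ flipping in accordance with the gluing of $\R^2_\Omega$.

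With the trajectory confined to a compact subset of $\R^2_\Omega\setminus\{(0,0)\}$, the Poincaré–Bendixson theorem (valid on the topological plane $\R^2_\Omega$) reduces the $\omega$- and $\alpha$-limit sets to: an equilibrium, a periodic orbit, or a graph of equilibria joined by heteroclinic orbits. The equilibria are $q_+^*$, $q_-^*$, and the corner $(2,2)$. The main obstacle, and the bulk of the work, is excluding periodic orbits and nontrivial heteroclinic graphs. For periodic orbits I plan a Dulac-type argument: search for a weight $\phi>0$ on each $\Omega_{\epsilon_2,\epsilon_3}^0$ whose pullback of $\operatorname{div}(\phi(v_2,v_3))$ has constant sign, the natural ansatz being a monomial $x_2^a x_3^b$ calibrated by the virial self-similarity $(r,e^{i\theta})\cdot z^*=z^*$ of the log-spirals. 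For the corner $(2,2)$, where $(v_2,v_3)$ is not differentiable, I would work in the conformal chart of Figure~\ref{fig:R-Omega} that sends $(2,2)$ to $0$, perform a desingularization (polar blow-up), and show that only finitely many orbits enter or leave $(2,2)$, all transversally connecting regions in a way incompatible with closing up into a cycle surrounding $q_+^*$ or $q_-^*$. Combined with the Jacobian eigenvalues $-\sqrt2\pm i\sqrt3$ at $q_+^*$ (asymptotic stability) and their negatives at $q_-^*$ (complete instability by the odd-symmetry exchange $\epsilon_i\mapsto -\epsilon_i$ and time reversal $\tau$), this forces $\omega(z)=\{q_+^*\}$ and $\alpha(z)=\{q_-^*\}$, proving bullets three and four.

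Finally, to obtain bullet one, I would lift the planar bounds back to $M$. Along the trajectory,
\[
(\log|z_1|)' \;=\; \RR\!\left(-i\tfrac{z_3}{|z_3|},\tfrac{z_1}{|z_1|}\right)
\]
is bounded in absolute value by $1$, and the asymptotic approach to $q_\pm^*$ forces the time average of this quantity to match the real part $-\sqrt2/|\text{eigenvalue}|$ associated with the stable/unstable log-spiral, which is finite and nonzero. Hence $|z_1|$ grows at most exponentially as $t\to+\infty$ and decays at most exponentially as $t\to-\infty$ (and vice-versa for the reverse trajectory through $q_-^*$), so the Fuller trajectory never blows up and never reaches $\{z_j=0\}$ in finite time; together with the standard local existence for the Fuller ODE away from $z_3=0$, this shows that $z(t)$ is defined for all $t\in\R$, completing the proof.
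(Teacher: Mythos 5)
Your strategy is genuinely different from the paper's: you propose Poincar\'e--Bendixson together with a Dulac criterion and a blow-up at $(2,2)$, whereas the paper uses a region-by-region case analysis of $\R_\Omega^2$ combined with a monotone-winding argument ($\det((x_2,x_3)-q_+^*,(v_2,v_3))\ge 0$ on $\Omega_{++}$) and an explicit Lyapunov function on a small disk about $q_+^*$. Both are legitimate planar routes. However, two of your steps have substantive gaps.

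The boundedness-away-from-$(0,0)$ argument does not work as stated. On $\Omega_{-+}^0$ near the puncture, one has $v_2<0$ \emph{and} $v_3<0$: the downstairs trajectory is moving toward $(0,0)$ in both coordinates, so $V=-a\log x_2 - b\log x_3$ with $a,b>0$ has $V'>0$ there and is not a Lyapunov function. Moreover, even where a bound $V'\le C$ holds, that only rules out blow-up of $V$ in finite time; it does not rule out $V\to\infty$ as $t\to\pm\infty$, which is precisely the asymptotic behavior the second bullet must exclude (and which is also needed as a prerequisite before Poincar\'e--Bendixson can be applied, since the theorem requires a forward orbit in a compact set). The paper's Lemma on behavior near $(0,0)$ handles exactly the troublesome $\Omega_{-+}$ and $\Omega_{--}$ cases not by a Lyapunov bound but by integrating the differential inequality $dx_3/dx_2 = v_3/v_2 \le \sqrt{2x_3}$ to deduce $\sqrt{2x_3(t)}\ge x_2(t)+c$ for a positive constant $c$; that parabolic barrier, not a sublinear growth bound on a Lyapunov function, is what keeps the trajectory away from the origin. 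You would need an analogue of that barrier, or you would need to restructure the argument so that boundedness follows rather than precedes the limit-set analysis.

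The final step for bullet one also contains a computational error. With $\RR(a,b)=\Re(\bar a b)$ and $z_1'=-iz_3/|z_3|$, one finds
\[
(\log|z_1|)' = \frac{\RR(z_1,z_1')}{|z_1|^2} = \frac{1}{|z_1|}\,\RR\!\left(\frac{z_1}{|z_1|},\,-i\frac{z_3}{|z_3|}\right),
\]
so the correct bound is $|(\log|z_1|)'|\le 1/|z_1|$, not $\le 1$; the factor $1/|z_1|$ blows up precisely in the regime you need to control, so the conclusion that $|z_1|$ grows or decays at most exponentially does not follow. The paper instead obtains global existence by first proving the downstairs boundedness (the vector field $(v_2,v_3)$ is bounded and $\Omega\subset[0,2]^2$ is compact away from the puncture) and then lifting the completed downstairs trajectory back to $M$; this is the cleaner route and avoids the need to estimate $\log|z_1|$ directly. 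The remaining speculative parts of your plan (the Dulac ansatz and the desingularization at $(2,2)$, bearing in mind that $(2,2)$ is only a phantom equilibrium of the downstairs vector field and is passed through by the lifted flow) are in the spirit of a sketch and could plausibly be filled in, but they should be flagged as unverified.
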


The proof appears at the end of the chapter in
Section~\ref{sec:global:proof}, after a series of lemmas.  The first
of these lemmas describes the movement of trajectories in
$\R_\Omega^2$ across the boundaries of the regions
$\Omega_{\epsilon_2\epsilon_3}$.

\begin{lemma} 
Along each boundary curve between regions
$\Omega_{\epsilon_2,\epsilon_3}$ the vector field points along the
tangent to the curve.  At the boundary curve $x_3 = x_2^2/2$
(excluding endpoints $(0,0)$ and $(2,2)$), the images $\pi(z(t))$ of
Fuller trajectories pass from $\Omega_{\epsilon_2-}$ into
$\Omega_{\epsilon_2+}$, for $\epsilon_2=\pm1$.  At the boundary curve
$x_3 = x_2$ (excluding endpoints $(0,0)$ and $(2,2)$), the images of
Fuller trajectories pass from $\Omega_{-\epsilon_3}$ into
$\Omega_{+\epsilon_3}$ if $x_2=x_3<\sqrt2$; and they pass in the other
direction from $\Omega_{+\epsilon_3}$ into $\Omega_{-\epsilon_3}$ if
$x_2=x_3>\sqrt2$, for $\epsilon_3=\pm1$.
\end{lemma}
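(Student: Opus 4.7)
My plan is to handle the tangency claim by direct substitution downstairs on $\R_\Omega^2$, and then to read off the direction of crossing by lifting to $M$, where trajectories are unique, and tracking the signs that define the gluings of the four copies $\Omega_{\epsilon_2,\epsilon_3}$.

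For tangency, I would substitute the defining equation of each boundary curve into the explicit formula for $(v_2,v_3)$. On $\partial^-$ (where $x_3=x_2^2/2$) the definitions in \eqref{eqn:cos2} give $\cos_3=1$ and $\sin_3=0$, so $v_2=\epsilon_2\cos_2$ and $v_3=x_2\epsilon_2\cos_2$; thus $(v_2,v_3)\parallel(1,x_2)$, which is the tangent to the parabola. On $\partial^+$ (where $x_3=x_2$) one has $\sin_2=1$ and $\cos_2=0$, and the formulas collapse to $v_2=v_3=-2x_2\epsilon_3\sin_3$, proportional to $(1,1)$. In both cases the scalar factor ($\epsilon_2\cos_2$, respectively $-2x_2\epsilon_3\sin_3$) vanishes only at the excluded corner $(2,2)$, so the tangent vector is nonzero along the open boundary arcs.

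For the direction-of-crossing statements, the key point is that the sign data $(\epsilon_2,\epsilon_3)$ downstairs is determined by the lifted data via
\[
\epsilon_2=\mathrm{sign}(\RR(z_2\bar{z}_1)),\qquad \epsilon_3=\mathrm{sign}(\Im(z_3\bar{z}_1)),
\]
so a crossing corresponds to one of these real scalar functions of time passing through zero. Using the Fuller ODEs together with $z_0=-iz_3/|z_3|$, I would compute
\[
\frac{d}{dt}(z_3\bar{z}_1)=z_2\bar{z}_1+i|z_3|,\qquad
\frac{d}{dt}(z_2\bar{z}_1)=|z_1|^2+iz_2\bar{z}_3/|z_3|,
\]
and then evaluate the imaginary and real parts, respectively, using the fiber parameterization of Lemma~\ref{lem:fiber}. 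On $\partial^-$ this gives $\frac{d}{dt}\Im(z_3\bar{z}_1)=x_2|z_1|^3\sin_2+|z_3|>0$, so $\epsilon_3$ passes monotonically from $-1$ to $+1$, which is exactly the claimed $\Omega_{\epsilon_2,-}\to\Omega_{\epsilon_2,+}$ transition. On $\partial^+$, using $z_2=ix_2z_1|z_1|$ and $\cos_3=x_2/2$ on that boundary, one computes $\Im(z_2\bar{z}_3)/|z_3|=x_2^2|z_1|^2/2$ and hence
\[
\frac{d}{dt}\RR(z_2\bar{z}_1)=\tfrac12|z_1|^2(2-x_2^2),
\]
strictly positive for $x_2<\sqrt2$ and strictly negative for $x_2>\sqrt2$. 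This is exactly the reversal of the $\epsilon_2$ sign asserted in the statement.

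The main subtlety, which I expect to be the principal obstacle, is that (as flagged in the remark preceding the theorem) the field $(v_2,v_3)$ downstairs is non-Lipschitz on the boundary curves, so trajectories downstairs are genuinely non-unique; the direction statement cannot be read off from $(v_2,v_3)$ alone but must be anchored upstairs in $M$, where the Fuller ODE is smooth and its integral curves are unique. Projecting through $\pi$ then transports the transverse sign changes of $\Im(z_3\bar{z}_1)$ and $\RR(z_2\bar{z}_1)$ into the asserted transitions between $\Omega_{\epsilon_2,\epsilon_3}$ patches. The isolated point $(x_2,x_3)=(\sqrt2,\sqrt2)$ on $\partial^+$, where $\frac{d}{dt}\RR(z_2\bar z_1)$ vanishes and a higher-order analysis would be needed, is explicitly excluded from both branches of the statement, so no extra work is required there.
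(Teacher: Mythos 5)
Your proof is correct and follows essentially the same route as the paper: verify tangency of $(v_2,v_3)$ by substitution, then lift to $M$ via the section $\psi$ and use the smooth Fuller ODE upstairs to determine the direction of crossing at $t=0$. The only cosmetic difference is that you directly differentiate the sign-defining functions $\Im(z_3\bar z_1)$ and $\RR(z_2\bar z_1)$, whereas the paper Taylor-expands the full trajectory and reads off $\epsilon_2,\epsilon_3$; the underlying first-order computation is identical.
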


\begin{proof}
Along the boundary $x_2=x_3$ the vector field has the form $v_2=v_3$,
so that the vector field is tangent to the boundary.  Similarly, the
vector field along the boundary $x_3=x_2^2/2$ is also tangent to the
boundary.  However, because of non-uniqueness of trajectories, the
flow does \emph{not} move along the boundaries!

\index[n]{p@$p$, point in bundle!value of section $\psi$}

We obtain a better approximation to the flow near a boundary of
$\Omega_{\epsilon_2,\epsilon_3}$ by taking the section
$p=\psi(x_2,x_3,\epsilon_2,\epsilon_3)\in M$, then expanding the Fuller
trajectory $z(t)$ with initial condition $p$ at $t=0$ in a Taylor
approximation $f$, then taking $\pi(z(t))$.

\index[n]{s@$s$, real parameter!$s\in(0,2)$, local parameter}
\index[n]{u@$\mb{u}$, vector!normal}

Following this procedure at the boundary point
$(x_2,x_3,\epsilon_2,\epsilon_3)=(s,s^2/2,\epsilon_2,\epsilon_3)$, for
$s\in(0,2)$, with normal $\mb{u}=(s,-1)$, we find that the trajectory moves
from $\Omega_{\epsilon_2-}$ to $\Omega_{\epsilon_2+}$.
\[
\mb{u}\cdot \pi(z(t))=  -\frac{(4+7s^2)}{8}t^2+O(t^3)\quad
\epsilon_2 = \epsilon_2,\quad
\epsilon_3 = \text{sign}(t).
\]

Following this procedure at the boundary point
$(x_2,x_3,\epsilon_2,\epsilon_3)=(s,s,\epsilon_2,\epsilon_3)$, for
$s\in(0,2)$, with normal $\mb{u}=(-1,1)$, we find that the trajectory
moves from $\Omega_{-\epsilon_3}$ to $\Omega_{+\epsilon_3}$ if
$s<\sqrt2$, and the direction between regions reverses when
$s>\sqrt2$.
\[
\mb{u}\cdot \pi(z(t))=  - \frac{(s^2-2)^2}{8u} t^2+ O(t^3),\quad
\epsilon_2 = \text{sign}(t(2-s^2)),\quad
\epsilon_3 = \epsilon_3.
\]
\end{proof}

The next lemma analyzes behavior near $(x_2,x_3)=(0,0)$.
\begin{lemma}\label{lem:00}
Let $z$ be a Fuller trajectory in $M$, defined on some open time
interval.  The trajectory $z$ extends to a trajectory in $M$ for all
$t\in\R$.  Moreover, the image $t\mapsto\pi(z(t))$ is bounded away
from $(x_2,x_3)=(0,0)$.
\end{lemma}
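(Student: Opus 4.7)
My plan is to prove Lemma~\ref{lem:00} in two parts—global existence in $M$, and the base-space boundedness away from $(0,0)\in\R_\Omega^2$—both of which I would extract from the universal polynomial growth bound on $|z_j(t)|$ coming from the chain structure of the Fuller ODE together with the scaling rigidity of its self-similar solutions. Throughout, I assume the implicit hypothesis $\pi(z(t))\notin\{q_\pm^*\}$ inherited from Theorem~\ref{thm:global}; without it, the inward log-spiral $z^\tau$ of \eqref{eqn:log-spiral} is itself a counterexample to global forward existence.

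For global existence, the ODE $z_1'=-iz_3/|z_3|$ has right-hand side of unit modulus wherever $z_3\ne 0$, so $|z_1|$ is $1$-Lipschitz; cascading through $z_2'=z_1$ and $z_3'=z_2$ gives $|z_j(t)|=O(|t|^j)$, ruling out finite-time escape to infinity. Thus the only obstruction to extending the trajectory in $M$ is reaching the boundary $\{z_1z_2z_3=0\}$. But on $M$ the conservation laws give $|z_2|^2=2\RR(z_1,z_3)\le 2|z_1||z_3|$ and $|z_3|=\Im(z_2\bar z_1)\le|z_1||z_2|$, so the vanishing of any single $|z_j|$ forces the simultaneous vanishing of all three. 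Hence the only way to leave $M$ in finite time is a joint collapse $z(t)\to 0$ at some $t_0<\infty$. If such a collapse occurred, setting $r=t_0-t$ the unit-speed bound would yield $|z_1(t)|\le r$, and iterated integration would yield $|z_j(t)|\le Cr^j$. Rescaling $\tilde z_j(r):=z_j(t_0-r)/r^j$ produces a bounded trajectory which, in logarithmic time $s=-\log r$, satisfies the Fuller system with $\H_F=\A_F=0$ preserved; Arzel\`a--Ascoli then gives subsequential limits $\tilde z_\infty$ that are bounded self-similar solutions. By the classification in Section~\ref{sec:log-spiral-solutions}, every such $\tilde z_\infty$ lies on the virial orbit of the inward log-spiral $z^\tau$, so $\pi(z(t_0-r_n))\to q_-^*$ along the subsequence; since $q_-^*$ is an isolated fixed point of $(v_2,v_3)$ in $\R_\Omega^2$, the limit is actually attained (not merely subsequential), giving $\pi(z(t))\to q_-^*$ as $t\to t_0^-$. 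This contradicts the hypothesis $\pi(z)\notin\{q_\pm^*\}$, so no finite-time collapse occurs, and $z$ extends to all of $\R$.

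For boundedness of $\pi(z(t))$ away from $(0,0)$, I would argue by contradiction: if some sequence $t_n$ had $\pi(z(t_n))\to(0,0)\in\R_\Omega^2$, then $x_2(t_n)=|z_2(t_n)|/|z_1(t_n)|^2\to 0$ and $x_3(t_n)=|z_3(t_n)|/|z_1(t_n)|^3\to 0$. Applying the virial action to put $|z_1(t_n)|=1$, the sequence $(z_1(t_n),z_2(t_n),z_3(t_n))$ is bounded and has a subsequential limit $(w_1,w_2,w_3)$ with $|w_1|=1$ and $w_2=w_3=0$. But $(w_1,0,0)$ violates $|z_2|^2=2\RR(z_1,z_3)\ne 0$—wait: in fact both sides are zero, consistent, but the point lies on the removed boundary of $M$. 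So the limit is not in $M$, meaning $\pi$ of the rescaled orbit tends to a degenerate limit. Since $\pi(z(t))$ evolves (up to time reparameterization) by the continuous vector field $(v_2,v_3)$ on $\R_\Omega^2$, and since direct inspection of the formulas $v_2=\epsilon_2\cos_2-2x_2\epsilon_3\sin_3$ and $v_3$ near the removed corner $(0,0)$ shows that $(v_2,v_3)$ has no equilibrium there (and in fact points away from the corner in the configurations compatible with Figure~\ref{fig:omega}), the base-space trajectory cannot accumulate at $(0,0)$, yielding the desired bound.

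The main obstacle is the rigidity step at the end of paragraph two: showing that every bounded self-similar limit of the rescaled collapsing trajectory is a virial-translate of the inward log-spiral, equivalently $\pi^{-1}(q_\pm^*)=\{\text{log-spiral orbits}\}$. This is what upgrades a subsequential Arzel\`a--Ascoli limit to genuine convergence of $\pi(z(t))$ to $q_-^*$, and hence to a contradiction with the standing hypothesis. I expect this rigidity to follow from the explicit parameterization of log-spirals in \eqref{eqn:log-spiral} combined with uniqueness for the Fuller ODE on the dense subset $M\setminus\pi^{-1}(q_\pm^*)$ where the vector field is Lipschitz.
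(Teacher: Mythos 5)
Your proposal takes a genuinely different route from the paper, and both halves of your argument have concrete gaps. The paper proves global existence almost for free: the vector field $(v_2,v_3)$ on $\R_\Omega^2$ is bounded, so the downstairs integral curve extends for all time, and as long as it stays away from the deleted corner $(0,0)$ one can lift back to $M$. The real content, for the paper, is the claim that $\pi(z(t))$ stays away from $(0,0)$; this is proved by passing to polar coordinates and doing a four-way case analysis over the regions $\Omega_{+-},\Omega_{++}^0,\Omega_{--}^0,\Omega_{-+}^0$, using in particular a differential inequality $dx_3/dx_2\le\sqrt{2x_3}$ on $\Omega_{--}$ and $\Omega_{-+}$ (where the vector field has $v_2<0$, i.e.\ actually points \emph{toward} the corner in the $x_2$-direction, so ``points away from the corner'' is simply not true there). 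Your corresponding step --- ``direct inspection \dots\ shows that $(v_2,v_3)$ \dots\ points away from the corner'' --- is therefore false as stated and does not give the boundedness; you would need something like the paper's separating-curve argument, or a convexity argument, and these have to be adapted region by region because the dynamics crossing $\partial^\pm_{\epsilon_2,\epsilon_3}$ are non-Lipschitz.

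On the global-existence side, your blowup/Arzel\`a--Ascoli approach replaces the paper's elementary ``extend downstairs, lift up'' argument with a considerably heavier machine, and it has two holes. First, the claim that ``the vanishing of any single $|z_j|$ forces the simultaneous vanishing of all three'' is only correct for $j=1$: the point $(z_1^0,0,0)$ with $z_1^0\ne0$ satisfies $\A_F=\H_F=0$ (both sides of your two inequalities are zero), so neither $|z_2|=0$ nor $|z_3|=0$ forces $|z_1|=0$. To rule out a trajectory limiting onto such a point you need an additional computation --- for example, expanding $\H_F$ to second order along a putative approach shows $\H_F\sim(t-t_0)^2|z_1^0|\ne0$, contradicting conservation --- and this step is missing. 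Second, the rigidity claim you flag at the end (that all bounded self-similar limits are virial-translates of the log-spiral) is exactly the hard part; the paper never needs it, and Section~\ref{sec:log-spiral-solutions} only exhibits the spirals, it does not classify all self-similar solutions. On the added hypothesis $\pi(z)\notin\{q_\pm^*\}$: you are right that the inward spiral reaches $\mb 0$ in finite forward time, so some care is required in interpreting the lemma's global-existence clause, but the lemma as printed carries no such hypothesis and the paper's proof does not invoke one, so grafting it on silently changes the statement being proved; that point deserves to be raised explicitly rather than assumed.
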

\begin{proof}
The vector field $(v_2,v_3)$ is bounded.  The base space $\R_\Omega^2$
fails to be compact because of the omission of the corner point
$(0,0)$ from $\Omega$.  The image $\pi(z(t))$ of a Fuller trajectory
on an open time interval can be extended in $\R_\Omega^2$ to all time,
then lifted to $M$ to extend $z(t)$, provided the trajectory
downstairs remains bounded away from $(0,0)$.  Thus, the lemma will
follow if we prove that trajectories downstairs are bounded away from
$(0,0)$, the common endpoint of all boundary curves.

\index[n]{r@$r$, real number!polar coordinate}
\index[n]{zh@$\theta$, angle}

We use polar coordinates $(x_2,x_3)=(r\cos(\theta),r\sin(\theta))$.
We may assume that $(x_2,x_3)$ is not on a boundary edge of
$\Omega_{\epsilon_2,\epsilon_3}$, because earlier analysis shows that
Fuller trajectories cross the boundary edges at isolated times.  We
analyze several subcases according to small neighborhoods of $(0,0)$
in the following separate pieces. We use a hodgepodge of arguments.

On $\Omega_{+-}$, 
\[
v_3 = 2x_3 + O(r^2),
\]
so that $x_3(t)$ is increasing, moving away from $(0,0)$.

On $\Omega_{++}^0$, we consider two subcases.  In
the first subcase, if $x_3\le x_2/2$ in a small neighborhood of
$(0,0)$, then $\theta$ is decreasing and
\[
r' = \cos\theta\sqrt{1-\tan^2\theta} + O(r),
\]
so that $r'$ is positive and bounded away from $0$, so that the
trajectory moves away from $(0,0)$.  In the other subcase, if $x_3\ge
x_2/2$ in a sufficiently small neighborhood of $(0,0)$, then the sign
of the planar curvature of $(x_2(t),x_3(t))$ is positive and the
tangent to the curve separates the trajectory from $(0,0)$.

Next, consider $\Omega_{--}^0$.  In the subcase near $(0,0)$ where
$x_3\ge x_2/10$, the curvature argument from $\Omega_{++}$ also
applies here.  In the subcase near $(0,0)$ where $x_3\le x_2/10$, then
$v_2<0$ and $v_3>0$.  Along a trajectory, $x_3$ is a function of
$x_2$, and we have
\begin{equation}\label{eqn:dx3}
\frac{dx_3}{dx_2} = \frac{v_3}{v_2} \le \sqrt{2 x_3}.
\end{equation}
Integrating this differential inequality, we obtain
\[
\sqrt{2 x_3(t)} \ge x_2(t) + c,
\]
where $c=\sqrt{2x_3(0)}-x_2(0)$ is positive on the interior of
$\Omega_{--}$.  This inequality bounds the trajectory away from
$(0,0)$.

Finally, consider $\Omega_{-+}^0$. We have $v_2,v_3<0$.  In this case,
inequality \eqref{eqn:dx3} holds, and we proceed as in the previous
case.
\end{proof}

\section{A Special Trajectory}

\index[n]{q@$q$, point on manifold!$q_{2,2}$, equilibrium point}
\index{special Fuller trajectory}
\index[n]{yz@$z_i$, Fuller system!$z_{spec}$, special trajectory}

Modulo the action of the virial group, there is a unique Fuller
trajectory whose image in $\R_\Omega^2$ passes through equilibrium
point $q_{2,2}:=(x_2,x_3,\epsilon_2,\epsilon_3) =(2,2,-1,1)$.  Using
the section $\psi$ of the bundle, the Fuller trajectory is determined by
the initial condition $(z_1(0),z_2(0),z_3(0))=(1,2i,2)=\psi(q_{2,2})$ at
$t=0$.  We call this particular trajectory $z_{spec}$
the \emph{special} Fuller trajectory.  Figure~\ref{fig:fuller-22}
shows the image (in red) of the special Fuller trajectory in
$\Omega_{-+}$ and its subsequent trajectory in $\Omega_{++}$.  The
signs $(\epsilon_2,\epsilon_3)$ are discarded, so that the figure
shows $\Omega_{-+}$ superimposed on $\Omega_{++}$.  At the point where
$\pi(z(t))$ meets the edge $x_2=x_3$, with $x_2<2$, the curve
crosses from $\Omega_{-+}$ to $\Omega_{++}$.
Figure~\ref{fig:fuller-22-deviation} shows that the crossing occurs at
the positive zero of $\pi(z(t))\cdot(1,-1)$ near $t=0.9$.  When $t$
is large, the special trajectory approaches the stable equilibrium point
$q_+^*\in\Omega_{++}$.

At $(x_2(0),x_3(0))=(2,2)$, the image trajectory $(x_2(t),x_3(t))$
moves from $\Omega_{+-}$ to $\Omega_{-+}$ (so that $(2,2)$ is not a
true fixed point, when higher order information from the Fuller
trajectory in $M$ is retained).
\[
x_2(t)= 2-\frac{17}{4}t^2 + O(t^3),\quad
x_3(t)= 2-\frac{9}{2}t^2 + O(t^3),\quad
\epsilon_2 =-\text{sign}(t),\quad
\epsilon_3 =\text{sign}(t).
\]
The special trajectory invariant under time reversal $\tau$.
Its trajectory for negative times is obtained by symmetry.

\begin{figure}[ht]
\centering
\includegraphics[scale=0.25]{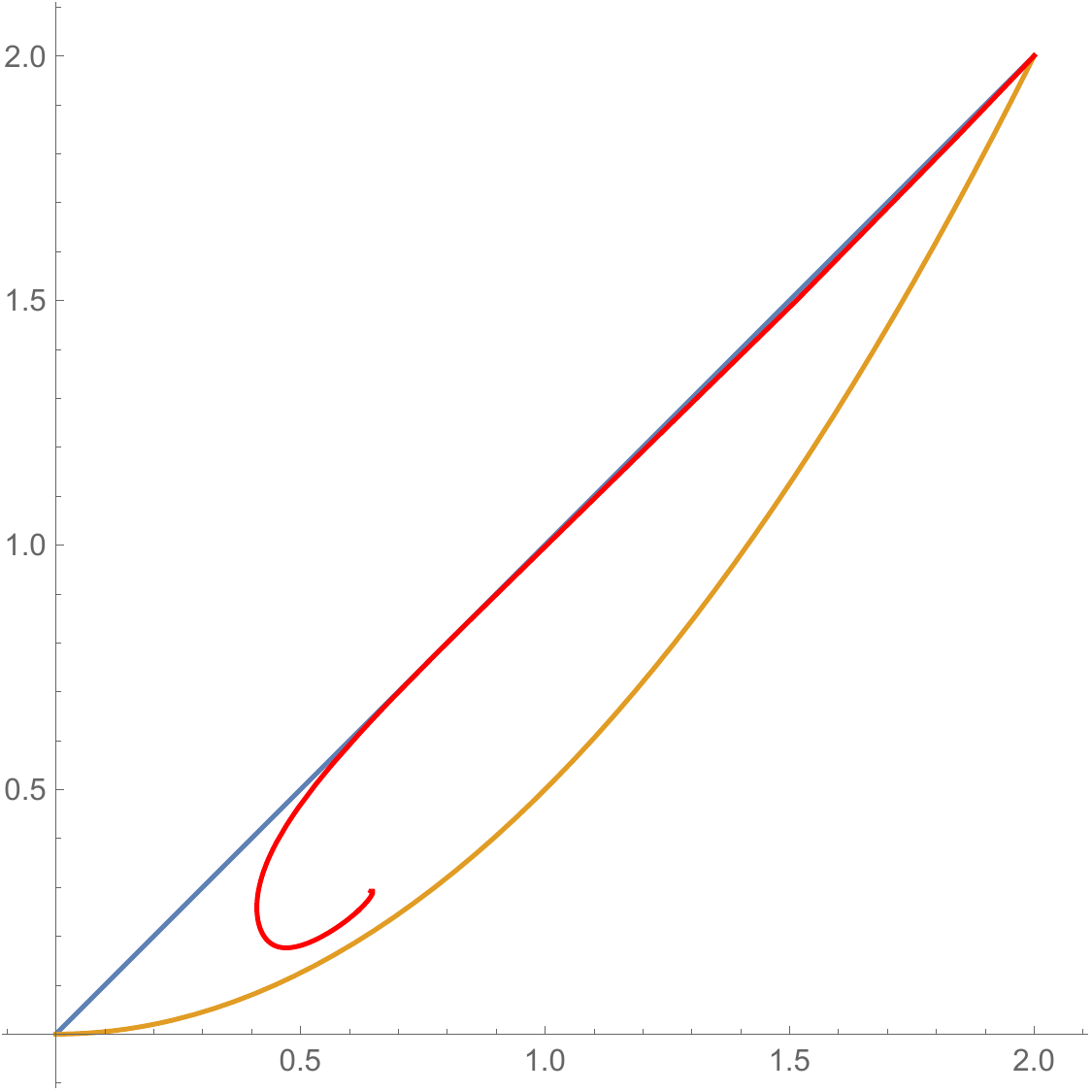} %
\caption{This figure shows the image (in red) in 
$\Omega$ of the special Fuller trajectory through $z=(1,2i,2)$.
(All four regions $\Omega_{\pm\pm}$ are superimposed in this figure.)
The red curve meets the boundary $x_2=x_3$ of $\Omega_{++}$ at two points:
at $(2,2)$ at time zero and at a second point at about time $t=0.9$.
For large values of $t$, the curve approaches the stable equilibrium
point $q_+^*$ in $\Omega_{++}$.
}
\label{fig:fuller-22}
\end{figure}

\begin{figure}[ht]
\centering
\includegraphics[scale=0.25]{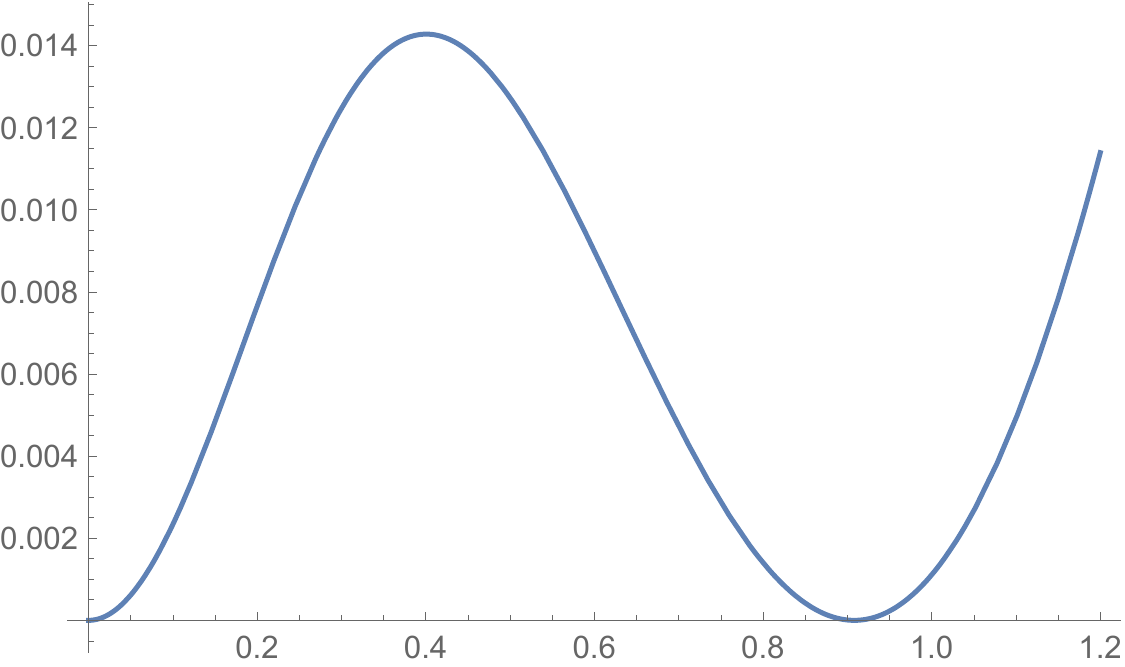} %
\caption{This figure shows $\mb{u}\cdot \pi(z(t))$, where $\mb{u}=(1,-1)$.
The graph gives the deviation of $\pi(z(t))$ from the boundary curve $x_2=x_3$.
Here $z(t)$ is the special Fuller trajectory with initial condition $z=(1,2i,2)$
at $t=0$.  The signs $(\epsilon_2,\epsilon_3)$ are ignored, but
$\pi(z(t))\in\Omega_{-+}$ for $t$ less than the positive zero near $t=0.9$,
then $\pi(z(t))$ passes into $\Omega_{++}$.
}
\label{fig:fuller-22-deviation}
\end{figure}

\index[n]{t@$t\in\R$, time!$t_c$, arrival time at boundary}
\index[n]{q@$q$, point on manifold!$q_c$, arrival point on boundary}
\index[n]{0@$[-,-]$, linear segment between endpoints}
\index[n]{zZ@$\Omega\subset[0,2]^2$!$\Omega_{spec+}\subset\Omega_{-+}$, subregion}

Let $t_{c}\approx0.9$ be the time at which
$q_c:=\pi(z_{spec}(t_c))\in\{x_2=x_3\}$.  Let $\Omega_{spec+}$ be the
narrow region in $\Omega_{-+}$ bounded by $\pi(z_{spec}(t))$, for
$t\in[0,t_c]$, and by the linear segment $[q_c,(2,2)]$ from $q_c$ to
$(2,2)$ along the edge $x_2=x_3$.  By our analysis of boundary
behavior, trajectories in $\Omega_{spec+}$, must enter through $\Omega_{++}$
along the segment $[(\sqrt2,\sqrt2),(2,2)]$ and exit back into
$\Omega_{++}$ along the segment $[q_c,(\sqrt2,\sqrt2)]$.  The
component $v_3$ of the vector field is negative on $\Omega_{spec+}$, so that
the trajectories always progress southward monotonically from entrance
to exit.

\section{Proof}\label{sec:global:proof}

%% \begin{theorem}\label{thm:global}
%% Let $U_+$ and $U_-$ be any neighborhoods of the stable and unstable
%% equilibrium points $q_+^*\in{}U_+\subseteq\Omega_{++}$ and
%% $q_-^*\in{}U_0\subseteq\Omega_{--}$.  Let $z(t)$ be any Fuller
%% trajectory in $M$ defined for all $t\in\R$.  Then, for large
%% positive $t$, if $z(t)$ is not the inward spiral, then $\pi(z(t))\in
%% U_+$; and for sufficiently negative $t$, if $z(t)$ is not the outward
%% spiral, then $\pi(z(t))\in U_-$.
%% \end{theorem}

%% \begin{remark} We call this particular result an observation rather
%% than a theorem because it relies on unverified pictures of
%% trajectories produced by Mathematica.  We believe that the observation
%% could be turned into a rigorous proof, by using interval arithmetic to
%% verify observed properties of trajectories. The required interval
%% arithmetic has not been done.  No further results in this book depend
%% on our observations.
%% \end{remark}
\index{Mathematica}

\begin{proof} We sketch a proof of Theorem~\ref{thm:global}, relying on a few numerical
calculations as needed.  The first two claims of the theorem follow
from Lemma~\ref{lem:00}.

Consider trajectories in $\Omega_{-+}\setminus \Omega_{spec+}$.  The
component $v_3$ of the vector field is negative on $\Omega_{-+}$.
Trajectories must enter from $\Omega_{--}$ along the lower boundary
$x_3=x_2^2/2$ and have a soutward drift until exiting along the edge
$x_2=x_3$ into $\Omega_{++}$ along the open segment between $(0,0)$
and $q_c$.

\index[n]{zZ@$\Omega\subset[0,2]^2$!$\Omega_{spec-}\subset\Omega_{+-}$, sign reversal of $\Omega_{spec+}$}

Let $\Omega_{spec-}\subset \Omega_{+-}$ be the region obtained from
$\Omega_{spec+}$ by reversing signs $\epsilon_2\mapsto-\epsilon_2$ and
$\epsilon_3\mapsto-\epsilon_3$.  Then the flow on
$\Omega_{+-}\setminus \Omega_{spec-}$ is obtained by reversing the
flow on $\Omega_{-+}\setminus \Omega_{spec+}$: the trajectories move
northward, entering from $\Omega_{--}$ and exiting into $\Omega_{++}$
along the lower boundary $x_3=x_2^2/2$.

Similarly, the behavior on $\Omega_{spec-}\cup\Omega_{--}$ will be the time
reversal of $\Omega_{spec+}\cup\Omega_{++}$, which we describe next.

\index{Lyapunov function}
\index[n]{D@$D_r$, small disk}

Finally, we describe the flow on $\Omega_{spec+}\cup\Omega_{++}$.  The
flow on $\Omega_{spec+}$ is described in the previous section.
Consider $\Omega_{++}$.  The point $q_+^*$ is a stable equilibrium
point.  By a constructive procedure using the Lyapunov equation, there
exists an explicit Lyapunov function on a small disk
\[
D_r = \{x=(x_2,x_3)\mid \|x - q_+^*\|=r\}.
\]
around $q_+^*$~\cite{nikravesh2018nonlinear}.  Let $t_1$ be the time at
which $\pi(z_{spec}(t))$ enters the disk $D_r$.  We consider the
curve $\gamma$ from $q_c$ to $q_+^*$ to  given by 
the arc $\pi(z_{spec}(t))$ for
$t\in[t_c,t_1]$ followed by
the linear segment from $\pi(z_{spec}(t_1))$ to $q_+^*$.

We compute $\det(((x_2,x_3)-q_+^*),(v_2,v_3))\ge0$ on $\Omega_{++}$
with equality if and only if $(x_2,x_3)$ is one of the three points $(0,0)$,
$(2,2)$, or $q_+^*$.  This means that the trajectories always
wind monotonically around the fixed point $q_+^*$.  In particular, as it winds,
every trajectory must meet the curve $\gamma$.  By the uniqueness of
trajectories, a trajectory meeting the special trajectory
$\pi(z_{spec}(t))$ must equal the special trajectory.  Every other
trajectory must meet $\gamma$ inside $D_r$.  Thus, every trajectory
(excluding the fixed point at $q_-^*$) must enter the Lyapunov disk
$D_r$, and from there be attracted $q_+^*$.

The point $q_-^*$ is related to $q_+$ by time reversal, so the
final claim of Theorem~\ref{thm:global} follows from what we
have already proved about $q_+^*$.
\end{proof}
\index[n]{zc@$\gamma$, planar curve}

%\clearpage
%\newpage

%% TCH 5/6/2024 Chapter deleted on Global Analysis of Circular Control 

\clearpage
\newpage

% deals with the multidimensional Fuller system in the 1-dimensional
% control case fairly comprehensively in Chapter 5, and the

\part{A Proof of Mahler's First Conjecture}\label{part:mahler}

\chapter{Fuller System for Triangular Control}

\section{Introduction}

\index{conjecture!Reinhardt}
\index{conjecture!Mahler's First}
\index{Mahler, Kurt}

The Reinhardt conjecture of 1934 asserts that among centrally
symmetric convex disks, the smoothed octagon has the least
greatest packing density.  The smoothed octagon is a modification of
the regular octagon obtained by rounding its corners with hyperbolic
arcs.
In 1947, Kurt Mahler conjectured a weak form of the Reinhardt conjecture,
when he wrote
\begin{quote}
    \textit{It seems highly probable from the convexity condition,
      that the boundary of an extreme convex domain consists of line
      segments and arcs of hyperbolae. So far, however, I have not
      succeeded in proving this assertion.} --Mahler 1947.
\end{quote}
We refer to this assertion as \emph{Mahler's First conjecture}.  The next
year, Mahler rediscovered Reinhardt's conjecture from 1934, which we
call Mahler's Second conjecture.

\index{Fuller dynamical system}
\index{Reinhardt dynamical system}
\index{Poincar\'e map!first recurrence map}
\index{divisor!exceptional}
\index{time reversal, $\tau$}
\index{singular locus}
\index{fixed points!$q_{out},q_{in}$}

In this part of the book, we give a proof of Mahler's First conjecture. 
The basic outline of the proof is as follows.  We make a detailed study of
the Fuller system. By restriction of the dynamical system to
switching times, the Fuller system becomes a discrete dynamical
system with dynamics given by a Poincar\'e first recurrence map.
We blow up the space at the singular locus.  Doing so 
introduces an exceptional divisor, which becomes the focus of
attention.  We find that the discrete Fuller-Poincar\'e map has
exactly two fixed points on the exceptional divisor.
One is stable and the other is unstable.  The fixed points
are exchanged by a time-reversing symmetry.  (These fixed points
can be interpreted as inward and outward self-similar spirals of the
Fuller system.)  We analyze the global dynamics of the Fuller system
on the exceptional divisor and show that the basin of attraction
of the stable fixed point is the entire exceptional divisor (excluding
the other fixed point).  
 
Returning to the Reinhardt dynamical system, we consider its
discrete Poincar\'e map.  We study the stable
and unstable manifolds at the two fixed points (which are also fixed
points of the Reinhardt dynamics).  We show that any trajectory of the
discrete Reinhardt dynamical system that has a cluster point
on the exceptional divisor, must approach the exceptional divisor
along the stable manifolds of the fixed points.
However, we show that the stable and unstable manifolds at the fixed points
do not contain any periodic trajectories, as required by the solution
to the Reinhardt problem.  We conclude that the solution of the Reinhardt
problem is given by a trajectory that does not meet the singular locus.
From this, it follows that the solution is bang-bang with finitely many
switches. It then follows that the solution the Reinhardt problem is a
smoothed polygon.

\section{Fuller system for Triangular Control}

We define the Fuller system for triangular control to be
the following controlled system of ordinary differential equations, taking
values in $\C$.
\begin{equation}
z_3' = z_2,\quad
z_2' = z_1,\quad
z_1' = -i u,\quad
u(t)\in\{1,\zeta,\zeta^2\}=:V_T,
\end{equation}
where $\zeta=\exp(2\pi{i}/3)$ is a primitive cube root of unity.
The control function $u$ is a measurable function of a real variable,
taking values in $V_T$.
We set $z_0:= -i u$ so that $z_1' = z_0$.
When $u\in\C$ is constant, we can
solve the Fuller system ODEs, to obtain
\begin{align}\begin{split}\label{eqn:z1z2z3}
z_0\phantom{(t)} &= -i u,\\
z_1(t) &= -i t u+ z_1^0,\\
z_2(t) &= -i t^2 u/2! + z_1^0 t + z_2^0,\\
z_3(t) &= -i t^3 u/3! + z_1^0 t^2/2! + z_2^0 t + z_3^0,
\end{split}
\end{align}
with initial conditions $(z_1^0,z_2^0,z_3^0)$ at $t=0$.

\index[n]{yz@$z_i$, Fuller system!$z=(z_1,z_2,z_3)\in\C^3$}
\index[n]{i@$i=\sqrt{-1}$}
\index[n]{u@$u$, control!$u\in{}V_T$}
\index[n]{V@$V_T=\{1,\zeta,\zeta^2\}$, vertices of control set}
\index[n]{zf@$\zeta=\exp(2\pi{i}/3)$, cube root of unity}
\index[n]{0@$-^0$ or $-_0$, initial value}
\index{Fuller system!triangular control}
\index{Hamiltonian!Fuller system}

\subsection{Hamiltonian}
The Hamiltonian for the Fuller system is
\begin{align}
\H_F(z,u) &= \RR(z_1,z_2 i) + \RR(u,z_3),\quad z =(z_1,z_2,z_3)
\\
&= \frac{1}{2} \sum_{j=0}^3(-1)^j\RR(z_{3-j},iz_j).
\end{align}
We seek solutions on an interval $[t_1,t_2]$ 
for which the control function $u$ maximizes the Hamiltonian.
\begin{equation}\label{eqn:max}
\H_F(z(t),u(t))\ge \H_F(z(t),\zeta^k),\quad \zeta^k\in\{1,\zeta,\zeta^2\},
\end{equation}
and such that
\begin{equation}\label{eqn:H0}
\H_F(z(t),u(t))=0,\quad\forall t\in[t_1,t_2].
\end{equation}
We define the maximized Hamiltonian to be
\[
\H_F^+(z):= \max_{u\in V_T} \H_F(z,u).
\]

\index[n]{H@$\H$, Hamiltonian!$\H_F$, Fuller system}
\index[n]{H@$\H$, Hamiltonian!$\H^+$, maximized}

It is easy to check that $\H_F$ is constant along every segment with
constant control. Also, if $u$ is chosen according to the maximum
principle, then $\H_F$ is constant along trajectories.

\subsection{Switching Function}

We say that $u\in V_T$ is the \emph{first control}, if it is the
optimal control starting at switching time $t=0$ until the first
positive switching time $t_{sw}>0$, for a given initial condition
$z^0\in\C^3\setminus\mb{0}$.  (It will become clear from the proof of
Lemma~\ref{lem:first-control} that $t=0$ is an isolated zero of the
relevant switching function, and that the notion of first control is
well-defined.)  

Let $z^0=(z_1^0,z_2^0,z_3^0)\in \C^3$.  We write the initial
conditions in polar coordinates: $z_j^0=r_je^{i\theta_j}$.  Let
$z_3(t,u,z^0)$ be the solution \eqref{eqn:z1z2z3} to the Fuller system
with first control $u=\zeta^k$ and initial condition $z^0$.  Set
$r_0=1$, $\theta_0=(2\pi{k}/3)-\pi/2$, and $z_0^0 = -i u = r_0
e^{i\theta_0}$.  The switching function from control $\zeta^i$ to
$\zeta^j$ is
\[
\chi^{u_k}_{u_i-u_j}(t)=\chi^k_{ij}(t)=\chi^{u_k}_{ij}(t):= 
\frac{1}{\sqrt{3}} \RR(u_i-u_j,z_3(t,u_k,z^0)).
\]
When $k=i$, we drop the superscript and write $\chi_{ij}$ for
$\chi^i_{ij}$.  We assume $i\ne j\mod 3$ so that $|u_i-u_j|=\sqrt3$,
and write
\[
u_i - u_j = \sqrt{3} e^{i\theta_{ij}},
\quad i,j\in\Z,\quad (\text{and }i=\sqrt{-1}).
\]
Then $\chi^u_{ij}(t)=\RR(e^{i\theta_{ij}},z_3(t,u,z^0))$.
We have
\begin{align*}
\theta_{i,i+1} &= -\pi/6 + 2\pi i/3\\
\theta_{i+1,i} &= 5\pi/6 + 2\pi i/3,\quad 
\theta_{i,i-1} = \pi/6 + 2\pi i/3,\quad i\in\Z.
\end{align*}
The switching function simplifies to the form
\[
\chi^k_{ij}(t)=
\sum_{m=0}^3 \frac{t^m}{m!} r_{3-m}\cos(\theta_{3-m}-\theta_{ij}).
\]

\index[n]{zx@$\chi_{ij}$, switching function}
\index[n]{zh@$\theta$, angle} 

\subsection{Symmetry}

We consider the symmetries of the system.  We start with positive
scaling.

\begin{lemma}\label{lem:rescale}
If $(z_1,z_2,z_3,u)$ is a solution to the Fuller system on $[t_1,t_2]$
satisfying \eqref{eqn:max} and \eqref{eqn:H0}, then
$(\tilde{z}_1,\tilde{z}_2,\tilde{z}_3,\tilde u)$ is a solution on $[t_1r,t_2r]$
satisfying the same constraints, where $\tilde z_j(t) = r^j z_j(t/r)$,
$\tilde u(t) = u(t/r)$, and where $r$ is real and positive.
\end{lemma}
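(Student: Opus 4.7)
The plan is to prove this rescaling lemma by direct verification that each defining property of a Fuller solution is preserved under the substitution $\tilde z_j(t) = r^j z_j(t/r)$ with time-reparametrized control $\tilde u(t) = u(t/r)$. The factor $r^j$ is the ``natural'' weight for $z_j$ because differentiation in time contributes one power of $1/r$ per derivative, so the weights must drop by one at each step of the chain $z_3' = z_2$, $z_2' = z_1$, $z_1' = -iu$. This is the same homogeneity that underlies the virial group action \eqref{eqn:virial} in the circular-control case, now specialized to the triangular control set $V_T$.

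First I would verify the state ODEs. Using $\tilde z_j'(t) = r^{j-1} z_j'(t/r)$ one gets $\tilde z_3'(t) = r^2 z_2(t/r) = \tilde z_2(t)$, $\tilde z_2'(t) = r\, z_1(t/r) = \tilde z_1(t)$, and $\tilde z_1'(t) = z_1'(t/r) = -i u(t/r) = -i\tilde u(t)$, all on the rescaled interval $[t_1 r, t_2 r]$. The admissibility of the control is automatic: $\tilde u(t) \in V_T$ for every $t$ because $u(t/r) \in V_T$, and measurability is preserved by the affine reparametrization $t \mapsto t/r$ since $r > 0$.

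Next I would check the Hamiltonian constraints. A one-line computation using $\RR$-bilinearity gives
\[
\H_F(\tilde z(t), w) = r^3\bigl(\RR(z_1(t/r),\, z_2(t/r)\,i) + \RR(w,\, z_3(t/r))\bigr) = r^3\, \H_F(z(t/r), w)
\]
for every $w \in \C$, in particular for $w = \tilde u(t) = u(t/r)$ and for $w = \zeta^k \in V_T$. Since $r^3 > 0$, the inequality \eqref{eqn:max} at time $t/r$ transfers to the inequality at time $t$, and the vanishing \eqref{eqn:H0} at $t/r$ transfers to vanishing at $t$.

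There is really no obstacle here; the only thing to be careful about is bookkeeping the domain $[t_1 r, t_2 r]$ and the scaling exponent $3$ in the Hamiltonian identity (which matches the degrees of the three terms $\RR(z_1, z_2 i) \sim r^{1+2}$ and $\RR(u, z_3) \sim r^3$). Thus the entire proof amounts to two short displayed computations plus a remark that $r > 0$ preserves the direction of the maximization inequality.
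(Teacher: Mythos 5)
Your proof is correct and takes exactly the approach the paper uses — the paper simply states ``This holds by direct substitution into the Fuller system and into the Hamiltonian,'' and your write-up is that substitution carried out explicitly, including the correct observation that the $r^3$ homogeneity of $\H_F$ preserves both the maximization inequality \eqref{eqn:max} and the vanishing condition \eqref{eqn:H0} because $r>0$.
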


\begin{proof}
This holds by direction substitution into the Fuller system and into
the Hamiltonian.
\end{proof}

There is a discrete rotational symmetry.

\begin{lemma}\label{lem:discrete}
If $(z_1,z_2,z_3,u)$ is a solution to the Fuller system
satisfying \eqref{eqn:max} and \eqref{eqn:H0}
with initial value $z^0=(z_1^0,z_2^0,z_3^0)$, then $(\zeta z_1,\zeta
z_2,\zeta z_3,\zeta u)$ is also a solution with initial value
$\zeta z^0$ satisfying the same constraints.
\end{lemma}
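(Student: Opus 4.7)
The plan is to verify each of the three structural conditions in turn—the state ODEs, the constraint \eqref{eqn:H0} with the maximization condition \eqref{eqn:max}, and the initial value—using nothing more than $\C$-linearity of the Fuller dynamics and the fact that the $\R$-bilinear form $\RR(w_1,w_2)=\Re(\bar w_1 w_2)$ is invariant under simultaneous multiplication of both arguments by a unit complex number.

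First I will check the state equations. Because the system $z_3'=z_2$, $z_2'=z_1$, $z_1'=-iu$ is linear over $\C$, multiplying both sides by $\zeta$ produces $(\zeta z_3)'=\zeta z_2$, $(\zeta z_2)'=\zeta z_1$, and $(\zeta z_1)'=-i(\zeta u)$, so $(\zeta z_1,\zeta z_2,\zeta z_3,\zeta u)$ satisfies the Fuller ODE. The fact that $\zeta u(t)\in V_T=\{1,\zeta,\zeta^2\}$ whenever $u(t)\in V_T$ is simply that multiplication by $\zeta$ cyclically permutes the three cube roots of unity, and measurability is preserved. The identity $\zeta z(0)=\zeta z^{0}$ is immediate.

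Next I would treat the Hamiltonian. Since $|\zeta|=1$, for all $w_1,w_2\in\C$ we have $\RR(\zeta w_1,\zeta w_2)=\Re(\bar\zeta\zeta\bar w_1 w_2)=\RR(w_1,w_2)$, and hence
\[
\H_F(\zeta z,\zeta u)=\RR(\zeta z_1,(\zeta z_2)\,i)+\RR(\zeta u,\zeta z_3)=\RR(z_1,z_2 i)+\RR(u,z_3)=\H_F(z,u).
\]
Therefore \eqref{eqn:H0} transfers directly from $(z,u)$ to $(\zeta z,\zeta u)$. For \eqref{eqn:max}, the same invariance gives $\H_F(\zeta z,\zeta^{k+1})=\H_F(z,\zeta^{k})$ for every $k\in\Z/3\Z$, so
\[
\H_F(\zeta z(t),\zeta u(t))=\H_F(z(t),u(t))\ge \H_F(z(t),\zeta^k)=\H_F(\zeta z(t),\zeta^{k+1}),
\]
and since $k\mapsto k+1$ is a bijection of $\Z/3\Z$, the right-hand side ranges over all of $V_T$. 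Thus $\zeta u(t)$ still maximizes the Hamiltonian among controls in $V_T$ at every time $t\in[t_1,t_2]$.

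There is essentially no obstacle here: the lemma is just a record of the manifest $\Z/3\Z$-equivariance of the triangular Fuller system under diagonal multiplication by a cube root of unity, and all three ingredients (linear dynamics, invariance of $V_T$, and invariance of $\RR(\cdot,\cdot)$ under unit complex scaling) have been assembled above. I would conclude by noting that this rotational symmetry, together with the scaling symmetry of Lemma \ref{lem:rescale} and the time-reversal $\tau$, generates the full group of symmetries of the maximized Fuller system that will be exploited in the later analysis of the Poincar\'e first recurrence map on the exceptional divisor.
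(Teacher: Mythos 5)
Your proof is correct and takes essentially the same route as the paper, which simply says ``Again, this holds by direct substitution''; you have just written out in full the three substitution checks (linearity of the ODE, invariance of $V_T$ under $\zeta$, and invariance of $\RR(\cdot,\cdot)$ under simultaneous unit scaling of both arguments) that the paper leaves to the reader.
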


\begin{proof} Again, this holds by direct substitution.
\end{proof}

We call the group $\G$ generated by discrete rotational symmetry and
rescalings the \emph{virial group}.  (This virial group is analogous
to but not identical to the virial group that was introduced earlier
for circular control.)  We say that $z$ and $\tilde{z}$
are \emph{equivalent} and write $z\equiv \tilde{z}$ if one can be
carried to the other by the virial group, that is, by a combination of
scaling and discrete rotations, as described by
Lemmas~\ref{lem:rescale} and \ref{lem:discrete}.

\index[n]{G@$\G$, virial group}
\index{virial!group}
\index{equivalent (under virial group)}

There is also a time-reversal symmetry.
Let $\bar{\cdot}$ denote complex conjugation.
If $(z_1,z_2,z_3)\in\C^3$, set 
$\tau(z_1,z_2,z_3)=(\bar{z}_1,-\bar{z}_2,\bar{z}_3)$.

\index[n]{0@$\bar{\cdot}$, complex conjugation}
\index[n]{zt@$\tau$, time reversal involution}

\begin{lemma}\label{lem:reversal}
If $(z_1,z_2,z_3,u)$ is a solution to the Fuller system on $[t_1,t_2]$
satisfying \eqref{eqn:max} and \eqref{eqn:H0}, then
$(\bar{z}_1(-t),-\bar{z}_2(-t),\bar{z}_3(-t),\bar{u}(-t))$ is a
solution on $[-t_2,-t_1]$ satisfying the same constraints and
with \emph{terminal} value $\tau(z^0)$.
\end{lemma}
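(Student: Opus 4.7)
The plan is to verify directly that $\tilde z(t) := \tau(z(-t))$ and $\tilde u(t) := \overline{u(-t)}$ satisfy all the required conditions, so this is a calculation rather than a construction. The only substantive point is that every piece of data in the problem (the ODE system, the control set $V_T$, the Hamiltonian, and the maximization condition) is compatible with the combined conjugation/time-reversal $\tau$, so the lemma reduces to four compatibility checks.

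First I will check the state equations. Writing $\tilde z_1(t) = \overline{z_1(-t)}$, $\tilde z_2(t) = -\overline{z_2(-t)}$, $\tilde z_3(t) = \overline{z_3(-t)}$ and applying the chain rule gives $\tilde z_3'(t) = -\overline{z_3'(-t)} = -\overline{z_2(-t)} = \tilde z_2(t)$, and similarly $\tilde z_2' = \tilde z_1$ and $\tilde z_1'(t) = -\overline{z_1'(-t)} = i\,\overline{u(-t)} = -i\tilde u(t)$, so the Fuller ODEs hold on $[-t_2,-t_1]$. Next, since $V_T = \{1,\zeta,\zeta^2\}$ is closed under complex conjugation (conjugation just interchanges $\zeta$ and $\zeta^2$), the measurable function $\tilde u$ takes values in $V_T$ whenever $u$ does.

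The Hamiltonian check is the main computation. Using $\RR(a,b) = \Re(\bar a b)$ one verifies the two identities $\RR(\bar a, \bar b i) = \RR(a, bi)$ and $\RR(\bar a,\bar b) = \RR(a,b)$. Applied termwise, these give
\[
\H_F(\tilde z(t),\tilde u(t)) = \RR(\overline{z_1(-t)},-\overline{z_2(-t)}\,i) + \RR(\overline{u(-t)},\overline{z_3(-t)}) = \H_F(z(-t),u(-t)),
\]
and more generally $\H_F(\tilde z(t),\zeta^k) = \H_F(z(-t),\overline{\zeta^k})$ for each vertex $\zeta^k\in V_T$. Since the map $\zeta^k \mapsto \overline{\zeta^k}$ permutes $V_T$, the original maximization condition \eqref{eqn:max} at time $-t$ translates to the corresponding maximization condition at time $t$ for $(\tilde z,\tilde u)$, and the vanishing condition \eqref{eqn:H0} is likewise preserved.

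Finally the endpoint condition is immediate: if $z^0 = z(t_1)$ is the initial value of the original trajectory, then $\tilde z(-t_1) = \tau(z(t_1)) = \tau(z^0)$, which sits at the right-hand endpoint $-t_1$ of $[-t_2,-t_1]$, i.e.\ the terminal value of $\tilde z$. I do not foresee any genuine obstacle; the only place one might slip is sign bookkeeping in the middle coordinate, where the $-\overline{z_2(-t)}$ is precisely what is needed to match the $i$ appearing in the $\RR(z_1,z_2 i)$ term of $\H_F$ and to make the second-order ODE $z_2' = z_1$ compatible with the first-order ODE $z_1' = -iu$ under time reversal.
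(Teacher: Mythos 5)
Your proof is correct and is exactly what the paper means by its one-line justification ``Direct substitution'': you verify the state ODEs, the closure of $V_T$ under conjugation, and the compatibility of $\H_F$ and the maximization condition with $\tau$. Note a small sign typo in the chain for $\tilde z_1'$, where the middle term should read $-i\,\overline{u(-t)}$ rather than $i\,\overline{u(-t)}$; the beginning and end of the chain are correct, so the conclusion is unaffected.
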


\begin{proof}
Direct substitution.
\end{proof}

The following simple lemma will allow us to draw powerful conclusions
about the discontinuities of Fuller system dynamics.  It relates the
multiplicities of roots of one switching function to the
multiplicities of roots of another switching function at time $t=0$.

\begin{lemma}\label{lem:switch-reversal}\mcite{MCA:7477621}
Let $t_0\in\R$, $z^0\in\C^3$, and $u\in\C$.  Let $z^*=z(t_0,z^0,u)$ be
the value of the Fuller ODE at time $t_0$, with initial condition
$z^0$ at time $t=0$, using constant control $u$ for all $t$.  For any
$v_1-v_2\in\C$, the switching function satisfies
\[
\chi^u_{v_1-v_2}(t_0-t,z^0)=\chi^{\bar{u}}_{\bar{v}_1-\bar{v}_2}(t,\tau(z^*)).
\]
\end{lemma}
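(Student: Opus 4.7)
The plan is to deduce the identity from the time-reversal symmetry already established in Lemma~\ref{lem:reversal}, applied componentwise to $z_3$. Concretely, I would set $\tilde z(t):=\tau\!\bigl(z(t_0-t,z^0,u)\bigr)$ and verify that $\tilde z$ is itself a solution to the constant-control Fuller ODE, with control $\bar u$ and initial condition $\tilde z(0)=\tau(z^*)$. This is a direct computation using the chain rule and the effect of $\tau$: writing $\tilde z_1(t)=\bar z_1(t_0-t)$, $\tilde z_2(t)=-\bar z_2(t_0-t)$, $\tilde z_3(t)=\bar z_3(t_0-t)$, one checks
\[
\tilde z_1'(t)=-\bar z_1'(t_0-t)=-\overline{(-iu)}=-i\bar u,\qquad
\tilde z_2'(t)=\bar z_1(t_0-t)=\tilde z_1(t),\qquad
\tilde z_3'(t)=-\bar z_2(t_0-t)=\tilde z_2(t),
\]
so $\tilde z$ satisfies $\tilde z_1'=-i\bar u$, $\tilde z_2'=\tilde z_1$, $\tilde z_3'=\tilde z_2$. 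This is exactly Lemma~\ref{lem:reversal} specialized to constant controls, and no maximization or $\H_F=0$ constraint is needed since $u$ is constant here.

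Next, I would invoke uniqueness of the initial-value problem (or, equivalently, the explicit closed-form solution~\eqref{eqn:z1z2z3}) to conclude the identification
\[
z(t,\tau(z^*),\bar u)=\tilde z(t)=\tau\!\bigl(z(t_0-t,z^0,u)\bigr).
\]
Taking third components gives the key scalar identity
\[
z_3(t,\tau(z^*),\bar u)=\bar z_3(t_0-t,z^0,u).
\]

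Finally I would substitute this into the definition of the switching function and exploit the elementary identity $\RR(\bar a,\bar b)=\Re(a\bar b)=\Re(\overline{\bar a b})=\RR(a,b)$ for $a,b\in\C$, giving
\[
\chi^{\bar u}_{\bar v_1-\bar v_2}(t,\tau(z^*))
=\tfrac{1}{\sqrt 3}\RR\!\bigl(\bar v_1-\bar v_2,\,z_3(t,\tau(z^*),\bar u)\bigr)
=\tfrac{1}{\sqrt 3}\RR\!\bigl(\overline{v_1-v_2},\,\overline{z_3(t_0-t,z^0,u)}\bigr)
=\tfrac{1}{\sqrt 3}\RR\!\bigl(v_1-v_2,\,z_3(t_0-t,z^0,u)\bigr)=\chi^{u}_{v_1-v_2}(t_0-t,z^0),
\]
which is the claimed equality.

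There is essentially no obstacle here beyond bookkeeping of signs: the $-$ sign in the middle coordinate of $\tau$ is exactly what is needed to convert the minus sign coming from $\frac{d}{dt}(t_0-t)=-1$ into the correct ODE hierarchy. The content of the lemma is just that the symmetry of Lemma~\ref{lem:reversal} descends to switching functions; since $\chi^u_{v_1-v_2}$ is an $\R$-linear functional of $z_3$, the result is almost immediate once the conjugation-compatibility of $\RR$ is noted.
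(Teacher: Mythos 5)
Your proof is correct and amounts to a clean, conceptual fleshing-out of the paper's terse ``Direct substitution'': rather than plugging the explicit cubic-polynomial solutions of \eqref{eqn:z1z2z3} into both sides, you derive the key scalar identity $z_3(t,\tau(z^*),\bar u)=\bar z_3(t_0-t,z^0,u)$ from Lemma~\ref{lem:reversal} and uniqueness of the constant-control IVP, then use $\RR(\bar a,\bar b)=\RR(a,b)$. The route is essentially the same, and the bookkeeping (in particular the role of the sign flip on the middle component of $\tau$ in absorbing the $\frac{d}{dt}(t_0-t)=-1$ factor) is handled correctly.
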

\begin{proof}
Direct substitution.
\end{proof}

We also have invariance with respect to multiplication by $\zeta$.
\begin{equation}
\chi^{\zeta u}_{\zeta(v_1-v_2)}(t,\zeta z^0) = \chi^u_{v_1-v_2}(t,z^0).
\end{equation}

We say that $u\in V_T$ is the \emph{most recent control}, if it is the
optimal control for small negative time $t<0$ until switching time
$t=0$, for a given initial condition $z^0\in\C^3\setminus\mb{0}$.

\index{control!most recent}

For each $z^0=(z_1^0,z_2^0,z_3^0) \in\C^3$,
and $u\in V_T$,
let $z(t,z^0,u)$ be the solution to the Fuller ODE with initial condition
$(z_1^0,z_2^0,z_3^0)$ and control $u$.

\begin{lemma}\label{lem:first-negative-control}\mcite{MCA:1407931}
We have for each $t\in\R$,
\[
\tau(z(-t,z^0,u)) = z(t,\tau(z^0),\bar{u}).
\]
Also, the most recent control of $z^0$ is $u$, if $\bar{u}$ is the
first control of $\tau(z^0)$.  Moreover, the most recent 
switching time for initial condition $z^0$ is $-t_{sw}$, where
$t_{sw}$ is the first positive switching time for $\tau(z^0)$.
\end{lemma}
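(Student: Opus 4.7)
The plan is to prove all three assertions through the explicit polynomial formulas \eqref{eqn:z1z2z3} combined with the switching-function symmetry already recorded in Lemma~\ref{lem:switch-reversal}. The first identity is a direct substitution into these formulas, and the second and third statements are essentially a reinterpretation of the first via the maximum principle.

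First I would prove the identity $\tau(z(-t,z^0,u)) = z(t,\tau(z^0),\bar u)$ componentwise. Starting from the closed-form solution in \eqref{eqn:z1z2z3} with constant control $u$, evaluate each component at $-t$, conjugate (with a sign flip on the middle component), and compare with \eqref{eqn:z1z2z3} applied to the conjugated initial data $\tau(z^0)=(\bar z_1^0,-\bar z_2^0,\bar z_3^0)$ with constant control $\bar u$. Every factor $t^k$ contributes $(-1)^k$ under $t\mapsto -t$, and the sign flip built into $\tau$ exactly compensates so that the polynomial identity is forced coefficient by coefficient. This is the constant-control specialization of Lemma~\ref{lem:reversal}, so I would simply note that and be done.

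Next I would translate this into a statement about controls via switching functions. Applying Lemma~\ref{lem:switch-reversal} at $t_0=0$ (so that $z^*=z^0$) gives, for every $v_1,v_2\in\C$,
\[
\chi^u_{v_1-v_2}(-t,z^0) \;=\; \chi^{\bar u}_{\bar v_1-\bar v_2}(t,\tau(z^0)).
\]
Since the control set $V_T=\{1,\zeta,\zeta^2\}$ is closed under complex conjugation, the condition that $\bar u$ is the first control of $\tau(z^0)$ means precisely
\[
\chi^{\bar u}_{\bar u-\bar v}(t,\tau(z^0)) \;\ge\; 0 \quad\text{for all small } t>0 \text{ and all } v\in V_T,
\]
which, via the displayed identity with $s=-t$, is the same as
\[
\chi^u_{u-v}(s,z^0) \;\ge\; 0 \quad\text{for all small } s<0 \text{ and all } v\in V_T.
\]
By definition, this says $u$ is the most recent control of $z^0$.

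Finally, for the switching-time claim, the same identity shows that the zeros of $s\mapsto \chi^u_{u-v}(s,z^0)$ on $(-\varepsilon,0)$ are in bijection (under $s\mapsto -s$) with the zeros of $t\mapsto \chi^{\bar u}_{\bar u-\bar v}(t,\tau(z^0))$ on $(0,\varepsilon)$. The first positive switching time for $\tau(z^0)$ under first control $\bar u$ is the smallest $t_{sw}>0$ at which some such switching function vanishes (with $\chi^{\bar u}$-sign changing so that the maximum crosses from $\bar u$ to another vertex), and under the involution $s=-t$ this is the most recent time at which the analogous crossing occurs on the negative side for $z^0$ with control $u$. Hence the most recent switching time for $z^0$ is $-t_{sw}$. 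There is no real obstacle here; the only mild care needed is to observe that the switching happens at isolated zeros (which follows because the switching functions are polynomials in $t$, cf.\ the closed form in \eqref{eqn:z1z2z3}), so "most recent" is well-defined.
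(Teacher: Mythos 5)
Your proof is correct and follows the same route as the paper's, which simply cites the time-reversal symmetry of Lemma~\ref{lem:reversal}; you spell out the details by verifying the first identity componentwise from \eqref{eqn:z1z2z3} and then invoking Lemma~\ref{lem:switch-reversal} (itself a corollary of the time-reversal symmetry) to translate the statements about first and most-recent controls and their switching times.
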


\index[n]{t@$t\in\R$, time!$t_{sw}$, switching time}

\begin{proof} 
The lemma follows 
from the time-reversal symmetry (Lemma~\ref{lem:reversal}).  
\end{proof}

\subsection{Walls}
We consider the Hamiltonian maximization condition \eqref{eqn:max}
in more detail. The part of the Hamiltonian depending on the
control $u$ is $\RR(z_3,u)$. At a switching time, up to equivalence by 
a discrete rotation, the maximization principle takes the form
\[
\RR(z_3,\zeta)=\RR(z_3,\zeta^2)\ge\RR(z_3,1).
\]
This defines the switching between controls $\zeta$ and $\zeta^2$.
The set of solutions in $z_3$ is $\R_{\le0}$.  We call this set
a \emph{wall}.  Now allowing discrete rotations, we define the set of
walls
\[
\W := \R_{\le0}\cup \R_{\le0}\zeta \cup \R_{\le0}\zeta^2 =:
\W_0\cup\W_1\cup\W_2.
\]
Switching of controls can only occur when $z_3\in\W$.

\index{wall}
\index[n]{W@$\W,\W_i$, wall}

\subsection{Switching Times}

Define the following function
\begin{align*}
\mb{v}&:(\C^3\setminus\mb{0})\times{}V_T\to\R^3\\
\mb{v}(z^0,u) &=(\RR(z_3^0,u),\RR(z_2^0,u),\RR(z_1^0,u)).
\end{align*}
(Note the backwards indexing.)
We call $\mb{v}(z^0,u)$ the \emph{control vector} of $u$.

\index{control!first}
\index{control!vector}
\index[n]{v@$\mb{v}$, vector!$\mb{v}(z,u)$, control vector}
\index{lexicographic order}
\index[n]{0=@$\prec$, lexicographic order}

We define the lexicographic order on vectors $\mb{v}=(\mb{v}_1,\ldots)$ by
\begin{align*}
\mb{0}\prec\mb{v}\quad&\Leftrightarrow\quad
0=\mb{v}_1=\mb{v}_2=\cdots=\mb{v}_k,~0<\mb{v}_{k+1}\\
\mb{u}\prec\mb{v}\quad&\Leftrightarrow\quad\mb{0}\prec\mb{v}-\mb{u}\\
\end{align*}

\begin{lemma}\label{lem:first-control}
Suppose $z^0=(z_1^0,z_2^0,z_3^0)\in\C^3\setminus\mb{0}$.
Let $V_{T,\max}\subset{}V_T=\{1,\zeta,\zeta^2\}$ 
be the set of controls that have the maximum lexicographical value among
$\{\mb{v}(z^0,1),\mb{v}(z^0,\zeta),\mb{v}(z^0,\zeta^2)\}$. 
(That is, let $V_{T,\max}=\arg\max_{u\in V_T}\mb{v}(z^0,u)\subseteq{V_T}$.)
Then
\begin{itemize}
\item If $V_{T,\max}=\{u\}$, then $u$ is the first control.
\item If $V_{T,\max}=\{\zeta^i,\zeta^{i+1}\}$ has two elements, then the first control is
$\zeta^i$.
\item $V_{T,\max}\ne{}V_T$.
\end{itemize}
\end{lemma}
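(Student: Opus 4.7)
The plan is to expand the Hamiltonian-maximization condition as a Taylor polynomial at $t=0$ and identify the first control from the sign of its leading nonzero coefficient. Fix $z^0\in\C^3\setminus\mathbf{0}$. If the control is held at the constant value $u\in V_T$ for small positive time, then \eqref{eqn:z1z2z3} gives
\[
z_3(t,z^0,u) \;=\; z_3^0 \;+\; t\, z_2^0 \;+\; \tfrac{t^2}{2}\, z_1^0 \;+\; \tfrac{t^3}{6}(-iu),
\]
so that for any candidate target $v\in V_T$,
\[
\RR(u-v,\, z_3(t,z^0,u)) \;=\; \RR(u-v,z_3^0) \,+\, t\,\RR(u-v,z_2^0) \,+\, \tfrac{t^2}{2}\RR(u-v,z_1^0) \,+\, \tfrac{t^3}{6}\RR(u-v,-iu).
\]
The first control is precisely the $u\in V_T$ such that this polynomial in $t$ is nonnegative for every $v\in V_T$ and every sufficiently small $t>0$, which is equivalent to asking that its first nonzero coefficient be positive.

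First I would dispatch the lexicographic case. The first three coefficients of the above polynomial are exactly the coordinates of $\mb{v}(z^0,u)-\mb{v}(z^0,v)$, so provided one of them is nonzero, positivity of the leading coefficient is the statement $\mb{v}(z^0,v)\prec\mb{v}(z^0,u)$. If $V_{T,\max}=\{u\}$, then $u$ beats every other vertex of $V_T$ already at the lexicographic stage, so $u$ is the first control, and $t=0$ is an isolated zero of each switching function $\chi^u_{u-v}$.

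For the tie case in the second bullet, suppose the lexicographic maximum is attained on $\{\zeta^i,\zeta^{i+1}\}$, so only the fourth coefficient can distinguish the two. Using the rotational invariance $\RR(\alpha w_1,\alpha w_2)=\RR(w_1,w_2)$ for $|\alpha|=1$,
\[
\RR\!\bigl(\zeta^i-\zeta^{i+1},\,-i\zeta^i\bigr) \;=\; \RR(1-\zeta,\,-i) \;=\; \Re\!\bigl((1-\bar\zeta)(-i)\bigr) \;=\; \tfrac{\sqrt 3}{2}>0,
\]
while the analogous calculation with the roles of $u$ and $v$ swapped gives $-\sqrt 3/2<0$. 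Hence $\zeta^i$ beats $\zeta^{i+1}$ for small $t>0$ but not conversely, and $\zeta^i$ is the first control; the switching function $\chi^{\zeta^i}_{\zeta^i-\zeta^{i+1}}$ again has an isolated zero at $t=0$ because its fourth coefficient is nonzero.

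The remaining obstacle is to exclude a three-way tie, i.e.\ to establish the third bullet. If $V_{T,\max}=V_T$, then the three vectors $\mb{v}(z^0,u)$, $u\in V_T$, are all equal to a common vector $(c_3,c_2,c_1)\in\R^3$. Since $\RR(z_k^0,\cdot)$ is $\R$-linear and $1+\zeta+\zeta^2=0$,
\[
3\,c_k \;=\; \sum_{u\in V_T}\RR(z_k^0,u) \;=\; \RR\bigl(z_k^0,\,1+\zeta+\zeta^2\bigr) \;=\; 0,
\]
so $\RR(z_k^0,u)=0$ for every $u\in V_T$ and every $k\in\{1,2,3\}$. But $\{1,\zeta,\zeta^2\}$ spans $\C$ as a real vector space, so the $\R$-linear functional $\RR(z_k^0,\cdot)$ vanishing on all of $V_T$ forces $z_k^0=0$ for each $k$, contradicting $z^0\ne\mathbf{0}$. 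Combining the three parts finishes the lemma; the only conceptual subtlety was ensuring that ``first control'' is well defined even in the boundary case of lexicographic ties, which is guaranteed because under the hypotheses of bullets one and two the corresponding switching function always has at least one nonzero Taylor coefficient among the first four.
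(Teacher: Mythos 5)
Your proof is correct and follows essentially the same approach as the paper: expand the switching polynomial in powers of $t$, read off the first control from the sign of the leading nonzero coefficient, and use $\RR(1-\zeta,-i)=\sqrt3/2$ to break the two-way tie. One small improvement: you supply the argument (via $1+\zeta+\zeta^2=0$ and the $\R$-spanning of $\C$ by $V_T$) for why three equal control vectors force $z^0=\mathbf{0}$, a step the paper's proof asserts without elaboration.
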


\index[n]{@V$V_{T,\max}$}

\begin{proof} % [Proof (Lemmas~\ref{lem:first-control}, \ref{lem:first-negative-control})] 
If two vectors have the same lexicographical order then the two
vectors are equal.  If $V_{T,\max}=V_T$, then all three control
vectors are equal.  This implies that $z^0=\mb{0}$, which is contrary
to hypothesis. Thus, $V_{T,\max}$ is a proper subset of $V_T$.

The first control is determined by the maximum principle for $t$ small
and nonnegative.  By the maximum principle at $t=0$ and the form of
the Hamiltonian, the first control must be among the controls $u\in V_T$
that
maximize 
\(
\RR(z_3^0,u)
\).
If there is more than one maximizer, then we break the tie by passing
to the first-order term in the control function 
\(
z_3(t)=z_3^0 + z_2^0 t+ \cdots
\).  
That means we consider terms $\RR(z_2^0,u)$. If again, there is a tie,
we consider terms $\RR(z_1^0,u)$.  In this way, the first control must
maximizes the lexicographical order.

Finally, if $V_{T,\max}$ contains two controls, we break the tie by
considering the highest order term $-iut^3/3!$ of $z_3(t)$, where now
$u$ is itself the first control.  
By rotational symmetry, assume without loss of generality that
$V_{T,\max}=\{1,\zeta\}$.
Assume for a contradiction that the
first control is $u=\zeta$. By the maximum principle we obtain
the following contradiction for $t$ small and positive,
\[
- \frac{t^3}{4\sqrt{3}} = \RR(-i\zeta t^3/3!,\zeta-1) \ge 0.
\]
This contradiction implies that $u=1$.
\end{proof}

\begin{corollary}\label{cor:10} Let $(z_1,z_2,z_3)\in\C^3\setminus\mb{0}$ satisfy
$|z_i|\le{r^i}$, for $i=1,2,3$.  Let $u$ be the first control.
Then the switching function from $u$ to $u/\zeta$ has a positive root
that is less than $10r$.  
In particular, the first switching time $t_{sw}$ is less than $10r$.
\end{corollary}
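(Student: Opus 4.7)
Write $\chi(t) := \chi^u_{u-u/\zeta}(t)$. The identity $u - u/\zeta = \sqrt{3}\,u e^{i\pi/6}$ combined with the explicit constant-control solution~\eqref{eqn:z1z2z3} gives
\[
\chi(t) = c_0 + c_1 t + \tfrac{1}{2} c_2 t^2 - \tfrac{1}{12}t^3,
\qquad c_j := \RR(ue^{i\pi/6},\,z^0_{3-j}),
\]
where the leading coefficient $-\tfrac{1}{12}$ comes from $\RR(ue^{i\pi/6},-iu) = -\tfrac{1}{2}$ (together with the $1/6$ in $z_3$). Since $|ue^{i\pi/6}|=1$, the hypothesis $|z_j^0|\le r^j$ yields $|c_j|\le r^{3-j}$ for $j=0,1,2$.

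The plan is to sandwich a zero of $\chi$ between a small positive time and $t=10r$ by the intermediate value theorem. For positivity near $0$: because $u$ is the first control but $u/\zeta$ is not, Lemma~\ref{lem:first-control} forces $\mb{v}(z^0,u) \succ \mb{v}(z^0,u/\zeta)$ strictly in lex order. The difference of these two vectors equals $\sqrt{3}(c_0,c_1,c_2)$, so the first nonvanishing entry among $c_0,c_1,c_2$ is strictly positive, whence the corresponding Taylor term dominates and $\chi(t)>0$ on some interval $(0,\epsilon)$. For negativity at $t=10r$, the triangle inequality applied term by term gives
\[
\chi(10r) \le r^3 + 10 r^3 + 50 r^3 - \tfrac{1000}{12}\,r^3 = \Bigl(61 - \tfrac{250}{3}\Bigr)r^3 = -\tfrac{67}{3}\,r^3 < 0.
\]
The intermediate value theorem applied on $[\epsilon,10r]$ then produces a root $t_0 < 10r$ of $\chi$. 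The ``in particular'' clause follows from the maximum principle: while the optimal control equals $u$, namely on $[0,t_{sw}]$, one has $\chi \ge 0$ there, so $t_{sw}\le t_0 < 10r$.

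The main delicate point is the positivity argument near $t=0$ in the tie case where $c_0=0$. Here one must invoke Lemma~\ref{lem:first-control} fully to guarantee that the first nonzero Taylor coefficient of $\chi$ at $0$ is positive, and in particular to exclude $c_0=c_1=c_2=0$ (which would instead force $u/\zeta$ to be the first control under the tie-breaking rule, contradicting the hypothesis). Everything beyond this is routine cubic arithmetic.
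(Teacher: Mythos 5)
Your proof is correct and follows essentially the same route as the paper: positivity of the switching function near $t=0$ via the strict lexicographic inequality $\mb{v}(z^0,u)\succ\mb{v}(z^0,u/\zeta)$ from Lemma~\ref{lem:first-control}, and negativity at $t=10r$ via a term-by-term triangle-inequality bound against the cubic leading term. The only cosmetic difference is that you keep $u$ general and use $|ue^{i\pi/6}|=1$ to get clean unit bounds $|c_j|\le r^{3-j}$, whereas the paper normalizes to $u=\zeta$ and carries the factor $|\zeta-1|=\sqrt3$ through the estimate; the arithmetic is equivalent.
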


\begin{proof} Permuting by $V_T$, we may assume without loss of generality
that the first control mode is $u=\zeta$.  

We claim that $\mb{v}(z,\zeta)$ is greater than $\mb{v}(z,1)$ in the
lexicographic order.  By the choice of first control mode, $\mb{v}(z,\zeta)$
is at least as great as $\mb{v}(z,1)$.  Assume for a
contradiction, that the two vectors are equal.  By the lemma
$V_{T,\max}=\{\zeta,1\}$.  Again, by the lemma, this implies that the
first control is $u=1$, which is contrary to hypothesis.

The switching function from $u=\zeta$ to $u/\zeta=1$ is
\[
\sqrt{3}\chi(t)=\RR(\zeta-1,z_3(t)) 
= -\frac{t^3}{4\sqrt3} + \sum_{k=0}^2 \RR(\zeta-1,z_{3-k})\frac{t^k}{k!}.
\]
The highest order term has negative coefficient, and by our claim
about control vectors, the lowest order nonzero coefficient of the
polynomial is
positive. Hence a positive root exists.  
If $t\ge 10r$, then the bound
\[
t^{-3}\sum_{k=0}^2|\RR(\zeta-1,z_{3-k})|t^k/k!\le\sum\sqrt3|z_{3-k}|t^{k-3}/k!<
\sqrt3\sum 10^{k-3}/k! < 1/(4\sqrt3)
\]
on the lower order 
terms of $\chi(t)$ implies that $\chi(t)<0$. This completes the
proof.
\end{proof}

The switching function has a simple but remarkable symmetry.

\begin{lemma}
Fix $z^0\in\C^3\setminus\{\mb{0}\}$, with $z_3^0\in\R_{\le0}$, the wall between
control modes $\zeta$ and $\zeta^2$.  
Then the switching functions 
with initial conditions $z^0$ at $t=0$ satisfy
\[
\chi_{\zeta,\zeta^2}(t) +\chi_{\zeta^2,\zeta}(t)=0,
\]
for all $t$.  
\end{lemma}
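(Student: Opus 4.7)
The plan is to reduce the identity to a single algebraic observation by exploiting the explicit polynomial solution \eqref{eqn:z1z2z3}. First I would rewrite the sum of switching functions: by bilinearity of $\RR$ and the antisymmetry $\zeta^2 - \zeta = -(\zeta-\zeta^2)$,
\[
\chi_{\zeta,\zeta^2}(t) + \chi_{\zeta^2,\zeta}(t)
= \tfrac{1}{\sqrt{3}}\,\RR\bigl(\zeta-\zeta^2,\; z_3(t,\zeta,z^0) - z_3(t,\zeta^2,z^0)\bigr).
\]
Thus everything reduces to understanding the difference of the two cubics in $t$ obtained by using constant controls $u = \zeta$ and $u = \zeta^2$ from the common initial value $z^0$.

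Next I would invoke \eqref{eqn:z1z2z3}, which gives
\[
z_3(t,u,z^0) = -\tfrac{it^3}{6}\,u + \tfrac{t^2}{2}\,z_1^0 + t\,z_2^0 + z_3^0.
\]
Only the leading cubic term depends on $u$, so the three lower-order terms cancel when we subtract, and we obtain
\[
z_3(t,\zeta,z^0) - z_3(t,\zeta^2,z^0) = -\tfrac{it^3}{6}\,(\zeta-\zeta^2).
\]
Crucially, this difference is purely imaginary times the vector $(\zeta-\zeta^2)$ itself.

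The final step is the elementary identity $\RR(v, iv) = 0$ for every $v \in \C$, which holds because $\RR(v,iv) = \Re(\bar v \cdot i v) = \Re(i|v|^2) = 0$. Applying this with $v = \zeta - \zeta^2$ (and pulling out the real scalar $-t^3/6$) yields
\[
\RR\bigl(\zeta-\zeta^2,\; -\tfrac{it^3}{6}(\zeta-\zeta^2)\bigr) = -\tfrac{t^3}{6}\,\RR(v,iv) = 0,
\]
which closes the proof. There is no real obstacle here—the only subtlety is bookkeeping the index conventions in the definition of $\chi^k_{ij}$ so that the two summands involve the same initial value $z^0$ but different first controls $\zeta$ and $\zeta^2$. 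I note that the hypothesis $z_3^0 \in \R_{\le 0}$ is not actually used in the derivation of the identity; it serves only to fix the geometric context (namely that $z^0$ lies on the wall $\W_0$, so that switching between the modes $\zeta$ and $\zeta^2$ is the relevant situation in which to consider these particular switching functions).
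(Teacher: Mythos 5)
Your proof is correct and is the natural direct verification that the paper presumably has in mind when it says only ``This is an easy computation.'' The decisive point is exactly what you isolate: after subtracting the two solutions with controls $\zeta$ and $\zeta^2$, the $z^0$-dependent part cancels and only the leading term $-\tfrac{it^3}{6}(\zeta-\zeta^2)$ survives, so the identity reduces to $\RR(v,iv)=0$. Your closing observation that the hypothesis $z_3^0\in\R_{\le0}$ plays no role in the algebra (it only establishes that switching between the modes $\zeta$ and $\zeta^2$ is the relevant situation) is accurate and shows you have correctly understood what the identity is actually asserting.
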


\begin{proof} This is an easy computation.
\end{proof}

As a corollary, as we switch back and forth between control modes
$\zeta$ and $\zeta^2$, the consecutive switching times are given by
the consecutive spacings between roots of a single cubic polynomial.
Upon reaching the largest root of the cubic, the control mode is
forced to switch to $u=1$.

\section{Singular Locus}

We define the \emph{singular locus} to be the origin in $\C^3$.  The
next lemma shows that we cannot reach the singular locus, with a
bang-bang solution with finitely many switches.  

\index{singular locus}

\begin{lemma}\label{lem:nonzero}\mcite{MCA:9042347}
Let $z^0\in \C^3\setminus\mb{0}$ with $z_3^0\in\W$
and let $z$ be the trajectory for $t\ge0$ with initial
condition $z(0)=z^0$ at $t=0$ with constant control given by the maximum
principle.  Then 
$z(t)\ne\mb{0}$, for all $t$.
\end{lemma}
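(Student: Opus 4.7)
The plan is a direct contradiction argument from the explicit polynomial form of the constant-control solutions in \eqref{eqn:z1z2z3}, using the rigidity of triple roots of a cubic.

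The key reduction is that with constant control $u\in V_T$, the chain $z_3'=z_2$, $z_3''=z_1$, $z_3'''=-iu$ makes $z_3(t)$ a complex cubic in $t$ with leading coefficient $-iu/6\ne 0$, and identifies the three simultaneous vanishing conditions $z_1(t^*)=z_2(t^*)=z_3(t^*)=0$ with the single condition that $t^*$ is a triple root of $z_3$. Rigidity then forces $z_3(t)=-\tfrac{iu}{6}(t-t^*)^3$, and in particular
\[
z_3^0 \;=\; z_3(0) \;=\; \frac{iu(t^*)^3}{6}.
\]

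Suppose for contradiction that $z(t^*)=\mb{0}$ for some $t^*\ge 0$. The hypothesis $z^0\ne\mb{0}$ rules out $t^*=0$, so $t^*>0$ and $(t^*)^3>0$; in particular $z_3^0\ne 0$ (otherwise $u=0$, contradicting $u\in V_T$). Hence $z_3^0$ lies on the open ray $\R_{>0}\cdot iu$, i.e.\ $z_3^0\in\R_{>0}\cdot iV_T$. On the other hand, $z_3^0\in\W\setminus\{0\}$ means $z_3^0\in\R_{>0}\cdot(-V_T)$. These two families of rays are disjoint because $iV_T\cap(-V_T)=\emptyset$ on the unit circle: $iu=-v$ with $u,v\in V_T$ would force $-i=v/u\in V_T$, but $-i$ is a fourth root of unity and not a cube root of unity. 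This contradiction completes the argument.

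No serious obstacle arises; the whole proof occupies only a few lines once the triple-root reformulation is made. The conceptual content is simply the observation that simultaneous vanishing of $(z_1,z_2,z_3)$ at a single time collapses to a triple-root condition on the scalar cubic $z_3$, after which the problem reduces to an elementary angular check on six unit complex numbers.
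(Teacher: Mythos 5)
Your proof is correct and takes essentially the same approach as the paper: both work backward from the hypothetical zero of the constant-control cubic solution to pin down $z_3^0$ as a positive multiple of $iu$, then observe this is incompatible with $z_3^0\in\W$. The paper normalizes $u=1$ by the $V_T$-symmetry before doing the computation, whereas you keep $u\in V_T$ general and close with the angular check $iV_T\cap(-V_T)=\emptyset$; this is a stylistic rather than substantive difference, though your triple-root framing of the simultaneous vanishing is a pleasant conceptual gloss.
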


\begin{proof}
Up to equivalence, we can assume that the constant control
is $u=1$. 
Assume for a contradiction that $z(t_0)=\mb{0}$. 
Solving the system of
linear equations \eqref{eqn:z1z2z3} with control $u=1$, we obtain
\[
z_j^0 = (-1)^{j+1} i t_0^j/j!,\quad j=1,2,3.
\]
By assumption $z^0\ne\mb{0}$, so that $t_0\ne0$.
Then $z_3^0=i t_0^3/6 \not\in\W$, which is contrary to
assumption.  Thus, $z(t)\ne0$, for all $t$.
\end{proof}

Let $U_T\subset\C$ be the convex hull of $V_T$.  The Fuller system has
a singular arc given by $z_1(t)=z_2(t)=z_3(t)=0$ and $u(t)=0$ (the
center of $U_T$) for all $t$.  This is an obvious solution to the
Fuller ODE.  We show the nonexistence of singular arcs, other than
this one.  The nonexistence of singular arcs was proved previously for
the Reinhardt system.  It comes as no surprise that it holds for the
Fuller system.

\begin{lemma}\label{lemma:no-singular}
Let $(z_1,z_2,z_3,u)$, $z_j:[t_1,t_2]\to\C^3$ absolutely continuous,
$u:[t_1,t_2]\to{}U_T$ measurable, be a controlled Fuller trajectory
satisfying the maximum principle for the Hamiltonian $\H_F$.  Suppose
that the trajectory is singular in the sense that for all
$t\in[t_1,t_2]$, the set of controls maximizing the Hamiltonian is a
face (and not a vertex).  Then $(z_1,z_2,z_3)$ is identically zero, 
and the control function is zero almost everywhere.
\end{lemma}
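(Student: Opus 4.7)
The plan is to exploit the cascade structure $z_3' = z_2$, $z_2' = z_1$, $z_1' = -iu$ together with the fact that the maximizing face of the control-dependent term $\RR(u,z_3)$ depends only on the geometry of $z_3$ relative to the walls. First I will note that the Hamiltonian splits as $\H_F(z,u) = \RR(z_1,iz_2) + \RR(u,z_3)$, so the set of $u \in U_T$ maximizing $\H_F$ over the triangle is dictated entirely by $z_3$: it is a vertex generically, the full triangle precisely when $z_3 = 0$, and an edge precisely when $z_3$ lies in $\W_i \setminus \{0\}$ for some $i \in \{0,1,2\}$. Thus singularity in the sense of the lemma forces $z_3(t) \in \W \cup \{0\}$ for every $t \in [t_1,t_2]$.

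Next I would partition the interval according to which wall $z_3(t)$ sits on. The sets $A_i := \{t \in [t_1,t_2] : z_3(t) \in \W_i \setminus \{0\}\}$ are open, because the three open rays are pairwise disjoint and $z_3$ is continuous, while $A_{\mathrm{origin}} := \{t : z_3(t) = 0\}$ is closed. The heart of the argument is to show every $A_i$ is empty. By Lemma~\ref{lem:discrete} it suffices to rule out $A_0$. On any component $(a,b) \subset A_0$, the function $z_3(t)$ is a negative real number, so $z_3$ is real-valued on $(a,b)$. Since $z_2 = z_3'$ a.e.\ and $z_2$ is absolutely continuous, $z_2$ is real-valued on $(a,b)$; iterating, $z_1$ is real-valued there as well. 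But on $(a,b)$ the Hamiltonian-maximizing face is the edge $[\zeta,\zeta^2]$, so the measurable optimal control satisfies $u(t) \in [\zeta,\zeta^2]$, hence $\Re(u(t)) = -1/2$ a.e. Then $z_1'(t) = -iu(t)$ has imaginary part $-\Re(u(t)) = 1/2 \ne 0$ a.e., contradicting the fact that $z_1$ is real (so $\Im z_1' = 0$ a.e.).

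With every $A_i$ empty, the singularity condition forces $z_3 \equiv 0$ on $[t_1,t_2]$. Differentiating the absolutely continuous function $z_3$ gives $z_2 = z_3' = 0$ a.e., hence $z_2 \equiv 0$ by absolute continuity; the same reasoning cascades to $z_1 \equiv 0$, and finally $-iu(t) = z_1'(t) = 0$ a.e., so $u(t) = 0$ a.e. Since the center $0$ of $U_T$ is indeed a control value in the convex control set, this is consistent with the maximum principle (the whole triangle maximizes when $z_3 = 0$).

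The plan is essentially clean once the wall geometry is combined with the derivative cascade; there is no real analytical obstacle. The one point to be careful about is the regularity step, namely that the pointwise equation $z_1'(t) = -iu(t)$ a.e.\ with $u(t) \in [\zeta,\zeta^2]$ a.e.\ legitimately contradicts $z_1$ being real-valued on a nondegenerate interval, which is immediate from absolute continuity and the fact that $\Im(z_1'(t)) = 1/2$ has nonzero integral over any subinterval of positive length.
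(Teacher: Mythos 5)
Your proof is correct, and it takes a genuinely different route from the paper. The paper introduces the triply integrated control $u_3(t)=\tfrac12\int_0^t u(s)(t-s)^2\,ds$, writes $z_3(t)=-iu_3(t)+z_1^0t^2/2+z_2^0t+z_3^0$, and then argues in two cases assuming the maximizing face is fixed over the whole interval: if the face is all of $U_T$ then $z_3\equiv0$ together with $u_3=O(t^3)$ kills the initial data and hence $u$; if the face is a fixed edge $[\zeta,\zeta^2]$, then $\Re(u)\equiv-1/2$ gives $\Re(u_3)=-t^3/12$, and the edge condition $\RR(z_3,\zeta-\zeta^2)\equiv0$ becomes a nonzero cubic polynomial in $t$ that must vanish identically, a contradiction. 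Your argument instead partitions the time interval according to which wall $z_3$ occupies. The key structural observation — that $\W_i\setminus\{0\}$ is relatively open in $\W$ while $\{0\}$ is closed, so the $A_i$ are open — lets you bypass the tacit assumption (present in the paper's case split) that the active face is constant on $[t_1,t_2]$. Your contradiction on a component $(a,b)\subset A_0$ is also cleaner: real-valuedness of $z_3$ propagates up the cascade $z_2=z_3'$, $z_1=z_2'$ by absolute continuity, while the edge constraint forces $\Im z_1'=-\Re(u)=1/2\ne0$ a.e., which is incompatible with $z_1$ real on a nondegenerate interval. The cascade step $z_3\equiv0\Rightarrow z_2\equiv0\Rightarrow z_1\equiv0\Rightarrow u=0$ a.e.\ is then immediate and avoids the $O(t^3)$ bookkeeping. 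In short, your proof buys conceptual clarity and handles the possibility of a time-varying active face explicitly, at the modest cost of a small point-set argument (relative openness of the punctured walls); the paper's proof buys a uniform computational framework (the function $u_3$) that is reused elsewhere in their treatment of the Fuller system.
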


\begin{proof}
We may assume $t_1=0$.
Let $u:[0,t_2]\to U_T$ be a measurable control function.  Let $u_3$
be a solution to the initial value problem $u_3'''=u$,
$u(0)=u'(0)=u''(0)=0$.  (More precisely, we assume that $u_3''$ is
absolutely continuous and its derivative is $u$ almost everywhere.)  A
solution to the initial value problem is
\begin{equation}\label{eqn:u3}
u_3(t) = \frac{1}{2}\int_0^t u(s) (t-s)^2\,ds.
\end{equation}
This representation of a solution leads to an estimate
\[
|u_3|\le \frac{1}{2}\int_0^t s^2\,ds = O(t^3).
\]
Then 
\begin{equation}\label{eqn:z3u3}
z_3(t)=-iu_3 + z_1^0t^2/2! + z_2^0t+z_3^0.
\end{equation}
We consider two cases.  In the first case, suppose that over the
interval $[0,t_2]$, the Hamiltonian is independent of the control in
$U_T$, so that the set of maximizers is all of $U_T$.  Since $U_T$
spans $\C$, by the form of the Hamiltonian, this implies that
$z_3(t)=0$ on $[0,t_2]$.  By the form of the solution $z_3$ and the
$O(t^3)$ estimate on $u_3$, we have $z_1^0=z_2^0=z_3^0=0$.  Then
$z_3=-iu_3=0$, identically.  Then also $u(t)=u_3'''(t)=0$, almost
everywhere.  This is the singular arc described above.

In the second case, assume for a contradiction that over the interval
$[0,t_2]$, the Hamiltonian is independent of the control function
$u(t)$ taking values in the edge $[\zeta,\zeta^2]\subset{}U_T$ (say).
If $u(t)\in[\zeta,\zeta^2]$, then $\Re(u(t))=-1/2$ and $\Re(u_3(t)) =
- t^3/12$.  The independence of the Hamiltonian
and \eqref{eqn:z3u3} imply
\[
0=\RR(z_3(t),\zeta-\zeta^2)= \frac{\sqrt3}{2} \frac{t^3}{3!} +
(\text{quadratic in } t)
\]
for all $t$.  This is absurd.
\end{proof}

\section{Blowing up Fuller}
We describe a (weighted) blowing up process at the singular locus.  Set
\[
\phi(z)=\phi(z_1,z_2,z_3) 
:= \left(\sum_{j=1}^3 |z_j|^{6/j}\right)^{1/6},\quad z=(z_1,z_2,z_3)\in\C^3.
\]
Then
\[
\phi(r z_1,r^2 z_2,r^3 z_3) = r\phi(z_1,z_2,z_3),\quad r > 0.
\]
Set
$\Xi:= \{\xi\in\C^3\mid \phi(\xi)=1\}$.
We have a diffeomorphism
\begin{align*}
\C^3\setminus\mb{0}&\leftrightarrow\  (\R_{>0}\times \Xi),\\
(z_1,z_2,z_3)&\mapsto (r,(\xi_1,\xi_2,\xi_3)) = 
(r,(z_1/r,z_2/r^2,z_3/r^3)),\quad r=\phi(z),
\\
(z_1,z_2,z_3)=(r\xi_1,r^2\xi_2,r^3\xi_3)
&\mapsfrom
(r,(\xi_1,\xi_2,\xi_3)).
\end{align*}

\index[n]{znxi@$\xi$, angular component}

We will often move between the two sides of this diffeomorphism
without warning, considering the right-hand side as weighted spherical
coordinates for the left-hand side.  Let $\pi_{rad}:{\R_{>0}}\times
\Xi\to{\R_{>0}}$ and $\pi_{ang}:{\R_{>0}}\times{\Xi}\to\C^3$ be the first
(radial) and second (angular) projections.  We refer to $\pi_{rad}(q)$
as the radial component of $q$ and $\pi_{ang}(q)$ as the angular
component, by analogy with spherical coordinates.  We refer to the
left-hand side as the Cartesian coordinates.

\index[n]{zv@$\phi$, weighted norm}
\index[n]{zp@$\pi$, projection!$\pi_{rad},\pi_{ang}$, radial and angular projections}
\index{angular component}
\index{radial component}
\index{Cartesian coordinates}
\index[n]{znX@$\Xi$}
\index{blowing up}

From a slightly different perspective, $\R_{\ge0}\times{}\Xi\to\C^3$ can
be viewed as an \emph{oriented weighted real blowup} of $\C^3$ at the
origin (the singular locus), where $\Xi$ is a \emph{weighted space},
$\{0\}\times{}\Xi$ is the exceptional divisor over the origin
$\mb{0}\in\C^3$, and each $\R_{\ge0}\times\{\xi\}$ is a real positive
ray through the origin. 

\index{blowup!oriented weighted real}

%\footnote{Oriented Real Blowup,
%Gillam,
%\url{http://oztekin.net/~wdgillam/Papers/orb.pdf}}
%http://oztekin.net/~wdgillam/Papers/abramovichetmany.pdf
%TCH 5/3/2024: I can no longer access these pages.

The diffeomorphism is equivariant with respect to the virial group,
where rescalings act by multiplication on the radial component and the
cyclic group $V_T$ acts by scalar multiplication on the angular
component of $\R_{>0}\times{}\Xi$.

Set 
\[
\Xi_\W=\{(\xi_1,\xi_2,\xi_3)\in{}\Xi
\mid \xi_3\in \W\}.
\] 
We view it as the Poincar\'e section for the Fuller system.
Let $\Xi_\W/V_T$ be the quotient of $\Xi_\W$ by the cyclic group action of
$V_T$ on $\Xi_\W$, acting diagonally.  Under the group action, the three
walls of $\W$ are identified with one another.

\index[n]{znX@$\Xi$!$\Xi_{\W},\Xi_{\W,0}$, Poincar\'e section}
\index{Fuller-Poincar\'e section}

\section{Dynamical System and Equilibrium Points}\label{sec:discrete-time}

We define a discrete-time autonomous dynamical system
$F:(\R_{>0}\times{\Xi})_\W\to(\R_{>0}\times{\Xi})_\W$ as follows.  Let
$q\in(\R_{>0}\times{}\Xi)_{\W_i}$.  The point $q$ has Cartesian
coordinates $z^0\in\C^3\setminus\mb{0}$ with
$z_3^0$ lying in the $i$th wall.  Let $z(t)$ be the solution to the
Fuller differential equations with initial condition $z(0)=z^0$ at time $t=0$
and control defined by the maximum principle.  Let $t_{sw}>0$ be the
first positive switching time.  Then let $F(q)\in(\R_{>0}\times{\Xi})_\W$
equal $z(t_{sw})$, rewritten in spherical coordinates
$\R_{>0}\times{}\Xi$.
By the equivariance of the construction with respect to 
cyclic rotations $V_T$, we find that 
$F$ descends to a well-defined map (denoted by the same symbol):
\[
F:(\R_{>0}\times \Xi)_\W/V_T \to (\R_{>0}\times \Xi)_\W/V_T.
\]

\index[n]{F@$F$, Poincar\'e map!first recurrence}
\index[n]{F@$F$, Poincar\'e map}

We can go further by considering scaling symmetries of the Fuller
system.  By the scaling symmetries, it is clear that if the angular
components $\pi_{ang}(q_1)=\pi_{ang}(q_2)$ are equal, then the angular
components $\pi_{ang}F(q_1)=\pi_{ang}F(q_2)$ of the images are equal.  Thus,
$F$ gives a well defined discrete-time autonomous dynamical system
\[
F_{ang}:\Xi_{\W}\to{\Xi_{\W}},\quad
\text{and by equivariance}\quad 
F_{ang}:\Xi_{\W}/V_T\to{\Xi_{\W}/V_T}.
\]

\index[n]{F@$F$, Poincar\'e map!$F_{ang}$, angular component}

By \emph{Poincar\'e map}, we will always mean
a map that discretizes a bang-bang dynamical system, by passing from one
switching time to the next. In this sense,
we think of $\Xi_\W$ as the Poincar\'e section and $F_{ang}$ as the
Poincar\'e map of the Fuller dynamical system on the angular component
(with the understanding that the virial group symmetries have been
built into $F_{ang}$).  

\begin{remark} The notation $F$ will be used in this chapter
for various versions of the Poincar\'e map in a context-dependent way.
The symbol $F$ can denote either the Poincar\'e map for the Fuller
system or the Poincar\'e map for the Reinhardt system, depending on
the context.  For the Fuller system, various domains are possible:
\[
\C^3\setminus\{\mb{0}\},\quad\Xi_{\W},\quad\Xi_{\W,0},
\quad\Xi_{\W}/V_T,\quad\Xi_{\W,0}/V_T,
\]
or various coordinate charts of these domains.
\end{remark}

\index{Poincar\'e map}
\index{Poincar\'e map!Fuller-Poincar\'e $F$}
\index{Poincar\'e map!Reinhardt-Poincar\'e}

We analyze the equilibrium points of the dynamical system $F_{ang}$.  

\index[n]{q@$q$, point on manifold!$q_{in},q_{out},q_{fix}$, fixed points}
\index[n]{r@$r$, real number!$r_{scale}\approx6.27$, scaling factor}

\begin{lemma} \label{lem:qout}
The dynamical system $F_{ang}$ has exactly two fixed points in
$\Xi_{\W}/V_T$ at which the Hamiltonian vanishes.  They are the
switching points $q_{out},q_{in}$ of outward and inward triangular
spirals. 
%, constructed in Appendix~\ref{sec:chatter}.  
They are related by time-reversing symmetry: $\tau(q_{out})=q_{in}$.
After virial rescaling to make the real part of the first
coordinate equal to $-1$, the fixed point $q_{out}$ takes the form
\begin{align}\label{eqn:equil-exact}
\begin{split}
\\
q_{out} &\equiv
(
-1 + i\frac{-1 + r}{\sqrt3 
   (1 + r)}, 
-\frac{-1 + r^3}{
   \sqrt3 (1 + r^3)} + i
\frac{ 1 - 3 r - 2 r^2 - 3 r^3 + 
   r^4}{3 (1 + r + r^3 + 
    r^4)}, 
\\
&\qquad\qquad
\frac{ -2 (1 + r - 4 r^3 - 7 r^4 - 
    9 r^5 - 7 r^6 - 4 r^7 + 
    r^9 + r^{10})}{9 (1 + r)^2 
   (1 - r + r^2) (1 + r^3 + 
    r^6)}
)\mod \G,
\\
\end{split}
\end{align}
where $r=\rs\approx6.27$ is the unique real root greater than $1$ of the
palindromic polynomial
\[
1-5r-7r^2-5r^3-7r^4-5r^5+r^6.
\]  
The first switching time of the initial conditions on the right-hand
side of \eqref{eqn:equil-exact} is
\[
t_{sw} = \frac{2(1+r+r^2)}{\sqrt3 (r+1)}\approx 7.4
\]
\end{lemma}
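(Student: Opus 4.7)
The plan is to reduce the fixed-point condition for $F_{ang}$ to an explicit system of polynomial equations in the Cartesian coordinates of the initial switching point, then eliminate variables to obtain the palindromic sextic in the statement. First I will use the virial group $\G$ to normalize a candidate fixed point. By $V_T$-equivariance, I may place the initial condition $z^0 = (z_1^0, z_2^0, z_3^0)$ on the wall $\W_0 = \R_{\le 0}$, so $z_3^0 \le 0$ is real. The scaling part of $\G$ lets me impose the normalization $\Re(z_1^0) = -1$ of the statement. A point $q \in \Xi_\W/V_T$ is a fixed point of $F_{ang}$ precisely when there exist $r > 0$ and $k \in \Z/3$ such that the Fuller trajectory $z(t)$ with initial value $z^0$ and first (constant) control determined by Lemma~\ref{lem:first-control} satisfies $z_j(t_{sw}) = r^j \zeta^k z_j^0$ for $j = 1,2,3$, where $t_{sw} > 0$ is the first positive switching time.

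Next I would invoke Lemma~\ref{lem:first-control} to determine the first control. Since $z_3^0 \le 0$ is real, the leading lexicographic entries $\Re(z_3^0, \zeta)$ and $\Re(z_3^0, \zeta^2)$ coincide and are maximal, so the first control is either $\zeta$ or $\zeta^2$ depending on the sign of $\Re(z_2^0, \zeta - \zeta^2)$; without loss of generality I take $u = \zeta$ (the other case is handled by complex conjugation, which is the source of the time-reversal symmetry). With $u = \zeta$ fixed, the trajectory is given explicitly by the polynomial formulas \eqref{eqn:z1z2z3}. The switching-time condition $z_3(t_{sw}) \in \W$ together with the maximum principle \eqref{eqn:max} pick out a wall (which, for a fixed point, must be $\zeta \W_0 = \W_1$ so that the cyclic rotation $\zeta^k = \zeta$ restores the initial wall after quotienting by $V_T$). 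Hence the fixed-point condition becomes the system
\begin{align*}
-i\zeta\, t_{sw} + z_1^0 &= r\,\zeta\, z_1^0, \\
-i\zeta\, t_{sw}^2/2 + z_1^0 t_{sw} + z_2^0 &= r^2 \zeta\, z_2^0, \\
-i\zeta\, t_{sw}^3/6 + z_1^0 t_{sw}^2/2 + z_2^0 t_{sw} + z_3^0 &= r^3 \zeta\, z_3^0,
\end{align*}
complemented by the reality condition $z_3^0 \in \R_{\le 0}$, the normalization $\Re(z_1^0) = -1$, and the Hamiltonian condition $\H_F(z^0, \zeta) = 0$.

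The first equation solves $z_1^0$ linearly in terms of $r$ and $t_{sw}$, giving $z_1^0 = i\zeta t_{sw}/(r\zeta - 1)$; substituting back determines $z_2^0$ and $z_3^0$ recursively as explicit rational expressions in $r, t_{sw}$. Imposing $\Re(z_1^0) = -1$ expresses $t_{sw}$ as a function of $r$ alone, and the reality condition $\Im(z_3^0) = 0$ then yields a polynomial equation in $r$. A direct (computer-algebra-aided) elimination should produce, after cancelling the spurious factor $r - 1$ (the trivial fixed point at the singular locus) and clearing denominators, the palindromic sextic $P(r) := 1 - 5r - 7r^2 - 5r^3 - 7r^4 - 5r^5 + r^6 = 0$. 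The Hamiltonian vanishing condition is then automatic (or, equivalently, consistent with this reduction) because $\H_F$ is constant along trajectories, vanishes identically on true spiral solutions, and drops out of the algebra under the fixed-point constraints.

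Finally I would analyze the roots of $P$. Palindromicity gives $r^6 P(1/r) = P(r)$, so the roots come in pairs $\{r, 1/r\}$; the substitution $s = r + 1/r$ reduces $P(r)/r^3 = 0$ to the cubic $s^3 - 3s - 5 - 7s - 7 = s^3 - 10s - 12 = 0$ (up to sign conventions), whose real roots I would classify by Sturm's theorem or direct inspection to show there is exactly one real root $s_0 > 2$, yielding precisely one real root $r = r_{scale} > 1$ and its reciprocal $1/r_{scale} \in (0,1)$; the other four roots of $P$ are non-real and do not correspond to admissible initial conditions (they violate either the reality of $z_3^0$ or the sign condition $z_3^0 \le 0$). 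The root $r_{scale}$ gives the outward spiral fixed point $q_{out}$, and back-substitution into the formulas for $z_1^0, z_2^0, z_3^0$ recovers the explicit expression \eqref{eqn:equil-exact} and the switching-time formula $t_{sw} = 2(1 + r + r^2)/(\sqrt{3}(r+1))$. The inward fixed point $q_{in}$ is then obtained from $q_{out}$ by Lemma~\ref{lem:reversal} (time reversal), and the formal identity $\tau(q_{out}) = q_{in}$ follows from the observation that $\tau$ inverts the virial scaling, sending $r \mapsto 1/r$. The principal obstacle in this program is the algebraic elimination producing the palindromic sextic: the intermediate expressions for $z_2^0, z_3^0$ are large rational functions in $r$ and $t_{sw}$, so a careful computer-algebra computation (of the kind used elsewhere in the book) will be required to confirm that the obstruction reduces exactly to $P(r)$ with no spurious factors.
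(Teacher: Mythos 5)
Your overall strategy---set up the fixed-point condition $z_j(t_{sw})=r^j\zeta^k z_j^0$ as a system of linear equations in the initial data, eliminate variables, and exploit palindromicity of the resulting sextic---is the same as the paper's, so the plan is sound, but there are substantive gaps. The most serious is the choice $\zeta^k=\zeta$. If you normalize $z_3^0\in\W_0=\R_{\le 0}$ and take first control $u=\zeta$ (as you do), then during $[0,t_{sw}]$ the control $\zeta$ must be Hamiltonian-maximizing, which forces the \emph{next} wall reached to be the one along which $\zeta$ ties for the maximum: that is $\W_0$ (wall between $\zeta$ and $\zeta^2$) or $\W_2$ (wall between $1$ and $\zeta$), never $\W_1$ (wall between $1$ and $\zeta^2$, where $\zeta$ is strictly suboptimal). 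Hence $z_3(t_{sw})=r^3\zeta^k z_3^0$ forces $\zeta^k\in\{1,\zeta^2\}$, and the nontrivial fixed points in the paper have $\zeta_1=\zeta^2$, not $\zeta$. Your system with $\zeta^k=\zeta$ solves a different (and inconsistent) problem; carried through, the elimination would not land on the stated sextic.

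A second gap is your assertion that the Hamiltonian condition ``is then automatic.'' It is not: the paper shows that the fixed-point analysis admits three solutions $(r,\zeta_1)\in\{(1,\zeta),(r_{\mathrm{scale}},\zeta^2),(1/r_{\mathrm{scale}},\zeta^2)\}$, and the spurious $(1,\zeta)$ solution is eliminated precisely because $\H_F(z^0,u)=1/(4\sqrt{3})\ne 0$ there. In your set-up the count of real unknowns and real constraints makes the $\H_F=0$ condition an additional, overdetermining equation; it must be imposed and cannot drop out. Finally, a smaller arithmetic slip: writing $P(r)/r^3$ in terms of $s=r+1/r$ gives $s^3-5s^2-10s+5$, not $s^3-10s-12$ (you dropped the $-5s^2$ term coming from the coefficient $-7$ of $r^2$ and $r^4$); the flagged ``up to sign conventions'' hedge signals that this computation was not actually carried out, but as written it would fail the numerical sanity check $s\approx r_{\mathrm{scale}}+1/r_{\mathrm{scale}}\approx 6.43$.
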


\begin{proof}
Let $q$ be a lift to $\Xi_\W$ of a fixed point in $\Xi_\W/V_T$.  Let
$u\in{}V_T$ be the control, starting at $t=0$ until the first positive
switching time $t_{sw}$.  Let $z^0\in\C^3\setminus\mb{0}$ be the
Cartesian coordinates of $(1,q)\in{}\R_{>0}\times{}\Xi_\W$.  The
fixed-point conditions are 
\begin{equation}\label{eqn:fixed-point}
z_j(t_{sw}) = r^j \zeta_1 z_j^0,\quad (\text{modulo } \G),
\end{equation} 
for some $r>0$ and some $\zeta_1\in{}V_T$, where $z_j(t_{sw})$
is given by \eqref{eqn:z1z2z3}.  Solving the linear
equations \eqref{eqn:z1z2z3} for $z_3^0$, we find
\[
z_3^0 = -i\frac{t_{sw}^3u(1+2r\zeta_1 + 2 r^2 \zeta_1 + r^3 \zeta_1^2)}
{6 (-1 + r\zeta_1) (-1+r^2\zeta_1) (-1 + r^3\zeta_1)}.
\]
It follows that $z_3^0\ne0$ (because $r>0$, $t_{sw}>0$ and $\zeta_1\in
V_T$).  From the control $u$, at times $t=0,t_{sw}$ we must have by
the maximum principle
\begin{align*}
\RR(z_3^0,u)&\ge \RR(\zeta_2z_3^0),\quad \forall\zeta_2\in V_T
\\
r^3\RR(z_3^0\zeta_1,u)=\RR(z_3(t_{sw}),u)&\ge\RR(\zeta_2z_3(t_{sw})),
\quad\forall\zeta_2\in V_T\\
\RR(z_3^0\bar{u})&=\RR(z_3^0\zeta_1\bar{u})\ge\RR(z_3^0\zeta_1^2),
\\
z_3^0\bar{u}\zeta_1^2&\in\R_{\le0}.
\end{align*}
This final condition implies that the scaling factor in the virial
group is
\[
(r,\zeta_1)\in\{(1,\zeta), (\rs,\zeta^2), (1/\rs,\zeta^2)\}\subset \R_{>0}\times{}V_T,
\]
where the scaling factor $\rs\approx 6.27$ and $1/\rs$ are the only
two real roots of the palindromic polynomial given in the lemma,
%(compare Appendix~\ref{sec:chatter}), 
and $\zeta=\exp(2\pi i/3)$.  If
$r=1$, then $\H_F(z^0,u)=1/(4\sqrt{3})\ne0$, and the solution is
rejected.  The two other solutions are the outward triangular spiral
with parameters $(r,\zeta_1)=(\rs,\zeta^2)$ and the inward triangular
spiral with parameters $(1/\rs,\zeta^2)$.  The coordinates in the
statement of the lemma have been rotated by $V_T$, choosing the first
control $u=\zeta_1^2$, to make the third coordinate real and negative.
\end{proof}

\begin{remark}
The fixed point $q_{out}$ of $F_{ang}$ is an outward triangular spiral for $F$
in a precise sense.  By
\eqref{eqn:fixed-point}, the iterates of $F$ satisfy
\[
F^k(q_{out})= (r_{scale},\zeta^2)^k\cdot q_{out},
\]
where $\cdot$ is the virial action.  These are discrete points on a
logarithmic spiral.  The points move outward because $r_{scale}>1$.
Similarly, $F^k(q_{in})$ are points on an inward moving logarithmic
spiral.
\end{remark}

\begin{remark} 
The fixed point $q_{fix}$ corresponding to
$(r,\zeta_1)=(1,\zeta)\in\G$ in the proof has the form
\[
q_{fix}=(1,-i/2,-1/2) \mod \G.
\]  
This fixed point has a nice interpretation.
In Section~\ref{sec:6k+2-gons}, we constructed a one-dimensional
family of Pontryagin extremals of the Reinhardt control problem,
indexed by a parameter $y_0\in(1/\sqrt{3},1)$.  This family includes
the smoothed octagon and the smoothed $6k+2$-gons.  Setting $y_0 =
1-r$, we may express this one-parameter family of extremals in
coordinates $(z_1(r),z_2(r),z_3(r))$, following a procedure described
below in Section~\ref{sec:asymptotic}.  Letting $r$ tend to zero, we
have asymptotics
\[
(z_1(r),z_2(r),z_3(r))=q_{fix} + \text{higher order terms} \mod \G.
\]
Thus, in a precise sense, $q_{fix}$ is the fixed point in the Fuller
system coming from the family of extremals in Reinhardt system that
includes the smoothed octagon.  It is particularly noteworthy that the
Fuller-system Hamiltonian is not zero at $q_{fix}$, although it is
constructed as a limit of points in the Reinhardt-system Hamiltonian zero set.
\end{remark}
% Leap day calculation 2/29/2024.
% Take solutions from triangleSub. Convert to hyperboloid coordinates, etc.
\mcite{MCA:zfix1}

By the constancy of the Hamiltonian, the map $F$ restricts to the
zero set of the Hamiltonian. 

\begin{lemma}\label{lem:eigenvalues}\mcite{MCA:2776759}
Restrict $F_{ang}$ to the subset $\Xi_{\W,0}/V_T$ of $\Xi_\W/V_T$
on which the Hamiltonian vanishes.  On that
subset, the fixed point $q_{out}\in{}\Xi_{\W,0}/V_T$ 
is an asymptotically stable
equilibrium, and the fixed point $q_{in}$ is unstable.
\end{lemma}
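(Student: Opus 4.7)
The plan is to compute the linearization $DF_{ang}$ explicitly at the two fixed points and show that its spectrum sits strictly inside the open unit disk at $q_{out}$ and strictly outside at $q_{in}$. Once this is done, the standard Hartman–Grobman / contraction-mapping result gives asymptotic stability of $q_{out}$ and instability of $q_{in}$. Throughout, I will use the fact that the virial group $\G$ and the Hamiltonian vanishing condition cut $\Xi_{\W,0}/V_T$ down to a $3$-real-dimensional manifold in a neighborhood of each fixed point, so $DF_{ang}$ is represented by a $3\times 3$ real matrix.

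First I would set up good local coordinates near $q_{out}$. Using Lemma~\ref{lem:qout}, I would lift $q_{out}$ to the normalized representative in $\C^3$ given by \eqref{eqn:equil-exact}, with first control $u=\zeta^2$ and $z_3^0\in\R_{<0}$. The quotient $\Xi_{\W,0}/V_T$ is locally modeled by three real parameters: two from perturbing $z_1^0$ in $\C$, and one from perturbing the modulus of $z_2^0$ (the argument of $z_2^0$ is pinned by the Hamiltonian-vanishing constraint $\H_F=0$ together with the normalization $z_3^0\in\R_{<0}$, $\phi=1$). Next, for an initial condition $\tilde z^0$ in this chart, I would use the closed-form solution \eqref{eqn:z1z2z3} to write out the image $z(t,\tilde z^0,\zeta^2)$, and determine the first switching time $t_{sw}(\tilde z^0)$ implicitly from the switching condition $\chi_{\zeta^2,\zeta}(t_{sw})=0$ (whose nondegeneracy at $q_{out}$ is guaranteed because the first control is well-defined at $q_{out}$, via Lemma~\ref{lem:first-control}). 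The implicit function theorem gives $t_{sw}$ and hence $F(\tilde z^0)$ as analytic functions of $\tilde z^0$, after which I renormalize by the virial action to land back in $\Xi_{\W,0}/V_T$ and read off $F_{ang}$.

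With $F_{ang}$ in hand near $q_{out}$, the computation of $DF_{ang}(q_{out})$ is a finite, closed-form calculation whose entries are rational functions of $r=\rs$, the real root greater than $1$ of the palindromic polynomial $1-5r-7r^2-5r^3-7r^4-5r^5+r^6$. I would carry this out symbolically in Mathematica (as done elsewhere in the book), obtain the characteristic polynomial of $DF_{ang}(q_{out})$ as a polynomial in $r$ modulo the palindromic relation, and verify by resultant or Sturm-sequence arguments that all three eigenvalues have modulus strictly less than $1$. I expect the eigenvalues to come as one real eigenvalue together with a complex-conjugate pair, reflecting the spiral character of the trajectory; the asymptotic stability at $q_{out}$ in Figure~\ref{fig:omega} (which pictures the analogous circular-control system) is strong visual confirmation that we should indeed find $|\lambda_j|<1$.

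For the unstable fixed point $q_{in}$, I would exploit the time-reversal symmetry rather than redo the computation. Lemma~\ref{lem:reversal} and Lemma~\ref{lem:switch-reversal} show that $\tau$ interchanges forward and backward trajectories and sends switching times of $F$ to switching times of the backward Fuller dynamics. Since $\tau(q_{out})=q_{in}$ by Lemma~\ref{lem:qout}, one gets the intertwining relation $F_{ang}\circ \tau = \tau \circ F_{ang}^{-1}$ in a neighborhood of $q_{in}$ (working on $\Xi_{\W,0}/V_T$). Linearizing, $DF_{ang}(q_{in})$ is conjugate to $DF_{ang}(q_{out})^{-1}$ via $D\tau$, so its eigenvalues are the reciprocals of those at $q_{out}$ and all have modulus strictly greater than $1$. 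This yields instability of $q_{in}$ and completes the proof.

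The main obstacle I anticipate is the symbolic eigenvalue computation in the first step: even after using the palindromic relation and the explicit reciprocal $\rs\cdot(1/\rs)=1$ between the two solutions of Lemma~\ref{lem:qout} to simplify, the characteristic polynomial of $DF_{ang}(q_{out})$ is a polynomial of degree $3$ whose coefficients live in $\Q(\rs)$, a degree-$6$ extension of $\Q$, and verifying $|\lambda_j|<1$ for all three roots requires either a careful application of the Schur–Cohn criterion or an explicit numerical certification (pinning down $\rs\approx 6.27$ to sufficient precision and controlling the rounding error). This is the step that genuinely depends on the specific arithmetic of the Fuller triangular-control problem; everything else is structural.
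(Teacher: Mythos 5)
Your proposal matches the paper's proof in all essentials: the paper likewise deduces instability of $q_{in}$ from time reversal, works in a local three-dimensional coordinate chart for $\Xi_{\W,0}/V_T$ around $q_{out}$, and computes the Jacobian of $F_{ang}$ there in Mathematica, finding numerically that all three eigenvalues have absolute value below $0.1$. The one small difference is that you propose a symbolic certification (Schur--Cohn or Sturm-sequence arguments over $\Q(\rs)$), whereas the paper's stated proof settles for a numerical eigenvalue computation.
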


\index{Mathematica}

\begin{proof}
The second assertion follows from the first, because $q_{in}$ is
obtained by time reversal from $q_{out}$.  It suffices to show that
$q_{out}$ is asymptotically stable.  This is a routine stability
calculation.  An open neighborhood of $q_{out}$ in $\Xi_{\W,0}/V_T$
is diffeomorphic to an open subset of $\R^3$.  An explicit calculation
of the eigenvalues of the Jacobian matrix in terms of local
coordinates centered at $q_{out}$ gives the result.  Numerically, the
three eigenvalues have absolute value less than $0.1$.  The
calculations were made in Mathematica.
\end{proof}

\chapter{Stable and Unstable Manifolds at Fixed Points}

In this chapter, we return to the Reinhardt dynamical system.
More specifically, we return to the blowup of the Reinhardt system and
consider the Fuller system as the restriction of the Reinhardt system
to the exceptional divisor of the blowup.  We describe the stable and
unstable manifolds at the fixed points $q_{in}$ and $q_{out}$, now viewed
as fixed points in the blowup of the Reinhardt system.

\begin{figure}
    \centering
    \includegraphics[scale=0.5]{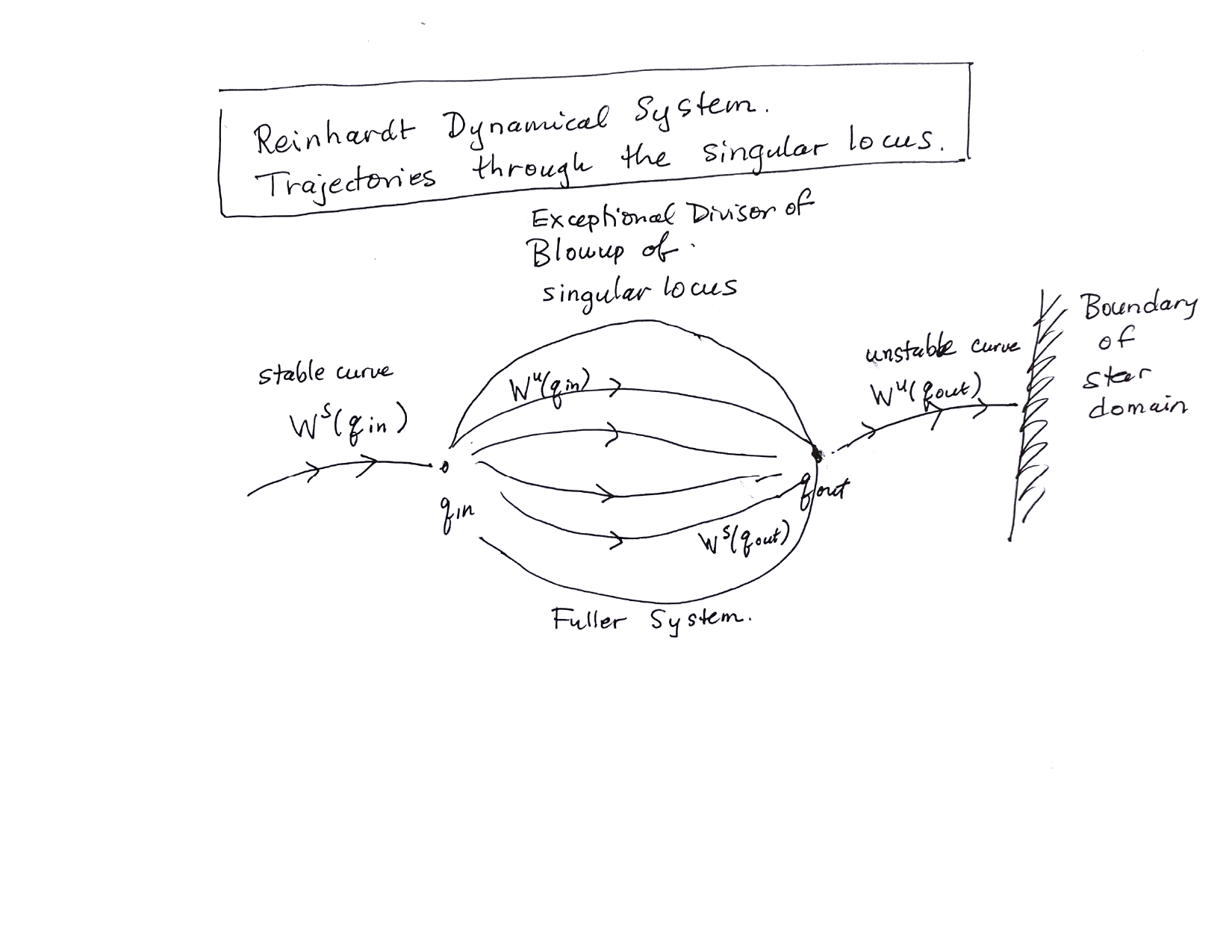}
     \caption{Schematic representation of the Poincar\'e map of the Reinhardt system along
trajectories that meet the singular locus.  The picture is four-dimensional
and the exceptional divisor is three dimensional.  
The picture has a time-reversing
 symmetry that reverses the direction of arrows.}
     \label{fig:scan-schematic}
\end{figure}

\index[n]{d@$d$, determinant!$d_1$,\ $\det(\Lambda_1)=d=\epsilon d_1^2$}
\index[n]{ze@$\epsilon\in\{-1,0,1\}$, sign!$\epsilon\in\{-1,0,1\}$, sign $\det(\Lambda_1)$}
\index[n]{zr@$\rho=\beta/\alpha >0$, control parameter}
\index[n]{zL@$\Lambda$, costate!$\lambda_{cost}$, cost multiplier}
\index[n]{W@$W^s(q),W^u(q)$, stable and unstable manifolds}

We use the parameter values $d_1=3/2$, $\epsilon=1$, $\rho=2$,
$\lambda_{cost}=-1$.  Any trajectory that meets the singular locus
must have these parameter values.  Figure~\ref{fig:scan-schematic}
gives a schematic representation of the Poincar\'e map for Reinhardt
system trajectories meeting the singular locus.  This chapter and the
next one will prove theorems about the qualitative features of this
picture.  In particular, the trajectories approach the singular locus
toward the fixed point $q_{in}$ along a stable curve $W^s(q_{in})$.
The flow along the exceptional divisor is governed by the Fuller
system, and every point except $q_{in}$ lies in the basin of
attraction of $q_{out}$.  The unstable manifold $W^u(q_{in})$ flows
into the fixed point $q_{out}$. The trajectories exit the exceptional
divisor along an unstable curve $W^u(q_{out})$ at $q_{out}$ that meets
the boundary of the star domain.

\section{Lie Algebra Coordinates}

In much of this book, we have worked with hyperboloid coordinates in
both Cartesian $z$ and spherical $(r,\xi)$ coordinate form.  We
now return to Lie algebra $\sl(\R)$ coordinates.  Moving forward, we will
use the explicit solutions to the ODEs with constant control,
expressed in Lie algebra coordinates.  For computer calculations,
$\sl$ coordinates have the slight advantage of avoiding complex
numbers.

%% DONE[TCH 5/3/2024. Footnote added.] Bad notation tilde Lambda is solution to ODE.

In the interest of developing asymptotic formulas near the
singular locus, for any $X,\Lambda_1,\Lambda_R\in\sl$
subject to the usual conditions $\det(X)=1$, $\det(\Lambda_1)=d_1^2=9/4$, we write%
\footnote{Here, $\tilde\Lambda_R$ is unrelated to
an earlier term with the same name.}
\begin{equation}\label{eqn:Xtilde}
X = J + r \tilde X,\quad
\Lambda_1 = (-3/2)J +  r^2 \tilde\Lambda_1,\quad
\Lambda_R = r^3 \tilde\Lambda_R,
\end{equation}
for some $\tilde X$, $\tilde\Lambda_1$, and $\tilde\Lambda_R\in\sl$,
where $r>0$ is a real parameter.
Let $\tilde{X}$, $\tilde{\Lambda}_1$ and $\tilde{\Lambda}_R$ have
matrix entries $\tilde{x}_{ij}$, $\tilde{\ell}_{ij}$ and
$\tilde{\ell}_{Rij}$, respectively.  
\[
\tilde{X} = \begin{pmatrix} \tilde{x}_{11}&\tilde{x}_{12}\\
\tilde{x}_{21}&\tilde{x}_{22}
\end{pmatrix}\quad
\tilde{\Lambda}_1 = \begin{pmatrix} \tilde{\ell}_{11}&\tilde{\ell}_{12}\\
\tilde{\ell}_{21}&\tilde{\ell}_{22}
\end{pmatrix}\quad
\tilde{\Lambda}_R = \begin{pmatrix} \tilde{\ell}_{R11}&\tilde{\ell}_{R12}\\
\tilde{\ell}_{R21}&\tilde{\ell}_{R22}
\end{pmatrix}.
\]
We are particularly interested in
points where $r$ is small and positive, and where $\tilde{x}_{11}>0$.
We make the representation
\eqref{eqn:Xtilde} unique by scaling by $r>0$ so that $\tilde{x}_{11}=1$.

%% N.B. 11/24/2023.  The condition tilde{x}_{21} is broken before the
%% trajectory reaches its first switch.  We need to change the
%% condition to tilde{x}_{11}>0, which is maintained along the
%% trajectory.

In Lie algebra coordinates, the rotational group action by the cyclic
group of order three is the action by powers of $\Ad_{R}$.  The walls
are determined by the vanishing of the switching functions.  Up to
rotational symmetry, we can assume that the trajectory starts at the
wall $\chi_{23}=0$ between control matrices $Z_{010}$ and $Z_{001}$.

\index[n]{YZ@$Z_u$, control matrix}

To introduce a coordinate system, we restrict the domain by the conditions
\begin{align}\label{eqn:coord-restriction}
0<r,\quad \tilde{x}_{11}=1,\quad \tilde{\ell}_{21}r^2 <3/2.
\end{align}
Suppose $\Phi(z)=J+r\tilde{X}$, $z=x+iy\in \hstar$, with $x>0$.  We
have $\tilde{x}_{21}\le\sqrt3-1/r$ if and only if $x\ge1/\sqrt3$,
which lies outside the star domain $\hstar\subset\h$.  Therefore we
assume $\tilde{x}_{21}>\tilde{x}_{21}^*(r):=\sqrt3-1/r$.  Set
\[
H^* := \{(r,\tilde{x})\in\R^4\mid \tilde{x}_{21}> \tilde{x}^*_{21}(r),
\ \tilde{\ell}_{21}r^2 <3/2\}.
\]
where we write
$\tilde{x}:=(\tilde{x}_{21},\tilde{\ell}_{11},\tilde{\ell}_{21})\in\R^3$.

\index[n]{H@$H$, Euclidean region!$H^*\subset\R^4$, coordinate chart}
\index[n]{x@$\tilde{x}^*_{21}(r)=\sqrt{3}-1/r$, star domain boundary curve}

If $r>0$ and $\tilde{x}_{21}>\tilde{x}^*_{21}$, then
\[
1 + r\tilde{x}_{21} \ge 1+r\tilde{x}^*_{21} = \sqrt3 r >0.
\]
Hence, we may invert $1 + r\tilde{x}_{21}$.  Then $X$ is determined
uniquely by $r$ and $\tilde{x}_{21}$.  Specifically,
\[
\tilde{X}=\mattwo{r}{\frac{-(1+r^2)}{1 + r\tilde{x}_{21}}}{1+r\tilde{x}_{21}}{-r}.
\]
%$\tilde{x}_{22}=-\tilde{x}_{11}=-1$, and $\tilde{x}_{12} =
%(\tilde{x}_{21}-r)/(1+r\tilde{x}_{21})$ is determined by $\det(X)=1$.
The element $\Lambda_1$ is uniquely determined by $r$,
$\tilde{l}_{11}$ and $\tilde{l}_{21}$ when $\tilde{\ell}_{21}r^2<3/2$ 
via the relations
$\det(\Lambda_1)=9/4$, $\tr(\Lambda_1)=0$.  Furthermore,
$\Lambda_R\in\sl$ is uniquely determined as a function of
$(r,\tilde{x})\in{H^*}$ with $r>0$ by the three linear equations
\begin{equation}\label{eqn:LambdaR-linear}
\bracks{X}{\Lambda_R}=\H(Z_{010},X,\Lambda_1,\Lambda_R)=\chi_{23}(X,\Lambda_R)=0,
\end{equation}
(which is always a full rank system of linear equations for
$\Lambda_R$).  The diagonal entries of $\Lambda_R$ have order $O(r^4)$
and the off-diagonal entries have order $O(r^3)$. The entries of
$\Lambda_R$ are polynomials in $r$, $\tilde{x}_{ij}$,
$\tilde{\ell}_{ij}$, $(1 + r\tilde{x}_{21})^{-1}$, and
$(3/2-\tilde{\ell}_{21}r^2)^{-1}$.  In summary,
$(r,\tilde{x})\in{}H^*$ with $r>0$ is a local coordinate for
$(X,\Lambda_1,\Lambda_R)$.

\index[n]{O@$O$, Landau big oh}

\section{Asymptotics}\label{sec:asymptotic}

\index[n]{znX@$\Xi$!$\Xi_{\W},\Xi_{\W,0}$, Poincar\'e section}
\index{Fuller-Poincar\'e section}

Let 
\[
\tilde{x}=(\tilde{x}_{21},
\tilde{\ell}_{11},\tilde{\ell}_{21})\in{\R^3},\quad \tilde{x}_{11}=1.
\]  
We construct a uniquely determined element
$\xi(\tilde{x})\in{\Xi_{\W,0}}$ as follows.  For each $r>0$
sufficiently small, we have $1+r\tilde{x}_{21}\ne0$,
$\tilde{\ell}_{21}r^2 < 3/2$, and these conditions allow us to form a
triple
$(X(r,\tilde{x}),\Lambda_1(r,\tilde{x}),\Lambda_R(r,\tilde{x}))$, as
just described.  By the Cayley transform to $\SU$, expressed in terms
of hyperboloid coordinates, the triple determines
$(w_r,b_r,c_r)\in\C^3$.  We set
\[
\hat z_1(r,\tilde{x}) = w_r/\rho,\quad \hat z_2(r,\tilde{x}) 
= -i b_r/(2\rho),\quad \hat z_3(r,\tilde{x}) = c_r/(6\rho),\quad \rho=2,
\]
according to the truncation rules of Equation~\eqref{eqn:wbc-to-z}. 
We use
Landau $O$ to describe asymptotic behavior as $r$ tends to $0$.  
A calculation gives
\begin{align*}
\hat z_1(r,\tilde{x}) 
&= \frac{r}{2}(\tilde{x}_{21} + i \tilde{x}_{11}) + O(r^2),
\\
\hat z_2(r,\tilde{x}) 
&= \frac{r^2}{3!}(\tilde{\ell}_{11} - i\tilde{\ell}_{21}) + O(r^3),
\\
\hat z_3(r,\tilde{x}) 
&= \frac{r^3}{4!}(\tilde{\ell}_{R12}+\tilde{\ell}_{R21}) + O(r^4).
\end{align*}
Then we define
$\xi(\tilde{x})\in{\Xi_{\W,0}}$ to be the angular component of
\[
z(\tilde{x}):=\lim_{r\to0} (\hat z_1(r,\tilde{x})/r,
\hat z_2(r,\tilde{x})/r^2,\hat z_3(r,\tilde{x})/r^3).
\]
(This limit exists and is nonzero.)
By developing in a series, we have an asymptotic relation between the Hamiltonians
in the Reinhardt and Fuller systems.
\[
\H(Z_u,X(r,\tilde{x}),\Lambda_1(r,\tilde{x}),\Lambda_R(r,\tilde{x}))
= 24 r^3 \H_F(\tilde{u},z(\tilde{x}))+O(r^4),
\]
where controls $u\in\{\mb{e}_j\}$ for Reinhardt and
$\tilde{u}\in{}V_T$ for Fuller
correspond by $\mb{e}_j\mapsto\zeta^{j-1}$, for $j=1,2,3$.
%\[
%\mb{e}_1\mapsto 1,\quad
%\mb{e}_2\mapsto\zeta,\quad\mb{e}_3\mapsto\zeta^2.
%\]
The equations~\eqref{eqn:LambdaR-linear} imply
\[
\H_F(z(\tilde{x}),\zeta)=\H_F(z(\tilde{x}),\zeta^2)=0.
\]
We are particularly interested in the controls $Z_{010}$ (and
$\zeta\in{}V_T$), because they are the controls at the fixed point
$q_{out}$, when represented according to our conventions.

\begin{remark}
For example, if we take
\[
\tilde{x}_{out} := (\tilde{x}_{21out},\tilde{\ell}_{11out},\tilde{\ell}_{21out}) =
\left(\frac{x_1}{y_1},\frac{3x_2}{2y_1^2},\frac{-3y_2}{2y_1^2}\right) \approx
(-2.39,-4.90,-1.12),
\]
where $z_{out,i}=x_i+iy_i$, then $\xi(\tilde{x}_{out})$ is equal to the
outward fixed point $q_{out}$ of the Fuller system modulo the virial
group.
\end{remark}

We can write the Reinhardt-Poincar\'e map $F$ in local coordinates 
$(r,\tilde{x})\mapsto F(r,\tilde{x})$.
We expect asymptotic expansions in $r$ of the Reinhardt system whose
leading term is given by the Fuller system.  By
Lemma~\ref{lem:rescale}, when taking asymptotics, we should work with
a rescaled time 
\[
s := t/r.
\]  

\index[n]{s@$s$, real parameter!$s=t/r$, rescaled time}

\index{Mathematica}

\begin{remark}
A general strategy is given by Manita and Ronzhina in their inverted
pendulum paper.
%\url{https://arxiv.org/pdf/1909.04708.pdf}.  \cite{pendulum}
% old label: \cite{zelikin2017typicality}
We produce essentially equivalent results by working with our explicit
solutions to the constant control ODEs and computing asymptotics using
Mathematica.  Their paper has also inspired our discussion of blowup.
\end{remark}

The next lemma shows that the leading term in the constant control
solution for $(X,\Lambda_1,\Lambda_R)$ is given by the Fuller system.
By cyclic symmetry, we may confine ourselves without loss of
generality to the constant control matrix $Z_{010}$.  For a given
$\tilde{x}$, the first control of
$(X(r,\tilde{x}),\Lambda_1(r,\tilde{x}),\Lambda_R(r,\tilde{x})$ is
$u=\mb{e}_2$ for all sufficiently small $r>0$, provided we assume
\[
\tilde{\ell}_{11}+\tilde{\ell}_{21}\tilde{x}_{21}<0,\quad\text{and}
\quad
\tilde{\ell}_{21}<0.
\]

\begin{lemma}\label{lem:Lie-Fuller}\mcite{MCA:2446603}
Let $(X,\Lambda_1,\Lambda_R)$ be solutions (expressed in local
coordinates as $(r(s),\tilde{x}(s))$ in rescaled time $s=t/r_0$) to
the Reinhardt ODE with constant control $Z_{010}$ and initial
condition $(r_0,\tilde{x}^0)$ (in local coordinates).  Let
$z=z(s)=(z_1(s),z_2(s),z_3(s))$ be solutions to the Fuller ODE with
constant control $u=\zeta\in{}V_T$ and initial condition
$z(\tilde{x}^0)$.  Then for each $s$ such that $r(s)<1$, we have
\begin{align*}
\hat z_1(r(s),\tilde{x}(s)) &= z_1(s)r_0 + O(r_0^2)\\
\hat z_2(r(s),\tilde{x}(s)) &= z_2(s)r_0^2 + O(r_0^3)\\
\hat z_3(r(s),\tilde{x}(s)) &= z_3(s)r_0^3 + O(r_0^4).
\end{align*}
That is, the leading term of the solutions of the Reinhardt and Fuller
systems are in agreement.
\end{lemma}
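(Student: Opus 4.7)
The plan is to reduce the claim to a Gronwall-type estimate on the error between the rescaled Reinhardt trajectory and the Fuller trajectory. Introduce the rescaled variables $Z_j(s) := \hat{z}_j(r(s),\tilde{x}(s))/r_0^j$ for $j=1,2,3$ and time $s = t/r_0$. The lemma asserts precisely that $Z_j(s) - z_j(s) = O(r_0)$ uniformly on any compact $s$-interval with $r(s) < 1$. The base case $s = 0$ is immediate from the asymptotic expansions of $\hat{z}_j(r_0,\tilde{x}^0)$ derived in Section~\ref{sec:asymptotic}, together with the definition of $z(\tilde{x}^0)$ as the limit of those expansions: $Z_j(0) = z_j(\tilde{x}^0) + O(r_0)$. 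The virial scaling symmetry of Lemma~\ref{lem:rescale} is what makes the choice of Fuller time $s = t/r_0$ the natural one here.

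Next, I would derive the system of ODEs governing $Z(s) = (Z_1(s), Z_2(s), Z_3(s))$ by starting from the Reinhardt hyperboloid ODE (Theorem~\ref{thm:ode-hyperboloid}) with constant control $Z_{010}$, applying the substitutions $w = \rho r_0 Z_1$, $b = 2i\rho r_0^2 Z_2$, $c = 6\rho r_0^3 Z_3$, and using the chain rule $d/ds = r_0\, d/dt$. The bookkeeping of Theorem~\ref{thm:ode-oh}, reinterpreted with the weights $|w| = O(r_0)$, $|b| = O(r_0^2)$, $|c| = O(r_0^3)$ replaced by the exact identities above, shows that the resulting system takes the form
\begin{equation*}
\frac{dZ_3}{ds} = Z_2 + r_0 R_3,\qquad
\frac{dZ_2}{ds} = Z_1 + r_0 R_2,\qquad
\frac{dZ_1}{ds} = -i\zeta + r_0 R_1,
\end{equation*}
where the remainders $R_j = R_j(r_0, Z, r(s), \tilde{x}(s))$ are rational functions whose denominators are positive powers of $\mu(w,z^*)$ and $\abss{w}$, both of which tend to $1$ as $r_0 \to 0$. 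The right-hand side is exactly the Fuller vector field with constant control $u = \zeta$ plus an $r_0$-correction. Subtracting the Fuller ODE and letting $E(s) = Z(s) - z(s)$, one obtains $E'(s) = L\, E(s) + r_0 \tilde{R}(s)$ with $L$ the constant nilpotent shift matrix and $\tilde{R}$ bounded on the compact set determined by $r(s) < 1$. The Gronwall inequality (Corollary~\ref{thm:Gronwall}) then yields $|E(s)| = O(r_0)$ on any bounded $s$-interval, which translates back via the weights to the three asserted estimates.

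The main obstacle will be producing the uniform bound on the remainders $R_j$ and verifying that they are genuinely $O(1)$ (rather than blowing up) throughout the time interval of interest. Two subtleties intervene: the denominators $\mu(w,z^*)$ and $\abss{w}$ in the hyperboloid ODE must be kept bounded away from zero, and the coupled scalar $r(s)$ must be shown to evolve slowly enough that the constraint $r(s) < 1$ is compatible with a nontrivial range of $s$. Both follow from the closed-form matrix-exponential solution \eqref{eqn:const-control-g}--\eqref{eqn:const-control-z-X} for constant control: one expands $g(t) = \exp(t(X_0 + P_0))\exp(-t P_0)$ in powers of $r_0$ about the singular locus (where $X_0 = J + O(r_0)$), uses the boundedness of $P_0$ on the compact set to bound $r(s)$, and reads off the remainder estimates directly from the Taylor expansions of the entries of $X(t)$, $\Lambda_1(t)$, and $\Lambda_R(t)$ expressed in the coordinates $(r,\tilde{x}) \in H^*$. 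With these bounds in hand, the Gronwall step is routine.
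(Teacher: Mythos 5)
Your proposal is correct and would establish the lemma, but it follows a genuinely different route from the paper's proof. The paper's argument (which is stated very tersely) is a direct series comparison: transform both the Reinhardt and Fuller trajectories to hyperboloid coordinates, write down the explicit constant-control solutions on both sides (the matrix exponential \eqref{eqn:const-control-g}--\eqref{eqn:const-control-z-X} for Reinhardt, the cubic polynomials \eqref{eqn:z1z2z3} for Fuller), expand the Reinhardt solution as a power series in $r_0$, and read off that the leading coefficients match the Fuller solution. No Gronwall estimate, no differential inequality --- just Taylor expansion of closed-form formulas followed by term-matching.

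Your version instead expresses the claim as an ODE-perturbation statement: the rescaled Reinhardt vector field equals the Fuller vector field plus an $O(r_0)$ remainder, and then Gronwall converts the vector-field closeness into solution closeness. This is a more conceptual framing and, in a setting where the constant-control ODE were \emph{not} explicitly solvable, it would be the only option. It also makes transparent \emph{why} the leading-order agreement holds (the rescaled vector fields converge), rather than merely verifying it by computation. The cost is that you must separately establish uniform boundedness of the remainders $R_j$ and the Gronwall hypotheses; as you yourself observe, the cleanest way to do that here is to fall back on the explicit matrix-exponential solution --- at which point the two arguments essentially converge, and the direct expansion becomes the shorter path.

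One minor point worth being careful about: the hypothesis $r(s)<1$ alone does not bound $s$ or keep the denominator $\mu(w,z^*)$ away from zero, so your Gronwall bound $|E(s)| = O(r_0)$ must be read as holding with $s$-dependent (or compact-interval-dependent) constants, consistent with the paper's implicit convention. The direct series expansion sidesteps this worry because the $s$-dependence of the coefficients is polynomial and explicit.
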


\begin{proof}
We go from the $\SL$ picture to $\SU$ by means of the Cayley
transform,  then switch to hyperboloid coordinates.  Use the explicit
solutions on both sides and expand as a series in the parameter $r_0$.
\end{proof}

\begin{lemma}\label{lem:chi-asymptotic}
With the same setup and matching initial conditions as in the previous lemma,
we have switching function asymptotics
\begin{align}\label{eqn:cubic-sr3}
\chi_{21}(X(s r_0),\Lambda_1(s r_0),\Lambda_R(s r_0)) 
= 24 r_0^3 \RR(\zeta-1,z_3(s )) + O(r_0^4)
\end{align}
\end{lemma}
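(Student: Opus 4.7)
The plan is to reduce the asymptotic formula for $\chi_{21}$ to the Hamiltonian asymptotic that was derived just before the preceding lemma, and then to invoke Lemma~\ref{lem:Lie-Fuller} to convert the resulting expression into one in the Fuller trajectory $z(s)$. The key algebraic identity is that the switching function is a difference of full Hamiltonians at two different controls. From the explicit form of $\H$ in \eqref{eqn:full-hamiltonian}, the term $\bracks{\Lambda_1 - (3/2)\lambda_{cost}J}{X}$ is independent of $Z_u$, so that
\[
\H(Z_{010},X,\Lambda_1,\Lambda_R) - \H(Z_{100},X,\Lambda_1,\Lambda_R)
= \bracks{\Lambda_R}{P_{2,1}-P_{2,2}} = \chi_{21}(X,\Lambda_1,\Lambda_R),
\]
using the definition \eqref{eqn:switch} of the switching function with current control $\mb{e}_2$.

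Next I would evaluate both sides at the Reinhardt trajectory point $(X(sr_0),\Lambda_1(sr_0),\Lambda_R(sr_0))$, which in local coordinates corresponds to $(r,\tilde x)=(r(sr_0),\tilde x(sr_0))\in H^*$. Applying the Hamiltonian asymptotic
\[
\H(Z_u,X(r,\tilde x),\Lambda_1(r,\tilde x),\Lambda_R(r,\tilde x)) = 24 r^3 \H_F(\tilde u,z(\tilde x))+O(r^4)
\]
at $u=\mb{e}_2,\mb{e}_1$ (equivalently $\tilde u=\zeta,1\in V_T$) and subtracting, the $\RR(z_1,iz_2)$ parts of the two Fuller Hamiltonians cancel, leaving
\[
\chi_{21}\bigl(X(sr_0),\Lambda_1(sr_0),\Lambda_R(sr_0)\bigr)
= 24\, r(sr_0)^3\, \RR\!\bigl(\zeta-1,\ z_3(\tilde x(sr_0))\bigr) + O\!\bigl(r(sr_0)^4\bigr),
\]
where $z_3(\tilde x)$ denotes the limiting value from the definition of $\xi(\tilde x)$ in Section~\ref{sec:asymptotic}.

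Finally I would use Lemma~\ref{lem:Lie-Fuller} to replace the current $r^3 z_3(\tilde x(\cdot))$ by the Fuller trajectory $r_0^3 z_3(s)$. Unwinding the definition $z_3(\tilde x) = \lim_{r\to 0}\hat z_3(r,\tilde x)/r^3$ as a first-order Taylor expansion in $r$ at fixed $\tilde x$, one has $\hat z_3(r,\tilde x) = r^3 z_3(\tilde x)+O(r^4)$ uniformly for $\tilde x$ in a bounded set; combining this with Lemma~\ref{lem:Lie-Fuller} yields
\[
r(sr_0)^3 z_3(\tilde x(sr_0)) = \hat z_3(r(sr_0),\tilde x(sr_0)) + O(r_0^4) = r_0^3 z_3(s)+O(r_0^4).
\]
Substituting back and using $\R$-bilinearity of $\RR$ to factor $r_0^3$ through delivers $\chi_{21} = 24 r_0^3\,\RR(\zeta-1,z_3(s))+O(r_0^4)$, as required. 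The only delicate point---the place I expect to spend the most care---is verifying that the error terms $O(r(sr_0)^4)$ and the Taylor remainder $O(r^4)$ from the definition of $z(\tilde x)$ are uniform in $s$ on any compact subinterval of the range where $r(s)<1$; this uses continuous dependence of the Reinhardt flow on initial data together with the boundedness of $\tilde x(sr_0)$ guaranteed by the same continuity and the standing coordinate restriction \eqref{eqn:coord-restriction}.
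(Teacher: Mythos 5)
Your approach is correct, and it takes a genuinely different route from the paper, whose proof is simply ``expand both sides in an explicit series.'' Your key insight---that the switching function is exactly a Hamiltonian difference at two controls, $\chi_{21}=\H(Z_{\ep_2},\cdot)-\H(Z_{\ep_1},\cdot)$, because the cost-independent term $\bracks{\Lambda_1-\tfrac{3}{2}\lambda_{cost}J}{X}$ cancels---lets you push the already-established Hamiltonian asymptotic $\H(Z_u,\cdot)=24r^3\H_F(\tilde u,z(\tilde x))+O(r^4)$ through the subtraction, and the $\RR(z_1,iz_2)$ part of $\H_F$ drops out, isolating $\RR(\zeta-1,z_3)$. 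What this buys is a structural explanation of where the factor $24$ comes from (inherited from the Hamiltonian asymptotic) and why only the $z_3$-component survives, whereas the paper's direct-expansion proof confirms the coefficient by brute-force Mathematica series arithmetic without exposing the mechanism. The one point where your proof is thinner than you'd like is the conversion $r(s)^3 z_3(\tilde x(s))=r_0^3 z_3(s)+O(r_0^4)$: you chain the Taylor remainder from the definition of $z(\tilde x)$ with Lemma~\ref{lem:Lie-Fuller}, which is the right idea, but it requires that $r(s)/r_0$ stays bounded and $\tilde x(s)$ stays in a compact set uniformly on the relevant $s$-interval so the two $O$-terms combine; you correctly flag this as the delicate step, and it does indeed follow from continuous dependence on initial data and the coordinate restriction \eqref{eqn:coord-restriction}, so it is a gap in exposition rather than substance.
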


Note that the term on the right is the switching
function of the Fuller system for the control mode transition
$\zeta\to1$.  Similar formulas hold for the other switching functions.

\begin{proof}
Expand both sides in an explicit series.  
\end{proof}

\section{Analytic Extension of the Reinhardt system}

The following is the key lemma.  It shows that we have succeeded
in transforming the behavior near the singular locus into something
quite pleasant.

\begin{lemma}
Let $\tilde{x}_{out}\in\R^3$ be the parameter associated with
the outward fixed point of the Fuller system.  The
Reinhardt-Poincar\'e map $F$ (initially defined for $r>0$)
extends to an analytic diffeomorphism in
a neighborhood of the fixed point $(0,\tilde{x}_{out})\in\R^4$
(including non-positive values of $r$), such that $F$ coincides with
the Fuller-Poincar\'e map at $r=0$.
\end{lemma}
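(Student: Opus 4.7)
The plan is to work in the local chart $(r,\tilde{x})\in H^{*}$ introduced earlier and to reparametrize time by the \emph{rescaled} variable $s=t/r_{0}$, where $r_{0}$ denotes the value of $r$ at the starting switching point. The explicit constant-control solutions $X(t),\Lambda_{1}(t),\Lambda_{R}(t)$ with control $Z_{010}$ are rational in matrix exponentials of matrices depending polynomially on $(r_{0},\tilde{x}^{0})$, so when we pass to the hyperboloid truncated coordinates $\hat z_{j}(r(t),\tilde{x}(t))$ and expand, we obtain functions that are \emph{jointly real-analytic} in $(r_{0},s,\tilde{x}^{0})$ on an open set containing $\{r_{0}=0\}$. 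This is precisely what Lemma~\ref{lem:Lie-Fuller} captures: the leading term in $r_{0}$ is the Fuller flow with initial condition $z(\tilde{x}^{0})$ and control $\zeta$. Every step below will be composition, evaluation and a chart change applied to this analytic family, so the only genuinely delicate issue is the analyticity of the switching time itself across $r_{0}=0$.

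To handle that, I would apply the analytic implicit function theorem. Dividing the Reinhardt switching function $\chi_{21}$ by $24r_{0}^{3}$ and invoking Lemma~\ref{lem:chi-asymptotic} yields an analytic function
\begin{equation*}
\tilde{\chi}(r_{0},s,\tilde{x}^{0}) \;=\; \RR(\zeta-1,\,z_{3}(s)) + O(r_{0}),
\end{equation*}
whose restriction to $r_{0}=0$ is the Fuller switching function along the trajectory out of $q_{out}$. At $(r_{0},\tilde{x}^{0})=(0,\tilde{x}_{out})$ and $s=s_{sw}^{F}$, where $s_{sw}^{F}$ is the explicit Fuller switching time of Lemma~\ref{lem:qout}, this function vanishes and its $s$-derivative does not: the control transition $\zeta\to 1$ at $q_{out}$ crosses the wall transversally (if the zero were higher-order, the Jacobian eigenvalue calculation of Lemma~\ref{lem:eigenvalues} would already have broken down). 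The implicit function theorem then produces a real-analytic function $s_{sw}(r_{0},\tilde{x}^{0})$ on a full neighbourhood of $(0,\tilde{x}_{out})$ in $\R^{4}$, including $r_{0}\le 0$, with $s_{sw}(0,\tilde{x}_{out})=s_{sw}^{F}$.

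I would then define the extended map $F$ by inserting $t=r_{0}\,s_{sw}(r_{0},\tilde{x}^{0})$ into the analytic constant-control flow, reading off $(X,\Lambda_{1},\Lambda_{R})$ in the hyperboloid truncated coordinates, normalising to recover $(r_{1},\tilde{x}^{1})\in H^{*}$, and conjugating by the rotation $\Ad_{R}$ needed to bring the new switching wall back to the standard wall $\chi_{23}=0$. Each ingredient is analytic in $(r_{0},\tilde{x}^{0})$, so $F$ is analytic. The identification $F|_{r_{0}=0}=F_{ang}$ is immediate from Lemma~\ref{lem:Lie-Fuller}: the leading $r_{0}$-behaviour of $\hat z_{j}$ is the Fuller solution $z_{j}(s)$, so evaluating at $s_{sw}^{F}$ and taking the angular component via $\xi(\tilde{x})$ recovers the Fuller-Poincar\'e image at $q_{out}$.

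Finally, the analytic inverse function theorem gives the diffeomorphism assertion once we inspect the Jacobian at $(0,\tilde{x}_{out})$, which is block triangular in the splitting $(r,\tilde{x})$. The $\tilde{x}$-block is the Jacobian of the Fuller-Poincar\'e map $F_{ang}$ at $q_{out}$, whose three eigenvalues lie strictly inside the open unit disk by Lemma~\ref{lem:eigenvalues}. The $r$-block is the scalar $r_{scale}\approx 6.27$ coming from the self-similar outward scaling $F^{k}(q_{out})\equiv(r_{scale},\zeta^{2})^{k}\cdot q_{out}$ of Lemma~\ref{lem:qout}. Since no eigenvalue vanishes, $F$ is a local analytic diffeomorphism, as claimed. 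The main obstacle in executing this outline is the bookkeeping of the cyclic rotations $\Ad_{R}$ (reconciling the Reinhardt labelling $\mb{e}_{1},\mb{e}_{2},\mb{e}_{3}$ with the Fuller labelling $1,\zeta,\zeta^{2}$), together with verifying that no spurious switch occurs on the open interval $(0,s_{sw}^{F})$ for small $r_{0}\ne 0$; the latter follows from the strict positivity of the other Fuller switching functions along the open arc, combined with uniform continuity of the $O(r_{0})$ corrections on a compact time interval.
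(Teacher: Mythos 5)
Your outline is correct and follows essentially the same route as the paper: rescale time by $s=t/r_{0}$, observe that the constant-control flow is analytic, divide the switching function by $r_{0}^{3}$ and apply the analytic implicit function theorem to extend $s_{sw}$ across $r_{0}=0$, then compose to define the extended $F$. Where you genuinely diverge is the diffeomorphism step. The paper proves $F$ is a diffeomorphism by constructing the backward-in-time map $F^{-1}$, establishing that it too extends analytically across $r_{0}=0$, and then invoking analytic continuation from the region $r_{0}>0$ (where $F$ and $F^{-1}$ are inverses by construction) to conclude they remain inverses on a full neighborhood of the fixed point. You instead argue directly via the inverse function theorem, reading off the block-triangular Jacobian at $(0,\tilde{x}_{out})$: the $r$-entry is $r_{scale}\approx 6.27$ and the $\tilde{x}$-block is the Fuller-Poincar\'e Jacobian, whose eigenvalues are nonzero. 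This is cleaner and strictly shorter than the paper's route, and it front-loads the eigenvalue computation that the paper performs only in the subsequent hyperbolicity theorem. The one small caveat is that the nonvanishing of the $s$-derivative of $\tilde{\chi}$ at $s_{sw}^{F}$ should ultimately rest on a direct check (as the paper does numerically, reporting $s_{sw,out}\approx 8.84$ as a simple zero with $s=0$ not a root), rather than on the indirect consistency argument you give via Lemma~\ref{lem:eigenvalues}; and your closing remark that no spurious intermediate switch occurs for small $r_{0}\neq 0$ is a genuine point the paper leaves implicit, which you are right to flag.
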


\begin{remark} Similar analytic extensions across $r=0$ can be carried out
under more general conditions in a neighborhood of other points $\tilde{x}$.
However, we must be cautious when the least positive root of the cubic
of the Fuller system \eqref{eqn:cubic-sr3} is not simple, when $s=0$
is a root of the cubic, or when the resultant of two Fuller switching
functions is zero.
\end{remark}

\begin{proof}
We begin by establishing analyticity of $F$ on some neighborhood of
the fixed point, expressed in the coordinates $(r_0,\tilde{x}_0)$.
(We add subscript $0$ to suggest that these are initial conditions of
the Reinhardt ODEs.)  We can take these coordinates with values in
either $\R$ or $\C$.  Lie algebra coordinates
$(X_0,\Lambda_{10},\Lambda_{R0})$ are rational functions with nonzero
denominators (and hence analytic) in the variables $(r_0,\tilde{x}_0)$
and $(1+r_0\tilde{x}_{21})^{-1}$.  (The denominators are
nonzero in a neighborhood of the fixed point.)  The
constant control $Z_{010}$ solutions $(X(t),\Lambda_1(t),\Lambda_R(t))$ to
the Reinhardt ODEs are given by matrix exponentials and are hence
analytic in time $t$ and initial conditions
$(X_0,\Lambda_{10},\Lambda_{R0})$.  We make a substitution $t = sr_0$
to give reparameterized time.

By Lemma~\ref{lem:chi-asymptotic}, by division of power series, the
function
\[
\chi_{21}(X(sr_0),\Lambda_1(sr_0),\Lambda_R(sr_0))/r_0^3
\]
extends to an analytic function $\tilde\chi_{21}(s,r_0,\tilde{x}_0)$
in a neighborhood of $r_0=0$.  

Restricting at first to real coordinates and $r_0\ge0$, the switching
time in rescaled coordinates is defined by the least positive zero
$s_{sw}=s_{sw}(r_0,\tilde{x}_0)$ of
$\tilde\chi_{21}(s,r_0,\tilde{x}_0)=0$.  At the fixed point
$(r_0,\tilde{x}_0)=(0,x_{out})$, the least positive zero
$s_{sw,out}\approx 8.84>0$ is a simple zero, and $s=0$ is not a zero.
If we remove the condition $r_0\ge0$, then by the analytic implicit
function theorem, we have a unique analytic extension of the switching
time $s_{sw}(r_0,\tilde{x}_0)$ to a neighborhood of the fixed point
such that
\[
\tilde\chi_{21}(s_{sw}(r_0,\tilde{x}_0),r_0,\tilde{x}_0)=0,
\quad s_{sw}(0,x_{out})=s_{sw,out}.
\] 
\index[n]{Y@$Y\in\mathfrak{g}$, Lie algebra element!in $\sl$}

Evaluating the solutions to the ODEs at the unscaled switching time
$t_{sw}=r_0s_{sw}$, and rotating by cyclic virial symmetries
($\ad_{R^{-1}}$), we obtain analytic functions of $(r_0,\tilde{x}_0)$:
\[
Y_{sw}(r_0,\tilde{x}_0):=
\ad_{R^{-1}}(Y(r_0s_{sw}(r_0,\tilde{x}_0),r_0,\tilde{x}_0)),
\quad\text{where } Y=X,\Lambda_1,\Lambda_R.
\]
We write $X=X(t,r_0,\tilde{x}_0)$, etc. to make the dependence on
initial conditions $(r_0,\tilde{x}_0)$ explicit.  The rotation is
chosen to make the fixed point property hold exactly, and not just up
to rotation.  These functions give the value of the
Reinhardt-Poincar\'e map.  

Finally, we show that we can analytically convert the Lie algebra
coordinates $(X,\Lambda_1,\Lambda_R)$ back to the coordinate system
$(r,\tilde{x})$.  The matrix coefficients of
\[
X(r_0s_{sw},r_0,\tilde{x}_0)-J,
\quad\text{and }
\Lambda_1(r_0s_{sw},r_0,\tilde{x}_0)+(3/2)J
\] 
are divisible by $r_0$, and $r_0^2$, respectively (regardless of the
precise form of $s_{sw}$).  The same is true of $X_{sw}-J$ and
$\Lambda_{1sw}+(3/2)J$.  Because of our convention $\tilde{x}_{11}=1$,
we must take the reciprocal of the $(1,1)$ matrix coefficient of
$(X_{sw}-J)/r_0$.  The value of this matrix coefficient at
$(0,\tilde{x}_{out})$ is
\[
r_{scale} \approx 6.27\ne0.
\]
This is the scaling factor, obtained as a root of the palindromic
polynomial considered above.  Since this $(1,1)$ matrix coefficient is
an analytic function that is nonzero in a neighborhood of the fixed
point, its reciprocal is an analytic function.  This completes the
proof of analytic continuation to a neighborhood of the fixed point.

The asymptotic formulas given above show that the restriction of (the
analytic continuation of) $F$ to $r=0$ is precisely the Fuller system.

Next, we show that the Reinhardt-Poincar\'e map is a diffeomorphism.
In similar way to what we have done, we can show analyticity and
analytic continuation of the Reinhardt-Poincar\'e map $F^{-1}$ that
moves backwards in time.  When the parameter $r_0$ is positive and in
a sufficiently small neighborhood of the fixed point, we have that $F$
and $F^{-1}$ are inverse functions.  By analytic continuation, they
are inverse functions in a neighborhood of the fixed point.  Hence,
$F$ is a diffeomorphism.
\end{proof}

\index{exceptional divisor}
\index{stable!manifold}
\index{manifold!stable, unstable}
\index{hyperbolic!fixed point}
\index{analytic continuation}

We refer to the hypersurface $r=0$ as the exceptional divisor.  We
refer to Irwin for background material about local unstable and stable
manifolds near a hyperbolic fixed point of a
diffeomorphism~\cite{irwin}. A brief summary appears in
Appendix~\ref{sec:stable-manifold}.  We write $F$ for the analytic
diffeomorphism that we have constructed, which extends the
Reinhardt-Poincar\'e map.
%% DONE[TCH 5/3/2024. Appendix sec:stable-manifold]. Give a brief
%% survey in an appendix on relevant facts from Irwin.

\begin{theorem}  
The fixed point $(0,\tilde{x}_{out})$ of the diffeomorphism $F$ is
hyperbolic.  The local unstable manifold is a $C^\infty$ curve.  In a
neighborhood of the fixed point, the local stable manifold coincides
with the three-dimensional exceptional divisor $r=0$.
\end{theorem}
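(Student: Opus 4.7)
The plan is to compute the Jacobian $DF(0,\tilde{x}_{out})$, verify hyperbolicity, and then apply the stable/unstable manifold theorem. Since the exceptional divisor $\{r=0\}$ is $F$-invariant (the analytic continuation of $F$ restricted to $r=0$ coincides with the Fuller-Poincar\'e map $F_{ang}$ on $\Xi_{\W,0}/V_T$), we have $F_r(0,\tilde{x}) \equiv 0$ in a neighborhood, and hence $\partial F_r/\partial\tilde{x}|_{(0,\tilde{x}_{out})} = 0$. In coordinates $(r,\tilde{x})$ the Jacobian is therefore block lower-triangular: the three-dimensional block acting on the tangent space of $\{r=0\}$ at $\tilde{x}_{out}$ is exactly $DF_{ang}(q_{out})$, whose eigenvalues all have modulus less than $0.1$ by Lemma~\ref{lem:eigenvalues}.

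The remaining eigenvalue is $\partial F_r/\partial r|_{(0,\tilde{x}_{out})}$, which I will identify with the Fuller spiral scaling factor $\rs \approx 6.27$. By Lemma~\ref{lem:qout}, the fixed point $q_{out}$ is an outward triangular spiral whose switching-to-switching iterate of the Fuller dynamics acts on Cartesian coordinates by the virial scaling $(z_1,z_2,z_3)\mapsto(\rs z_1,\rs^2 z_2,\rs^3 z_3)$ (up to the cyclic $V_T$ rotation already built into the definition of $F_{ang}$). Lemma~\ref{lem:Lie-Fuller} then shows that a Reinhardt trajectory started at $(r_0,\tilde{x}_{out})$ agrees with the corresponding Fuller trajectory at $q_{out}$ to leading order in $r_0$, and the next switching time of the Reinhardt system tends to the Fuller switching time as $r_0\to 0$ (by the analytic continuation argument of the preceding lemma, applied to the switching function asymptotics of Lemma~\ref{lem:chi-asymptotic}). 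Hence $F(r_0,\tilde{x}_{out}) = (\rs r_0 + O(r_0^2),\,\tilde{x}_{out} + O(r_0))$, so $\partial F_r/\partial r|_{(0,\tilde{x}_{out})} = \rs > 1$. The spectrum of $DF(0,\tilde{x}_{out})$ therefore consists of three eigenvalues strictly inside and one strictly outside the unit circle, proving hyperbolicity.

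Hyperbolicity in hand, the stable/unstable manifold theorem of Irwin (see Appendix~\ref{sec:stable-manifold}) provides local stable and unstable manifolds $W^s$ and $W^u$ at the fixed point, of dimensions three and one respectively; since $F$ is analytic, both are $C^\infty$ (indeed analytic). Two observations then identify $W^s$ with the exceptional divisor locally. First, $\{r=0\}$ is an $F$-invariant three-dimensional submanifold through the fixed point on which $F$ restricts to $F_{ang}$, and asymptotic stability of $F_{ang}$ at $q_{out}$ (Lemma~\ref{lem:eigenvalues}) implies that every point in a small neighborhood of $\tilde{x}_{out}$ in $\{r=0\}$ converges to $\tilde{x}_{out}$ under iteration and therefore lies in $W^s$. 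Second, $W^s$ has dimension three. Hence $W^s$ and $\{r=0\}$ coincide in a neighborhood of the fixed point, and $W^u$ is the transverse $C^\infty$ curve claimed in the theorem.

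The hardest step will be the explicit computation of $\partial F_r/\partial r$. The local coordinates were built with the normalization $\tilde{x}_{11}=1$, so one must check that the scaling parameter $r_0$ appearing in the asymptotic Lemma~\ref{lem:Lie-Fuller} matches the coordinate $r$ of the chart to first order at the fixed point, and that the rotation $\mathrm{Ad}_{R^{-1}}$ built into the definition of the discretized map aligns with the virial rotation $\zeta^2$ that realizes $q_{out}$ as a fixed point (rather than a period-three orbit) in Lemma~\ref{lem:qout}. Once the normalization is verified, the scaling factor $\rs$ of Lemma~\ref{lem:qout} transfers directly to the Jacobian entry; the remaining pieces --- the block-triangular structure, application of the stable manifold theorem, and dimension matching --- are standard.
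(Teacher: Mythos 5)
Your proposal is correct and follows essentially the same route as the paper: invariance of $\{r=0\}$ gives the block-triangular Jacobian, the three-dimensional block is $DF_{ang}(q_{out})$ with eigenvalues of modulus less than $0.1$ by Lemma~\ref{lem:eigenvalues}, the radial eigenvalue is the scaling factor $\rs>1$ (which the paper extracts from the computation in the preceding analytic-extension lemma, where the $(1,1)$ matrix coefficient of $(X_{sw}-J)/r_0$ evaluates to $\rs$ at the fixed point), and then the stable/unstable manifold theorem applies. Your write-up is somewhat more explicit than the paper's terse proof --- in particular, you spell out the identification $W^s=\{r=0\}$ (an invariant three-dimensional contracting submanifold through a hyperbolic fixed point must be the local stable manifold, by dimension matching), a step the paper leaves implicit --- but there is no genuine difference in method.
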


\begin{remark} 
By time reversal, the Reinhardt-Poincar\'e map $F$ has a hyperbolic
fixed point at $(0,x_{in})\in\R^4$, its local stable manifold is
one-dimensional, and its local unstable manifold is the
three-dimensional exceptional divisor.
\end{remark}

\begin{proof}
Near the fixed point, the analytic continuation $F$ of the
Reinhardt-Poincar\'e map agrees with the Fuller-Poincar\'e map, when
$r=0$, when we use coordinates $\tilde{x}\in\R^3$ for points on the
exceptional divisor $\xi(\tilde{x})\in{}\Xi_{\W,0}$ as above.  The
exceptional divisor $r=0$ is a three-dimensional invariant subset of
the diffeomorphism.  We have seen that the linearization of the
Fuller-Poincar\'e map $F$ near the fixed point is a contraction on the
three dimensional exceptional divisor.  Moving away from the
exceptional divisor in the radial direction, the Reinhardt-Poincar\'e
map has scaling factor $r_{scale}>1$.  The Fuller-Poincar\'e map is
therefore hyperbolic, with three eigenvalues $|\lambda|<1$ and one
eigenvalue $|\lambda|>1$.  By general theory, the unstable curve is
$C^\infty$, because the diffeomorphism is $C^\infty$.
\end{proof}

\section{A Computation of the Unstable Manifold}

We extend the local unstable manifold to the global unstable manifold.
The previous theorem shows the existence of a $C^\infty$ curve (the
global unstable manifold around the fixed point $(0,x_{out})$):
\[
t\mapsto (r(t),\tilde{x}(t))\in\R^4,\quad 
(r(0),\tilde{x}(0))=(0,x_{out}),\quad t\ge0,
\]
for some local parameter $t$.  In fact, we use $t=r$ (the first
coordinate of the system \eqref{eqn:Xtilde}) as the local parameter.
%% TCH 5/4/2024: I cut a few sentences.
%% We have computed the unstable manifold numerically.
%% Let $(r_{ode}(t,r_0,\tilde{x}_0),\tilde{x}_{ode}(t,r_0,\tilde{x}_0))$
%% be solutions to the Reinhardt ODE with constant coefficients,
%% expressed in the coordinates $(r,\tilde{x})$ and with initial
%% conditions $(r_0,\tilde{x}_0)$.  Let $s_{sw}=s_{sw}(r_0,\tilde{x}_0)$ 
%% be the first switching
%% time as above.  We then have the following messy
%% functional equation for $\tilde{x}(r)$.
%% \[
%% \tilde{x}_{ode}(r s_{sw},r,\tilde{x}(r)) = 
%% \tilde{x}(r_{ode}(r s_{sw},r,\tilde{x}(r)))
%% \mod V_T,\quad\text{where } s_{sw}=s_{sw}(r,\tilde{x}(r)),
%% \]
%% with initial condition $\tilde{x}(0)=x_{out}$.  The curve is
%% $C^\infty$. In a small neighborhood of the fixed point, the functional
%% equation determines the Taylor polynomial approximations to the curve
%% $\tilde{x}(r)$.  
Figure~\ref{fig:unstable-curve} shows a numerical
computation of the unstable curve.  Although we have not done so
because it did not seem especially worthwhile, these calculations
might be repeated using more rigorous numerical methods such as
interval arithmetic. The numerical situation is favorable: because of
contraction in the stable directions, any numerical errors in
computing the \emph{unstable} curve will tend to be self-effacing (in
the same way that under mild assumptions, Von Mises iteration of
matrix powers converge to the dominant eigenvalue).  Another numerical
advantage is that three contractive eigenvalues are small (less than
$0.1$ according to the proof of Lemma~\ref{lem:eigenvalues}).

\index{Von Mises iteration}

\mcite{MCA:unstable-curve}
\begin{figure}
    \centering 
    \includegraphics[scale=0.3]{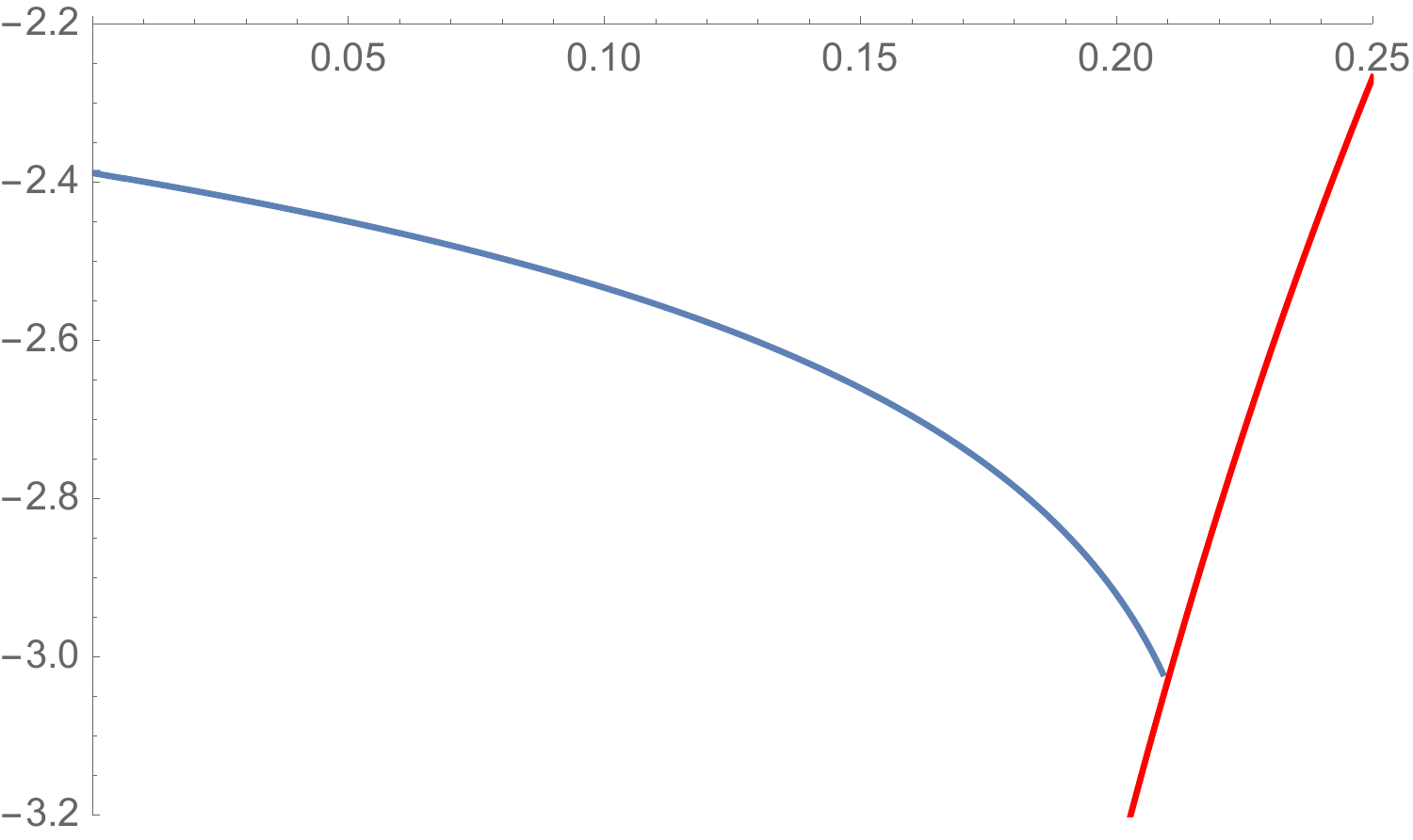} 
    \caption{The unstable curve 
$r\mapsto(r,\tilde{x}_{21}(r))$ (in blue) starts at the outward fixed
point $\tilde{x}_{21out}\approx -2.39$ and continues to the boundary
$\tilde{x}_{21}^*(r)=\sqrt{3}-1/r$ of the star domain (in red).  The
curve has been approximated in Mathematica, using
the \emph{ListLinePlot} command to create a piecewise linear curve
joining $434$ data points.}
\label{fig:unstable-curve}
\end{figure}

\index{Mathematica}

As we move away from the fixed point, computations use the unscaled
switching time $t_{sw}=t_{sw}(r,\tilde{x}(r))$ rather than the scaled
switching time.  We check that the other switching functions
$\chi_{ij}$ remain positive, so that the switching sequence is always
cyclical $3\mapsto2\mapsto1\mapsto3\mapsto\cdots$.  The unstable curve
reaches the boundary of the star domain near
$(r,\tilde{x}_{21})\approx(0.21,-3.03)$.  Once $r$ is at least about
$0.065$, a Reinhardt trajectory starting on the unstable curve reaches
the boundary of the star domain before the next switching time, and
the forward step of the Reinhardt-Poincar\'e map is no longer
defined.

% at least about Sort[unstableCurveData]//Last gives 0.0641035.

\begin{theorem}\label{thm:no-return} 
A trajectory of the Reinhardt system that emanates
from the fixed point $(r,\tilde{x})=(0,x_{out})$ on the exceptional
divisor does not return to the exceptional divisor.
A trajectory of the Reinhardt system that tends to
the fixed point $(r,\tilde{x})=(0,x_{in})$ on the exceptional
divisor did not emanate at an earlier time from the exceptional divisor.
\end{theorem}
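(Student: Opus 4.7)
Plan: I would exploit the fact that the exceptional divisor $\{r=0\}$ is invariant under the analytically continued Poincar\'e map $F$ and its inverse, together with the global basin structure of the Fuller-Poincar\'e dynamics on the divisor, to show that the unstable manifold $W^u(q_{out})$ meets the exceptional divisor only at $q_{out}$ itself. The statement about $q_{in}$ will then follow by the time-reversing symmetry $\tau$, which interchanges $q_{out}$ and $q_{in}$ and exchanges stable with unstable manifolds.

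The main step is to establish $W^u(q_{out}) \cap \{r=0\} = \{q_{out}\}$. Let $p$ lie in this intersection. By definition of $W^u$, the backward iterates $F^{-n}(p)$ converge to $q_{out}$. Because $F$ restricts to the Fuller-Poincar\'e map on $\{r=0\}$ and is a diffeomorphism, the exceptional divisor is invariant under $F^{-1}$, so every iterate $F^{-n}(p)$ remains on the divisor. On the divisor the dynamics is governed by the (triangular) Fuller-Poincar\'e map, which by Lemma~\ref{lem:qout} has exactly two fixed points $q_{out}$ and $q_{in}$, with $q_{out}$ asymptotically stable (Lemma~\ref{lem:eigenvalues}) and $q_{in}$ unstable. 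Invoking the global basin result for the triangular Fuller-Poincar\'e map (the discrete analogue of Theorem~\ref{thm:global} combined with the time-reversing symmetry $\tau$ to transfer the forward basin at $q_{out}$ into a backward basin at $q_{in}$), every point of the divisor other than $q_{out}$ and $q_{in}$ has backward iterates converging to $q_{in}$. Hence $p \in \{q_{out}, q_{in}\}$; and $p = q_{in}$ is ruled out because $F^{-n}(q_{in}) = q_{in} \ne q_{out}$. So $p = q_{out}$.

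The first statement now follows. A trajectory emanating from $q_{out}$ consists, at switching times, of the forward orbit $\{F^n(p_0)\}_{n \ge 1}$ for some $p_0 \in W^u_{loc}(q_{out}) \setminus \{q_{out}\}$, which has $r(p_0) > 0$ by the transversality of the one-dimensional curve $W^u_{loc}(q_{out})$ to the exceptional divisor. Each $F^n(p_0)$ lies on $W^u(q_{out})$ and is distinct from $q_{out}$ by the injectivity of $F$, so by the intersection result $r(F^n(p_0)) > 0$ for all $n \ge 1$. Between consecutive switching times the Reinhardt trajectory evolves with constant control, and by Lemma~\ref{lem:constant-not-singular} such a constant-control trajectory cannot meet the singular locus; combining these, the whole trajectory avoids the exceptional divisor after its emanation from $q_{out}$. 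The second statement is obtained verbatim by applying this analysis to the time-reversed trajectory via $\tau$, since $\tau(q_{in}) = q_{out}$ turns a trajectory tending to $q_{in}$ into one emanating from $q_{out}$, which by the first part does not return to the exceptional divisor.

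The main obstacle will be rigorously establishing the discrete global basin result for the triangular Fuller-Poincar\'e map on $\Xi_{\W,0}/V_T$: namely, that every point other than $q_{out}$ and $q_{in}$ has forward iterates converging to $q_{out}$ and backward iterates converging to $q_{in}$. The continuous-time analogue (Theorem~\ref{thm:global}) is available for the circular control model, but the triangular case studied here requires a separate analysis, presumably via a geometric partition of $\Xi_{\W,0}/V_T$ and an upper triangular/Lyapunov argument of the sort previewed in the introduction to Part~\ref{part:mahler}. Once this global basin structure is in hand, the rest of the proof proceeds purely by local dynamical arguments near the hyperbolic fixed points.
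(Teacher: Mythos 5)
Your plan diverges substantially from the paper's, and it contains a genuine gap in the argument that is not addressed.

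The paper's proof is short and relies on a concrete computational fact established in the immediately preceding section (``A Computation of the Unstable Manifold''): the one-dimensional global unstable curve through $(0,\tilde{x}_{out})$ exits the star domain after finitely many switches (the curve reaches the boundary $\tilde{x}_{21}^*(r)=\sqrt{3}-1/r$ near $(r,\tilde{x}_{21})\approx(0.21,-3.03)$, and once $r\gtrsim 0.065$ the trajectory leaves the star domain before the next switch, so the forward Poincar\'e map is no longer defined). A return to the exceptional divisor would require \emph{infinitely many} switching points accumulating on $\{r=0\}$ (chattering), and an infinite sequence of switching points cannot exist because the unstable curve hits the boundary. This is the entire argument; the second statement is time reversal.

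Your argument instead shows that $W^u(q_{out})\cap\{r=0\}=\{q_{out}\}$ using the global basin structure of the Fuller dynamics (Theorem~\ref{thm:basin}, proved later in the paper), and concludes that every switching point $F^n(p_0)$ for $n\ge 1$ has $r(F^n(p_0))>0$. That inference is correct as far as it goes, but it does not prove the theorem. A trajectory ``returns to the exceptional divisor'' precisely by chattering: infinitely many switching points, each with $r>0$, whose radial components tend to zero in finite time. Knowing $r(F^n(p_0))>0$ for every $n$ is compatible with $r(F^n(p_0))\to 0$. To close this gap you would need to control $\overline{W^u(q_{out})}\cap\{r=0\}$ (the closure, not just the manifold itself), or show the number of switches on the unstable curve is finite, which is exactly what the paper's numerical observation supplies. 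Addressing the accumulation scenario in full generality is what the paper's Theorem~\ref{thm:no-chatter} does, via the geometric partition and upper triangular structure, and that is a separate and considerably heavier argument than the present theorem requires.

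A secondary remark: you flag the discrete global basin result for the triangular Fuller-Poincar\'e map as the ``main obstacle'', but this is actually proved in the paper as Theorem~\ref{thm:basin}. The real missing piece in your plan is the one above; the basin result alone does not rule out chattering along $W^u(q_{out})$.
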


\begin{proof} All switches must be on the unstable curve, which meets
that exceptional divisor at a single point $(0,x_{out})$. Any
trajectory that returns to the singular locus must chatter (that is,
must use infinitely many switches to arrive).  This would require the
unstable curve to contain a sequence of \emph{forward} iterates of
points tending to the exceptional divisor.  This does not happen,
because the unstable curve hits the boundary of the star domain.

The second statement follows from the first by time reversal.
\end{proof}

\mcite{MCA:outward-spirals}
\begin{figure}
    \centering
    \includegraphics[scale=0.3]{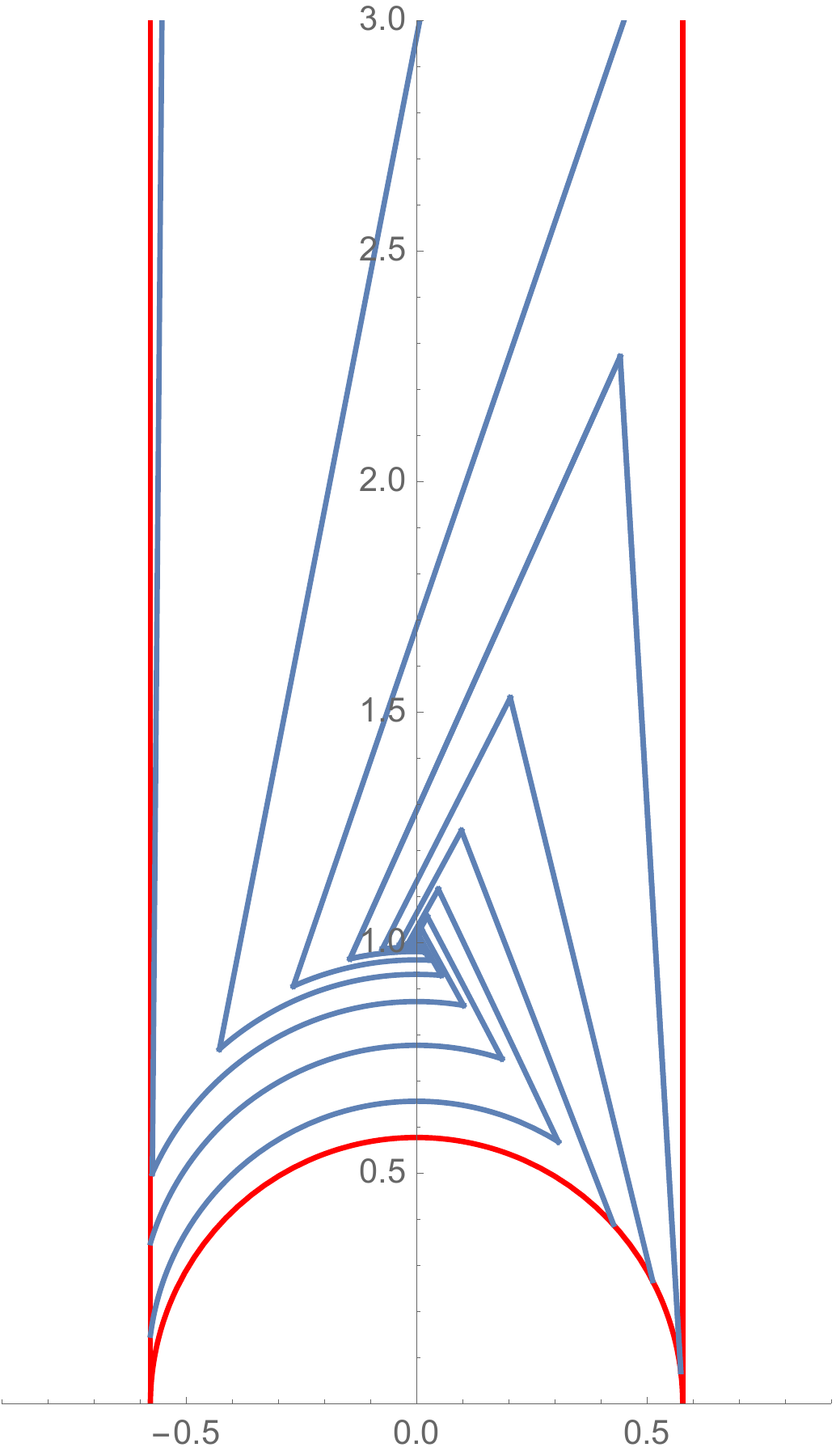}
\caption{Outward triangular spirals of the Reinhardt system that
     start at the singular locus.  The figure shows the image of
    trajectories $z(t)\in\hstar$ in the upper half plane.}
     \label{fig:triangular-spiral}
\end{figure}

Figure~\ref{fig:triangular-spiral} shows the resulting outward spiral
trajectories $z(t)\in\hstar$ (where $\Phi(z(t))=X(t)$). Trajectories
chatter as they exit from the singular locus, and move in a triangular
spiral until they hit the boundary of the star domain in finite time.
The switching points are clearly visible as corners of the triangular
spirals.  The unstable curves, which are related by rotations, are
obtained by joining these switching points by smooth curves.

\chapter{Geometry of the Fuller-Poincar\'e Map}

Throughout this section, dropping the subscript on $F_{ang}$, we let
$F$ denote the Poincar\'e map for the Fuller system on the Poincar\'e
section $\Xi_{\W,0}/V_T$ or on the domain given by the cells covering
$\Xi_{\W,0}/V_T$, as described below.

\section{Three-Cells}

Let $z=(z_1,z_2,z_3)\in\C^3$.  Write $z_j=r_je^{i\theta_j}=x_j+iy_j$,
for $j=1,2,3$.  Assume that $z_3\in\W$, a wall.  By passing to a
$V_T$-equivalent point, we may assume that $z_3=\Re(z_3)=x_3\le0$.

Assume that the Hamiltonian vanishes at $z=(z_1,z_2,z_3)$ for some
control $u\in V_T$.  The vanishing Hamiltonian condition implies
\begin{equation}\label{eqn:x3}
x_3 = 2 r_1 r_2 \sin(\theta_1 - \theta_2) = -2 r_1 r_2 \sin\psi,
\end{equation}
where $\psi:=\theta_2-\theta_1$.  If $x_3<0$, this implies
$r_1r_2\ne0$ and $\psi\in(0,\pi)$ (modulo integer multiples of
$2\pi$).

\index[n]{zy@$\psi=\theta_2-\theta_1$, phase difference}

\begin{lemma} For each $z=(z_1,z_2,z_3)\ne\mb{0}$ satisfying \eqref{eqn:x3}, 
there is a unique rescaling by a positive scalar in the virial group
so that $r_1+r_2=1$ (and $r_1,r_2\ge0$).
\end{lemma}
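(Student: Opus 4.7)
The plan is to exploit the explicit action of the scaling subgroup of the virial group from Lemma~\ref{lem:rescale}. Under rescaling by a positive scalar $r>0$, the components transform as $\tilde{z}_j = r^j z_j$, so the moduli transform as $\tilde{r}_1 = r\,r_1$ and $\tilde{r}_2 = r^2\,r_2$. The target condition $\tilde{r}_1 + \tilde{r}_2 = 1$ therefore translates to the single polynomial equation
\begin{equation}\label{eqn:scale-poly}
r_2\,r^2 + r_1\,r - 1 = 0
\end{equation}
in the unknown positive scalar $r$. Thus the lemma reduces to showing that \eqref{eqn:scale-poly} has exactly one positive root.

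First I would dispose of the degenerate case by showing $(r_1,r_2)\ne(0,0)$. After the $V_T$-rotation placing $z_3 \in \R_{\le 0}$, we have $z_3 = x_3$. If $r_1 = r_2 = 0$, the defining relation \eqref{eqn:x3} gives $x_3 = -2 r_1 r_2 \sin\psi = 0$, hence $z_3 = 0$ as well, contradicting $z \ne \mb{0}$. So at least one of $r_1, r_2$ is strictly positive, and \eqref{eqn:scale-poly} is a non-constant polynomial of degree at most $2$ with nonnegative leading coefficients and constant term $-1$.

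Next I would split into the two sub-cases dictated by the degree of \eqref{eqn:scale-poly}. If $r_2 > 0$, the equation is a genuine quadratic; its two roots have product $-1/r_2 < 0$, so by Vieta they are real and of opposite sign, giving the unique positive root
\begin{equation*}
r \;=\; \frac{-r_1 + \sqrt{r_1^2 + 4 r_2}}{2 r_2} \;>\; 0 .
\end{equation*}
If instead $r_2 = 0$, then by the previous paragraph $r_1 > 0$, and \eqref{eqn:scale-poly} reduces to the linear equation $r_1\,r = 1$, whose unique positive solution is $r = 1/r_1$. In either case the scaling factor $r$ is uniquely determined, and the rescaled $\tilde r_1, \tilde r_2$ are nonnegative because they are obtained from nonnegative quantities by multiplication by a positive scalar.

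The argument is essentially elementary — the only point requiring a moment's care is the degenerate case $r_2 = 0$, where one must remember to invoke the condition $z\ne\mb{0}$ together with \eqref{eqn:x3} in order to conclude $r_1 > 0$. Once that observation is made, the uniqueness is immediate from sign analysis of the roots, so there is no serious obstacle.
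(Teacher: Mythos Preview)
Your proof is correct and takes essentially the same approach as the paper: both reduce to a single polynomial equation in the scaling parameter after ruling out $r_1=r_2=0$ via \eqref{eqn:x3}. The paper writes the equation as $r_1/s + r_2/s^2 = 1$ (your $r$ is their $1/s$) and simply asserts the unique positive root, while you give the explicit Vieta/sign argument and treat the degenerate linear case $r_2=0$ separately; these are differences in exposition, not in substance.
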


\begin{proof} 
If $r_1=r_2=0$, then $z=\mb{0}$ by \eqref{eqn:x3}, which is contrary
to assumption.  Hence, we may assume $r_1+r_2>0$.  Solve the following
quadratic equation for its unique positive root $s>0$,
\[
r_1/s + r_2/s^2 = 1.
\]
Then scale $r_i\mapsto r_i/s^i$.
\end{proof}
Thus, we may write
$r_1 = 1-r_2$, with $r_2\in[0,1]$.

The time reversal symmetry $\tau$, when expressed in terms of
coordinates $(r_2,\psi,\theta_2)$ takes the form
\[
\tau(r_2,\psi,\theta_2)=(r_2,\pi-\psi,\pm\pi-\theta_2).
\]
If $\theta_2\in[-\pi,\pi]$, then
the sign $\pm\pi$ in the third coordinate $\pm\pi-\theta_2$ is
chosen to give a value again in $[-\pi,\pi]$.

We now enumerate the cells partitioning the domain $\Xi_{\W,0}/V_T$ starting with the
two three-dimensional cells.  We use notation
$\CC_k(u,m_A,m_B)$ for cells.  The subscript $k$ denotes the
dimension of the cell; $u$ is the first control; $m_A$ (resp. $m_B$)
is the multiplicity of $t=0$ in the switching polynomial 
$\chi_A=\chi^u_{u-1}$ (resp. $\chi_B=\chi^u_{u-\bar{u}}$).
Let $\chi_{A,m_A}:=\chi_A/t^{m_A}$ and
$\chi_{B,m_B}:= \chi_B/t^{m_B}$.  Let $\Delta_{A,m_A},\Delta_{B,m_B}$
be the discriminants of $\chi_{A,m_A}$ and $\chi_{B,m_B}$.
We sometimes also affix a
superscript $\CC_k^A$ or $\CC_k^B$ to indicate whether the active
switching function is $\chi_{A,m_A}$ or $\chi_{B,m_B}$.

\index[n]{m@$m$, integer dimension!multiplicity of root}
\index[n]{zx@$\chi_{ij}$, switching function!$\chi_{A,m_A},\chi_{B,m_B}$, reduced switching}
\index[n]{zD@$\Delta$, discriminant}
\index[n]{C@$\CC_k(u,m_A,m_B)$ cell of dimension $k$}

\begin{definition}\label{def:big:open}
The cell $\CC_3(\zeta)^0=\CC_3(u,m_A,m_B)^0=\CC_3(\zeta,0,1)^0$ is
defined by conditions $x_3\ne0$ and $y_2>0$.  The cell
$\CC_3(\zeta^2)^0=\CC_3(u,m_A,m_B)^0=\CC_3(\zeta^2,0,1)^0$ is defined
by conditions $x_3\ne0$ and $y_2<0$.  We call $\CC_3(u)^0$ the \emph{big
open cells}.
\end{definition}

\index{big cell}
\index[n]{0@$-^0$, interior}

(We will construct compactifications $\CC_3(u,0,1)$ of the open cells
below.)
$\CC_3(\zeta)^0$ is a three-dimensional open rectangle in $\R^3$ with
coordinates $r_2\in(0,1)$, $\psi\in(0,\pi)$, and $\theta_2\in(0,\pi)$.
The first control is $\zeta$.  Also, $\CC_3(\zeta^2)^0$ is a
three-dimensional open rectangle in $\R^3$ with coordinates
$r_2\in(0,1)$, $\psi\in(0,\pi)$, and $\theta_2\in(-\pi,0)$.  The first
control is $\zeta^2$.  The complement of
$\CC_3(\zeta)^0\cup\CC_3(\zeta^2)^0$ is a union of strata of dimension
at most two.  Thus, these two three-dimensional cells cover most of
the domain.  We also refer to $\CC_3(\zeta)^0$ and $\CC_3(\zeta^2)^0$
as the \emph{first and second big cells}, respectively.  The
involution $\tau$ is given by
$\tau(r_2,\psi,\theta_2)=(r_2,\pi-\psi,\pi-\theta_2)$ on the first big
cell and by $\tau(r_2,\psi,\theta_2)=(r_2,\pi-\psi,-\pi-\theta_2)$ on
the second big cell.

\index{cell!first and second big}

Note that $\chi_{A,0}(0)\ne0$ and $\chi_{B,1}(0)\ne0$ on the big open
cells, and the roots are nonzero. As above, let $\Delta_{A,0}$ and
$\Delta_{B,1}$ be their discriminants, and let $\op{res}_{AB}$ be the
resultant of $\chi_{A,0}$ and $\chi_{B,1}$.  The first switching time
$t_{sw}$ is a discontinuous function on the cell.  The discontinuities
can only appear along the loci $\Delta_{A,0}=0$, $\Delta_{B,1}=0$ and
$\op{res}_{AB}=0$.  However, the loci do not always force a
discontinuity in $t_{sw}$.  For example, $\op{res}_{AB}=0$ does not
give a discontinuity when it represents the equality of negative roots
of $\chi_{A,0}$ and $\chi_{B,1}$.

We study the boundaries of the first and second big cells, with the
aim of extending the dynamical system continuously to the boundaries
(with noted exceptions).

We identify points along the face $r_2=0$, if they have the same image
under the mapping $f:[0,\pi]\times[0,\pi]$,
$f(\psi,\theta_2)=\theta_2-\psi=\theta_1\in[-\pi,\pi]$ on the first
cell, and the mapping $f:[-\pi,0]\times[0,\pi]$,
$f(\psi,\theta_2)=\theta_2-\psi=\theta_1\in[0,2\pi]$ on the second
cell.  (When $r_2=0$, the coordinate $z_2=r_2e^{i\theta_2}$ does not
depend on $\theta_2$.)  On each cell separately, we identify points
along the face $r_2=1$, if they have the same image under the
projection $f(\psi,\theta_2)=\theta_2$, for similar reasons: the
coordinates $z_1=r_1e^{i\theta_1}$ does not depend on $\theta_1$.  We
do \emph{not} identify points $\theta_2=0$ on the bottom face of the
first big cell with points $\theta_2=0$ on the top face of the second
big cell, because they have different first controls $u$ and behave
differently in the Fuller dynamical system.  For the same reason, we
do \emph{not} identify points $\theta_2=\pi$ on the top face of the
first big cell with points $\theta_2=-\pi$ on the bottom face of the
second big cell.

Figure~\ref{fig:1556775} shows shaded in red those points $p_0$ on the
boundary of the two cells where the first switching time satisfies
$\lim_{p\to{}p_0}t_{sw}(p)=0$, where the limit is taken over interior
points of the cells.  Although the switching time is zero, the
dynamical system is best treated as nontrivial (by refraining from
identifying $V_T$-equivalent points on the boundary of the cells).

By taking points in the big open cells near the boundary, we can
determine that the Fuller-Poincar\'e map acts in the following way on
the (red-shaded regions of the) boundary (by continuous extension of
the map on the interior of the cells).  The bottom face $\theta_2=0^+$
of the first big cell maps to the top face $\theta_2=0^-$ of the
second big cell (by the identity map $(r_2,\psi)\mapsto(r_2,\psi)$.
The bottom face $\theta_2=(-\pi)^+$ of the second big cell maps to the
top face $\theta_2=\pi^-$ of the first big cell by the map
$(r_2,\psi,\theta_2)\mapsto(r_2,\psi,2\pi+\theta_2)$.  The boundary
region $\psi\in\{0,\pi\},\theta_2\in[0,\pi/3]$ on the first big cell
is shifted to $V_T$-equivalent points
$(\psi,\theta_2)\mapsto(\psi,\theta_2+2\pi/3)$ on the same faces.
Also, the boundary region $\psi\in\{0,\pi\},\theta_2\in[-\pi/3,0]$ on
the first big cell is shifted to $V_T$-equivalent points
$(\psi,\theta_2)\mapsto(\psi,\theta_2-2\pi/3)$ on the same faces.
Finally, the right and left faces $r_2=0$ and $r_2=1$ of the cells
have been collapsed to edges along the front and back faces
$\psi\in\{0,\pi\}$, and their behavior is dictated by the behavior on
the other faces.  (As stated above, all these boundary behaviors are
obtained by studying the behavior of the dynamical system on the
interior of the cells and taking limits to the boundary.)
\mcite{MCA:3962148}

\begin{figure}
    \centering
    \includegraphics[scale=0.4]{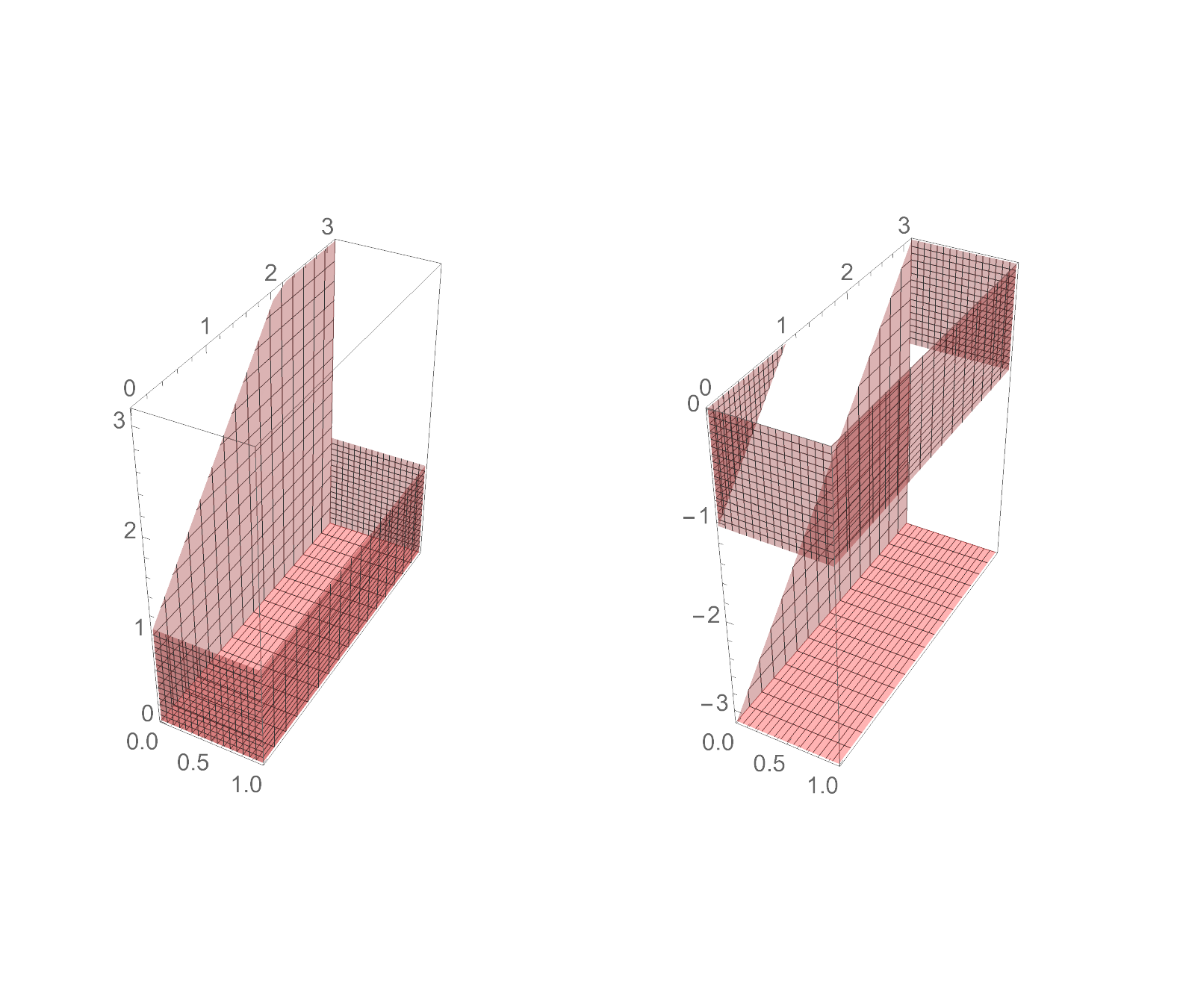}
     \caption{Shaded regions are points on the boundary of the first
and second big cells where the first switching time $t_{sw}$ has
limiting value $0$. Smaller cells $y_2=0$ will be attached to the top faces
of both $3$-cells to cap the top faces, and further smaller cells $z_3=0$ will
be attached to the unshaded regions of the front and back faces to
fill the unshaded regions of the faces.} 
     \label{fig:1556775}
\end{figure}

\section{Smaller cells}

In this section, we partition the complement of the union
$\CC_3(\zeta^{\pm1})^0$ into cells of lower dimension. We find that
the cells of lower dimension can be attached to the faces of the first
and second big cells in a way that preserves continuity.  We continue
to use polar coordinates $z_j=r_je^{i\theta_j}=x_j+iy_j$.  We may
assume $r_2 = 1-r_1\in[0,1]$ and $x_3=-2r_1r_2\sin\psi$,
$\psi=\theta_2-\theta_1\in[0,\pi]$ by \eqref{eqn:x3}.

We begin with the cases such that $x_3\ne0$.  To be in the complement
of the big open cells, we must have $\sin\theta_2=0$ and $y_2=0$.  The
two-dimensional cells of this form have parameters
$u\in\{\zeta^{\pm1}\}$, $(m_A,m_B)=(0,2)$.  That is, $\chi_B$ has a
double root at $t=0$.  We denote these cells $\CC_2(u,m_A,m_B)$,
according to the their parameters.

The cell $\CC_2(u,m_A,m_B)=\CC_2(\zeta,0,2)$ 
is defined by $x_3\ne0$ and $\theta_2=\pi$.
Coordinates are $r_2\in(0,1)$,
$\psi\in(0,\pi)$.  The first control is $\zeta$.  The switching
function $\chi_{B,2}$ never has a positive root.  Thus, the switching
function is always $\chi_{A,0}$.  By Descartes's rule of signs,
$\chi_{A,0}$ always has a unique positive root.  The first switching
time is then a continuous function on the cell.  
If we identify this cell with the top face $\theta_2=\pi$ of the first
three-cell $\CC_3(\zeta)^0$, then the dynamical system extends continuously
from the interior of the first big cell to its top face.

\index{Descartes's rule of signs}

The cell $\CC_2(u,m_A,m_B)=\CC_2(\zeta^2,0,2)$ is defined by
$x_3\ne0$, and $\theta_2=0$.  Coordinates are $r_2\in(0,1)$,
$\psi\in(0,\pi)$.  The first control is $\zeta^2$. If we identify this
$2$-cell with the top face $\theta_2=0$ of the second three-cell
$\CC_3(\zeta^2)^0$, then the dynamical system extends continuously from
the interior of the second big cell to its top face (with exceptional
discontinuities, where the dynamical system is already discontinuous
on $\CC_2(\zeta^2,0,2)$, as noted below) .

Note that we cannot have parameter values $(m_A,m_B)=(0,3)$.  In fact,
if $m_B=3$, the $y_1=y_2=0$. This implies that $r_1=0$, $r_2=0$ or
$\theta_1,\theta_2\in\pi\Z$.  Then $x_3 = -2r_1r_2\sin\psi=0$
by \eqref{eqn:x3}, and $m_A>0$.

In the remaining region, $z_3=0$ (and $m_A>0$).  By \eqref{eqn:x3}, we
have $r_2\in\{0,1\}$, or $\sin\psi=0$.  Each angle $\theta_2$ is
$V_T$-equivalent to a unique angle
$\theta_2\in\leftopen\pi/3,\pi\rightclosed$.  We define
a \emph{closed} $2$-cell $\CC_2(\zeta)=\CC_2(\zeta,1,1)$ with first
control $u=\zeta$ as follows.  The coordinates are
$\tilde{r}_1\in[-1,1]$ and $\theta_2\in[\pi/3,\pi]$.  If we define the
first switching time $t_{sw}$ over the entire closed $2$-cell by
continuous extension of the first switching time on the interior of
the closed $2$-cell, then Figure~\ref{fig:1556775} shows the parts of
the boundary where the continuous extension gives $t_{sw}=0$.  Setting
$r_1 = \tilde{r}_1e^{i\psi}$, and $r_2=1-r_1$, with $\psi\in\{0,\pi\}$
and $r_1\in(0,1)$, we recover the coordinates $(r_2,\psi,\theta_2)$.
(Exceptionally, on the segment $\tilde{r}_1=0$, $r_2=1$, we disregard
$\psi$ and only use $\theta_2$.)  If we attach each point
$(r_2,\psi,\theta_2)$ of this cell to the point on the front and back
boundary faces $\psi\in\{0,\pi\}$ of the first $3$-cell with the same
coordinates $(r_2,\psi,\theta_2)$, then the dynamical system on the
interior of the $3$-cell extends continuously to the front and back
faces, in agreement with the dynamics on the $2$-cell $\CC_2(\zeta)$.
Note that the segment $\tilde{r}_1=0$ maps to the right face of the
$3$-cell, which has been collapse to a segment, and with this
collapsed right face, the map from the $2$-cell to the front and back
faces of the $3$-cell is continuous.

In a similar way, by $V_T$-equivalence, the angle $\theta_2$ can be
brought into the interval
$\theta_2\in\leftopen-\pi,-\pi/3\rightclosed$ by the $V_T$-action.  In
this case, the first control is $u=\zeta^2$ and a closed cell
$\CC_2(\zeta^2,1,1)$ can be attached in a similar way to the front and
back faces of the second $3$-cell $\CC_3(\zeta^2)^0$ in a way that
agrees with the dynamics.  

\begin{remark}
A symmetry of $V_T$ carries the closed cell $\CC_2(\zeta,1,1)$ to
$\CC_2(\zeta^2,1,1)$, but \emph{we refrain from identifying} these two
closed cells with each other.  Instead, we consider the two closed big
cells as disjoint from each other.  We will see that
$F^{-1}(\CC_2(\zeta))$ and $F^{-1}(\CC_2(\zeta^2))$ are the \emph{two
sides} of a hypersurface $\bd_{res}$ in the first big cell, along
which the Fuller-Poincar\'e map is discontinuous.  Because of this
discontinuity, it is best to keep the two two-cells separate.
\end{remark}

We summarize our results in the following lemma.

\begin{proposition} Every point in the domain of $\Xi_{\W,0}/V_T$ is equivalent
to a point in the union of the closures $\CC_3(\zeta^{\pm1})$ of the
first and second big cells (with identifications on the boundaries of
each cell as given above).  The first control $u$ is $\zeta$ on the
first big cell and $\zeta^2$ on the second big cell.  The dynamics on
the faces of the cells is given as the continuous extension from the
dynamics on the interior of the cells.  Every point on every face of
the cells that is not identified with a point in the domain
$\Xi_{\W,0}/V_T$ is a point with vanishing (limiting) first switching
time $t_{sw}=0$.
\end{proposition}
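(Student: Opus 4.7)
The plan is to take an arbitrary point $z=(z_1,z_2,z_3)\in \Xi_{\W,0}$ and constructively reduce it, using the action of the virial group $\G = \R_{>0}\times V_T$ and the constraints $\H_F(z,u)=0$ and $z_3\in\W$, to a unique representative in one of the enumerated cells. First I would apply the $V_T$-action to bring $z_3$ onto the negative real axis, so that $z_3=x_3\le 0$, and apply the positive rescaling (using the quadratic-root lemma proved just above) to normalize $r_1+r_2=1$. The vanishing Hamiltonian then forces equation \eqref{eqn:x3}, namely $x_3=-2r_1r_2\sin\psi$ with $\psi=\theta_2-\theta_1\in[0,\pi]$, so $(r_2,\psi,\theta_2)$ with $r_2\in[0,1]$ and $\psi\in[0,\pi]$ provide coordinates on $\Xi_{\W,0}/(\R_{>0}\cdot V_T^{(1)})$, where $V_T^{(1)}$ denotes the piece of the $V_T$-action already used.

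Next I would carry out the case analysis on $(x_3, y_2)$. If $x_3\ne 0$, then either $y_2>0$, placing $z$ in $\CC_3(\zeta)^0$ (i.e., $\theta_2\in(0,\pi)$), or $y_2<0$, placing $z$ in $\CC_3(\zeta^2)^0$, or $y_2=0$, placing $z$ on one of the two $2$-cells $\CC_2(\zeta^{\pm 1},0,2)$ already identified with the top faces $\theta_2\in\{0,\pi\}$ of the $3$-cells. If instead $x_3=0$, then either $r_1r_2=0$ or $\sin\psi=0$; in each subcase the residual $V_T$-action lets me bring $\theta_2$ uniquely into the interval $(\pi/3,\pi]$ (first control $\zeta$) or $(-\pi,-\pi/3]$ (first control $\zeta^2$), placing $z$ in $\CC_2(\zeta,1,1)$ or $\CC_2(\zeta^2,1,1)$, which have been attached to the front/back faces $\psi\in\{0,\pi\}$ of the corresponding $3$-cell. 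The first-control assignment $u=\zeta$ vs.\ $u=\zeta^2$ on each cell is read off from Lemma~\ref{lem:first-control} by computing the lexicographic order of the three control vectors $\mb{v}(z,\zeta^k)$, which is determined in each case by the sign of $y_2$ (or, when $y_2=0$, by the higher-order coordinates).

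To verify that the dynamics on each face is the continuous extension from the interior, I would argue that on each of the lower-dimensional cells the active switching polynomial (respectively $\chi_{A,0}$ on $\CC_2(\zeta^{\pm},0,2)$ and the appropriately reduced polynomial on $\CC_2(\zeta^{\pm},1,1)$) has a least positive root that varies continuously with the parameters, since the discriminants $\Delta_{A,m_A},\Delta_{B,m_B}$ and relevant resultant $\op{res}_{AB}$ are nonvanishing on a dense subset of each face and the sign configuration matches that of the adjacent open big cell. A Descartes's rule of signs computation on $\CC_2(\zeta,0,2)$ (and its $V_T$-image) shows that $\chi_{A,0}$ always has a unique positive root there, so the switching time is continuous across the top face; along the faces $\psi\in\{0,\pi\}$ one similarly checks that the positive root of the reduced polynomial on $\CC_2(\zeta,1,1)$ is the limit of the first positive root from the interior.

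Finally, for the $t_{sw}=0$ claim on boundary points that are not identified with points of $\Xi_{\W,0}/V_T$, I would use Figure~\ref{fig:1556775}: at such a point $z_3=0$ (so the initial condition lies at the junction of two walls), and Lemma~\ref{lem:first-control} together with the form of the switching polynomials then forces either $V_{T,\max}$ to contain the incoming and outgoing controls simultaneously or the switching polynomial to have $t=0$ as its least nonnegative root, so that $\lim_{p\to p_0}t_{sw}(p)=0$. The main obstacle will be the careful bookkeeping at the face $r_2=1$ (where the angle $\theta_1$ is undefined) and at corners where two boundary strata meet: one must check that the identifications chosen for $\psi\in\{0,\pi\}$ and $\theta_2=0,\pm\pi$ are precisely those for which the Fuller-Poincar\'e map admits a continuous single-valued extension, and that the hypersurface $\partial_{\mathrm{res}}=\{\op{res}_{AB}=0\}$ is genuinely a discontinuity locus rather than an identification locus, which justifies keeping $\CC_2(\zeta,1,1)$ and $\CC_2(\zeta^2,1,1)$ as disjoint closed cells.
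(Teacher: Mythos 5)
Your first three paragraphs correctly reconstruct the case analysis that the paper carries out in the discussion preceding this proposition (the paper's ``proof'' is itself just a pointer to that discussion): normalization by the virial group and the wall constraint, the case split on $x_3\ne0$ vs.\ $x_3=0$ and the sign of $y_2$, the attachment of the $2$-cells $\CC_2(\zeta^{\pm1},0,2)$ to the top faces and $\CC_2(\zeta^{\pm1},1,1)$ to the front/back faces, and the lexicographic determination of the first control.

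However, your fourth paragraph has a concrete error. You claim that at every boundary point not identified with a point of $\Xi_{\W,0}/V_T$ one has $z_3=0$ (i.e.\ the initial condition lies at the junction of two walls). This fails for the bottom faces $\theta_2=0$ of $\CC_3(\zeta)$ and $\theta_2=-\pi$ of $\CC_3(\zeta^2)$: on these faces $\psi\in(0,\pi)$ and $r_2\in(0,1)$, so the vanishing-Hamiltonian constraint gives $x_3=-2r_1r_2\sin\psi<0$, hence $z_3\ne0$. Those faces are precisely the ones the paper declines to identify (``because they have different first controls''), and they are shaded in Figure~\ref{fig:1556775}, so they must carry $t_{sw}\to 0$. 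The correct mechanism there is not that $z_3$ vanishes but that $y_2\to0$ while $y_3$ is already $0$ on the wall; the active switching function $\chi_{\zeta,\zeta^2}(t)=\Im z_3(t)$ then has $\chi(0)=0$ and $\chi'(0)=y_2^0\to0$, so $t=0$ becomes a double root in the limit and the first positive root tends to zero. Your $z_3=0$ reasoning is the right one for the \emph{shaded parts of the front/back faces} $\psi\in\{0,\pi\}$ (where $\sin\psi=0$ does force $x_3=0$, hence $z_3=0$ on the wall), but it does not cover the bottom faces, which form the bulk of the unidentified boundary shown in Figure~\ref{fig:1556775}. You need a separate, multiplicity-increase argument for those faces.
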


\begin{proof} See the discussion leading up to the statement of the proposition. 
\end{proof}

\section{Involution}

\index[n]{zi@$\itf=\tau\circ{F}$, involution}
\index{involution}
\index[n]{0@$\partial$, boundary!$\partial$, boundary locus}
\index[n]{0@$\partial$, boundary!$\partial_{A},\partial_B$, discriminant locus}
\index[n]{0@$\partial$, boundary!$\partial_{res}$, resultant locus}

Let $F$ be the Fuller-Poincar\'e map, and let $\tau$ be the time
reversing symmetry.  Both have domain given by the union of two closed
$3$-cells.  Since $F^{-1}=\tau\circ{}F\circ{}\tau$ and
$\tau=\tau^{-1}$, it follows that $\itf:=\tau\circ{}F$ is an
involution: $\itf = \itf^{-1}$.  In this section, we use
properties of this involution to describe discontinuities of
the Fuller-Poincar\'e map.

We say that two subset $\DD,\tilde{\DD}$ of the domain are \emph{in
involution} if $\itf(\DD)=\tilde{\DD}$ (and $\itf(\tilde{\DD})=\DD$).
Let $\CC_2(\zeta,1,1)$ be the closed $2$-cell defined above, viewed as
a subset of the front and back faces of the first big cell.  Define
the resultant locus as $\bd_{res}=\itf(\CC_2(\zeta))$. By
Lemma~\ref{lem:switch-reversal}, the resultant of the switching
functions $\chi_{A0}$ and $\chi_{B1}$ is zero along $\bd_{res}$.  By
construction $\bd_{res}$ and $\CC_2(\zeta,1,1)$ are in involution.  Define
$\bd_A=\itf(\CC_2(\zeta,0,2))$, where $\CC_2(\zeta,0,2)$ is viewed as
the top face $\theta_2=\pi$ of the first big cell.  By
Lemma~\ref{lem:switch-reversal}, the discriminant of the switching
function $\chi_{A0}$ is zero along $\bd_A$ and $F$ is discontinuous
along $\bd_A$.  By construction $\bd_A$ and $\CC_2(\zeta)$ are in
involution.  The locus $\bd_{res}\cup{}\bd_A$ lies in the first big
cell and geometrically partitions the first big cell into two
parts. (Here, by a \emph{geometric partition} of a set, we mean a
collection of \emph{regular closed subsets} covering the set whose
interiors are disjoint. A closed set is \emph{regular}, if it is the
closure of its interior.)

% regular closed = closure of interior = no exterior
% https://math.stackexchange.com/questions/3267497/a-set-which-is-the-closure-of-its-interior-points

\index{geometric partition}
\index{regular closed set}
\index[n]{D@$\DD_j$, parts of a geometric partition}

Let $\DD_1$ be the part of the geometric partition containing the face
$\CC_2(\zeta,0,2)$ (that is, the face $\theta_2=\pi$). There are no
further discontinuities in $\DD_1$; that is, $F$ on the interior of
$\DD_1$ extends continuously to a function $F_1$ with domain $\DD_1$,
with first control $u=\zeta$ and switching function $\chi_{A0}$.  The
domain $\DD_1$ is in self involution.  The two fixed points
$q_{in},q_{out}\in \DD_1$ are in involution.  See
Figure~\ref{fig:resultant}.

\begin{figure}
    \centering
    \includegraphics[scale=0.55]{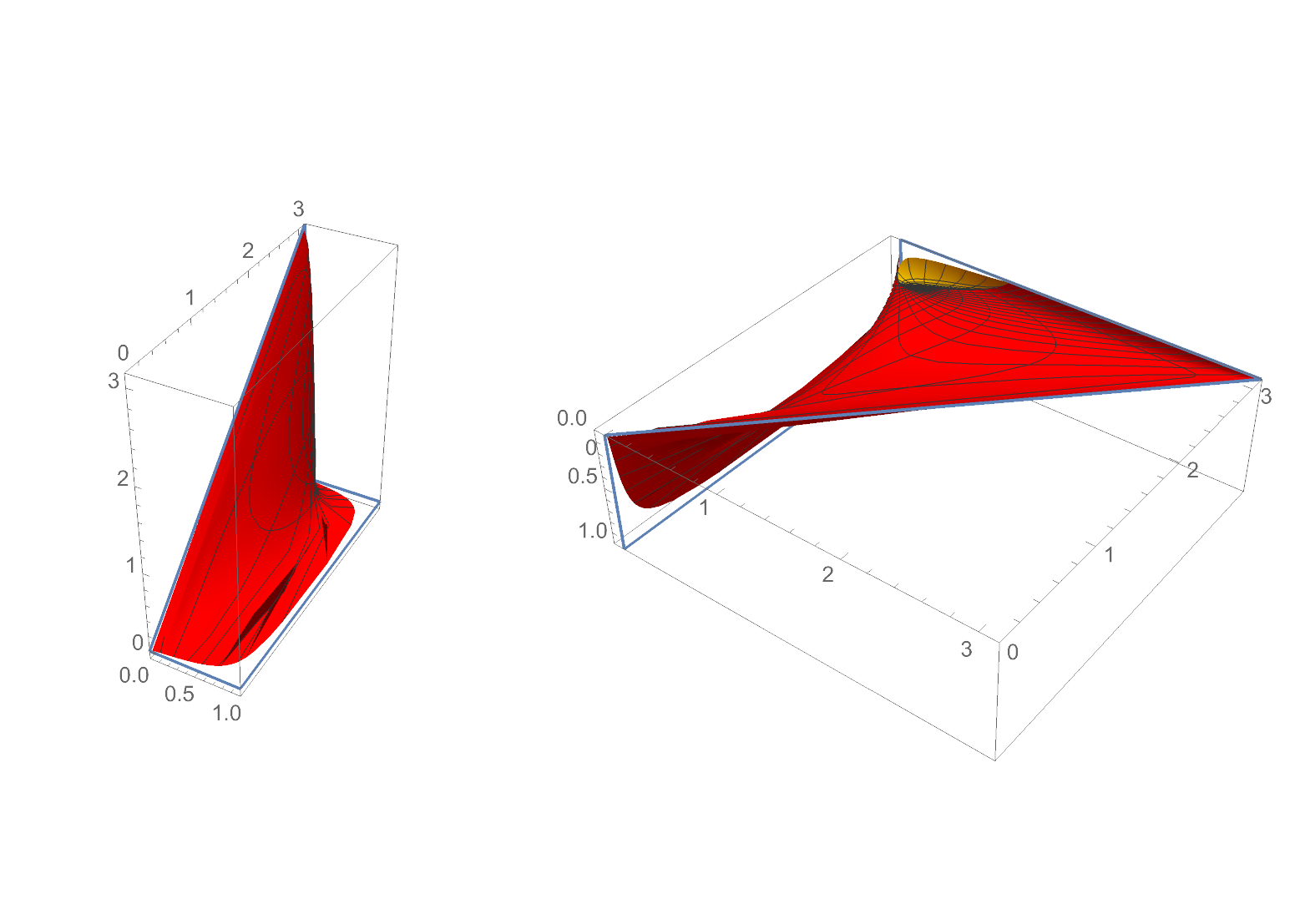}
     \caption{Geometric partition of the first big cell into parts
       $\DD_0,\DD_1,\DD_4$.  The two panels show two different views
       of the boundary separating $\DD_1$ from $\DD_0\cup\DD_4$ in the
       first big cell. On the red shaded part $\bd_{res}$ of the
       boundary, the resultant vanishes. On the yellow shaded part
       $\bd_A$ of the boundary, the discriminant $\Delta_{A,0}$
       vanishes.  These red and yellow boundaries extend to the blue
       perimeter, even if the displayed graphics stop short due to
       imperfect rendering.  The blue perimeter is in involution with
       the perimeter of the red region in the first frame of
       Figure~\ref{fig:1556775}.  The part $\DD_1$ lies above and to
       the right of the boundary in the first panel and to the lower
       right of the boundary in the second panel.  The parts $\DD_0$
       and $\DD_4$ lie below the boundary in the first panel and to
       the upper left in the second panel.  The boundary $\bd_B$
       between $\DD_0$ and $\DD_4$ is not shown.  The part $\DD_0$ is
       a very small bubble, which is attached to the yellow part
       $\bd_A$ of the boundary.  }
     \label{fig:resultant}
\end{figure}

\begin{figure}
\centering 
\includegraphics[scale=0.5]{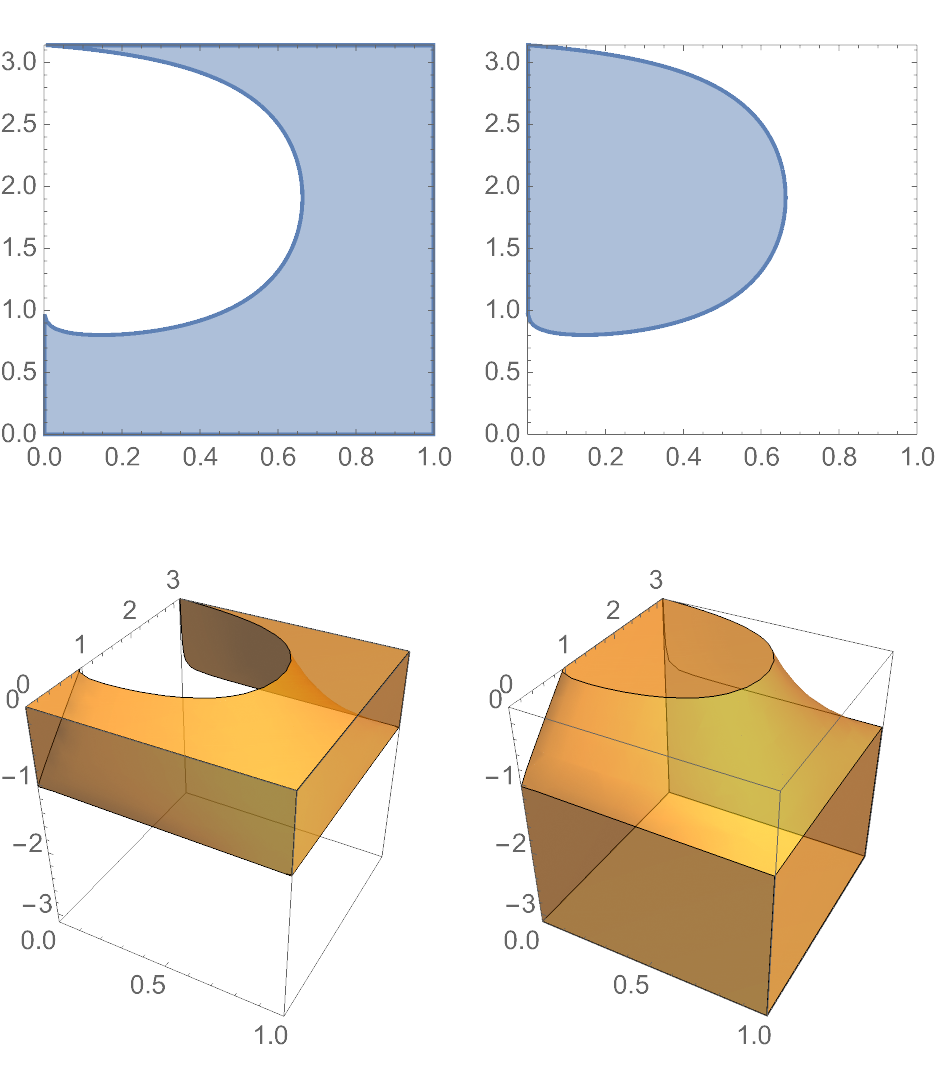} 
\caption{Geometric
    partition of the second big cell into parts $\DD_2$ (lower left panel) and $\DD_3$
    (lower right panel).  The
    top left panel $\CC_2^A$ and top right panel $\CC_2^B$ are given by
    $\Delta_{A,0}\ge0$ and $\Delta_{A,0}<0$, respectively in the top
    face $\theta_2=0$ of the second big cell.  The lower left panel $\DD_2$
    and lower right panel $\DD_3$ are separated by the discriminant locus
    $\Delta_{A,0}=0$.  The part $\DD_2$ is in involution with itself,
    and the involution exchanges its top face $\CC_2^A$ with the
    locus separating $\DD_2$ from $\DD_3$.  The involution exchanges
    $\DD_3$ and $\DD_4$, sending the top face $\CC_2^B$ of $\DD_3$ to
    the boundary $\bd_B$ separating $\DD_4$ from $\DD_0$.}
\label{fig:6828854}
\end{figure}

We geometrically partition the $2$-cell $\CC_2(\zeta^2,0,2)$.
according to the active switching function (Figure~\ref{fig:6828854}).
On one part $\CC_2^A$, we have $\Delta_{A,0}\ge0$. By Descartes's rule
of signs, $\chi_{A,0}$ has two positive roots (counted with algebraic
multiplicity). The first switching time is a root of $\chi_{A,0}$ and
is continuous on $\CC_2^A$.  On the interior of the part $\CC_2^B$, we
have $\Delta_{A,0}<0$.  In this case, $\chi_{A,0}$ has no positive
root, and the first switching time is the unique root of the linear
polynomial $\chi_{B,2}$.  The map $F$ is discontinuous along the
discriminant locus $\Delta_{A,0}=0$.  However, the first switching
time and the Fuller-Poincar\'e map $F$ extend continuously to give
$F_A$ with domain $\CC_2^A$ and $F_B$ with domain $\CC_2^B$.

Set $\bd_{A,\zeta^2}:=\itf(\CC_2^A)$. It is a subset of the second big
cell, and it geometrically partitions the big cell into two parts.
The discriminant $\Delta_{A,0}$ vanishes along $\bd_A$, by
Lemma~\ref{lem:switch-reversal}.  Let $\DD_2$ be the part that
contains $\CC_2^A$.  The active switching function on $\DD_2$ is
$\chi_{A,0}$.  The Fuller-Poincar\'e map $F$ extends continuously from
the interior of $\DD_2$ to a continuous function $F_2$ with domain
$\DD_2$, and the restriction of $F_2$ to $\CC_2^A$ is $F_A$.  The part
$\DD_2$ is in self involution.

Let $\DD_3$ be the other part of the partition of the second big
cell. The active switching function on $\DD_3$ is $\chi_{B,1}$, and
$F$ extends continuously to a function $F_3$ with domain $\DD_3$.

Let $\DD_4:= \itf(\DD_3)$. It is a subset of the first big cell, that
shares the boundary $\bd_{res}$ (and part of the boundary $\bd_{A}$
with $\DD_1$.  Finally, let $\DD_0$ be the closure of the complement
in the first big cell of the union of $\DD_1$ and $\DD_4$.  The
boundary of $\DD_0$ consists of $\bd_B:=\itf(C_2^B)$ and a subset of $\bd_A$.
Along $\bd_B$, the discriminant vanishes: $\Delta_{B,1}=0$.

\begin{remark}
The fixed point $q_{in}\in\DD_1$ is given by coordinates
\[
(r_{2in},\psi_{in},\theta_{2in})\approx(0.267949,0.1705935,2.91574).
\]
The fixed point is remarkably close to the triple juncture of
$\DD_0,\DD_1$, and $\DD_4$. In fact, the line segment
\[
r_2\mapsto (r_2,\psi_{in},\theta_{2in}),\quad r_2\in[0,1]
\]
meets the boundary $\bd_B$ (separating $D_4$ from $D_0$) at
$r_2\approx0.2677$, then meets the boundary $\bd_A$ (separating $D_0$
from $D_1$) at $r_2\approx0.267905$, then reaches the fixed point at
$r_2\approx0.267949$. The fixed point $q_{out}\in\DD_1$ with
coordinates $(r_2,\pi-\psi_{in},\pi-\theta_{2in})\in\DD_1$ is far from
the other parts $\DD_0,\DD_4$.
\end{remark}

\mcite{MCA:boundaryMapping}

In summary, we have the following proposition.

\begin{proposition}
Let $\DD_i$, $i=0,1,2,3,4$ be the geometric partition of the two big
cells defined as above.  The Fuller-Poincar\'e map extends
continuously from the interiors of $\DD_i$ to functions $F_i$ on the
closures $\DD_i$, for $i=0,1,2,3,4$.  The involution acts on the parts
by $\DD_3\leftrightarrow \DD_4$. Moreover, $\DD_1,\DD_2,\DD_0$ are
each in self involution.
\end{proposition}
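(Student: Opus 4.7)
The plan is to treat the three claims (continuous extension, involution swap $\DD_3\leftrightarrow\DD_4$, self-involution of $\DD_0,\DD_1,\DD_2$) in sequence, leveraging the fact that on the interior of each part only one switching function is ``active'' and its relevant positive root is simple.

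For the continuous extension, I would proceed part by part. On the interior of each $\DD_i$, I would identify the active switching function ($\chi_{A,0}$ on $\DD_1$ and on $\DD_2$; $\chi_{B,1}$ on $\DD_3$ and on $\DD_4$; $\chi_{B,2}$ on $\DD_0$, matching the characterization given in the text). Inside each $\DD_i$ the first switching time is the least positive root of one fixed polynomial in $t$, and this root is simple because the relevant discriminant and resultant do not vanish in the interior. The implicit function theorem then gives $C^\omega$ dependence of $t_{sw}$ on initial data, hence continuity of $F$. At the boundary of $\DD_i$, one of $\Delta_{A,0}$, $\Delta_{B,1}$, or $\op{res}_{AB}$ may vanish; however, in each case one can verify that the active root either (a) persists as an isolated simple root, in which case the implicit function theorem continues to apply, or (b) meets another root or a boundary face of the cell as a bubble attached along the locus where $t_{sw}\to 0$ (as depicted in Figure~\ref{fig:1556775}). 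In case (b) continuity is proved by expressing $t_{sw}$ in one-sided Puiseux form and noting that its limit agrees with the value on the attached face. I would bundle these local continuity checks into a single lemma: if $p_0\in\partial\DD_i$ lies on $\bd_A$, $\bd_B$, or $\bd_{res}$, then limits of $F$ from the interior of $\DD_i$ exist and agree with the one-sided limits on the adjacent face.

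For the involution claim, I would use that $\itf=\tau\circ F$ with $F^{-1}=\tau\circ F\circ\tau$ and $\tau^2=\op{id}$, so $\itf$ is an involution. By the definition $\DD_4:=\itf(\DD_3)$ this immediately gives the swap $\DD_3\leftrightarrow\DD_4$. For self-involution of $\DD_2$, observe that its top face $\CC_2^A$ is glued to the discriminant locus $\bd_{A,\zeta^2}=\itf(\CC_2^A)$, which together bound $\DD_2$; since $\itf$ is a homeomorphism of the domain that interchanges these two boundary pieces, a connectedness argument (the interior of $\DD_2$ is connected and $\itf$ preserves the partition into parts up to the already-established $\DD_3\leftrightarrow\DD_4$ swap) forces $\itf(\DD_2)=\DD_2$. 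The same mechanism handles $\DD_1$: its boundary consists of $\bd_A=\itf(\CC_2(\zeta,0,2))$ and $\bd_{res}=\itf(\CC_2(\zeta,1,1))$ together with the faces $\CC_2(\zeta,0,2)$ and $\CC_2(\zeta,1,1)$ themselves, which $\itf$ permutes pairwise. For $\DD_0$ the same argument applies after noting that $\bd_B:=\itf(\CC_2^B)$ is paired with $\CC_2^B$-equivalent boundary in $\DD_0$ and the remaining part of $\partial\DD_0$ lies on $\bd_A$, which by Lemma~\ref{lem:switch-reversal} is setwise preserved by $\itf$.

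The main obstacle will be \textbf{the self-involution of $\DD_0$}, because $\DD_0$ is a small ``bubble'' region whose structure is less transparent than the other parts and which is the only part with active switching function $\chi_{B,2}$ (linear in $t$ after factoring). Careful bookkeeping of which root of which switching function is first, across the discriminant boundary $\bd_B$ and the discriminant boundary $\bd_A$, is required. I expect to organize this by applying Lemma~\ref{lem:switch-reversal} pointwise: if $p\in\DD_0$ has first control $u$ and first switching time $t_{sw}$ dictated by $\chi_{B,2}$, then $\itf(p)$ has terminal control $u$ reached at its own first switching time, and the switching-function identity in that lemma identifies the image switching polynomial. This locally shows $\itf(\DD_0)\subseteq\DD_0$; involutivity of $\itf$ then gives equality. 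A secondary obstacle is verifying that each $\DD_i$ is regular closed (so ``geometric partition'' is justified), but this follows from the fact that the defining loci $\Delta_{A,0}=0$, $\Delta_{B,1}=0$, $\op{res}_{AB}=0$ are real analytic hypersurfaces of codimension one in the interior of the big cells.
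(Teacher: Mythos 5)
Your proposal follows essentially the paper's route: the involution $\itf=\tau\circ F$ together with Lemma~\ref{lem:switch-reversal} is used to transport the interior discontinuity loci ($\bd_A$, $\bd_B$, $\bd_{res}$, $\bd_{A,\zeta^2}$) to boundary $2$-cells, and the parts $\DD_i$ are defined so that this structure makes the involution claims nearly definitional. Two corrections. First, the active reduced switching function on $\DD_0$ is $\chi_{B,1}$, not $\chi_{B,2}$: since $\DD_0$ sits inside the $3$-cell $\CC_3(\zeta)$ we have $m_B=1$ there, whereas $\chi_{B,2}$ arises only on the boundary $2$-cells with $m_B=2$ (such as $\CC_2^B$). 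Second, you single out the self-involution of $\DD_0$ as the main obstacle and propose a pointwise application of Lemma~\ref{lem:switch-reversal}, but this is actually the easiest case: once $\itf(\DD_3)=\DD_4$ (definitional) and $\itf(\DD_1)=\DD_1$, $\itf(\DD_2)=\DD_2$ are established, the fact that $\itf$ is a bijection of the full domain $\bigcup_i\DD_i$ onto itself, preserving the geometric partition (because $\itf$ maps the discontinuity set of $F$ to itself), forces $\itf(\DD_0)=\DD_0$ by complementation, with no further verification needed. The substantive work is therefore in showing $\itf$ fixes $\DD_1$ and $\DD_2$; your connectedness argument captures the idea, but it should be tied explicitly to the fact that $\itf$ preserves the discontinuity locus of $F$ and hence permutes the regular-closed parts cut out by it, which is exactly what the definitions $\bd_A:=\itf(\CC_2(\zeta,0,2))$, $\bd_{res}:=\itf(\CC_2(\zeta,1,1))$, $\bd_{A,\zeta^2}:=\itf(\CC_2^A)$ encode via Lemma~\ref{lem:switch-reversal}.
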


\begin{proof} 
The proof is contained in the discussion leading up to the
proposition.  To briefly summarize the argument, any discontinuity in
the interior of a big cell must appear along one of the loci
$\op{res}_{AB}=0$, $\Delta_{A0}=0$, or $\Delta_{B1}=0$.  By
Lemma~\ref{lem:switch-reversal}, the involution maps the interior
discontinuities to the boundary faces of the big cells.

The Fuller-Poincar\'e map is continuous on the boundary cells
$\CC_2(\zeta^{\pm1},0,2)$ and $\CC_2(\zeta,1,1)$, again with
exceptions where a resultant or discriminant vanishes.  Analyzing
cases on two-cells, the only discontinuity is given by $\Delta_{A0}=0$
on $\CC_2(\zeta^2,0,2)$.  Using the involution to map these parts of
the faces back into big cells, we obtain a complete description of the
discontinuities.
\end{proof}

\chapter{Global Basin of Attraction and Mahler's First}

Throughout this chapter, dropping the subscript on $F_{ang}$,
we let  $F$ denote the Poincar\'e map for the Fuller
system on the two big cells. Also, $F_i$ denotes the
continuous extension of $F$ to $\DD_i$.

\index[n]{F@$F$, Poincar\'e map!$F_i$ extension to $\DD_i$}

\section{Main result on Basin of Attraction}

\begin{theorem}[Global Basin]\label{thm:basin}
Let $q\ne{}q_{in}$ be a point in $\Xi_{\W,0}/V_T$.  
Then the iterates $F^kq$ under the Fuller-Poincar\'e map
tend to the fixed point $q_{out}$.
\end{theorem}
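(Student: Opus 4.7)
The plan is to combine the block upper triangular structure of $F$ with respect to the geometric partition $\{\DD_0,\DD_1,\DD_2,\DD_3,\DD_4\}$ with a local analysis at $q_{out}$, thereby reducing the global claim to finitely many computable local statements. The only part containing fixed points is $\DD_1$, which houses both $q_{in}$ and $q_{out}$. So the proof splits into two stages: (i) every orbit that ever visits a transient part $\DD_j$ with $j\in\{0,2,3,4\}$ is ejected into $\DD_1$ after finitely many iterations and cannot return; and (ii) every orbit of $F_1$ in $\DD_1\setminus\{q_{in}\}$ is attracted to $q_{out}$.

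For (i), I would construct on each transient $\DD_j$ a discrete Lyapunov function -- an integer- or real-valued function that strictly decreases under $F_j$ while the orbit remains in $\DD_j$, forcing a finite exit time. Since $\itf=\tau\circ F$ exchanges $\DD_3\leftrightarrow\DD_4$ and stabilizes $\DD_0$ and $\DD_2$, the time-reversal symmetry halves the verification by letting one import estimates between involutive conjugate parts; in particular, the transience of $\DD_3\cup\DD_4$ follows once one pair of consecutive iterates is shown to exit. Combined with the continuous extensions $F_j$ of the Fuller-Poincar\'e map and the compactness of the $\DD_j$'s, this upper-triangularity is a finite combinatorial statement on the images $F_j(\DD_j)$, amenable to computer-assisted certification in the style already used to compute the eigenvalues of Lemma~\ref{lem:eigenvalues}. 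For (ii), Lemma~\ref{lem:eigenvalues} and the conjugacy $\tau(q_{out})=q_{in}$ give that $dF_1(q_{out})$ is a uniform contraction (all eigenvalues of modulus below $1/10$) while $dF_1(q_{in})$ is a uniform expansion. Standard stable-manifold theory then produces a compact forward-invariant neighborhood $U$ of $q_{out}$ contained in its basin, and a neighborhood $V$ of $q_{in}$ from which every point other than $q_{in}$ is ejected in one step; together with the compactness of $\DD_1$, the theorem reduces to showing no orbit starting in $\DD_1\setminus(U\cup V\cup\{q_{in}\})$ stays in that compact ``annular'' region forever.

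The main obstacle is precisely this last reduction: ruling out periodic orbits of period $\ge 2$, invariant circles, or other recurrent sets of $F_1$ away from the two fixed points, in a regime where no obvious global Lyapunov function (such as Euclidean distance to $q_{out}$) is monotone. This is where the program announced in the Introduction -- a ``block upper triangular'' refinement of the partition interpreted as a discrete Lyapunov function -- has to be carried out. Concretely, I would further subdivide $\DD_1\setminus U$ into finitely many compact semi-algebraic sub-pieces and verify, with rigorous numerics (e.g.\ interval arithmetic, exploiting the explicit rational-plus-root expressions in $(r_2,\psi,\theta_2)$), that $F_1$ sends each sub-piece strictly into the union of $U$ and sub-pieces of strictly lower index in a chosen total ordering. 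Because $F_1$ is the continuous extension established in the previous chapter and the switching function $\chi_{A,0}$ on $\DD_1$ is a polynomial whose first positive root varies analytically, each such set-containment reduces to checking positivity of finitely many explicit continuous functions on compact sets -- feasible, though delicate near the triple juncture of $\DD_0,\DD_1,\DD_4$, where the fixed point $q_{in}$ sits within about $10^{-4}$ of the boundary and numerical sensitivity is highest.
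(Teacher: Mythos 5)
Your plan matches the paper's proof in its essential structure: the block upper triangular containment table over the parts $\DD_0,\ldots,\DD_4$ serves as the discrete Lyapunov function, the involution $\itf$ is used to import containments between conjugate parts (which lets most rows be computed without direct iteration of $F$), the further subdivision of $\DD_1$ into finitely many boxes with a total order supplies the ``no recurrence in the annulus'' argument via lexicographically decreasing indices, and the local eigenvalue computation at $q_{out}$ (moduli below $0.1$) closes the argument once an orbit enters $\DD_{out}$. You also correctly identify the crux: ruling out recurrent sets in $\DD_1$ away from the two fixed points, with no obvious global Lyapunov scalar to hand. This is exactly what the paper's nine-rectangle refinement $\DD_{1,ij}$ and lexicographic-order subclaims address.

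Two small corrections. First, your assertion that there is ``a neighborhood $V$ of $q_{in}$ from which every point other than $q_{in}$ is ejected in one step'' is not right and cannot hold: $q_{in}$ is a repelling fixed point of the (smooth, diffeomorphism-like) map, so points arbitrarily close to $q_{in}$ take arbitrarily many iterations to leave any fixed neighborhood. What the paper actually does is apply time-reversal: since $\tau(\DD_{in}) = \DD_{out}$ and $\DD_{out}$ is forward-invariant with $q_{out}$ a contracting fixed point, the backward iterates $F^{-k}(\tau q)$ must eventually exit $\DD_{out}$ when $\tau q\ne q_{out}$, and hence $F^k q$ eventually exits $\DD_{in}$. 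The ``eventually'' cannot be strengthened to ``in one step.'' Second, your proposal to construct on each transient $\DD_j$ (for $j\in\{0,2,3,4\}$) a function that decreases ``while the orbit remains in $\DD_j$'' is more machinery than is needed: the containment table shows $F_j(\DD_j)$ lands entirely in strictly lower parts, so every orbit leaves each transient part after a single step; no Lyapunov function on the transient parts is required. Finally, you propose interval arithmetic for the verifications; the paper explicitly notes that while the strategy is that of containment functions from interval arithmetic, the computations are simple enough that the full interval-arithmetic infrastructure is not adopted, and the boundary method with direct Mathematica evaluation suffices.
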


\begin{proof} 
The proof follows the strategy of \emph{containment functions} from
interval arithmetic.  However, the proof is simple enough that it is
not necessary to adopt the entire infrastructure of interval
arithmetic.

\index{containment function}

We work with the representation of $\Xi_{\W,0}/V_T$ as the union of two
closed big cells $\CC_3(\zeta^{\pm1})$.  Let $\DD_0,\ldots,\DD_4$ be the
geometric partition of the cells, and let $F_i$ be the continuous
extension of $F$ from the interior of $\DD_i$ to $\DD_i$.

\index[n]{D@$\DD_{1,ij}$, geometric partition of $\DD_1$}

We further partition $\DD_1$ into $9$ rectangles.
Set 
\begin{align*}
\DD_{ij}&:=\{(r_2,\psi,\theta_2)\in \CC_3(\zeta)\mid r_2\in[0,1],\ 
\psi\in[a_i,a_{i+1}],\ \theta_2\in[b_{i+1},b_{i}]\},
\\
\DD_{1,ij}&:=\DD_1\cap\DD_{ij},\quad i=0,1,2,\quad j=0,1,2,
\end{align*}
where $(a_0,a_1,a_2,a_3)=(0,\pi/3,2\pi/3,\pi)$ and
$(b_0,b_1,b_2,b_3)=(\pi,\pi-1.1,1.1,0)$.  We have
$q_{in}\in{}\DD_{in}:=\DD_{1,22}$ and
$q_{out}\in{}\DD_{out}:=\DD_{1,00}=\DD_{00}$.

\index[n]{a@$a_i,b_i$, real numbers}

We claim that we have the following domain and range restrictions.
Let $\DD,\DD^*$ run over the sets in Table~\ref{tab:diag}.
\[
\DD,\DD^*\in\{\DD_{in},\DD_{out},\DD_4,\DD_2,
\DD_0,\DD_3,\DD_1\setminus \DD_{in}\}.
\]
Assume $\DD\subseteq\DD_i$. We claim that
$F_i(\DD)\subset{}\cup_{(\DD,\DD^*)\sim*}\DD^*$, where $\DD^*$ is
included in the union whenever the row-column entry $(\DD,\DD^*)$ of
Table~\ref{tab:diag} is marked with an asterisk.  (The dots $\cdot$
are placeholders along the diagonal of the table and do not indicate
inclusion in the union.)

\bgroup
\def\arraystretch{1.5}%
\begin{table}
\begin{tabular}{l|l|l|l|l|l|l|l}
&$\DD_{in}$ & $\DD_{out}$ & $\DD_4$ & $\DD_2$ 
& $\DD_0$ & $\DD_3$ & $\DD_1\setminus \DD_{in}$\\
\hline
$\DD_{in}$ &*& * & * & * & * & * & *\\
$\DD_{out}$ && * &&&&&\\
$\DD_4$ &&&$\cdot$ & * && *  & \\
$\DD_2$ &&&&$\cdot$   && *  & \\
$\DD_0$ &&&&&$\cdot$ & & * \\
$\DD_3$ &&&&&&$\cdot$ & * \\
$\DD_1\setminus \DD_{in}$ && & & & & &*
\end{tabular}
\smallskip
\caption{Upper triangular structure of the Fuller-Poincar\'e map on the two big cells.
The dots $\cdot$ are placeholders along the diagonal.  The nontrivial
diagonal entries appear in the first, second, and last rows.}
\label{tab:diag}
\end{table}
\egroup

We justify the claim and the associated table as follows.  Whenever
$\DD\subset{}\DD_i$ is topologically a closed ball with boundary
$\partial{\DD}$, then in order to show that $F_i(\DD)\subset{}\tilde{\DD}$, where
$\tilde{\DD}$ is a closed convex subset of $\R^3$, it is enough to show
$F_i(\partial{\DD})\subset{}\tilde{\DD}$.  In practice, $\partial{\DD}$ consists of
a small number of analytic surfaces (such as the six faces of a cube),
and the proof of the containment $F_i(\DD)\subset{}\tilde{\DD}$ reduces to the
containment of the images of the faces, which we compute numerically
without difficulty in Mathematica.  We call this the \emph{boundary
method}.  In fact, by using the involution $\itf$, we can mostly avoid
direct use of the Fuller-Poincar\'e map.

\index{Mathematica}
\index{boundary method}

We start with the second row.  We compute the interval containment
$F_1(\DD_{out})\subset{}\DD_{out}$ by the boundary method.  The image
$F_1(\DD_{out})$ and the fixed point $q_{out}$ are shown in
Figure~\ref{fig:2082214}.  The forward iterates
$F_1^k(\DD_{out})$ quickly shrink toward $q_{out}$.  The Jacobian
calculation at the fixed point (appearing earlier) shows that
$q_{out}$ is an asymptotically stable fixed point.

\begin{figure}
\centering 
\includegraphics[scale=0.4]{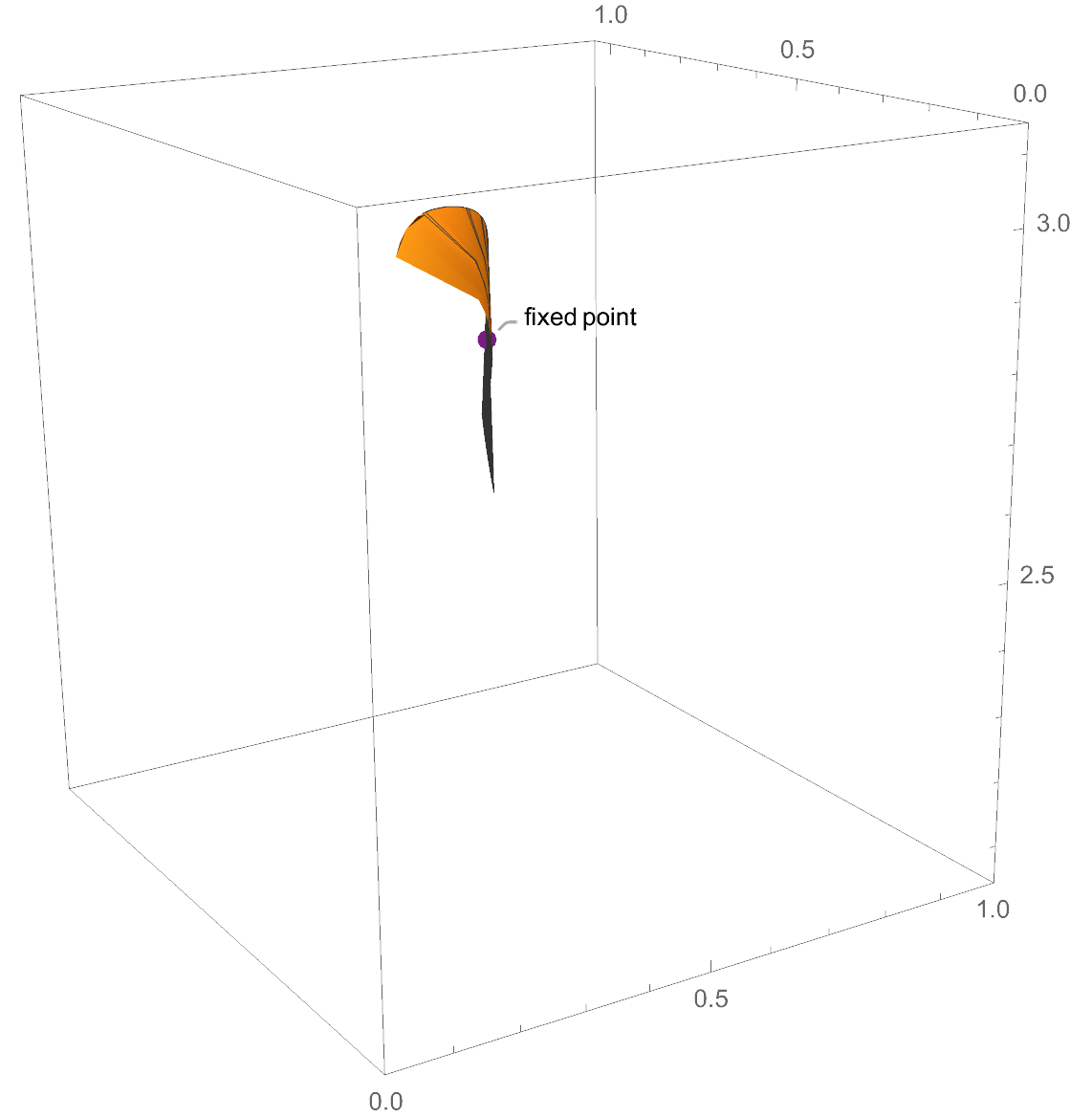} 
\caption{The bounding box is the set $\DD_{out}\subset{}\DD_1$ and
the shaded region is the image $F_1(\DD_{out})$, showing the contraction
of $\DD_{out}$ toward the fixed point $q_{out}\in\DD_{out}$.}
\label{fig:2082214}
\end{figure}

%% DONE[TCH 5/3/2024. No need.] Redo the Jacobian in these coordinates.

The first row asserts that the range of $\DD_{in}$ can be anything,
and there is nothing to prove in this case.  For the $\DD_4$ row, the
image satisfies $F_4(\DD_4)=\tau(\itf(\DD_4))=\tau(\DD_3)$, which is
contained in the second big cell, covered by $\DD_2\cup{}\DD_3$.  For
the next row, $F_2(\DD_2)=\tau(\itf(\DD_2))=\tau(\DD_2)\subset \DD_3$.
(Note that $\theta_2\in[-\pi/3,0]$ holds on $\DD_2$, so that
$\theta_2\in[-\pi,-2\pi/3]$ holds on $\tau(\DD_2)$, to see the
containment in $\DD_3$.)

Turning to the row for $\DD_0$, we note that
$F_0(\DD_0)=\tau(\itf(\DD_0))=\tau(\DD_0)$.  The boundary of
$\tau(\DD_0)$ consists of $\tau\itf(C_2^B)$ and a subset of
$\tau(\bd_A)$. Both of these boundary components lie in $\DD_{out}$,
which is a subset of $\DD_1\setminus{}\DD_{in}$.  The containment
$F_0(\DD_0)\subset{}\DD_1\setminus{}\DD_{in}$ follows by the boundary
method.

Next consider the row $\DD_3$.  We have
$F_3(\DD_3)=\tau(\itf(\DD_3))=\tau(\DD_4)$, which is a subset of the
first cell.  Recall that the first big cell is covered by the union of
$\DD_0$, $\DD_1$, and $\DD_4$.  The inequality
$\theta_1=\theta_2-\psi\le0$ holds on $\DD_4$, $\DD_0$, and
$\DD_{in}$, but $\theta_1=\theta_2-\psi\ge0$ holds on
$\tau(\DD_4)$. Hence by exclusion,
$\tau(\DD_4)\subset{}\DD_1\setminus{}\DD_{in}$. (In the boundary case
$\theta_2=\psi$, the edge given by equations $r_2=0$, $\theta_2=\psi$
belongs to both $\DD_1$ and $\DD_4$, but we still have
$\tau(\DD_4)\subset{}\DD_1\setminus{}\DD_{in}$.)

We have that $F^2\DD_3\subset\DD_{00}$, as shown by the
calculation in Figure~\ref{fig:1350181}.  Applying the function
$F\circ\tau$ to both sides of this inclusion, we obtain
\begin{equation}
\DD_4\subset F(\DD_{22}),
\end{equation}
because
\[
\DD_4=\itf(\DD_3)=F\tau{}F^2\DD_3
\subset{}F\tau\DD_{00}=F(\DD_{22}).
\]
%\[
%\DD_4\subset F(\DD_{22})
%\quad\Leftrightarrow\quad
%\DD_3\subset \tau{}F^2\DD_{22} = F^{-2}\tau\DD_{22}=F^{-2}\DD_{00}
%\]

\begin{figure}
\centering 
\includegraphics[scale=0.4]{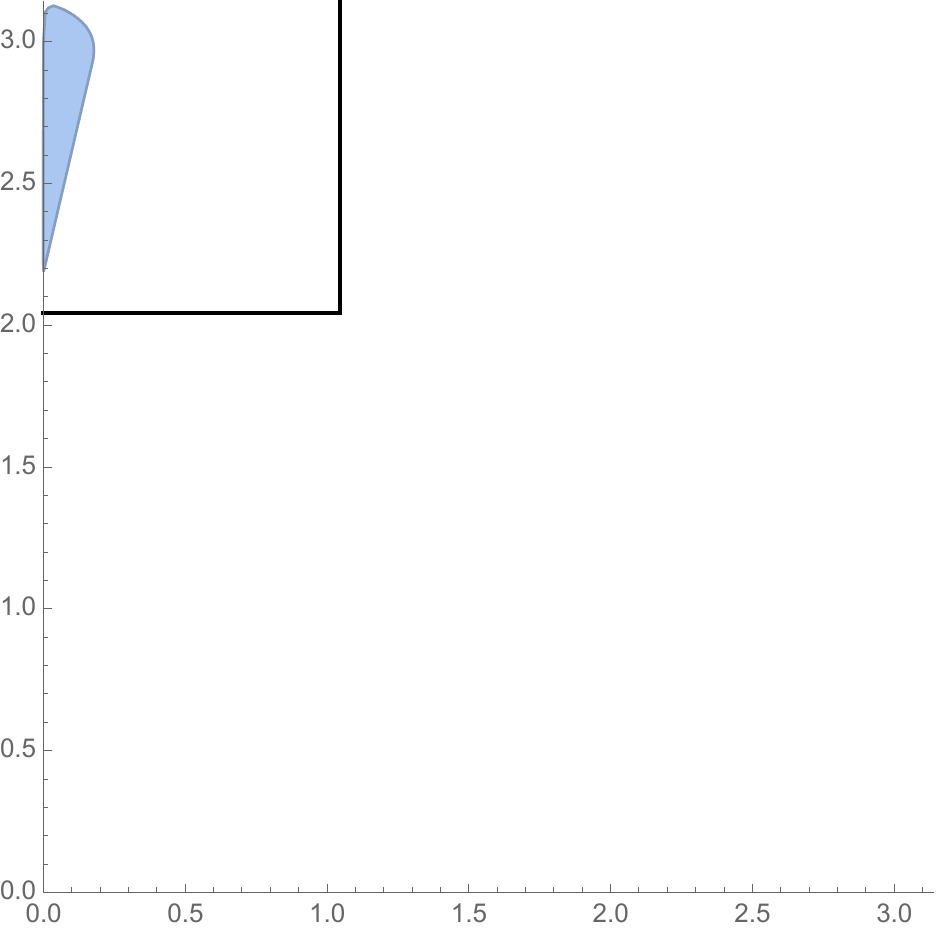} 
\caption{The shaded region is the convex hull 
of the projection to the $(\psi,\theta_2)$-plane
of the iterated image $F^2\DD_3$.  The black lines show the
boundary of the projection of $\DD_{out}$. The conclusion is
that $F^2\DD_3\subset\DD_{out}$.}
\label{fig:1350181}
\end{figure}

The last row of the table is justified below by taking an interval
refinement.  We claim that
$F_1(\DD_1\setminus{}\DD_{in})\subset \CC_3(\zeta)\setminus{}\DD_{in})$.
(Note that if $q_1\in\DD_1\setminus{}\DD_{in}$ and
$F_1(q_1)\in\CC_3(\zeta)\setminus{}\DD_{in}$, then using facts
$\CC_3(\zeta)=\DD_0\cup\DD_1\cup\DD_4$ and $\DD_0\subset\DD_{in}$ and
$\DD_4\subset{}F\DD_{22}\not\ni{}F(q_1)$, we get
$F_1(q_1)\in\DD_1\setminus\DD_{in}$ to complete the justification of
the row.)

\index[n]{0=@$\prec$, lexicographic order}

The cases $(i,j)=(0,0),(2,2)$ are the cases $\DD_{in},\DD_{out}$,
which are treated elsewhere. For $(i,j)\ne(0,0),(2,2)$, we break the
claim into a series of subclaims.  The domain
$\DD_1\setminus{}\DD_{in}$ is covered by the sets $\DD_{1,ij}$, for
$(i,j)\ne(2,2)$.  The subclaims are the following domain and range
restrictions for $(i,j)\ne(0,0),(2,2)$; \emph{subclaim}\relax$_{ij}$:
we have $F_1(\DD_{1,ij})\subset{}\cup_{k,\ell}\DD_{k\ell}$, where the
union runs over $(k,\ell)\prec(i,j)$.  Here we use the lexicographic
total order $(\prec)$ on ordered pairs given by
\[
(k,\ell)\prec(i,j)\quad\Leftrightarrow\quad
(k<i)\text{~or~}(k=i\text{~and~}\ell<j).
\]
These subclaims are established by the boundary method through direct
computation, explained above.  The convex hulls of planar projections
of the images are shown in Figure~\ref{fig:2381734}. The domain and
range restrictions follow by observing that the blue regions are
subsets of the yellow regions.  The three-dimensional images
$F_1(D_{1,21})$ $F_1(D_{1,01})$ are shown in
Figure~\ref{fig:5049419}. Their projections appears in panels $(2,1)$
and $(0,1)$ of Figure~\ref{fig:2381734}.

\begin{figure}
\centering 
\includegraphics[scale=0.4]{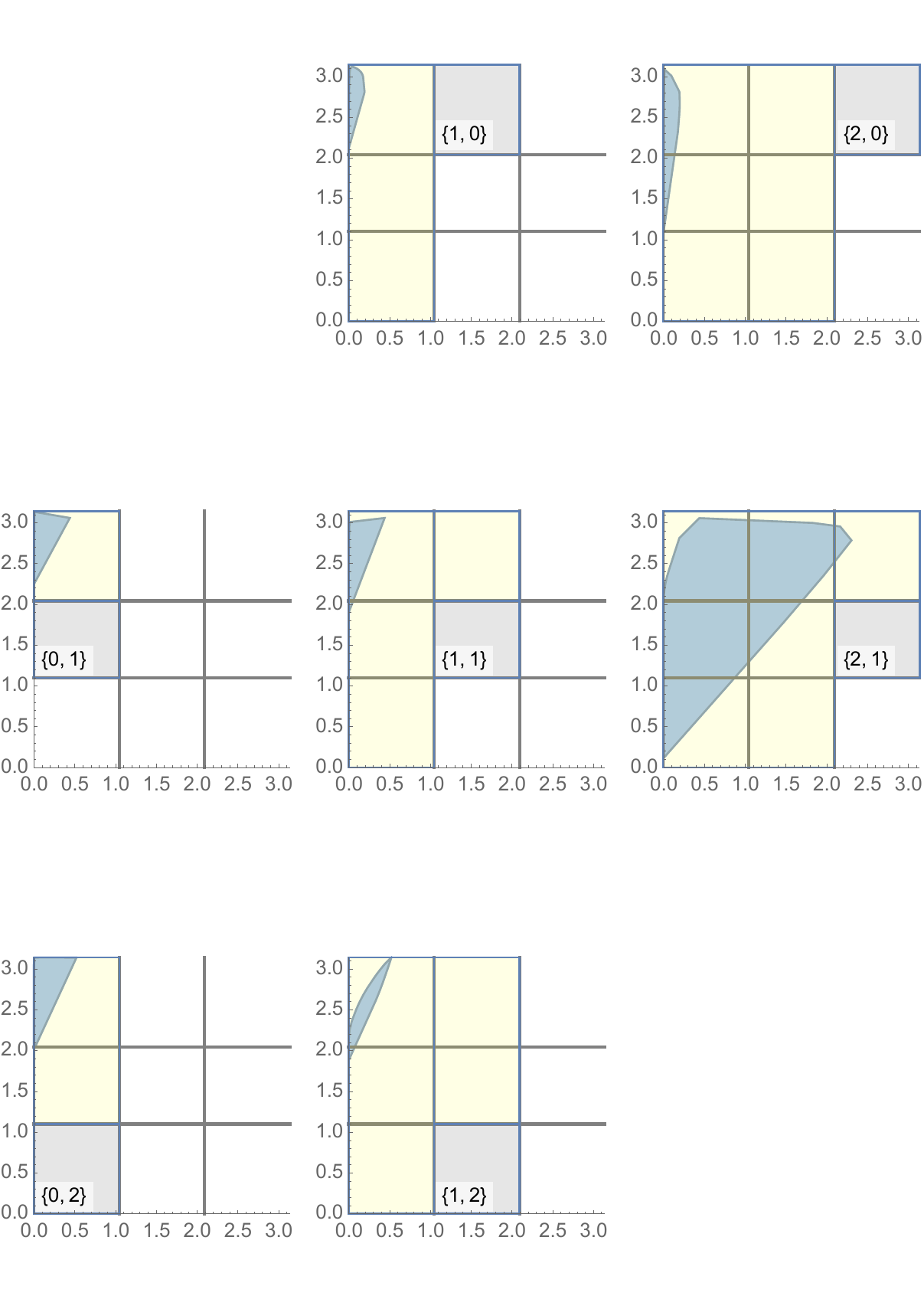} 
\caption{Projections of images $F_1(\DD_{1,ij})$ in panel $(i,j)$. 
The coordinates are $(\psi,\theta_2)$ and projection map is
$(r_2,\psi,\theta_2)\mapsto(\psi,\theta_2)$. The gray square labeled
$\{i,j\}$ is the projection of $\DD_{ij}$, containing the domain.  
The blue region is the
convex hull of the projection of $F_1(\DD_{1,ij})$. The yellow
squares are the projections of $\DD_{kl}$ such that $(k,l)\prec(i,j)$.
The subclaims follow from the observation that the each blue region is
contained in the corresponding yellow region.  }
\label{fig:2381734}
\end{figure}

\begin{figure}
\centering 
\includegraphics[scale=0.35]{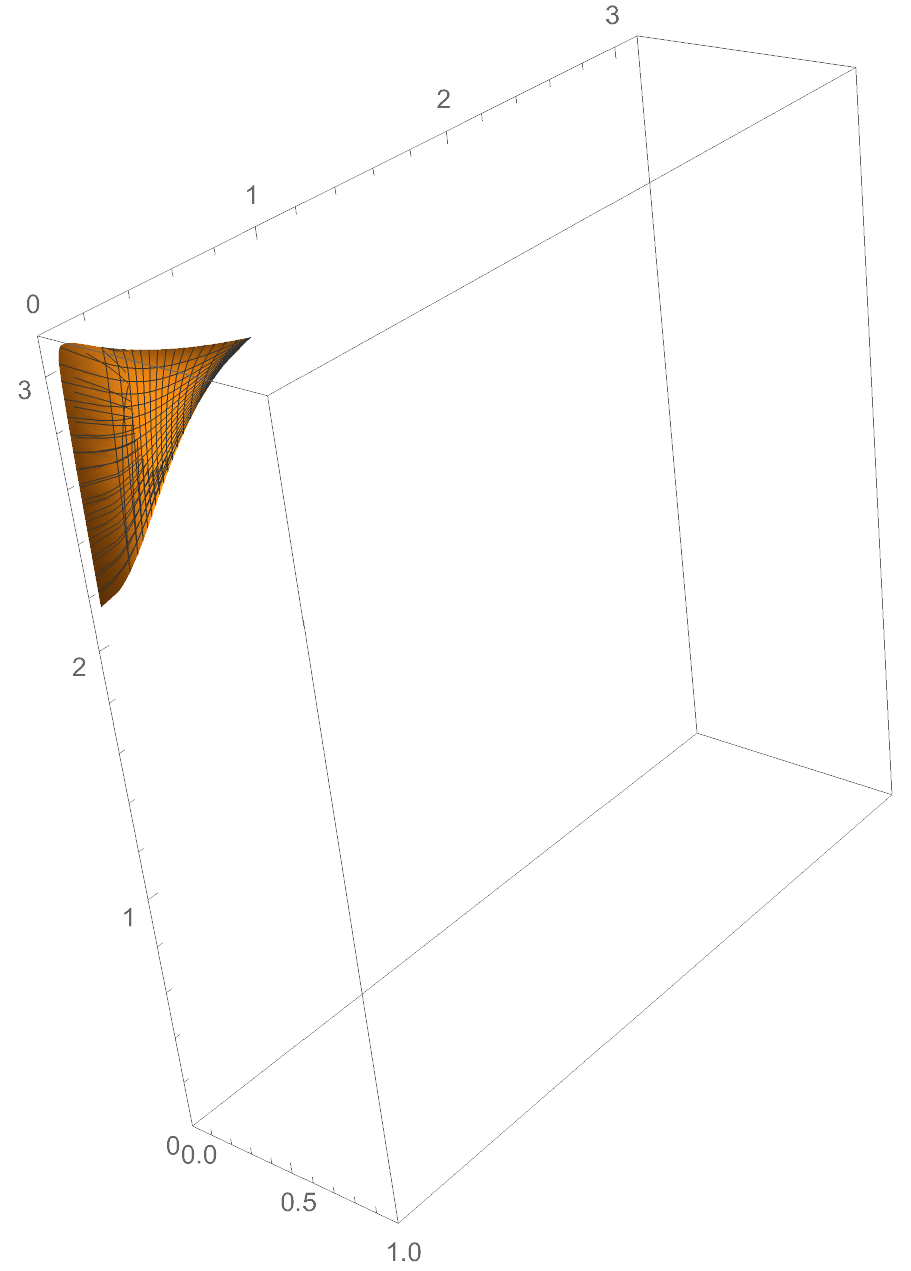} 
\includegraphics[scale=0.35]{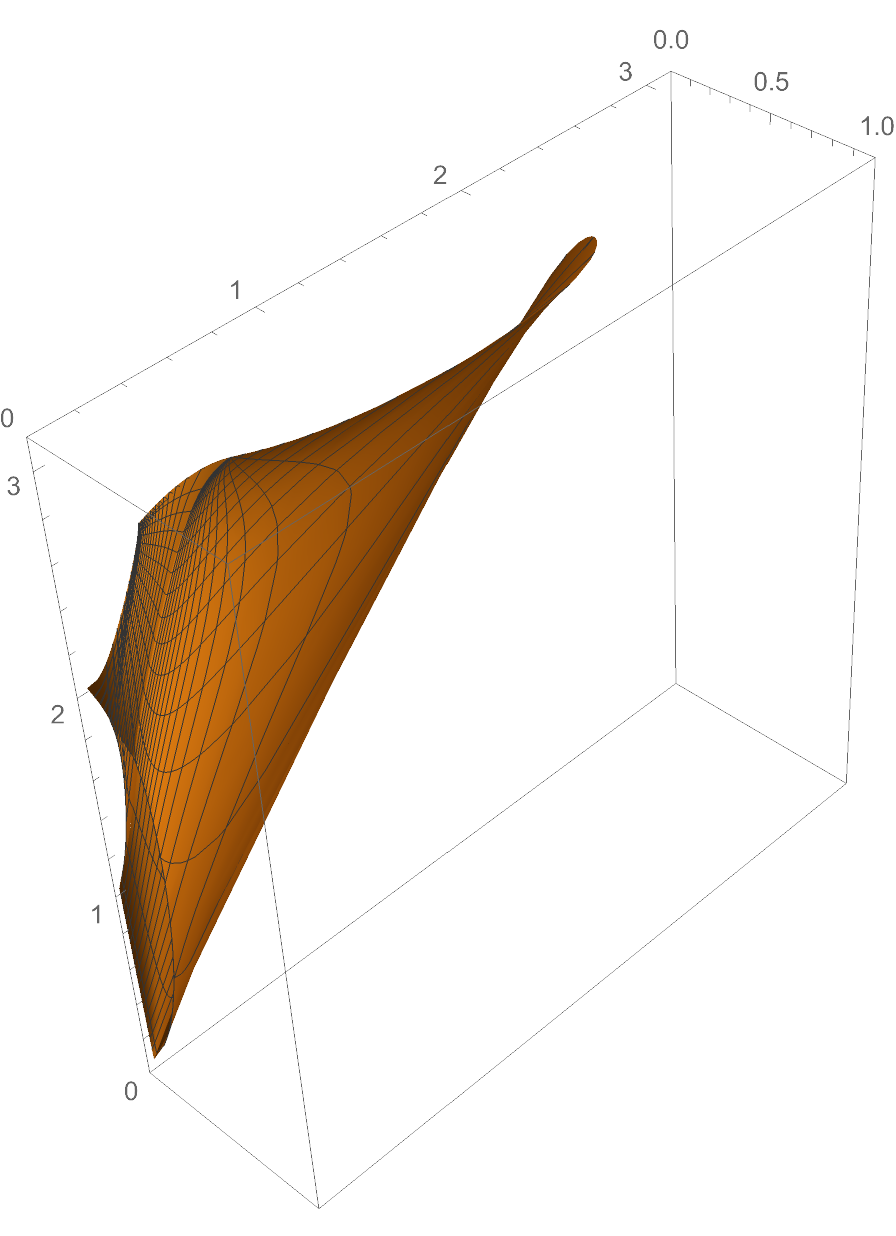} 
\caption{The right panel shows the image $F_1(\DD_{1,21})$ in the first big cell. The projection
of this region appears in panel $(2,1)$ of Figure~\ref{fig:2381734}.
The left panel shows $F_1(\DD_{1,01})$, corresponding to panel $(0,1)$
of Figure~\ref{fig:2381734}.}
\label{fig:5049419}
\end{figure}

We are ready to prove that $q_{out}$ has a global basin.  Let $q\ne
q_{in}$.  Assume first that $q\in \DD_{in}$.  By time reversal
symmetry, for every $q\in{}\DD_{in}$, with $q\ne{}q_{in}$, the
iterates $F^kq$ must eventually exit $\DD_{in}$ (because the iterates
$F^{-k}(\tau{q})$ exit $\DD_{out}$ for all sufficiently large $k$).
Note the upper triangular structure of the first table, with diagonal
entries only for $\DD_{in}$ and $\DD_1\setminus{}\DD_{in}$.  Hence, if
$q\ne{}q_{in}$, the iterates $F^kq$ lie in $\DD_1\setminus{}\DD_{in}$
for all sufficiently large $k$.

Note that the total order $(\prec)$ gives a strict triangular
structure with respect to domain and range interval containments.
Hence, if $q\in{}\DD_1\setminus{}\DD_{in}$, the foward iterates $F^kq$
must then eventually all lie in $\DD_{1,00}=\DD_{out}$.  As already
noted, once in the small rectangle $\DD_{out}$ containing $q_{out}$,
the iterates rapidly converge to the fixed point $q_{out}$.
\end{proof}

\section{Classification of Outward Fuller Trajectories}

In this section we return to the Fuller-Poincar\'e map
$F:(\C^3\setminus\mb{0})\to(\C^3\setminus\mb{0})$ (including the
radial component) and restore the subscript $F_{ang}$ when referring
to the Fuller-Poincar\'e map on the angular component.

\begin{theorem}\label{thm:outward-fuller} 
Consider the Fuller-Poincar\'e dynamical system $F$ on
$(\C^3\setminus\mb{0})/V_T$.  Every outward trajectory that emanates
from the singular locus has all its switching points in the set
$\R_{>0}\times\{q_{out}\}$ modulo $V_T$.  Every inward trajectory to
the singular locus has all its switching points in the set
$\R_{>0}\times\{q_{in}\}$ modulo $V_T$.
\end{theorem}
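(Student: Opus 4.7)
The plan is to reduce the inward case to the outward case via the time-reversing symmetry $\tau$, and to prove the outward case by combining the global basin result (Theorem~\ref{thm:basin}) for the time-reversed Fuller-Poincar\'e map with the radial contraction rate at $q_{in}$.

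To set up the outward case, I would fix an outward Fuller trajectory $z(t)$ emanating from the singular locus at time $t_0$, translated to $t_0=0$. By Lemma~\ref{lem:nonzero} no constant-control arc reaches the singular locus in finite time, so the bang-bang control must chatter as $t\to 0^+$, with switching times $t_k$ accumulating at $0$. Indexing so that $t_k\to 0^+$ as $k\to -\infty$, I write $p_k:=z(t_k)$ for the switching points and $q_k:=\pi_{ang}(p_k)$ for their angular components; then $\phi(p_k)\to 0$ and $F_{ang}(q_k)=q_{k+1}$ on $\Xi_{\W,0}/V_T$.

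Suppose for contradiction that some $q_K$ differs from $q_{out}$ modulo $V_T$. Running $k$ backward is forward iteration under $F_{ang}^{-1}=\tau\circ F_{ang}\circ\tau$, which by the time-reversal involution has $q_{in}=\tau(q_{out})$ as its asymptotically stable fixed point. Applying Theorem~\ref{thm:basin} to this time-reversed system (whose global basin of $q_{in}$ is $\Xi_{\W,0}/V_T\setminus\{q_{out}\}$), I would conclude $q_{K-n}\to q_{in}$ as $n\to\infty$. Next I would observe that the radial ratio $p\mapsto\phi(F(p))/\phi(p)$ is invariant under the $\R_{>0}$ scaling in the virial group, so it descends to a continuous function on the angular quotient whose value at $q_{in}$ is $1/\rs<1$ by Lemma~\ref{lem:qout}. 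Choosing a neighborhood $U$ of $q_{in}$ and a constant $\rho_0\in(1/\rs,1)$ so that this ratio is at most $\rho_0$ on $U$, I would obtain $\phi(p_{k+1})\le\rho_0\,\phi(p_k)$ for all sufficiently negative $k$, hence $\phi(p_{k-n})\ge\rho_0^{-n}\,\phi(p_k)\to\infty$ as $n\to\infty$, contradicting $\phi(p_k)\to 0$. Therefore every $q_k$ equals $q_{out}$, and the inward statement follows by applying $\tau$.

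The main technical difficulty will be ensuring that the radial ratio is genuinely continuous at $q_{in}$: the map $F$ is only piecewise continuous, with continuous extensions $F_i$ on the parts $\DD_i$ of the geometric partition, and discontinuities along the discriminant and resultant loci. Since $q_{in}$ lies in the interior of $\DD_1$, where $F$ coincides with $F_1$, and since the analytic extension constructed in the previous chapter restricts on the exceptional divisor to $F_{ang}$ near $q_{in}$, the required continuity is genuinely available; but the passage between $\Xi_{\W,0}$ and its $V_T$-quotient, together with the bookkeeping of the radial scaling along the fixed ray over $q_{in}$, warrants care in a formal write-up.
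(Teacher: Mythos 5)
Your proposal is correct and is essentially the paper's own argument reflected through the time-reversal symmetry: the paper proves the inward case directly (using the basin theorem to drive $F_{ang}^j(q)\to q_{out}$ and the radial scaling $r_{scale}>1$ at $q_{out}$ to contradict convergence to the singular locus), while you prove the outward case directly (using the time-reversed basin theorem to drive $q_{K-n}\to q_{in}$ and the inverse radial scaling $1/r_{scale}$ at $q_{in}$ to contradict $\phi(p_k)\to 0$). The two are logically equivalent and rely on the same ingredients, so this is the same proof in substance.
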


\begin{proof}
By time reversal symmetry, it is enough to prove the second statement
of the lemma.  Let $q\in{}\Xi_\W/V_T$ be any switching point of any
trajectory.  If $q=q_{in}$, then it is the inward spiral.

Otherwise, $q\ne{}q_{in}$.  In this case, by the Fuller basin theorem
and by the stability calculation near $q_{in}$, for large $j$, the
iterates $F_{ang}^j(q)$ approach the outward spiral.  For every
$r>0$, this implies that the angular component of the Fuller
trajectory $z(t)$ with initial conditions $(r,q)$ approaches the
outward spiral.  The outward spiral moves away from the singular locus
(because of the scaling factor $r_{scale} > 1$), and the Fuller
trajectory must then also move away from the singular locus.  In
particular, $(r,q)$ is not the initial condition of a forward
trajectory that converges to the singular locus.
\end{proof}

\section{Mahler's First: Bang-bang with Finitely Many Switches}

In this section, we return to the Reinhardt dynamical system,
and $F$ now denotes the Reinhardt-Poincar\'e map.

We prove Mahler's First conjecture from 1947.
Theorem~\ref{thm:mahler} is our main result.

\begin{theorem}[Mahler's First conjecture]\label{thm:mahler}
The global minimizer of the Reinhardt optimal control problem is a
bang-bang solution with finitely many switches.  In particular, the
minimizer $K_{\min}$ of the Reinhardt problem is a finite-sided
smoothed polygon with rounded hyperbolic arcs at each corner of the
sort described by Reinhardt and Mahler.
\end{theorem}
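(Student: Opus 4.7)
The plan is to combine three earlier ingredients — existence and edge extremality of the global minimizer, the bang-bang theorem away from the singular locus, and the global basin analysis of the blown-up Fuller/Reinhardt dynamics — to show that a global minimizer of the Reinhardt control problem cannot meet the singular locus $\Ssing$, after which everything reduces to Theorem~\ref{thm:finite-bang-bang}. Filippov's theorem (Section~\ref{sec:existence-opt-control}) produces a global minimizer $K_{\min}\in\Kbal$, which lifts to an extremal of Problem~\ref{pbm:state-costate-reinhardt}; by Proposition~\ref{prop:edge-extremal} the lift is also edge extremal, and the transversality conditions force it to be periodic modulo rotation by $R$. If this lift avoids $\Ssing$, Theorem~\ref{thm:finite-bang-bang} immediately delivers a bang-bang control with finitely many switches — i.e.\ a finite-sided smoothed polygon — so the job reduces to ruling out lifts that meet $\Ssing$.

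Suppose for contradiction the lift meets $\Ssing$ at some time $t_*$. Theorem~\ref{thm:no-singular-arcs} forbids it from remaining in $\Ssing$ on any positive interval, and Lemma~\ref{lem:constant-not-singular} rules out reaching $\Ssing$ along a constant-control arc, so the only possible route is chattering. I would pass to the weighted blowup at $\Ssing$ in the hyperboloid coordinates of Section~\ref{sec:coordinate}: Lemma~\ref{lem:Lie-Fuller} shows that to leading order the Reinhardt ODEs collapse to the Fuller system on the exceptional divisor $r=0$, and the Reinhardt--Poincar\'e map $F$ extends to an analytic diffeomorphism of a neighborhood of the inward and outward spiral fixed points $q_{in}$ and $q_{out}$, both hyperbolic for the extended $F$. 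Combining the Fuller global basin Theorem~\ref{thm:basin} with Theorem~\ref{thm:outward-fuller}, every chattering approach to $\Ssing$ must accumulate along the one-dimensional local stable manifold $W^s(q_{in})$ of the extended $F$, and every chattering departure along the one-dimensional $W^u(q_{out})$, up to the virial group and cyclic rotation.

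The remaining step is to show that trajectories along $W^s(q_{in})$ and $W^u(q_{out})$ cannot belong to a periodic orbit lying inside $\hstar$. The numerical continuation of $W^u(q_{out})$ carried out in the chapter on stable and unstable manifolds (Figure~\ref{fig:unstable-curve}) shows that, iterated forward by $F$, it reaches the boundary of the star domain in finitely many switches; by the time-reversing symmetry $\tau$, the same holds for $W^s(q_{in})$ iterated backward. A trajectory that exits $\hstar$ in finite time cannot sit inside a closed periodic orbit in $\hstar$, contradicting the periodicity of the lifted minimizer. Hence $K_{\min}$ does not meet $\Ssing$, and Theorem~\ref{thm:finite-bang-bang} then yields the smoothed polygon conclusion. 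The main obstacle is not the logical arrangement above but the two substantial inputs on which it relies: the Fuller global basin theorem, which rests on the geometric partition and block upper-triangular argument on the exceptional divisor, and the rigorous verification that $W^u(q_{out})$ leaves $\hstar$ in finitely many steps — a delicate computer-assisted continuation that would need to be upgraded from its present numerical form (e.g.\ via interval arithmetic) for the proof to be fully airtight.
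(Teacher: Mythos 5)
Your proposal follows the same route as the paper's proof: reduce to the case where the lifted minimizer meets $\Ssing$, eliminate the possibilities of lying on it (Theorem~\ref{thm:no-singular-arcs}) or reaching it without chattering (Lemma~\ref{lem:constant-not-singular}), blow up, and show that a chattering approach is incompatible with periodicity, so that Theorem~\ref{thm:finite-bang-bang} applies. Your final step — the unstable curve $W^u(q_{out})$ leaves the star domain in finitely many steps, hence chattering orbits cannot close up — is exactly the content of Theorem~\ref{thm:no-return} and its time reversal, which the paper invokes at the same point.

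The one place you are too quick is the sentence that passes from the Fuller-level statements (Theorems~\ref{thm:basin} and \ref{thm:outward-fuller}) to the assertion that every chattering Reinhardt trajectory must accumulate on $W^s(q_{in})$. Those two theorems describe dynamics \emph{on} the exceptional divisor $r=0$; a chattering Reinhardt trajectory lives entirely in $r>0$ and only has a cluster point on the divisor. Converting the Fuller basin into a statement about the Reinhardt Poincar\'e map off the divisor is nontrivial: one has to extend the geometric partition $\DD_0,\dots,\DD_4$ of the two big cells to Reinhardt cells $\DD_i^R$ in a collar neighborhood of $r=0$, use Weierstrass preparation to tame the Reinhardt switching functions so they behave like the cubic Fuller ones, and then rerun the block upper-triangular argument for the cluster set. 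That work is exactly Theorem~\ref{thm:no-chatter} and Section~\ref{sec:proof} of the paper; invoking it makes your logical chain airtight, whereas ``combining'' the Fuller theorems directly is not enough. Once that bridge is in place, your outline matches the paper's proof step for step, including the honest caveat that $W^u(q_{out})$ escaping $\hstar$ (Figure~\ref{fig:unstable-curve}) is a computer-assisted computation.
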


\index{Mahler's First conjecture}
\index[n]{K@$K$, convex disk!$K_{\min}$, minimax optimizer}

By \emph{the sort described by Reinhardt and Mahler}, we mean more
precisely that the minimizer $K_{min}$ has no corners and the boundary
alternates between straight edge segments and hyperbolic arcs, whose
asymptotes are lines extending the straight edge segments of the
boundary, as in Figure~\ref{fig:smoothed}.

\begin{proof} 
A bang-bang trajectory with finitely many switches is a polygon with
rounded corners of the sort described by Reinhardt, so the second
statement of the theorem follows from the first.

By Proposition~\ref{prop:edge-extremal}, any globally minimizing
trajectory that avoids the singular locus is is an extremal for the
Reinhardt control problem and is also edge-extremal.  By
Theorem~\ref{thm:finite-bang-bang},  such a trajectory is a
bang-bang trajectory with finitely many switches.  A trajectory cannot
remain on the singular locus for any positive interval of time by
Theorem~\ref{thm:no-singular-arcs}.  A trajectory cannot reach the
singular locus with finitely many switches by
Lemma~\ref{lem:constant-not-singular}.

The proof then reduces to the consideration of a trajectory such that
the infinite sequence of switching points has a subsequence tending to
the singular locus.  Passing to the blow-up, which has a compact
exceptional divisor, the sequence of switching points has a
subsequence tending to a limit on the exceptional divisor in finite
time.  By Theorem~\ref{thm:no-chatter}, which appears below, the limit
is $q_{in}$ and the sequence approaches $q_{in}$ along the stable
curve $W^s(q_{in})$.  By Theorem~\ref{thm:no-return} and its time
reversal, the stable curve $W^s(q_{in})$ did not come from the
exceptional divisor at an earlier time.  Thus, the trajectory is not
periodic. This is contrary to the boundary conditions of the Reinhardt
conjecture.
\end{proof}

\section{Cluster Point Theorem}\label{sec:proof}

The proof of Mahler's First relies on the following theorem.

\begin{theorem}\label{thm:no-chatter}
If the sequence of switching points of a Pontryagin extremal Reinhardt
trajectory has a cluster point on the
exceptional divisor of the blow-up, reached in finite time, and if the
switching points themselves are not on the exceptional divisor, then
that cluster point is the fixed point $q_{in}$ and the switching
points lie on the stable curve $W^s(q_{in})$.
\end{theorem}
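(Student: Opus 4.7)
The plan is to analyze the cluster set $C$ of the switching points $\{p_k\}$ in the blow-up, show that it is forward-invariant under the analytic extension of the Reinhardt-Poincar\'e map $F$, and then combine the Fuller basin theorem with a hyperbolicity argument at $q_{out}$ to force $C = \{(0, q_{in})\}$. First, because $F$ extends analytically across the exceptional divisor (by the analytic implicit function theorem applied to the Reinhardt switching polynomials, generalizing the construction at $q_{out}$ and $q_{in}$ in the previous chapter) and restricts on $r=0$ to the Fuller-Poincar\'e map $F_{ang}$, continuity yields $F(C) \subseteq C$. The finite-time hypothesis $\sum_k \tau_k < \infty$, combined with the asymptotic $\tau_k \sim r_k\, t_{sw}(\xi_k)$ from Lemma~\ref{lem:Lie-Fuller} and the compactness of the Poincar\'e section $\Xi_{\W,0}/V_T$, forces $r_k \to 0$ for the full sequence, so $C \subseteq \{r=0\}$.

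The crux is to exclude $(0, q_{out})$ from $C$. Suppose for contradiction some subsequence $p_{l_j} \to (0, q_{out})$. Since $(0, q_{out})$ is a hyperbolic fixed point of $F$ with one-dimensional unstable radial eigenvalue $r_{scale} > 1$ and three-dimensional stable exceptional divisor, there is a neighborhood $U$ and smaller $V \subset U$ on which $F$ is $C^1$-close to its linearization. An orbit entering $V$ at step $l_j$ traverses $U$ in approximately $n_j \sim \log(r_U/r_{l_j})/\log r_{scale}$ steps before exiting, during which the radial coordinates satisfy $r_{l_j + n} \approx r_{scale}^n\, r_{l_j}$. The total switching time consumed by this excursion is
\begin{equation*}
\sum_{n=0}^{n_j-1} \tau_{l_j+n} \ \sim \ t_{sw,out}\sum_{n=0}^{n_j-1} r_{scale}^n\, r_{l_j} \ \sim \ \frac{t_{sw,out}\, r_U}{r_{scale}-1},
\end{equation*}
a positive constant independent of $j$. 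Because $(0, q_{out})$ is a cluster point, the orbit re-enters $V$ infinitely often, producing infinitely many such excursions and contradicting $\sum_k \tau_k < \infty$.

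With $q_{out}$ excluded, the Fuller basin theorem (Theorem~\ref{thm:basin}) implies $C \cap \{r=0\} = \{(0,q_{in})\}$: any $F_{ang}$-orbit other than $q_{in}$ has $q_{out}$ in its $\omega$-limit set, so any other closed $F_{ang}$-invariant subset of the exceptional divisor would contain $q_{out}$. Thus $p_k \to (0, q_{in})$ for the entire sequence. Since $(0, q_{in})$ is a hyperbolic fixed point with one-dimensional stable curve $W^s(q_{in})$, the local stable manifold theorem places the tail of the orbit on $W^s_{loc}(q_{in})$; propagating backward by $F^{-1}$ along the trajectory then yields every switching point on the global stable curve $W^s(q_{in}) = \bigcup_n F^{-n}(W^s_{loc})$.

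The main obstacle I expect is making the excursion-time estimate rigorous, which requires uniform two-sided bounds $c_1 r \leq \tau \leq c_2 r$ on the switching time throughout a neighborhood of $(0, q_{out})$ in the blow-up, not merely at the fixed point itself. These bounds should follow from the analytic normal form together with the explicit leading-order asymptotics of Lemma~\ref{lem:Lie-Fuller}, but their uniformity must be carefully verified. A secondary technical point is the analytic extension of $F$ across the exceptional divisor at general points of the Poincar\'e section away from the two fixed points, which should proceed by the same implicit-function argument used in the previous chapter but must avoid the discriminant and resultant loci where the Fuller-Poincar\'e map is discontinuous; here one uses that the cluster set is forward-invariant and, after finitely many iterations, lands in the interior of a single cell $\DD_i$ where continuity of $F$ is already established.
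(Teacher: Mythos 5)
Your proof takes a genuinely different route in two places, but the second of these contains a real gap.

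\textbf{Excluding $q_{out}$.} Your excursion-time argument is valid: if $q_{out}$ were a cluster point, hyperbolicity in the radial direction would push the orbit out to radius $r_U$ on each approach, and a geometric-series estimate shows each excursion costs at least a fixed positive amount of unscaled time, contradicting the finite-time hypothesis. The paper instead converts the cluster point to a limit (using the same finite-time hypothesis in the form of a ball-crossing argument) and then appeals directly to the fact that the local stable manifold $W^s(q_{out})$ coincides with the exceptional divisor $\{r=0\}$: an orbit converging to $q_{out}$ would have to lie on the exceptional divisor, which is excluded by hypothesis. The paper's route is cleaner and sidesteps the need to quantify excursion times; your route is correct but does more work. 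Note also that your uniform two-sided bound $c_1 r \le \tau \le c_2 r$ is not needed with the paper's argument and in any case fails globally, since the first switching time $t_{sw}(\xi)$ vanishes on part of the Poincar\'e section (Figure~\ref{fig:1556775}).

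\textbf{Forward invariance of the cluster set.} This is where your proof has a gap. You assert that $F$ extends analytically across the exceptional divisor at general points ``by the analytic implicit function theorem applied to the Reinhardt switching polynomials,'' and conclude $F(C)\subseteq C$ by continuity. But the paper's analytic extension is established only in a neighborhood of the two fixed points; away from them, the extension fails precisely along the discriminant and resultant loci where the Fuller switching cubic has multiple roots or a vanishing resultant, and the Fuller-Poincar\'e map is genuinely discontinuous there. So $F_{ang}(C)\subseteq C$ is not automatic, and without it your appeal to the Fuller basin theorem (``any closed invariant subset other than $\{q_{in}\}$ would contain $q_{out}$'') does not close. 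Your fallback remark — that the cluster set lands in the interior of a single cell $\DD_i$ after finitely many iterations — is circular, since it presupposes exactly the forward invariance you are trying to establish. The paper resolves this by assigning each cluster point to a part $\DD_i^R$ along a convergent subsequence (so that the relevant one-sided continuous extension $F_i$ applies), and then using the strict upper-triangular structure of Table~\ref{tab:diag} to propagate cluster points down to $\DD_{out}^R$, where the analyticity near $q_{out}$ and the closedness of the cluster set produce the contradiction. That partition-and-descent step is the essential technical content your proposal lacks.
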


The proof of Theorem~\ref{thm:no-chatter} will be presented 
after some preparations.  

\subsection{Coordinates}

We return to the hyperboloid coordinate system $(w,b,c)\in\C^3$ for
the Reinhardt dynamical system.  We use parameters
\[
\rho=2,\quad d_1=3/2,\quad \lambda_{cost}=-1,\quad \epsilon=1,
\]

We recall that we have rescaled variables
\[
(z_1,z_2,z_3)=(w/\rho,-ib/(2\rho),c/6\rho),\quad
(w,b,c)=(\rho z_1,2\rho i z_2,6\rho z_3),\quad \rho=2,
\]
that were introduced in Equation~\eqref{eqn:wbc-to-z}.
We assume that $(z_1,z_2,z_3)\ne\mb{0}$.
Formulas involving hyperboloid variables from previous chapters can
be rewritten in terms of $z$.  We do so without further comment.

Returning to earlier notation, we let $r$ be the radial variable in
the angular decomposition of $\C^3\setminus\{\mb{0}\}$.  As usual, we
call $r$ and $\xi$ the radial and angular components of $z$.  We have
coordinates
\[
z_k = r^k \xi_k,\quad \xi=(\xi_1,\xi_2,\xi_3)\in\C^3,
\quad \phi(\xi)=1,\quad r>0.
\]
The set 
\[
\{\xi\in\C^3\mid \phi(\xi)=1\}
\]
is a compact manifold. This is the 
\emph{angular component} in this context.  For now, we do
not impose the vanishing of the Hamiltonian.  That will be reimposed
later.

\subsection{Reinhardt Switching functions as cubic polynomials}
In Section~\ref{sec:asymptotic}, the asymptotic formulas in $r_0$ for
the Reinhardt switching functions are functions of rescaled time $s$.
These switching functions (and their derivatives with respect
to $s$) are approximated as $r_0\to0$ by the Fuller switching
functions, as functions of $s$.  The Fuller switching functions are
cubic polynomials, whose leading coefficients are nonzero constants.
For $r_0$ sufficiently small, the third derivative of the Reinhardt
switching functions are also nonzero.  This implies that the Reinhardt
switching functions behave qualitatively as cubic polynomials: the
third derivative has fixed sign, the second derivative is monotonic
with at most one zero, the first derivative is convex with at most two
zeros, and the function itself has at most one inflection point, has
at most two local minima, and has at most three zeros.  In summary,
the Reinhardt switching function behaves qualitatively like a monic
cubic polynomial.

\index{Weierstrass!preparation}
\index{Weierstrass!polynomial}
\index[n]{zx@$\chi_{ij}$, switching function!$\chi_{W}$, Weierstrass polynomial of}

We can make this polynomial behavior precise using Weierstrass
preparation.  Let $z(s)$ be the solution to the Reinhardt system in
hyperboloid coordinates with initial condition $(r,\xi^0)$, rescaled time
$t = s r$, and first control $u=\zeta^k$.  We view the Reinhardt
switching functions $\chi^k_{ij}(s)/r^3$ from $\zeta^i$ to $\zeta^j$
as analytic functions of the variables $s,r,\RR(\xi^0_k),\Im(\xi^0_k)$.  By
the earlier asymptotic formulas (adapted to this system of variables), the
switching functions extend analytically to a neighborhood of $r=0$.
When $r=0$, the Reinhardt switching function agrees with the
Fuller switching function and is a cubic polynomial in $s$.

For a given initial condition $\xi^0$ on the angular component, the
restriction of the switching function
$\chi(s,r,\xi^0)=\chi^k_{ij}(s,r,\xi^0)$ to $r=0$ is a cubic
polynomial with roots $s_1,s_2,\ldots$ having algebraic multiplicities
$m_i$, where $\sum_i m_i = 3$.  Applying the Weierstrass preparation
theorem centered at the point $(s,r,\xi)=(s_i,0,\xi^0)$, we obtain a
(monic) Weierstrass polynomial of degree $m_i$.
\[
\chi_{W,i}(s,r,\xi)=\sum_{m=0}^{m_i} (s-s_i)^m b_m(r,\xi),
\]
where $b_{m_i}=1$. For $m<m_i$, the coefficients $b_m$ are analytic
functions of $(r,\xi)$ near $(0,\xi^0)$ such that $b_m(0,\xi^0)=0$.
If $m_i=1$, then $s_i - b_0(r,\xi)$ is simply the implicitly defined
root of $\chi$ near $(s_i,0,\xi^0)$.)  Set $\chi_W(s,r,\xi)
= \Pi_i \chi_{W,i}(s,r,\xi)$, which is defined for all $s$ and for all
$(r,\xi)$ in some open neighborhood of $(0,\xi^0)$.  Since $\chi$ is
a \emph{real} analytic function, the nonreal roots $s_i$ come in
complex conjugate pairs, and the corresponding Weierstrass polynomials
come in pairs.  By the uniqueness of the Weierstrass polynomials,
$\chi_W(s,r,\xi)$ takes real values on real inputs.  By
Weierstrass division, and the continuous dependence of roots on their
coefficients, the polynomial $\chi_W$ captures all real roots of
$\chi$ for all $(r,\xi)$ in some neighborhood of $(0,\xi^0)$.  Thus
$\chi_W$ can be used as the switching function.

Now we drop the subscript $W$, and take the switching function $\chi$
to be a cubic polynomial in $s$, whose coefficients are analytic in
$(r,\xi)$.  A monic polynomial switching function
$\chi(s)=s^3+b_2s^2+b_1s+b_0$, determines complex analytic
varieties for each $\ell\le 3$ by the equations $b_i(r,\xi)=0$ for
$i<\ell$.  We have truncations
\[
\chi_\ell(s) = \sum_{m=\ell}^3 b_m s^{m-\ell}.
\]
On this complex analytic variety, we have
$\chi_\ell(s)=\chi(s)/s^\ell$.

We have defined cells $\CC_k$ for the Fuller system by conditions on
the first control, multiplicities $m_A,m_B$ of the zero at $s=0$ of
the switching functions $\chi_{A}$, $\chi_B$.  We have defined a
further geometric partition according to inequalities on the
discriminants $\Delta_{A,m_A}$, $\Delta_{B,m_B}$, and the resultant
$\op{res}(\Delta_{A,m_A},\Delta_{B,m_B})$, and according to the active
switching function.  All of these defining conditions can be carried
over to the Reinhardt dynamical system in terms of the switching
functions of the Reinhardt system.  We use the superscript $R$ to
designate an extension from the exceptional divisor to a neighborhood,
according to Reinhardt dynamics.  In this way, we extend the
definitions of cells $\CC_k$ to a neighborhood $\CC_k^R$ of the
exceptional division using the Reinhardt system dynamics.  Upon
restriction to $r=0$, the cells agree with the cells defined for the
Fuller system.  There is a shift in dimension of each cell by one,
because the Fuller system exceptional divisor has codimension one (if
we restrict to the vanishing set of the Hamiltonian in both cases).

The rule for the first control for the Fuller system extends to give
first control $u$ on the two big cells $\CC_3^R(u)$.  We obtain a
geometric partition of the two big cells into five regular closed sets
$\DD_i^R$, $i=0,1,2,3,4$ and a continuous extension $F_i$ of the
Reinhardt-Poincar\'e map $F$ from the interior of $\DD_i^R$ to all of
$\DD_i^R$.  The restriction of each $\DD_i^R$ to the exceptional
divisor $r=0$ is the previously defined Fuller-system part $\DD_i$.

Similarly, where we have refined the partition into smaller parts
(such as $\DD_{1,ij}$), we choose a corresponding refinement of the
parts, such as $\DD_1^R$ into $\DD_{1,ij}^R$.  The precise definitions
of these subparts will not matter as long as they agree with
previously established subparts $\DD_{1,ij}$ on the exceptional
divisor $r=0$.

\subsection{Proof}

\begin{proof}
We now turn to the proof of Theorem~\ref{thm:no-chatter}.  Consider a
sequence of switching points of a Pontryagin extremal trajectory that
has a cluster point on the exceptional divisor.  Assume that the
switching points themselves are not on the exceptional divisor.

Note that a periodic set of switching points does not approach the
exceptional divisor.  Hence the sequence of switching points is
injective, and the set of switching points is countably infinite.

We consider the various possibilities for the cluster points on the
exceptional divisor.  If $q_{in}$ is a cluster point, then by the
definition of the stable manifold, the switching points lie on the
stable manifold $W^s(q_{in})$.  This case appears as a possibility in
the statement of the theorem.  In this case, $q_{in}$ is the only
cluster point of the trajectory.  (If the sequence of switching points
has $q_{in}$ as a cluster point, then the finite time hypothesis
implies that the limit of the sequence exists and equals $q_{in}$,
because of the time required to travel from outside an $\epsilon$-ball
with center $q_{in}$ to a point inside an $\epsilon/r_{scale}$-ball
with center $q_{in}$; once entering an $\epsilon/r_{scale}$-ball, a
finite time sequence must eventually remain inside the
$\epsilon$-ball. Here time is measured with respect to the unscaled
time parameter $t$.)

We now assume that $q_{in}$ is not a cluster point of the trajectory.
We assume for a contradiction that the trajectory has a cluster point
other than $q_{in}$.

We claim that $q_{out}$ is not a cluster point.  Otherwise, for a
contradiction, we find that the switching points lie on the stable
manifold $W^s(q_{out})$. (Again, the finite time hypothesis is used to
convert a cluster point to a limit.)  However, this stable manifold is
a subset of the exceptional divisor, which is contrary to the
assumption that the switching points are not on the exceptional
divisor.

Consider the set of cluster points on the exceptional divisor,
viewed as a union of the two big cells $\CC_3(u)$.  If a cluster point
$q$ lies in two or more parts $\DD_i$ (where boundaries meet), then we
can assign $q$ to $\DD_i$, if a convergent subsequence $(q_{n_k})$ of
$(q_n)$ has limit $q$, with $q_{n_k}\in{}\DD_i^R$.  Each cluster point
$q$ can be assigned to at least one part $\DD_i$ in this way.

We claim that with respect to the order on parts imposed by the upper
triangular structure of containment relations in Table \ref{tab:diag},
if $q$ is a cluster point in $\DD$, then there is also a cluster point in
some $\tilde{\DD}$, which is smaller with respect this order.  (Here
$\DD,\tilde{\DD}$ are the parts $\DD_i$ or subparts $\DD_{1,ij}$,
etc. of the geometric partitions that appear in the proof of
Theorem~\ref{thm:basin}.)  In fact, if $q_{n_k}\to q$, with
$q_{n_k}\in{}\DD_i^R$, then $F_i(q_{n_k})$ lies in a finite union of
lesser parts $\tilde{\DD}^R$ (extending $\tilde{\DD}$ to
$\tilde{\DD}^R$).  Passing again to a subsequence, we may assume that
$F_i(q_{n_k})\in \tilde{\DD}^R$ converges to a limit in some
lesser $\tilde{\DD}$.  Repeating the argument of
Theorem~\ref{thm:basin}, eventually we obtain a cluster point
$q\in \DD_{out}^R$.

However, a cluster point in the local stable manifold $\DD_{out}^R$ 
contradicts the local structure of the stable and
unstable manifolds at $q_{out}$.  (The dynamics are analytic on
$\DD_{out}^R$, and the set of cluster points being closed, we would find
that $q_{out}$ itself would be a cluster point, which has already been
ruled out.)  Thus, the cluster point $q$ cannot exist.
\end{proof}

%%%%% APPENDIX

\newpage
\part{Appendices}

\appendix
\chapter{Background Material}

%\section{Real Analysis}

\section{Gronwall inequality}\label{sec:gronwall}

We give two versions of Gronwall's inequality.

\index[n]{zy@$\psi$, local auxiliary function or integral}

\begin{lemma}[Gronwall inequality] 
Let $I\subset\R$ be an interval, $t_0\in I$, and
let $\psi_1,\psi_2,x$ be continuous nonnegative functions on $I$.
If
\[
x(t)\le \psi_1(t) + \left|\int_{t_0}^t \psi_2(s)x(s) ds\right|,\quad
\text{for all } t\in I,
\]
then for all $t\in I$,
\[
x(t) \le \psi_1(t)+ \left|\int_{t_0}^t \psi_1(s)\psi_2(s)
\exp{|\int_s^t \psi_2(\tau) d\tau|} ds\right|.
\]
\end{lemma}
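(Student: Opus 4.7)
The plan is to reduce the inequality to a standard linear differential inequality for an auxiliary integral, then apply an integrating factor. I will handle the two sides $t \ge t_0$ and $t \le t_0$ separately, since the absolute values in the statement are exactly what encode the symmetric treatment of forward and backward time.

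First, for the forward case $t \ge t_0$, I would introduce the auxiliary function
\[
u(t) := \int_{t_0}^t \psi_2(s)\, x(s)\, ds,
\]
which is nonnegative, absolutely continuous, and satisfies $u(t_0) = 0$. The hypothesis then reads $x(t) \le \psi_1(t) + u(t)$, and differentiating $u$ almost everywhere gives
\[
u'(t) = \psi_2(t)\, x(t) \le \psi_2(t)\,\psi_1(t) + \psi_2(t)\, u(t).
\]
This is a linear differential inequality for $u$. I would then multiply through by the integrating factor $\exp\!\bigl(-\int_{t_0}^t \psi_2(\tau)\, d\tau\bigr)$ so that the left side becomes an exact derivative, and integrate from $t_0$ to $t$, using $u(t_0) = 0$. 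Rearranging the exponentials (combining $-\int_{t_0}^s$ with $\int_{t_0}^t$ to get $\int_s^t$) yields
\[
u(t) \le \int_{t_0}^t \psi_1(s)\,\psi_2(s)\,\exp\!\Bigl(\int_s^t \psi_2(\tau)\, d\tau\Bigr)\, ds.
\]
Substituting back into $x(t) \le \psi_1(t) + u(t)$ delivers the claimed bound, since on $[t_0, \infty) \cap I$ all the integrals in question are nonnegative and the absolute value signs are harmless.

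For the backward case $t \le t_0$, I would repeat the argument with the reversed auxiliary function $u(t) := \int_t^{t_0} \psi_2(s)\, x(s)\, ds$, again nonnegative, with $u(t_0) = 0$ and $-u'(t) = \psi_2(t)\, x(t)$. The same integrating factor trick applied on $[t, t_0]$ produces the same conclusion but with the roles of limits swapped, which is precisely the content of the outer absolute values in the statement.

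The main obstacle is essentially bookkeeping: keeping track of the signs and limits of integration so that the two cases combine into the single symmetric statement with absolute values, and ensuring that the almost-everywhere differentiation of $u$ is legitimate. The latter is immediate because $\psi_2 x$ is continuous, so $u$ is actually $C^1$ and no measure-theoretic subtlety arises. Once the forward case is written down carefully, the backward case is obtained by the substitution $t \mapsto 2t_0 - t$ or by repeating the argument verbatim, so there is no genuinely new content in the second half.
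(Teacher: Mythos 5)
Your proof is correct. The paper does not actually give an argument here: its ``proof'' is a one-line citation to Amann's \emph{Ordinary Differential Equations} (p.~90), so there is no approach to compare against. Your argument is the standard one: set $u(t) = \int_{t_0}^t \psi_2(s)x(s)\,ds$ for $t\ge t_0$, use the hypothesis to get the linear differential inequality $u' \le \psi_1\psi_2 + \psi_2 u$, multiply by the integrating factor $\exp\bigl(-\int_{t_0}^t\psi_2\bigr)$, integrate, and combine exponents to produce the kernel $\exp\bigl(\int_s^t\psi_2\bigr)$. As you observe, $\psi_2 x$ continuous makes $u$ genuinely $C^1$, so no measure-theoretic care is needed, and the backward case $t\le t_0$ follows by the mirror calculation with $u(t)=\int_t^{t_0}\psi_2 x\,ds$, which produces exactly the configuration of absolute values stated in the lemma. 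The one bookkeeping point worth writing carefully is that in the backward case the differential inequality becomes $u' + \psi_2 u \ge -\psi_1\psi_2$, so the integrating factor now multiplies a lower bound and the sense of the inequality flips twice before yielding the same final estimate; your sketch handles this correctly by invoking the symmetry $t\mapsto 2t_0-t$.
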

%DONE[TCH 5/4/2024. Rechecked from online library.pitt.edu] Recheck: it is suspicious when $\psi_1=0$.
\begin{proof}
See ~\cite[p.90]{amann2011ordinary}.
\end{proof}
% From Google books, start of chapter II, of Amann, screen shot on Don-mac.
% Screen shot also on El-mac.
% Book is available online from Pitt library.

% Deleted Reference.
%@misc{Gronwall,
%author={anonymous},
%title={Gronwall lemma},
%howpublished = {Encyclopedia of Mathematics,
%\url{http://encyclopediaofmath.org/index.php?title=Gronwall_lemma&oldid=30829}},
%year={2013}
%}

Here is the second version.
\begin{lemma}
Let $x:[t_0,t_1]\to\R^n$ be absolutely continuous and satisfy
\[
\|x'(t)\|\le \psi_2(t)\|x(t)\| + \psi_1(t),\quad t\in[t_0,t_1]\ \text{a.e.},
\]
where $\psi_1,\psi_2\in L^1(t_0,t_1)$, with $\psi_2$ nonnegative.  Then, for all
$t\in[t_0,t_1]$, we have
\[
\|x(t)-x(t_0)\|\le 
\int_{t_0}^t \exp(\int_s^t \psi_2(t) d\tau)
(\psi_2(s)\|x(t_0)\| + \psi_1(s)) ds
\]
\end{lemma}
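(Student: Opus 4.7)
The plan is to reduce to a standard scalar integrating-factor argument by working with the displacement $y(t) := \|x(t)-x(t_0)\|$, which satisfies $y(t_0)=0$, rather than with $\|x(t)\|$ directly.

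First I would establish the regularity of $y$. Since $x$ is absolutely continuous, the reverse triangle inequality gives, for $s<t$ in $[t_0,t_1]$,
\[
|y(t)-y(s)| \le \|x(t)-x(s)\| \le \int_s^t \|x'(\tau)\|\,d\tau,
\]
and $\|x'\|\in L^1(t_0,t_1)$ by hypothesis, so $y$ is absolutely continuous on $[t_0,t_1]$. In particular $y$ is differentiable almost everywhere with $|y'(t)|\le \|x'(t)\|$ a.e. Combining this with the hypothesis and with the triangle inequality $\|x(t)\|\le \|x(t_0)\|+y(t)$, I obtain the scalar differential inequality
\[
y'(t)\ \le\ \|x'(t)\|\ \le\ \psi_2(t)\,y(t)\ +\ \tilde\psi_1(t)\quad\text{a.e.,}
\]
where $\tilde\psi_1(t):=\psi_2(t)\|x(t_0)\|+\psi_1(t)\in L^1(t_0,t_1)$.

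Next I would introduce the integrating factor $\mu(t):=\exp\!\bigl(-\int_{t_0}^t\psi_2(\tau)\,d\tau\bigr)$, which is absolutely continuous and bounded on $[t_0,t_1]$ with $\mu'(t)=-\psi_2(t)\mu(t)$ a.e. Since the product of two bounded absolutely continuous functions is absolutely continuous, $\mu y$ is absolutely continuous, and the product rule together with the inequality above gives
\[
(\mu(t)y(t))'\ =\ \mu(t)\bigl(y'(t)-\psi_2(t)y(t)\bigr)\ \le\ \mu(t)\,\tilde\psi_1(t)\quad\text{a.e.}
\]
Integrating from $t_0$ to $t$, using $y(t_0)=0$, and then multiplying by $\mu(t)^{-1}=\exp\!\bigl(\int_{t_0}^t\psi_2\bigr)$ yields
\[
y(t)\ \le\ \int_{t_0}^t \exp\!\left(\int_s^t \psi_2(\tau)\,d\tau\right)\bigl(\psi_2(s)\|x(t_0)\|+\psi_1(s)\bigr)\,ds,
\]
which is exactly the claimed bound on $\|x(t)-x(t_0)\|$.

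The main technical point — really the only one — is the passage from the hypothesis on $\|x'\|$ to a usable scalar differential inequality for $y$; once $y$ is known to be absolutely continuous with $y'\le\|x'\|$ a.e., the rest is the classical integrating-factor computation. Alternatively, one could integrate the inequality $y'\le \psi_2 y+\tilde\psi_1$ in integral form and invoke the first (integral) version of Gronwall's inequality stated in the appendix, but the direct integrating-factor argument is cleaner and gives the stated conclusion without an extra iteration.
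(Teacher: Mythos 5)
Your proof is correct; the key steps are sound. The only thing worth noting is that the paper does not actually prove this lemma at all — it simply cites Clarke, \emph{Functional Analysis, Calculus of Variations and Optimal Control}, Theorem 6.41 — so you have supplied a genuine argument where the text has a reference. The main idea, working with $y(t)=\|x(t)-x(t_0)\|$ instead of $\|x(t)\|$ so that the initial condition $y(t_0)=0$ kills the boundary term, is exactly what makes the stated form of the bound (with the constant $\|x(t_0)\|$ absorbed into the forcing term $\tilde\psi_1=\psi_2\|x(t_0)\|+\psi_1$) drop out cleanly from the integrating-factor computation. Your handling of the regularity issues is also right: $x$ absolutely continuous gives $x'\in L^1$, the reverse triangle inequality then shows $y$ is absolutely continuous with $|y'|\le\|x'\|$ a.e., and the product $\mu y$ of two bounded absolutely continuous functions on a compact interval is again absolutely continuous, so the fundamental theorem of calculus applies to $(\mu y)'$. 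Your final remark is also accurate — one could instead integrate the scalar inequality and invoke the first (integral) Gronwall lemma in the same appendix, but the integrating-factor route is more direct and gives the stated bound in one pass.
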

% DONE[TCH 5/4/2024: Fixed] What is $t_0$?
\begin{proof}
\cite[Th.6.41]{clarke2013functional}.
\end{proof}

%% DONE[]. TCH: Checked. Added clean statement of Gronwall.
\index[n]{x@$x$, function of time}
\begin{corollary}\label{thm:Gronwall} 
Let $x:[0,t_1]\to\R$ be a nonnegative continuous function, let $n$
be positive integer, and let $C,C_1$ nonnegative real numbers.  Assume
\[
x(t)\le C t^n  + C_1 \int_0^t x(t) dt,\quad\text{for all } t\in[0,t_1].
\]
Then $x(t) =O(t^n)$ for $t$ nonnegative and sufficiently close to
$t=0$.
\end{corollary}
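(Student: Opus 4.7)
The plan is to apply the first version of Gronwall's inequality stated just above, taking $\psi_1(t) = Ct^n$ and $\psi_2(t) \equiv C_1$. Both are continuous and nonnegative on $[0,t_1]$, and the hypothesis of the corollary (with $t_0 = 0$, so the absolute value signs are unnecessary) matches the hypothesis of the Gronwall lemma exactly.

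Plugging into the conclusion of the Gronwall lemma, I would obtain
\[
x(t) \;\le\; Ct^n + CC_1 \int_0^t s^n \exp\!\left(\int_s^t C_1\, d\tau\right) ds
\;=\; Ct^n + CC_1 \int_0^t s^n e^{C_1(t-s)}\, ds.
\]
Since $t \in [0,t_1]$, the exponential factor is bounded by $e^{C_1 t_1}$, so the integral is at most $e^{C_1 t_1} \int_0^t s^n\, ds = e^{C_1 t_1}\, t^{n+1}/(n+1)$. Thus
\[
x(t) \;\le\; Ct^n + \frac{CC_1\, e^{C_1 t_1}}{n+1}\, t^{n+1}.
\]
On the interval $[0,t_1]$, we have $t^{n+1} \le t_1 \cdot t^n$, so collecting terms gives
\[
x(t) \;\le\; \left( C + \frac{CC_1\, e^{C_1 t_1}\, t_1}{n+1} \right) t^n,
\]
which is exactly the $O(t^n)$ bound claimed, with an explicit constant depending only on $C, C_1, n, t_1$.

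There is no real obstacle here: the corollary is essentially a specialization of the Gronwall lemma to a power-law majorant with constant kernel, and the only minor step is trading the extra factor of $t$ produced by the integration $\int_0^t s^n\, ds$ for the uniform bound $t \le t_1$ to land back at the $t^n$ rate.
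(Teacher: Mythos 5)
Your proof is correct, and it follows the route the paper intends: the corollary is stated without its own proof, immediately after the two versions of Gronwall's inequality, so the expected argument is precisely the specialization you carry out (take $\psi_1(t)=Ct^n$, $\psi_2\equiv C_1$, bound the exponential kernel by $e^{C_1 t_1}$, and absorb the extra factor of $t$ using $t\le t_1$). No gap, and the explicit constant you obtain makes the $O(t^n)$ conclusion uniform on $[0,t_1]$, which is slightly stronger than the stated ``sufficiently close to $0$.''
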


% 5/6/2023: Replaced dubious encyclopedia entry with a source giving a
% proof.  A more general theorem can be found in
% \cite[Th.~3]{Gronwall}.

\section{Functional derivative}

\index[n]{F@$F,G$, smooth functions}
\index{functional derivative}
\index[n]{zd@$\delta/\delta X$, functional derivative} 
\index[n]{0@$\bracks{-}{-}_*$, canonical pairing}
\index[n]{v@$\mb{v}$, vector!$\mb{v},\mb{w}$, vectors}
\index[n]{0@$-'$, derivative!directional}
\index{directional derivative}
\index[n]{V@$V$, vector space}

\begin{definition}[functional derviative]
Consider a real finite-dimensional vector space $V$, and its linear
dual $V^*$.  Let $F:V\to\R$ be smooth.  We define the \emph{functional
derivative} $\delta{F}/\delta\mb{v}\in V^*$ in terms of
the \emph{directional derivative} $F'$ of $F$ at $\mb{v}\in V$ by
\begin{equation}\label{eqn:functional-derivative}
F'(\mb{v};\mb{w}):=
\lim_{t \to 0}\frac{1}{t}(F(\mb{v} + t \mb{w}) - F(\mb{v}))
=: \left\langle \mb{w}, \frac{\delta F}{\delta\mb{v}} \right\rangle_{*}
\end{equation}
for all directions $\mb{w} \in V$, where
$\langle\cdot,\cdot\rangle_{*}$ is the natural pairing between a
vector space and its dual.
\end{definition}

\section{Stable and Unstable Manifolds}\label{sec:stable-manifold}

\index{fixed point!hyperbolic}
\index{stable manifold}
\index{generalized eigenspace}
\index{hyperbolic fixed point}

We review basic facts about stable and unstable manifolds at a
hyperbolic fixed point~\cite[Chapter~6]{irwin}.  Let $M$ be a manifold
and let $f:M\to{}M$ be a diffeomorphism with fixed point $p=f(p)$.  The
\emph{global stable set} at $p$ is the set of all $q\in{}M$ such that
$\lim_{n\to\infty}f^n(q)=p$.  If $U\subseteq{}M$ is open, the
\emph{local stable set} at $p$ is the set of all $q\in{}U$ such that
$\lim_{n\to\infty}f^n(q)=p$.

A fixed point $p$ is \emph{hyperbolic} if the tangent map
$T_pf:T_pM\to{}T_pM$ has no eigenvalues of absolute value $1$.  At a
hyperbolic fixed point, the tangent space $T_pM$ is a direct sum of
two summands, according to the factorization of the characteristic
polynomial of $T_pf$ into two factors: the \emph{stable} factor with
eigenvalues $|\lambda|<1$ and the \emph{unstable} factor with
eigenvalues $|\lambda|>1$.

\begin{theorem}[Irwin~\cite{irwin}\relax]  Let $p$ be a hyperbolic
fixed point of a $C^r$ diffeomorphism ($r\ge1$) of $M$.  Then, for
some open neighborhood $U$ of $p$, the local stable set $W^s(p)$ of
$f|_U$ at $p$ is a $C^r$ embedded submanifold of $M$, tangent at $p$
to the stable summand of $T_pf$.  The global stable set at $p$ is a
$C^r$ immersed submanifold of $M$, tangent at $p$ to the stable
summand of $T_pf$.
\end{theorem}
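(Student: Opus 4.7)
The plan is to reduce to a local problem near the origin in $\R^n$, construct the local stable manifold as the graph of a smooth function via a contraction-mapping argument, and then build the global stable manifold as a nested union of preimages of the local one. First I would choose a chart centered at $p$ so that $f$ takes the form $f(x) = Ax + g(x)$ with $A = T_pf$, $g(0)=0$, and $Dg(0)=0$. Hyperbolicity gives an $A$-invariant splitting $T_pM = E^s \oplus E^u$ corresponding to the factorization of the characteristic polynomial according to $|\lambda|<1$ vs.\ $|\lambda|>1$. After passing to an adapted (box) norm, $A_s := A|_{E^s}$ is a strict contraction (norm $<\lambda<1$) and $A_u := A|_{E^u}$ has a bounded inverse that is a strict contraction. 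Replacing $g$ by $\phi \cdot g$ for a smooth bump $\phi$ supported in a small ball $B_\varepsilon$ produces a globally defined $\tilde f$ agreeing with $f$ on $B_{\varepsilon/2}$, with $g$ having arbitrarily small global Lipschitz constant by choosing $\varepsilon$ small.

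Next I would characterize the local stable set as the set of initial points $q$ such that the forward orbit $(q_n) = (\tilde f^n q)$ lies in $\ell^\infty(\N; \R^n)$ and decays geometrically, i.e.\ $\sup_n \lambda^{-n}\|q_n\| < \infty$. This suggests Perron's method: set up the affine operator $T$ on the Banach space $X$ of such exponentially decaying sequences, defined by $(Tq)_n = A^n q_0^s + \sum_{k=0}^{n-1} A^{n-1-k}_s g(q_k)^s - \sum_{k=n}^\infty A^{n-1-k}_u g(q_k)^u$, where $(\cdot)^s,(\cdot)^u$ denote the projections onto $E^s$ and $E^u$. A short calculation shows fixed points of $T$ are exactly orbits of $\tilde f$ lying in $X$, and that $q_0^u$ is determined by $q_0^s$ through the formula above. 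Since $\mathrm{Lip}(g)$ can be made as small as desired, $T$ (parametrized by $q_0^s \in E^s$) is a uniform contraction on $X$, so the Banach fixed point theorem in its parameter-dependent $C^r$ version (see Irwin, Chapter 6) produces a $C^r$ map $h:E^s \to E^u$ whose graph is the local stable manifold. Differentiating the fixed-point equation at the origin and using $Dg(0)=0$ gives $Dh(0)=0$, i.e.\ tangency to $E^s$.

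For the global statement, I would use $W^s(p) = \bigcup_{n\ge 0} f^{-n}(W^s_{\mathrm{loc}}(p))$. Because $f$ is a diffeomorphism, each $f^{-n}(W^s_{\mathrm{loc}}(p))$ is a $C^r$ embedded submanifold of $M$ diffeomorphic to $W^s_{\mathrm{loc}}(p)$, and they form a nested increasing sequence. Equipping the union with the direct-limit smooth structure yields a $C^r$ injectively immersed submanifold (not in general embedded, since distinct branches can accumulate), with tangent space at $p$ equal to $E^s$ by the local result. The unstable manifold statement, if needed, follows by applying the argument to $f^{-1}$.

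The main obstacle will be the parameter-dependent $C^r$ regularity of the fixed point of $T$: proving continuity of the graph $h$ is immediate from uniform contraction, but lifting this to $C^r$ requires either differentiating the fixed-point equation formally and verifying that the resulting equation for $D^k h$ again has a unique bounded solution (the ``fiber-contraction'' or Lanford method), or invoking a smooth implicit function theorem on the Banach space of decaying sequences. Either route requires careful bookkeeping of the Lipschitz constants of $g$ and its derivatives on the shrunken neighborhood, which is the genuinely technical core of the argument.
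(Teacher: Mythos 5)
The paper does not prove this theorem; it states it as background and cites Irwin's book (Chapter 6), so there is no proof in the paper to compare against. Your sketch is a correct outline of the standard sequence-space (Lyapunov--Perron) proof, and it is in fact close to Irwin's own method: Irwin's contribution was precisely to recast the stable manifold theorem as an application of the Banach-space implicit function theorem on a space of exponentially decaying orbit sequences, so that $C^r$ regularity of the graph $h$ comes from the $C^r$ implicit function theorem rather than from separate derivative estimates. The operator $T$ you write down is the correct Perron operator, and the delicate point you flag --- the $C^r$ smoothness of the substitution operator $\gamma \mapsto (g(\gamma_k))_k$ on the weighted sequence space --- is the genuine technical crux; this operator is not automatically $C^r$ on $\ell^\infty$, and the choice of function space together with the cut-off estimates on $g$ and its derivatives are exactly what make it work. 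The global assembly is also fine, with the one unstated detail that the sets $f^{-n}(W^s_{\mathrm{loc}}(p))$ form an increasing chain of open embedded submanifolds once the local neighborhood is shrunk so that $f(W^s_{\mathrm{loc}})\subset W^s_{\mathrm{loc}}$; the direct limit then gives the injectively immersed (generally non-embedded) global stable manifold. I see no gap beyond the bookkeeping you already identify as the hard part.
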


There are corresponding statements for local and global unstable
manifolds $W^u(p)$.  Unstable for $f$ means stable for $f^{-1}$.

\section{Classical Lie Groups and Lie Algebras}\label{sec:Lie}

\index[n]{GL@$\mathrm{GL}_n$, general linear group}
\index{general linear group}
\index[n]{SL@$\mathrm{SL}_n$, special linear group}
\index{special linear group}
\index[n]{G@$G$, Lie group}

Let $\mathrm{GL}_n(\C)$ be the general linear group, consisting of all
invertible linear transformations $\C^n\to\C^n$. Let
$\mathrm{SL}_n(\C)$ be the special linear subgroup, consisting of all
linear transformations of determinant $1$.  
Let $\mathrm{GL}_n(\R)$ and $\mathrm{SL}_n(\R)$ be the general linear and
special linear groups of linear transformations $\R^n\to\R^n$.
All of the groups $\mathrm{GL}_n(\C)$, $\mathrm{SL}_n(\C)$,
$\mathrm{GL}_n(\R)$, and $\mathrm{SL}_n(\R)$ are Lie groups.

\index[n]{GL@$\mathrm{GL}_n$, general linear group!$\mathfrak{gl}_n$, Lie algebra}
\index[n]{SL@$\mathrm{SL}_n$, special linear group!$\mathfrak{sl}_n$, Lie algebra}
\index[n]{G@$G$, Lie group!$\mathfrak{g}$, Lie algebra}

The Lie algebras of these groups are $\mathfrak{gl}_n(\C)$ ($n\times
n$ matrices with complex entries), $\mathfrak{sl}_n(\C)$ (complex
entries and trace zero), $\mathfrak{gl}_n(\R)$ (real entries), and
$\mathfrak{sl}_n(\R)$ (real entries and trace zero).

\index[n]{SU@$\mathrm{SU}(1,1)$, special unitary group!$\mathrm{U}(1,1)$, full unitary group}
\index[n]{SU@$\mathrm{SU}(1,1)$, special unitary group}
\index{unitary group}
\index{special unitary group}
\index[n]{J@$\Jsu=\mathrm{diag}(-i,i)$, Cayley transform of $J$}
\index[n]{SU@$\mathrm{SU}(1,1)$, special unitary group!$\su$, Lie algebra}
\index[n]{t@$t$, real number!matrix entry}
\index[n]{yz@$z\in\C$}

The unitary group $U(1,1)$ of signature $(1,1)$ is
\[
U(1,1) = \{g\in\mathrm{GL}_2(\C)\mid
\bar{g}^{tr} \Jsu g = \Jsu\},
\]
where $\Jsu = \mathrm{diag}(-i,i)$.  
%Here $-^\dag$ is the conjugate transpose of a matrix. 
The special unitary group $\SU$ is the determinant $1$ subgroup of $U(1,1)$.
The Lie algebra $\su$ of $\SU$ is given
by 
\[
% TCH 5/2/2024: corrected.
\{X\in\sl(\C)\mid \bar{X}^{tr} \Jsu + \Jsu X = 0\} = 
\left\{ \begin{pmatrix} i t & z \\ \bar{z} & -i t\end{pmatrix}\mid t\in\R, z\in\C\right\}.
\]

\index[n]{0@$-^{tr}$, transpose}
\index[n]{SO@$\mathrm{SO}$, special orthogonal group!$\mathfrak{so}$, Lie algebra}
\index{special orthogonal group}

The special orthogonal group $\mathrm{SO}(m,n)$ is the subgroup of
$\mathrm{SL}_{m+n}(\R)$ preserving a symmetric matrix $A$ of signature
$(m,n)$:
\[
\mathrm{SO}(m,n) = \{g\in\mathrm{SL}_{m+n}(\R)\mid g^{tr} A g = A\},
\]
where $-^{tr}$ is the transpose. Different choices of matrices $A$ of
the same $(m,n)$ or reversed $(n,m)$ signature give isomorphic Lie
groups.  The Lie algebra is
\[
\mathfrak{so}(m,n) = \{X\in\mathfrak{sl}_{m+n}(\R)\mid
X^{tr} A + A X = 0\}.
\]

\index{adjoint representation!Lie group}
\index{adjoint representation!Lie algebra}
\index[n]{Ad@$\Ad$, adjoint representation of Lie group}
\index[n]{ad@$\ad$, adjoint representation of the Lie algebra}

For a general Lie group $G$, the Lie algebra $\mathfrak{g}$ can be defined
as the tangent space $T_eG$ at the neutral element $e\in G$.
The group $G$ acts as inner automorphisms (conjugation) on itself.
Passing to the tangent spaces, the differential of inner automorphism 
affords a representation $\op{Ad}:G\to\mathrm{GL}(\mathfrak{g})$
on the Lie algebra $\mathfrak{g}$, called the adjoint representation.
Again by taking derivatives, this in turn affords a representation
of the Lie algebra
$\mathrm{ad}:\mathfrak{g}\to\mathfrak{gl}(\mathfrak{g})$,
called the adjoint representation of the Lie algebra.

\index{coadjoint representation}
\index[n]{ad@$\ad^*$, coadjoint representation of the Lie algebra}
\index[n]{Ad@$\Ad^*$, coadjoint representation of the Lie group}

Let $\mathfrak{g}^*$ be the linear dual of the Lie algebra $\mathfrak{g}$.
The coadjoint representation $\op{Ad}^*:G\to\mathrm{GL}(\mathfrak{g}^*)$
of $G$
is defined by
\[
\bracks{\op{Ad}_g^* Y}{X}_* = \bracks{Y}{\op{Ad}_{g^{-1}} X}_*,
\]
for all $Y\in\mathfrak{g}^*$ and $X\in\mathfrak{g}$.
The coadjoint representation $\op{ad}^*:\mathfrak{g}\to\mathfrak{gl}(\mathfrak{g}^*)$
of the Lie algebra $\mathfrak{g}$ is defined by
\[
\bracks{\op{ad}_Z^* Y}{X}_* = \bracks{Y}{-\op{ad}_{Z} X}_*,
\]
for all $Y\in\mathfrak{g}^*$ and $X\in\mathfrak{g}$.

\index{Cayley transform}
\index[n]{Cayley@$\mathrm{Cayley}$, Cayley transform}
\index[n]{A@$A$, matrix or linear map!Cayley transform matrix}

The \emph{Cayley transform} of a $2\times2$ matrix $X$ is defined
as 
\[
\mathrm{Cayley}(X):= A^{-1} X A,\quad\text{where}\ A= \frac{1}{\sqrt{2}}\mattwo 1 i i 1\in\SL(\C).
\]

\section{Exceptional Isomorphisms in Rank One}
\label{sec:sl2-exceptional}

\index[n]{0=@$\cong$, isomorphism}

\begin{lemma}
\begin{itemize}
    \item There is an isomorphism of Lie groups:
    \begin{equation}\label{eqn:SL2-SU11}
        \SL(\R) \cong \SU
    \end{equation}
\item There are isomorphisms of Lie algebras:
\begin{equation}\label{eqn:sl2-isoms}
\sl(\R) \cong \su  \cong \sotwo
\end{equation}
\end{itemize}
\end{lemma}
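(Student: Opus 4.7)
The plan is to establish the two isomorphisms using the Cayley matrix $A$ from the appendix as a concrete intertwiner, and then pass from $\su$ to $\sotwo$ via the adjoint representation. First I would define the map $\Psi : \SL(\R) \to \mathrm{GL}_2(\C)$ by $g \mapsto A^{-1} g A$. This is manifestly a group homomorphism that preserves the determinant, so it lands in $\SL(\C)$. The nontrivial check is that $\Psi(g) \in \SU$ when $g$ has real entries, i.e.\ that $\overline{\Psi(g)}^{tr}\, \Jsu\, \Psi(g) = \Jsu$. Using $\bar{A} = A^{tr}$ (visible from the explicit form of $A$) and the identity $A^{tr} \Jsu A = \Jsu$, which is a direct $2 \times 2$ calculation, one computes
\[
\overline{\Psi(g)}^{tr} \Jsu \Psi(g) = \bar{A}^{tr} \bar{g}^{tr} (A^{-1})^{tr} \Jsu A^{-1} g A = A^{-1} g^{tr}\, A\, \Jsu\, A^{-1} g A = \Jsu,
\]
using $\bar{g} = g$ and that $A \Jsu A^{-1}$ reduces (after one further application of $A^{tr} \Jsu A = \Jsu$) to the appropriate real conjugation. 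The inverse is given by $h \mapsto A h A^{-1}$, and the symmetric verification shows it sends $\SU$ to $\SL(\R)$. This establishes \eqref{eqn:SL2-SU11}.

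For the Lie algebra isomorphism $\sl(\R) \cong \su$, I would take the differential of $\Psi$ at the identity: this is precisely $X \mapsto \mathrm{Cayley}(X) = A^{-1} X A$. Since $\Psi$ is a diffeomorphism of Lie groups, its differential is automatically a Lie algebra isomorphism. For the reader who prefers a direct check, one verifies by computing on a basis of $\sl(\R)$ (say $\mathrm{diag}(1,-1)$ and the two standard nilpotents) that the images satisfy $\bar{X}^{tr}\Jsu + \Jsu X = 0$, which defines $\su$.

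Third, to produce the isomorphism $\su \cong \sotwo$, I would use the adjoint action of $\SU$ on $\su$ together with the trace form $\langle X, Y \rangle = \mathrm{tr}(XY)$. This form is $\Ad$-invariant, symmetric, and nondegenerate. Using the basis
\[
\Jsu = \begin{pmatrix} -i & 0 \\ 0 & i \end{pmatrix}, \qquad
E_1 = \begin{pmatrix} 0 & 1 \\ 1 & 0 \end{pmatrix}, \qquad
E_2 = \begin{pmatrix} 0 & i \\ -i & 0 \end{pmatrix}
\]
of $\su$, a direct computation of the Gram matrix gives $\langle \Jsu, \Jsu\rangle = -2$ and $\langle E_j, E_j\rangle = 2$ with all cross-terms zero, so the form has signature $(2,1)$. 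The map $\ad : \su \to \mathfrak{so}(\langle\cdot,\cdot\rangle) \cong \sotwo$ is therefore a well-defined Lie algebra homomorphism. Its kernel is the center of $\su$, which is zero because $\su$ is simple (equivalently, $\sl(\R)$ is simple), and both sides are three-dimensional, so $\ad$ is an isomorphism.

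The main obstacle I expect is the signature bookkeeping in the third step: getting the sign conventions right so that the trace form on $\su$ comes out with signature $(2,1)$ rather than $(3,0)$ or $(1,2)$, and choosing the correct normalization of the quadratic form defining $\sotwo$ to match. Once the basis computation above is laid out explicitly, everything else is either a standard dimension-count argument or a mechanical matrix verification; the Cayley matrix $A$ does all the structural work on the group and algebra level, and the adjoint representation does the work of passing to $\sotwo$.
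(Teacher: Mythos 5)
Your proposal takes essentially the same route as the paper: Cayley conjugation by $A$ for the first two isomorphisms, and the adjoint representation together with the signature-$(2,1)$ trace form for the third (the paper runs $\ad$ on $\sl(\R)$ rather than $\su$, but since you have already identified the two Lie algebras this is a purely cosmetic difference). Your remark that the kernel of $\ad$ is trivial because the algebra is simple is a point the paper leaves unsaid, and it is genuinely needed.

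One bookkeeping slip in the group-level verification: the identity $\bar{A}=A^{tr}$ is false. From the explicit form $A=\tfrac{1}{\sqrt2}\bigl(\begin{smallmatrix}1&i\\i&1\end{smallmatrix}\bigr)$ one sees $A$ is \emph{symmetric} and \emph{unitary}, so the correct facts are $A^{tr}=A$ and $\bar A=A^{-1}$. With these, your displayed chain of equalities is exactly right: $\bar A^{tr}=A^{-1}$ and $(\bar A^{-1})^{tr}=A$, giving $A^{-1}g^{tr}A\,\Jsu\,A^{-1}gA$. The last step also deserves to be spelled out rather than waved past: $A\Jsu A^{-1}=J$ (inverting the definition of the Cayley transform), and then $g^{tr}Jg=(\det g)J=J$ for $g\in\SL(\R)$ — the symplectic identity — finishes the reduction to $\Jsu$. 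None of this changes the outcome, but the cited identity should be fixed and the $\mathrm{Sp}_2$ fact named.
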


\begin{proof}
The isomorphism between the special linear and special unitary group
is provided by the Cayley transform.
We have
\begin{align}
   \mathrm{Cayley}(\SL(\R))= A^{-1} \, \SL(\R) \, A &= \SU \\
   \mathrm{Cayley}(\sl(\R))= A^{-1} \, \sl(\R) \, A &= \su.
\end{align}

\index{trace}
\index[n]{0@$\bracks{-}{-}$, bilinear form!trace form on Lie algebra}

To establish the isomorphism with the special orthogonal Lie algebra
consider the adjoint representation of $\mathfrak{g}=\sl(\R)$.
\[
\mathrm{ad}:\sl(\R)\to\mathfrak{gl}(\mathfrak{g}).
\]
We set $\bracks{X}{Y}=\mathrm{trace}(XY)$, for $X,Y\in\sl(\R)$.
This is a quadratic form of signature $(2,1)$ on $\sl(\R)$.
The linear transformation $\mathrm{ad}_X$ preserves the quadratic
form in the sense that 
\[
\bracks{\mathrm{ad}_X Y}{Z} + \bracks{Y}{\mathrm{ad}_X Z} = 0.
\]
This implies that the image of the adjoint representation is contained
in a special orthogonal Lie subalgebra of
$\mathfrak{gl}(\mathfrak{g})$ of signature $(2,1)$. This is an
isomorphism.
\end{proof}

\section{Matrix Identities}
\index[n]{X@$X$, Lie algebra element!$X,Y,Z,W\in\sl$}

We collect the following properties of matrices in $\sl(\C)$.

\begin{proposition}\label{prop:brack-brack}
For matrices $X,Y \in \sl(\C)$ we have
\begin{align*}
\ad_X^2\,Y &=
[[Y,X],X] = -2 \det(X) Y - 2 X Y X
\\
&= 2 \bracks{X}{X} Y - 2 \bracks{X}{Y} X.
\end{align*}
\end{proposition}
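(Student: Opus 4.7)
The plan is to prove the two equalities by direct algebraic manipulation, leveraging the special structure of traceless $2\times 2$ matrices as encoded in Lemma~\ref{lem:sl2-lemmas}. The identity splits naturally into two independent computations, one for each equals sign, and neither appears to pose a real obstacle: this is a routine verification rather than a deep fact, so the work is simply to organize the identities cleanly.

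For the first equality, I would expand the nested bracket directly:
\[
[[Y,X],X] = (YX - XY)X - X(YX - XY) = YX^2 + X^2 Y - 2XYX.
\]
Since $X \in \sl(\C)$ has trace zero, the Cayley--Hamilton theorem applied to the characteristic polynomial $\lambda^2 + \det(X) = 0$ gives $X^2 = -\det(X) I_2$. Substituting this into $YX^2 + X^2 Y$ yields $-2\det(X)\,Y$, which produces the first identity
\[
\ad_X^2 Y = -2\det(X)\,Y - 2XYX.
\]

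For the second equality, I would use the anticommutator identity from Lemma~\ref{lem:sl2-lemmas}(3), namely $XY + YX = \bracks{X}{Y} I_2$, to rewrite $XY = \bracks{X}{Y} I_2 - YX$. Multiplying on the right by $X$ and using $X^2 = -\det(X) I_2$ again gives
\[
XYX = \bracks{X}{Y} X - Y X^2 = \bracks{X}{Y} X + \det(X)\,Y.
\]
Plugging this into the first identity and collecting the $\det(X) Y$ terms produces $-4\det(X)\,Y - 2\bracks{X}{Y} X$. Finally, the trace-form evaluation $\bracks{X}{X} = -2\det(X)$ from Lemma~\ref{lem:sl2-lemmas}(1) converts $-4\det(X)$ into $2\bracks{X}{X}$, delivering the second form $2\bracks{X}{X} Y - 2\bracks{X}{Y} X$.

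The only step requiring any care is keeping the signs and placement of $X$ straight in the substitution for $XYX$; everything else is automatic from the two cited parts of Lemma~\ref{lem:sl2-lemmas} and the traceless Cayley--Hamilton relation $X^2 = -\det(X) I_2$. There is no genuine obstacle here.
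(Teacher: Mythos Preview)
Your proof is correct and complete. The paper actually states this proposition without proof (it is listed among routine matrix identities in the appendix), so your direct computation via Cayley--Hamilton and Lemma~\ref{lem:sl2-lemmas}(1),(3) is exactly the kind of verification the paper leaves to the reader.
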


\begin{proposition}\label{prop:trace-quotient-sub}
For matrices $X,Y,Z,W \in \sl(\C)$, we have
\[ 
\langle X,Z\rangle\bracks{Y}{W} - \langle Y,Z \rangle \bracks{X}{W} 
= -\frac{1}{2}\bracks{[X,Y]}{[Z,W]}
\]
\end{proposition}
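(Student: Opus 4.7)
The plan is to recast the identity in the equivalent form
\[
\bracks{[X,Y]}{[Z,W]} = 2\bracks{Y}{Z}\bracks{X}{W} - 2\bracks{X}{Z}\bracks{Y}{W},
\]
which is a Lagrange-type identity reflecting the exceptional isomorphism $\sl_2(\C) \cong \mathfrak{so}(3,\C)$ of Appendix~\ref{sec:sl2-exceptional}: under this isomorphism, the trace form becomes a non-degenerate inner product and the Lie bracket plays the role of a cross product. The right-hand side is the obvious analogue of the classical formula $(X\times Y)\cdot(Z\times W) = (X\cdot Z)(Y\cdot W) - (X\cdot W)(Y\cdot Z)$ (up to the overall sign coming from the signature of the trace form).

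First I would settle the special case $W = X$ directly, using only the invariance property $\bracks{A}{[B,C]} = \bracks{[A,B]}{C}$ from Lemma~\ref{lem:sl2-lemmas}(4) and Proposition~\ref{prop:brack-brack}. Symmetry of the trace form gives $\bracks{[X,Y]}{[Z,X]} = \bracks{[Z,X]}{[X,Y]}$, and invariance rewrites this as $\bracks{[[Z,X],X]}{Y}$. Proposition~\ref{prop:brack-brack} evaluates the inner double bracket as $[[Z,X],X] = 2\bracks{X}{X}Z - 2\bracks{X}{Z}X$, so pairing with $Y$ produces exactly $2\bracks{X}{X}\bracks{Y}{Z} - 2\bracks{X}{Z}\bracks{X}{Y}$, which is the desired identity with $W = X$.

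Next, I would upgrade this special case to the general identity by a polarization argument. Set
\[
f(X,Y,Z,W) := \bracks{[X,Y]}{[Z,W]} - 2\bracks{Y}{Z}\bracks{X}{W} + 2\bracks{X}{Z}\bracks{Y}{W}.
\]
This is multilinear, antisymmetric in $(X,Y)$, antisymmetric in $(Z,W)$, and symmetric under the pair swap $(X,Y) \leftrightarrow (Z,W)$ (all three symmetries are visible directly from the definition). The first step showed $f(X,Y,Z,X) \equiv 0$. Polarizing this vanishing in $X$ (replace $X$ by $X + \lambda W$ and extract the coefficient of $\lambda$) yields
\[
f(X,Y,Z,W) + f(W,Y,Z,X) = 0.
\]
Combining this relation with the pair-swap symmetry and the two pair antisymmetries, a short chase shows $f$ is also antisymmetric in the adjacent positions $(Y,Z)$, hence totally antisymmetric in all four arguments.

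The punchline is dimension counting: since $\dim_\C \sl_2(\C) = 3$, we have $\Lambda^4 \sl_2(\C) = 0$, so every totally antisymmetric quadrilinear form on $\sl_2(\C)$ vanishes identically, giving $f \equiv 0$. The main obstacle is simply the bookkeeping in the polarization step — composing the pair-swap symmetry with the two pair antisymmetries and the polarized relation to deduce total antisymmetry — but no computation beyond what Proposition~\ref{prop:brack-brack} already provides is required.
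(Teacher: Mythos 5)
Your proof is correct, and it takes a genuinely different route from the paper, whose proof is a bare \emph{``Compute.''} — a direct symbolic verification (e.g.\ expanding both sides in the standard basis $\{E,F,H\}$, or in matrix entries, and checking equality). Your argument instead gets the special case $W=X$ structurally from the invariance $\bracks{A}{[B,C]}=\bracks{[A,B]}{C}$ and the $\ad_X^2$-formula of Proposition~\ref{prop:brack-brack}, polarizes to obtain the relation $f(X,Y,Z,W)+f(W,Y,Z,X)=0$, and then observes that the three transposition antisymmetries $(1,2)$, $(3,4)$, $(1,4)$ generate $S_4$ (a connected transposition graph on four letters), so $f$ is totally antisymmetric; since $\dim_\C\sl_2(\C)=3$, every alternating $4$-linear form on $\sl_2(\C)$ vanishes, giving $f\equiv 0$. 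I verified each symmetry claim and the polarization expansion, and all check out. What the brute-force computation buys is brevity of exposition; what your route buys is an explanation of \emph{why} the identity holds, namely that it is the rank-one manifestation of the classical quadruple-product formula under $\sl_2\cong\mathfrak{so}(3)$, and it reuses Proposition~\ref{prop:brack-brack} rather than reproving it in disguise. One small stylistic remark: the phrase ``a short chase shows'' before deducing antisymmetry in $(Y,Z)$ is fine, but the cleanest way to land the punchline is the generation argument you already gesture at — the three transpositions you have in hand generate $S_4$, and a map on generators with $\sigma\cdot f=-f$ forces $f$ to transform by the sign character — which avoids the explicit chase entirely.
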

\begin{proof}
Compute.
\end{proof}
\begin{proposition}\label{prop:brack-brack-inner-prod}
For matrices $X,Y,Z, W \in \sl(\C)$ such that $\bracks Y Z = 0$, we have
\[
% TCH 5/4/2024: Sign corrected.
\bracks{[X,Y]}{[Z,W]} = -2 \bracks{X}{Z} \bracks{Y}{W}
\]
\end{proposition}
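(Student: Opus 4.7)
The proposition follows directly as a specialization of the identity established in Proposition~\ref{prop:trace-quotient-sub}. The plan is simply to substitute the hypothesis $\bracks{Y}{Z}=0$ into the general four-matrix identity
\[
\bracks{X}{Z}\bracks{Y}{W} - \bracks{Y}{Z}\bracks{X}{W} = -\tfrac{1}{2}\bracks{[X,Y]}{[Z,W]},
\]
which causes the second term on the left-hand side to vanish, leaving
\[
\bracks{X}{Z}\bracks{Y}{W} = -\tfrac{1}{2}\bracks{[X,Y]}{[Z,W]}.
\]
Multiplying both sides by $-2$ yields the claimed identity. No further work is required beyond invoking the previous proposition.

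Since this is effectively a one-line deduction from the preceding result, there is no real obstacle. The only minor subtlety is that Proposition~\ref{prop:trace-quotient-sub} is stated for $\sl(\C)$ without restriction, so the hypothesis $\bracks{Y}{Z}=0$ is applied without needing any additional nondegeneracy or positivity argument. Thus the proof reduces to a direct citation of Proposition~\ref{prop:trace-quotient-sub} followed by elementary algebra.
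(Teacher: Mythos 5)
Your proof is correct and takes the same approach as the paper, which states that the result is ``immediate from the previous proposition.'' You have simply spelled out the one-line substitution and rearrangement that the paper leaves implicit.
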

\begin{proof}
This is immediate from the previous proposition.
\end{proof}

\index[n]{exp@$\exp$, exponential and matrix exponential}
\index{exponential!matrix}

The matrix exponential is defined by the power series,
which converges for all $n\times n$ matrices $X$:
\[
\exp(X) = \sum_{k=0}^\infty \frac{X^k}{k!}.
\]

\index[n]{sinh@$\sinh$, hyperbolic sine}
\index[n]{cosh@$\cosh$, hyperbolic cosine}

\begin{lemma}\label{lem:exp-of-sl2}
If $X \in \sl(\R)$ and $d=\det(X)$, then $\exp(X) \in \SL(\R)$,
and
% and also (for nonzero $X$), % follows from det(X)\ne0.
\begin{align}\label{eqn:matrix-exp-sl2}
\begin{split}
\exp(t X) &= \cosh{(t\sqrt{-d})} I_2 
+ \frac{\sinh{(t\sqrt{-d})}}{\sqrt{-d}}X,\quad (d<0)\\
&=\cos{(t\sqrt{d})} I_2 
+ \frac{\sin{(t\sqrt{d})}}{\sqrt{d}}X,\quad (d>0)\\
&=I_2 + t X,\quad (d=0).
\end{split}
\end{align}
\end{lemma}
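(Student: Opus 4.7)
The plan is to reduce everything to the Cayley--Hamilton identity for $X \in \sl(\R)$. Since $\tr(X)=0$, the characteristic polynomial of $X$ is $\lambda^2 + d$, so Cayley--Hamilton gives $X^2 = -d\, I_2$. This single relation lets me collapse every power $X^k$ onto the two-dimensional span of $\{I_2, X\}$: by induction $X^{2k} = (-d)^k I_2$ and $X^{2k+1} = (-d)^k X$.

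Next I would substitute this into the defining power series $\exp(tX) = \sum_{k\ge 0} t^k X^k / k!$ and split the sum according to the parity of $k$, yielding
\[
\exp(tX) \;=\; \Bigl(\sum_{k\ge 0} \frac{(-d)^k t^{2k}}{(2k)!}\Bigr) I_2 \;+\; \Bigl(\sum_{k\ge 0} \frac{(-d)^k t^{2k+1}}{(2k+1)!}\Bigr) X.
\]
The three cases then follow by recognizing the Taylor series. When $d<0$, write $(-d)^k = (\sqrt{-d})^{2k}$ and the scalar coefficients become $\cosh(t\sqrt{-d})$ and $\sinh(t\sqrt{-d})/\sqrt{-d}$. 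When $d>0$, write $(-d)^k = (-1)^k (\sqrt d)^{2k}$ and the coefficients become $\cos(t\sqrt d)$ and $\sin(t\sqrt d)/\sqrt d$. When $d=0$, all terms with $k\ge 2$ vanish because $X^2=0$, leaving $I_2 + tX$.

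For the membership $\exp(X)\in \SL(\R)$, I would invoke the standard identity $\det(\exp(X)) = \exp(\tr X)$ (which follows, for instance, from Jacobi's formula, Lemma~\ref{lem:Jacobi}, applied to $A(s) = \exp(sX)$). Since $\tr X = 0$, this gives $\det(\exp X) = 1$, and $\exp(X)$ is clearly real when $X$ is real. There is no genuine obstacle here; the only minor point to be careful about is the uniform handling of $\sqrt{-d}$ versus $\sqrt{d}$ in the two nondegenerate cases, and checking that the formulas are continuous in $d$ across $d=0$ (they are: both nondegenerate expressions limit to $I_2 + tX$ as $d\to 0$, so the degenerate case is the expected limit rather than a genuinely separate computation).
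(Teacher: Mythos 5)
Your proof is correct and takes a genuinely different route from the paper's. You compute the matrix exponential directly from the defining power series: the Cayley--Hamilton relation $X^2 = -d\,I_2$ collapses every power $X^k$ onto $\mathrm{span}\{I_2, X\}$, and splitting by parity produces the scalar Taylor series for $\cosh/\sinh$, $\cos/\sin$, or the truncated polynomial. The paper instead observes (also citing Cayley--Hamilton for the span fact) that both sides of the claimed identity solve the same linear matrix initial value problem $F'(t) = XF(t)$, $F(0) = I_2$, and appeals to uniqueness of solutions. Your argument is more elementary and self-contained --- it verifies the coefficients explicitly rather than outsourcing to ODE uniqueness --- while the paper's argument is shorter once you know to reach for it and avoids the parity bookkeeping. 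Both establish $\exp(X) \in \SL(\R)$ the same way, via $\det(\exp X) = \exp(\tr X) = 1$. Your closing remark about continuity in $d$ across $d=0$ is a nice sanity check but not logically needed, since the degenerate case is handled directly.
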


\index{Cayley-Hamilton theorem}
\index{Rodrigues formula for rotations}

\begin{proof}
By the Cayley-Hamilton theorem, for $X\in\sl(\R)$, the matrix
exponential $\exp(t X)$ is a linear combination of $I_2$ and $X$.  The
lemma makes this linear combination explicit.  The lemma is a variant
of the classical Rodrigues formula, which holds for rotation matrices.
The two sides of the identity are equal, both being the unique solution
of the initial value problem
\[
F'(t) = X F(t),\quad F(0)=I_2.
\]
The
determinant of $\exp(X)$ is given by the formula
\[
\det(\exp (X)) = \exp(\tr (X)) = 1
\]
since $\tr(X) = 0$. 
\end{proof}

\section{Symplectic Geometry}\label{sec:symplectic}

\index[n]{V@$V$, vector space}

% DONE[TCH] Make symplectic geometry appendix with this and
% one-form stuff.  

% DONE[] Is this indeed the identification being used?  This
%comment about exterior powers might belong earlier, when
%the vector field associated to a function is first introduced.}  

For any finite
dimensional vector space $V$ with dual $V^*$, we have a nondegenerate
pairing between the exterior
power $\Lambda^k (V^*)$ and  $\Lambda^k V$ that sends
\[
\mb{v}_1^*\wedge\mb{v}_2^*\cdots\wedge\mb{v}_k^*\in\Lambda^k(V^*),\quad
\mb{w}_1\wedge\mb{w}_2\cdots\wedge \mb{w}_k\in\Lambda^kV
\]
to $\det(\bracks{\mb{v}_i^*}{\mb{w}_j}_*)$.
We can regard an element $\omega$ of the exterior power $\Lambda^k(V^*)$
as an alternating multilinear map on $V^k$ by using this pairing.
\[
\omega(\mb{w}_1,\ldots,\mb{w}_k) = 
\bracks{\omega}{\mb{w}_1\wedge\mb{w}_2\cdots\wedge\mb{w}_k}_*.
\]

\index[n]{v@$\mb{v}$, vector!in $T(T^*M)$}
\index[n]{v@$\mb{v}$, vector!$\mb{v}^*\in T^*M$}
\index[n]{v@$\mb{v}$, vector!$\mb{v}_*\in TM$}
\index[n]{zh@$\theta$, differential one-form}
\index[n]{zz@$\omega$, two-form!on symplectic manifold}
\index[n]{F@$F,G$, smooth functions}
\index[n]{0@$-\vec{\phantom{-}}$, vector field of function}
%
% DONE[] Move the following paragraph to the appendix?

Recall that the cotangent bundle $T^*M$ of a smooth manifold carries
the tautological one-form $\theta$.  Each element $\mb{v}$ of
$T(T^*M)$ defines both an element $\mb{v}^*\in T^*M$ (by projection)
and an element $\mb{v}_*\in TM$ (by the tangent map of $T^*M\to M$).
The tautological one-form is defined by the relation
$\bracks{\theta}{\mb{v}}_*=\bracks{\mb{v}^*}{\mb{v}_*}_*$.  The
exterior derivative $\omega=d\theta$ defines a canonical two-form on
$T^*M$, giving the cotangent bundle the structure of a symplectic
manifold.

\index{tautological one-form}
\index{canonical two-form}
\index{symplectic vector field $\vec{F}$}
\index{Poisson bracket}
\index{Hamilton's equation}

Each differentiable function $F$ on a symplectic manifold $(M,\omega)$ defines a
vector field $\vec{F}$ by
\[
\omega_q(\vec{F},\mb{v}_*)=\bracks{dF}{\mb{v}_*}_*, 
\]
for $\mb{v}_*\in{T_qM}$.
The Poisson bracket is defined by
\(
\{F,G\}=\omega(\vec{F},\vec{G})
\).
Hamilton's equation corresponding to a Hamiltonian $\H$ is the ODE
\[
p'=\{p,\H\}.
\]

\section{Lie-Poisson Dynamics on the Lie Algebra}\label{sec:X-lie-poisson}

\index{Lie-Poisson!dynamics}
\index{Lie-Poisson!bracket}
\index[n]{0@$\{-,-\}$, Poisson bracket!Lie-Poisson}

The dual vector space $\mathfrak{g}^*$ can be equipped with a Poisson
bracket\index{Poisson!bracket} called the $\pm$ \textit{Lie-Poisson
  bracket}: if $F,G$ are two smooth functions on $\mathfrak{g}^*$,
then the bracket is given by
\[
\index[n]{0@$\{-,-\}$, Poisson bracket}
\{F,G\}(X^*) = \pm \left\langle X^*, 
\left[ \frac{\delta F}{\delta X^*}, 
\frac{\delta G}{\delta X^*} \right] \right\rangle_{*} \quad 
X^* \in \mathfrak{g}^*
\]
Here we identify $\mathfrak{g}\cong\mathfrak{g}^{**}$.

\index{Lie-Poisson!equations}

Hamilton's equations with respect to this bracket are called
\textit{Lie-Poisson equations} and take the following form (Marsden and
Ratiu~\cite[Proposition~10.7.1]{marsden2013introduction}).

\index[n]{H@$\H$, Hamiltonian!Lie-Poisson}
\index[n]{X@$X$, Lie algebra element!$X^*\in\mathfrak{g}^*$}

\begin{proposition}[Lie-Poisson equations]
Let $G$ be a Lie group. The equations of motion for a smooth
Hamiltonian $\H : \mathfrak{g}^* \to \R$ with respect to the $\pm$
Lie-Poisson brackets on $\mathfrak{g}^*$ are
\begin{equation}\label{eqn:lie-poisson-gen}
\frac{dX^*}{dt} = \mp \ad^*_{\delta \H/ \delta X^*}X^* \quad X^* \in \mathfrak{g}^*
\end{equation}
\end{proposition}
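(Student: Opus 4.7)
The plan is to probe Hamilton's equation $\dot f = \{f,\H\}$ against a complete family of linear test functions on $\mathfrak{g}^*$. For each fixed $Y \in \mathfrak{g}$, I would introduce the linear observable $F_Y : \mathfrak{g}^* \to \R$, $F_Y(X^*) := \langle X^*, Y\rangle_*$. Because $F_Y$ is already linear in $X^*$, its functional derivative is trivial to compute: $\delta F_Y / \delta X^* = Y$. This is the key device that reduces the proof to an unwinding of definitions.

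Next, I would compute both sides of $\dot F_Y = \{F_Y, \H\}$ at a point $X^*(t)$. On the left, the chain rule gives $\dot F_Y(X^*) = \langle \dot X^*, Y\rangle_*$. On the right, substituting into the $\pm$ Lie-Poisson bracket yields
\[
\{F_Y,\H\}(X^*) = \pm\left\langle X^*, \left[Y,\frac{\delta \H}{\delta X^*}\right]\right\rangle_*.
\]
The next step is to convert the bracket expression into a coadjoint action by invoking the defining relation $\langle \op{ad}^*_Z \mu, W\rangle_* = \langle \mu, -\op{ad}_Z W\rangle_*$ from the appendix on the classical Lie groups. Setting $Z = \delta \H/\delta X^*$, $\mu = X^*$, $W = Y$, and combining with the antisymmetry $[Y,Z] = -[Z,Y]$, the right-hand side rewrites as $\mp\langle \op{ad}^*_{\delta \H/\delta X^*} X^*, Y\rangle_*$.

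Finally, I would conclude by equating both sides:
\[
\langle \dot X^*, Y\rangle_* = \mp\left\langle \op{ad}^*_{\delta \H/\delta X^*} X^*,\, Y\right\rangle_*,
\qquad \forall Y \in \mathfrak{g}.
\]
Nondegeneracy of the canonical pairing $\langle\cdot,\cdot\rangle_* : \mathfrak{g}^* \times \mathfrak{g} \to \R$ then forces the desired equality $\dot X^* = \mp \op{ad}^*_{\delta\H/\delta X^*} X^*$ in $\mathfrak{g}^*$. There is no serious obstacle here; the entire proof is essentially a bookkeeping exercise. The only item requiring care is tracking the sign: two sign conventions intervene (the $\pm$ choice in the bracket itself and the built-in minus in the paper's definition of $\op{ad}^*$), and one must verify that the product of these sign flips, together with the antisymmetry of the Lie bracket, matches the $\mp$ in the statement. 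A secondary subtlety worth a sentence in the write-up is the justification that testing against the linear family $\{F_Y\}_{Y\in\mathfrak{g}}$ suffices to characterize the vector $\dot X^*$ — this is immediate from the fact that $\{F_Y\}$ separates points of $\mathfrak{g}^*$ linearly, so that any two curves agreeing on all $F_Y$ have equal derivatives.
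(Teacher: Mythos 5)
Your strategy---probing Hamilton's equation $\dot f = \{f,\H\}$ against the linear observables $F_Y(X^*)=\langle X^*,Y\rangle_*$ and recovering $\dot X^*$ by nondegeneracy of the pairing---is the standard textbook argument for this result, and in fact the paper gives no proof here at all; it simply cites Marsden--Ratiu, Proposition~10.7.1. So your approach is sound in outline. However, there is a concrete sign slip in the penultimate step. Tracing the computation carefully under the appendix's convention $\langle\ad_Z^*\mu,W\rangle_*=-\langle\mu,\ad_Z W\rangle_*$, the antisymmetry of the Lie bracket contributes one sign flip and the minus built into $\ad^*$ contributes a second, so the two \emph{cancel}:
\begin{align*}
\pm\langle X^*,[Y,Z]\rangle_*
&= \mp\langle X^*,[Z,Y]\rangle_* \\
&= \mp\bigl(-\langle\ad_Z^*X^*,Y\rangle_*\bigr)
= \pm\langle\ad_Z^*X^*,Y\rangle_*,
\end{align*}
where $Z=\delta\H/\delta X^*$. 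Nondegeneracy then forces $\dot X^*=\pm\,\ad^*_{\delta\H/\delta X^*}X^*$, not $\mp$ as you claim. Your write-up quotes the defining relation with the built-in minus but then applies it as though the minus were absent, landing on the stated $\mp$ for the wrong reason.

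The underlying issue is a clash of conventions worth flagging explicitly. Marsden--Ratiu define $\ad_\xi^*$ as the plain dual of $\ad_\xi$, namely $\langle\ad_\xi^*\mu,\eta\rangle_*=\langle\mu,\ad_\xi\eta\rangle_*$ with no minus; under that convention your method does yield $\dot X^*=\mp\,\ad^*_{\delta\H/\delta X^*}X^*$, which is exactly how the cited Proposition~10.7.1 is phrased. The appendix of this book instead builds in the extra minus (so that $\ad^*$ is the Lie-algebra derivative of $\Ad^*_g$, which is defined via $g^{-1}$), and the two conventions differ by a global sign. To make your proof airtight, either state up front that you are using the Marsden--Ratiu normalization of $\ad^*$, or record that with the appendix's normalization the Lie--Poisson equations read with $\pm$ in place of $\mp$.
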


\index{semisimple Lie algebra}
\index[n]{0@$\bracks{-}{-}$, bilinear form!nondegenerate}
\index[n]{X@$X$, Lie algebra element!$X,Y,Z\in\mathfrak{g}$}

Assume further that our Lie algebra $\mathfrak{g}$ is semisimple: it
can be equipped with a nondegenerate bilinear form, which we denote by
$\bracks \cdot \cdot$.  This bilinear form satisfies the following
relation:
\begin{equation}\label{eqn:nondegen-inner-prod}
\bracks {X} {[Y,Z]} = \bracks {[X,Y]} Z, \quad X,Y,Z \in \mathfrak{g}
\end{equation}

\index[n]{0@$-^*$, dual!isomorphism}
\index[n]{X@$X$, Lie algebra element!$X,Y,Z\in\mathfrak{g}$}
\index[n]{Y@$Y\in\mathfrak{g}$, Lie algebra element}

Using this bilinear form, we can identify $\mathfrak{g}^*$ with
$\mathfrak{g}$ as follows:
\begin{equation}\label{eqn:semisimple-lg-ident}
X^*(Y) = \bracks X Y \qquad X, Y\in \mathfrak{g}, \ X^* \in \mathfrak{g}^*,
\end{equation}
where $X^*$ maps to $X$, under this isomorphism.

%% Issue:8084788. DONE[TCH 5/4/2024. No sign. OK as is. Explanation ad
%% and ad^* are related by a sign in the definition of ad^*, but when
%% the isomorphism is invoked, the sign goes away.]. Doesn't it map ad
%% to -ad^*?

This isomorphism maps the operator $\ad$ to $\ad^*$ and so equation
\eqref{eqn:lie-poisson-gen} becomes
\[ 
\frac{dX}{dt} = \mp \ad_{\delta \H/\delta X} X 
=  \mp \left[\frac{\delta \H}{\delta X}, X \right],\quad
X\in\mathfrak{g}.
\]

\index[n]{0@$\bracks{-}{-}$, bilinear form!trace form on Lie algebra}

Armed with this background material, in this section we recast the
dynamics for $X$ in our system, as given in Lemma
\ref{lem:X-dynamics}, as the Lie-Poisson equation of a
control-dependent Hamiltonian on the vector space $\sl(\R)^*$.  To do
this we shall need to exhibit a Hamiltonian function. Recall that we
have defined $\bracks{X}{Y} = \tr(XY)$ for matrices $X,Y \in \sl(\R)$.

\index[n]{H@$\H$, Hamiltonian!Lie-Poisson}
\index[n]{YZ@$Z_u$, control matrix!$Z_0$, constant}
\index[n]{ln@$\ln$, natural log}

\begin{proposition}\label{lem:X-ham-frac-deriv}
If 
\(
\H(X)= -\frac{\langle X,X \rangle}{2}\ln 
\frac{\langle X,X \rangle}{\langle X, Z_0 \rangle}
\) 
then 
\[
X' = -\ad_{\delta \H/\delta X}X 
= -\frac{\langle X,X \rangle}{2\langle X,Z_0 \rangle}\left[Z_0, X\right]
\]
\end{proposition}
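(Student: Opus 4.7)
The plan is to verify the claimed Lie--Poisson equation by a direct computation of the functional derivative $\delta \H/\delta X$, followed by evaluation of the bracket $[\delta \H/\delta X, X]$.

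First, I would compute $\delta \H/\delta X \in \sl(\R)$. Abbreviate $f(X) = \langle X,X\rangle$ and $g(X) = \langle X,Z_0\rangle$, so that $\H = -(f/2)(\ln f - \ln g)$. Since $\langle\cdot,\cdot\rangle$ is symmetric bilinear, the directional derivatives are $f'(X;W) = 2\langle X,W\rangle$ and $g'(X;W) = \langle Z_0,W\rangle$ for any $W\in\sl(\R)$. A short application of the product and chain rules yields
\[
\H'(X;W) \;=\; -\langle X,W\rangle \ln(f/g) \;-\; \langle X,W\rangle \;+\; \frac{f}{2g}\,\langle Z_0,W\rangle,
\]
so that, under the identification $\sl(\R)^*\cong\sl(\R)$ via the trace form,
\[
\frac{\delta \H}{\delta X} \;=\; -X\ln(f/g) \;-\; X \;+\; \frac{f}{2g}\,Z_0.
\]

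Second, I would substitute this into the bracket. Since $[X,X]=0$, the two summands proportional to $X$ drop out of $[\delta\H/\delta X,\,X]$, leaving only
\[
-\ad_{\delta \H/\delta X} X \;=\; -\bigl[\delta \H/\delta X,\,X\bigr] \;=\; -\frac{\langle X,X\rangle}{2\langle X,Z_0\rangle}\,[Z_0,X],
\]
which is exactly the formula asserted in the proposition.

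As a consistency check (not strictly part of the statement), I would note that on the adjoint orbit $\O_J$ where $\det(X)=1$, Lemma~\ref{lem:sl2-lemmas}(1) gives $\langle X,X\rangle = -2$, so the right-hand side simplifies to $[Z_0,X]/\langle X,Z_0\rangle$, matching the controlled dynamics of Theorem~\ref{thm:X-dynamics-sl2} with constant control $Z_u = Z_0$. The only real obstacle is bookkeeping in the functional derivative; in particular, one must be careful that the non-bracket-producing terms $-X\ln(f/g)-X$ really do vanish against $X$ under the Lie bracket, and that the signs from the $-f/2$ prefactor, the chain rule on $\ln f - \ln g$, and the $-\ad$ in the Lie--Poisson equation combine consistently. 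There is no deeper analytic content beyond this verification.
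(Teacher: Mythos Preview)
Your proof is correct and follows essentially the same route as the paper: compute $\delta\H/\delta X$ by the product and chain rules, then observe that the terms proportional to $X$ drop out of $[\delta\H/\delta X,X]$, leaving only $-\tfrac{\langle X,X\rangle}{2\langle X,Z_0\rangle}[Z_0,X]$. The only cosmetic point is that writing $\ln(f/g)=\ln f-\ln g$ is formally awkward since $f=\langle X,X\rangle<0$ on $\O_J$; the paper keeps $\ln(f/g)$ intact (which is real since $g<0$ on the star domain) and applies the quotient rule, but the derivative computation and final answer are unaffected.
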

\begin{proof}
The function $\H$ is well-defined since $\langle X, X \rangle = -2 $
and on the star-domain, by Lemma \ref{lem:sl2-star-condition}, we
have that $\langle X,Z_0 \rangle < 0$.
Now we have that
\begin{align*}
    \ad_{\delta \H/\delta X}X &= \left[ \frac{\delta \H}{\delta X}, X \right] 
\\
    &= \left[\frac{\delta}{\delta X}\left(-\frac{\langle X,X \rangle}{2} 
\ln \frac{\langle X,X \rangle}{\langle X, Z_0 \rangle}\right) ,X\right]
\\
    &= \left[ -X \ln \frac{\langle X,X \rangle}{\langle X, Z_0 \rangle} 
- \frac{\bracks X X}{2} \left( \frac{\bracks X {Z_0}}{\bracks X X} 
\frac{2X \bracks X {Z_0} - \bracks X X Z_0}{{\bracks X {Z_0}}^2}\right), X\right] 
\\
% TCH 5/4/2024: X' -> -X'
    &= \frac{\bracks X X}{2 \bracks X {Z_0}}\left[ Z_0, X \right] = -X'
\end{align*}
Thus, we see that the dynamics for $X$ is Lie-Poisson with respect
to the Hamiltonian $\H(X) = -\frac{\langle X,X \rangle}{2} \ln
\frac{\langle X,X \rangle}{\langle X, Z_0 \rangle}$.
\end{proof}
\begin{remark}\normalfont
Note that, with the parameterization of Section
\ref{sec:lie-algebra-dynamics}, the Hamiltonian becomes $\H(X) =
\ln(-\bracks{X}{Z_0})$.
\end{remark}
% This is because as explained in Babelon et
% al.~\cite[p.~43]{babelon2003introduction}, the Lie-Poisson bracket
% is \textit{degenerate}: its kernel is the set of $\Ad^*$ invariant
% functions and so, the spectral invariants of matrices (which arise
% as conserved quantities of the Lie-Poisson bracket) are give rise to
% trivial equations of motion.

\section{Poisson Reduction of the Extended State Space}\label{sec:poisson-bracket}
% DONE[] be more specific: replace our control, with a specific reference
% The Lie-Poisson bracket helps us to write the Poisson bracket for the
% manifold of the Reinhardt control problem, viz., $T^*(T\SL(\R))$.

The Poisson manifold $T^*T\SL(\R)$ can be Poisson-reduced by
left-translation symmetries arising from the left-multiplication
action of $\SL(\R)$. This reduction results in a Poisson bracket on
the reduced Poisson manifold, which we call the \textit{extended space
  Poisson bracket}.  For preliminaries on Poisson reduction, we refer
to Chapter 10 of Marsden~and~Ratiu~\cite{marsden2013introduction}.

This reduction procedure also reduces a Hamiltonian system on
$T^*T\SL(\R)$ to a system on the quotient 
\begin{equation}\label{eqn:cotangent-invariant}
T^*T\SL(\R)/\SL(\R)\cong\sl(\R)^* 
\times \sl(\R) \times \sl(\R)^*
\cong\sl(\R) 
\times \sl(\R) \times \sl(\R),
\end{equation}
by means of the invariant bilinear form on $\sl(\R)$.  Thus, for
example, the Hamiltonian system arising from the Pontryagin Maximum
Principle gets reduced this way. We have already seen expressions for
integral curves of the reduced Hamiltonian vector field in
Section~\ref{sec:costate-variables}.

These ODEs for $X,\Lambda_1,\Lambda_R$ on the quotient Poisson
manifold can be written in Poisson bracket form with respect to the
extended space Poisson bracket. We have the following expression for
this bracket, which appears in multiple sources. See
Jurdjevic~\cite{jurdjevic2016optimal},
Gay-Balmaz~et~al.~\cite[pp.~34]{gay2012invariant} and
Esen~et~al.~\cite[pp.~13]{esen2021tulczyjew}.

\index[n]{F@$F,G$, smooth functions}
\index{Poisson!extended bracket}

\begin{theorem}\label{thm:extended-space-poisson-bracket}
If $F$ and $G$ are left-invariant smooth functions on $T^*(T\SL(\R))$,
then we can identify them with functions on the quotient
\eqref{eqn:cotangent-invariant}, which is isomorphic to $\sl(\R)^3$.
By using 
coordinates $(X,\Lambda_1,\Lambda_2)$ introduced in Section~\ref{sec:PMP},
their \emph{extended space Poisson bracket} on the quotient Poisson manifold is given
by
\[
\index[n]{0@$\{-,-\}$, Poisson bracket!$-_{ex}$, extended}
\{F,G\}_{ex} := \left\langle \Lambda_1, 
\left[ \frac{\delta F}{\delta\Lambda_1}, 
\frac{\delta G}{\delta\Lambda_1} \right] \right\rangle
 + \bracks{\frac{\delta F}{\delta X}}
{\frac{\delta G}{\delta \Lambda_2}} 
-  \bracks{\frac{\delta F}{\delta \Lambda_2}}{\frac{\delta G}{\delta X}}
\]
which is the sum of the Lie-Poisson bracket on $\sl(\R)^*$ and the
canonical Poisson bracket on $T^*(\sl(\R))$ (where $\sl(\R)$ is
identified with the dual $\sl(\R)^*$ as needed). Here $\delta/\delta
X$ denotes the functional derivative with respect to $X$.
\end{theorem}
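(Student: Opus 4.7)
The plan is to obtain the extended bracket by Poisson reduction of the canonical symplectic structure on $T^*(T\SL(\R))$ under the left-multiplication action of $\SL(\R)$, and to verify that the reduction splits cleanly into a Lie-Poisson piece on $\sl(\R)^*$ and a canonical piece on $T^*\sl(\R)$.

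First I would set up coordinates by trivializing the iterated bundle. The left trivialization $TG \cong G \times \mathfrak{g}$ (with $\mathfrak{g}=\sl(\R)$) sends $(g, g X) \mapsto (g, X)$, matching Definition~\ref{def:X-defn}. Taking the cotangent bundle, Proposition~\ref{prop:trivialization} and its analogue for $\mathfrak{g}$ give
\[
T^*(TG) \cong T^*(G \times \mathfrak{g}) \cong (G \times \mathfrak{g}^*) \times (\mathfrak{g} \times \mathfrak{g}^*),
\]
with coordinates $(g, \Lambda_1, X, \Lambda_2)$. Identifying $\mathfrak{g}^* \cong \mathfrak{g}$ via the trace form $\langle\cdot,\cdot\rangle$ recovers the state/costate variables of Problem~\ref{pbm:state-costate-reinhardt}. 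The left action $h \cdot g = hg$ lifts to the cotangent bundle and, in the left trivialization, acts only on the $G$-factor; hence the quotient is $\mathfrak{g}^* \times \mathfrak{g} \times \mathfrak{g}^* \cong \sl(\R)^3$, exactly as in \eqref{eqn:cotangent-invariant}.

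Next I would compute the reduced bracket by splitting $T^*(G \times \mathfrak{g}) = T^*G \oplus T^*\mathfrak{g}$ as symplectic manifolds. The canonical bracket on this product splits as a sum, and since the left action touches only the $T^*G$ factor, the reduction only affects that summand. For the $T^*G$ summand, the standard theorem (see Marsden-Ratiu~\cite{marsden2013introduction}, Theorem 13.4.4) says that $T^*G/G \cong \mathfrak{g}^*$ inherits the $(+)$- or $(-)$-Lie-Poisson bracket depending on whether the left or right trivialization is used; with the left trivialization employed here, one obtains
\[
\{F, G\}_{T^*G/G}(\Lambda_1) = \left\langle \Lambda_1, \left[\frac{\delta F}{\delta \Lambda_1}, \frac{\delta G}{\delta \Lambda_1}\right]\right\rangle.
\]
For the $T^*\mathfrak{g}$ summand, which is a vector space cotangent bundle without any group action, the canonical bracket descends trivially and takes the standard Darboux form with conjugate variables $(X, \Lambda_2)$, yielding the last two terms of the asserted formula.

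The main technical step, and the place to be careful, is justifying that no cross terms appear between the $T^*G$ and $T^*\mathfrak{g}$ factors under the left trivialization. In a naive coordinate system on $TG$, the fiber coordinate $v \in T_gG$ depends on the base point $g$, so mixed derivatives generate cross terms; the left trivialization replaces $v$ by $X = T_gL_{g^{-1}}v$, which is $G$-invariant by construction, so partial derivatives with respect to $X$ genuinely commute with left translation and the symplectic form on $T^*(TG)$ splits as the direct sum $\omega_{T^*G} \oplus \omega_{T^*\mathfrak{g}}$ after pullback. I would verify this splitting by writing the tautological one-form $\theta$ on $T^*(TG)$ in the trivialization and checking that $d\theta$ decomposes as claimed; this is a short computation using that the Maurer-Cartan form $g^{-1}dg$ pairs with $\Lambda_1$ to give the $T^*G$ tautological form, while $\langle\Lambda_2, dX\rangle$ gives the $T^*\mathfrak{g}$ tautological form, with no coupling terms because $X$ is left-invariant. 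Once this splitting is in hand, the bracket formula follows immediately from the sum of the two individual Poisson structures, and one finally identifies $\mathfrak{g}^* \cong \mathfrak{g}$ via the nondegenerate trace form as in Equation~\eqref{eqn:semisimple-lg-ident} to recast all functional derivatives as taking values in $\sl(\R)$.
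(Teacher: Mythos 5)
The paper does not prove this theorem; it cites it from Jurdjevic, Gay-Balmaz et al., and Esen et al.\ and moves on, so you are supplying a proof where the authors declined to give one. Your Poisson-reduction strategy is exactly the one underlying those references, and the overall structure---trivialize $TG\cong G\times\mathfrak{g}$, note that the lifted $G$-action touches only the first factor, reduce each summand---is correct.

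Two points to sharpen. First, you attribute the product splitting of the canonical symplectic form on $T^*(TG)$ to the left trivialization, but that splitting is automatic for \emph{any} trivialization $TG\cong G\times\mathfrak{g}$: cotangent lifts of base diffeomorphisms are symplectomorphisms, and $T^*(M\times N)\cong T^*M\times T^*N$ as symplectic products. What the \emph{left} trivialization specifically buys is that the lifted $G$-action becomes $(g,X)\mapsto(hg,X)$, acting trivially on the $\mathfrak{g}$-factor, so the reduction touches only the $T^*G$ summand. Second, and more substantively, the sign of the reduced bracket: the Lie-Poisson reduction theorem in Marsden--Ratiu~\cite{marsden2013introduction} assigns the \emph{minus} Lie-Poisson bracket to the left trivialization and the plus bracket to the right; you claim the plus. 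If you compute $\Lambda_1'=\{\langle\Lambda_1,Y\rangle,\H\}_{ex}$ from the bracket as you state it, you get $\langle\Lambda_1,[Y,X]\rangle=-\langle[\Lambda_1,X],Y\rangle$, hence $\Lambda_1'=-[\Lambda_1,X]$, which contradicts the costate equation $\Lambda_1'=[\Lambda_1,X]$ from Section~\ref{sec:costate-variables}; the minus bracket gives the correct agreement. The discrepancy is not of your making---the theorem as printed appears to carry it too---but you should not reproduce the sign uncritically; verify it against the costate ODE.
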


Using this bracket, we can deduce the following theorem. 

\begin{theorem} \label{thm:reinhardt-poisson}
  The Reinhardt system defined in
  problem~\ref{pbm:state-costate-reinhardt} can be written in Poisson
  bracket form as follows.
  \begin{align*}
    X' &= \{X,\mathcal{H}\}_{ex}, \\ 
    \Lambda_1' &= \{\Lambda_1,\mathcal{H}\}_{ex}, \\ 
    \Lambda_R' &= \{\Lambda_R,\mathcal{H}\}_{ex},
  \end{align*}
  where $\H(\Lambda_1,\Lambda_R,X;Z_u) = 
  \left\langle \Lambda_1 - \frac{3}{2}\lambda_{cost} J, X \right\rangle  
  - \frac{\langle \Lambda_R,Z_u\rangle }{\langle X, Z_u \rangle}$ and $\Lambda_R = [\Lambda_2,X]$ as usual.
\end{theorem}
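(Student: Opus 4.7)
The plan is to compute the functional derivatives of $\mathcal{H}$ with respect to each of its arguments, then plug into the extended Poisson bracket formula and match against the equations already derived in Problem~\ref{pbm:state-costate-reinhardt}. Since the bracket $\{\cdot,\cdot\}_{ex}$ is defined for scalar functions, I will treat the matrix equations $\{Y, \mathcal{H}\}_{ex}$ componentwise (via the trace pairing with matrix-valued functional derivatives), interpreting the displayed identities as ``equalities of matrices whose entries are Poisson-commuted with $\mathcal{H}$.''

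First I would compute $\delta\mathcal{H}/\delta\Lambda_1 = X$ directly, so that the only term contributing to $\{\Lambda_1, \mathcal{H}\}_{ex}$ is the Lie--Poisson term $\langle \Lambda_1, [\,\cdot\,, X]\rangle$, yielding $\Lambda_1' = -\mathrm{ad}_X \Lambda_1 = [\Lambda_1, X]$ as in Corollary~\ref{cor:lam1-gen-sol}. For $X'$, since $X$ does not depend on $\Lambda_1$ or on itself in the coordinate expression, only the canonical bracket term survives and $\{X,\mathcal{H}\}_{ex} = \delta\mathcal{H}/\delta\Lambda_2$. Here I must apply the chain rule through $\Lambda_R = [\Lambda_2, X]$: varying $\Lambda_2$ by $Y$ gives $\delta\Lambda_R = [Y, X]$, and invariance of the trace form yields $\langle Y, \delta\mathcal{H}/\delta\Lambda_2\rangle = -\langle [Y,X], Z_u\rangle/\langle X, Z_u\rangle = \langle Y, [P, X]\rangle$, so that $X' = [P, X]$ as desired.

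The main work is for $\Lambda_R'$. The cleanest route is to \emph{first} compute $\Lambda_2' = \{\Lambda_2, \mathcal{H}\}_{ex} = -\delta\mathcal{H}/\delta X$, which involves three contributions: the linear term gives $-(\Lambda_1 - \frac{3}{2}\lambda_{cost} J)$; the chain rule through $\Lambda_R = [\Lambda_2, X]$ produces $-[\Lambda_2, P]$ after using $\langle Y, [Z_u, \Lambda_2]\rangle/\langle X, Z_u\rangle = \langle Y, [P, \Lambda_2]\rangle$; and differentiation of the denominator $\langle X, Z_u\rangle$ yields $-\langle \Lambda_R, P\rangle P$. These match the expression for $\Lambda_2'$ derived in Section~\ref{sec:costate-variables} after rewriting $\langle[\Lambda_2,P],X\rangle = -\langle\Lambda_R,P\rangle$ via the invariance identity. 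Then I would expand
\[
\Lambda_R' = \{[\Lambda_2,X],\mathcal{H}\}_{ex} = [\Lambda_2',X] + [\Lambda_2, X'],
\]
substitute the expressions for $\Lambda_2'$ and $X' = [P,X]$, and simplify using the Jacobi identity in the form $[[\Lambda_2, P], X] - [\Lambda_2, [P, X]] = -[P, \Lambda_R]$ to consolidate the bracket terms into $[P, \Lambda_R]$, producing precisely the ODE for $\Lambda_R$ listed in Problem~\ref{pbm:state-costate-reinhardt}.

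The only real obstacle is bookkeeping: one must consistently pass between $\Lambda_2$ (the canonical variable appearing in the bracket) and $\Lambda_R$ (the reduced variable in which the Hamiltonian is naturally expressed), and verify that the chain rule through the relation $\Lambda_R = [\Lambda_2, X]$ produces exactly the correction terms $-\langle\Lambda_R, P\rangle [P, X]$ and $[P, \Lambda_R]$. Everything else is a routine application of $\langle [A,B], C\rangle = \langle A, [B, C]\rangle$ and the Jacobi identity; no new structural input is required beyond the formulas already proved in Theorem~\ref{thm:extended-space-poisson-bracket} and in Section~\ref{sec:costate-variables}.
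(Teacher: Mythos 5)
Your proposal is correct and follows essentially the same route as the paper: pair each state/costate matrix against an arbitrary constant $Y\in\sl(\R)$, compute the functional derivatives of $\H$ (using the chain rule through $\Lambda_R=[\Lambda_2,X]$ and the invariance of the trace form), and match against the ODEs already derived in Section~\ref{sec:costate-variables}. The paper's proof carries out this computation only for $X$ and explicitly ``omits the others,'' deferring to the derivation in Section~\ref{sec:costate-variables}, so your write-up simply supplies the omitted details along the same lines.
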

\noindent In the theorem, the bracket is applied to each matrix entry, and
  we identify $\sl(\R)^* \cong \sl(\R)$ via the nondegenerate trace form.

\begin{proof} 
  This is a routine calculation. We show the derivation for $X$ and
  omit the others.  For an arbitrary constant $Y \in \sl(\R)$, we have
  \begin{align*}
    \{\langle X,Y\rangle,\mathcal{H}\}_{ex} &= \bracks{\frac{\delta}{\delta X}\bracks{X}{Y}}
    {\frac{\delta \mathcal{H}}{\delta \Lambda_2}} \\ 
    &= \bracks{Y}{\frac{\delta}{\delta\Lambda_2}\frac{\bracks{\Lambda_2}{[Z_u,X]}}{\bracks{Z_u}{X}}} \\ 
    &= \bracks{Y}{\frac{[Z_u,X]}{\bracks{Z_u}{X}}} \\ 
    &= \bracks{X'}{Y}.
  \end{align*}
which proves the first equation. 
\end{proof}

% DONE[] \cong is strange to use between elements
% DONE[] be more specific: which optimal control problem
% Cite 

%% Is direct sum the correct terminology? Yes, by Jurdevic

\section{Symplectic Structure of Coadjoint Orbits}\label{sec:kirillov}
\index{Poisson!structure, direct sum}
\index{coadjoint orbit}
\index[n]{O@$\O_-$, adjoint or coadjoint orbit}
\index[n]{zz@$\omega$, two-form!$\omega^K$, Kirillov}
\index[n]{W@$W^*,Z^*\in\mathfrak{g}^*$}

On a Lie group $G$ with Lie algebra $\mathfrak{g}$,
Kirillov~\cite{kirillov2004lectures} has defined a symplectic
structure on the coadjoint orbit $\O_{Z^*}:=\{\Ad^*_{g^{-1}}Z^*~|~g
\in G\}$ through $ Z^* \in \mathfrak{g}^*$ (the linear dual of the Lie
algebra $\mathfrak{g}$). This two-form $\omega^K$ on $\O_{Z^*}$ is given
by
% TCH 3/26/2023: added _*
\[
\omega_{Z^*}^K(\ad_X^*W^*,\ad_Y^*W^*) := \bracks{W^*}{[X,Y]}_*, \quad 
W^* \in \O_{Z^*}, \quad 
X,Y \in \mathfrak{g}
\]
where $\ad^*_XW^*,\ad^*_YW^* \in T_{W^*}\O_{{Z^*}}$.  We specialize this
general construction to our setting with $G = \SL(\R)$.

Since the Lie algebra $\sl(\R)$ carries with it the nondegenerate
trace form: $\bracks{X}{Y} = \mathrm{trace}(XY)$, this sets up a
linear isomorphism $\sl(\R)^* \cong \sl(\R)$, which we use to
transport the symplectic structure from coadjoint orbits to adjoint
orbits.

In this section, we prove that the Kirillov symplectic structures on
the adjoint orbit $\O_X \subset \sl(\R)$ and the symplectic structure
on the Poincar\'{e} upper half-plane $\h$ are
(anti)-equivalent. Recall that we have the following map.
\begin{align*}
     \Phi : \mathfrak{h} &\to \mathcal{O}_J 
\\
     z = x+iy &\mapsto \mattwo {x/y} {-(x^2 + y^2)/y} {1/y} {-x/y} =: \Phi(z),
\end{align*} 
from the upper half-plane to adjoint orbit $\OX = \O_J$ in $\sl(\R)$.
\begin{lemma}
The map $\Phi$ (defined in Lemma \ref{lem:def-phi}) is an
anti-symplectomorphism.
\end{lemma}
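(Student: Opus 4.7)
The plan is a direct computation of the pullback $\Phi^*\omega^K$ and comparison with the hyperbolic area form on $\h$, using the tangent map already worked out in Lemma~\ref{lem:tangent-maps}.

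First I would fix conventions. On $\h$ I take the standard symplectic form (up to sign) $\omega_\h = dx\wedge dy/y^2$, which is the K\"ahler form of the Poincar\'e metric. On $\O_J\subset\sl(\R)$, the Kirillov form, transported from $\sl(\R)^*$ via the trace form as described in Section~\ref{sec:kirillov}, can be written in the form
\[
\omega^K_X([Y_1,X],[Y_2,X]) = \bracks{X}{[Y_1,Y_2]},\qquad Y_1,Y_2\in\sl(\R),
\]
where both sides are invariant under the shift $Y_i\mapsto Y_i+\R X$, so the formula descends to the quotient $\sl(\R)/\R X\cong T_X\O_J$ from Lemma~\ref{lem:half-plane-lie-algebra-iso}. (The sign of this formula depends on whether one identifies $\ad^*_Z W$ with $[W,Z]$ or $[Z,W]$, and this is the source of the ``anti-'' in anti-symplectomorphism.)

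Next I would plug in the explicit formula from Lemma~\ref{lem:tangent-maps}. For tangent vectors $\mb{v}_i = r_{i,1}\partial_x+r_{i,2}\partial_y\in T_z\h$ ($i=1,2$), we have $T\Phi(\mb{v}_i)=[Y_z(r_{i,1},r_{i,2}),X]$ where
\[
Y_z(r_1,r_2)=\begin{pmatrix} r_2/(2y) & (yr_1-r_2x)/y \\ 0 & -r_2/(2y)\end{pmatrix},\qquad X=\Phi(z).
\]
Therefore
\[
(\Phi^*\omega^K)_z(\mb{v}_1,\mb{v}_2) = \bracks{X}{[Y_z(r_{1,1},r_{1,2}),\,Y_z(r_{2,1},r_{2,2})]}.
\]
This is a straightforward $2\times 2$ matrix computation. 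The bracket $[Y_z(\cdot),Y_z(\cdot)]$ is strictly upper triangular (both matrices are upper triangular, and the commutator of upper triangular matrices is strictly upper triangular), so only the $(1,2)$-entry of the bracket and the $(2,1)$-entry of $X$ contribute to the trace. Expanding yields $(\Phi^*\omega^K)_z(\mb{v}_1,\mb{v}_2) = \pm(r_{1,1}r_{2,2}-r_{1,2}r_{2,1})/y^2$, i.e.\ $\pm\omega_\h(\mb{v}_1,\mb{v}_2)$.

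Finally, I would check the sign. If the sign is $-$, then $\Phi^*\omega^K=-\omega_\h$ and $\Phi$ is an anti-symplectomorphism, which is exactly the claim. The main (only) obstacle here is bookkeeping: matching the convention for $\ad^*$ versus $\ad$ in the Kirillov formula (as defined in Section~\ref{sec:Lie}) with the convention that tangent vectors at $X$ are written as $[Y,X]$ (as in Lemma~\ref{lem:half-plane-lie-algebra-iso}), and correctly tracking the sign through the trace computation; everything else is a short and routine calculation.
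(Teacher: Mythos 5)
Your proposal is correct and takes essentially the same route as the paper: pull back $\omega^K$ via the explicit tangent map of Lemma~\ref{lem:tangent-maps}, write $\Phi^*\omega^K_z(\mb{v},\mb{w}) = \bracks{\Phi(z)}{[Y_z(v_1,v_2),Y_z(w_1,w_2)]}$, and compute the $2\times2$ trace directly to obtain $-\,(v_1w_2-v_2w_1)/y^2 = -\omega_z(\mb{v},\mb{w})$. The only thing left implicit in your plan — that the sign does come out to $-$, not $+$ — is exactly what the paper's short explicit computation settles, and your observation about the strictly upper-triangular commutator is a correct shortcut for that evaluation.
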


\index[n]{zz@$\omega$, two-form!on $\h$}
\index[n]{zz@$\omega$, two-form!$\omega^K$, Kirillov}
\index[n]{0@$\wedge$, wedge product of differential forms}

\begin{proof}
Let $\omega$ be the symplectic form of the upper half-plane.
\[
\omega = \frac{dx \wedge dy}{y^2}     
\]
\index[n]{v@$\mb{v}$, vector!$\mb{v},\mb{w}\in T_z\h$, tangent}
\index[n]{v@$\mb{v}$, vector!$v_i,w_i\in\R$, components of $\mb{v},\mb{w}$}
and let $\omega^K$ be the Kirillov two-form on the coadjoint orbit
$\OX$.  We have to show $\omega^K$ pulls back to the two-form $-\omega$
on the upper half-plane by $\Phi : \h \to \OX$. So, at a point $z =
x+iy \in \h$ and tangent vectors $\mb{v},\mb{w} \in T_z\h$:
\begin{align*}
     \Phi^*\omega^K_z(\mb{v},\mb{w}) &= \omega^K_{\Phi(z)}(T_z\Phi(\mb{v}),T_z\Phi(\mb{w})) 
\\
     &=\left\langle \Phi(z),\left[ \mattwo {\frac{v_2}{2y}} 
{\frac{v_1y - v_2 x}{y})} {0} 
{\frac{v_2}{2y}},\mattwo {\frac{w_2}{2y}} {\frac{w_1y - w_2 x}{y})} {0} 
{\frac{w_2}{2y}} \right]\right\rangle 
\\
     &=-\frac{v_1 w_2 - v_2 w_1}{y^2} = -\omega_z(\mb{v},\mb{w})
\end{align*}
This proves that $\Phi^* \omega^K = -\omega$. 
\end{proof}

\section{Riemannian Metric on Coadjoint Orbits}

\index{regular!semisimple element of Lie algebra}
\index{Cartan decomposition}
\index[n]{p@$\mathfrak{p}_X=X^\perp$, component of Cartan decomposition}

Let $X \in \O_J$.  Then $X$ is regular semisimple, and $\RX$ is a rank
one Cartan subalgebra of $\sl(\R)$, where $\RX$ is the span of $X$.
There is a Cartan decomposition $\sl(\R) = \RX \oplus \mathfrak{p}_X$
decomposition adapted to $\sl(\R)$, where $\mathfrak{p}_X=X^\perp$ is
the two-dimensional orthogonal complement of $\RX$ with respect to the
trace form on $\sl(\R)$.

Explicitly,
\[
\mathfrak{p}_X = \{ [X,Y] \mid Y \in \sl(\R)\}.
\]
From Lemma~\ref{lem:half-plane-lie-algebra-iso},
$\mathfrak{p}_X$ is the tangent space $T_X\O_J$,
which is also identified with $T_{z}\h$, where $X = \Phi(z)$.

By transport of structure, the trace form on $\sl(\R)$ restricts to
$\mathfrak{p}_X$ and defines a symmetric bilinear form on
$T_{z}\h$. By general theory, this quadratic form is positive definite
on $\mathfrak{p}_X$.

\index{Riemannian metric!on $\h$}

\begin{lemma}\label{lem:riemannian}
The symmetric bilinear form on $T_{z}\h$ determined by the trace form
on $\mathfrak{p}_X$ is twice the usual invariant Riemannian metric on
$\h$:
\[
  2\frac{dx^2 + dy^2}{y^2}.
\]
\end{lemma}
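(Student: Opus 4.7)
The plan is to pull the restricted trace form on $\mathfrak{p}_X \subset \sl(\R)$ back to $T_z\h$ through the tangent map $T_z\Phi$ and match it against the invariant metric. First I would verify that the two natural realizations of the tangent space agree as subspaces of $\sl(\R)$: by Lemma~\ref{lem:half-plane-lie-algebra-iso} we have $T_X\O_J = \{[W,X] : W \in \sl(\R)\}$, and since $\bracks{[W,X]}{X} = \bracks{W}{[X,X]} = 0$, this subspace is contained in $X^\perp = \mathfrak{p}_X$; dimension count (both sides are two-dimensional inside the three-dimensional $\sl(\R)$) forces equality. So under $T_z\Phi$, the vector $\mb{v} = r_1\partial_x + r_2\partial_y$ corresponds to the element $[Y_z(\mb{v}), X]\in\mathfrak{p}_X$, where $Y_z(\mb{v})$ is the representative produced in Lemma~\ref{lem:tangent-maps}, and the pullback form is
\begin{equation*}
\langle \mb{v},\mb{w}\rangle_{\Phi^*} := \bracks{[Y_z(\mb{v}),X]}{[Y_z(\mb{w}),X]}.
\end{equation*}

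Second, I would reduce this pairing to an inner product of $X$-perpendicular representatives rather than expand the commutators directly. Let $\tilde Y_z(\mb{v})\in X^\perp$ denote the unique representative of the coset $Y_z(\mb{v})+\R X$ lying in $\mathfrak{p}_X$; since subtracting a multiple of $X$ does not change the bracket with $X$, we have $[\tilde Y_z(\mb{v}),X]=[Y_z(\mb{v}),X]$. Applying Proposition~\ref{prop:trace-quotient-sub} with the substitution $(X,Y,Z,W)=(\tilde Y_z(\mb{v}),X,\tilde Y_z(\mb{w}),X)$ and using $\bracks{\tilde Y_z(\mb{v})}{X}=\bracks{\tilde Y_z(\mb{w})}{X}=0$ and $\bracks{X}{X}=-2\det X=-2$, the identity collapses to
\begin{equation*}
\bracks{[Y_z(\mb{v}),X]}{[Y_z(\mb{w}),X]} = 4\bracks{\tilde Y_z(\mb{v})}{\tilde Y_z(\mb{w})}.
\end{equation*}

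Third, I would carry out the explicit computation. Using the formula for $Y_z(\mb{v})$ from Lemma~\ref{lem:tangent-maps} and $X=\Phi(z)$ from Lemma~\ref{lem:def-phi}, I would subtract the correct scalar multiple of $X$ (determined by $\bracks{Y_z(\mb{v})}{X}/\bracks{X}{X}$) to land in $\mathfrak{p}_X$, then compute $\mathrm{trace}(\tilde Y_z(\mb{v})\tilde Y_z(\mb{w}))$. The terms involving $x$ should cancel against one another after the projection, leaving $(r_1s_1+r_2s_2)/(2y^2)$; multiplying by the factor of $4$ from the previous step yields $2(r_1s_1+r_2s_2)/y^2$, which is exactly twice the usual Riemannian pairing. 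As a sanity check, I would verify the claim by the equivalent direct route: compute $[Y_z(\mb{v}),X]$ as a $2\times 2$ matrix (the result is independent of the representative), multiply by $[Y_z(\mb{w}),X]$, and take the trace; the same cancellation pattern recovers $2(r_1s_1+r_2s_2)/y^2$.

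There is no conceptual obstacle here — everything is linear algebra in a three-dimensional Lie algebra — so the main task is just organizing the bookkeeping. The only nuance worth flagging is the choice to work with the $X$-perpendicular representative $\tilde Y_z(\mb{v})$: skipping this projection and pairing $Y_z(\mb{v})$ against $Y_z(\mb{w})$ through Proposition~\ref{prop:trace-quotient-sub} would give a spurious contribution from the $\RX$-component, so one must either project first or keep the full commutator expression throughout.
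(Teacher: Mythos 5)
Your proposal is correct. The crucial identity you derive, $\bracks{[\tilde Y_z(\mb{v}),X]}{[\tilde Y_z(\mb{w}),X]} = 4\bracks{\tilde Y_z(\mb{v})}{\tilde Y_z(\mb{w})}$, does follow from Proposition~\ref{prop:trace-quotient-sub} exactly as you describe (one could equally well cite Proposition~\ref{prop:brack-brack-inner-prod} with its hypothesis $\bracks{Y}{Z}=0$ satisfied), and the projected representatives do pair to $(r_1 s_1 + r_2 s_2)/(2y^2)$. What you call the ``sanity check'' is in fact the paper's actual proof: the paper takes the representatives $\mb{\tilde e}_1 = \bigl(\begin{smallmatrix} 0 & 1 \\ 0 & 0\end{smallmatrix}\bigr)$ and $\mb{\tilde e}_2 = \bigl(\begin{smallmatrix} 1/(2y) & -x/y \\ 0 & -1/(2y)\end{smallmatrix}\bigr)$ from Lemma~\ref{lem:tangent-maps}, forms the commutators $[\mb{\tilde e}_i,\Phi(z)]$ directly, and checks the trace pairing equals $2\delta_{ij}/y^2$. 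Your primary route — project the $Y_z$ representatives into $X^\perp$ first and then invoke the quadratic identity to trade the commutator pairing for a scaled trace pairing — is a genuinely different organization. It buys you smaller matrices to multiply (no need to expand two commutators) at the cost of an explicit projection and one appeal to a structural identity; the paper's route has no projection step but requires computing the commutators. You are also right that the projection is not optional under your route: without it, the first LHS term in Proposition~\ref{prop:trace-quotient-sub} would pick up a contribution from $\bracks{Y_z(\mb{v})}{X}\ne 0$ (and indeed $\bracks{\mb{\tilde e}_1}{\Phi(z)}=1/y\ne 0$, so $\mb{\tilde e}_1$ genuinely needs projecting, while $\mb{\tilde e}_2$ happens to lie in $X^\perp$ already).
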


\index[n]{e@$\mb{e}_i$, standard basis!$\mb{\tilde e}_i$, image of standard basis.}
\index[n]{e@$\mb{e}_i$, standard basis! of $\R^2$}
\index[n]{zd@$\delta_{ij}$, Kronecker delta}

\begin{proof}
Set
  \[
  \mb{\tilde e}_1 = \begin{pmatrix} 0 & 1 \\ 0 & 0 \end{pmatrix},\quad
  \mb{\tilde e}_2 = \begin{pmatrix} 1/(2y) & -x/y \\
    0 & -1/(2 y)\end{pmatrix}.
    \]
% TCH 4/28/2023 : reworded to remove inverse map.
Under the map $T\Phi:T_z\h\to T_X\O_J$ of
Lemma~\ref{lem:tangent-maps}, the preimage of $\mb{\tilde{e}}_1$ and
$\mb{\tilde e}_2$ is the basis $\mb{e}_1=\partial/\partial x$,
$\mb{e}_2=\partial/\partial y$ of $\R^2=T_z\h$.  The isomorphism
$\sl(\R)/\RX\to \mathfrak{p}_X =X^\perp$ is $Y \mapsto [Y,X]$.  Thus,
it is enough to check that
\[
  \langle [\mb{\tilde e}_i,\Phi(z)],[\mb{\tilde e}_j,\Phi(z)]\rangle
  = \frac{2\delta_{ij}}{y^2},
\]
  where $\delta_{ij}$ is the Kronecker delta.
  This is easily computed.
\end{proof}

\chapter{Extensions of the Theory}

\section{Hypotrochoids}\label{sec:hypotrochoids}

\index{hypotrochoid}
\index{roulette curve}

\begin{figure}[htbp]
    \centering
    \includegraphics[scale=0.5]{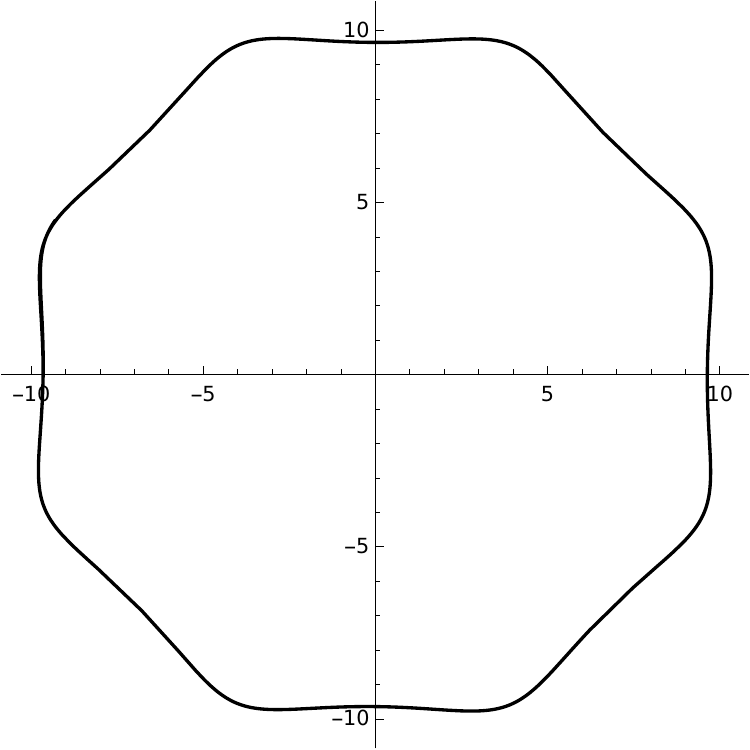}
    \caption{A hypotrochoid resembling the smoothed octagon.}
    \label{fig:smoothed-octagon-hypo}
\end{figure}

\index[n]{r@$r$, real number!$r_i$, hypotrochoid parameter}

A hypotrochoid is a roulette curve which is traced by a point which is
at a distance $r_0$ from the center of a circle of radius $r_1$ as it
rolls without slipping on the inside of a circle of a fixed circle of
radius $r_2$.  The parametric equation of a hypotrochoid in 
the complex plane $\C$ is given by
\begin{align*}
z(t)
= (r_2-r_1)\exp(i t) 
+r_0 \exp\left(-\frac{(r_2-r_1)}{r_1}i t \right).
\end{align*}

This section was motivated by the striking figure in Figure
\ref{fig:smoothed-octagon-hypo}, which depicts a hypotrochoid with
parameters $r_1 = 2.498$, $(r_2-r_1)/r_1= 1/7$, and $r_0=-10$. As we
can see, the Figure
\ref{fig:smoothed} resembles the smoothed octagon.

\index[n]{zf@$\zeta=\exp(2\pi{i}/3)$, cube root of unity}
\index[n]{zs@$\sigma_i$, multi-curve}

% DONE: Something was wrong with the equation for sigma. It does not have
% period 2Pi/rho.  
% TCH 3/28/2023 Replaced rho with 1/n to give periodicity.

If $\zeta$ is a primitive cube root of unity, and $n,j$ are integers,
define
\begin{equation}\label{eqn:hypo}
\sigma_{2j}(t) := 
r \exp(i t)\zeta^j +r_0 \exp(-i  t/n)\zeta^{-j},
\end{equation}
which is a closed curve of period
$2\pi n$. We recover a hypotrochoid from $\sigma_0$ by
setting $1/n = (r_2-r_1)/r_1$ and $r = (r_2-r_1)$.

The smoothed octagon is given by a bang-bang control and hence is not
a real analytic curve. But the following proposition shows that the
hypotrochoid is a multi-curve, realized by a curve in $\SL(\R)$.

\begin{proposition}
% TCH 5/4/2024: Added n = 1 mod 3.
If $|r_0| \ne |r|$ and if $n\equiv1\mod3$, then there exists a curve
in $\SL(\R)$ which realizes the hypotrochoid $\sigma_0$.
\end{proposition}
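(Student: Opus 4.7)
The plan is to exploit the identification $\R^2 \cong \C$, under which every real-linear map $g: \C \to \C$ takes the form $g(z) = az + b\bar z$ for some $a,b\in\C$, with real determinant $|a|^2 - |b|^2$. The hypotrochoid formula \eqref{eqn:hypo} splits naturally into a $\C$-linear part (the $\exp(it)$ piece) and a $\C$-antilinear part (the $\exp(-it/n)$ piece), so it sits inside this framework by design. I will identify $\R^2$ with $\C$ so that the sixth roots of unity are $\mb{s}_j^* = e^{i\pi j/3}$; in particular $\mb{s}_{2j}^* = \zeta^j$.

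Next I would define the $\R$-linear map $g(t):\C\to\C$ by
\[
g(t)(z) \;=\; r\,e^{it}\,z \;+\; r_0\,e^{-it/n}\,\bar z.
\]
A direct substitution gives $g(t)(\zeta^j) = r\,e^{it}\zeta^j + r_0\,e^{-it/n}\bar\zeta^j$, which matches $\sigma_{2j}(t)$ in \eqref{eqn:hypo}. As a real-linear map the determinant is
\[
\det g(t) \;=\; |r\,e^{it}|^2 - |r_0\,e^{-it/n}|^2 \;=\; r^2 - r_0^2,
\]
which is a nonzero real constant thanks to the hypothesis $|r|\ne|r_0|$. After rescaling $g(t)$ and the multi-point $\mb{s}^*$ by compatible positive scalars, the resulting curve lies in $\SL(\R)$ when $r^2 > r_0^2$. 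In the case $r^2 < r_0^2$, the determinant is negative; I would then swap the roles of the $\C$-linear and antilinear pieces (equivalently, replace $\mb{s}_{2j}^*$ with $\bar\zeta^j$, which amounts to reflecting the hexagon), so that the corresponding $g(t)$ regains positive determinant and sits in $\SL(\R)$ after rescaling.

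The remaining task is to verify that the data assembled this way satisfies the full multi-curve and closure conditions from Definition \ref{def:multi-pt-multi-curve}. The multi-point identities $\mb{s}^*_j+\mb{s}^*_{j+2}+\mb{s}^*_{j+4}=0$ and $\mb{s}^*_{j+3}=-\mb{s}^*_j$ are preserved under $g(t)$, so the corresponding multi-curve identities for $\sigma_j$ follow automatically, and the computation $\det(\sigma_0(t),\sigma_2(t)) = (r^2-r_0^2)\sqrt3/2$ (which is constant in $t$) confirms the area normalization up to scaling. The critical step is checking that all six curves $\sigma_j$ trace a single simple closed curve, which amounts to finding a time shift $t_0$ with $\sigma_2(t) = \sigma_0(t+t_0)$. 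Comparing coefficients of $\zeta^j$ and $\zeta^{-j}$ forces the simultaneous conditions
\[
e^{it_0} = \zeta, \qquad e^{-it_0/n} = \bar\zeta,
\]
and an elementary computation shows these are compatible if and only if $n\equiv 1\pmod 3$. This is precisely where the congruence hypothesis enters the proof.

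The main obstacle I anticipate is bookkeeping in the $r^2<r_0^2$ case: the naive construction produces a reflection, and one must be careful that the reflected hexagon still defines a valid multi-point and that the induced $g(t)$ inherits positive determinant, orientation, and the right cyclic labeling of the $\sigma_j$. Once the sign issue is settled, Theorem \ref{thm:defn-g} provides the desired curve $g:[0,2\pi n]\to\SL(\R)$ realizing the hypotrochoid, and the congruence $n\equiv 1\pmod 3$ is shown to be essential for the closure step.
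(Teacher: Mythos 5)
Your proposal is correct and arrives at the same constraints as the paper's proof, but by a genuinely different and more constructive route. The paper verifies the three multi-point identities abstractly ($\sigma_0+\sigma_2+\sigma_4=0$, constancy of $\det(\sigma_0,\sigma_2)$ via the computation $\bar\sigma_0\sigma_2 = r_0^2\zeta^2 + r^2\zeta + 2r_0r\,\RR(\zeta\,e^{it+it/n})$, and the cyclic shift $\sigma_{2j}(t+2\pi n/3)=\sigma_{2j+2}(t)$), then invokes Theorem~\ref{thm:defn-g} to deduce the existence of $g(t)$. You instead write $g(t)$ down directly as the real-linear map $z\mapsto a(t)z+b(t)\bar z$ with $a=re^{it}$, $b=r_0e^{-it/n}$, and observe that $\det g(t)=|a|^2-|b|^2=r^2-r_0^2$ is automatically constant and nonzero. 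This buys you immediate transparency on the role of $|r|\ne|r_0|$ (the paper sees the same constant $(r^2-r_0^2)\sqrt3/2$ emerge only after expanding $\bar\sigma_0\sigma_2$ and discarding the real cross-term), and it naturally explains why the construction lives in $\GL_2(\R)$ at all: the hypotrochoid formula is literally a $\C$-linear plus $\C$-antilinear combination. Your closure analysis (solving $e^{it_0}=\zeta$, $e^{-it_0/n}=\bar\zeta$ simultaneously) is equivalent to the paper's third identity and correctly isolates where $n\equiv1\pmod 3$ enters: reducing $n(1+3m)=1+3k$ modulo $3$ gives $n\equiv 1$. One caution on the $r^2<r_0^2$ bookkeeping you flag: simply negating the $\sigma_{2j}$, as the paper's parenthetical might suggest, does not flip the sign of $\det(\sigma_0,\sigma_2)$; the fix is a relabeling such as $\tilde\sigma_{2j}(t):=\sigma_{-2j}(-t)$ (reversing both the index cycle and time), which does flip the sign. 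Your proposed "swap the linear and antilinear pieces" is morally this same fix, but you'd want to check that the resulting index map respects the cyclic labeling and keeps $\det(\mb{s}_j,\mb{s}_{j+1})$ positive rather than just replacing $\mb{s}^*_{2j}$ with $\bar\zeta^j$, which on its own produces a clockwise-oriented multi-point with the wrong sign of normalization.
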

\begin{proof}
We will prove the following identities of the curves $\sigma_{2j}(t)$:
\begin{align*}
\sigma_0(t) + \sigma_2(t) + \sigma_4(t) &= 0, \\
\Re(i{\sigma}_0(t),\sigma_2(t)) &= \mathrm{constant}, \\
\sigma_{2j}(t + \frac{2\pi n}{3}) &= \sigma_{2j+2}(t).
\end{align*}
The first identity is a result of $1+\zeta+\zeta^2=0$. The second
identity follows from
\[
\bar{\sigma}_0(t)\sigma_2(t) 
= r_0^2 \zeta^2 + r^2 \zeta + 2 r_0r\,\Re(\zeta \exp(it + i t/n))
\]
The third follows from the definition of $\sigma_{2j}$.  Identifying
$\mathbb{C}$ with $\R^2$ we get that 
\[
\Re(i{\sigma}_0(t),\sigma_2(t))
= \det(\sigma_0(t), \sigma_2(t)) = \mathrm{constant}.
\]
This means that there is a constant $s > 0$ such that the rescaled
curves $s\,\sigma_{2j}$ (and their negations $-s\,\sigma_{2j}$) form a
multi-curve as in Definition \ref{def:multi-pt-multi-curve}. We can go
through the same construction now as in Section
\ref{sec:lie-group-dynamics} to construct a curve
${g}:[0,t_f]\to\SL(\R)$ so that $s\, \sigma_{2j}(t) =
    {g}(t)\mb{e}_{2j}^*$.
\end{proof}

% Why the tilde on g?  DONE[TCH. 3/28/2023. Tilde deleted.]

\begin{remark}
This hypotrochoid result might allow us a further speculation. We can
compute the curvatures $\kappa_j(t)$ of the curves $\sigma_{2j}(t)$
defined in that section and compute their normalization and label them
as \emph{controls}. This then shows that a hypotrochoid
determines a control function in the control vector space
$\{(u_0,u_1,u_2)~|~u_0+u_1+u_2=1\}$.

We might then ask for an optimal control problem which has a
particular hypotrochoid as a global optimizer and investigate how it
might relate to the smoothed octagon.
\end{remark}

\section{Chaos in Numerical Experiments}

This appendix describes some numerical experiments for the Reinhardt
control problem with circular control set for various choices of
parameters. Here we use the hyperboloid coordinates $w,b,c$ introduced
in Section~\ref{sec:coordinate} with fixed angular momentum
$\A_0$.

A numerical experiment suggests that for some values of the
parameters, the trajectories might be chaotic. See
Figure~\ref{fig:chaos}.  However, for other parameter values, the
trajectories appear to be periodic. See Figure~\ref{fig:nonchaos}.
The only difference in parameter values for these two figures is
$\A_0=3$ in the first figure and $\A_0=2.5$ in the second.  We cannot
guarantee the accuracy of these numerical solutions.  

%We have observed
%errors from a related numerical solution in Figure~\ref{fig:equil}.

%Mathematica's ODE solver NDSolve was unable to produce solutions for
%large values of $t$ when $\A_0>3$.  It gave an error message stating
%that the step size is effectively zero; singularity or stiff system
%suspeced.  

Much further numerical exploration of the solutions would
be desirable, both for circular control sets and for triangular
control sets.

\begin{figure}[ht]
\centering
\includegraphics[scale=0.5]{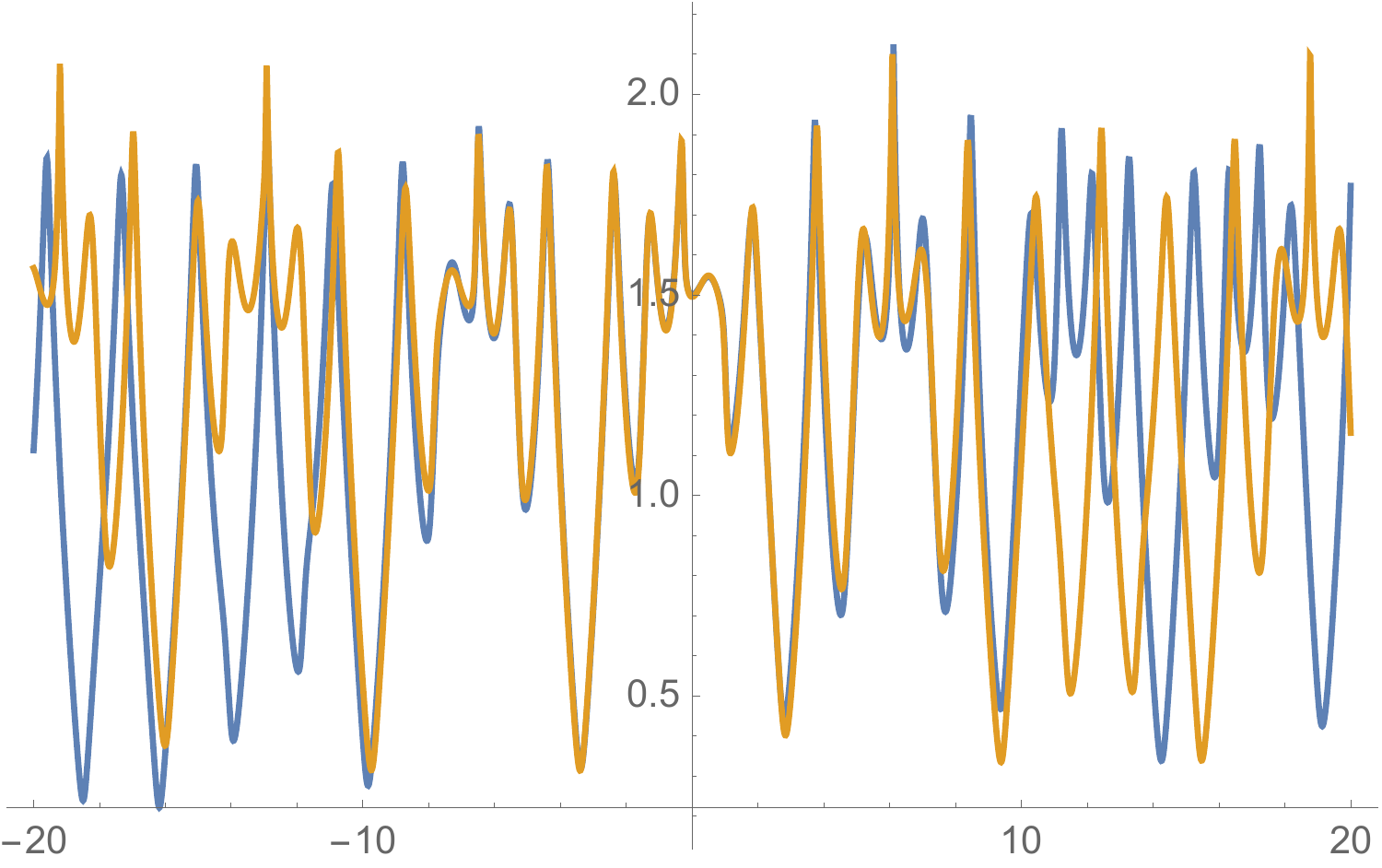}
\caption{The graph of $|w|$ as a function of time.  For nearby initial
  conditions, the trajectories of $|w|$ drift apart in a way that
  suggests the onset of chaos.  The parameter values are $\rho=1.1$,
  $d_1=3/2$, $\epsilon=1$, $\A_0=3$, and $\epsilon_b=1$. The two
  solutions (in orange and blue) have initial values
  $(w_0,c_0)=(1.5,0.5)$ and $(w_0,c_0)=(1.495,0.5)$, respectively.
  The figure was produced using NDSolve, Mathematica's numerical ODE solver.
  }
\label{fig:chaos}
\end{figure}

\index{Mathematica!NDSolve}

\begin{figure}[ht]
\centering
\includegraphics[scale=0.5]{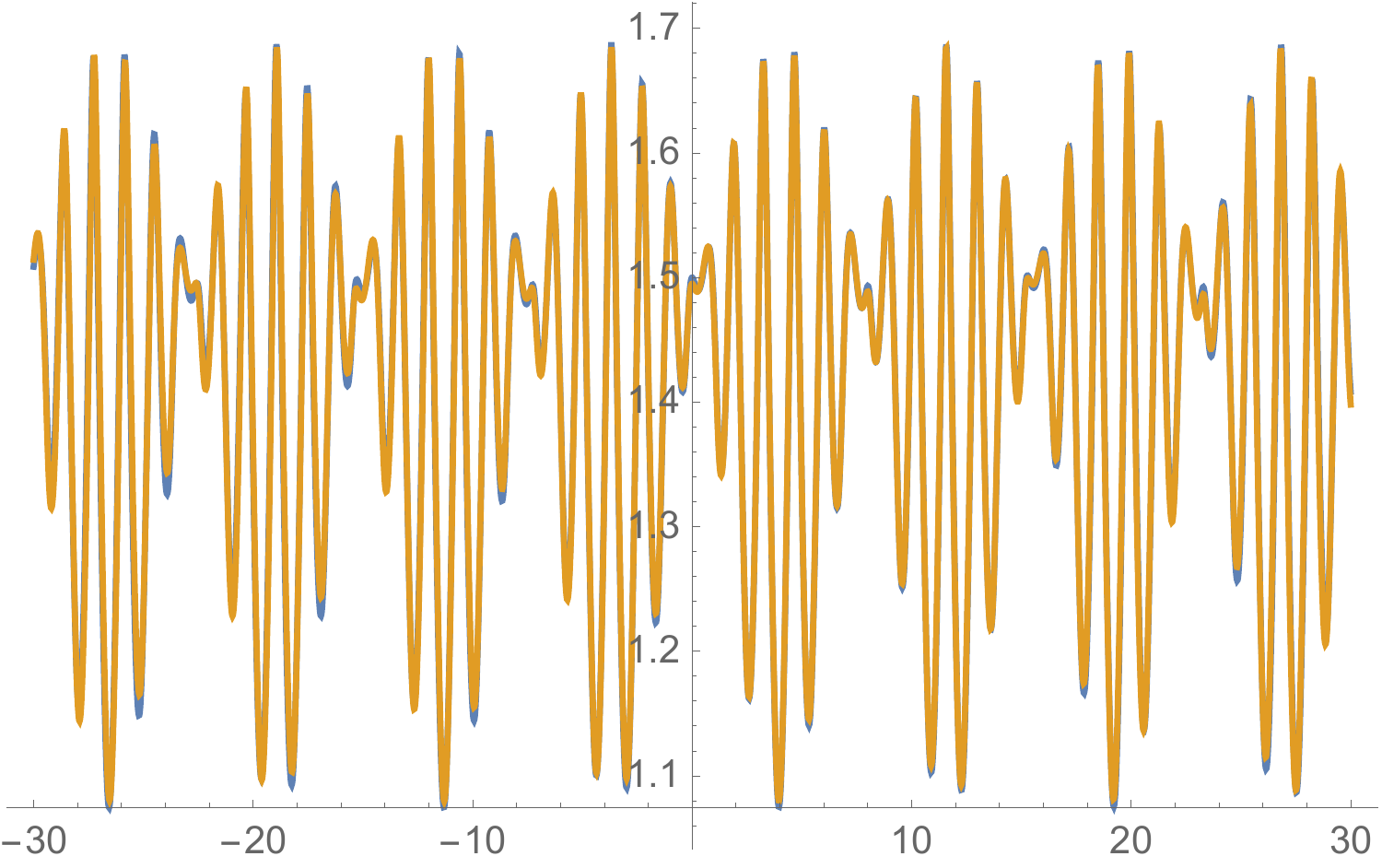}
\caption{The graph of $|w|$ as a function of time.  For other nearby initial conditions, the trajectories of $|w|$ remain
  close to each other.  The trajectories appear to be periodic.
  The parameter values are
  $\rho=1.1$, $d_1=3/2$, $\epsilon=1$, $\A_0=2.5$, and
  $\epsilon_b=1$. The two solutions (in orange and blue)
  have initial values $(w_0,c_0)=(1.5,0.5)$
  and $(w_0,c_0)=(1.495,0.5)$, respectively.  The graphic
  was produced by the numerical solver NDSolve.
}
\label{fig:nonchaos}
\end{figure}

\section{Kuperberg's Area Formula}

\index{Iwasawa decomposition}
\index{Kuperberg, Greg}

Greg Kuperberg has given an area formula for
centrally symmetric disks satisfying the minimality conditions of
Reinhardt~\cite{kuperberg}.  
Write $g\in \SL(\R)$ as
\[
g =
 \begin{pmatrix} 1 &  x\\ 0 & 1 \end{pmatrix}
\begin{pmatrix} y^{1/2} & 0\\ 0 & y^{-1/2} \end{pmatrix}
\begin{pmatrix} \cos\theta & -\sin\theta \\ \sin\theta & \cos\theta
  \end{pmatrix},\quad y>0.
\]
Then $g\cdot{i}=x+i y$.

Let $K$ be a balanced disk in the Euclidean plane, given
by a path $g:[0,t_f]\to\SL(\R)$.  Let $\tilde g:[0,t_f]\to
\h$ be the image $\tilde g(t) = g(t)\cdot{i}$ of the path in the upper-half
plane.  Note that with the usual boundary conditions on $g$, we have
$g(t_f) = g(0)R$, and $\tilde g(t_f) = \tilde g(0)$, so that the curve in
the upper-half plane is
closed.

\index{Cartan-Maurer one-form}

Based on the formula \eqref{eqn:cost}, 
there is a cost one-form, which expressed in the coordinates $(x,y,\theta)$ 
gives
\begin{equation}
  -\frac{3}{2} \tr(J g^{-1} dg) =
   3 d\theta - \frac{3 dx}{2 y},
\end{equation}
where $g^{-1}dg$ is the Cartan-Maurer one-form on $\SL(\R)$,
and $J$ 
is the infinitesimal generator of the rotation group $\SO$.
On any disk with hexagonal symmetry, $\theta(t_f) = \pi/3$.
Also,
\[
d ( dx/ y) = \frac{ dx \wedge dy}{ y^2},
\]
which is the invariant two-form $\omega$ on $\h$.
Thus, the area is
\begin{equation}
\op{area}(K) = \pi - \frac{3}{2}\int\omega
\end{equation}
where the integral is the signed hyperbolic area of the region in the
upper-half plane
enclosed by the path $\tilde g$.
The Reinhardt problem is asking for a maximization of the 
signed area given by the integral.

\index[n]{zz@$\omega$, two-form!on $\h$}

Note that the upper-half plane occurs in two contexts now: as the
codomain of the path $\tilde g:[0,t_f]\to \h$ and as the parameter space for
the tangent $X = \Phi(z)$, $z \in \h$.  Roughly speaking, the
derivative of the first $\h$ is the second $\h$, according to the
relation $g' = g X$.

\begin{example} If $K$ is the circle, then $g(t) = i$ is constant,
  and the signed area $\int\omega=0$.  The area formula reads
  $\op{area}(K) = \pi$.
\end{example}

\begin{example}
  We can imagine the proof of the local optimality of the
  smoothed octagon in Figure~\ref{fig:kuperberg} by the way that the smoothed
octagon is approximately an area maximizing circle.
Also shown are the smoothed $10$-gon, $20$-gon, and $62$-gon.
In general the smoothed $6k-2$-gon
  will have turning number $-k$ (and negative area), and the smoothed $6k+2$
will have turning number $+k$ (and positive area).
  \end{example}
\begin{figure}[ht]
\centering
 \includegraphics[height=2in]{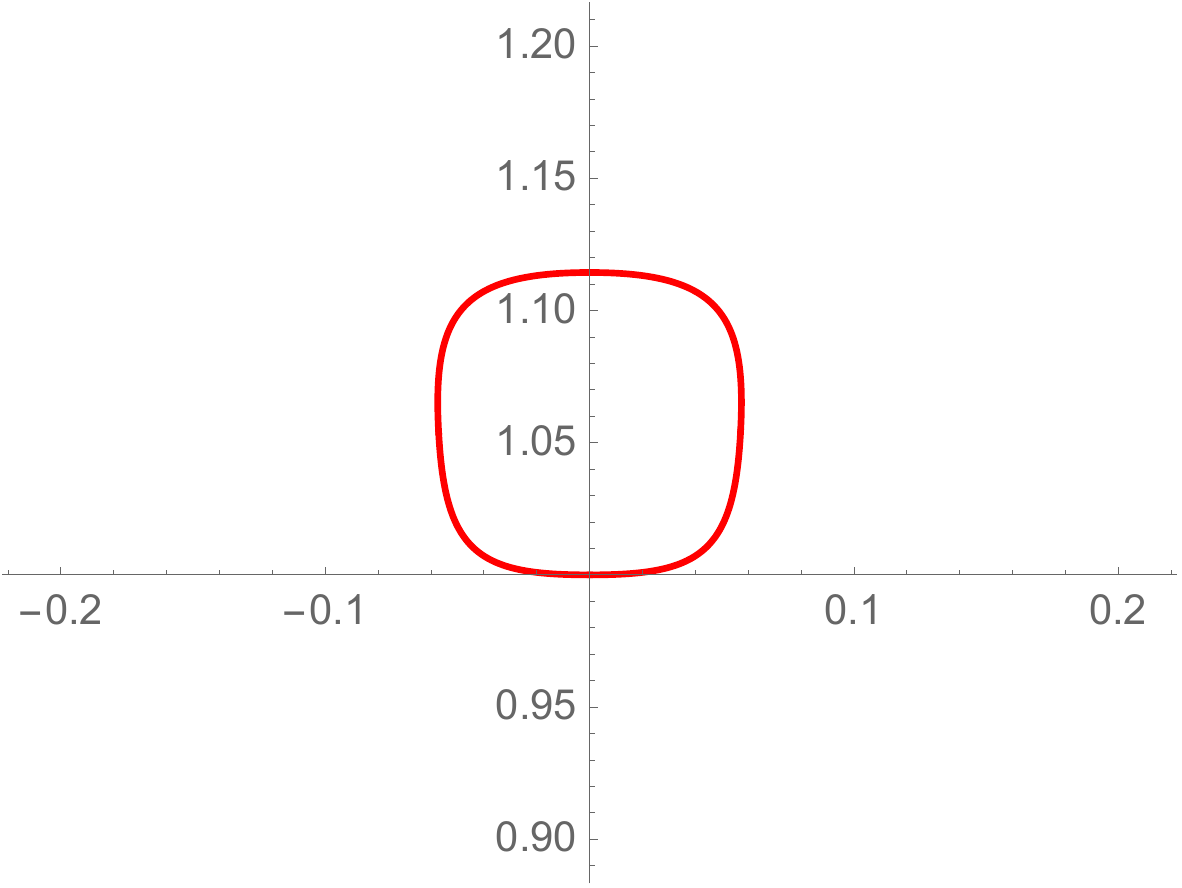}
   \includegraphics[height=2in]{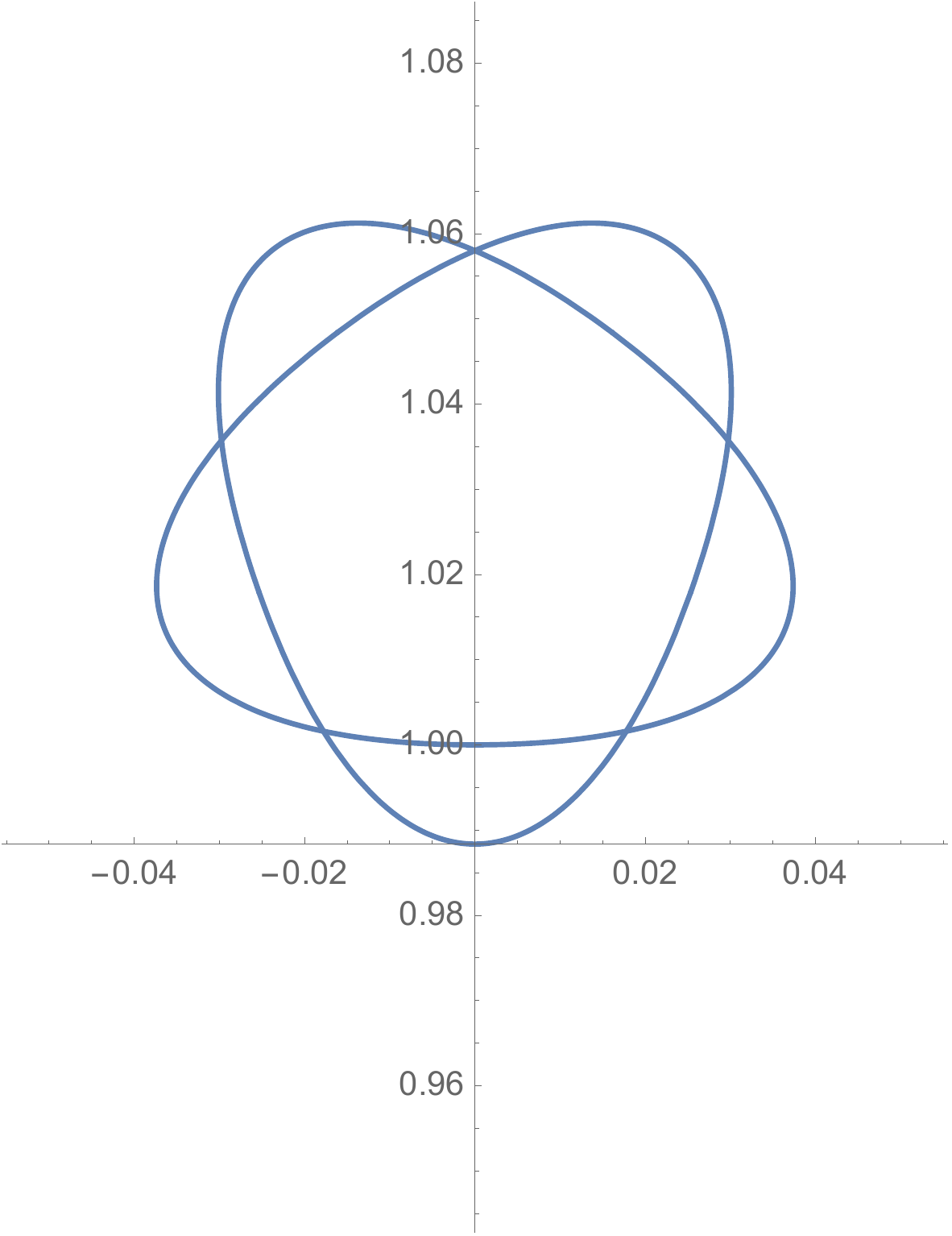}
 \includegraphics[height=2in]{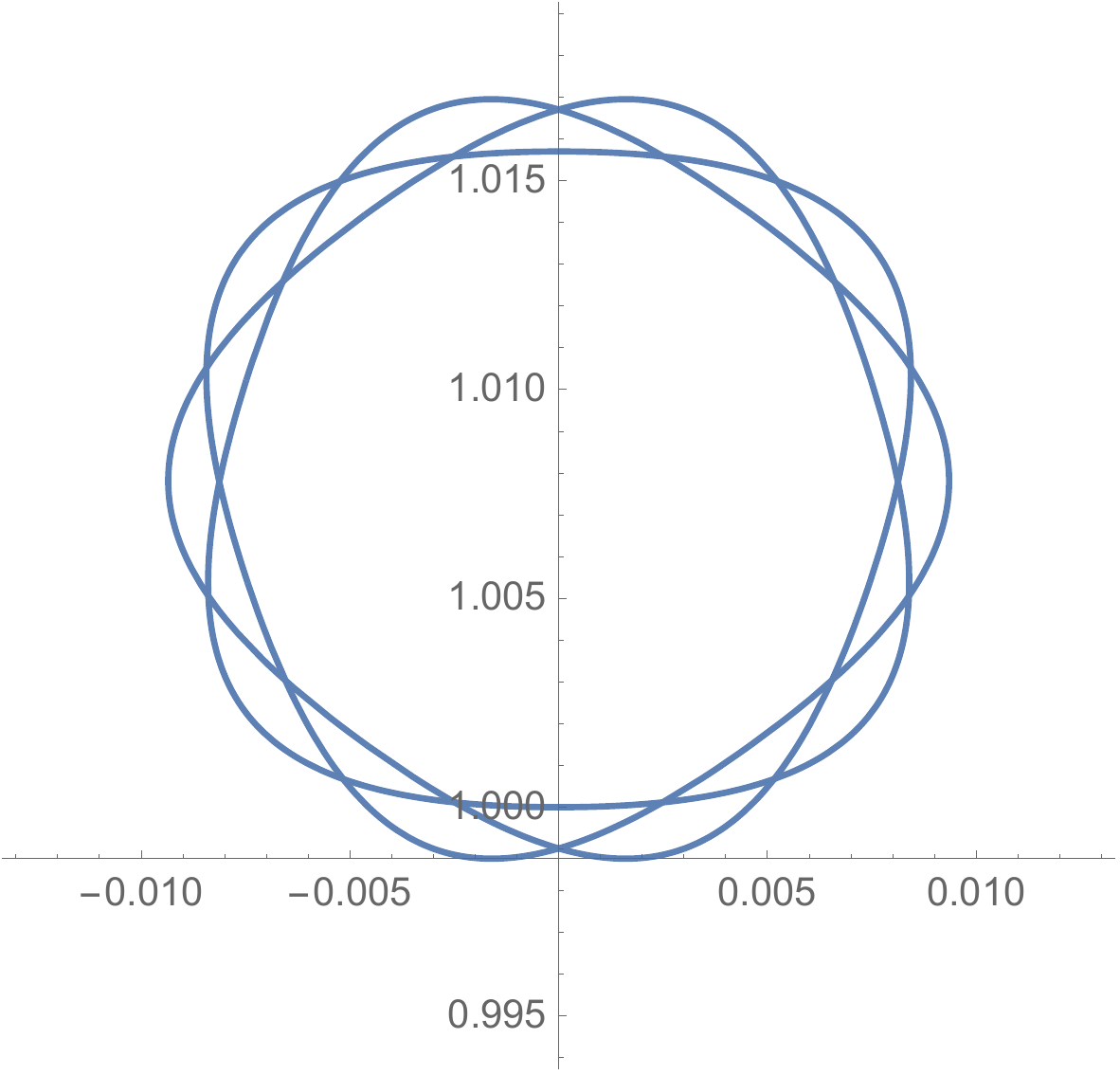}
 \includegraphics[height=2in]{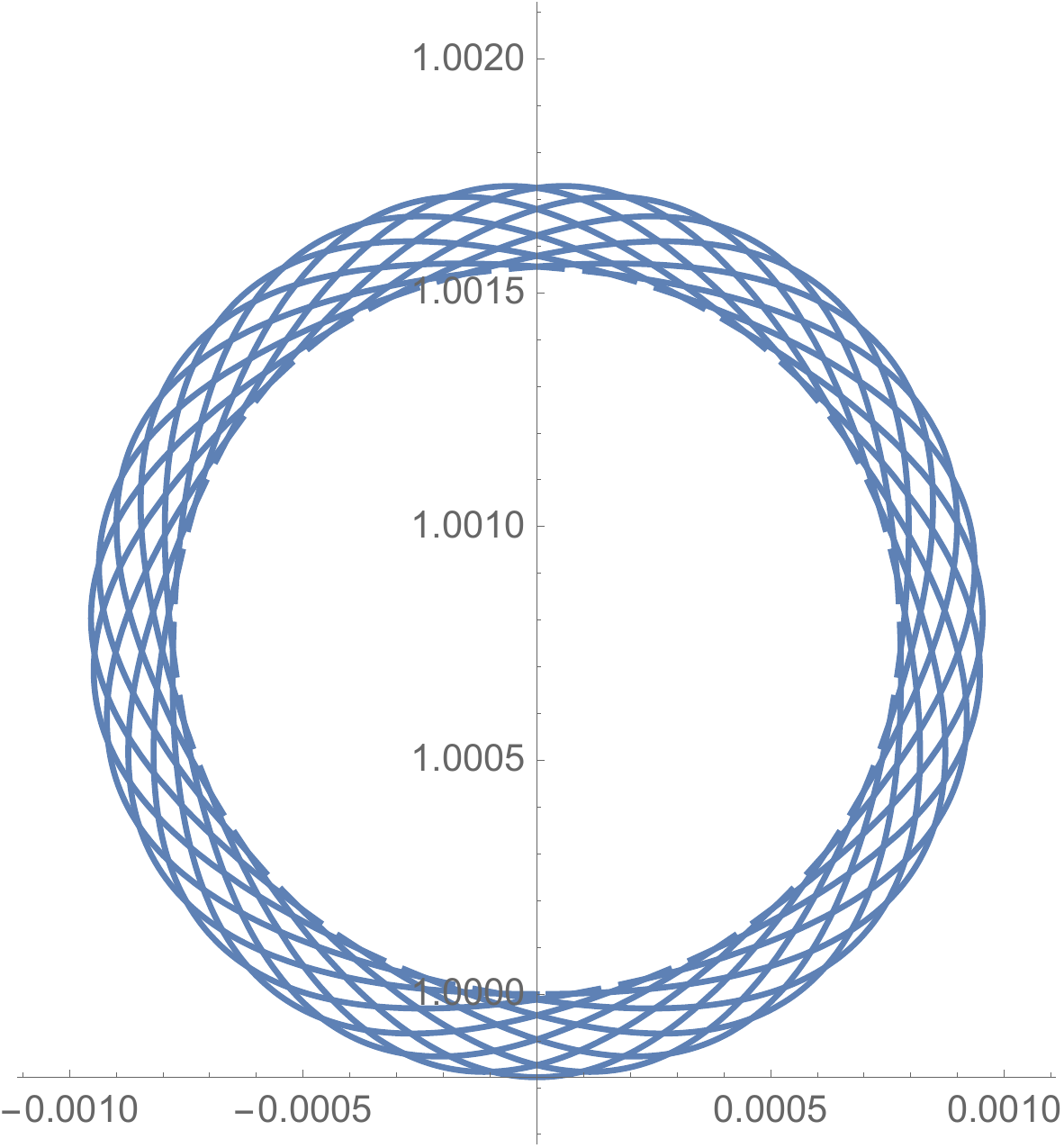}        
\caption{Kuperberg representation of
the smoothed octagon, $10$-gon, $20$-gon, $62$-gon}
\label{fig:kuperberg}
\end{figure}

\section{Research Problems}\label{sec:problems}

We begin with a few questions related to circular control sets.

\begin{enumerate}
%TCH 5/6/2024 CHAPTER DELETED
%\item Lemma~\ref{lem:equ-points} gives the classification of all relative
%equilibrium points of the $\SO$-invariant subsystem for a circular control
%set (with parameter $\rho\in[1,2]$).  Analyze the stability
%of each relative equilibrium point.
\item 
In the context of a circular control set, determine numerically, the
parameter regions that give chaos.
\item Give a comprehensive description of the global dynamics for
circular control, building on the description of global dynamics for
the Fuller system.
\end{enumerate}

Here are some research questions related to triangular control $U_T$.

\begin{enumerate}
\item We do not give an upper bound on the number of edges in the
  smoothed polygon.  It seems to us that an extension of the methods
  presented here might lead to an explicit upper bound on the number
  of edges.  (In fact, we expect that our analysis of the singular
  locus completes the most difficult stage of the proof of the full
  Reinhardt conjecture.)  To obtain a bound on the number of edges, it
  would be useful to extend our analysis from trajectories that meet
  the singular locus to include trajectories that come within a small
  neighborhood of the singular locus.  It might then be possible to
  obtain an upper bound on the number of control mode switches for
  trajectories that avoid a given small neighborhood of the singular
  locus.
\item We have a family of dynamical systems parameterized by
  $d=\det(\Lambda_1)$.  For each $d$, the Poincar\'e section is a
  four-dimensional space. Short of giving the complete solution to the
  Reinhardt problem, the Reinhardt problem might be solved for
  particular $d$ (such as $d=0$).
\item In the particular case $d=\det(\Lambda_1)=d_1^2=9/4$, we might ask
  whether our analysis of the behavior of the dynamical system around
  the singular locus (the stable and unstable manifolds at the fixed
  points) gives the comprehensive picture.  That is, do trajectories
  generally start at the boundary of the star domain, start to move
  inward toward the fixed point $q_{in}$, only to veer toward the other
  fixed point $q_{out}$, and finally move back out to the boundary of
  the star domain?
\item We have described the global behavior of trajectories that meet
  the singular locus, including the global behavior of the Fuller
  system.  To what extent do the methods introduced there (such as the
  involution $\itf$, the geometric partition of the domain, and a
  block triangular structure of the Poincar\'e map) generalize to the
  Reinhardt system?  Are these ideas sufficient to give a full proof
  of the Reinhardt conjecture?
\end{enumerate}

\clearpage

%\part{Other}
%\input{cut}

%\newpage
%\section{The Reinhardt Conjecture}
%\input{chapter2}

%\newpage
%\section{Conclusions}
%\input{chapter3}

%\newpage
%\appendix
%\chapter{Background Material}

%\input{appendix}

%\input{listofnotation} Replaced by index of notation
%============================================================================%
%============================================================================%

%============================================================================%
%============================================================================%
% \begin{figure}[t]
%     \centering
%     \includegraphics[width=0.9\textwidth]{Images/Picture-Example.jpg}
%     \caption{Caption: Image Example}
    
%     \label{Reference: Picture Example}
% \end{figure}

% \chapter{Second Appendix}

%==========================================================================================%
% BIBLIOGRAPHY
%==========================================================================================%

\singlespacing

% \safebibliography{etdbib.bib}
\bibliographystyle{plain}
\bibliography{Bibliography.bib}
%==========================================================================================%
%==========================================================================================%

%% shell:>makeindex index/Index
%% shell:>makeindex index/Notation
%% edit index/Notation.ind to put an \indexspace before Greek entries.

\printindex  % General Index
\printindex[n] % Index of Notation

% Endnotes
% \theendnotes

\end{document}